\documentclass[10pt]{article}
\usepackage{amssymb}
\usepackage{mathrsfs}
\usepackage{iftex}
\usepackage{dsfont} 
\usepackage{graphicx}
\usepackage{enumerate}
\usepackage{array}
\usepackage{url} 
\usepackage{amsmath}
\usepackage{tcolorbox}
\usepackage{subcaption}
\usepackage{bm}
\usepackage{hyperref}
\usepackage{arydshln}
\usepackage{enumitem}
\usepackage{caption}
\usepackage{comment}

\newcommand{\Fn}{{\rm F}}
\newcommand{\cOp}{\mathcal{O}_p}
\newcommand{\pwk}{\pi_{\mathsf{w}}}
\let\hat\widehat
\let\tilde\widetilde

\newcommand{\ba}{\bm{a}}
\newcommand{\bb}{\bm{b}}

\newcommand{\bd}{\bm{d}}
\newcommand{\be}{\bm{e}}

\newcommand{\bg}{\bm{g}}

\newcommand{\br}{\bm{r}}
\newcommand{\bs}{\bm{s}}

\newcommand{\bu}{\bm{u}}
\newcommand{\bv}{\bm{v}}

\newcommand{\bx}{\bm{x}}
\newcommand{\by}{\bm{y}}
\newcommand{\bz}{\bm{z}}

\newcommand{\Ib}{\mathbf{I}}

\newcommand{\Lb}{\mathbf{L}}

\newcommand{\Wb}{\mathbf{W}}

\newcommand{\bA}{\bm{A}}
\newcommand{\bB}{\bm{B}}
\newcommand{\bC}{\bm{C}}
\newcommand{\bD}{\bm{D}}
\newcommand{\bE}{\bm{E}}

\newcommand{\bG}{\bm{G}}
\newcommand{\bH}{\bm{H}}
\newcommand{\bI}{\bm{I}}
\newcommand{\bJ}{\bm{J}}

\newcommand{\bL}{\bm{L}}
\newcommand{\bM}{\bm{M}}

\newcommand{\bP}{\bm{P}}
\newcommand{\bQ}{\bm{Q}}
\newcommand{\bR}{\bm{R}}
\newcommand{\bS}{\bm{S}}
\newcommand{\bT}{\bm{T}}
\newcommand{\bU}{\bm{U}}
\newcommand{\bV}{\bm{V}}

\newcommand{\bX}{\bm{X}}
\newcommand{\bY}{\bm{Y}}
\newcommand{\bZ}{\bm{Z}}

\newcommand{\cA}{\mathcal{A}}
\newcommand{\cB}{\mathcal{B}}

\newcommand{\cE}{\mathcal{E}}

\newcommand{\cH}{\mathcal{H}}

\newcommand{\cK}{\mathcal{K}}
\newcommand{\cL}{\mathcal{L}}

\newcommand{\cN}{\mathcal{N}}
\newcommand{\cO}{\mathcal{O}}

\newcommand{\cQ}{\mathcal{Q}}
\newcommand{\cR}{\mathcal{R}}
\newcommand{\cS}{{\mathcal{S}}}

\newcommand{\EE}{\mathbb{E}}

\newcommand{\PP}{\mathbb{P}}

\newcommand{\RR}{\mathbb{R}}
\newcommand{\SSS}{\mathbb{S}}

\newcommand{\bbeta}{\bm{\beta}}
\newcommand{\bgamma}{\bm{\gamma}}
\newcommand{\bdelta}{\bm{\delta}}

\newcommand{\btheta}{\bm{\theta}}

\newcommand{\bmu}{\bm{\mu}}

\newcommand{\bphi}{\bm{\phi}}

\newcommand{\bpsi}{\bm{\psi}}

\newcommand{\bGamma}{\bm{\Gamma}}
\newcommand{\bDelta}{\bm{\Delta}}
\newcommand{\bTheta}{\bm{\Theta}}
\newcommand{\bLambda}{\bm{\Lambda}}

\newcommand{\bSigma}{\bm{\Sigma}}

\newcommand{\bPhi}{\bm{\Phi}}
\newcommand{\bPsi}{\bm{\Psi}}
\newcommand{\bOmega}{\bm{\Omega}}

\newcommand{\argmin}{\mathop{\mathrm{argmin}}}
\newcommand{\argmax}{\mathop{\mathrm{argmax}}}

\newcommand{\tr}{\mathop{\mathrm{tr}}}

\newcommand*{\zero}{{\bm 0}}
\newcommand*{\one}{{\bm 1}}

\def\T{{ \intercal }}

\ifx\BlackBox\undefined
\newcommand{\BlackBox}{\rule{1.5ex}{1.5ex}}  
\fi

\ifx\QED\undefined
\def\QED{~\rule[-1pt]{5pt}{5pt}\par\medskip}
\fi

\ifx\proof\undefined
\newenvironment{proof}{\par\noindent{\bf Proof\ }}{\hfill\BlackBox\\[2mm]}
\fi

\newtheorem{assumption}{Assumption}

\ifx\theorem\undefined
\newtheorem{theorem}{Theorem}
\fi
\ifx\example\undefined
\newtheorem{example}{Example}
\fi
\ifx\property\undefined
\newtheorem{property}{Property}
\fi
\ifx\lemma\undefined
\newtheorem{lemma}{Lemma}
\fi
\ifx\proposition\undefined
\newtheorem{proposition}{Proposition}
\fi
\ifx\remark\undefined
\newtheorem{remark}{Remark}
\fi
\ifx\corollary\undefined
\newtheorem{corollary}{Corollary}
\fi
\ifx\definition\undefined
\newtheorem{definition}{Definition}
\fi
\ifx\conjecture\undefined
\newtheorem{conjecture}{Conjecture}
\fi
\ifx\fact\undefined
\newtheorem{fact}{Fact}
\fi
\ifx\claim\undefined
\newtheorem{claim}{Claim}
\fi
\ifx\cond\undefined
\newtheorem{cond}{Condition}
\fi

\ifPDFTeX
  \usepackage[T1]{fontenc}
  \usepackage[utf8]{inputenc}
  \usepackage{textcomp} 
\else 
  \usepackage{unicode-math}
  \defaultfontfeatures{Scale=MatchLowercase}
  \defaultfontfeatures[\rmfamily]{Ligatures=TeX,Scale=1}
\fi
\usepackage{lmodern}
\ifPDFTeX\else  
\fi
\IfFileExists{upquote.sty}{\usepackage{upquote}}{}
\IfFileExists{microtype.sty}{
  \usepackage[]{microtype}
  \UseMicrotypeSet[protrusion]{basicmath} 
}{}
\makeatletter
\@ifundefined{KOMAClassName}{
  \IfFileExists{parskip.sty}{%
    \usepackage{parskip}
  }{
    \setlength{\parindent}{2pt}
    \setlength{\parskip}{6pt plus 2pt minus 1pt}}
}{
  \KOMAoptions{parskip=half}}
\makeatother
\usepackage{xcolor}
\makeatletter
\ifx\paragraph\undefined\else
  \let\oldparagraph\paragraph
  \renewcommand{\paragraph}{
    \@ifstar
      \xxxParagraphStar
      \xxxParagraphNoStar
  }
  \newcommand{\xxxParagraphStar}[1]{\oldparagraph*{#1}\mbox{}}
  \newcommand{\xxxParagraphNoStar}[1]{\oldparagraph{#1}\mbox{}}
\fi
\ifx\subparagraph\undefined\else
  \let\oldsubparagraph\subparagraph
  \renewcommand{\subparagraph}{
    \@ifstar
      \xxxSubParagraphStar
      \xxxSubParagraphNoStar
  }
  \newcommand{\xxxSubParagraphStar}[1]{\oldsubparagraph*{#1}\mbox{}}
  \newcommand{\xxxSubParagraphNoStar}[1]{\oldsubparagraph{#1}\mbox{}}
\fi
\makeatother

\usepackage{longtable,booktabs,array}
\usepackage{calc} 
\usepackage{etoolbox}
\makeatletter
\patchcmd\longtable{\par}{\if@noskipsec\mbox{}\fi\par}{}{}
\makeatother
\IfFileExists{footnotehyper.sty}{\usepackage{footnotehyper}}{\usepackage{footnote}}
\makesavenoteenv{longtable}
\usepackage{graphicx}
\makeatletter
\def\maxwidth{\ifdim\Gin@nat@width>\linewidth\linewidth\else\Gin@nat@width\fi}
\def\maxheight{\ifdim\Gin@nat@height>\textheight\textheight\else\Gin@nat@height\fi}
\makeatother
\setkeys{Gin}{width=\maxwidth,height=\maxheight,keepaspectratio}
\makeatletter
\def\fps@figure{htbp}
\makeatother

\makeatletter
\@ifpackageloaded{caption}{}{\usepackage{caption}}
\AtBeginDocument{%
\ifdefined\contentsname
  \renewcommand*\contentsname{Table of contents}
\else
  \newcommand\contentsname{Table of contents}
\fi
\ifdefined\listfigurename
  \renewcommand*\listfigurename{List of Figures}
\else
  \newcommand\listfigurename{List of Figures}
\fi
\ifdefined\listtablename
  \renewcommand*\listtablename{List of Tables}
\else
  \newcommand\listtablename{List of Tables}
\fi
\ifdefined\figurename
  \renewcommand*\figurename{Figure}
\else
  \newcommand\figurename{Figure}
\fi
\ifdefined\tablename
  \renewcommand*\tablename{Table}
\else
  \newcommand\tablename{Table}
\fi
}
\@ifpackageloaded{float}{}{\usepackage{float}}
\floatstyle{ruled}
\@ifundefined{c@chapter}{\newfloat{codelisting}{h}{lop}}{\newfloat{codelisting}{h}{lop}[chapter]}
\floatname{codelisting}{Listing}

\makeatother
\makeatletter
\@ifpackageloaded{caption}{}{\usepackage{caption}}
\@ifpackageloaded{subcaption}{}{\usepackage{subcaption}}
\makeatother

\ifLuaTeX
  \usepackage{selnolig}  
\fi
\usepackage[]{natbib}
\usepackage{bookmark}

\IfFileExists{xurl.sty}{\usepackage{xurl}}{} 
\hypersetup{
    colorlinks=true,
    citecolor=blue,
    linkcolor=blue,
    urlcolor=blue
}

\begin{document}

\addtocontents{toc}{\protect\setcounter{tocdepth}{-1}}

\def\spacingset#1{\renewcommand{\baselinestretch}
{#1}\small\normalsize} \spacingset{1.70}

\title{\bf  Marginal Maximum Likelihood Estimation and Asymptotic Theory for Latent Variable Models in High Dimensions}
  \author{Chengyu Cui and Gongjun Xu\hspace{.2cm}\\
    Department of Statistics, University of Michigan}
    \date{}
  \maketitle
\vspace{0.5em}

\spacingset{1.15}
\begingroup
\setlength{\leftmargini}{5em}%
\begin{abstract}

This work addresses a longstanding gap in the statistical foundations of marginal maximum likelihood estimation for high-dimensional latent variable models.
Marginal maximum likelihood estimation is widely used to fit latent variable models across the social sciences, ecology, and machine learning. Despite its broad use, rigorous asymptotic theory for nonlinear models remains limited when both the sample size and the number of observed variables diverge.
The gap arises largely from the fact that integration over the latent variables creates a nonlinear objective that tightly couples the high-dimensional model parameters.
To address this issue, we first show that the marginal likelihood exhibits multiple nearly flat directions even at the true parameter, in contrast to the behavior in the fixed-dimensional case. Building on this geometric characterization, we develop new techniques to establish consistency and asymptotic normality for the marginal estimator of the high-dimensional parameters. For the latent variables, we provide frequentist and Bayesian uncertainty quantification, proving asymptotic normality of the maximum a posteriori estimator and a Bernstein–von Mises-type result for the plug-in posterior. Together, these results provide rigorous foundations for marginal estimation and latent-variable inference in high-dimensional models.
\end{abstract}
\endgroup
\vspace{0.5em}

\spacingset{1}
{\it Keywords: } {Marginal likelihood; random effects; asymptotic normality; posterior inference.}

\spacingset{1.19}

\section{Introduction}\label{sec_intro}
Latent variable models~\citep{skrondal2004generalized,bartholomew2011latent} provide a powerful statistical framework for representing dependence in high-dimensional observations through a small number of unobserved variables. They specify a data-generating process in which a high-dimensional observation $\bX = (X_1,\ldots,X_q)^\T$ is generated conditionally on a latent variable $\bU\in\RR^K$, with $K$ usually much smaller than $q$. Parameterized by $\btheta = (\btheta_1^\T,\dots,\btheta_q^\T)^\T$, the model takes the form
\begin{equation}\label{eq_glm_link}
    \left\{\begin{aligned}&\bU\sim\PP_U,\\& X_j\mid \bU \;{\sim}\;\; p_j\big\{\cdot\mid \eta(\bU;\btheta_j)\big\}\quad\text{independently for}\quad j=1,\dots,q\end{aligned}\right.
\end{equation}
where $\PP_U$ denotes the latent distribution, $\eta(\bU;\btheta_j)$ is a linear function of $\bU$ parameterized by $\btheta_j$, and $p_j(\cdot\mid\cdot)$ is a specified conditional density or mass function allowed to vary across $j$. This framework has received substantial attention across many fields:
\begin{enumerate}[leftmargin=0.4cm]
\item\; In the social sciences, latent variable models are widely used. In psychological and educational assessment, examples include item response theory and factor analysis, where the latent variables represent unobserved abilities, traits, or attitudes~\citep{holland1990dutch,rizopoulos2008generalized,reckase2009}. Closely related latent-variable formulations also appear in political science, where latent positions summarize legislators' ideological preferences~\citep{treier2008democracy}. For network and social-interaction data, latent variables are used as node positions or memberships to explain network formation and dependence~\citep{hoff2002latent,bickel2013asymptotic}; 
\item\; In ecology, latent variables are useful for capturing dependence among species or sampling sites in joint species distribution  models~\citep{hui2015model,warton2015so,kidzinski2022generalized}. Site-level latent scores provide low-dimensional representations of unmeasured ecological traits that jointly shape species abundances. This representation supports model-based ordination, comparison and clustering of sampling sites, and identification of species characteristic of different ecological regimes;
\item\; In machine learning, latent variable models are popular in learning low-dimensional representations. Notable examples include variational autoencoders~\citep{kingma2013auto,makhzani2015adversarial}, deep generative models~\citep{salakhutdinov2015learning}, and causal representation learning~\citep{scholkopf2021toward,wang2024desiderata}. These models usually posit that high-dimensional observations arise from a comparatively low-dimensional set of latent variables through a complicated, often nonlinear map, and their inference often requires marginalizing over unobserved variables.
\end{enumerate}
Despite differences in their domain-specific formulations, these models share several central features, including a low-dimensional latent representation underlying high-dimensional data, typically nonlinear observation models, rotational nonidentifiability, and objective functions obtained by integrating over the latent variables. The formulation in
\eqref{eq_glm_link} provides a common abstraction of these features.

Marginal likelihood estimation is one of the most popular approaches to fitting latent variable models~\citep{skrondal2004generalized,bartholomew2011latent}. Suppose we observe $n$ independent samples $\bX_1,\dots,\bX_n$ from \eqref{eq_glm_link}, where $\bX_i=(X_{i1},\dots,X_{iq})^\T$, and suppose that the latent distribution $\PP_U$ admits a density $\pi(\bu)$. Conditional on $\{\bU_i\}_{i\le n}$, assume the variables $X_{i1},\dots,X_{iq}$ are independent and follow \eqref{eq_glm_link}. The negative marginal log-likelihood is
\begin{equation}
    \cL(\btheta\mid\bX) \; = \; - \sum_{i=1}^n \log \int_{\RR^K} \prod_{j=1}^qp_j\big\{X_{ij}\mid\eta(\bu;\btheta_j)\big\}\pi(\bu)d\bu.\label{eq_marginal_likelihood}
\end{equation} Here, $\bu$ is the integration variable and the argument of $\eta(\cdot ;\btheta_j)$.
Except in special conjugate or Gaussian cases, the integrals in \eqref{eq_marginal_likelihood} have no closed form. One immediate challenge associated with marginal estimation is thus computational. This has motivated a large literature on numerical and approximate methods, including EM algorithms~\citep{dempster1977maximum,bock1981marginal}, Laplace approximation~\citep{breslow1993approximate,huber2004estimation}, and adaptive quadrature~\citep{moustaki2000generalized,Bianconcini2012AsymptoticPO}. More recent work has developed scalable variational and stochastic-optimization methods for fitting latent variable models in high-dimensional settings~\citep[e.g.][]{cai2010high,hui2017variational,oka2024boosting}.



Despite these developments in computational methods, the theoretical properties of the marginal maximum likelihood estimator remain much less understood.
In fact, in regimes where both $n$ and $q$ diverge, even the consistency of marginal maximum likelihood estimation has not been established, and the estimator's practical performance is supported largely by empirical evidence~\citep{baker2004item,reckase2009}. Related work avoids these integrals by treating the latent variables as fixed parameters, thereby addressing a different estimation problem~\citep{bai2003inferential,chen2019joint,Wang2018Maximum}.
Moreover, uncertainty quantification for the model parameters commonly relies on the Fisher information matrix~\citep{oakes1999direct,bartholomew2011latent}. In high dimensions, evaluating it becomes computationally infeasible because the matrix dimension grows with $q$ and its evaluation involves $n$ integrals without convenient closed forms. Statistical guarantees for latent-variable recovery are also scarce. In psychometrics, it has long been conjectured that, as the number of items $q$ grows, the posterior distribution is approximately normal~\citep{holland1990dutch}, and existing results address it only partially under the impractical assumption that the model parameters are known~\citep{chang1993asymptotic,kornely2022asymptotic}.

Establishing statistical properties for marginal maximum likelihood estimation in high dimensions is challenging for several reasons. First, the objective to minimize is a sum of $n$ integrals as in~\eqref{eq_marginal_likelihood}, each of which generally lacks a closed form when $p_{j}(\cdot\mid\cdot)$ is nonlinear, as is typical in many applications.
Related likelihood integrals arise in generalized linear mixed models (GLMMs)~\citep{breslow1993approximate,mcculloch2008generalized}, for which substantial asymptotic theory has been developed. For instance, \citet{jiang2022usable}, \citet{lyu2024increasing}, \citet{jiang2025asymp} and related work have resolved several longstanding questions concerning likelihood-based inference for GLMMs. By contrast, the problem considered here is quite different.
Unlike GLMMs where the parameter dimension is typically fixed, the parameter dimension in our setting grows with $q$. Moreover, each integral in \eqref{eq_marginal_likelihood} couples the entire parameter vector through the nonlinear product $\prod_{j=1}^qp_j\big\{X_{ij}\mid\eta(\bu;\btheta_j)\big\}$, which prevents the objective from being decomposed into lower-dimensional estimation problems.

Beyond the difficulty of integral approximation, a second and more fundamental challenge lies in the geometry of the marginal log-likelihood. A standard route to asymptotic theory for an estimator $\hat\btheta$ begins with a Taylor expansion around the true parameter $\btheta^*$:
    \begin{equation}
          \partial_{\btheta}\cL(\hat\btheta\mid\bX) \; - \; \partial_{\btheta}\cL(\btheta^*\mid\bX) \; \approx\; \partial^2_{\btheta\btheta}\cL(\btheta^*\mid \bX)\ (\hat\btheta - \btheta^*)  ,\label{eq_expand_informal}
    \end{equation}
    and then inverts the Hessian matrix $\partial^2_{\btheta\btheta}\cL(\btheta^*|\bX) $ to control $\hat\btheta - \btheta^*$~\citep{van2000asymptotic}. In our setting, this route breaks down. First, a Laplace approximation of the Hessian yields a leading block-diagonal term, together with additional terms that collectively form a large dense matrix whose operator norm is comparable to that of the leading term. As a result, 
    the dense and non-negligible off-diagonal terms leave no mathematically convenient way to invert the full Hessian. Second, as we show in Lemma~\ref{lemma_convexity} below, the Hessian has an ill-conditioned spectrum as $q$ grows. Its inverse therefore diverges in operator norm, preventing uniform control via direct Taylor expansion.



\subsection{Our Contributions}

To address these challenges, we develop new tools for analyzing the high-dimensional marginal likelihood and establish a comprehensive asymptotic theory for marginal maximum likelihood estimation. Our contributions are summarized as follows.


Our first step in overcoming these difficulties is to characterize the geometry of the marginal log-likelihood as the parameter dimension diverges. We identify the directions along which the curvature vanishes as the dimension grows, even at the true parameter; see Lemma~\ref{lemma_convexity} below. This implies that the marginal log-likelihood is not uniformly strongly convex in the high-dimensional regime, a feature fundamentally different from the low-dimensional settings considered in much of the existing literature~\citep{huber2004estimation,ma2010explicit}. At the same time, the problematic directions identified by this analysis provide crucial geometric insight that guides our subsequent arguments establishing consistency and asymptotic normality.



     We establish consistency of the marginal maximum likelihood estimator via a construction of an oracle auxiliary estimator. This estimator borrows a carefully controlled amount of information from the true parameter, which is designed based on the flat directions identified in the previous analysis. This auxiliary estimator restores well-behaved curvature along the path to the true parameter while remaining tightly aligned with the original marginal estimator. We thereby first establish consistency of the auxiliary estimator and then show that it is sufficiently close to the marginal maximum likelihood estimator. This proximity, in turn, implies consistency of the latter.
    
   We further derive asymptotic normality of the marginal maximum likelihood estimator. The major obstacle is that the marginal Hessian $\partial_{\btheta}^2\cL(\btheta\mid\bX)$ is dense, ill-conditioned, and does not admit a tractable inverse. We show that, to leading order, the dense component can be expressed as a Schur-complement term arising from the Hessian of the conditional likelihood, with the latent variables treated as parameters as well.
   This algebraic representation makes clear that the dense off-diagonal structure arises from the complex coupling between the model parameters and latent variables within each integral. 
   This also suggests a solution that rather than expanding the marginal score directly, we expand the scores of the conditional likelihood for the model parameters and latent variables together. This yields the estimator’s asymptotic distribution. Moreover, its asymptotic covariance matrix coincides with the inverse Fisher information in the oracle setting where the latent variables are observed, thereby establishing asymptotic efficiency.
   
   For latent-variable recovery, we establish statistical guarantees from frequentist and Bayesian perspectives. From the frequentist perspective, we study the mode of the plug-in posterior, constructed using the marginal maximum likelihood estimator, as a point estimator of the realized latent variable and establish its asymptotic normality. From the Bayesian perspective, we prove a Bernstein–von Mises-type result for the entire plug-in posterior. Together, these results provide the first theoretical guarantee for these two commonly used approaches to recovering the latent variables following marginal estimation  under practical assumptions.

The rest of the paper is organized as follows. Section~\ref{sec_setup} introduces the setup for the marginal maximum likelihood estimation. Section~\ref{sec_res} presents the main theoretical results, and Section~\ref{sec_proof} outlines the proof strategies. Sections~\ref{sec_simu} and~\ref{sec_data} present simulation studies and a data analysis, respectively, while Section~\ref{sec_discussion} concludes. The Supplementary Material contains additional numerical studies and complete proofs.

\noindent {\bf Notation.} We first define notation. For any positive integer $N$, let $[N]=\{1,\ldots,N\}$. For $a,b\in\RR$, let $a\vee b=\max(a,b)$ and $a\wedge b=\min(a,b)$. For sequences $a_n,b_n$, $a_n\asymp b_n$ means that their ratio is bounded above and away from zero, and $a_n\ll b_n$ means $a_n/b_n\to0$.
For a vector $\bx$, $\|\bx\|$ and $\|\bx\|_\infty$ denote its Euclidean and maximum norms. For a matrix $\bM$, $\|\bM\|$ and $\|\bM\|_{\Fn}$ denote its spectral and Frobenius norms, respectively. We write $\mathrm{col}(\bM)$ for its column space. For a square matrix $\bM$, write $\lambda_{\min}(\bM)$ and $\lambda_{\max}(\bM)$ for its smallest and largest eigenvalues. Write $\zero_d$ for the $d$-vectors of zeros and $\bI_d$ for the $d\times d$ identity matrix. Unless explicitly qualified, $\cOp(\cdot)$, $o_p(\cdot)$, and convergence in distribution refer to the joint law of the latent variables and responses.

\section{Marginal Maximum Likelihood Estimation}\label{sec_setup}

Throughout, $\bU_i\in\RR^K$ denotes the latent random variable associated with subject $i$, $\bu_i^*$ denotes its realized value, and $\bu=(u_1,\ldots,u_K)^\T\in\RR^K$ denotes a generic argument in a function or integral. We assume that $K$ is fixed and known. Write the linear predictor in \eqref{eq_glm_link} as
{\begin{equation*}
    \eta(\bu;\btheta_j) \; = \; \beta_j+\sum_{r=1}^K\gamma_{jr}u_r \; = \; \beta_j+\bgamma_j^\T\bu,
\end{equation*}}
where $\bgamma_j=(\gamma_{j1},\ldots,\gamma_{jK})^\T$, and $\btheta_j=(\beta_j,\bgamma_j^\T)^\T$. We collect the parameters as $\bbeta=(\beta_1,\ldots,\beta_q)^\T$, $\bGamma=(\bgamma_1,\ldots,\bgamma_q)^\T$, $\bTheta=(\bbeta,\bGamma)=(\btheta_1,\ldots,\btheta_q)^\T$, and $\btheta=(\btheta_1^\T,\ldots,\btheta_q^\T)^\T$.

For subject $i$, define the conditional log-likelihood at a variable value $\bu\in\RR^K$ as 
\begin{equation*}\ell_i(\bu;\btheta) \; = \; \sum_{j=1}^q\log p_{j}\{X_{ij}\mid\eta(\bu;\btheta_j)\}.\end{equation*} We suppress the dependence of $\ell_i(\cdot\, ;\, \cdot)$ on the observed data $\bX_i$ when it is clear from the context and use the subscript $i$ to emphasize this subject-specific conditional likelihood. 
Constructing the marginal likelihood requires specifying the latent distribution $\PP_{U}$, which is generally unknown in practice.
As noted by~\citet{ma2010explicit}, nonparametric estimators of $\PP_U$ converge slowly, and $\PP_U$ itself is not the primary object of inference. Marginal estimation therefore usually proceeds using a working prior, most commonly taken to be the standard Gaussian density~\citep{skrondal2004generalized,mcculloch2008generalized,bartholomew2011latent}.
We adopt this convention, denote the standard Gaussian density by $\pwk(\cdot)$, and consider the working marginal log-likelihood
\begin{equation*}
    \cL(\btheta) \; = \; -\sum_{i=1}^n\log\int_{\bu\in\RR^K}\exp\big\{\ell_i(\bu;\btheta)\big\}\pwk(\bu)d\bu .
\end{equation*}
Our theory leaves the true latent distribution $\PP_U$ unspecified, apart from the mild regularity conditions in Assumption~\ref{assump_standard_id} below. This relaxes the assumption adopted in much of the random-effects literature that restricts $\PP_U$ to be Gaussian~\citep[e.g.][]{bock1981marginal,huber2004estimation,jiang2022usable}. Our results thereby provide theoretical support for the practice of using a Gaussian working prior in the large-scale settings.
For notational simplicity, we continue to write $\cL(\btheta)$ for the working marginal likelihood.
We study the marginal estimator given by
\begin{align}
    \hat\btheta \;\in\; \mathop{\argmin}_{\btheta\in\Xi}\;\cL(\btheta),\label{eq_mmle_IC1}
\end{align}
where, with $\bu_i^{\star}(\btheta)=\mathop{\argmax}_{\bu\in\RR^K}\ell_i(\bu;\btheta)$, the feasible set $\Xi$ is defined as
\begin{align*}
    \Xi\; =\; \left\{\btheta\,:\,\|\btheta\|_{\infty}\le C_{\theta},\quad q^{-1/2}\sigma_{K+1}(\bTheta)\ge c_{\theta},\quad\max_{i\in[n]}\big\|\bu_i^{\star}(\btheta)\big\| \le C_u\sqrt{\log n}\,\right\},
\end{align*} for some constants $C_{\theta},c_{\theta},C_u$.
Here, $\sigma_{K+1}(\bTheta)$ denotes the $(K+1)$th largest singular value of $\bTheta$; $\bu_i^{\star}(\btheta)$ is the conditional mode around which $\exp\{\ell_i(\bu;\btheta)\}$ concentrates its mass in each integral of the marginal likelihood. 

The constraints defining $\Xi$ are imposed to exclude pathological parameters.
In nonlinear latent variable models, the log-likelihood can become nearly flat in the tails. For example, under the logistic link $p_j(X_{ij}\mid \eta) = \exp(X_{ij}\eta)/\{1+\exp(\eta)\}$, we have $-\partial_{\eta}^2\log p_j(X_{ij}\mid \eta) = \exp(\eta)/\{1+\exp(\eta)\}^2$, which decays exponentially as $|\eta|\to\infty$. The constraints $\|\btheta\|_\infty\le C_{\theta}$ and $\max_{i\in[n]}\|\bu_i^\star(\btheta)\|\le C_u\sqrt{\log n}$ therefore keep the predictors $\eta(\bu;\btheta_j)$ within a controlled region in which the objective can have informative curvature.
In addition, the constraint $\sigma_{K+1}(\bTheta)\ge c_\theta\sqrt q$ rules out nearly rank-deficient parameters. For theoretical purposes, the constants $C_{\theta}$, $c_{\theta}$, and $C_u$ may be chosen to satisfy the thresholds imposed by the regularity conditions; see Remark~\ref{remark_explain_xi} below. In practice, these constraints need not be explicitly enforced or, when imposed, can be chosen sufficiently loosely to remain inactive for any consistent estimator.
In what follows, we present some technical conditions.


\begin{assumption}\label{assump_standard_id}
The random variables $\bU_i$ are independently and identically distributed with sub-Gaussian norm bounded by a constant $M_u$. Moreover, $\EE[\bU_i] = \zero_K$ and $\bSigma_u^*:=\EE[\bU_i\bU_i{}^\T]=\bI_K$.
\end{assumption}
Assumption~\ref{assump_standard_id} imposes the standard location and scale normalizations for identification. In Section~\ref{sec_res_pract_id} of the Supplementary Material, we further extend our results to the setting with correlated latent variables by relaxing the constraint $\bSigma_u^* = \bI_K$. The sub-Gaussian requirement covers the Gaussian latent distributions~\citep{bock1981marginal,huber2004estimation,jiang2022usable} and bounded-support latent distributions~\citep{chen2019joint,chen2020structured}.   Under Assumption~\ref{assump_standard_id}, for some constant $C>0$,
\begin{equation*}
     \PP\big\{\max_{i\in[n]} \|\bU_i\|\le CM_u\sqrt{\log n}\big\} \to 1,\quad\text{ as }\quad n\to\infty.
\end{equation*}
We denote this event by $\mathcal E_\eta$ and conduct all subsequent analysis on $\mathcal E_\eta$, whose probability tends to one. 
Denote the true parameters by $\btheta^* = (\btheta_1^*{}^{\T},\dots,\btheta_q^*{}^\T)^\T$, where $\btheta_j^* = (\beta_j^*,\bgamma_j^*{}^\T)^\T$ for $j\in[q]$. The following assumption imposes boundedness and nondegeneracy conditions standard in the latent-variable literature~\citep{bai2003inferential,bai2012statistical,chen2019joint,Wang2018Maximum}.

\begin{assumption}
	\label{assumption_psd_covariance} There exists a constant $M_{\theta}$ such that $\|\btheta^*\|_{\infty}\le M_{\theta}$. The limit $\bSigma_{\theta}^* = \lim_{q \rightarrow \infty}q^{-1}\sum_{j=1}^q\btheta_j^*{\btheta_j^*}^\T$ exists and is positive definite.
\end{assumption}
Assumptions~\ref{assump_standard_id} and~\ref{assumption_psd_covariance} together imply that under $\cE_{\eta}$, $\eta_{ij}^* := \eta(\bu_i^*;\btheta_j^*)$ satisfy
\begin{equation*}
    \max_{i\in[n],j\in[q]}|\eta_{ij}^*|\; \le \; C_{\eta}\sqrt{\log n},
\end{equation*}for some constant $C_{\eta}$.
Here $\bu_i^*$ denotes the unobserved realization of $\bU_i$ underlying the observed response vector $\bX_i$. The superscript $^*$ marks either a realized latent value, as in $\bu_i^*$, or the true value of a model parameter, as in $\btheta^*$.
Let $\ell_{ij}(\eta) = \log p_j(X_{ij}\mid \eta)$, where we again suppress its dependence on $X_{ij}$ when it is clear from context. We introduce the following condition on $\ell_{ij}(\eta)$.

\begin{assumption}
\label{assumption_smoothness}
{Each $\ell_{ij}(\cdot)$ is concave on $\mathbb R$. There exist constants $c_0,C_0,b_{L},b_{U}>0$ such that, for every $\eta\in\mathbb R$, $-\ell_{ij}''(\eta)\ge c_0\exp(-b_{L}|\eta|) $ and $|\ell_{ij}^{(s)}(\eta)|
\le C_0\exp(b_{U}|\eta|)$ for $s=2,3,4$. Moreover, conditionally on $\{U_i\}_{i=1}^n$, the variables
$\ell_{ij}'(\eta_{ij}^*)$ are independent and mean zero, and have sub-exponential norms bounded by $C_0\exp\{b_{U}|\eta_{ij}^*|/2\}$.}
\end{assumption}

Assumption~\ref{assumption_smoothness} allows the curvature and higher-order derivatives of $\ell_{ij}(\eta)$ to diverge as $|\eta|$ increases, but restricts the rate to be at most exponential. It is satisfied by many commonly used response models, including Gaussian responses, Bernoulli responses with a logistic link, and Poisson responses with a log link, among others. To better characterize the exponential tail, define the deterministic sequences
\begin{equation*}
    B_n \; = \;1\vee C_0\exp(b_U C_\eta\sqrt{\log n}),\qquad b_n \; = \;1\wedge c_0\exp(-b_L C_\eta\sqrt{\log n}).
\end{equation*}
Then, uniformly over $|\eta|\le C_\eta\sqrt{\log n}$, we have $b_n\le -\ell_{ij}''(\eta)\le B_n$, $\max_{s=3,4}|\ell_{ij}^{(s)}(\eta)|\le B_n$, and the conditional sub-exponential norm of $\ell_{ij}'(\eta_{ij}^*)$ is of order $\sqrt{B_n}$. 
We summarize the combined effect of $B_n$ and $b_n$ through the effective noise-to-information ratio $\kappa_n := {\sqrt{B_n}}/{b_n}$. 
Because $\exp\{C\sqrt{\log n}\}=n^{C/\sqrt{\log n}}=n^{o(1)}$, $B_n$, $b_n^{-1}$, and $\kappa_n$ grow more slowly than any fixed polynomial power of $n$. To ensure that these quantities do not alter the polynomial orders of the rates expressed in terms of both $n$ and $q$, we impose the following condition.
\begin{assumption}\label{assump_scaling}
     For fixed constants $c,C>0$, assume $n^c\le q\le n^C$.
\end{assumption}
Assumption~\ref{assump_scaling} implies that $B_n,\; b_n^{-1},\;\kappa_n = o\{ (n\wedge q)^a\}$ for every fixed $a>0$. 
This assumption specifies the high-dimensional regime considered in this paper, allowing a broad range of polynomial growth rates. Notably, it overlaps the region $q=o(n^{1/3})$, a sufficient growth condition under which the estimation error is asymptotically normal~\citep{portnoy1985asymptotic,he2000parameters}.



   \begin{remark}\label{remark_explain_xi}
       For theoretical analysis, we need the constraints defining the feasible set $\Xi$ to satisfy
    \[C_{\theta}>M_{\theta},\quad 
    c_{\theta}<\sqrt{\lambda_{\min}(\bSigma_{\theta}^*)},\quad
   \text{ and }\quad C_u>CM_u .\]
In practice, these constants may be taken sufficiently loose. 
We emphasize that, to the best of our knowledge, comparable theoretical guarantees for marginal likelihood estimation in the considered high-dimensional setup are not available. In light of the preceding discussion, the proposed feasible set provides a reasonable parameter space and, as established below, defines a region on which asymptotic theory can be developed for the resulting estimator. 
\end{remark}

We present two useful preliminary results. The first shows that $\Xi$ contains a local neighborhood of $\btheta^*$, and the second characterizes the local curvature of the marginal log-likelihood function.

\medskip
\noindent\textbf{Localization. } Define a local region around the true parameter $\btheta^*$ by
\begin{equation}\label{eq_define_loca_region}\cB_{\epsilon}(\btheta^*) \; :=\; \big\{\btheta:\|\btheta\|_{\infty}\le C_{\theta},\;q^{-1/2}\|\btheta-\btheta^*\|\le \epsilon\, \big\}.\end{equation} 
The next result shows that, for sufficiently small $\epsilon$, every $\btheta\in\cB_{\epsilon}(\btheta^*)$ also belongs to $\Xi$ with probability tending to one.
   \begin{lemma}\label{lemma_neighbour_control}
    Under Assumptions~\ref{assump_standard_id}--\ref{assumption_smoothness} and $\kappa_n^2\log n = o(q)$, suppose that $\btheta$ satisfies $\|\btheta\|_{\infty}\le C_{\theta}$ and $q^{-1/2}\|\btheta-\btheta^*\|\le \epsilon$ for some $\epsilon\ll b_n/(B_n\sqrt{\log n})$. Then $\bu_i^{\star}(\btheta)= \arg\max_{\bu\in\RR^K}\ell_i(\bu;\btheta)$ exists for each $i\in[n]$ and
    \begin{align*}
        \Big(n^{-1}\sum_{i=1}^n\big\|\bu_i^{\star}(\btheta) - \bu_i^*\big\|^2\Big)^{1/2} &= \cOp\big(\epsilon B_n/b_n + \kappa_n/\sqrt{q}\big), \\
        \max_{i\in[n]}\big\|\bu_i^{\star}(\btheta) - \bu_i^*\big\|_{\infty} &= \cOp\big(\sqrt{\log n}\{\epsilon B_n/b_n + \kappa_n/\sqrt{q}\}\big).
    \end{align*}
    Thus, $\btheta\in\Xi$ with probability tending to one as $n,q\to\infty$. 
\end{lemma}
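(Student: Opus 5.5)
The plan is to treat, for each $i$, the map $\bu\mapsto\ell_i(\bu;\btheta)$ as a concave $K$-dimensional M-estimation problem with $q$ summands. We then compare its maximizer with $\bu_i^*$ through a one-step (Newton-type) argument on a small ball around $\bu_i^*$.

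First, existence. By Assumption~\ref{assumption_smoothness}, $\ell_i(\cdot;\btheta)$ is concave. If we show that its gradient on the sphere $\{\bu:\|\bu-\bu_i^*\|=r_i\}$ points inward, i.e. $(\bu-\bu_i^*)^\T\partial_\bu\ell_i(\bu;\btheta)<0$, then concavity forces a maximizer inside the ball. Strict concavity near the ball then makes it unique, so $\bu_i^\star(\btheta)$ exists and satisfies $\|\bu_i^\star(\btheta)-\bu_i^*\|\le r_i$.

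Second, the key expansion. For $\bu$ in the ball, write
\[
\partial_\bu\ell_i(\bu;\btheta)=\sum_j\ell_{ij}'(\eta(\bu;\btheta_j))\bgamma_j ,
\]
and expand $\eta(\bu;\btheta_j)$ around $\eta_{ij}^*$:
\[
\eta(\bu;\btheta_j)-\eta_{ij}^*=(\beta_j-\beta_j^*)+(\bgamma_j-\bgamma_j^*)^\T\bu_i^*+\bgamma_j^\T(\bu-\bu_i^*).
\]
The gradient then splits into three pieces:
\begin{itemize}
\item a noise term $\bs_i=\sum_j\ell_{ij}'(\eta_{ij}^*)\bgamma_j$;
\item a parameter-perturbation term of size at most $B_n\sum_j|\btheta_j-\btheta_j^*|\,(1+\|\bu_i^*\|)\,\|\bgamma_j\|\lesssim B_n\sqrt{\log n}\,\sqrt q\,\|\btheta-\btheta^*\|$ by Cauchy--Schwarz;
\item a curvature term $-\bH_i(\bu-\bu_i^*)$, with $\bH_i=\sum_j(-\ell_{ij}'')\bgamma_j\bgamma_j^\T$.
\end{itemize}

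For the curvature we need $\lambda_{\min}(\bH_i)\gtrsim qb_n$. This follows because $-\ell''\ge b_n$ on the relevant range of $\eta$, and $q^{-1}\sum_j\bgamma_j\bgamma_j^\T$ is close to the $\bgamma$-block of $\bSigma_\theta^*$, which is positive definite. The closeness holds since
\[
q^{-1}\Big\|\sum_j(\bgamma_j\bgamma_j^\T-\bgamma_j^*\bgamma_j^{*\T})\Big\|\lesssim\epsilon ,
\]
which is small. Keeping all predictors in $|\eta|\le C_\eta\sqrt{\log n}$, up to a constant enlargement, uses the event $\cE_\eta$, $\|\btheta\|_\infty\le C_\theta$, and the assumption that $r_i$ is $o(1)$. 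The condition $\epsilon\ll b_n/(B_n\sqrt{\log n})$ is exactly what makes the remainder from third derivatives negligible against $qb_n$.

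Solving the linearized equation gives
\[
\|\bu_i^\star-\bu_i^*\|\lesssim \frac{\|\bs_i\|}{qb_n}+\frac{B_n\sqrt{\log n}}{b_n}\,\frac{1}{\sqrt q}\sum_j\|\btheta_j-\btheta_j^*\|\cdot\frac{1}{\sqrt q}.
\]
The perturbation part is cleaner in averaged form. Writing $\bDelta_j=\btheta_j-\btheta_j^*$, the drift is $\sum_j\ell_{ij}''\bgamma_j\bDelta_j^\T(1,\bu_i^{*\T})^\T$, whose norm is at most $B_n\sqrt q\,\|\bDelta\|\,(1+\|\bu_i^*\|)$. Dividing by $qb_n$ gives $\epsilon(B_n/b_n)(1+\|\bu_i^*\|)$. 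Averaging over $i$ uses $n^{-1}\sum_i\|\bu_i^*\|^2=\cOp(1)$, which yields the $\epsilon B_n/b_n$ rate in root-mean-square, while the maximum over $i$ picks up the $\sqrt{\log n}$ factor.

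For the noise, conditionally on $\{\bU_i\}$, $\bs_i$ is a sum of $q$ independent mean-zero sub-exponential terms with norm of order $\sqrt{B_n}$. Bernstein's inequality gives $\|\bs_i\|=\cOp(\sqrt{qB_n})$ in mean square, and a union bound gives $\max_i\|\bs_i\|=\cOp(\sqrt{qB_n\log n})$, using $\log n\lesssim q$. Dividing by $qb_n$ produces $\kappa_n/\sqrt q$, with a $\sqrt{\log n}$ factor for the maximum. The condition $\kappa_n^2\log n=o(q)$ guarantees that this is $o(1)$, so the ball radius is indeed small.

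Finally, $\bu_i^\star(\btheta)$ and $\btheta$ lie in $\Xi$:
\begin{itemize}
\item For the mode constraint, $\max_i\|\bu_i^\star\|\le\max_i\|\bu_i^*\|+o_p(1)\le CM_u\sqrt{\log n}+o_p(1)<C_u\sqrt{\log n}$, by Remark~\ref{remark_explain_xi}.
\item For the rank constraint, Weyl's inequality gives $q^{-1/2}\sigma_{K+1}(\bTheta)\ge q^{-1/2}\sigma_{K+1}(\bTheta^*)-\epsilon\to\sqrt{\lambda_{\min}(\bSigma_\theta^*)}-o(1)>c_\theta$.
\end{itemize}

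The main obstacle is the uniform-in-$i$ control in the maximum bound. The parameter perturbation is measured only in aggregate $\ell_2$ norm, while the inward-gradient argument must hold simultaneously for all $n$ subjects. This requires the crude Cauchy--Schwarz drift bound together with the union bound for the noise, and a careful check that the enlarged $\eta$-range keeps $B_n$ and $b_n$ valid up to constants.
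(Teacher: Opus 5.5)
Your proof is correct and uses the same decomposition as the paper's. The score at $\bu_i^*$ splits into a conditionally mean-zero noise part of size $\sqrt{qB_n}$ and a parameter drift of size $B_n(1+\|\bu_i^*\|)\sqrt{q}\,\|\btheta-\btheta^*\|$. Both are divided by a curvature of order $qb_n$, obtained from Weyl's inequality on $q^{-1}\sum_j\bgamma_j\bgamma_j^\T$. The difference is in the localization. The paper applies the integral mean-value theorem along the segment from $\bu_i^*$ to $\bu_i^\star(\btheta)$ and takes existence of $\bu_i^\star(\btheta)$ for granted. It cites only continuity and concavity, which alone do not yield a maximizer on $\RR^K$. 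It then rules out $\|\bu_i^\star(\btheta)\|_\infty>M\sqrt{\log n}$ by a contradiction argument at the point where that segment leaves the $\ell_\infty$-box. Your inward-gradient condition on the sphere of radius $r_i$ gives existence, uniqueness and the radius bound in one step. On existence it is the more rigorous of the two.

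Three small corrections.

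First, you need no third derivatives. Write $\ell_{ij}'(\eta)-\ell_{ij}'(\eta_{ij}^*)=\bar\ell_{ij}''\,(\eta-\eta_{ij}^*)$ with $-\bar\ell_{ij}''\in[b_n,B_n]$ along the segment; this handles drift and curvature exactly. A linearization with $\bH_i$ frozen at $\eta_{ij}^*$ would require $B_nr_i\ll b_n$, which the hypotheses do not give. The actual role of $\epsilon\ll b_n/(B_n\sqrt{\log n})$ is to make $\max_i r_i=o_p(1)$, so every ball stays where the bounds $b_n,B_n$ apply.

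Second, enlarging the $\eta$-range changes $b_n$ and $B_n$ by factors $e^{O(\sqrt{\log n})}$, not by constants. As the paper does, define them with a large enough $C_\eta$ from the outset.

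Third, your noise term $\bs_i=\sum_j\ell_{ij}'(\eta_{ij}^*)\bgamma_j$ has $\btheta$-dependent weights, so Bernstein's inequality applies only to a non-random $\btheta$. That covers the statement as written. The paper instead writes $\bgamma_j=\bgamma_j^*+(\bgamma_j-\bgamma_j^*)$ and bounds the cross term by $\|(\Lb_\eta)_{i,}\|\,\|\btheta-\btheta^*\|$, which contributes only $\epsilon\sqrt{B_n}/b_n$. This makes all bounds uniform over $\cB_\epsilon(\btheta^*)$. That uniformity is how the lemma is used later: at data-dependent points such as $\hat\btheta^*$, and in the proofs of Lemmas~\ref{lemma_convexity2} and~\ref{lemma_aux_estimator_property}. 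You should adopt the same split.
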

Lemma~\ref{lemma_neighbour_control} implies that, for sufficiently small $\epsilon$, within the local neighborhood $\cB_{\epsilon}(\btheta^*)$, the third constraint defining $\Xi$ is inactive with probability tending to one. Thus, if the estimator is shown to lie within this neighborhood, the constraint can be omitted in the subsequent analysis. Indeed, the role of this constraint is precisely to enable this localization. Restriction of this form is also essential because, outside the controlled region, the likelihood curvature may decay rapidly, potentially leading to many candidate solutions with little informative local curvature.

\medskip
\noindent\textbf{Local Geometry.}
Let $\cH_{\theta\theta}(\btheta) := \partial_{\btheta}^2 \cL(\btheta)$ be the Hessian matrix of the negative marginal log-likelihood.
The following lemma shows that $n^{-1}\cH_{\theta\theta}(\btheta)$ has vanishing eigenvalues at $\btheta^*$, and characterizes the associated weak-curvature directions. Let $\mathrm{vec}(\bM)$ denote the vector obtained by stacking the columns of $\bM$.
\begin{lemma}\label{lemma_convexity}
    Suppose Assumptions~\ref{assump_standard_id}--\ref{assump_scaling} hold. Then there exist some constants $\gamma,c>0$ such that, for any $\btheta\in\cB_{\epsilon}(\btheta^*)$ with $\epsilon\ll (\log n)^{-1}\wedge b_n^3/(B_n^3\sqrt{\log n})$ and any $\bV  \in\RR^{(K+1)\times q}$ with $\bv = \mathrm{vec}(\bV)$, it holds that
    \begin{equation}\label{eq_hessian_lower_bound}
        \PP\left\{n^{-1}\bv^\T\cH_{\theta\theta}(\btheta)\bv \; \ge \; 
        \gamma b_n\|\bV\|_{\Fn}^2 - cb_nq^{-1}\big\| \bV\bGamma^*  \big\|_{\Fn}^2\right\}\; \to \; 1,\; \text{ as }\; n,q\to\infty.
    \end{equation}Furthermore,  $\big|\lambda_{\min}\{n^{-1}\cH_{\theta\theta}(\btheta^*)\}\big| = \cOp\big(B_n^{5/2}b_n^{-2}/\sqrt{n\wedge q}\big)$.
\end{lemma}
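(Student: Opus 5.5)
The plan is to approximate the marginal Hessian by Laplace's method around each conditional mode $\bu_i^{\star}(\btheta)$, which Lemma~\ref{lemma_neighbour_control} supplies, and to reduce it to a Schur complement of the joint conditional Hessian. Write $\ell_i(\bu;\btheta)$ and set $\bH^{(i)}_{\theta\theta}=-\partial^2_{\btheta\btheta}\ell_i$, $\bH^{(i)}_{\theta u}=-\partial^2_{\btheta\bu}\ell_i$ and $\bH^{(i)}_{uu}=-\partial^2_{\bu\bu}\ell_i$, all evaluated at $(\bu_i^\star(\btheta),\btheta)$. The marginal Hessian for subject $i$ equals the posterior mean of $-\partial^2_{\btheta\btheta}\ell_i$ minus the posterior covariance of $\partial_{\btheta}\ell_i$ under the density proportional to $\exp\{\ell_i(\bu;\btheta)\}\pwk(\bu)$. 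This posterior is concentrated at scale $q^{-1/2}$, since $\bH^{(i)}_{uu}\asymp q$ with eigenvalues between $qb_n$ and $qB_n$ by the $\sigma_{K+1}$ constraint. A second-order Laplace expansion therefore gives
\[
\partial^2_{\btheta\btheta}\cL(\btheta)=\sum_{i=1}^n\Big\{\bH^{(i)}_{\theta\theta}-\bH^{(i)}_{\theta u}\big(\bH^{(i)}_{uu}\big)^{-1}\bH^{(i)}_{u\theta}\Big\}+\bR,
\]
with a remainder $\bR$ controlled by third and fourth derivatives. We bound $\|\bR\|/n$ using $B_n$, the localization $\|\bu_i^\star\|\lesssim\sqrt{\log n}$, and $\epsilon\ll(\log n)^{-1}\wedge b_n^3/(B_n^3\sqrt{\log n})$. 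Here the score term $\partial_{\btheta}\ell_i$ at the mode carries random fluctuations, handled via the sub-exponential condition in Assumption~\ref{assumption_smoothness}. The block $\bH^{(i)}_{\theta\theta}$ is block diagonal with $j$th block $-\ell_{ij}''\,(1,\bu_i^{\star\T})^\T(1,\bu_i^{\star\T})$.

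For the lower bound \eqref{eq_hessian_lower_bound}, fix $\bv=\mathrm{vec}(\bV)$ with columns $\bv_j$, and write $\tilde\bu_i=(1,\bu_i^{\T})^\T$ and $w_{ij}=-\ell_{ij}''(\eta_{ij})$. The quadratic form of the main term is
\[
\sum_i\Big\{\sum_j w_{ij}(\bv_j^\T\tilde\bu_i)^2-\bs_i^\T\bH_{uu}^{(i)-1}\bs_i\Big\},\qquad \bs_i=\sum_j w_{ij}\bgamma_j(\bv_j^\T\tilde\bu_i)+(\text{score terms}).
\]
This is the residual sum of squares, in the $w_{ij}$-weighted norm, of the vector $(\bv_j^\T\tilde\bu_i)_j$ after projecting onto the $K$-dimensional span $\{(\bgamma_j^\T\ba)_j:\ba\in\RR^K\}$. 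Dropping the projection gives
\[
\sum_i\sum_j w_{ij}(\bv_j^\T\tilde\bu_i)^2\ge b_n\sum_j\bv_j^\T\Big(\sum_i\tilde\bu_i\tilde\bu_i^\T\Big)\bv_j\ge \gamma n b_n\|\bV\|_{\Fn}^2,
\]
because $n^{-1}\sum_i\tilde\bu_i\tilde\bu_i^\T\to\mathrm{diag}(1,\bI_K)$ by Assumption~\ref{assump_standard_id} and the closeness of $\bu_i^\star$ to $\bu_i^*$ from Lemma~\ref{lemma_neighbour_control}. For the projection, Cauchy--Schwarz gives
\[
\bs_i^\T\bH_{uu}^{(i)-1}\bs_i\le (qb_n)^{-1}\Big\|\sum_j w_{ij}\bgamma_j\bv_j^\T\tilde\bu_i\Big\|^2.
\]
The weights are not constant, so I would compare with $\bgamma_j^*$ and bound this by a multiple of $b_nq^{-1}\|\bV\bGamma^*\|_{\Fn}^2$ plus lower-order terms of size $\epsilon$. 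The main obstacle I foresee is that $w_{ij}$ ranges over $[b_n,B_n]$, so the projection loss is not uniformly a multiple of $b_n$. To handle this, I would keep the residual form exact and use the fact that the weighted projection removes at most the component lying in the weighted span. I would then bound that component through its unweighted counterpart at a cost of $B_n/b_n$ factors, absorbing them into the choice of constants and the $\epsilon$ rate. This is the step most likely to need care.

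For the eigenvalue claim, take $\bV=\bA\,[\zero_q,\bGamma^*]^\T$ with $\bA$ chosen as the infinitesimal generator of the rotational and affine non-identifiability $\bu\mapsto\bu+\delta\ba$ or $\bu\mapsto(\bI+\delta\bM)\bu$, i.e.\ $\bv_j=(\ba^\T\bgamma_j^*,\bM^\T\bgamma_j^*)$. For this direction the vector $(\bv_j^\T\tilde\bu_i^*)_j$ lies exactly in the span of $(\bgamma_j^{*\T}\bb)_j$ with $\bb=\ba+\bM\bu_i^*$, so the Schur complement annihilates the main term. The Rayleigh quotient, normalized by $\|\bv\|^2\asymp q$, then equals the remainder: the $\bu_i^\star-\bu_i^*$ error of order $\kappa_n/\sqrt q$ from Lemma~\ref{lemma_neighbour_control}, the averaged score fluctuation of order $n^{-1/2}$, and the Laplace error of order $q^{-1/2}$. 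Tracking powers of $B_n$ and $b_n$ gives $\cOp(B_n^{5/2}b_n^{-2}/\sqrt{n\wedge q})$. Since $\cH_{\theta\theta}(\btheta^*)$ is positive semidefinite up to these errors, this upper bound on the Rayleigh quotient also controls $|\lambda_{\min}|$.
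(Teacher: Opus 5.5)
Your per-subject reading of the Schur complement as a $w_{ij}$-weighted least-squares residual is a genuinely different route from the paper's. The paper instead works in the joint $(\btheta,\bu)$ space: it subtracts a constant weight of order $b_n$ from every $w_{ij}$ so the remainder is PSD, uses that the constant-weight part plus $\widetilde\bV\widetilde\bV^\T$ is exactly block diagonal, adds $\bar\bV\bar\bV^\T$, and only then passes to the Schur complement.

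As written, your argument breaks at the step you flagged. You bound $\sum_j w_{ij}x_{ij}^2\ge b_n\sum_j x_{ij}^2$ and, separately, $\bs_i^\T(\bH^{(i)}_{uu})^{-1}\bs_i\le (qb_n)^{-1}\|\sum_j w_{ij}\bgamma_j x_{ij}\|^2$. This loses a factor of order $(w_{ij}/b_n)^2$. Since $B_n/b_n\to\infty$ (already for the logistic link, where $b_n\to0$ while typical weights are of constant order), this cannot be absorbed into constants or into the $\epsilon$ rate. Worse, because $w_{ij}$ varies with $j$, $\sum_j w_{ij}\bgamma_j\bv_j^\T\tilde\bu_i$ is not controlled by $\bV\bGamma^*$ at all. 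It can be of order $qB_n$ even when $\bV\bGamma^*=\zero$: take intercept-only $\bv_j$ orthogonal to the columns of $\bGamma^*$ but correlated with the $j$-dependence of $w_{ij}$.

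The missing idea is a one-line monotonicity. Since $w_{ij}\ge b_n$, for every $\ba$ we have $\sum_j w_{ij}(x_{ij}-\bgamma_j^\T\ba)^2\ge b_n\sum_j(x_{ij}-\bgamma_j^\T\ba)^2$. Hence the weighted residual is at least $b_n\|\bP_{\bGamma}^{\perp}\bV^\T\tilde\bu_i\|^2$, with $\bP_{\bGamma}^{\perp}$ the unweighted projector. Summing over $i$,
\[
\sum_i b_n\|\bP_{\bGamma}^{\perp}\bV^\T\tilde\bu_i\|^2\ge b_n\lambda_{\min}\Big(\sum_i\tilde\bu_i\tilde\bu_i^\T\Big)\Big\{\|\bV\|_{\Fn}^2-\frac{\|\bV\bGamma\|_{\Fn}^2}{\lambda_{\min}(\bGamma^\T\bGamma)}\Big\},
\]
with no $B_n/b_n$ loss. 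Replacing $\bGamma$ by $\bGamma^*$ costs only $O(\epsilon^2)\|\bV\|_{\Fn}^2$. With this fix, the curvature part of \eqref{eq_hessian_lower_bound} follows more directly than in the paper.

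The second gap is the score terms, which cannot be dispatched with entrywise sub-exponential tails. They enter $\bs_i$ as $\bm{t}_i=\sum_j\ell_{ij}'(\eta_{ij}^\star)\bv_{j,\gamma}$, where $\eta^\star_{ij}=\bgamma_j^\T\bu_i^\star(\btheta)+\beta_j$, $\bv_{j,\gamma}$ are the loading coordinates of $\bv_j$, and each $\ell'_{ij}$ is of size $\sqrt{B_n}$. A Frobenius bound gives only $\sum_i\|\bm{t}_i\|^2\lesssim nqB_n\|\bV\|_{\Fn}^2$. Hence $n^{-1}\sum_i\bm{t}_i^\T(\bH^{(i)}_{uu})^{-1}\bm{t}_i\lesssim(B_n/b_n)\|\bV\|_{\Fn}^2$, which is not $o(b_n)\|\bV\|_{\Fn}^2$. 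You need the spectral-norm bound for the $n\times q$ score matrix at $(\btheta,\bu^\star(\btheta))$, namely $\sqrt{(n\vee q)B_n}+B_n\sqrt{nq\log n}\,\epsilon'$ with $\epsilon'=q^{-1/2}\|\btheta-\btheta^*\|+n^{-1/2}\|\bu^\star(\btheta)-\bu^*\|$ (Lemma~\ref{lemma_concentration_l_mat}(iii)--(iv)). Then the quadratic term is $o_p(nb_n)\|\bV\|_{\Fn}^2$ under the stated range of $\epsilon$. So is the cross term, by Cauchy--Schwarz against the projection part, which is at most $CnB_n\|\bV\|_{\Fn}^2$.

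The same ingredient is what the eigenvalue claim actually needs, and your sketch misplaces the source of the rate. Along the affine directions, the score part vanishes identically at the modes because $\sum_j\ell'_{ij}(\eta^\star_{ij})\bgamma_j=\zero$. So the Schur form is exactly zero there, and the Rayleigh quotient is just the Laplace error ($\varepsilon_{nq}/q$ in the paper). The $(n\wedge q)^{-1/2}$ rate comes from the \emph{lower} bound, since $\cH_{\theta\theta}(\btheta^*)$ need not be PSD. Your phrase ``positive semidefinite up to these errors'' is exactly what must be proved: curvature residual $\ge0$ plus the operator-norm control above. The paper does this via $\bar\bH_L^*\succeq0$, $\|\bS_m(\bH_L^*-\bar\bH_L^*)\bS_m\|=\cOp(\sqrt{B_n/(n\wedge q)})$, and the congruence $\bD_m$.
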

To establish Lemma~\ref{lemma_convexity}, we carefully calculate higher-order terms in the Laplace approximation. In particular, for $j\neq j'$, each entry in the off-diagonal block $\partial_{\btheta_j\btheta_{j'}}^2\cL(\btheta)$ is of constant order, both for the exact Hessian and for its approximation. Collectively, these blocks form a $q(K+1)\times q(K+1)$ dense matrix and are therefore not negligible relative to the diagonal blocks. To derive the lower bound, we show that several terms in the Laplace approximation cancel, yielding an approximate decomposition into a block-diagonal component and a Schur-complement component; see Section~\ref{sec_prove_norma} for some discussion. This decomposition allows us to identify and separate the problematic directions, which leads to the lower bound in \eqref{eq_hessian_lower_bound}.

The result reveals a geometry of the marginal likelihood different from that in low-dimensional problems, where the likelihood is often assumed to be uniformly strongly convex over a local neighborhood~\citep{huber2004estimation,ma2010explicit,Bianconcini2012AsymptoticPO}.
In the high-dimensional setting, despite the dense off-diagonal blocks of the Hessian, the quadratic form admits a lower bound comprising an isotropic curvature term and a correction term depending on $\bV\bGamma^*$, implying that the off-diagonal part does not affect the curvature equally in all directions. In particular, the isotropic curvature can be significantly offset only along directions aligned with $\mathrm{col}(\bGamma^*)$, and the bound on the minimum eigenvalue confirms this loss of curvature. Together, these results characterize directions along which the marginal log-likelihood can become nearly flat and show that the problematic directions span only a low-dimensional subspace of the parameter space.


\section{Main Results}\label{sec_res}

\subsection{Estimation and Inference for Model Parameters}

We study the marginal maximum likelihood estimator $\hat\btheta$ obtained in \eqref{eq_mmle_IC1}. Note that the likelihood is unchanged under any orthogonal transformation of $\bgamma_j$s. Specifically, for any orthogonal matrix $\bQ\in\RR^{K\times K}$ and $\btheta^{\prime} =(\beta_1, (\bQ\bgamma_1)^\T,\dots, \beta_q, (\bQ\bgamma_q)^\T)^\T$, we have $\cL(\btheta)=\cL(\btheta^{\prime})$.
To address this indeterminacy, we adopt the standard Procrustes alignment~\citep{ten1977orthogonal} obtained by{
\begin{equation}\label{eq_q_transf}
    \hat\bQ^{*} \;\in \argmin_{\bQ\in\RR^{K\times K}:\,\bQ\bQ^\T=\bI_K}
    \sum_{j=1}^q\big\|\bQ\hat\bgamma_j - \bgamma_j^*\|^2 \quad= \argmin_{\bQ\in\RR^{K\times K}:\,\bQ\bQ^\T=\bI_K}
    \big\|\hat\bGamma\bQ^\T - \bGamma^*\big\|_{\Fn}^2 ,
\end{equation}
}where $\hat\bGamma = (\hat\bgamma_1,\dots,\hat\bgamma_q)^\T$.
Define $\varepsilon_{nq} = \exp\{\log^{\frac12 + c}(n\vee q)\}$ where $c\in(0,1/2)$ is a fixed constant allowed to be arbitrarily small. This quantity is used to control the tail contributions in the Laplace approximations to the $q(K+1)$-dimensional marginal score. By definition, $\varepsilon_{nq}$ asymptotically dominates both $B_n$ and $b_n^{-1}$, while remaining subpolynomial in $n\wedge q$ under Assumption~\ref{assump_scaling}. Define
\begin{equation}
    r_{\theta,nq} := \sqrt{B_n} + \frac{B_n\kappa_n}{\sqrt{q}} + \frac{B_n\varepsilon_{nq}\sqrt{n\log n}}{q}.
    \label{eq_define_r_theta_nq}
\end{equation}
Lemma~\ref{lemma_first_order_concern} in the Supplementary Material shows that the $\ell_2$ norm of the marginal score at the true parameter is $\cOp(r_{\theta,nq}\sqrt{nq})$, where the third term in $r_{\theta,nq}$ accounts for the error introduced by the Laplace approximation to the score. The following theorem establishes consistency for $\hat\btheta$.
\begin{theorem}[Consistency]\label{thm_consis}
    Under Assumptions~\ref{assump_standard_id}--\ref{assump_scaling}, for $\hat\btheta$ in \eqref{eq_mmle_IC1} and $\hat\bgamma_j^Q := \hat\bQ^*\hat\bgamma_j$, we have
    \begin{align*}
         &q^{-1/2}\Big(\sum_{j=1}^q\Big\{\big\|\hat\bgamma_j^Q - \bgamma_j^*\big\|^2 + \big|\hat\beta_j - \beta_j^*\big|^2\Big\}\Big)^{1/2} = \cOp \; \Big(\frac{\kappa_n^2r_{\theta,nq}}{\sqrt{n}} + \frac{\kappa_n}{\sqrt{q}}\Big),\\
         &\max_{j\in[q]}\left(\big\|\hat\bgamma_j^Q - \bgamma_j^*\big\| +\big|\hat\beta_j - \beta^*_j\big|\right) \; = \; \cOp\Big(\frac{r_{\theta,nq}}{\sqrt{n}}\Big\{\frac{\sqrt{\log q}}{b_n} + \frac{B_n^2}{b_n^3}\Big\} + \frac{\kappa_n}{\sqrt{q}}\Big).
    \end{align*}
\end{theorem}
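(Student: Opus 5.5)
The plan follows the strategy announced in the introduction: build an oracle auxiliary estimator, prove it consistent using Lemma~\ref{lemma_convexity}, and show it is close to $\hat\btheta$. The weak-curvature directions in \eqref{eq_hessian_lower_bound} are confined to perturbations $\bV$ with $\bV\bGamma^*$ large. Since $\Gamma^*$ has rank $K$, these form a subspace of dimension $O(K(K+1))$. This subspace corresponds to the infinitesimal affine reparametrizations $\bu\mapsto \bA\bu+\bb$ of the latent variables, which are fixed in the population only through the working prior and the normalization $\EE\bU=\zero$, $\EE\bU\bU^\T=\bI_K$.

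\textbf{Step 1: the auxiliary estimator.} Define
\[
\tilde\cL(\btheta)=\cL(\btheta)+\lambda n b_n q^{-1}\big\|(\bTheta-\bTheta^*)\bGamma^*\big\|_{\Fn}^2
\]
with $\lambda>c$, and let $\tilde\btheta$ minimize $\tilde\cL$ over $\cB_\epsilon(\btheta^*)$, after Procrustes alignment. The penalty borrows exactly the information missing along the flat directions. By Lemma~\ref{lemma_convexity}, $n^{-1}\partial^2\tilde\cL\succeq \gamma b_n \bI$ uniformly on $\cB_\epsilon(\btheta^*)$ with probability tending to one. Lemma~\ref{lemma_neighbour_control} ensures this ball lies in $\Xi$.

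\textbf{Step 2: consistency of $\tilde\btheta$.} A standard convexity argument along the segment from $\btheta^*$ to $\tilde\btheta$ gives
\[
\gamma b_n\|\tilde\btheta-\btheta^*\|^2\le n^{-1}\|\partial\cL(\btheta^*)\|\,\|\tilde\btheta-\btheta^*\|.
\]
Combining this with Lemma~\ref{lemma_first_order_concern}, which bounds the score by $\cOp(r_{\theta,nq}\sqrt{nq})$, yields $q^{-1/2}\|\tilde\btheta-\btheta^*\|=\cOp(r_{\theta,nq}/(b_n\sqrt n))$. This lies inside $\cB_\epsilon$, so the local minimizer is interior.

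\textbf{Step 3: $\hat\btheta$ is close to $\tilde\btheta$.} I first need a crude global localization: $\hat\btheta\in\cB_\epsilon(\btheta^*)$ up to rotation. I would obtain it from an identifiability and uniform-convergence argument. Using the Laplace approximation, $n^{-1}\cL(\btheta)$ is approximately the joint (profile) likelihood $-n^{-1}\sum_i\ell_i(\bu_i^\star(\btheta);\btheta)$ plus $O(K\log q)$ terms. Joint-likelihood low-rank consistency arguments in the style of \citet{chen2019joint}, together with the constraints in $\Xi$ (in particular $\sigma_{K+1}$ and the $\bu_i^\star$ bound), then give $q^{-1/2}\|\bTheta\bQ-\bTheta^*\|\to0$.

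Within the ball, decompose $\hat\btheta-\tilde\btheta$ into its component along the affine-reparametrization directions and an orthogonal remainder. The remainder is controlled by the isotropic curvature. The flat component is governed by the prior/normalization term. Because $\pwk$ and the identification $\EE\bU\bU^\T=\bI_K$ pin down $\bA,\bb$ up to a rotation, I expect the flat component to be of order $\kappa_n/\sqrt q$. The justification is that the sample moments of the conditional modes $\bu_i^\star$ deviate from the normalization by $\kappa_n/\sqrt q$, per Lemma~\ref{lemma_neighbour_control}, and by $n^{-1/2}$ from sampling. The extra factor $\kappa_n^2$ in the first bound arises when converting the curvature $b_n$ and the Laplace-error constants $B_n$ into the rate.

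\textbf{Step 4: the max-norm bound.} Use the block structure of the score. Fix $j$, and expand $\partial_{\btheta_j}\cL(\hat\btheta)=0$ around $\btheta^*$ while holding the other blocks at their estimates. The diagonal block of the Hessian is bounded below by $b_n$. The contributions of the other blocks enter through the $\bu_i^\star$ shift, giving the $B_n^2/b_n^3$ term. A union bound over $j$ of the sub-exponential per-item score gives the $\sqrt{\log q}/b_n$ factor, and the $\kappa_n/\sqrt q$ term is the flat-direction error carried over from Step 3.

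\textbf{Main obstacle.} I expect the hardest part to be Step 3, especially the global localization and the quantification of the flat-direction component. What I would try first is to show that the marginal score restricted to the reparametrization directions equals, to leading order, the score of the working prior evaluated at the empirical moments of $\bu_i^\star(\btheta)$. This would convert the flat-direction error into a moment discrepancy of order $\kappa_n/\sqrt q$.
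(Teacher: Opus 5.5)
Your Steps 1--2 are sound and reproduce part $(i)$ of the paper's auxiliary-estimator lemma. Step 3, however, has a genuine gap, and it starts with the localization. The profile criterion $-\sum_i\ell_i(\bu_i^\star(\btheta);\btheta)$ is exactly invariant under every invertible affine reparametrization $\beta_j\mapsto\beta_j+\bgamma_j^\T\bb$, $\bgamma_j\mapsto\bA^\T\bgamma_j$, $\bu_i\mapsto\bA^{-1}(\bu_i-\bb)$. The constraints in $\Xi$ only keep $\bA$ bounded and well conditioned. So a joint-likelihood argument plus the $\cOp(n\log(nq))$ Laplace gap localizes $\hat\btheta$ only up to an $O(1)$ affine map, not up to a rotation. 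This is exactly what the paper gets from Lemma~\ref{lemma_average_converge_eta}, Davis--Kahan and Lemma~\ref{lemma_average_eta}, with a general invertible $\hat\bG^\dagger$ and shift $\hat\bd^\dagger$. Showing that this map is within $\cOp(\delta_{nq})$ of an orthogonal one is the crux of the theorem, and your proposal only asserts it (``I expect''). A local Taylor treatment of the flat component cannot close this. Along normalized affine directions the true curvature of $n^{-1}\cL$ is of order $q^{-1}$, coming only from $\pwk$. Lemma~\ref{lemma_laplace_approx_second_order}, by contrast, controls $n^{-1}\cH_{\theta\theta}$ only up to $\cOp(\varepsilon_{nq}/q)$, which is larger. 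Moreover, your fixed-prior $\tilde\btheta$ is not an affine image of $\hat\btheta$, so no exact identity ties $\cL(\hat\btheta)$ to $\cL(\tilde\btheta)$ along those directions. Step~4 inherits the problem: expanding $\partial_{\btheta_j}\cL(\hat\btheta)=\zero$ presupposes the first-order condition for $\hat\btheta$. The paper obtains that condition (Lemma~\ref{lemma_key_first_order}) only after the equivalence described below.

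The missing idea is to make the affine directions exact coordinates by freeing the working prior in the auxiliary problem: take $(\hat\btheta^*,\hat\bmu^*,\hat\bSigma^*)\in\argmin_{\cB_\epsilon(\btheta^*)\times\cK_C}\cL(\btheta,\bmu,\bSigma)+P^*(\btheta)$. Two facts drive the argument. First, $\cL(T\btheta,\zero_K,\bI_K)=\cL(\btheta,\bmu_T,\bSigma_T)$ holds exactly for an affine map $T$. Second, the $K+K(K+1)/2$ prior parameters plus $K(K-1)/2$ rotations match the $K^2+K$ entries of $\bV\bGamma^*$. Together these give $P^*(\hat\btheta^*)=0$ by exact reparametrization. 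Next one compares $\hat\btheta^\dagger$ (the zero-penalty affine image of $\hat\btheta$, in $\cB_\epsilon$ by the crude localization) with $\hat\btheta^\ddagger$ (the image of $\hat\btheta^*$ under prior $(\zero_K,\bI_K)$, in $\Xi$). This yields equal minimal values, hence, up to a rotation, $\hat\bgamma_j=(\hat\bSigma^*)^{1/2}\hat\bgamma_j^*$ and $\hat\beta_j=\hat\beta_j^*+(\hat\bgamma_j^*)^\T\hat\bmu^*$. Your closing idea, that the score along reparametrization directions is the prior score at the empirical moments of the modes, then becomes rigorous in $(\bmu,\bSigma)$ coordinates. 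There the curvature is of order $n$ and Lemma~\ref{lemma_first_order_base} is accurate to $\cOp(n\varepsilon_{nq}/q)$. The variational inequality over $\cK_C$ puts $(\hat\bmu^*,\hat\bSigma^*)$ within $\cOp(\varepsilon_{nq}/q)$ of the empirical mean and covariance of $\bu_i^\star(\hat\btheta^*)$. Lemma~\ref{lemma_neighbour_control} with $q^{-1/2}\|\hat\btheta^*-\btheta^*\|=\cOp(r_{\theta,nq}/(b_n\sqrt n))$ then gives $\cOp(\delta_{nq})$. This is where $\kappa_n^2r_{\theta,nq}/\sqrt n=(B_n/b_n)\cdot r_{\theta,nq}/(b_n\sqrt n)$ actually comes from: it is the mode shift, not a generic curvature conversion. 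The max-norm bound is then your Step~4 applied to $\hat\btheta^*$, not $\hat\btheta$. Its $\ell_2$ error carries no $\kappa_n/\sqrt q$ term, and the penalized Hessian splits into a block-diagonal-plus-low-rank part inverted in $\|\cdot\|_\infty$ and the Schur part. To this one adds the uniform $\cOp(\delta_{nq})$ error of the common affine map and of the Procrustes rotation $\hat\bQ^*$.
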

The terms involving $r_{\theta,nq}$ reflect the contribution of the marginal score, including the Laplace approximation error. The remaining term, $\kappa_n/\sqrt q$, arises from estimating the subject-specific conditional modes and corresponds to the second term in Lemma~\ref{lemma_neighbour_control}. Assumption~\ref{assump_scaling} ensures that both displayed rates converge to zero as $n,q\to\infty$.

It is worth noting that Theorem~\ref{thm_consis} does not conflict with the low-dimensional theory. Our result concerns the high-dimensional regime in which both $n$ and $q$ diverge at comparable polynomial rates. It is not intended to be sharp when $q$ is fixed or grows only subpolynomially in $n$, such as when $q\le \kappa_n^2$. Our proof relies on Laplace approximations to the derivatives of the marginal likelihood with respect to the $q(K+1)$-dimensional parameter, where the residual term scales with $q^{-1}$. Thus, for small $q$, this approximation may be too coarse, making the resulting convergence rates conservative. In the finite-$q$ case, the model reduces to a conventional finite-dimensional estimation problem, for which standard $M$-estimation theory applies under additional regularity conditions~\citep{van2000asymptotic}. %


The primary difficulty in establishing Theorem~\ref{thm_consis} is to control the curvature of the marginal likelihood $\cL(\cdot)$ along the path between $\hat\btheta$ and $\btheta^*$. To apply Lemma~\ref{lemma_convexity}, after localizing $\hat\btheta$ within the neighborhood $\cB_{\epsilon}(\btheta^*)$, we must show that, for some orthogonal matrix $\bQ$, the rows of  error matrix $\hat\bV := (\hat\bbeta-\bbeta^*,\hat\bGamma\bQ-\bGamma^*)^\T$ are nearly orthogonal to $\mathrm{col}(\bGamma^*)$. 
Directly establishing this directional control for the
high-dimensional estimator can be hard, explained later in Section~\ref{sec_prove_consis}.
To address this difficulty, we construct a novel auxiliary estimator. Roughly speaking, we design an {\it oracle} auxiliary estimator, denoted by $\hat\btheta^*$, with a carefully controlled dependence on the true parameters. This oracle dependence allows us to show 
\begin{equation}\label{eq_desired_direction}
    \big(\hat\bbeta^* - \bbeta^*,\hat\bGamma^* - \bGamma^*\big)^\T\bGamma^*=\zero_{(K+1)\times K},
\end{equation}
where $\hat\bGamma^*$ and $\hat\bbeta^*$ denote the corresponding components of $\hat\btheta^*$.
These constraints thereby restrict the auxiliary estimator to directions in which the isotropic curvature term in~\eqref{eq_hessian_lower_bound} dominates, which ensures nonvanishing curvature in the path between $\btheta^*$ and $\hat\btheta^*$. The consistency results follow by first establishing the $\ell_2$ and $\ell_{\infty}$ consistency of $\hat\btheta^*$ and then showing that $\hat\btheta$ is also close to this oracle auxiliary estimator. The technical details are given in Section~\ref{sec_proof}.

We now establish the asymptotic distribution of $\hat\btheta$. Let $\bar\bmu_u = n^{-1}\sum_{i=1}^n\bu_i^*$ and $\bar\bSigma_u = n^{-1}\sum_{i=1}^n(\bu_i^* - \bar\bmu_u)(\bu_i^* - \bar\bmu_u)^\T$. Under Assumption~\ref{assump_standard_id}, one can verify $\|\bar\bmu_u \| = \cOp(n^{-1/2})$ and $\|\bar\bSigma_u - \bI_K\| = \cOp(n^{-1/2})$. We then define the {\it canonical latent variables and parameters} by
\begin{equation}\label{eq_canp_trans}
    \bu_i^0\ =\ \bar\bSigma_u^{-1/2}(\bu_i^*-\bar\bmu_u),\qquad \bgamma_j^0 \ =\  \bar\bSigma_u^{1/2}\bgamma_j^*, \qquad \beta_j^0 \ =\ \beta_j^*+(\bgamma_j^*)^\T\bar\bmu_u .
\end{equation}Note that the canonical representation yields the same linear predictor $\eta_{ij}^* = (\bgamma_j^0)^\T\bu_i^0 + \beta_j^0 = (\bgamma_j^*)^\T\bu_i^* + \beta_j^*$.
The canonical parameters serve as the centers of the asymptotic distributions below. Under the canonical representation, define Procrustes alignment to $\bgamma_j^0$s as $\hat\bQ^0\in\argmin_{\bQ\bQ^\T=\bI_K}\sum_{j=1}^q\|\bQ\hat\bgamma_j-\bgamma_j^0\|^2$.
\begin{theorem}[Asymptotic Distribution]\label{thm_asymp}
    Suppose Assumptions~\ref{assump_standard_id}--\ref{assump_scaling} hold. Let
\begingroup
\setlength{\abovedisplayskip}{5pt}
\setlength{\belowdisplayskip}{5pt}\begin{equation}
    \label{eq_define_tau_nq} \tau_{nq} = \kappa_n^3B_n^2\Big\{\frac{\kappa_n^6B_nr_{\theta,nq}^2}{n} + \frac{\varepsilon_{nq}}{q}\Big\}. 
\end{equation}\endgroup If $\sqrt{n\log n}\,\tau_{nq}\to 0$ as $n,q\to\infty$, then, for $\hat\btheta$ obtained in \eqref{eq_mmle_IC1}, conditioning on $\{\bU_i\}_{i\le n}$, it holds for each $j\in[q]$ that
    \begin{equation*}
        (\bPhi_{j}^*)^{-1/2} \begingroup
    \renewcommand{\arraystretch}{0.6}\begin{pmatrix}
            \hat\beta_j - \beta_j^0\\ \hat\bQ^0 \hat\bgamma_j - \bgamma_j^0
        \end{pmatrix}\endgroup \; \overset{d}{\to} \; \cN(\zero_{K+1},\bI_{K+1}), \; \text{ as }\;  n,q\to\infty,
    \end{equation*}
    where $\bu_i^{c0} = \big(1,(\bu_i^0)^\T\big)^\T$ and $\bPhi^*_j$ is given by
    \begin{equation*}
        \bPhi^*_j = \Big(-\sum_{i=1}^n\ell_{ij}^{\prime\prime}(\eta_{ij}^*)\bu_i^{c0}(\bu_i^{c0})^\T\Big)^{-1}\Big(\sum_{i=1}^n\big\{\ell_{ij}^{\prime}(\eta_{ij}^*)\big\}^2\bu_i^{c0}(\bu_i^{c0})^\T\Big)\Big(-\sum_{i=1}^n\ell_{ij}^{\prime\prime}(\eta_{ij}^*)\bu_i^{c0}(\bu_i^{c0})^\T\Big)^{-1}.
    \end{equation*}
    Moreover, $\bPhi^*_j $ can be consistently estimated by the plug-in estimator $\hat\bPhi_j$ obtained by replacing $\eta_{ij}^*$ and $\bu_i^0$ with $\hat\eta_{ij} = \hat\bgamma_j^\T\bu_i^{\star}(\hat\btheta) + \hat\beta_j$ and $\hat\bQ^0\bu_i^{\star}(\hat\btheta)$, respectively, where $\bu_i^{\star}(\hat\btheta) = \argmax_{\bu\in\RR^K}\ell_i(\bu;\hat\btheta)$. The asymptotic normality result remains valid with $\bPhi_j^*$ replaced by $\hat\bPhi_j$.
\end{theorem}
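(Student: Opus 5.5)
The plan is to avoid inverting the dense marginal Hessian altogether, as the introduction suggests. Instead, we expand the scores of the \emph{conditional} (joint) likelihood in the model parameters and the subject-specific modes. By Theorem~\ref{thm_consis} and Lemma~\ref{lemma_neighbour_control}, $\hat\btheta\in\cB_\epsilon(\btheta^*)$ with probability tending to one, so the constraints in $\Xi$ are inactive. Hence $\partial_{\btheta}\cL(\hat\btheta)=\zero$. Let $\hat\bu_i:=\bu_i^\star(\hat\btheta)$.

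\textbf{Step 1: reduce the marginal score to a conditional score.} A Laplace expansion of each integral around $\hat\bu_i$ gives
\[
\partial_{\btheta_j}\cL(\hat\btheta)=-\sum_{i=1}^n \ell_{ij}'(\hat\eta_{ij})\,(1,\hat\bu_i^\T)^\T+\bR_j,
\]
where $\bR_j$ collects the $q^{-1}$-order Laplace corrections and the prior gradient term $\partial_{\bu}\log\pwk$. The first-order condition therefore reads $\sum_i\ell_{ij}'(\hat\eta_{ij})\hat\bu_i^{c}=\bR_j$ with $\hat\bu_i^c=(1,\hat\bu_i^\T)^\T$. By the definition of the mode, $\sum_j\ell_{ij}'(\hat\eta_{ij})\hat\bgamma_j=\zero_K$ holds exactly. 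I will show $\max_j\|\bR_j\|=\cOp(n\varepsilon_{nq}B_n/q)$ uniformly, using the tail control from $\varepsilon_{nq}$ (as in Lemma~\ref{lemma_first_order_concern}).

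\textbf{Step 2: joint Taylor expansion.} Expand both estimating equations around $(\btheta^0,\{\bu_i^0\})$, after aligning $\hat\btheta$ by $\hat\bQ^0$ and applying the same rotation to $\hat\bu_i$. The $j$-equation becomes
\[
\bD_j(\hat\btheta_j^Q-\btheta_j^0)+\sum_i\{-\ell_{ij}''(\eta_{ij}^*)\}\bu_i^{c0}(\bgamma_j^{0})^\T(\hat\bu_i-\bu_i^0)=\sum_i\ell_{ij}'(\eta_{ij}^*)\bu_i^{c0}+\text{rem}_j,
\]
with $\bD_j=-\sum_i\ell_{ij}''\bu_i^{c0}(\bu_i^{c0})^\T$. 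The $i$-equation expresses $\hat\bu_i-\bu_i^0$ as $\bigl(\sum_j-\ell_{ij}''\bgamma_j\bgamma_j^\T\bigr)^{-1}\sum_j\ell_{ij}'\bgamma_j$ plus a term linear in the parameter errors. Substituting this into the $j$-equation produces a Schur-complement cross term. I then split it into two pieces. The first is the piece driven by the noise $\sum_{j'}\ell_{ij'}'\bgamma_{j'}$; summed over $i$, it is of order $\sqrt{n}\,\sqrt{B_n}\kappa_n/\sqrt q$, which is negligible against $\sqrt n$ because $j$ contributes only $O(1/q)$ to each mode. The second is the piece driven by the parameter errors $\hat\btheta_{j'}-\btheta_{j'}^0$ averaged over $j'$. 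This is where the canonical normalization enters. The average error projects onto the $(K+1)K$-dimensional gauge directions, namely shifts and linear maps of $\bu$. The canonical choice \eqref{eq_canp_trans} together with the working prior's pull toward mean zero and identity covariance (the prior gradient in $\bR_j$) fixes these directions up to $o_p(n^{-1/2})$. Controlling this term is the main obstacle. I would first try to show, via the $\ell_2$ rate of Theorem~\ref{thm_consis}, that after projecting out $\mathrm{col}(\bGamma^0)$ (as in Lemma~\ref{lemma_convexity}) the weighted average $q^{-1}\sum_{j'}\ell''$-weighted errors is $\cOp(\tau_{nq})$. The remaining gauge component would then be handled by the sample normalizations $n^{-1}\sum_i\hat\bu_i\approx\zero$ and $n^{-1}\sum_i\hat\bu_i\hat\bu_i^\T\approx\bI_K$, which are implied by the prior gradient terms in the score.

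\textbf{Step 3: CLT and remainders.} The Taylor remainders are quadratic in the errors, with third derivatives bounded by $B_n$. Using the $\ell_\infty$ rates of Theorem~\ref{thm_consis} and Lemma~\ref{lemma_neighbour_control}, they are bounded by $n\tau_{nq}\sqrt{\log n}$, so they vanish after scaling by $\bD_j^{-1}\asymp n^{-1}b_n^{-1}$ under $\sqrt{n\log n}\,\tau_{nq}\to0$. This yields $\hat\btheta_j^Q-\btheta_j^0=\bD_j^{-1}\sum_i\ell_{ij}'(\eta_{ij}^*)\bu_i^{c0}+o_p(\|\bPhi_j^{*1/2}\|)$. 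Conditionally on $\{\bU_i\}$, the summands are independent, mean zero, and sub-exponential (Assumption~\ref{assumption_smoothness}), so Lyapunov's CLT gives the stated $\cN(\zero,\bI_{K+1})$ limit after normalizing by $\bPhi_j^{*-1/2}$.

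Finally, for consistency of $\hat\bPhi_j$, we replace $\eta_{ij}^*$ and $\bu_i^0$ by $\hat\eta_{ij}$ and $\hat\bQ^0\hat\bu_i$. The uniform bounds on $\ell''$, $\ell'''$ and the $\ell_\infty$ rates give relative errors $o_p(1)$ for both sandwich factors. For the middle factor, we also need $n^{-1}\sum_i[\{\ell_{ij}'\}^2-\EE]\bu_i^{c0}(\bu_i^{c0})^\T\to0$ in relative terms, which follows from a sub-exponential concentration bound. Slutsky's lemma then completes the argument.
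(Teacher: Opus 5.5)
Your outline has the same architecture as the paper: reduce the marginal score to the conditional score at the modes, expand jointly in $(\btheta,\bu)$, pin the gauge by Procrustes alignment and the implicit normalization of the modes, and finish with a conditional Lindeberg CLT. However, the decisive step is left open, and the route you suggest for it does not close.

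\textbf{Why Step~2 fails as proposed.} After eliminating $\hat\bu_i-\bu_i^0$ via the $i$-equations, the $j$-equation contains the term $\sum_i C_{ij}^\T\bA_i^{-1}\sum_{j'}C_{ij'}(\hat\btheta_{j'}^Q-\btheta_{j'}^0)$. Here $\bA_i=-\sum_{j'}\ell''_{ij'}\bgamma_{j'}^0(\bgamma_{j'}^0)^\T$, and $C_{ij'}$ is the cross-derivative block, which contains $-\ell''_{ij'}\bgamma_{j'}^0(\bu_i^{c0})^\T$ and $\ell'_{ij'}$ terms.
\begin{itemize}
\item For nonlinear links, $\ell''_{ij'}$ varies with $j'$ through $\eta_{ij'}$, deterministically given $\{\bU_i\}$. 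Projecting the error matrix off $\mathrm{col}(\bGamma^0)$ kills only unweighted sums $\sum_{j'}\bgamma_{j'}^0\bv_{j'}^\T$, so it does not annihilate this weighted sum.
\item Bounding the sum by Cauchy--Schwarz with the $\ell_2$ rate of Theorem~\ref{thm_consis} gives a contribution to $\hat\btheta_j^Q-\btheta_j^0$ of order $(B_n/b_n)^2\delta_{nq}$.
\item Since $\delta_{nq}\ge\kappa_n^2\sqrt{B_n}/\sqrt n$, this is never $o(n^{-1/2})$; it is only $O(n^{-1/2})$ even in the linear Gaussian case.
\item The required smallness would need cancellation across $j'$. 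That cancellation holds only because the errors behave like conditionally independent mean-zero noise to first order, which is exactly what you are trying to prove. So the step is circular.
\end{itemize}
Relatedly, the reduced Schur system is nearly singular along the $K^2+K$ gauge directions. Even after pinning the gauge, you still need a quantitative statement that its inverse off the gauge is blockwise close to $\mathrm{diag}(\bD_j^{-1})$ when applied to the score. You do not supply that.

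\textbf{What the paper does instead.} The missing idea is never to let the coupling act on the error vector.
\begin{itemize}
\item Write the joint expansion as $(\bH_L^*+\bH_P^*)\bdelta=-\bS_L^*-\tfrac12\bR'$. The augmentation $\bH_P^*=cb_n\sum_{r,l}\bpsi_{rl}^*(\bpsi_{rl}^*)^\T$ fills in the flat gauge directions, so that $\bS_m(\bH_L^*+\bH_P^*)\bS_m$ has smallest eigenvalue $\gtrsim b_n$. Then invert.
\item Lemma~\ref{lemma_approx_hessian_inverse} shows the $\cK_j^+$ rows of $(\bH_L^*+\bH_P^*)^{-1}$ are close to those of $(\bH_{L\theta\theta}^*)^{-1}$. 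Crucially, it shows their difference \emph{applied to the score} $\bS_L^*$ is $\cOp(\kappa_n^2B_n/\sqrt{nq}+\kappa_n^2/(n\wedge q))$. There the off-diagonal blocks multiply conditionally independent mean-zero scores, so the cancellation is available (Lemma~\ref{lemma_aux_score_bound}).
\item The error vector then enters only through quadratic Taylor remainders and through $\bH_P^*\bdelta$. Lemma~\ref{lemma_bound_aug_via_id} shows the latter is negligible.
\item The $\btheta$-block of $\bH_P^*\bdelta$ vanishes exactly by the Procrustes condition that $\sum_j\hat\bgamma_j(\bgamma_j^0)^\T$ is symmetric. This, not the mode normalization, fixes the rotational gauge.
\item The $\bu$-block is controlled by $n^{-1}\sum_i\hat\bu_i^\star=\cOp(\varepsilon_{nq}/q)$ and $n^{-1}\sum_i\hat\bu_i^\star(\hat\bu_i^\star)^\T-\bI_K=\cOp(\varepsilon_{nq}/q)$, combined with the exact normalization of $\bu^0$.
\end{itemize}

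\textbf{On the mode normalizations.} They come from $\cS_\mu(\hat\btheta,\zero_K,\bI_K)=\cS_\Sigma(\hat\btheta,\zero_K,\bI_K)=\zero$ together with Lemma~\ref{lemma_first_order_base}. You can derive these from $\cS_\theta(\hat\btheta)=\zero$ by affine invariance, for example $\cS_\mu=\sum_j\hat\bgamma_j\,\partial_{\beta_j}\cL$. You cannot read them off your blockwise bound $\max_j\|\bR_j\|=\cOp(n\varepsilon_{nq}B_n/q)$, which loses a factor of $q$ when aggregated over $j$.
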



The asymptotic distribution is centered at the canonical representative rather than at the original true parameters, reflecting the {\it implicit normalization} induced by the working prior $\pwk(\cdot)$. To see this, define the likelihood mode at marginal estimator as $\bu_i^{\star}(\hat\btheta) = \argmax_{\bu\in\RR^K}\ell_i(\bu; \hat\btheta)$. In the proof of Lemma~\ref{lemma_bound_aug_via_id} in the Supplementary Material, we show  
\begin{equation}
    \big\|\frac{1}{n}\sum_{i=1}^n \bu_i^{\star}(\hat\btheta) \big\| = \cOp(\varepsilon_{nq}q^{-1}), 
    \qquad \big\| \frac{1}{n}\sum_{i=1}^n \bu_i^{\star}(\hat\btheta)\{\bu_i^{\star}(\hat\btheta)\}^\T - \bI_K\big\| =    \cOp(\varepsilon_{nq}q^{-1}).\label{eq_implicit_reg_1}
\end{equation} Thus, the likelihood modes are centered and standardized up to an error of order $\varepsilon_{nq}q^{-1}$. These errors are smaller than those of $\|n^{-1}\sum_{i=1}^n\bu_i^*\|$ and $\|n^{-1}\sum_{i=1}^n\bu_i^*(\bu_i^*)^\T - \bI_K\|$ under the scaling condition in Theorem~\ref{thm_asymp}. This sharper normalization \eqref{eq_implicit_reg_1} explains why the exactly normalized canonical representative, rather than the original representative, serves as the center of the asymptotic distribution. We call it implicit because $\pwk(\cdot)$ is asymptotically negligible relative to the conditional log-likelihood, and its normalizing effect is manifested in \eqref{eq_implicit_reg_1} through the likelihood modes $\bu_i^{\star}(\hat\btheta)$, rather than directly through the marginal estimator.

Importantly, this phenomenon is not an artifact of the specification of $\pwk(\cdot)$. It occurs for any latent distribution satisfying Assumption~\ref{assump_standard_id}, including the standard Gaussian distribution. On the other side, it does not affect the practical utility of the inference theory, because the canonical and original representatives generate exactly the same linear predictors and, for each $j$, their parameters differ by only $\cOp(n^{-1/2})$.

Finally, we note that under correct parametric specification for the observed response, the information identity $\EE[\{\ell_{ij}'(\eta_{ij}^*)\}^2\mid\bU_i] = -\EE[\ell_{ij}''(\eta_{ij}^*)\mid\bU_i]$ implies that the sandwich covariance $\bPhi_j^*$ is asymptotically equivalent to the usual inverse Fisher information in the oracle setting where the latent variables are observed. Thus, marginal maximum likelihood estimation attains the same asymptotic efficiency as an oracle estimator, despite the unknown latent variables. In particular, the implicit normalization does not entail a first-order efficiency loss, supporting the use of the canonical representative as the appropriate target of asymptotic inference.

\begin{remark}
The scaling condition $\sqrt{n\log n}\tau_{nq}\to0$ ensures that the second-order remainder is asymptotically negligible relative to the asymptotically normal term. This condition is satisfied when $n^{1/2+c}\le q$ for arbitrarily small constant $c>0$, which ensures the remainder terms in the asymptotic expansion are negligible. Similar conditions appear in~\citet{bai2003inferential}, \citet{Wang2018Maximum}, and \citet{cui2025identifiability} for joint estimation that treat the latent variables as parameters and therefore do not involve marginalization over latent variables. In the present setting, the remainder is substantially harder to control because it combines errors from integral approximation, plug-in estimation of latent modes, and Hessian perturbations. 
\end{remark}

In practice, identifiability conditions are typically imposed to make estimation and inference interpretable and uniquely defined~\citep{bai2012statistical,cui2025identifiability}. Marginal estimation and our asymptotic theory can be extended to commonly used practical identifiability conditions, accommodating both uncorrelated and correlated latent variables. To streamline the presentation, we defer the details to Section~\ref{sec_res_pract_id} of the Supplementary Material.

\subsection{Latent Variable Recovery}\label{subsec_latent_recover}

We next turn to inference for the latent variables. Although the latent variables are treated as random in the marginal likelihood formulation, inference on their individual realizations is important in practice. We consider two approaches: a frequentist approach based on point estimation and a Bayesian approach based on the posterior distributions~\citep{skrondal2004generalized,mcculloch2008generalized}. Both involve the plug-in posterior obtained from the marginal maximum likelihood estimator. For notational simplicity, we work directly with the optimally aligned estimator $\hat\btheta^Q = \big(\hat\beta_1,\hat\bgamma_1^\T(\hat\bQ^0)^\T,\dots,\hat\beta_q,\hat\bgamma_q^\T(\hat\bQ^0)^\T\big)^\T$. The plug-in posterior is given by
\begin{equation}
    \hat p_i(\bu\mid \bX) \; = \; \frac{\exp\{\ell_i(\bu;\hat\btheta^Q)\}\pwk(\bu)}{\int_{\bv\in\RR^{K}}\exp\{\ell_i(\bv;\hat\btheta^Q)\}\pwk(\bv)d\bv}.\label{eq_poster_estim}
\end{equation}
In what follows, we derive the asymptotic sampling distribution of the posterior mode as a point estimator from the frequentist perspective and establish a Bernstein–von Mises-type approximation to the plug-in posterior from the Bayesian perspective.

\medskip
\noindent{\bfseries Maximum a Posteriori: A Frequentist View. }
We first consider the frequentist view and study the maximum a posteriori (MAP) estimator
\begin{equation}
    \hat\bu_i \; = \; \argmax_{\bu\in\RR^K}\big\{\ell_i(\bu;\hat\btheta^Q) + \log\pwk(\bu)\big\}.\label{eq_define_map}
\end{equation}
Based on Theorem~\ref{thm_asymp}, we obtain the following asymptotic distribution for the point estimate.


\begin{corollary}[Asymptotic Distribution of Point Estimator]\label{thm_asymp_bu}
    Suppose Assumptions~\ref{assump_standard_id}--\ref{assump_scaling} hold. If $\sqrt{q\log n}\tau_{nq}\to 0$ as $n,q\to\infty$ with $\tau_{nq}$ given in \eqref{eq_define_tau_nq}, conditioning on $\mathscr{U}_n = \{\bU_i\}_{i\le n}$, it holds for each $i\in[n]$ that
    \begin{equation*}
        (\bPsi_{i}^*)^{-1/2} \big(\hat\bu_i - \bu_i^0)\; \overset{d}{\to}\; \cN(\zero_K,\bI_K), \; \text{ as } \; n,q\to\infty,
    \end{equation*}
   where $\bu_i^0$ is given in \eqref{eq_canp_trans} and the conditional law is taken with respect to the responses given $\mathscr{U}_n$, and $\bPsi^*_i$ is given by
    \begin{equation*}
        \bPsi^*_i = \Big(-\sum_{j=1}^q\ell_{ij}^{\prime\prime}(\eta_{ij}^*)\bgamma_j^0(\bgamma_j^0)^\T\Big)^{-1}\Big(\sum_{j=1}^q\big\{\ell_{ij}^{\prime}(\eta_{ij}^*)\big\}^2\bgamma_j^0(\bgamma_j^0)^\T\Big)\Big(-\sum_{j=1}^q\ell_{ij}^{\prime\prime}(\eta_{ij}^*)\bgamma_j^0(\bgamma_j^0)^\T\Big)^{-1}.
    \end{equation*}
        Moreover, $\bPsi^*_i $ can be consistently estimated by the plug-in estimator $\hat\bPsi_i$ obtained by replacing $\eta_{ij}^*$ and $\bgamma_j^0$ with $\hat\eta_{ij} = \hat\bgamma_j^\T\bu_i^{\star}(\hat\btheta) + \hat\beta_j$ and $\hat\bQ^0\hat\bgamma_j$, respectively. The asymptotic normality result remains valid with $\bPsi_i^*$ replaced by $\hat\bPsi_i$.
\end{corollary}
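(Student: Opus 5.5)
The plan is to treat $\hat\bu_i$ as an M-estimator for a fixed $K$-dimensional parameter, with a score that is a sum over $q$ items. The estimated item parameters $\hat\btheta^Q$ enter as a nuisance, and their effect is controlled through the first-order expansion behind Theorem~\ref{thm_asymp}.

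\textbf{Step 1: first-order condition and leading term.} The MAP solves
\[
\sum_{j=1}^q \ell_{ij}'(\hat\beta_j+\hat\bgamma_j^{Q\T}\hat\bu_i)\,\hat\bgamma_j^Q-\hat\bu_i=\zero_K ,
\]
where $\hat\bgamma_j^Q=\hat\bQ^0\hat\bgamma_j$. I first localize $\hat\bu_i$. By Lemma~\ref{lemma_neighbour_control} and the $\ell_\infty$ rate in Theorem~\ref{thm_consis}, $\|\bu_i^\star(\hat\btheta^Q)-\bu_i^0\|=o_p(1)$. The prior adds a strictly concave term of order one, while the likelihood curvature is of order $qb_n$, so $\|\hat\bu_i-\bu_i^\star(\hat\btheta^Q)\|=\cOp(1/(qb_n))$.

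Next I Taylor expand the score around $(\btheta^0,\bu_i^0)$, using the identity $\eta_{ij}^*=\beta_j^0+\bgamma_j^{0\T}\bu_i^0$. This gives
\[
\bA_i(\hat\bu_i-\bu_i^0)=\sum_j\ell_{ij}'(\eta_{ij}^*)\bgamma_j^0+\sum_j\ell_{ij}''(\eta_{ij}^*)\bgamma_j^0\,\bu_i^{c0\T}\bdelta_j+\bR_i ,
\]
where $\bA_i=-\sum_j\ell_{ij}''(\eta_{ij}^*)\bgamma_j^0\bgamma_j^{0\T}$ and $\bdelta_j=(\hat\beta_j-\beta_j^0,(\hat\bgamma_j^Q-\bgamma_j^0)^\T)^\T$. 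The remainder $\bR_i$ collects three pieces:
\begin{itemize}
\item[(a)] the second-order Taylor terms, bounded by $B_n q(\max_j\|\bdelta_j\|^2+\|\hat\bu_i-\bu_i^0\|^2)$;
\item[(b)] the score residual $\sum_j \ell_{ij}'(\eta_{ij}^*)(\hat\bgamma_j^Q-\bgamma_j^0)$;
\item[(c)] the prior term $-\hat\bu_i=\cOp(\sqrt{\log n})$.
\end{itemize}
The first term on the right has conditional covariance $\sum_j\{\ell_{ij}'\}^2\bgamma_j^0\bgamma_j^{0\T}$, since the items are independent given $\mathscr U_n$. A Lyapunov CLT then applies, using the sub-exponential bounds in Assumption~\ref{assumption_smoothness} and the fact that both $\bA_i$ and the covariance matrix are of order $q$ (by Assumption~\ref{assumption_psd_covariance} and $b_n\le-\ell''\le B_n$). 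This produces exactly the sandwich $\bPsi_i^*$, which is of order $1/q$ up to $\kappa_n$ factors.

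\textbf{Step 2: negligibility of the nuisance term (main obstacle).} I must show that $\sum_j\ell_{ij}''(\eta_{ij}^*)\bgamma_j^0\bu_i^{c0\T}\bdelta_j$ and term (b) are $o_p(\sqrt q)$. A crude $\ell_\infty$ bound from Theorem~\ref{thm_consis} gives $q\cdot \max_j\|\bdelta_j\|$, which is too large. Instead I plug in the linear representation from the proof of Theorem~\ref{thm_asymp}:
\[
\bdelta_j=\Big(-\sum_{i'}\ell_{i'j}''\bu_{i'}^{c0}\bu_{i'}^{c0\T}\Big)^{-1}\sum_{i'}\ell_{i'j}'(\eta_{i'j}^*)\bu_{i'}^{c0}+\br_j ,\qquad \max_j\|\br_j\|=\cOp(\tau_{nq}).
\]
The remainder contributes at most $qB_n\tau_{nq}$, which is $o(\sqrt q)$ precisely under $\sqrt{q\log n}\,\tau_{nq}\to0$; this is where the hypothesis is used. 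The leading part is a double sum $\sum_j\sum_{i'}w_{ij}\ell_{i'j}'\,\bc_{ji'}$ of independent mean-zero terms. Its variance is of order $q\cdot(1/n)$ up to $B_n$ factors, so it is $\cOp(\sqrt{q/n}\,\kappa_n^c)=o_p(\sqrt q)$.

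The delicate point is the diagonal summand $i'=i$. There $\ell_{ij}'$ appears twice, so the term is not mean zero. Its contribution is bounded by $\sum_j\{\ell_{ij}'\}^2/n=\cOp(qB_n/n)$, which is $o(\sqrt q)$ because $q\le n^{C}$ and this term is already dominated by the scaling condition. Term (b) is handled the same way: its cross-terms are mean zero and its diagonal term is $\cOp(qB_n/n)$. Dividing by $\bA_i\asymp q$, all remainders are $o_p(q^{-1/2})$ relative to the $\bPsi_i^{*1/2}\asymp q^{-1/2}$ scale. Slutsky's lemma then gives the stated limit.

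\textbf{Step 3: variance estimation.} For the plug-in $\hat\bPsi_i$, I show $\|\hat\bPsi_i-\bPsi_i^*\|=o_p(q^{-1})$. First, $\hat\eta_{ij}-\eta_{ij}^*$ is uniformly small: this combines the $\ell_\infty$ rate of Theorem~\ref{thm_consis} with the $\sqrt{\log n}$-scaled bound for $\bu_i^\star(\hat\btheta)$ from Lemma~\ref{lemma_neighbour_control}, noting that $\hat\bQ^0$ cancels in $\hat\eta_{ij}$. Second, $\ell_{ij}''$ and $\ell_{ij}'$ are Lipschitz with constants bounded by $B_n$. Third, the $q^{-1}$-normalized sandwich matrices are bounded and nondegenerate. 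With $\|\hat\bPsi_i-\bPsi_i^*\|=o_p(q^{-1})$ in hand, Slutsky's lemma transfers the normal limit from $\bPsi_i^*$ to $\hat\bPsi_i$.
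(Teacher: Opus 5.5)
The gap is in Step~2, at the very point where you say the hypothesis enters.

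First, the item expansion \eqref{eq_asymp_expand_2_bgamma} holds only for each fixed $j$. You cannot sum $q$ pointwise $\cOp(\cdot)$ statements. The uniform bound $\max_j\|\br_j\|=\cOp(\tau_{nq})$ you invoke is not established anywhere; even the maximal residual bounds in Lemma~\ref{lemma_residual_in_asym_expand} pick up an extra $\log q$.

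Second, grant a uniform version at the paper's own rate $\tau_{nq}\sqrt{\log n}/\sqrt{B_n}$. The triangle inequality then gives $\|\sum_j\ell''_{ij}\bgamma_j^0(\bu_i^{c0})^\T\br_j\|=\cOp(q\sqrt{B_n}\,\tau_{nq}\sqrt{\log n})$. Dividing by $\lambda_{\min}(\bA_i)\gtrsim qb_n$ and multiplying by $\|(\bPsi_i^*)^{-1/2}\|=\cOp(\sqrt{qB_n})$ leaves $\sqrt{q\log n}\,\tau_{nq}B_n/b_n$. The stated hypothesis does not send this to zero.

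Your inference that $qB_n\tau_{nq}=o(\sqrt q)$ because $\sqrt{q\log n}\,\tau_{nq}\to0$ fails whenever $B_n\to\infty$. It is also the wrong target: since $b_n,B_n$ are not constants, the remainder in $\bA_i(\hat\bu_i-\bu_i^0)$ must be $o_p(\sqrt q\,b_n/\sqrt{B_n})$. Independence across $j$ does not rescue the bound. Part of $\br_j$ is shared by all items, namely the propagated latent-mode errors and the residual rotation/normalization error, so there is no $\sqrt q$ cancellation.

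The paper avoids this step entirely. \eqref{eq_asymp_expand_2} expands the joint $(\btheta,\bu)$ score, so the proof of Theorem~\ref{thm_asymp} already gives \eqref{eq_asymp_expand_2_bu} for the likelihood mode $\hat\bu_i^{\star}$. There, the coupling to the $q$ item errors is handled by the $\bu$-rows of $(\bH_L^*+\bH_P^*)^{-1}$ (Lemma~\ref{lemma_approx_hessian_inverse}(c)--(d)), and the flat directions by Lemma~\ref{lemma_bound_aug_via_id}. Multiplying its remainder by $\sqrt{qB_n}$ gives exactly $\sqrt{q\log n}\,\tau_{nq}$.

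The corollary then needs only two more ingredients. One is the CLT \eqref{eq_asymp_expand_2_bu_2}. The other is the MAP--mode comparison $\hat\bu_i-\hat\bu_i^{\star}=-(\bar\bH_i+\bI_K)^{-1}\hat\bu_i^{\star}=\cOp\{\sqrt{\log n}/(qb_n)\}$, which is $\cOp(\kappa_n\sqrt{\log n/q})$ after normalization; your Step~1 essentially contains it. So the clean fix is to replace Step~2 by a citation of \eqref{eq_asymp_expand_2_bu}.

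If you keep your route, three smaller points need repair:
\begin{itemize}
\item The cross terms in your double sum are not exactly mean zero. The normalizer $(\bH^*_{L\theta_j\theta_j})^{-1}$ contains $\ell''_{i'j}$, which is correlated with $\ell'_{i'j}$ for non-canonical links. Replace it by its conditional mean, as in the proof of Lemma~\ref{lemma_aux_score_bound_pre}.
\item The conditional covariance of $\sum_j\ell'_{ij}\bgamma_j^0$ is $\sum_j\EE[\{\ell'_{ij}\}^2\mid\mathscr U_n]\bgamma_j^0(\bgamma_j^0)^\T$, not the realized version you wrote. Reaching the realized sandwich $\bPsi_i^*$ therefore requires a concentration step for the meat matrix.
\item The claim $qB_n/n=o(\sqrt q)$ does not follow from $q\le n^C$. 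It does follow from the hypothesis, because $\tau_{nq}\ge\kappa_n^9B_n^4/n$.
\end{itemize}
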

 Corollary~\ref{thm_asymp_bu} provides a frequentist asymptotic theory for the point estimator and therefore permits inference on individual latent variables when $n$ and $q$ are large. Similar to Theorem~\ref{thm_asymp}, the canonical latent variables $\{\bu_i^0\}_{i=1}^n$ serve as the centers of the asymptotic distributions, which are measurable with respect to $\mathscr{U}_n$.  Under correct model specification, the sandwich covariance $\bPsi_i^*$ is asymptotically equivalent to the inverse conditional Fisher information $\big[\EE\{-\sum_{j=1}^q\ell_{ij}^{\prime\prime}(\eta_{ij}^*)\bgamma_j^0(\bgamma_j^0)^\T\mid \mathscr{U}_n\}\big]^{-1}$. Corollary~\ref{thm_asymp_bu} also holds when $\hat\bu_i$ is replaced by the likelihood mode $\bu_i^{\star}(\hat\btheta^Q) = \argmax_{\bu\in\RR^K}\ell_i(\bu;\hat\btheta^Q)$, because $\log\pwk(\bu)$ contributes only a negligible term compared with the $\ell_i(\cdot;\cdot)$ in \eqref{eq_define_map}. Both the MAP estimator and the likelihood mode are therefore consistent and asymptotically normal.
 



\medskip
\noindent{\bfseries Posterior Asymptotic Normality: Bayesian View.} We next characterize uncertainty in each latent variable through its estimated posterior distribution.  For $\hat p_i(\cdot\mid \bX)$ defined in \eqref{eq_poster_estim}, we establish the following Bernstein–von Mises-type result.
\begin{corollary}[Posterior Asymptotic Normality]
    \label{thm_bvm_irt}
    Let $\hat\bu_i^{\star} = \argmax_{\bu\in\RR^K}\ell_i(\bu;\hat\btheta^Q)$ and $\bar\bPsi_{i}^* = \big\{\sum_{j=1}^q-\ell_{ij}^{\prime\prime}(\eta_{ij}^*)\bgamma_j^0(\bgamma_j^0)^\T\big\}^{-1}$. Under Assumptions~\ref{assump_standard_id}--\ref{assump_scaling}, for each $i\in[n]$, we have
    \begin{equation*}
          \int_{\RR^{K}} \, \Big|\hat p_i\big(\bu \mid \bX\big) - \pwk\big(\bu\mid \hat{\bu}_i^{\star}, \bar\bPsi^*_i\big) \Big|\, d\bu  \;\overset{p}{\to}\; 0, \; \text{ as } n,q \to \infty,
    \end{equation*}
    where $\pwk\big(\bu\mid \hat{\bu}_i^{\star}, \bar\bPsi^*_i\big)$ denotes normal density with mean $ \hat{\bu}_i^{\star}$ and covariance $\bar\bPsi^*_i$, and the convergence in probability is under the joint distribution of
the latent variables and responses.
\end{corollary}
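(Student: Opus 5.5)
We reduce Corollary~\ref{thm_bvm_irt} to a Laplace-type argument for a single $K$-dimensional integral, with the plug-in parameter $\hat\btheta^Q$ treated as fixed. The consistency guarantees of Theorem~\ref{thm_consis} control the error from plugging in. Fix $i$ and write $\hat\ell(\bu)=\ell_i(\bu;\hat\btheta^Q)+\log\pwk(\bu)$. Set $\hat\bH_i=-\partial^2_{\bu\bu}\ell_i(\hat\bu_i^\star;\hat\btheta^Q)=\sum_{j}-\ell_{ij}''(\hat\gamma_j^{Q\T}\hat\bu_i^\star+\hat\beta_j)\hat\bgamma^Q_j(\hat\bgamma_j^Q)^\T$. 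By Scheffé's lemma, total variation convergence follows from pointwise convergence of the rescaled densities plus a tightness/domination argument. We therefore work with the standardized variable $\bz=\hat\bH_i^{1/2}(\bu-\hat\bu_i^\star)$ and show three things:
\begin{enumerate}[label=(\roman*)]
\item $\|q^{-1}\hat\bH_i-q^{-1}(\bar\bPsi_i^*)^{-1}\|\overset{p}{\to}0$, with both matrices having eigenvalues of order $q$ up to $b_n,B_n$ factors.
\item Locally, $\hat\ell(\hat\bu_i^\star+\hat\bH_i^{-1/2}\bz)-\hat\ell(\hat\bu_i^\star)=-\|\bz\|^2/2+o_p(1)$ uniformly on $\|\bz\|\le M_n$ for some slowly growing $M_n$.
\item The mass of $\hat p_i$ outside this region is $o_p(1)$.
\end{enumerate}

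For (i), recall that $\hat\bu_i^\star$ is the likelihood mode; Corollary~\ref{thm_asymp_bu} (which holds for the likelihood mode, as remarked after it) gives $\hat\bu_i^\star-\bu_i^0=\cOp(\kappa_n/\sqrt q)$. The $\ell_\infty$ bound of Theorem~\ref{thm_consis} gives $\max_j\|\hat\bQ^0\hat\bgamma_j-\bgamma_j^0\|+|\hat\beta_j-\beta_j^0|=o_p(1)$ at a polynomial rate; the difference between the $\hat\bQ^*$ and $\hat\bQ^0$ alignments, and between $\btheta^*$ and $\btheta^0$, is only $\cOp(n^{-1/2})$. Hence $\max_j|\hat\eta_{ij}-\eta^*_{ij}|$ vanishes up to subpolynomial factors. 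Combining this with $|\ell_{ij}'''|\le B_n$ on $|\eta|\lesssim\sqrt{\log n}$ yields $q^{-1}\|\hat\bH_i-(\bar\bPsi_i^*)^{-1}\|=o_p(1)$. The lower bound $\lambda_{\min}\{(\bar\bPsi_i^*)^{-1}\}\gtrsim qb_n$ comes from $-\ell''\ge b_n$ and Assumption~\ref{assumption_psd_covariance}. For (ii), expand $\hat\ell$ to third order around $\hat\bu_i^\star$. The gradient of $\ell_i$ vanishes there, so the prior gradient contributes $\cOp(\sqrt{\log n})\cdot\|\bu-\hat\bu_i^\star\|=\cOp(\sqrt{\log n}\,M_n/\sqrt{qb_n})$. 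The cubic remainder is bounded by $qB_n\|\bu-\hat\bu_i^\star\|^3\lesssim B_nM_n^3/\sqrt{q b_n^3}$. Both terms are $o(1)$ by Assumption~\ref{assump_scaling}, since $B_n,b_n^{-1}$ are subpolynomial. Since $\hat\bH_i$ and $(\bar\bPsi_i^*)^{-1}$ are relatively close by (i), the two normal densities are TV-close: the Gaussian TV distance is controlled by $\|\hat\bH_i^{-1/2}(\bar\bPsi_i^*)^{-1}\hat\bH_i^{-1/2}-\bI_K\|_{\Fn}$. So it suffices to compare $\hat p_i$ with $N(\hat\bu_i^\star,\hat\bH_i^{-1})$.

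Step (iii) is the main obstacle: the curvature bound $b_n$ only holds for $|\eta|\lesssim\sqrt{\log n}$, so the tails cannot be controlled by a global quadratic. We will use concavity of $\ell_i(\cdot;\hat\btheta^Q)$ from Assumption~\ref{assumption_smoothness}. Along any ray from the mode, the concave function $\ell_i$ decreases at least linearly beyond radius $r_n=M_n/\sqrt{q b_n}$, with slope at least its drop on the sphere divided by $r_n$, that is, at least $\gtrsim M_n^2/r_n$. Hence $\hat\ell(\bu)-\hat\ell(\hat\bu_i^\star)\le -cM_n\|\bu-\hat\bu_i^\star\|/r_n+\log\pwk(\bu)-\log\pwk(\hat\bu_i^\star)$ outside the ball. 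The Gaussian prior only helps except for a term linear in $\|\hat\bu_i^\star\|\lesssim\sqrt{\log n}$, which is absorbed. Integrating this exponential bound gives outside mass $\lesssim e^{-cM_n^2}r_n^{K}\cdot(\text{poly})$. Comparing with the normalizing constant, which (ii) shows is $\asymp r_n^K M_n^{-K}$ times $e^{\hat\ell(\hat\bu_i^\star)}$, yields $o_p(1)$ once $M_n\to\infty$ slowly. The Gaussian tail of $N(\hat\bu_i^\star,\hat\bH_i^{-1})$ outside the ball is trivially $o(1)$. Combining (i)–(iii) with the triangle inequality gives the stated $L^1$ convergence. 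The probability statements are unconditional because every bound holds on $\cE_\eta$ together with events from Theorem~\ref{thm_consis} whose probability tends to one.
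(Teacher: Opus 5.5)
Your proposal is correct and is essentially the paper's argument (Gaussian total-variation continuity to pass from $\bar\bPsi_i^*$ to the plug-in curvature, then a standardized Laplace expansion with cubic remainder on a growing ball and global concavity of $\ell_i(\cdot;\hat\btheta^Q)$ for the tails). The one structural difference is that you extend the boundary drop linearly along rays straight from the Taylor region, rather than inserting the paper's intermediate annulus controlled by the $b_n/B_n$ curvature bound. This is valid and slightly cleaner provided the Taylor ball is taken in $\bz$-coordinates: with a Euclidean ball in $\bu$, the normalizer is $\asymp\det(\hat\bH_i)^{-1/2}$ rather than $(qb_n)^{-K/2}$, and you would need $M_n^2\gg\log(B_n/b_n)$, not just $M_n\to\infty$.

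Two small repairs are needed. First, obtain the consistency of $\hat\bu_i^\star$ from Lemma~\ref{lemma_neighbour_control} or Lemma~\ref{lemma_residual_in_asym_expand} together with Theorem~\ref{thm_consis}, not from Corollary~\ref{thm_asymp_bu}, whose extra condition $\sqrt{q\log n}\,\tau_{nq}\to0$ is not assumed here. Second, state step (i) as the relative bound $q^{-1}\|\hat\bH_i-(\bar\bPsi_i^*)^{-1}\|=o_p(b_n)$, which is what the Gaussian comparison actually requires and which your polynomial rates do deliver.
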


Corollary~\ref{thm_bvm_irt} shows that the plug-in posterior is asymptotically Gaussian, centered at $\hat\bu_i^{\star}$ with covariance $\bar\bPsi_i^*$. By the preceding frequentist result, $\hat\bu_i^{\star}$ is itself asymptotically normal around $\bu_i^0$, conditional on $\mathscr{U}_n$, and $\bar\bPsi_i^*$ is asymptotically equivalent to the covariance matrix $\bPsi_i^*$ in Corollary~\ref{thm_asymp_bu}. Thus, the posterior uncertainty asymptotically agrees with the frequentist sampling uncertainty of its center. This provides a rigorous justification for the longstanding practice of summarizing latent variables or random effects by their posterior distributions, whose credible sets and covariance matrices are asymptotically valid for frequentist inference on $\bu_i^0$~\citep{bock1981marginal,skrondal2004generalized}.



The Gaussian approximation of the latent-variable posterior with plugged-in item parameter estimation has long been conjectured in the psychometrics literature, remaining unsolved since~\citet{holland1990dutch}.
Existing work has largely considered oracle settings in which the item parameters are known~\citep[e.g.][]{chang1993asymptotic,chang1996asymptotic,kornely2022asymptotic}. 
These results provide an important theoretical basis in applications such as educational assessment, where practitioners report individual ability estimates and quantify their uncertainty via posterior distribution. Their applicability is nevertheless limited by the assumption that the model parameters are known.
Corollary~\ref{thm_bvm_irt} closes this gap by showing that the same Gaussian approximation remains valid for the plug-in posterior based on the marginal maximum likelihood estimator. To the best of our knowledge, this provides one of the first Bernstein–von Mises-type results for posterior distributions of latent variables in a nonlinear high-dimensional regime.


\section{Overview of Proof Techniques}\label{sec_proof}
In this section, we outline the key technical arguments underlying the theoretical results for $\hat\btheta$ presented in Section~\ref{sec_res}. Complete proofs of the results and technical details are provided in the Supplementary Material.


We begin with a critical but often overlooked obstacle: establishing the first-order optimality condition for $\hat\btheta$. This condition is the starting point for asymptotic analysis, including the expansion in \eqref{eq_expand_informal}, but it is nontrivial to obtain because $\hat\btheta$ is obtained by optimizing over the constrained feasible set $\Xi$. 
In what follows, we first explain how to prove the first order condition and then outline the arguments for establishing consistency and asymptotic normality.

\begin{remark}[Comparison with Low-Dimensional Setting]\label{remark_comparison}
    Establishing the first-order optimality condition is a major technical challenge in the considered setting, in contrast to fixed- or low-dimensional problems. In the latter case, one can first establish $\ell_2$-consistency using standard $M$-estimation theory under regularity conditions on the likelihood, such as uniform strong convexity in a local region~\citep{huber2004estimation,ma2010explicit,Bianconcini2012AsymptoticPO}. This consistency can then be used to show that the estimator lies in the interior of the parameter space, yielding the first-order condition. This strategy fails in our high-dimensional setting for two reasons. First, the required strong convexity does not hold (Lemma~\ref{lemma_convexity}) and, more importantly, $\ell_2$-consistency alone does not preclude large entrywise deviations, making the interior point condition nontrivial to prove.
\end{remark}

\subsection{Establishing the First-Order Condition}\label{subsec_first_order}
Remark~\ref{remark_comparison} identifies two reasons the conventional argument for the first-order condition fails. We now elaborate on the subtle difficulty implied by the second reason: establishing a uniform error bound for the estimator. A natural approach is a Taylor expansion of the marginal likelihood along the path connecting $\hat\btheta$ and $\btheta^*$. Lemma~\ref{lemma_convexity} tells that controlling the curvature of $\cL(\cdot)$ along this path requires the error direction $\hat\btheta-\btheta^*$ to have little overlap with $\mathrm{col}(\bGamma^*)$. This directional information, however, is unavailable at this stage, where even the first-order condition itself remains unestablished. 


To resolve this dilemma, we construct an auxiliary estimator that is allowed to borrow a controlled amount of information on the true parameters $\btheta^*$ and thereby exhibits more tractable behavior in its path to $\btheta^*$. Let $\pwk(\cdot\mid\bmu,\bSigma)$ denote the density of a multivariate Gaussian distribution with mean $\bmu\in\RR^{K}$ and covariance matrix $\bSigma\in\RR^{K\times K}$. Treating $\bmu$ and $\bSigma$ as additional arguments, define marginal likelihood function
\begin{equation}
    \cL(\btheta,\bmu,\bSigma)\;  = \; -\sum_{i=1}^n\log\int_{\bu_i\in\RR^K}\exp\big\{\ell_i(\bu_i;\btheta)\big\}\pwk(\bu_i\mid\bmu,\bSigma)d\bu_i .\label{eq_full_marginal_likelihood}
\end{equation}
The auxiliary estimator is then defined by
\begin{equation}
(\hat\btheta^*,\hat\bmu^*,\hat\bSigma^*)\; \;  \in \; \mathop{\arg\min}_{\substack{\btheta\in \cB_{\epsilon}(\btheta^*),(\bmu,\bSigma)
    \in\cK_C}}\cL(\btheta,\bmu,\bSigma) + P^*(\btheta),\label{eq_aux_estimator_illus}
\end{equation}
where $\cB_{\epsilon}(\btheta^*)$ is defined in \eqref{eq_define_loca_region}, $\cK_{C}$ is a regularized parameter set defined in \eqref{eq_define_KC} of the Supplementary Material and
\begin{equation}
    P^* ( \btheta) \; = \; c_Pnq\left\| q^{-1}\big(\bbeta-\bbeta^*,\bGamma-\bGamma^*\big)^\T\bGamma^*\right\|_{\Fn}^2.\label{eq_define_p_star_illus}
\end{equation}
Here $c_P\asymp b_n$ and $\epsilon$ is specified in Section~\ref{supp_sec_prove_lemma_key_first_order} of the Supplementary Material. The superscript $^*$ emphasizes that $P^*(\btheta)$ depends on the true parameters and is thus not computable from the data. This poses no practical limitation because the $P^*(\cdot)$ is introduced solely for analytical purposes. 

In \eqref{eq_aux_estimator_illus}, the auxiliary estimator $\hat\btheta^*$ incorporates the true parameter information via $P^*(\btheta)$.
To understand the influence of $P^*(\cdot)$, we note that $P^*(\btheta) = c_Pnq\left\|q^{-1}\bV\bGamma^*\right\|_{\Fn}^2$ for $\bV=(\bbeta-\bbeta^*,\bGamma-\bGamma^*)^\T$, equaling the correction term in the curvature bound \eqref{eq_hessian_lower_bound} up to scaling. Intuitively, $P^*(\cdot)$ discourages directions for which $\bV\bGamma^*$ is large, thereby ensuring well-behaved curvature along the path between $\hat\btheta^*$ and $\btheta^*$.


The introduction of arguments $\bmu$ and $\bSigma$ in the marginal likelihood contributes $K+K(K+1)/2$ degrees of freedom. Together with the $K(K-1)/2$ degrees of freedom associated with orthogonal transformations of $\bgamma_j$s, this gives $K+\frac{K(K+1)}{2}+\frac{K(K-1)}{2} = K^2 + K$, exactly matching the number of scalar entries in $\bV\bGamma^*\in\RR^{(K+1)\times K}$ measured by $P^*(\cdot)$. These additional degrees of freedom allow us to establish $P^*(\hat\btheta^*)=0$. Because $P^*(\cdot)$ is nonnegative, we can then establish that the auxiliary minimizer of $\cL+P^*$ also minimizes the negative marginal log-likelihood $\cL$ within the local region. In this sense, the auxiliary estimator borrows precisely the amount of true-parameter information needed to regularize the curvature while retaining a bridge to the original estimator.


The first-order optimality conditions can then be established in three steps, which we outline here. The technical details are deferred to Section~\ref{supp_sec_prove_lemma_key_first_order} of the Supplementary Material.
\begin{enumerate}[label=\arabic*.]
\item\; We first establish the theoretical properties of the auxiliary estimator $(\hat\btheta^*,\hat\bmu^*,\hat\bSigma^*)$ defined by the local optimization problem in \eqref{eq_aux_estimator_illus}, including the first-order conditions and uniform error bounds. Notably, $(\hat\btheta^*,\hat\bmu^*,\hat\bSigma^*)$ is a consistent estimator for $(\btheta^*,\zero_K,\bI_K)$.


\item\; We then relate $\hat\btheta^*$ to $\hat\btheta$ by showing that both estimators minimize the appropriate negative marginal log-likelihood and are asymptotically equivalent under a suitable transformation. 

\item\; Finally, using the transformation identified in Step~2, a change of variables expresses the score at $(\hat\btheta,\zero_K,\bI_K)$ as a linear transformation of the score at $(\hat\btheta^*,\hat\bmu^*,\hat\bSigma^*)$. The latter vanishes by Step~1, and hence so does the former, yielding the desired first-order optimality conditions for $\hat\btheta$.

 \end{enumerate}
Following the above procedure, we establish:
\begin{lemma}[First-order Condition] \label{lemma_key_first_order}
Define $\cS_{\theta}=\partial_{\btheta}\cL$, $\cS_{\mu}=\partial_{\bmu}\cL$, and $\cS_{\Sigma}=\partial_{\bSigma}\cL$ for $\cL(\btheta,\bmu,\bSigma)$ in \eqref{eq_full_marginal_likelihood}. Under Assumptions~\ref{assump_standard_id}--\ref{assump_scaling}, the following holds with probability tending to one:
    \begin{equation*}
        \cS_{\theta}(\hat\btheta,\zero_K,\bI_K) = \zero,\qquad\cS_{\mu}(\hat\btheta,\zero_K,\bI_K) = \zero,\quad\text{ and }\quad\cS_{\Sigma}(\hat\btheta,\zero_K,\bI_K) = \zero.
    \end{equation*}
\end{lemma}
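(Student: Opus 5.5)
The plan rests on one exact invariance of the augmented likelihood \eqref{eq_full_marginal_likelihood} under affine reparametrizations of the latent variable. For $\bSigma$ positive definite, substitute $\bu_i=\bmu+\bSigma^{1/2}\bv_i$ in each integral. Since $\eta(\bmu+\bSigma^{1/2}\bv;\btheta_j)=\eta(\bv;\btheta_j')$ with $\beta_j'=\beta_j+\bgamma_j^\T\bmu$ and $\bgamma_j'=\bSigma^{1/2}\bgamma_j$, we get
\begin{equation*}
\cL(\btheta,\bmu,\bSigma)=\cL\big(T_{\bmu,\bSigma}\btheta,\zero_K,\bI_K\big),
\end{equation*}
where $T_{\bmu,\bSigma}$ is this linear map on $\btheta$. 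Composing with orthogonal rotations $\bgamma_j\mapsto\bQ\bgamma_j$ gives a $(K^2+K)$-dimensional group $\cA$ of transformations that leave $\cL$ unchanged. All three steps outlined before the lemma will be reductions to this identity together with the chain rule.

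\textbf{Step 1 (auxiliary estimator).} First I show that $P^*(\hat\btheta^*)=0$. Consider the map $(\bmu,\bSigma,\bQ)\mapsto \bV\bGamma^*/q$, where $\bV$ is the error matrix of the transformed $\btheta$. Its Jacobian at $(\zero_K,\bI_K,\bI_K)$ is a linear map of $(K^2+K)$ free parameters onto $\RR^{(K+1)\times K}$. It is invertible because $q^{-1}\bTheta^{*\T}\bTheta^*\to\bSigma_\theta^*\succ0$ by Assumption~\ref{assumption_psd_covariance}. By the implicit function theorem, every $\btheta\in\cB_\epsilon(\btheta^*)$ can therefore be moved by some element of $\cA$ close to the identity to a point $\btheta''$ with $P^*(\btheta'')=0$ and the same value of $\cL$. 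Hence at the minimizer of $\cL+P^*$ the penalty must vanish.

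Next I establish interior and uniform bounds for the auxiliary estimator. On the constraint set $\bV\bGamma^*=0$, Lemma~\ref{lemma_convexity} gives isotropic curvature of order $b_n$. Combining this with the score bound $\cOp(r_{\theta,nq}\sqrt{nq})$ yields an $\ell_2$ rate. An entrywise (leave-one-$j$-out type) expansion then yields $\|\hat\btheta^*-\btheta^*\|_\infty\to0$ and $(\hat\bmu^*,\hat\bSigma^*)\to(\zero_K,\bI_K)$. Choosing $\epsilon$ and $\cK_C$ larger than these rates makes every constraint inactive, so $\hat\btheta^*$ is an interior point. Because the penalty is quadratic and vanishes at the minimizer, its gradient $2c_P n\,q^{-1}\,\mathrm{vec}(\bV\bGamma^*)\otimes\cdots$ is zero there as well. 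Consequently $\cS_\theta$, $\cS_\mu$ and $\cS_\Sigma$ all vanish at $(\hat\btheta^*,\hat\bmu^*,\hat\bSigma^*)$.

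\textbf{Step 2 (linking $\hat\btheta$ to $\hat\btheta^*$).} Set $\tilde\btheta=T_{\hat\bmu^*,\hat\bSigma^*}\hat\btheta^*$, which lies in $\Xi$ by Lemma~\ref{lemma_neighbour_control}. I must show that $\hat\btheta$ itself equals $\tilde\btheta$ up to an element of $\cA$, i.e.\ that the global minimizer over $\Xi$ is attained in the $\cA$-orbit of the local neighbourhood. For this I will prove a uniform identification bound: for $\btheta\in\Xi$ outside every $\cA$-image of $\cB_\epsilon(\btheta^*)$, $\cL(\btheta)-\cL(\btheta^*)$ is positive with probability tending to one.

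To do so, I apply a Laplace approximation at the modes $\bu_i^\star(\btheta)$, which is controlled by the constraint $\|\bu_i^\star\|\le C_u\sqrt{\log n}$. This reduces $\cL$ to the joint (profile) likelihood up to $O(n\log q)$ errors. For the joint likelihood, the expected excess loss is bounded below by $b_n\|\bU\bTheta^\T-\bU^*\bTheta^{*\T}\|_\Fn^2$. The singular-value constraint $\sigma_{K+1}(\bTheta)\ge c_\theta\sqrt q$ then forces $\bTheta$ to agree with $\bTheta^*$ up to an affine/rotation transform. Concentration of the score and the condition $\kappa_n^2\log n=o(q)$ make the deviation term lower order. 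Once $\hat\btheta$ lies in such an image, minimality of both estimators gives $\cL(\hat\btheta)=\cL(\tilde\btheta)$, and I take $\hat\btheta$ to be its representative.

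\textbf{Step 3 (transfer of the first-order condition).} By the identity above, $\cL(\btheta,\bmu,\bSigma)=\cL(\hat\btheta^*\text{-coordinates})$ under an invertible smooth change of variables $\Phi$ sending $(\hat\btheta^*,\hat\bmu^*,\hat\bSigma^*)\mapsto(\hat\btheta,\zero_K,\bI_K)$, namely $\Phi(\btheta,\bmu,\bSigma)=(T_{\hat\bmu^*,\hat\bSigma^*}\btheta,\ldots)$ composed with the group action. The chain rule gives
\begin{equation*}
(\cS_\theta,\cS_\mu,\cS_\Sigma)(\hat\btheta,\zero_K,\bI_K)=(D\Phi)^{-\T}\,(\cS_\theta,\cS_\mu,\cS_\Sigma)(\hat\btheta^*,\hat\bmu^*,\hat\bSigma^*)=\zero.
\end{equation*}
Equivalently, $\cS_\mu=\cS_\Sigma=\zero$ follow directly from local minimality of $(\bmu,\bSigma)\mapsto\cL(T_{\bmu,\bSigma}\hat\btheta,\zero_K,\bI_K)$ at $(\zero_K,\bI_K)$. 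Likewise, $\cS_\theta=\zero$ follows because $\hat\btheta$ is an interior point of $\Xi$: all three constraints are slack, by Step 1's sup-norm bound transported through $\Phi$ together with Lemma~\ref{lemma_neighbour_control}.

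I expect Step 2, the global-to-local identification over all of $\Xi$, to be the main obstacle. The Laplace remainder must be uniform over an unbounded-dimensional set, and the exponential curvature decay means the lower bound must carry the $b_n$ and $B_n$ factors carefully. If a direct approach fails, I would first use the joint-likelihood comparison only to get a crude $q^{-1/2}\ell_2$ bound, and then bootstrap it into the small neighbourhood using Lemma~\ref{lemma_convexity} on the $\cA$-normalized error.
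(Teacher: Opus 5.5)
\textbf{Overall.} Your plan follows the paper's route: the same penalized auxiliary problem with free $(\bmu,\bSigma)$, affine gauge fixing to reach $P^*=0$, a link between $\hat\btheta$ and the auxiliary minimizer through a crude profile-likelihood comparison plus spectral alignment (Lemma~\ref{lemma_average_converge_eta}, Davis--Kahan, Lemma~\ref{lemma_average_eta}), and a change of variables at the end.

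Your remark that differentiating $\cL(\btheta,\bmu,\bSigma)=\cL(T_{\bmu,\bSigma}\btheta,\zero_K,\bI_K)$ at $(\zero_K,\bI_K)$ makes $\cS_\mu$ and $\cS_\Sigma$ linear images of $\cS_\theta$ is correct. For example, $\cS_\mu=\sum_j\hat\bgamma_j\,\partial_{\beta_j}\cL$. This shortens the paper's Step~3.

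Step~2 is also easier than you expect. No uniform Laplace remainder is needed there, only two ingredients:
\begin{itemize}
\item the deterministic sandwich $0\le\cL(\btheta)+\sum_i\ell_i\{\bu_i^\star(\btheta);\btheta\}\le Cn\log(nq)$ on $\Xi$;
\item a trace inequality for the rank-$(2K+1)$ matrix $\{\hat\eta^\star_{ij}-\eta^\star_{ij}\}$, which makes the score term uniform automatically.
\end{itemize}
The real difficulties are in your Step~1, and there the plan has two gaps.

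\textbf{First gap: the ordering is circular.} Moving a point of $\cB_\epsilon(\btheta^*)$ onto the slice $\{\bV\bGamma^*=\zero\}$ uses a near-identity map. It only rescales the error by about $1+O(\epsilon)$, so it can still push a point near the boundary of the $\ell_2$-ball, or of the box $\|\btheta\|_\infty\le C_\theta$, out of the feasible set. The same applies to $(\bmu,\bSigma)$ relative to $\cK_C$. Hence ``close to the identity'' does not yet give $P^*(\hat\btheta^*)=0$. You then derive the rate on the slice, i.e.\ using $P^*=0$.

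The paper breaks the circle the other way. The Hessian of $P^*$ absorbs the correction term in Lemma~\ref{lemma_convexity}, so $\cL+P^*$ is $nb_n$-strongly convex on all of $\cB_\epsilon(\btheta^*)$; see \eqref{eq_convex_aux_obj}. Rates are therefore obtained without $P^*=0$. Only afterwards does the paper prove $P^*(\hat\btheta^*)=0$, using $q^{-1/2}\|\hat\btheta^*-\btheta^*\|=\cOp\{r_{\theta,nq}/(b_n\sqrt n)\}\ll\epsilon$ to keep the gauge-fixed point feasible. No implicit function theorem is needed: in the parametrization $\bgamma_j\mapsto\bG^\T\bgamma_j$, $\beta_j\mapsto\beta_j+\bgamma_j^\T\bd$, the constraint is linear and is solved explicitly.

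\textbf{Second gap: the sup-norm bound is asserted, not proved.} This is exactly the step Remark~\ref{remark_comparison} identifies as the main obstacle. An ``entrywise expansion'' of the score presupposes $\partial_{\btheta}\cQ^*(\hat\btheta^*,\hat\bmu^*,\hat\bSigma^*)=\zero$, which you have only after interiority is known. Also, $C_\theta$ is fixed, so the box cannot be ``chosen larger than the rates.'' You need a device that controls a possibly-boundary constrained minimizer entrywise.

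The paper's device:
\begin{itemize}
\item It takes $\tilde\btheta^*\in\argmin_{\btheta\in\cB_\epsilon(\btheta^*)}\|\hat\cS_Q^*(\btheta)\|_\infty^2$, which by construction satisfies $\|\hat\cS_Q^*(\tilde\btheta^*)-\hat\cS_Q^*(\btheta^*)\|_\infty\le2\|\hat\cS_Q^*(\btheta^*)\|_\infty$.
\item It splits the integrated Hessian as $\tilde\cH_{Q1}^*-\tilde\cH_{Q2}^*$, with $\|(\tilde\cH_{Q1}^*)^{-1}\|_\infty=\cOp(b_n^{-1})$ (block-diagonal plus rank $K^2+K$) and $\|\tilde\cH_{Q2}^*(\btheta)(\btheta-\btheta^*)\|_\infty=\cOp\{B_n^2\|\btheta-\btheta^*\|/(b_n\sqrt q)\}$.
\item This yields the sup-norm rate, hence interiority, a zero score, and finally $\tilde\btheta^*=\hat\btheta^*$ by strong convexity (Lemma~\ref{lemma_aux_estimator_property}).
\end{itemize}

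A blockwise variational-inequality version of your leave-one-$j$-out idea would also work. Once the $\ell_2$-ball constraint is slack, $\hat\btheta_j^*$ minimizes the $nb_n$-strongly convex map $\btheta_j\mapsto\cQ^*(\btheta_j,\hat\btheta^*_{-j},\hat\bmu^*,\hat\bSigma^*)$ over the box. Hence $\|\hat\btheta_j^*-\btheta_j^*\|\lesssim\|\partial_{\btheta_j}\cQ^*(\btheta_j^*,\hat\btheta^*_{-j},\hat\bmu^*,\hat\bSigma^*)\|/(nb_n)$, and the off-block part of that gradient is bounded by the same Schur-term estimate. Some such argument must be written down; as stated, your Step~1 assumes the interiority it is meant to establish.
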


\subsection{Consistency}\label{sec_prove_consis}
The consistency result follows from the properties of the auxiliary estimator. Specifically, Step~2 above establishes, up to an orthogonal transformation of $\hat\bgamma_j$s, that
\begin{equation*}
    \hat\bgamma_j = (\hat\bSigma^*)^{1/2}\hat\bgamma_j^* \;\text{ and }\; \hat\beta_j = \hat\beta_j^* + (\hat\bgamma_j^*)^\T\hat\bmu^*\;\text{with probability tending to one.}
\end{equation*}
In Step~1, we also bound $\|\hat\bmu^*\|\text{ and }\|\hat\bSigma^* - \bI_K\|$. Combining the convergence rate of $\hat\btheta^*$ established in Step~1 above proves Theorem~\ref{thm_consis}.

Quantities $\hat\bmu^*$ and $\hat\bSigma^* - \bI_K$ are closely related to $\bu_i^{\star}(\hat\btheta^*) = \argmax_{\bu\in\RR^K}\ell_i(\bu;\hat\btheta^*)$. Indeed, the first-order conditions for $\bmu$ and $\bSigma$ in Lemma~\ref{lemma_key_first_order} together with Laplace approximations yield 
\begin{equation*}
    \hat\bmu^* \;\approx\;\frac{1}{n}\sum_{i=1}^n\bu_i^{\star}(\hat\btheta^*)\quad\text{and}\quad\hat\bSigma^* \;\approx\;\frac{1}{n}\sum_{i=1}^n\big\{\bu_i^{\star}(\hat\btheta^*) - \hat\bmu^*\big\}\big\{\bu_i^{\star}(\hat\btheta^*) - \hat\bmu^*\big\}^\T.
\end{equation*}
Together with Lemma~\ref{lemma_neighbour_control} and consistency of $\hat\btheta^*$, these approximations yield the error bounds for $\|\hat\bmu^*\|$ and $\|\hat\bSigma^* - \bI_K\|$. The approximations manifest the implicit normalization in marginal estimation. When the mean and covariance parameters of $\pwk$ are set free, the estimated $\hat\bmu^*$ and $\hat\bSigma^*$ agree with the empirical mean and covariance of the likelihood modes up to small order, even though $\log\pwk(\cdot)$ is asymptotically negligible to the marginal likelihood. 

\subsection{Asymptotic Normality}\label{sec_prove_norma}
For simplicity, in this subsection we write $(\btheta^*,\bu^*)$ for the canonical representative and $\hat\btheta$ for the canonically aligned estimator $\hat\btheta^Q$. With $\cS_{\theta}(\hat\btheta,\zero_K,\bI_K) = \zero$, a natural starting point for deriving the asymptotic distribution of $\hat\btheta$ is expansion \eqref{eq_expand_informal}, which involves the inverse of $\partial_{\btheta}^2\cL(\btheta^*) = \cH_{\theta\theta}(\btheta^*)$. In particular, the discussion following Lemma~\ref{lemma_convexity} shows that it consists of a block-diagonal leading term and a dense, non-negligible correction arising from a Schur complement. This makes an explicit calculation of $\cH_{\theta\theta}^{-1}$ almost intractable.

To address this, we first observe that the dense component is the price paid for profiling out the latent variables. To make this connection precise, we consider the conditional negative log-likelihood $L(\btheta,\bu) = -\sum_{i=1}^n\ell_i(\bu_i;\btheta)$ with $\bu=(\bu_1^\T,\ldots,\bu_n^\T)^\T$ as the argument. Here, the stacked latent variables $\bu$ and the model parameter $\btheta$ are both arguments of $L(\btheta,\bu)$. Define the first- and second-order derivatives by
\begin{equation*}
    \bS_{L\theta}(\btheta,\bu) :=\, \frac{\partial L(\btheta,\bu)}{\partial\btheta},
    \quad\quad\quad
    \bS_{Lu}(\btheta,\bu)    :=  \frac{\partial L(\btheta,\bu)}{\partial\bu},\end{equation*}\begin{equation*}
    \bH_L(\btheta,\bu) :=\, \frac{\partial^2 L(\btheta,\bu) }{\partial^2(\btheta^\T,\bu^\T)^\T} = \begin{pmatrix}
        \bH_{L\theta\theta}(\btheta,\bu)&\bH_{L\theta u}(\btheta,\bu)  \\   \bH_{Lu\theta}(\btheta,\bu) & \bH_{Luu}(\btheta,\bu) 
        \end{pmatrix},
\end{equation*}
where, omitting the argument $(\btheta,\bu)$, the blocks in $\bH_L(\btheta,\bu)$ are given by
\begin{align*}
    \bH_{L\theta\theta} :=\frac{\partial^2L}{\partial\btheta\,\partial\btheta^\T},\quad
    \bH_{L\theta u}:= \frac{\partial^2L}{\partial\btheta\,\partial\bu^\T},\quad
    \bH_{Lu\theta}:=\frac{\partial^2L}{\partial\bu\,\partial\btheta^\T},\; \text{and}\;
    \bH_{Luu}:= \frac{\partial^2L}{\partial\bu\,\partial\bu^\T}.
\end{align*}
Lemma~\ref{lemma_laplace_approx_second_order} in the Supplementary Material gives the following Laplace approximation to $\cH_{\theta\theta}(\cdot)$:{
\setlength{\abovedisplayskip}{4pt}
\setlength{\belowdisplayskip}{4pt}
\begin{equation*}
    \cH_{\theta\theta}(\btheta) \;\approx\;\bH_{L\theta\theta}(\btheta,\bu^{\star}(\btheta)) \,- \,\bH_{L\theta u}\big(\btheta,\bu^{\star}(\btheta)\big)\,\left\{\bH_{Lu u}\big(\btheta,\bu^{\star}(\btheta)\big)\right\}^{-1}\,\bH_{Lu\theta}\big(\btheta,\bu^{\star}(\btheta)\big),
\end{equation*}}
where $\bu^{\star}(\btheta) = \big(\bu_1^{\star}(\btheta)^\T,\cdots,\bu_n^{\star}(\btheta)^\T\big)^\T$, with $\bu_i^{\star}(\btheta)$ denoting the likelihood mode at $\btheta$. The right-hand side is the Schur complement of the lower-right block $\bH_{Luu}$ in $\bH_L$.

What does this Schur-complement representation tell us? It suggests that the dense component in $\cH_{\theta\theta}(\btheta)$ arises from integrating out the latent variables, folding their contribution into the Schur complement $\bH_{L\theta u}\bH_{Luu}^{-1}\bH_{Lu\theta}$. To retain the latent-variable contribution explicitly, let $\bu^* = ((\bu_1^*)^\T,\dots,(\bu_n^*)^\T)^\T$ and consider the following score expansion:
\begin{equation}\label{eq_augmented_score_expansion}
    \begingroup
    \renewcommand{\arraystretch}{0.8}\begin{pmatrix}\bS_{L\theta}\big(\hat\btheta,\bu^{\star}(\hat\btheta)\big)\\\bS_{Lu}\big(\hat\btheta,\bu^{\star}(\hat\btheta)\big)\end{pmatrix} \,-\, \begin{pmatrix}\bS_{L\theta}(\btheta^*,\bu^*)\\\bS_{Lu}(\btheta^*,\bu^*)\end{pmatrix} \;=\; \bH_L(\btheta^*,\bu^*)\!\begin{pmatrix}
        \hat\btheta - \btheta^*\\\bu^{\star}(\hat\btheta) - \bu^*
    \end{pmatrix} \;+\; \mathsf{remainder}\endgroup.
\end{equation}
 The following features make this augmented expansion particularly tractable: 
\begin{itemize}
    \item \; On the left side, the score at $(\hat\btheta,\bu^\star(\hat\btheta))$ is asymptotically negligible, since (1) $\bS_{L\theta}(\hat\btheta,\bu^{\star}(\hat\btheta))\approx \cS_{\theta}(\hat\btheta,\zero_K,\bI_K) = \zero$ by Laplace approximation and Lemma~\ref{lemma_key_first_order}; and (2)  $\bS_{Lu}(\hat\btheta,\bu^{\star}(\hat\btheta)) = \zero$ follows directly from the definition of $\bu^\star(\hat\btheta)$. In addition, the score at $(\btheta^*,\bu^*)$ satisfies a conditional central limit theorem given $\{\bU_i\}_{i\in[n]}$.
    \item \; The Hessian $\bH_L(\btheta^*,\bu^*)$ can be inverted explicitly (after a correction detailed in Lemmas~\ref{lemma_approx_hessian_inverse} and~\ref{lemma_bound_aug_via_id} in the Supplementary Material). Its diagonal blocks, $\bH_{L\theta\theta}(\btheta^*,\bu^*)$ and $\bH_{Luu}(\btheta^*,\bu^*)$, are block-diagonal and thus admit direct inverses. We can then apply a standard $2\times 2$ block-matrix inversion formula to obtain a tractable expansion for $\hat\btheta - \btheta^*$. 
    This tractability arises because the dependence between the model parameters and latent variables remains isolated in the off-diagonal blocks $\bH_{L\theta u}(\btheta^*,\bu^*)$ and $\bH_{Lu\theta}(\btheta^*,\bu^*)$. The Schur complement folds this dependence into the dense term in $\cH_{\theta\theta}(\btheta^*)$, whereas the augmented score expansion keeps it explicit. 
\end{itemize}

Combining these analyses yields componentwise asymptotic normality for $\hat\btheta$ and $\hat\bu^\star$, which proves Theorem~\ref{thm_asymp}. Corollary~\ref{thm_asymp_bu} then follows by showing that $\hat\bu_i^\star-\hat\bu_i$ is asymptotically negligible, and Corollary~\ref{thm_bvm_irt} follows from a standard Bernstein–von Mises argument. Full details are provided in Sections~\ref{supp_sec_prove_asym_0}--\ref{supp_sec_prove_thm_bvm_irt} of the Supplementary Material.

\section{Simulation Studies}\label{sec_simu}
We assess the finite-sample coverage of the inference procedures developed in Section~\ref{sec_res}. We consider three latent distributions, with each component in $\bU_i$ generated independently from a Gaussian distribution, $\cN(0,1)$; a uniform distribution,  $\mathrm{Unif}(-2,2)$; or the Gaussian mixture $
\frac{1}{2}\cN(0.5,0.5^2)
+
\frac{1}{2}\cN(-0.5,0.5^2)$.
Within each replication, the generated latent variables are centered and normalized to obtain the canonical representation. The latent dimension is fixed at $K=3$. We vary $n\in \{300,600,1000\}$ and $q\in\{150,300,600\}$. For each $j\in[q]$, $\bgamma_j$s are generated independently from $\cN(\zero_K,\bSigma_\gamma)$ and then truncated entrywise to $[-2,2]$, where $\bSigma_\gamma\in\RR^{K\times K}$ has its $(l,h)$th entry being $0.3^{|l-h|}$. The intercepts are generated independently as $\beta_j\sim\cN(0,1)$ and truncated to $[-2,2]$.

\begin{table}[htbp]
\centering
\caption{Empirical coverage probabilities for model parameters under the Bernoulli model. Parentheses report Monte Carlo standard errors multiplied by \(1000\). The nominal coverage level is \(0.95\).}
\label{tab:coverage-gamma-beta-bernoulli}
\begin{tabular}{lccccccccc}
\toprule
 & \multicolumn{3}{c}{Gaussian} & \multicolumn{3}{c}{Uniform} & \multicolumn{3}{c}{Gaussian Mixture} \\
\cmidrule(lr){2-4}\cmidrule(lr){5-7}\cmidrule(lr){8-10}
$n$ & $q=150$ & $q=300$ & $q=600$ & $q=150$ & $q=300$ & $q=600$ & $q=150$ & $q=300$ & $q=600$ \\
\midrule
300 & 0.930 & 0.941 & 0.945 & 0.932 & 0.941 & 0.947 & 0.931 & 0.941 & 0.945 \\
 & (1.55) & (0.70) & (0.47) & (1.56) & (0.69) & (0.46) & (1.57) & (0.72) & (0.47) \\
600 & 0.931 & 0.942 & 0.946 & 0.933 & 0.942 & 0.946 & 0.931 & 0.942 & 0.946 \\
 & (1.53) & (0.69) & (0.47) & (1.56) & (0.67) & (0.45) & (1.53) & (0.69) & (0.46) \\
1000 & 0.937 & 0.943 & 0.945 & 0.930 & 0.941 & 0.946 & 0.932 & 0.943 & 0.946 \\
 & (1.50) & (0.71) & (0.46) & (1.67) & (0.69) & (0.46) & (1.53) & (0.67) & (0.46) \\
\bottomrule
\end{tabular}
\end{table}

We consider three observation models: a linear Gaussian model, a Bernoulli model with a logistic link, and a Poisson model with a log link. For each configuration, we conduct $100$ independent replications. Estimation proceeds in two stages. We first adopt the variational approximation~\citep{hui2017variational} to obtain a computationally efficient initial estimate. Then we apply an EM algorithm in which the required integrals are evaluated using adaptive Gauss--Hermite quadrature with seven nodes per dimension, giving $7^K=343$ quadrature nodes in total. This refinement targets the marginal maximum likelihood estimator up to small numerical optimization
and quadrature errors.
We report empirical coverage using the sandwich-form asymptotic variance estimator for the model parameters (Theorem~\ref{thm_asymp}) and the sandwich-form asymptotic variance estimator for the posterior-mode point estimate of $\bU_i$ (Corollary~\ref{thm_asymp_bu}).
 The empirical coverage results for the Bernoulli model are reported in Tables~\ref{tab:coverage-gamma-beta-bernoulli} and~\ref{tab:coverage-u-bernoulli}. The results under Gaussian and Poisson models exhibit similar patterns and are reported in Section~\ref{supp_Sec_simu} of the Supplementary Material.

\begin{table}[htbp]
\centering
\caption{Empirical coverage probabilities for latent variables $\bU_i$ under the Bernoulli model. Parentheses report Monte Carlo standard errors multiplied by \(1000\). The nominal coverage level is \(0.95\).}
\label{tab:coverage-u-bernoulli}
\begin{tabular}{lccccccccc}
\toprule
 & \multicolumn{3}{c}{Gaussian} & \multicolumn{3}{c}{Uniform} & \multicolumn{3}{c}{Gaussian Mixture} \\
\cmidrule(lr){2-4}\cmidrule(lr){5-7}\cmidrule(lr){8-10}
$n$ & $q=150$ & $q=300$ & $q=600$ & $q=150$ & $q=300$ & $q=600$ & $q=150$ & $q=300$ & $q=600$ \\
\midrule
300 & 0.928 & 0.933 & 0.937 & 0.926 & 0.931 & 0.936 & 0.927 & 0.932 & 0.935 \\
 & (1.05) & (0.99) & (0.90) & (1.30) & (0.86) & (0.89) & (1.06) & (0.92) & (0.79) \\
600 & 0.934 & 0.938 & 0.942 & 0.932 & 0.939 & 0.941 & 0.935 & 0.939 & 0.942 \\
 & (0.86) & (0.60) & (0.54) & (0.70) & (0.56) & (0.61) & (0.70) & (0.57) & (0.60) \\
1000 & 0.938 & 0.941 & 0.944 & 0.934 & 0.941 & 0.943 & 0.937 & 0.942 & 0.943 \\
 & (0.63) & (0.47) & (0.44) & (0.55) & (0.48) & (0.44) & (0.71) & (0.44) & (0.40) \\
\bottomrule
\end{tabular}
\end{table}

The results show that confidence intervals based on our asymptotic theory attain empirical coverage close to the nominal 95\% level, with little variation across replications. The results are similar under Gaussian and non-Gaussian latent distributions, providing numerical support that the result does not depend on the specific choice of latent distribution. Inference for each $\btheta_j$ requires inversion of one $(K+1)\times(K+1)$ matrix, which is computationally efficient. As a comparison, the conventional method inverts the full Hessian with respect to entire parameter $\btheta\in\RR^{q(K+1)}$. We do not carry out this comparison over the full 100 replications for several reasons. First, the computational cost is substantial. With $S$ Monte Carlo draws, forming this Hessian costs $O(nSq^2(K+1)^2)$, in addition to the cost of inverting a $q(K+1)$-dimensional matrix. Moreover, the estimated Hessian was indefinite in every replication we examined. The diminishing curvature established in Lemma~\ref{lemma_convexity} makes its smallest eigenvalues particularly sensitive to Monte Carlo error. Finally, using a Moore--Penrose pseudoinverse to approximate the Hessian inverse can produce negative variance estimates for many entries and substantial undercoverage for the other parameters with positive estimated variances. These findings make the conventional approach unreliable to use as a benchmark in high dimensions.


\section{Real Data Analysis}\label{sec_data}
    We analyze data from the 2026 release of the North American Breeding Bird Survey (BBS)~\citep{pardieck2016north}\footnote{The data can be found in \href{https://doi.org/10.5066/P144YU3S}{doi:10.5066/P144YU3S}}. The BBS is a continent-wide monitoring program designed to assess the status and long-term trends of North American bird populations. Along survey routes distributed across North America, observers record all birds seen or heard. The resulting data play an important role in documenting changes in avian populations and informing conservation and wildlife-management decisions. 

We focus on the 2024 survey year in the continental United States and organize the data into a route-by-species count matrix $\bX\in\{0,1,\cdots,50\}^{2124\times 595}$, whose rows correspond to survey routes and whose columns correspond to bird species. Each entry records the number of the $50$ roadside stops along a route at which the species was detected, so the responses are bounded detection counts rather than numbers of individuals. For each route, the latent variable $\bU_i$ provides a low-dimensional representation of avian assemblage structure. For each species $j$, $\btheta_j=(\beta_j,\bgamma_j^\T)^\T$ characterizes its abundance pattern across these latent dimensions. We fit the model with a Poisson response distribution with dispersion $\phi_j$ estimated during fitting from the Pearson residuals. We set the number of latent variables at $K=5$. To address rotational indeterminacy and facilitate interpretation, we apply an oblique Geomin rotation~\citep{yates1987multivariate} to the estimated loading matrix and transform the latent scores accordingly. This rotation yields a more interpretable loading structure while allowing the rotated latent variables to be correlated.


\begin{figure}
    \centering
    \includegraphics[width=0.9\linewidth]{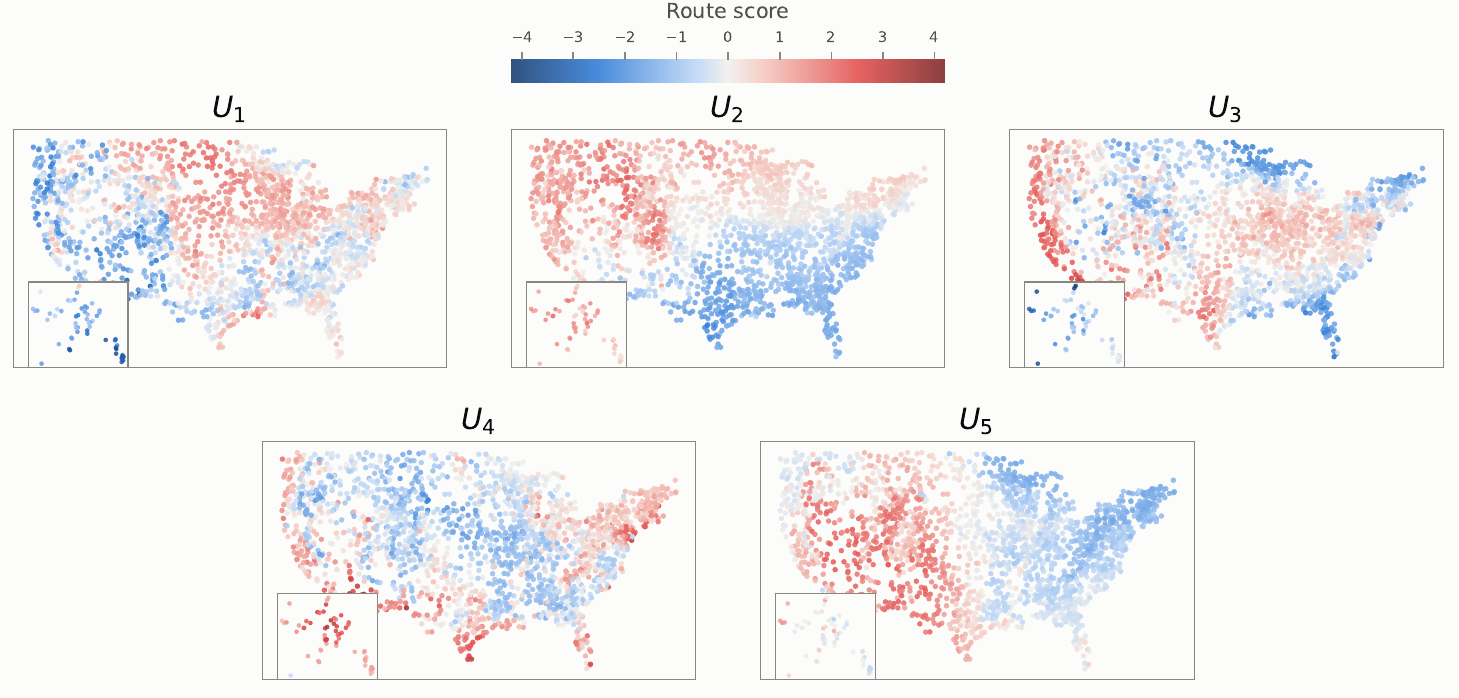}
    \caption{Estimated route scores for the five latent variables across the continental United States.}
    \label{fig_route1}
\end{figure}

Figure~\ref{fig_route1} maps the estimated route scores for the five latent variables. These latent variables align with recognizable geographic gradients. Specifically, $U_1$ highlights the grassland and prairie routes of the Great Plains, contrasting them primarily with routes in the western mountains and along the Pacific coast; $U_2$ separates northern and northwestern routes from those in the Southeast and exhibits a clear latitudinal gradient; $U_3$ separates Californian routes most strongly, with the far north and the Southeast at the opposite pole; and $U_4$ captures a broader coastal–inland separation, with the coastal pattern particularly visible in New England, Florida, Texas, and Alaska.


The fifth variable, $U_5$, exhibits a pronounced east--west separation. Its spatial pattern follows approximately the 100th-meridian transition, a major climatic and biogeographic boundary that also separates much of the eastern and western avifauna. This interpretation is further supported by the species-specific parameter $\gamma_{j5}$. Wood Thrush and Ovenbird, both characteristic of eastern deciduous forests, have the largest estimates of one sign, $-4.09$ and $-3.91$. By contrast, Western Meadowlark and Western Kingbird, which are predominantly associated with western and southwestern habitats, have positive estimates of $+1.50$ and $+1.44$, respectively. All four entries are highly significant, with p-value below $10^{-40}$.
The inference results obtained from our asymptotic theory can further support various downstream tasks, including comparing association strengths across species or ecological groups and testing their relationships with habitat, migration, or taxonomy.


\begin{figure}[hbtp]
    \centering
    \includegraphics[width=0.75\linewidth]{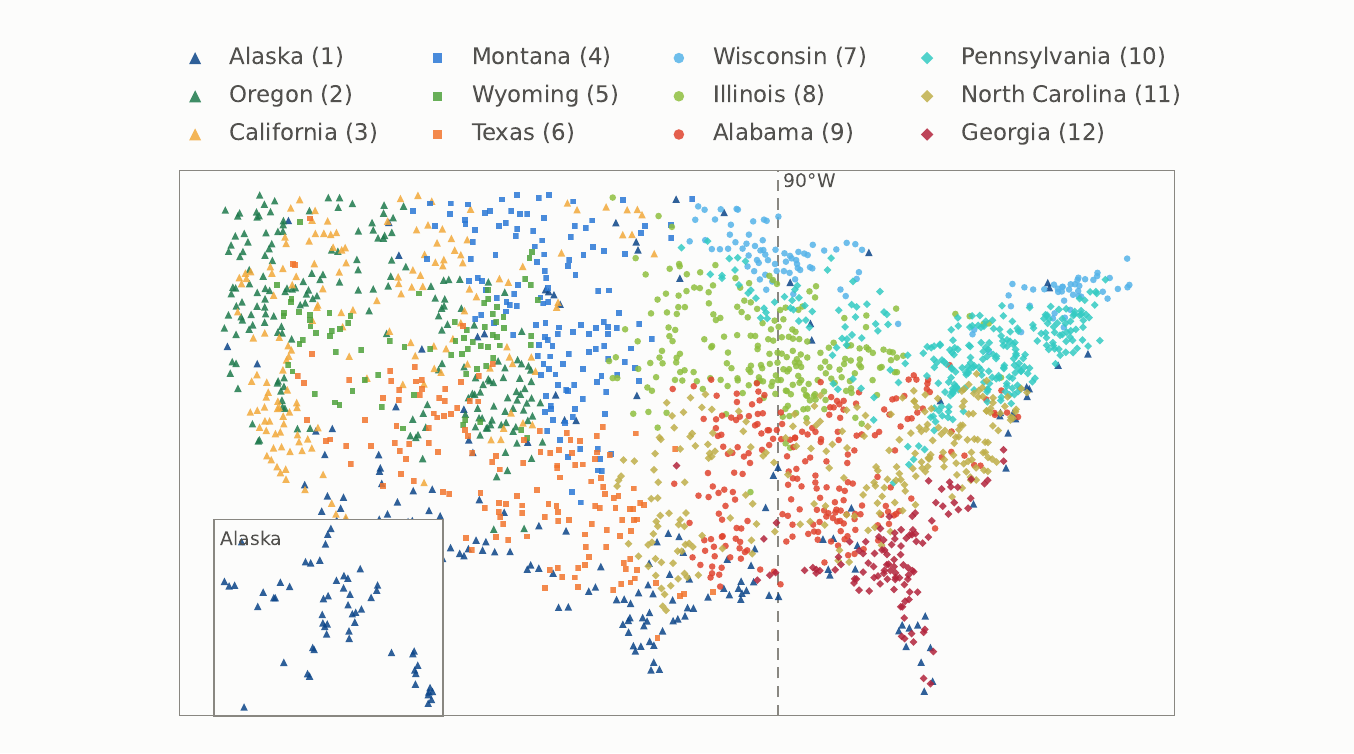}
    \caption{Geographic distribution of the twelve route clusters based on the estimated latent variables. Each cluster is named after the state most frequently represented among its assigned routes.}
    \label{fig_xd_clusters}
\end{figure}

We apply extreme deconvolution~\citep{bovy2011extreme} to cluster the routes, which fits a Gaussian mixture to the underlying latent distribution while accounting for heteroskedastic measurement error at each route. Motivated by Corollaries~\ref{thm_asymp_bu} and~\ref{thm_bvm_irt}, we use $\hat\bu_i$ as a point estimate of $\bU_i$ and its estimated asymptotic covariance as route-specific variance. Fitting mixtures with $G=1,\ldots,20$ components, the BIC falls steeply from ${\rm BIC}(G=1)=28606.9$ for a single Gaussian to a minimum ${\rm BIC}(G=12)=20463.5$.  The discrepancy in BIC indicates that the latent distribution is not adequately described by a single multivariate Gaussian.
As shown in Figure~\ref{fig_xd_clusters}, the resulting clusters are largely geographically coherent. For example, east of $90^\circ$W the clusters form an orderly latitudinal sequence, from Cluster~7 in the upper Midwest and northern New England, through Cluster~10 centred on Pennsylvania and the Northeast and Cluster~8 in the Corn Belt, to Cluster~11 across the central Appalachians and the Carolinas, Cluster~9 in the interior Deep South and Cluster~12 on the Gulf and South Atlantic coastal plain.


\section{Discussion}\label{sec_discussion}
This paper develops asymptotic theory for marginal maximum likelihood estimation in high-dimensional latent variable models. Theoretically, we establish the first rigorous statistical guarantees for marginal estimation in this regime, including consistency and asymptotic normality for model parameter estimation, as well as latent-variable recovery, closing a long-standing gap in the literature. Our analysis reveals several fundamental features of the problem, including the local geometry of the marginal likelihood, the implicit normalization of the conditional likelihood modes, and how integrating out the latent variables shapes the curvature of the marginal likelihood. We assess the finite-sample performance of the resulting inferential procedures through simulations and illustrate their practical utility in an analysis of North American Breeding Bird Survey data.

Our work opens several directions for future research. First, our analysis assumes the standard conditionally independent measurement model. More complex dependence structures arise in hierarchical, longitudinal, spatial, and network latent-variable models, and whether our techniques can be adapted to these settings is an interesting question~\citep{hoff2002latent,skrondal2004generalized,mcculloch2008generalized}. Second, we focus on continuous latent variables. Models with discrete and mixed-type latent variables are also widely used and raise additional identifiability issues~\citep{allman2009identifiability,xu2017identifiability}. Establishing statistical guarantees for their marginal estimators is an important direction. Third, our results provide a likelihood-based starting point for studying latent representation learning. Machine learning tools employ increasingly complex representations, such as discrete and hierarchical latent codes and the autoencoded latent spaces used by diffusion models~\citep{van2017neural,vahdat2020nvae,rombach2022high}. Extending our analysis to these settings could bring a statistical perspective to the properties and uncertainty of learned representations.

{\spacingset{1.2}
\bibliographystyle{agsm}
\bibliography{reference}}

\newpage

\newtheorem{suppTheorem}{Theorem}[section]
\renewcommand{\thesuppTheorem}{\thesection}
\renewcommand{\theequation}{S\arabic{equation}}
\setcounter{equation}{0}
\renewcommand{\thefigure}{S\arabic{figure}}
\setcounter{figure}{0}
\ifx\lemma\undefined
\newtheorem{lemma}{Lemma}[section]
\fi
\renewcommand{\thelemma}{\thesection\arabic{lemma}}
\appendix

\addtocontents{toc}{\protect\setcounter{tocdepth}{1}}
\setcounter{tocdepth}{1}
\tableofcontents

\section{Notation and Approximation to the Derivatives}\label{supp_sec_prelim}

We introduce notation used throughout the supplement.  For a vector $\bx\in\RR^d$ and an index set $\cS\subseteq[d]$, $\bx_{\cS}$ is the subvector indexed by $\cS$, and $|\cS|$ is the cardinality of $\cS$. For a matrix $\bM=(M_{rs})\in\RR^{d_1\times d_2}$ and index sets $\cS_1\subseteq[d_1]$ and $\cS_2\subseteq[d_2]$, $\bM_{\cS_1,\cS_2}$ is the corresponding submatrix. We abbreviate $\bM_{[d_1],\cS_2}$ and $\bM_{\cS_1,[d_2]}$ as $\bM_{,\cS_2}$ and $\bM_{\cS_1,}$, respectively. For a matrix $\bM$, let $\|\bM\|_\infty$ and $\|\bM\|_{*}$ denote its maximum-row-sum and nuclear norms, respectively. Let $\mathrm{diag}(\bM)$ retain only the diagonal of $\bM$. The operator $\mathrm{vech}(\bM)$ stacks the lower-triangular entries of a symmetric $\bM$. Write $\be_r^{(d)}$ for the $r$th standard basis vector of $\RR^d$  and $\one_d$ for the $d$-vector of ones. We write $\cO^K=\{\bQ\in\RR^{K\times K}:\bQ\bQ^\T=\bI_K\}$ for the orthogonal group. For a random variable $Z$, $\|Z\|_{\psi_1}$ denotes its sub-exponential Orlicz norm, and $\psi_1\mid\mathcal F$ denotes its conditional version. We write $\operatorname{rank}(\bM)$ and $\|\bM\|_*$ for the rank and nuclear norm of a matrix, $\bP_{\cS}$ for the orthogonal projector onto a subspace $\cS$, and $\bP_{\cS}^{\perp}=\bI-\bP_{\cS}$. Let
\begin{equation}
    \cK_i=\{(i-1)K+1,\ldots,iK\},\qquad
    \cK_j^+=\{(j-1)(K+1)+1,\ldots,j(K+1)\}.\label{eq_define_Kij}
\end{equation} We use $\bu=(\bu_1^\T,\ldots,\bu_n^\T)^\T$, $\bbeta=(\beta_1,\ldots,\beta_q)^\T$, $\bGamma=(\bgamma_1,\ldots,\bgamma_q)^\T$, and $\bTheta=(\bbeta,\bGamma)$. Superscript $^*$ denotes the true parameter or realized latent value (e.g. $\bu^*$) and $\bu_i^\star(\btheta)$ denotes the conditional-likelihood mode at $\btheta$. Let $\bSigma_\gamma^*:=\lim_{q\to\infty}q^{-1}\sum_{j=1}^q\bgamma_j^*(\bgamma_j^*)^\T$, which exists and is positive definite by Assumption~\ref{assumption_psd_covariance}. Denote the joint law of latent variables and responses by $\PP$.

We further clarify scaling conditions implied by Assumption~\ref{assump_scaling}. Because $n^c\le q\le n^C$ for constants $c,C>0$, we have 
\begin{equation*}
    \log q\asymp \log n.
\end{equation*}Moreover, $B_n$, $b_n^{-1}$, and $\kappa_n$ are all of order at most $\exp\{O(\sqrt{\log n})\} = n^{o(1)}$. For $\varepsilon_{nq} = \exp\{\log^{1/2+c}(n\vee q)\} = (n\vee q)^{o(1)}$, it dominates any fixed product of $B_n$, $b_n^{-1}$, $\kappa_n$, and $\log(n\vee q)$. It is worth mentioning that any fixed product of powers of these quantities remains subpolynomial, and multiplication by any fixed negative power of $n\wedge q$ makes it converge to zero.



Recall that $\{\bu_i^*\}_{i=1}^n$ denotes the realized values of the random variables $\{\bU_i\}_{i=1}^n$. Another quantity that plays an important role in the subsequent analysis is $ \bu_i^{\star} 
    = \argmax_{\bu\in\RR^K} \ell_i(\bu;\btheta^*)$. The following result characterizes this oracle estimator.
\begin{lemma}
    \label{lemma_bu_star_theta_true}\em At the true parameter $\btheta^*$, let $\bu_i^{\star}$ be the solution to $\arg\max_{\bu\in\RR^{K}}\ell_i(\bu;\btheta^*)$. Under Assumptions~\ref{assump_standard_id}--\ref{assump_scaling},
    \begin{equation*}
        \big\|\bu_i^{\star} - \bu_i^*\big\| = \cOp\big(\tfrac{\kappa_n}{\sqrt{q}}\big),\quad n^{-1}\sum_{i=1}^n\big\|\bu_i^{\star} - \bu_i^*\big\|^2 = \cOp\big(\tfrac{\kappa_n^2}{q}\big),\quad \max_{i\in[n]}\big\|\bu_i^{\star} - \bu_i^*\big\| = \cOp\big(\kappa_n\sqrt{\tfrac{\log n}{q}}\big).
    \end{equation*}
    Furthermore,
    \begin{equation}
        \bu_i^{\star} - \bu_i^* = -\big\{\sum_{j=1}^q\ell_{ij}^{\prime\prime}(\eta_{ij}^*)\bgamma_j^*(\bgamma_j^*)^\T\big\}^{-1}\sum_{j=1}^q\ell_{ij}^{\prime}(\eta_{ij}^*)\bgamma_j^* + \cR_i^{\star},\label{eq_expansion_u_star}
    \end{equation}
    where $\cR_i^{\star}$ satisfies $\|\cR_i^{\star}\| = \cOp\big(\kappa_n^2B_n/(qb_n)\big)$, $\big(n^{-1}\sum_{i=1}^n\|\cR_i^{\star}\|^2\big)^{1/2} = \cOp\big(\kappa_n^2B_n/(qb_n)\big)$ and $\max_{i\in[n]}\|\cR_i^{\star}\| = \cOp(\kappa_n^2B_n\log n/(qb_n))$.
   Furthermore,
    \begin{equation*}
        \Big\|\sum_{i=1}^n\big(\bu_i^\star - \bu_i^*\big)\Big\| = \cOp\big(\kappa_n\sqrt{n/q} + \kappa_n^3\sqrt{B_n}n/q\big),\end{equation*}
        and
        \begin{equation*}\Big\|\sum_{i=1}^n\big(\bu_i^\star{\bu_i^\star}^\T - \bu_i^*{\bu_i^*}^\T\big)\Big\| = \cOp\big(\kappa_n\sqrt{n/q} + \kappa_n^3\sqrt{B_n}n/q\big).
    \end{equation*}
\end{lemma}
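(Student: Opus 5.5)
We view $\bu_i^\star$ as an M-estimator of the $K$-dimensional parameter $\bu_i^*$ in a model whose $q$ "observations" $X_{i1},\dots,X_{iq}$ are conditionally independent given $\bU_i$ with known coefficients $\btheta_j^*$. The argument therefore has three parts: a local strong-concavity plus score bound to get existence and the rate $\kappa_n/\sqrt q$, a one-step Taylor expansion for \eqref{eq_expansion_u_star}, and averaging over $i$ for the two sum bounds. Throughout we work on $\cE_\eta$, where $|\eta_{ij}^*|\le C_\eta\sqrt{\log n}$. Hence $b_n\le-\ell_{ij}''\le B_n$ on a neighbourhood $\{\bu:\|\bu-\bu_i^*\|\le \delta\}$, provided $\delta\sqrt{\log n}$ is small enough that the shifted predictors stay within a constant multiple of the range; the constant can be absorbed into $C_\eta$.

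\textbf{Step 1: curvature, score, and consistency.} The Hessian is
\[
-\partial^2_{\bu}\ell_i(\bu;\btheta^*)=\sum_j -\ell_{ij}''(\eta)\bgamma_j^*\bgamma_j^{*\T}\succeq b_n\sum_j\bgamma_j^*\bgamma_j^{*\T}\succeq c\,b_nq\,\bI_K,
\]
using Assumption~\ref{assumption_psd_covariance} (positive definiteness of $\bSigma_\gamma^*$) for large $q$. The score is $\bs_i=\sum_j\ell_{ij}'(\eta_{ij}^*)\bgamma_j^*$, a sum of conditionally independent mean-zero sub-exponential terms with norm $\lesssim\sqrt{B_n}$. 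Bernstein's inequality then gives $\|\bs_i\|=\cOp(\sqrt{B_nq})$, a second-moment bound gives $n^{-1}\sum_i\|\bs_i\|^2=\cOp(B_nq)$, and a union bound gives $\max_i\|\bs_i\|=\cOp(\sqrt{B_nq\log n})$. Next, compare $\ell_i$ on the sphere $\|\bu-\bu_i^*\|=\delta$ with $\delta\asymp$ a large multiple of the score bound divided by $b_nq$. Concavity (Assumption~\ref{assumption_smoothness}) and strong concavity inside the ball show that the maximizer exists, is unique, and satisfies
\[
\|\bu_i^\star-\bu_i^*\|\le 2\|\bs_i\|/(cb_nq).
\]
This yields all three rate statements, since $\sqrt{B_n}/b_n=\kappa_n$. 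The only care needed is for the maximum over $i$: the ball radius $\kappa_n\sqrt{\log n/q}$ times $\sqrt{\log n}$ must be $o(1)$, which holds by Assumption~\ref{assump_scaling}.

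\textbf{Step 2: expansion.} Expand $0=\partial_{\bu}\ell_i(\bu_i^\star)$ around $\bu_i^*$ with an integral-form remainder. This gives \eqref{eq_expansion_u_star} with
\[
\cR_i^\star=\bH_i^{-1}\big(\tilde\bH_i-\bH_i\big)(\bu_i^\star-\bu_i^*),
\]
where $\bH_i$ is the Hessian at $\bu_i^*$ and $\tilde\bH_i$ is the averaged Hessian along the segment. Bounding $|\ell_{ij}'''|\le B_n$ gives $\|\tilde\bH_i-\bH_i\|\lesssim B_nq\|\bu_i^\star-\bu_i^*\|$. Together with $\|\bH_i^{-1}\|\lesssim 1/(b_nq)$, this yields
\[
\|\cR_i^\star\|\lesssim (B_n/b_n)\|\bu_i^\star-\bu_i^*\|^2=\cOp\big(\kappa_n^2B_n/(qb_n)\big),
\]
with the averaged and maximal versions following from Step 1. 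The maximal version carries the factor $\log n$.

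\textbf{Step 3: sums over $i$ (main obstacle).} Write
\[
\sum_i(\bu_i^\star-\bu_i^*)=-\sum_i\bH_i^{-1}\bs_i+\sum_i\cR_i^\star .
\]
Conditionally on $\{\bU_i\}$, the leading terms are independent across $i$ and mean zero, each with variance $\lesssim B_n/(b_n^2q)$. Their sum is therefore $\cOp(\kappa_n\sqrt{n/q})$. The naive remainder bound $n\kappa_n^2B_n/(qb_n)$ is too crude for the claimed $\kappa_n^3\sqrt{B_n}\,n/q$, so I would refine it. Expand one order further: the second-order term is a quadratic form in $\bH_i^{-1}\bs_i$ weighted by $\sum_j\ell_{ij}'''\bgamma_j\bgamma_j\bgamma_j$. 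Its conditional mean is of size $(B_nq)(B_n/(b_n^2q))/(b_nq)$, whose sum over $i$ fits within $\kappa_n^3\sqrt{B_n}\,n/q$ after bookkeeping, and its centered part is smaller. Getting these exponents to match exactly is the delicate part; if needed I would bound the third-derivative weight by $B_n$ but keep the quadratic's mean explicit. The second-moment sum is handled the same way via
\[
\bu_i^\star\bu_i^{\star\T}-\bu_i^*\bu_i^{*\T}=\bdelta_i\bu_i^{*\T}+\bu_i^*\bdelta_i^\T+\bdelta_i\bdelta_i^\T,\qquad \bdelta_i=\bu_i^\star-\bu_i^*.
\]
The cross terms are treated like the first sum, since $\bu_i^*$ is $\{\bU_i\}$-measurable. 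The term $\sum_i\|\bdelta_i\|^2=\cOp(n\kappa_n^2/q)$ is dominated by the stated bound.
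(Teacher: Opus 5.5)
Your Steps 1 and 2 are sound and match the paper. The paper gets the rates from the curvature argument in the proof of Lemma~\ref{lemma_neighbour_control} with $\epsilon=0$, and the expansion from the integral mean-value theorem with $|\ell_{ij}^{(3)}|\le B_n$. Step 3 has two problems.

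\textbf{The obstacle you flag is not real.} Since $\kappa_n=\sqrt{B_n}/b_n$, we have $\kappa_n^2B_n/b_n=B_n^2/b_n^3=\kappa_n^3\sqrt{B_n}$. So the crude bound
\[
\Big\|\sum_i\cR_i^\star\Big\|\le\sqrt n\Big(\sum_i\|\cR_i^\star\|^2\Big)^{1/2}=\cOp\big(n\kappa_n^2B_n/(qb_n)\big)
\]
is exactly the claimed term $\kappa_n^3\sqrt{B_n}\,n/q$. This is what the paper uses. Your own estimate of the quadratic term's conditional mean, $B_n^2/(b_n^3q)$ per subject, has the same order, so the unfinished second-order expansion gains nothing and can be dropped.

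\textbf{The genuine gap is the claim that $\bH_i^{-1}\bs_i$ is conditionally mean zero.} Assumption~\ref{assumption_smoothness} makes only $\ell_{ij}'(\eta_{ij}^*)$ conditionally mean zero. The curvature $\ell_{ij}''(\eta_{ij}^*)=\partial_\eta^2\log p_j(X_{ij}\mid\eta)$ may depend on $X_{ij}$; it is nonrandom given $\bU_i$ only for canonical links such as logistic, Poisson, or Gaussian. In general $\bH_i$ is random and correlated with $\bs_i$, so $\EE[\bH_i^{-1}\bs_i\mid\mathscr U_n]\neq\zero$.

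The paper repairs this by centering the curvature at $\bar\bH_i=\EE[\bH_i\mid\mathscr U_n]$. This matrix is $\mathscr U_n$-measurable and still satisfies $\lambda_{\min}(\bar\bH_i)\ge cqb_n$. Then $\sum_i\bar\bH_i^{-1}\bs_i$ is a sum of conditionally independent mean-zero vectors, so it is $\cOp(\kappa_n\sqrt{n/q})$. The fluctuation part is bounded crudely:
\[
\Big\|\sum_i(\bH_i^{-1}-\bar\bH_i^{-1})\bs_i\Big\|\le\frac{C}{q^2b_n^2}\sum_i\|\bH_i-\bar\bH_i\|\,\|\bs_i\|=\cOp\Big(\frac{nB_n^{3/2}}{qb_n^2}\Big).
\]
This uses $\|\bH_i^{-1}-\bar\bH_i^{-1}\|\le\|\bH_i^{-1}\|\,\|\bar\bH_i^{-1}\|\,\|\bH_i-\bar\bH_i\|$, together with $\EE(\|\bH_i-\bar\bH_i\|^2\mid\mathscr U_n)\lesssim qB_n^2$ and $\EE(\|\bs_i\|^2\mid\mathscr U_n)\lesssim qB_n$. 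Because $b_n\le1\le B_n$, this term lies within $\kappa_n^3\sqrt{B_n}\,n/q$.

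The same centering is needed for the cross terms $\sum_i\bu_i^*(\bH_i^{-1}\bs_i)^\T$ in the second-moment sum. The rest of your treatment of that sum is as in the paper: the decomposition via $\bdelta_i$, Cauchy--Schwarz for $\sum_i\bu_i^*(\cR_i^\star)^\T$, and $\sum_i\|\bdelta_i\|^2=\cOp(n\kappa_n^2/q)$.
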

\begin{proof}
    See Section~\ref{supp_sec_prove_lemma_bu_star_theta_true}.
\end{proof}
\subsection{Derivatives of the Marginal Log-likelihood}\label{supp_sec_deriva_form}
Recall that $\cL(\btheta,\bmu,\bSigma)$ is the negative marginal log-likelihood
\begin{equation*}
    \cL(\btheta,\bmu,\bSigma) = -\sum_{i=1}^n\log\int_{\bu_i\in\RR^K}\prod_{j=1}^qp_j(X_{ij}\mid\bgamma_j^\T\bu_i + \beta_j)\pwk(\bu_i\mid\bmu,\bSigma)d\bu_i .
\end{equation*}
Its first-order derivatives are defined as
\begin{equation*}
\cS_{\theta}(\btheta,\bmu,\bSigma) = \frac{\partial \cL(\btheta,\bmu,\bSigma)}{\partial\btheta},\; \cS_{\mu}(\btheta,\bmu,\bSigma) = \frac{\partial \cL(\btheta,\bmu,\bSigma)}{\partial\bmu}\text{ and }\cS_{\Sigma}(\btheta,\bmu,\bSigma) = \frac{\partial \cL(\btheta,\bmu,\bSigma)}{\partial\bSigma},
\end{equation*}
Here, $\cS_\Sigma$ denotes the symmetric matrix gradient satisfying
$d\cL=\tr(\cS_\Sigma^\T d\bSigma)$ and let $\cS_\Sigma^{\mathrm{vec}}=\mathrm{vec}(\cS_\Sigma)$ denote its fully vectorized form. The second-order derivative with respect to $\btheta$ is
\begin{equation*}
    \cH_{\theta\theta}(\btheta,\bmu,\bSigma) = \frac{\partial^2\cL(\btheta,\bmu,\bSigma)}{\partial\btheta^2}.
\end{equation*}
For any of these quantities, we suppress the arguments $(\bmu,\bSigma)$ when they are fixed at $(\zero_K,\bI_K)$.

Next, we introduce derivatives of the conditional negative log-likelihood $L(\btheta,\bu)=-\sum_{i=1}^n\sum_{j=1}^q\ell_{ij}(\bgamma_j^\T\bu_i+\beta_j)$, treating the $\bu_i$ as arguments. Scores and Hessians carrying a subscript $L$ refer to the conditional negative log-likelihood $L(\btheta,\bu)=-\sum_{i=1}^n\ell_i(\bu_i;\btheta)$, while calligraphic scores ($\cS$) and Hessians ($\cH$) refer to the negative marginal log-likelihood.
Define $\bS_L(\btheta,\bu) = \big(\{\bS_{L\theta}(\btheta,\bu)\}^\T,\{\bS_{Lu}(\btheta,\bu)\}^\T\big)^\T$ with
\begin{equation*}
    \big\{\bS_{L\theta}(\btheta,\bu)\big\}_{\cK_j^+} =-\sum_{i=1}^n\ell^{\prime}_{ij}\big(\bgamma_j^\T\bu_i + \beta_j\big)\begingroup \renewcommand{\arraystretch}{0.7}\begin{pmatrix}
        1\\\bu_i
    \end{pmatrix}\endgroup\;\text{ for }\;j\in[q],
\end{equation*}
and 
\begin{equation*}
    \big\{\bS_{Lu}(\btheta,\bu)\big\}_{\cK_i} = -\sum_{j=1}^q\ell^{\prime}_{ij}\big(\bgamma_j^\T\bu_i + \beta_j\big)\bgamma_j\;\text{ for }\;i\in[n].
\end{equation*}$\cK_i$ and $\cK_j^+$ have been defined in \eqref{eq_define_Kij}.

Define 
\begin{equation*}
    \bH_L(\btheta,\bu) = \begin{pmatrix}
        \bH_{L\theta\theta}(\btheta,\bu) & \bH_{L\theta u}(\btheta,\bu) \\ \bH_{Lu\theta}(\btheta,\bu) & \bH_{Luu}(\btheta,\bu)
    \end{pmatrix}
\end{equation*}
with 
\begin{equation*}
    \bH_{L\theta_j\theta_j}(\btheta,\bu):=\big\{\bH_{L\theta\theta}(\btheta,\bu) \big\}_{\cK_j^+,\cK_j^+} = -\sum_{i=1}^n\ell_{ij}^{\prime\prime}(\eta_{ij})\begingroup \renewcommand{\arraystretch}{0.5}\begin{pmatrix}
        1&\bu_i^\T\\\bu_i&\bu_i\bu_i^\T
    \end{pmatrix}\endgroup\text{ for }j\in[q]
\end{equation*}
and $\big\{\bH_{L\theta\theta}(\btheta,\bu) \big\}_{\cK_{j_1}^+,\cK_{j_2}^+} = \zero$ if $j_1\neq j_2$;\begin{equation*}
    \bH_{Lu_iu_i}(\btheta,\bu):=\big\{\bH_{Luu}(\btheta,\bu) \big\}_{\cK_i,\cK_i} = -\sum_{j=1}^q\ell_{ij}^{\prime\prime}(\eta_{ij})\bgamma_j\bgamma_j^\T\text{ for }i\in[n],
\end{equation*}
and $\big\{\bH_{Luu}(\btheta,\bu) \big\}_{\cK_{i_1},\cK_{i_2}} = \zero$ if $i_1\neq i_2$; 
\begin{equation*}
    \bH_{L\theta_ju_i}(\btheta,\bu):=\big\{\bH_{L\theta u}(\btheta,\bu) \big\}_{\cK_j^+,\cK_i} = -\ell_{ij}^{\prime\prime}(\eta_{ij})\begingroup \renewcommand{\arraystretch}{0.7}\begin{pmatrix}
        1\\\bu_i
    \end{pmatrix}\bgamma_j^\T - \begin{pmatrix}
        \zero_K^\T\\\ell_{ij}^{\prime}(\eta_{ij})\bI_K
    \end{pmatrix}\endgroup\text{ for }i\in[n], j\in[q],
\end{equation*}
and $\bH_{Lu\theta}(\btheta,\bu) = \big\{\bH_{L\theta u}(\btheta,\bu)\big\}^\T$. 
Here $\eta_{ij} = \bgamma_j^\T\bu_i + \beta_j$. Additionally, let $\bJ_{L\theta u}(\btheta,\bu) = \{\bJ_{Lu\theta }(\btheta,\bu)\}^\T$ be the matrix containing only the first-order terms in $\bH_{L\theta u}(\btheta,\bu)$. Specifically,
\begin{equation*}
    \bJ_{L\theta_ju_i}(\btheta,\bu):=\big\{\bJ_{L\theta u}(\btheta,\bu)\big\}_{\cK_j^+,\cK_i} = -\begingroup \renewcommand{\arraystretch}{0.7}\begin{pmatrix}
        \zero_K^\T\\\ell_{ij}^{\prime}(\eta_{ij})\bI_K
    \end{pmatrix}\endgroup\text{ for }i\in[n], j\in[q].
\end{equation*} Let $\bar\bH_{L\theta u}(\btheta,\bu) = \{\bar\bH_{Lu\theta }(\btheta,\bu)\}^\T = \bH_{L\theta u}(\btheta,\bu) - \bJ_{L\theta u}(\btheta,\bu)$.

Next, we use Laplace approximation to show that the derivatives for the marginal likelihood can be approximated by the derivatives of the conditional log-likelihood function displayed above.
For fixed constants $\rho\in(0,1)$ and $C>0$, define the parameter sets
\begin{equation*}
\bar\cK_\rho = \left\{ (\bmu,\bSigma): \|\bmu\|\le \rho,\quad\|\bSigma-\bI_K\|\le \rho \right\}.
\end{equation*}and
\begin{equation}\label{eq_define_KC}
    \cK_{C} := \{ (\bmu,\bSigma): \|\bmu\|\le C,\quad C^{-1}\le\lambda_{\min}(\bSigma)\le\lambda_{\max}(\bSigma)\le C\}.
\end{equation}
The approximations to the first-order derivatives are given in the following lemma.
\begin{lemma}[Approximation to the First-order Derivatives]
    \label{lemma_first_order_base}\em Suppose that Assumptions~\ref{assump_standard_id}--\ref{assump_scaling} hold. Given any $\btheta\in\Xi$, let $\bu_i^{\star}(\btheta) = \arg\max_{\bu\in\RR^{K}}\ell_i(\bu;\btheta)$ and $\bu^{\star}(\btheta) = \big({\bu_1^{\star}(\btheta)}^\T,\dots,{\bu_n^{\star}(\btheta)}^\T\big)^\T$. For any fixed $\rho\in(0,1)$, the following hold uniformly over $\btheta\in\Xi$ and $(\bmu,\bSigma)\in\bar\cK_\rho$:
    \begin{equation*}
        \big\|\cS_{\theta}(\btheta,\bmu,\bSigma) - \bS_{L\theta}\big(\btheta,\bu^{\star}(\btheta)\big)\big\|_{\infty} = \cOp(nB_n\sqrt{\log n}\varepsilon_{nq}/q),
    \end{equation*}
    and
    \begin{equation*}
       \big\|\cS_{\theta}(\btheta,\bmu,\bSigma) - \bS_{L\theta}\big(\btheta,\bu^{\star}(\btheta)\big)\big\| = \cOp(nB_n\varepsilon_{nq}\sqrt{\log n}/\sqrt{q}).
    \end{equation*}
    Moreover, the following hold uniformly over $\btheta\in\Xi$ and $(\bmu,\bSigma)\in\bar\cK_\rho$:
    \begin{equation*}
        \big\|\bSigma \cS_{\mu}(\btheta,\bmu,\bSigma) + \sum_{i=1}^n(\bu_i^\star(\btheta) - \bmu)\big\| = \cOp\big(\|\bSigma\|n\varepsilon_{nq}/q\big),
    \end{equation*}
    and \begin{equation*}
       \left\|\bSigma \cS_{\Sigma}(\btheta,\bmu,\bSigma) \bSigma+\frac{1}{2} \sum_{i=1}^n\Big\{(\bu_i^\star(\btheta) - \bmu)(\bu_i^\star(\btheta) - \bmu)^\T - \bSigma\Big\}\right\| = \cOp\big(\|\bSigma\|^2n\varepsilon_{nq}/q\big).
    \end{equation*}
    The same conclusions remain valid uniformly over $(\bmu,\bSigma)\in\cK_C$ for any fixed $C>0$.
\end{lemma}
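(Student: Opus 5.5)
The plan is a Laplace-type analysis of the plug-in posterior for each subject, carried out uniformly in $\btheta\in\Xi$ and $(\bmu,\bSigma)\in\bar\cK_\rho$ (or $\cK_C$). Write $p_i(\bu)\propto\exp\{\ell_i(\bu;\btheta)\}\pwk(\bu\mid\bmu,\bSigma)$. Differentiating under the integral gives the exact representations
\[
\{\cS_\theta\}_{\cK_j^+}=-\sum_{i=1}^n\EE_{p_i}\Big[\ell_{ij}'(\bgamma_j^\T\bu+\beta_j)\begin{pmatrix}1\\ \bu\end{pmatrix}\Big],\qquad
\bSigma\cS_\mu=-\sum_{i=1}^n\EE_{p_i}[\bu-\bmu],
\]
\[
\bSigma\cS_\Sigma\bSigma=\frac12\sum_{i=1}^n\Big\{\bSigma-\EE_{p_i}\big[(\bu-\bmu)(\bu-\bmu)^\T\big]\Big\}.
\]
Comparing with the targets, it suffices to show two things. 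First, the posterior mean satisfies $\|\EE_{p_i}[\bu]-\bu_i^\star(\btheta)\|\lesssim\varepsilon_{nq}/q$. Second, the posterior covariance satisfies $\|\mathrm{Cov}_{p_i}(\bu)\|\lesssim\varepsilon_{nq}/q$. Both must hold uniformly in $i$, $\btheta$ and $(\bmu,\bSigma)$. The $\mu$ and $\Sigma$ statements then follow directly, since $\EE[(\bu-\bmu)(\bu-\bmu)^\T]=(\bar\bu-\bmu)(\bar\bu-\bmu)^\T+\mathrm{Cov}$ with $\bar\bu=\EE_{p_i}[\bu]$. Summing over $i$ gives the $n\varepsilon_{nq}/q$ rates, and the factors $\|\bSigma\|$ come from the Gaussian prior terms.

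The first step is deterministic curvature at the mode. On $\Xi$ we have $\|\btheta\|_\infty\le C_\theta$ and $\|\bu_i^\star\|\le C_u\sqrt{\log n}$, so $|\eta|\lesssim\sqrt{\log n}$ on a ball of radius $O(1)$ around $\bu_i^\star$. Enlarging constants, $-\ell_{ij}''\ge b_n$ there. By singular value interlacing, $\sigma_K(\bGamma)\ge\sigma_{K+1}(\bTheta)\ge c_\theta\sqrt q$. Hence $-\partial_\bu^2\ell_i(\bu;\btheta)\succeq b_n c_\theta^2 q\,\bI_K$ on that ball, while third and fourth $\bu$-derivatives are $O(qB_n)$. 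Next, split $\RR^K$ into a ball $\{\|\bu-\bu_i^\star\|\le\delta\}$ with $\delta\asymp\varepsilon_{nq}^{1/4}q^{-1/2}$ (or similar) and its complement. Inside the ball, expand $\ell_i$ to fourth order and $\log\pwk(\cdot\mid\bmu,\bSigma)$ to first order around $\bu_i^\star$. The standard Laplace calculation then gives the following bounds.

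\begin{itemize}
\item Posterior mean: $\EE[\bu-\bu_i^\star]=O\big(q^{-1}\{\|\bSigma^{-1}(\bu_i^\star-\bmu)\|+B_n/b_n^2\}\big)$. Here the prior gradient contributes $O(\sqrt{\log n}/q)$ and the cubic term contributes $O(qB_n\cdot q^{-2}b_n^{-2})$.
\item Posterior covariance: $\mathrm{Cov}=O(1/(qb_n))$.
\end{itemize}

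All these factors are absorbed into $\varepsilon_{nq}$. For the $\btheta$-score, set $g(\bu)=\ell_{ij}'(\bgamma_j^\T\bu+\beta_j)(1,\bu^\T)^\T$. Then
\[
\EE g-g(\bu_i^\star)=\nabla g\cdot\EE[\bu-\bu_i^\star]+O\big(\|\nabla^2 g\|\,\EE\|\bu-\bu_i^\star\|^2\big),
\]
with $\|\nabla g\|,\|\nabla^2g\|\lesssim B_n\sqrt{\log n}$. This gives $O(B_n\sqrt{\log n}\,\varepsilon_{nq}/q)$ per subject and per coordinate. Summing over $i$ yields the $\|\cdot\|_\infty$ bound. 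The $\ell_2$ bound follows from the crude relation $\|\cdot\|\le\sqrt{q(K+1)}\,\|\cdot\|_\infty$, which gives exactly $nB_n\varepsilon_{nq}\sqrt{\log n}/\sqrt q$.

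The main obstacle is the tail region outside the ball, because curvature of $\ell_{ij}$ may decay like $e^{-b_L|\eta|}$ while $\ell_{ij}'$ and polynomial moments grow like $e^{b_U|\eta|}$. I would use log-concavity: $\ell_i(\cdot;\btheta)+\log\pwk(\cdot\mid\bmu,\bSigma)$ is concave, with Hessian at least $b_nq$ on the ball boundary. Concavity then forces a drop of at least $c\,b_nq\delta\|\bu-\bu_i^\star\|$ outside the ball, i.e.\ linear-exponential decay at rate $b_nq\delta\gtrsim\varepsilon_{nq}^{1/4}\sqrt q$. The Gaussian prior adds quadratic decay, which makes the moments of $\|\bu\|$ and $e^{b_U|\eta(\bu)|}$ integrable. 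Their contribution is then $\exp\{-c\,\varepsilon_{nq}^{1/2}\}\cdot e^{O(\sqrt{\log n})}$, which is $o(q^{-1})$ because $\varepsilon_{nq}$ dominates any $e^{C\sqrt{\log n}}$. The same tail bound controls the normalizing constant. Since every constant depends only on $C_\theta,c_\theta,C_u,\rho$ (or $C$), the bounds are uniform over $\Xi\times\bar\cK_\rho$ and $\Xi\times\cK_C$. The only probabilistic input is the event $\cE_\eta$, which explains the $\cOp$.
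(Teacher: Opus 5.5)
Your overall strategy is the paper's. It uses exact posterior-expectation formulas for the three scores, a Laplace expansion around $\bu_i^\star(\btheta)$ in which the order-$q^{-1/2}$ terms vanish by symmetry, curvature $\gtrsim b_nq$ from $\sigma_K(\bGamma)\ge\sigma_{K+1}(\bTheta)$, and concavity to control the tails. Reducing the $\btheta$-score to the posterior mean and covariance of $\bu$ plus a second-order Taylor expansion of $g$ is only a reorganization of the paper's direct expansion of $g(\bu_0+\delta\bA\bz)$. Your localization radius $\varepsilon_{nq}^{1/4}q^{-1/2}$ works as well as the paper's radius $c_rb_n/B_n$ combined with the split at $\|\bz\|\le C_T\sqrt{\log(n\vee q)}$.

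One step is false as written. Assumption~\ref{assumption_smoothness} bounds $\ell_{ij}^{(s)}$ only for $s=2,3,4$. The first derivative is random and unbounded (e.g.\ $X_{ij}-e^{\eta}$ in the Poisson case), and only $\ell_{ij}'(\eta_{ij}^*)$ is controlled, as a conditionally sub-exponential variable with norm $\lesssim\sqrt{B_n}$.
\begin{itemize}
\item $\partial_{\bu}g_{ij}$ contains the block $\ell_{ij}'(\eta)\bI_K$, so your bound $\|\nabla g\|\lesssim B_n\sqrt{\log n}$ does not hold deterministically.
\item $g_{ij}$ itself, through $\ell_{ij}'$, multiplies the tail integrand.
\item Consequently your remark that $\cE_\eta$ is the only probabilistic input is wrong.
\end{itemize}
The fix is the paper's $M_{ij,n}$ step. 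By the mean value theorem, $\sup_{|\eta|\le C\sqrt{\log n}}|\ell_{ij}'(\eta)|\le|\ell_{ij}'(\eta_{ij}^*)|+CB_n\sqrt{\log n}$. This is uniform over $\btheta\in\Xi$ because it involves only the range of $\eta$. A sub-exponential union bound then gives $\max_{i,j}|\ell_{ij}'(\eta_{ij}^*)|=\cOp\{\sqrt{B_n}\log(nq)\}$. In the tail region the same random constant enters only linearly, via $|\ell_{ij}'(\eta)|\le|\ell_{ij}'(\eta_{ij}^*)|+C_0e^{b_U(|\eta|+|\eta_{ij}^*|)}|\eta-\eta_{ij}^*|$. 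Since $\log(nq)$ is absorbed by $\varepsilon_{nq}$, the stated rates survive.

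Two further slips are harmless:
\begin{itemize}
\item The prior-gradient contribution to the mean shift is $O(\sqrt{\log n}/(qb_n))$, not $O(\sqrt{\log n}/q)$, because the inverse Hessian contributes a factor $b_n^{-1}$.
\item The tail prefactor is $n^{O(1)}$, coming from $\pwk(\bu_i^\star\mid\bmu,\bSigma)\ge n^{-C}$, rather than $e^{O(\sqrt{\log n})}$. It is still dominated by $\exp(-cb_n\varepsilon_{nq}^{1/2})$.
\end{itemize}
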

\begin{proof}See Section~\ref{supp_sec_prove_lemma_first_order_base}.\end{proof}
 Together with the first-order conditions in Lemma~\ref{lemma_key_first_order}, the last two conclusions imply that the conditional-likelihood modes $\bu^{\star}_i(\hat\btheta)$ satisfy
\begin{equation*}
    n^{-1}\sum_{i=1}^n\bu^{\star}_i(\hat\btheta) = \cOp(q^{-1}\varepsilon_{nq})\quad\text{ and }\quad n^{-1}\sum_{i=1}^n\bu^{\star}_i(\hat\btheta)\bu^{\star}_i(\hat\btheta)^\T = \bI_K+ \cOp(q^{-1}\varepsilon_{nq}).
\end{equation*}
This motivates the canonical realization-level normalization \eqref{eq_id_st} used later for asymptotic distribution statements.

Assumption~\ref{assumption_smoothness} states that the conditional scores $\ell_{ij}'(\eta_{ij}^*)$ are sub-exponential given $\{\bu_i^*\}_{i=1}^n$, and consequently we have the following lemma.
\begin{lemma}\label{coro_first_order_concern}\em
    Suppose Assumptions~\ref{assump_standard_id}--\ref{assump_scaling} hold. Denote by $\btheta^{*}$ the true model parameters and by $\bu^*$ the realized latent variables. Then
    \begin{equation*}
        \frac{1}{\sqrt{n}}\big\|\bS_{L\theta}(\btheta^*, \bu^*)\big\| = \cOp\big(\sqrt{qB_n}\big)\text{, }\frac{1}{\sqrt{q}}\big\|\bS_{Lu}(\btheta^*, \bu^*)\big\| = \cOp\big(\sqrt{nB_n}\big),
    \end{equation*}
    and
    \begin{equation*}
        \frac{1}{\sqrt{n}}\big\|\bS_{L\theta}(\btheta^*, \bu^*)\big\|_{\infty} = \cOp\left(\sqrt{B_n\log q}\right)\text{, }\frac{1}{\sqrt{q}}\big\|\bS_{Lu}(\btheta^*, \bu^*)\big\|_{\infty} = \cOp\left(\sqrt{B_n\log n} \right),
    \end{equation*}
\end{lemma}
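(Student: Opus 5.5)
The score vectors at the truth are sums of conditionally independent, mean-zero, sub-exponential terms, so the bounds come from conditioning on $\mathscr{U}_n=\{\bU_i\}_{i\le n}$ and working on the event $\cE_\eta$. By the computation following Assumption~\ref{assumption_smoothness}, on this event each $\ell_{ij}'(\eta_{ij}^*)$ has conditional mean zero and conditional sub-exponential norm at most of order $\sqrt{B_n}$. The multipliers are also bounded. Entries of $\bgamma_j^*$ are bounded by $M_\theta$. Entries of $(1,(\bu_i^*)^\T)^\T$ are bounded by $CM_u\sqrt{\log n}$ on $\cE_\eta$, and Assumption~\ref{assump_standard_id} gives $n^{-1}\sum_i\|\bu_i^*\|^2=\cOp(1)$. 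All statements below are conditional, and they transfer to the joint law because $\PP(\cE_\eta)\to1$.

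\textbf{$\ell_2$ bounds.} I would compute second moments directly. For block $j$ of $\bS_{L\theta}(\btheta^*,\bu^*)$, independence across $i$ gives
\[
\EE\Big[\Big\|\sum_i \ell_{ij}'(\eta_{ij}^*)(1,\bu_i^{*\T})^\T\Big\|^2\,\Big|\,\mathscr{U}_n\Big]=\sum_i \EE[\ell_{ij}'(\eta_{ij}^*)^2\mid\mathscr{U}_n]\,(1+\|\bu_i^*\|^2)\lesssim B_n\sum_i(1+\|\bu_i^*\|^2).
\]
Summing over $j\in[q]$ yields $\EE[\|\bS_{L\theta}\|^2\mid\mathscr{U}_n]\lesssim qB_n\sum_i(1+\|\bu_i^*\|^2)=\cOp(nqB_n)$. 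Markov's inequality then gives $n^{-1/2}\|\bS_{L\theta}\|=\cOp(\sqrt{qB_n})$. The same computation for $\bS_{Lu}$ uses $\|\bgamma_j^*\|^2\le KM_\theta^2$ and gives $\EE[\|\bS_{Lu}\|^2\mid\mathscr{U}_n]\lesssim nqB_n$, so $q^{-1/2}\|\bS_{Lu}\|=\cOp(\sqrt{nB_n})$.

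\textbf{$\ell_\infty$ bounds.} Each coordinate is a sum of independent sub-exponential variables. For $\bS_{L\theta}$, coordinate $(j,r)$ is $\sum_i a_{ir}\ell_{ij}'(\eta_{ij}^*)$ with $a_{ir}\in\{1,u_{ir}^*\}$. By Bernstein's inequality it exceeds $t$ with probability at most
\[
2\exp\Big[-c\min\Big\{\frac{t^2}{B_n\sum_i a_{ir}^2},\ \frac{t}{\sqrt{B_n}\max_i|a_{ir}|}\Big\}\Big].
\]
Take $t=C'\sqrt{B_n n\log q}$ and note that $\sum_i a_{ir}^2=\cOp(n)$. The quadratic term is then of order $\log q$. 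The linear term is of order $\sqrt{n\log q}/\sqrt{\log n}$, which is much larger than $\log q$ because $\log q\asymp\log n$ under Assumption~\ref{assump_scaling}. A union bound over the $q(K+1)$ coordinates gives $n^{-1/2}\|\bS_{L\theta}\|_\infty=\cOp(\sqrt{B_n\log q})$. The argument for $\bS_{Lu}$ is the same: sum over $j$ with bounded weights, take $t=C'\sqrt{B_n q\log n}$, and union-bound over the $nK$ coordinates. The linear Bernstein term is harmless here because $q\ge n^c$ implies $\sqrt{q\log n}\gg\log n$.

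\textbf{Main obstacle.} The only delicate step is the linear, sub-exponential part of Bernstein's inequality in the $\ell_\infty$ bounds. It requires the weights to be bounded by $\sqrt{\log n}$ and the number of summands to grow polynomially. Both are supplied by $\cE_\eta$ and Assumption~\ref{assump_scaling}.
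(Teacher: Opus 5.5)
Your proposal is correct and follows essentially the same route as the paper. The $\ell_2$ bounds come from a conditional second-moment computation (given the latent variables), and the $\ell_\infty$ bounds come from a conditional Bernstein inequality for weighted sums plus a union bound over coordinates. The only cosmetic difference is in the $\ell_\infty$ step: the paper writes the resulting rate as $\sqrt{nB_n\log q}+\sqrt{B_n\log n}\,\log q$ and absorbs the second term using Assumption~\ref{assump_scaling}, which is equivalent to your choice of $t$ and your check that the linear Bernstein term is dominated.
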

\begin{proof}
    See Section~\ref{sub_prove_coro_first_order_concern}.
\end{proof}
Based on Lemmas~\ref{lemma_bu_star_theta_true}, \ref{lemma_first_order_base} and~\ref{coro_first_order_concern}, we arrive at the following result on the first-order derivative $\cS_{\theta}$ at the true parameters. 
\begin{lemma}
    \label{lemma_first_order_concern}\em Suppose Assumptions~\ref{assump_standard_id}--\ref{assump_scaling} hold. Denote by $\btheta^{*}$ the true model parameters. Given any $0<\rho < 1$, uniformly over $(\bmu,\bSigma)\in\bar\cK_\rho$, it holds that \begin{enumerate}[label=$(\roman*)$]
        \item 
        \begin{equation*}
        \big\|\cS_{\theta}(\btheta^*,\bmu,\bSigma)\big\| = \cOp\left(\sqrt{nq}r_{\theta,nq}\right); 
    \end{equation*}
    \item 
    \begin{equation*}
        \big\|\cS_{\theta}(\btheta^*,\bmu,\bSigma)\big\|_{\infty} = \cOp\left(\sqrt{n\log q}r_{\theta,nq}\right),
    \end{equation*} where $r_{\theta,nq}$ is defined in Equation~\eqref{eq_define_r_theta_nq}.
    \end{enumerate}
\end{lemma}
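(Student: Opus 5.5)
We would decompose the marginal score at the truth by a triangle inequality into three pieces,
\begin{align*}
\cS_{\theta}(\btheta^*,\bmu,\bSigma) \;=\;& \big\{\cS_{\theta}(\btheta^*,\bmu,\bSigma)-\bS_{L\theta}(\btheta^*,\bu^{\star}(\btheta^*))\big\}
+\big\{\bS_{L\theta}(\btheta^*,\bu^{\star}(\btheta^*))-\bS_{L\theta}(\btheta^*,\bu^*)\big\}\\
&+\bS_{L\theta}(\btheta^*,\bu^*).
\end{align*}
Here $\bu_i^{\star}=\bu_i^{\star}(\btheta^*)$ is the oracle mode of Lemma~\ref{lemma_bu_star_theta_true}. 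Since $\btheta^*\in\Xi$ on $\cE_\eta$ (by Lemma~\ref{lemma_neighbour_control} with $\epsilon=0$), the first piece is handled by Lemma~\ref{lemma_first_order_base}, uniformly over $(\bmu,\bSigma)\in\bar\cK_\rho$. This gives $\cOp(nB_n\varepsilon_{nq}\sqrt{\log n}/\sqrt q)=\sqrt{nq}\cdot\cOp(B_n\varepsilon_{nq}\sqrt{n\log n}/q)$ in $\ell_2$, and $\cOp(nB_n\sqrt{\log n}\varepsilon_{nq}/q)$ in $\ell_\infty$. The latter is at most $\sqrt{n\log q}\cdot B_n\varepsilon_{nq}\sqrt{n}/q$, using $\log q\asymp\log n$. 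Both match the third term of $r_{\theta,nq}$. The third piece is bounded by Lemma~\ref{coro_first_order_concern}: it is $\cOp(\sqrt{nqB_n})$ in $\ell_2$ and $\cOp(\sqrt{nB_n\log q})$ in $\ell_\infty$, matching the first term $\sqrt{B_n}$.

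The middle piece is the main work and should produce the $B_n\kappa_n/\sqrt q$ term. For each $j$ we would write
\[
\{\cdot\}_{\cK_j^+}=-\sum_{i}\Big[\ell'_{ij}(\eta^{\star}_{ij})\binom{1}{\bu_i^\star}-\ell'_{ij}(\eta^*_{ij})\binom{1}{\bu_i^*}\Big],
\]
where $\eta^\star_{ij}=(\bgamma_j^*)^\T\bu_i^\star+\beta_j^*$. Taylor expanding $\ell'_{ij}$ around $\eta^*_{ij}$ splits this block into three parts:
\begin{enumerate}[label=(\alph*)]
\item $\sum_i\ell''_{ij}(\eta^*_{ij})(\bgamma_j^*)^\T(\bu_i^\star-\bu_i^*)\,(1,\bu_i^{*\T})^\T$;
\item $\sum_i\ell'_{ij}(\eta^*_{ij})(0,(\bu_i^\star-\bu_i^*)^\T)^\T$;
\item a quadratic remainder bounded by $B_n\sum_i\|\bu_i^\star-\bu_i^*\|^2(1+\|\bu_i^*\|)$.
\end{enumerate}
Bounding (a) crudely by Cauchy--Schwarz with $\sum_i\|\bu_i^\star-\bu_i^*\|^2=\cOp(n\kappa_n^2/q)$ from Lemma~\ref{lemma_bu_star_theta_true} gives $\cOp(B_n\sqrt{n\log n}\,\kappa_n/\sqrt q)$ per block. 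Summed over $q$ blocks in $\ell_2$ this is $\sqrt{nq}\,B_n\kappa_n\sqrt{\log n}/\sqrt{q}$. That is off by a $\sqrt{\log n}$ factor, which would have to be absorbed, or else removed by using $\|\bu_i^*\|$ only in $n^{-1}\sum_i\|\bu_i^*\|^2=\cOp(1)$ rather than its maximum. For the $\ell_\infty$ bound, the same per-block estimate combined with $\max_i\|\bu_i^\star-\bu_i^*\|$ gives $\sqrt{n\log q}\,B_n\kappa_n/\sqrt q$ up to logs.

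The main obstacle is term (b), together with the need to avoid losing a $\sqrt{\log n}$. Term (b) is a product of the noise $\ell'_{ij}$ and $\bu_i^\star-\bu_i^*$, and these are dependent because $\bu_i^\star$ uses row $i$'s data. Here I would substitute the linear expansion \eqref{eq_expansion_u_star}. The leading part yields a double sum $\sum_i\ell'_{ij}\,\bH_i^{-1}\sum_{j'}\ell'_{ij'}\bgamma_{j'}^*$. Its diagonal terms $j'=j$ contribute $\cOp(nB_n\cdot b_n^{-1}/q)$, and its off-diagonal terms are mean zero with variance of order $nB_n^2/(b_n^2q)$. Thus this part is $\cOp(\sqrt n\,\kappa_n\sqrt{B_n}/\sqrt q+n\kappa_n^2/q)$, which is dominated by the target. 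The remainder $\cR_i^\star$ is controlled by Cauchy--Schwarz with $\sum_i\ell'^2_{ij}=\cOp(nB_n)$. Finally, $\ell_\infty$ versions follow by union bounds over $j\in[q]$ using the sub-exponential tails, at the cost of a $\sqrt{\log q}$ factor.
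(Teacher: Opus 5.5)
Your three-piece decomposition matches the paper's proof, and so does your handling of the two outer pieces via Lemmas~\ref{lemma_first_order_base} and~\ref{coro_first_order_concern}. Your treatment of term (b) through the linearization \eqref{eq_expansion_u_star} also matches: (b) is the paper's $\alpha_1$, and (a) is the leading part of its $\alpha_2$ and $\alpha_3$.

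The gap is term (a), where the Cauchy--Schwarz arithmetic is off. With $|\ell''_{ij}|\le B_n$,
\[
\Big\|\sum_{i=1}^n\ell''_{ij}(\eta^*_{ij})(\bgamma_j^*)^\T(\bu_i^\star-\bu_i^*)\big(1,\bu_i^{*\T}\big)^\T\Big\|\le CB_n\Big(\sum_{i}\|\bu_i^\star-\bu_i^*\|^2\Big)^{1/2}\Big(\sum_{i}(1+\|\bu_i^*\|)^2\Big)^{1/2}=\cOp\big(B_n\kappa_n n/\sqrt q\big).
\]
The second factor is of order $\sqrt n$, not $\sqrt{\log n}$. So you lose a factor $\sqrt n$, not $\sqrt{\log n}$, and using $n^{-1}\sum_i\|\bu_i^*\|^2=\cOp(1)$ does not recover it. The resulting $\ell_2$ bound $nB_n\kappa_n$ exceeds $\sqrt{nq}\,r_{\theta,nq}$ unless $q\gtrsim nB_n\kappa_n^2$. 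For example, it fails for $q\asymp n^{1/2}$, which Assumption~\ref{assump_scaling} permits, and the same happens for the $\ell_\infty$ bound. Any argument that uses only the size of $\bu_i^\star-\bu_i^*$ adds $n$ terms of order $\kappa_n/\sqrt q$ coherently. Reaching $B_n\kappa_n\sqrt{n/q}$ per block requires cancellation across $i$.

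The fix is to treat (a) the way you treat (b).
\begin{enumerate}
\item Substitute \eqref{eq_expansion_u_star}. The leading part becomes $\sum_i\ell''_{ij}(\eta^*_{ij})(1,\bu_i^{*\T})^\T(\bgamma_j^*)^\T\bH_i^{-1}\bb_i$, with $\bb_i=\sum_t\ell'_{it}(\eta^*_{it})\bgamma_t^*$.
\item Replace $\bH_i$ by $\bar\bH_i=\EE(\bH_i\mid\mathscr U_n)$. A matrix Bernstein bound on $\max_i\|\bH_i^{-1}-\bar\bH_i^{-1}\|$ shows this costs only $\cOp(\varepsilon_{nq}n/q)$.
\item Split $\bb_i=\bb_i^{(-j)}+\ell'_{ij}(\eta^*_{ij})\bgamma_j^*$.
\begin{itemize}
\item Given $\mathscr U_n$, the $\bb_i^{(-j)}$ part is a sum of independent mean-zero terms, because $\ell''_{ij}$ is independent of $\bb_i^{(-j)}$. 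Its variance is $\cOp(nB_n^2\kappa_n^2/q)$, so it is $\cOp(B_n\kappa_n\sqrt{n/q})$.
\item The $j$th term is $\cOp(nB_n\kappa_n/q)$.
\item The remainder $\cR_i^\star$ contributes $\cOp(nB_n^2\kappa_n^2/(qb_n))$.
\end{itemize}
\end{enumerate}
This is the paper's Lemma~\ref{lemma_aux_score_bound_pre}(ii), which also supplies the $\max_j$ and $\sum_j(\cdot)^2$ versions needed for parts (ii) and (i).

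The same $\bar\bH_i$ replacement is needed in your term (b). When $\ell''_{ij}$ depends on $X_{ij}$, $\bH_i$ is not independent of $\ell'_{ij}$. Your claim that the off-diagonal terms are mean zero therefore holds only after this centering.
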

\begin{proof}See Section~\ref{supp_sec_prove_lemma_first_order_concern}.\end{proof}

The next lemma regards the approximation to the second-order derivatives $\cH_{\theta\theta}(\btheta,\bmu,\bSigma)$. 
\begin{lemma}[Approximation to the Second-order Derivatives]\label{lemma_laplace_approx_second_order}\em
   Suppose that Assumptions~\ref{assump_standard_id}--\ref{assump_scaling} hold. Given any $\btheta\in\Xi$, let $\bu_i^{\star}(\btheta) = \arg\max_{\bu\in\RR^{K}}\ell_i(\bu;\btheta)$ and $\bu^{\star}(\btheta) = \big(\bu_1^{\star}(\btheta)^\T, \dots, \bu_n^{\star}(\btheta)^\T\big)^\T$. Define 
    \begin{equation*}
        \bH_{-\theta}^{\star}(\btheta):= \bH_{L\theta\theta}(\btheta,\bu^{\star}(\btheta)) - \bH_{L\theta u}\big(\btheta,\bu^{\star}(\btheta)\big)\,\left\{\bH_{Lu u}\big(\btheta,\bu^{\star}(\btheta)\big)\right\}^{-1}\,\bH_{Lu\theta}\big(\btheta,\bu^{\star}(\btheta)\big).
    \end{equation*}
    Then, for any fixed $\rho\in(0,1)$, the following hold uniformly over $\btheta\in\Xi$ and $(\bmu,\bSigma)\in\bar\cK_\rho$:
    \begin{equation*}
        \Big\|\cH_{\theta\theta}(\btheta,\bmu,\bSigma) - \bH_{-\theta}^{\star}(\btheta)\Big\| = \cOp(n\varepsilon_{nq}/q),
    \end{equation*}
   \begin{equation*}
        \Big\|\cH_{\theta\theta}(\btheta,\bmu,\bSigma) - \bH_{-\theta}^{\star}(\btheta)\Big\|_{\infty} = \cOp(n\varepsilon_{nq}/q).
    \end{equation*}
    
\end{lemma}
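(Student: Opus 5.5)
The plan is to work subject by subject. Write $\cL(\btheta,\bmu,\bSigma)=\sum_{i=1}^n\cL_i$ with $\cL_i=-\log\int\exp\{\ell_i(\bu;\btheta)\}\pwk(\bu\mid\bmu,\bSigma)d\bu$. Differentiating twice under the integral gives the exact identity
$$\partial^2_{\btheta}\cL_i=-\EE_{i}\big[\partial^2_{\btheta}\ell_i(\bU;\btheta)\big]-\mathrm{Cov}_{i}\big[\partial_{\btheta}\ell_i(\bU;\btheta)\big],$$
where $\EE_i$ and $\mathrm{Cov}_i$ are taken under the tilted posterior $\propto\exp\{\ell_i(\bu;\btheta)\}\pwk(\bu\mid\bmu,\bSigma)$.

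Since $\bH_{Luu}$ is block diagonal in $i$, the Schur complement splits as
$$\bH_{L\theta u}\bH_{Luu}^{-1}\bH_{Lu\theta}=\sum_{i=1}^n\bH_{L\theta u_i}\bH_{Lu_iu_i}^{-1}\bH_{Lu_i\theta},$$
all evaluated at $\bu^\star(\btheta)$. The goal is therefore to show that the per-subject error
$$\bE_i:=\partial^2_{\btheta}\cL_i-\Big\{\bH_{L\theta\theta,i}-\bH_{L\theta u_i}\bH_{Lu_iu_i}^{-1}\bH_{Lu_i\theta}\Big\}$$
sums to something of norm $\cOp(n\varepsilon_{nq}/q)$, in both spectral norm and $\|\cdot\|_\infty$.

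The first step is localization of the tilted posterior. On $\Xi$, Lemma~\ref{lemma_neighbour_control} places $\bu_i^\star(\btheta)$ within $C_u\sqrt{\log n}$, and concavity together with Assumption~\ref{assumption_smoothness} gives curvature $\bH_{Lu_iu_i}\succeq c\,q\,b_n'$ near the mode, where the $\sigma_{K+1}$ constraint of $\Xi$ makes $\sum_j\bgamma_j\bgamma_j^\T\asymp q$. The prior $\log\pwk(\cdot\mid\bmu,\bSigma)$ has $\cO(\sqrt{\log n})$ gradient on $\cK_C$, so it moves the posterior mode by only $\cO(\sqrt{\log n}/(qb_n))$. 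Splitting the integral at the ball $\|\bu-\bu_i^\star\|\le \delta_{nq}$, with $\delta_{nq}=q^{-1/2}\varepsilon_{nq}^{1/2}$-type radius, I expect the outside mass to be $\exp\{-c\,\varepsilon_{nq}\}$-small uniformly. This is where $\varepsilon_{nq}$, which dominates $B_n,b_n^{-1}$, swallows the exponential growth of the derivative bounds.

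The second step is a Taylor expansion inside the ball. Expand $\partial_{\btheta}\ell_i(\bu)$ around $\bu_i^\star$ to third order. Because $\ell_i$ depends on $\btheta_j$ only through $\eta_{ij}$, the block of $\partial_{\btheta_j}\ell_i$ is $\ell'_{ij}(\eta_{ij})(1,\bu^\T)^\T$. Its $\bu$-derivatives at $\bu_i^\star$ are exactly $-\bH_{L\theta_ju_i}$, including the $\bJ$ term $\ell_{ij}'\bI_K$.

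Two computations then follow. First, $\mathrm{Cov}_i[\partial_\btheta\ell_i]=\bH_{L\theta u_i}\bH_{Lu_iu_i}^{-1}\bH_{Lu_i\theta}+{}$(higher-order terms), using the Laplace posterior covariance $\bH_{Lu_iu_i}^{-1}\{1+\cO(q^{-1})\}$ and posterior mean shift $\cO(q^{-1})$. Second, $-\EE_i[\partial^2_\btheta\ell_i]=\bH_{L\theta\theta,i}+{}$(higher-order terms).

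The third step bounds the remainders using this factorized structure. The diagonal blocks $(j,j)$ of $\bE_i$ pick up second-moment corrections of size $B_n\,\EE_i\|\bU-\bu_i^\star\|^2=\cO(B_n/q)$, so their sum over $i$ has norm $\cO(nB_n/q)$. The off-diagonal blocks $(j,j')$ arise from third and fourth posterior cumulants and from products like $\bH_{L\theta_ju_i}\times\cO(q^{-2})\times\bH_{Lu_i\theta_{j'}}$. Each such block has size $\cO(B_n^{c}q^{-2})$ and is of the form $\ba_{ij}^\T\bM_i\ba_{ij'}$ with $\bM_i$ a fixed-size tensor. Hence for each $i$ the off-diagonal part is a rank-$\cO(K^3)$ matrix, with spectral norm and maximum row sum at most $q\cdot q^{-2}B_n^c=\cO(B_n^c/q)$. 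Summing over $i$ gives $\cO(nB_n^c/q)\le\cO(n\varepsilon_{nq}/q)$ in both norms.

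The main obstacle is this third step: making the cumulant expansion uniform over $\btheta\in\Xi$ and $(\bmu,\bSigma)\in\bar\cK_\rho$ or $\cK_C$, and showing that the dense $q(K+1)\times q(K+1)$ remainder really has the low-rank-times-$q^{-2}$ structure, rather than being bounded entrywise and losing a factor $q$. I would first write each remainder explicitly as a sum of outer products of the vectors $(\ell^{(s)}_{ij}(\eta_{ij})\,(1,\bu_i^{\star\T})\otimes\bgamma_j^{\otimes r})_{j}$, weighted by posterior central moments of fixed order. I would then bound those moments by $\cO(q^{-m/2}\varepsilon_{nq})$ via the localized Laplace expansion, uniformly, using deterministic bounds on $\Xi\cap\cE_\eta$.
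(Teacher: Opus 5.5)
Your proposal is correct and follows essentially the paper's route: the exact identity $\partial^2_{\btheta}\cL_i=-\EE_i[\partial^2_{\btheta}\ell_i]-\mathrm{Cov}_i[\partial_{\btheta}\ell_i]$, then a localized Laplace expansion. In that expansion the $\bu$-Jacobian $\bD_{ij}^\star$ of $\ell_{ij}'(1,\bu^\T)^\T$ at the mode equals $-\bH_{L\theta_ju_i}$, so the covariance term gives the per-subject Schur complement, and the remainder splits into a block-diagonal $\cOp(n\varepsilon_{nq}/q)$ piece and a dense piece of order $\cOp(n\varepsilon_{nq}q^{-2})$ per block.

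The low-rank structure you flag as the main obstacle is not needed. Once each $(j,j')$ block of the per-subject covariance error is $\cO(\varepsilon_{nq}q^{-2})$, the paper bounds the dense part by its Frobenius norm, $\{q^2(n\varepsilon_{nq}q^{-2})^2\}^{1/2}=n\varepsilon_{nq}/q$, and its row sums by $q\cdot n\varepsilon_{nq}q^{-2}$, so entrywise bounds at the $q^{-2}$ scale lose no factor of $q$. One smaller correction: the factor $\ell_{ij}'$ in the $\bJ$-part is not bounded deterministically on $\cE_\eta$; it needs a high-probability union bound over $(i,j)$.
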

\begin{proof}See Section~\ref{supp_sec_prove_lemma_laplace_approx_second_order}.\end{proof}

\section{Proof of Lemmas in the Main Text}\label{supp_sec_main_lemma}
\subsection{Proof of Lemma~\ref{lemma_neighbour_control}}\label{supp_sec_prove_lemma_neighbour_control}
In what follows, we establish that
\begin{align}
        \big\|\bu_i^{\star}(\btheta) - \bu_i^*\big\|& = \cOp\big(\sqrt{B_n}\kappa_n\epsilon\vee \kappa_n/\sqrt{q}\big),\text{ for each }i\in[n];\label{eq_lemma_neigh_average_control}\\
         \max_{i\in[n]}\big\|\bu_i^{\star}(\btheta) - \bu_i^*\big\| &= \cOp\Big(\sqrt{\log n}\big\{\sqrt{B_n}\kappa_n\epsilon\vee \kappa_n/\sqrt{q}\big\}\Big);\label{eq_lemma_neigh_unif_control}\\
         n^{-1}\sum_{i=1}^n\big\|\bu_i^{\star}(\btheta) - \bu_i^*\big\|^2 &=  \cOp\big(B_n\kappa^2_n\epsilon^2\vee \kappa_n^2/q\big).\label{eq_lemma_neigh_aver_control}
    \end{align} under the setting of Lemma~\ref{lemma_neighbour_control}. Note that $\ell_{ij}''(\eta)< 0$ by Assumption~\ref{assumption_smoothness} and that 
    \begin{equation}\begin{aligned}
        \lambda_{\min}\big\{\sum_{j=1}^q\bgamma_j\bgamma_j^\T\big\}\ge &\, \lambda_{\min}\big\{\sum_{j=1}^q\bgamma_j^*(\bgamma_j^*)^\T\big\} - \big\|\sum_{j=1}^q(\bgamma_j - \bgamma_j^*)(\bgamma_j^*)^\T\big\|-\big\|\sum_{j=1}^q\bgamma_j(\bgamma_j - \bgamma_j^*)^\T\big\| \\\ge &\,\lambda_{\min}(\bSigma_{\gamma}^*)q/2 - Cq\epsilon,
    \end{aligned}\label{eq_condition_gamma}\end{equation} when $q$ is large enough. Here, the first inequality follows from Weyl's inequality, and the second follows from Assumption~\ref{assumption_psd_covariance}, $\|\btheta\|_\infty\le C$, and $\|\btheta - \btheta^*\|\le \sqrt{q}\epsilon$. Within the controlled predictor region, \eqref{eq_condition_gamma} and Assumption~\ref{assumption_smoothness} make $\partial_{\bu}^2\ell_i(\bu;\btheta)$ negative definite, so any maximizer in that region is unique. The existence of the maximizer can be justified by the fact that $\ell_i(\bu;\btheta)$ is continuous and concave for $\btheta\in\Xi$.
    Next, we introduce the following lemma.
    \begin{lemma}\em
    Let $\Lb_{\eta} = \big\{\ell_{ij}^{\prime}(\eta_{ij}^*)\big\}_{n\times q}$ be an $n\times q$ matrix with the $(i,j)$th entry being $\ell_{ij}^{\prime}(\eta_{ij}^*)$. Denote its $i$th row and $j$th column as $\Lb_{\eta,i} = (\Lb_{\eta})_{i,}$ and $\Lb_{\eta,j} = (\Lb_{\eta})_{,j}$ , respectively. Under Assumptions\ref{assump_standard_id}--~\ref{assumption_smoothness}, on $\cE_{\eta}$, we have
    \begin{enumerate}[label=$(\roman*)$]
    \item for any $i\in[n]$, $\big\|(\Lb_{\eta})_{i,}\big\|=\cOp\big(\sqrt{qB_n}\big)$;
     $\max_{1\le i\le n}\big\|(\Lb_{\eta})_{i,}\big\| = \cOp(\sqrt{qB_n}+\sqrt{B_n}{\log n})$;
    \item for any $j\in[q]$, $\big\|(\Lb_{\eta})_{,j}\big\| = \cOp\big(\sqrt{nB_n}\big)$; $\max_{1\le j\le q}\big\|(\Lb_{\eta})_{,j}\big\| = \cOp\big(\sqrt{nB_n}+\sqrt{B_n}{\log q}\big)$;
        \item $\big\|\Lb_{\eta}\big\| = \cOp\big(\sqrt{(n\vee q)B_n}\big)$.
    \end{enumerate}
    Furthermore, consider any $(\btheta,\bu)$ satisfying, for some absolute constant $C$ and a sequence $\epsilon\to0$,
    \begin{equation}
       \|\btheta\|_{\infty}\le C,\;\|\bu\|_{\infty}\le C\sqrt{\log n} \text{ and }q^{-1/2}\big\|\btheta - \btheta^* \big\| + n^{-1/2}\big\|\bu - \bu^*\big\|\le \epsilon,\label{eq_feasible_theta}
    \end{equation} define $\eta_{ij} = \bgamma_j^\T\bu_i + \beta_j$ for each $i\in[n]$ and $j\in[q]$. Let $\Lb_{\eta}(\btheta,\bu) = \big\{\ell_{ij}^{\prime}(\eta_{ij})\big\}_{n\times q}$ be an $n\times q$ matrix with the $(i,j)$th entry being $\ell_{ij}^{\prime}(\eta_{ij})$. Then the following hold uniformly over $(\btheta,\bu)$ defined by \eqref{eq_feasible_theta}:
     \begin{enumerate}[label=$(\roman*)$, start=4]
        \item $\big\|\Lb_{\eta}(\btheta,\bu)\big\| = \cOp\big(\sqrt{(n\vee q)B_n} + B_n\sqrt{nq\log n}\epsilon\big)$.
\end{enumerate}
    \label{lemma_concentration_l_mat}
\end{lemma}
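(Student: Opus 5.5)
Parts (i)--(iii) concern the matrix $\Lb_\eta$ of true-parameter scores, and part (iv) its perturbation. I would handle them separately: concentration for sums of independent sub-exponential variables first, then a mean-value argument.

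\emph{Steps for (i) and (ii).} Throughout I work conditionally on $\{\bU_i\}_{i\le n}$ and on $\cE_\eta$.
\begin{itemize}
\item By Assumption~\ref{assumption_smoothness}, the entries $\ell_{ij}'(\eta_{ij}^*)$ are independent and mean zero. Their $\psi_1$-norms are at most $C\sqrt{B_n}$, uniformly over $i,j$, because $|\eta_{ij}^*|\le C_\eta\sqrt{\log n}$.
\item For a fixed row $i$, $\|(\Lb_\eta)_{i,}\|^2=\sum_j\{\ell_{ij}'(\eta_{ij}^*)\}^2$. Its conditional mean is $O(qB_n)$, so Markov's inequality gives the pointwise bound $\cOp(\sqrt{qB_n})$.
\item For the maximum over $i$, I would write $\|(\Lb_\eta)_{i,}\|=\sup_{\|\ba\|=1}\sum_j a_j\ell_{ij}'(\eta_{ij}^*)$. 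Concentration of the norm of a sub-exponential vector follows from Bernstein's inequality on a $1/2$-net. A cleaner route is Hanson--Wright type concentration of $\sum_j\{(\ell_{ij}')^2-\EE(\ell_{ij}')^2\}$, whose summands have $\psi_{1/2}$-norms of order $B_n$.
\item Either route gives deviations of order $\sqrt{B_n}(\sqrt{q t}+t)$ with probability $1-e^{-t}$. Taking $t\asymp\log n$ and a union bound over $i\in[n]$ yields $\sqrt{qB_n}+\sqrt{B_n}\log n$.
\item Part (ii) follows by the identical argument applied to columns, with a union bound over $j\in[q]$.
\end{itemize}

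\emph{Step for (iii).} I would use a standard $\epsilon$-net argument for the operator norm of a random matrix with independent centred sub-exponential entries. Take $1/4$-nets of the unit spheres in $\RR^n$ and $\RR^q$, of cardinality $9^{n+q}$. For fixed unit $\ba,\bb$, Bernstein's inequality applied to $\ba^\T\Lb_\eta\bb=\sum_{ij}a_ib_j\ell_{ij}'$ gives
\[\PP\big(|\ba^\T\Lb_\eta\bb|>s\big)\le 2\exp\Big(-c\min\Big\{\frac{s^2}{B_n},\frac{s}{\sqrt{B_n}\max_{ij}|a_ib_j|}\Big\}\Big).\]
With $s\asymp\sqrt{(n+q)B_n}$ the sub-Gaussian part beats the $9^{n+q}$ entropy. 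The sub-exponential part is the main obstacle, since $\max|a_ib_j|$ can equal $1$ and the tail term then only controls $s\gtrsim (n+q)\sqrt{B_n}$.

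I would try the following first. Truncate the entries at level $\sqrt{B_n}\log(nq)$; the truncation event has probability tending to one by a union bound. Recentre, absorbing the recentring bias, which is negligible by the sub-exponential tail. Then apply the bounded-entry result (Bandeira--van Handel or Latała type) to the truncated matrix, which gives
\[\|\cdot\|\lesssim\sqrt{B_n}\big(\sqrt{n}+\sqrt{q}\big)+\sqrt{B_n}\log(nq)\sqrt{\log(n\vee q)}.\]
The second term is dominated under Assumption~\ref{assump_scaling}, which gives (iii).

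\emph{Step for (iv).} Write $\Lb_\eta(\btheta,\bu)=\Lb_\eta+\bDelta$ with $\Delta_{ij}=\ell_{ij}''(\tilde\eta_{ij})(\eta_{ij}-\eta_{ij}^*)$ by the mean value theorem. Since $\|\btheta\|_\infty\le C$ and $\|\bu\|_\infty\le C\sqrt{\log n}$, every $|\tilde\eta_{ij}|\lesssim\sqrt{\log n}$. Enlarging constants in $B_n$ if necessary, this gives $|\Delta_{ij}|\le B_n|\eta_{ij}-\eta_{ij}^*|$ deterministically, uniformly over the set \eqref{eq_feasible_theta}. Then
\[\|\bDelta\|\le\|\bDelta\|_{\Fn}\le B_n\Big(\sum_{ij}(\eta_{ij}-\eta_{ij}^*)^2\Big)^{1/2}.\]
Next decompose $\eta_{ij}-\eta_{ij}^*=(\beta_j-\beta_j^*)+(\bgamma_j-\bgamma_j^*)^\T\bu_i+(\bgamma_j^*)^\T(\bu_i-\bu_i^*)$. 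This yields
\[\sum_{ij}(\eta_{ij}-\eta_{ij}^*)^2\lesssim n\log n\,\|\btheta-\btheta^*\|^2+q\|\bu-\bu^*\|^2\lesssim nq\log n\,\epsilon^2,\]
using $\|\bu_i\|\lesssim\sqrt{\log n}$, $\|\bgamma_j^*\|\le C$ and \eqref{eq_feasible_theta}. Hence $\|\bDelta\|\lesssim B_n\sqrt{nq\log n}\,\epsilon$ uniformly. Combining this with (iii) by the triangle inequality proves (iv); the uniformity is automatic because the bound on $\bDelta$ is deterministic.
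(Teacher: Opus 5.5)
Your proposal is correct and follows the paper's proof essentially step by step: Bernstein's inequality for the centred $\psi_{1/2}$ summands $\{\ell_{ij}'(\eta_{ij}^*)\}^2$ plus a union bound for (i)--(ii), truncation at level $\sqrt{B_n}\log(nq)$ with recentring and the Bandeira--van Handel bounded-entry bound for (iii), and the deterministic mean-value/Frobenius bound on $\eta_{ij}-\eta_{ij}^*$ combined with (iii) for (iv). Two small fixes are needed in (i). First, the net route fails there for the same $\|\ba\|_\infty$ reason you identify in (iii) and only gives order $q\sqrt{B_n}$. Second, the correct intermediate is $S_i-\EE S_i\lesssim B_n(\sqrt{qt}+t^2)$ with $S_i=\|(\Lb_\eta)_{i,}\|^2$; its square root at $t\asymp\log n$ gives $\sqrt{qB_n}+\sqrt{B_n}\log n$, whereas the deviation $\sqrt{B_n}(\sqrt{qt}+t)$ you wrote would leave an extra $\sqrt{\log n}$ factor on the leading term.
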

\begin{proof}
See Section~\ref{supp_sec_prove_lemma_concentration_l_mat}.
\end{proof}
Expanding $\sum_{j=1}^q\ell_{ij}^{\prime}(\bgamma_j^\T\bu_i^* + \beta_j)\bgamma_j$ around $\sum_{j=1}^q\ell_{ij}^{\prime}\{(\bgamma_j^*)^\T\bu_i^* + \beta_j^*\}\bgamma_j^*$ gives, for any $\btheta$ satisfying \eqref{eq_feasible_theta},
\begin{align}
    \sum_{j=1}^q\ell_{ij}^{\prime}\big(\bgamma_j^\T\bu_i^{*} + \beta_j\big)\bgamma_j = &\,\sum_{j=1}^q\ell_{ij}^{\prime}\big((\bgamma_j^*)^\T\bu_i^* + \beta_j^*\big)\bgamma_j^*\label{eq_bound_neighb_theta0}\\&\,+\sum_{j=1}^q\left\{\ell_{ij}^{\prime}\big(\bgamma_j^\T\bu_i^* + \beta_j\big) - \ell_{ij}^{\prime}\big((\bgamma_j^*)^\T\bu_i^* + \beta_j^*\big)\right\}\bgamma_j^*\label{eq_bound_neighb_theta1}\\&\,+\sum_{j=1}^q\ell_{ij}^{\prime}\big(\bgamma_j^\T\bu_i^* + \beta_j\big)\big(\bgamma_j - \bgamma_j^*\big).\label{eq_bound_neighb_theta2}
\end{align}
The argument used to prove Lemma~\ref{coro_first_order_concern}, based on Lemma~\ref{lemma_concentration}, gives $\|\eqref{eq_bound_neighb_theta0}\| = \cOp(\sqrt{qB_n})$. For \eqref{eq_bound_neighb_theta1}, we have
\begin{equation*}
    \big\|\eqref{eq_bound_neighb_theta1}\big\| = \Big\|\sum_{j=1}^q\ell_{ij}^{\prime\prime}(\tilde\bgamma_j^\T\bu_i^* + \tilde\beta_j )\big\{(\bgamma_j-\bgamma_j^*)^\T\bu_i^* + (\beta_j - \beta_j^*)\big\}\bgamma_j^*\Big\|
\end{equation*}
where $\tilde\bgamma_j$ lies on the segment between $\bgamma_j^*$ and $\bgamma_j$, and $\tilde\beta_j$ lies on the segment between $\beta_j^*$ and $\beta_j$. The parameter bounds imply $\max_{j\in[q]}\|\tilde\bgamma_j\|_{\infty}\le C$ and $\max_{j\in[q]}|\tilde\beta_j|\le C$. By Assumption~\ref{assumption_smoothness}, $|\ell_{ij}^{\prime\prime}(\tilde\bgamma_j^\T\bu_i^* + \tilde\beta_j)|\le B_n$, and therefore
\begin{align*}
    \big\|\eqref{eq_bound_neighb_theta1}\big\| \le &\,C\sqrt{q}B_n\left(\sum_{j=1}^q\|\bgamma_j-\bgamma_j^*\|^2\|\bu_i^*\|^2\right)^{1/2} + C\sqrt{q}B_n\left(\sum_{j=1}^q(\beta_j-\beta_j^*)^2\right)^{1/2}\\\le &\, \cOp\big(qB_n\epsilon\big).
\end{align*}
when $\|\btheta - \btheta^*\| \le \sqrt{q}\epsilon$. The first inequality follows from $\|\bgamma_j^*\|\le \sqrt{K}\|\bgamma_j^*\|_{\infty}\le C$ uniformly for $j\in[q]$ by Assumption~\ref{assumption_psd_covariance}; the second inequality follows from $\|\bu_i^*\|\le \cOp(1)$ by the sub-Gaussianity Assumption~\ref{assump_standard_id} and $\|\btheta - \btheta^*\|\le \sqrt{q}\epsilon$. For \eqref{eq_bound_neighb_theta2},
\begin{align*}
    \big\|\eqref{eq_bound_neighb_theta2}\big\| \le&\, \Big\|\sum_{j=1}^q\ell_{ij}^{\prime}\big((\bgamma_j^*)^\T\bu_i^* + \beta_j^*\big)\big(\bgamma_j - \bgamma_j^*\big)\Big\| \\&\,+\Big\| \sum_{j=1}^q\Big\{\ell_{ij}^{\prime}\big((\bgamma_j^*)^\T\bu_i^* + \beta_j^*\big)-\ell_{ij}^{\prime}\big(\bgamma_j^\T\bu_i^* + \beta_j\big)\Big\}\big(\bgamma_j - \bgamma_j^*\big)\Big\|\\\le &\,\big\|(\Lb_{\eta})_{i,}\big\|\big\|\btheta-\btheta^*\big\| + \cO(qB_n\epsilon).
\end{align*} The second inequality follows similarly from the bound derived above for \eqref{eq_bound_neighb_theta1}. 
Here $\Lb_{\eta}$ is as defined in Lemma~\ref{lemma_concentration_l_mat}, and by $(i)$ of this lemma and $\|\btheta - \btheta^*\|\le \sqrt{q}\epsilon$, we know the first term can be bounded by $\cOp(q\sqrt{B_n}\epsilon)$. Taking the above estimates together with $B_n\ge 1$, we have
$\big\|\sum_{j=1}^q\ell_{ij}^{\prime}\big(\bgamma_j^\T\bu_i^{*} + \beta_j\big)\bgamma_j\big\| = \cOp\big(qB_n\epsilon + \sqrt{qB_n}\big)$.
Next, since $\bu_i^{\star}(\btheta) = \arg\max_{\bu\in\RR^K}\ell_i(\bu;\btheta)$, we have $\sum_{j=1}^q\ell_{ij}^{\prime}(\bgamma_j^\T\bu_i^{\star}(\btheta)+\beta_j)\bgamma_j = \zero_K$ and consequently,
\begin{equation}
    \Big\|\sum_{j=1}^q\ell_{ij}^{\prime}\big(\bgamma_j^\T\bu_i^{*} + \beta_j\big)\bgamma_j - \sum_{j=1}^q\ell_{ij}^{\prime}(\bgamma_j^\T\bu_i^{\star}(\btheta)+\beta_j)\bgamma_j\Big\| = \cOp\big(qB_n\epsilon + \sqrt{qB_n}\big).\label{eq_bound_neighb_theta_der_diff}
\end{equation}
Next, apply the integral mean value theorem, and we have the expansion
\begin{align}
    &\,\sum_{j=1}^q\ell_{ij}^{\prime}\big(\bgamma_j^\T\bu_i^{*} + \beta_j\big)\bgamma_j - \sum_{j=1}^q\ell_{ij}^{\prime}(\bgamma_j^\T\bu_i^{\star}(\btheta)+\beta_j)\bgamma_j\nonumber \\&\,= \int_{0}^1\sum_{j=1}^q-\ell_{ij}^{\prime\prime}\big(\bgamma_j^\T[\bu_i^{*} + s\{\bu_i^{\star}(\btheta) - \bu_i^*\}] + \beta_j\big)\bgamma_j\bgamma_j^\T(\bu_i^{\star}(\btheta) - \bu_i^*)\,ds.\label{eq_bound_neighb_theta_2der_diff}
\end{align}
If $\|\bu_i^{\star}(\btheta)\|_{\infty}\le C\sqrt{\log n}$, Assumption~\ref{assumption_smoothness} implies that the negative second derivative along the segment in \eqref{eq_bound_neighb_theta_2der_diff} is bounded below by $b_n$. Hence
\begin{align}
    \|\eqref{eq_bound_neighb_theta_2der_diff}\|\ge &\,\lambda_{\min}\left[\int_{0}^1\sum_{j=1}^q-\ell_{ij}^{\prime\prime}\big(\bgamma_j^\T[\bu_i^{*} + s\{\bu_i^{\star}(\btheta) - \bu_i^*\}] + \beta_j\big)\bgamma_j\bgamma_j^\T\,ds\right]\big\|\bu_i^{\star}(\btheta) - \bu_i^*\big\|\nonumber\\
    \ge &\,b_nq\lambda_{\min}\big\{q^{-1}\sum_{j=1}^q\bgamma_j\bgamma_j^\T\big\} \big\|\bu_i^{\star}(\btheta) - \bu_i^*\big\|\nonumber\\
    \ge &\,b_nq\left[\lambda_{\min}\Big\{q^{-1}\sum_{j=1}^q\bgamma_j^*(\bgamma_j^*)^\T\Big\} - q^{-1}\Big\|\sum_{j=1}^q\bgamma_j^*(\bgamma_j^*)^\T-\sum_{j=1}^q\bgamma_j\bgamma_j^\T\Big\|\right] \big\|\bu_i^{\star}(\btheta) - \bu_i^*\big\|\nonumber\\
    \ge &\,\big\{b_nq\lambda_{\min}(\bSigma_{\gamma}^*)/4\big\}\big\|\bu_i^{\star}(\btheta) - \bu_i^*\big\|.\label{eq_bound_neighb_theta_2der_diff_derive}
\end{align}
for all sufficiently large $n$ and $q$. The third inequality follows because Assumption~\ref{assumption_psd_covariance} gives $\lambda_{\min}\big\{q^{-1}\sum_{j=1}^q\bgamma_j^*(\bgamma_j^*)^\T\big\}\ge\lambda_{\min}(\bSigma_{\gamma}^*)/2$ for sufficiently large $q$, while $q^{-1}\big\|\sum_{j=1}^q\bgamma_j^*(\bgamma_j^*)^\T-\sum_{j=1}^q\bgamma_j\bgamma_j^\T\big\| \le C\epsilon\le \lambda_{\min}(\bSigma_{\gamma}^*)/4$ follows from $\|\btheta-\btheta^*\|\le \sqrt{q}\epsilon$ and $\epsilon=o(1)$ for some absolute constant $C > 0$. Combining this result with \eqref{eq_bound_neighb_theta_der_diff} gives
\begin{equation*}
    \big\|\bu_i^{\star}(\btheta) - \bu_i^*\big\|\le \cOp\big((qB_n\epsilon + \sqrt{qB_n})/{(qb_n)}\big),
\end{equation*}
which proves \eqref{eq_lemma_neigh_average_control}.
We next establish the required uniform bound on $\|\bu_i^{\star}(\btheta)\|_{\infty}$. Let $C_0>0$ satisfy $\max_{i\in[n]}\|\bu_i^*\|_{\infty}\le C_0\sqrt{\log n}$ with probability tending to one, and fix $M>2C_0$. Suppose, to obtain a contradiction, that $\|\bu_i^{\star}(\btheta)\|_{\infty}>M\sqrt{\log n}$ for some $i$. Let $\bu_i^{\star,M}(\btheta)$ be the unique intersection of the boundary $\{\bu:\|\bu\|_{\infty}=M\sqrt{\log n}\}$ with the segment from $\bu_i^*$ to $\bu_i^{\star}(\btheta)$. Define the unit vector
\begin{equation*}
    \bv_i=
    \frac{\bu_i^{\star,M}(\btheta)-\bu_i^*}{\|\bu_i^{\star,M}(\btheta)-\bu_i^*\|}.
\end{equation*} Left-multiplying the integral in \eqref{eq_bound_neighb_theta_2der_diff} by $\bv_i^\T$, we split the result into two parts:
\begin{align*}
    \bv_i^\T\eqref{eq_bound_neighb_theta_2der_diff} &=\bv_i^\T\int_{0}^1\sum_{j=1}^q-\ell_{ij}^{\prime\prime}\big(\bgamma_j^\T\bu_i^{*} + s\bgamma_j^\T\{\bu_i^{\star,M}(\btheta) - \bu_i^*\} + \beta_j\big)\bgamma_j\bgamma_j^\T(\bu_i^{\star,M}(\btheta) - \bu_i^*)\,ds \\&+ \bv_i^\T\int_{0}^1\sum_{j=1}^q-\ell_{ij}^{\prime\prime}\big(\bgamma_j^\T\bu_i^{\star,M}(\btheta) + s\bgamma_j^\T\{\bu_i^{\star}(\btheta) - \bu_i^{\star,M}(\btheta)\} + \beta_j\big)\bgamma_j\bgamma_j^\T(\bu_i^{\star}(\btheta) - \bu_i^{\star,M}(\btheta))\,ds,
\end{align*}
where $\bu_i^{\star,M}(\theta)$ is the intersection of $\{\bu:\|\bu\|_{\infty}\le M\sqrt{\log n}\}$ and the segment between $\bu_i^{\star}(\theta)$ and $\bu_i^*$. Since the second term is the quadratic form of a positive definite matrix and $\bv_i$ is unit vector, we know
\begin{equation*}
    \|\eqref{eq_bound_neighb_theta_2der_diff}\|\ge \bv_i^\T\int_{0}^1\sum_{j=1}^q-\ell_{ij}^{\prime\prime}\big(\bgamma_j^\T[\bu_i^{*} + s\{\bu_i^{\star,M}(\btheta) - \bu_i^*\}] + \beta_j\big)\bgamma_j\bgamma_j^\T(\bu_i^{\star,M}(\btheta) - \bu_i^*)\,ds.
\end{equation*}
Since $\big\|\bu_i^{\star,M}(\btheta)-\bu_i^*\big\|_{\infty} \ge (M-C_0)\sqrt{\log n}\ge (M/2)\sqrt{\log n}$. An argument similar to the derivation of \eqref{eq_bound_neighb_theta_2der_diff_derive} therefore gives
\begin{equation*}
    \|\eqref{eq_bound_neighb_theta_2der_diff}\|\ge Cb_nq\lambda_{\min}(\bSigma_{\gamma}^*)\sqrt{\log n},
\end{equation*}
which contradicts $\|\eqref{eq_bound_neighb_theta_2der_diff}\| = \|\eqref{eq_bound_neighb_theta_der_diff}\|$ as $\epsilon \ll b_n/(B_n\sqrt{\log n})$.

To establish \eqref{eq_lemma_neigh_unif_control}, similar to deriving \eqref{eq_bound_neighb_theta_der_diff}, one can prove with Lemma~\ref{coro_first_order_concern} that
\begin{align}
    \max_{1\le i\le n}\Big\|\sum_{j=1}^q\ell_{ij}^{\prime}\big(\bgamma_j^\T\bu_i^{*} + \beta_j\big)\bgamma_j - \sum_{j=1}^q\ell_{ij}^{\prime}(\bgamma_j^\T\bu_i^{\star}(\btheta)+\beta_j)\bgamma_j\Big\| = \cOp\big(qB_n\epsilon\sqrt{\log n} + \sqrt{qB_n\log n}\big).
\end{align}
The integral mean value theorem gives
\begin{align}
    &\,\max_{1\le i\le n}\Big\|\sum_{j=1}^q\ell_{ij}^{\prime}\big(\bgamma_j^\T\bu_i^{*} + \beta_j\big)\bgamma_j - \sum_{j=1}^q\ell_{ij}^{\prime}(\bgamma_j^\T\bu_i^{\star}(\btheta)+\beta_j)\bgamma_j\Big\|\nonumber \\&\,= \max_{1\le i\le n}\left\|\int_{0}^1\sum_{j=1}^q-\ell_{ij}^{\prime\prime}\big(\bgamma_j^\T[\bu_i^{*} + s\{\bu_i^{\star}(\btheta) - \bu_i^*\}] + \beta_j\big)\bgamma_j\bgamma_j^\T(\bu_i^{\star}(\btheta) - \bu_i^*)\,ds\right\|.
\end{align}
Using the same argument as for \eqref{eq_bound_neighb_theta_2der_diff}, we obtain
\begin{equation}\label{eq_condition_u}
\max_{1\le i\le n}\big\|\bu_i^{\star}(\btheta) - \bu_i^*\big\| = \cOp\Big(\sqrt{\log n}\Big\{\frac{B_n\epsilon}{b_n}\vee \frac{\sqrt{B_n}}{\sqrt{q}b_n}\Big\}\Big)=o_{\PP}(1),
\end{equation}
where the last inequality follows from $\sqrt{\log n}\big\{\frac{B_n\epsilon}{b_n}\vee \frac{\sqrt{B_n}}{\sqrt{q}b_n}\big\}\ll 1$ by Assumption~\ref{assump_scaling}. Note that the first inequality, appearing also in Lemma~\ref{lemma_neighbour_control}, does not require Assumption~\ref{assump_scaling}.

    Finally, to bound \eqref{eq_lemma_neigh_aver_control}, Lemma~\ref{coro_first_order_concern} gives $\sum_{i=1}^n\|\sum_{j=1}^q\ell_{ij}^{\prime}\{(\bgamma_j^*)^\T\bu_i^* + \beta_j^*\}\bgamma_j^*\|^2 = \cOp(nqB_n)$. As in the derivation of \eqref{eq_bound_neighb_theta_der_diff}, we can bound $\sum_{i=1}^n\|\sum_{j=1}^q\ell_{ij}^{\prime}(\bgamma_j^\T\bu_i^{*} + \beta_j)\bgamma_j\|^2$ by $\cOp(nqB_n\vee nq^2B_n^2\epsilon^2)$. Moreover, \eqref{eq_condition_u} and Assumptions~\ref{assumption_psd_covariance}--\ref{assumption_smoothness} imply that $-\ell_{ij}^{\prime\prime}(\bgamma_j^\T[\bu_i^{*} + s\{\bu_i^{\star}(\btheta) - \bu_i^*\}] + \beta_j)$ is bounded below by $b_n$. Therefore, \eqref{eq_bound_neighb_theta_2der_diff} and the argument for \eqref{eq_bound_neighb_theta_2der_diff_derive} yield
\begin{equation*}
    n^{-1}\sum_{i=1}^n\|\bu_i^{\star}(\btheta) - \bu_i^*\|^2\le \cOp\big(n^{-1}\times q^{-2}\times b_n^{-2}(nqB_n\vee nq^2B_n^2\epsilon^2)\big) = \cOp\Big(\frac{\epsilon^2B_n^2}{b_n^2}\vee \frac{B_n}{qb_n^2}\Big).
\end{equation*}
    Finally, Weyl's inequality gives
    \begin{equation*}
        q^{-1/2}\sigma_{K+1}(\bTheta)
        \ge q^{-1/2}\sigma_{K+1}(\bTheta^*)-q^{-1/2}\|\bTheta-\bTheta^*\|_{\Fn}
        \ge \sqrt{\lambda_{\min}(\bSigma_\theta^*)}-\epsilon+o(1).
    \end{equation*}
    Combining this bound with \eqref{eq_condition_u}, the assumed $\ell_\infty$ bound on $\btheta$, and the choices in Remark~\ref{remark_explain_xi} proves $\btheta\in\Xi$ with probability tending to one.

\subsection{Proof of Lemma~\ref{lemma_convexity}}

    We establish the upper bound for $\lambda_{\min}\big\{n^{-1}\cH_{\theta\theta}(\btheta^*)\big\} $ first. For any $r\in\{0\}\cup [K]$ and $l\in[K]$, let
    \begin{equation}
        \bv_{rl}(\btheta) = \left(\gamma_{1l}\big(\be_{r+1}^{(K+1)}\big)^\T, \dots, \gamma_{ql}\big(\be_{r+1}^{(K+1)}\big)^\T\right)^\T \in \RR^{qK+q}.\label{eq_define_v_pq}
    \end{equation}
   Let $\{\bu_i^{\star}(\btheta)\}_{i=1}^n$ and $\bH_{-\theta}^{\star}(\btheta)$ be as in Lemma~\ref{lemma_laplace_approx_second_order}, and set $\eta_{ij}^{\star}(\btheta) = \bgamma_j^\T\bu_i^{\star}(\btheta) + \beta_j$. Simple algebra yields
    \begin{align*}
       &\,\big\{\bv_{rl}(\btheta)\big\}^\T \bH_{-\theta}^{\star}(\btheta) \bv_{rl}(\btheta) \\= &\,\sum_{j=1}^q\gamma_{jl}\big(\be_{r+1}^{(K+1)}\big)^\T\Big\{-\sum_{i=1}^n\ell_{ij}^{\prime\prime}(\eta_{ij}^{\star}(\btheta))\begingroup \renewcommand{\arraystretch}{0.5}\begin{pmatrix}
        1&\big\{\bu_i^{\star}(\btheta)\big\}^\T\\\bu_i^{\star}(\btheta)&\bu_i^{\star}(\btheta)\big\{\bu_i^{\star}(\btheta)\big\}^\T
    \end{pmatrix}\endgroup\Big\}\be_{r+1}^{(K+1)}\gamma_{jl}\\&\, -\left[\sum_{j=1}^q\gamma_{jl}\left\{\bH_{Lu\theta}\big(\btheta,\bu^{\star}(\btheta)\big)\right\}_{,(j-1)(K+1) + r+1}\right]^\T\left\{\bH_{Lu u}\big(\btheta,\bu^{\star}(\btheta)\big)\right\}^{-1}\\&\qquad\times\left[\sum_{j=1}^q\gamma_{jl}\left\{\bH_{Lu\theta}\big(\btheta,\bu^{\star}(\btheta)\big)\right\}_{,(j-1)(K+1) + r+1}\right]\\
    = &\,-\sum_{j=1}^q\sum_{i=1}^n\gamma_{jl}^2\ell_{ij}^{\prime\prime}(\eta_{ij}^{\star}(\btheta))\big\{u_{ir}^{\star}(\btheta)\big\}^2 \quad (:=\alpha_1)\\&\, + \sum_{i=1}^n\left\{\Big(\sum_{j=1}^q\gamma_{jl}\ell_{ij}^{\prime\prime}(\eta_{ij}^{\star}(\btheta))u_{ir}^{\star}(\btheta)\bgamma_j + \gamma_{jl}\ell_{ij}^{\prime}(\eta_{ij}^{\star}(\btheta))1_{(r\ge 1)}\be_{r}^{(K)}\Big)\right\}^\T\left\{\sum_{s=1}^q\ell_{is}^{\prime\prime}(\eta_{is}^{\star}(\btheta))\bgamma_s\bgamma_s^\T\right\}^{-1}\\&\,\qquad\qquad\times \left\{\sum_{j=1}^q\Big(\gamma_{jl}\ell_{ij}^{\prime\prime}(\eta_{ij}^{\star}(\btheta))u_{ir}^{\star}(\btheta)\bgamma_j + \gamma_{jl}\ell_{ij}^{\prime}(\eta_{ij}^{\star}(\btheta))1_{(r\ge 1)}\be_{r}^{(K)}\Big)\right\}\quad (:=\alpha_2).
    \end{align*}
    Here we abuse the notation to denote $u_{i0}^{\star}(\btheta) = 1$. For $\alpha_2$, since
    \begin{align*}
        &\,\left\{\sum_{s=1}^q\ell_{is}^{\prime\prime}(\eta_{is}^{\star}(\btheta))\bgamma_s\bgamma_s^\T\right\}^{-1} \sum_{j=1}^q\gamma_{jl}\ell_{ij}^{\prime\prime}(\eta_{ij}^{\star}(\btheta))u_{ir}^{\star}(\btheta)\bgamma_j\\ =&\, \left\{\sum_{j=1}^q\ell_{ij}^{\prime\prime}(\eta_{ij}^{\star}(\btheta))\bgamma_j\bgamma_j^\T\right\}^{-1} \left\{\sum_{j=1}^q\bgamma_{j}\bgamma_j^\T\ell_{ij}^{\prime\prime}(\eta_{ij}^{\star}(\btheta))\right\}\be_{l}^{(K)}u_{ir}^{\star}(\btheta) =\be_l^{(K)} u_{ir}^{\star}(\btheta),
    \end{align*}
    it can be simplified to
    \begin{equation}
    \begin{aligned}
        \alpha_2 = &\,\underbrace{\sum_{i=1}^n\sum_{j=1}^q\gamma_{jl}\ell_{ij}^{\prime\prime}(\eta_{ij}^{\star}(\btheta))u_{ir}^{\star}(\btheta)\bgamma_j^\T\be_l^{(K)} u_{ir}^{\star}(\btheta)}_{\beta_1} + \underbrace{2\sum_{i=1}^n\sum_{j=1}^q\gamma_{jl}\ell_{ij}^{\prime}(\eta_{ij}^{\star}(\btheta))u_{ir}^{\star}(\btheta)1_{(r\ge 1)}(\be_r^{(K)})^\T\be_l^{(K)}}_{\beta_2} \\&\, + \underbrace{\sum_{i=1}^n\left\{\sum_{j=1}^q\gamma_{jl}\ell_{ij}^{\prime}(\eta_{ij}^{\star}(\btheta))1_{(r\ge 1)}(\be_{r}^{(K)})^\T\right\}\left\{\sum_{s=1}^q\ell_{is}^{\prime\prime}(\eta_{is}^{\star}(\btheta))\bgamma_s\bgamma_s^\T\right\}^{-1}\left\{\sum_{j=1}^q\gamma_{jl}\ell_{ij}^{\prime}(\eta_{ij}^{\star}(\btheta))1_{(r\ge 1)}\be_{r}^{(K)}\right\}}_{\beta_3}.
    \end{aligned}\label{eq_aux_bound_imply_null_space}\end{equation}
    Observe that $\beta_1 +\alpha_1 =0$ and $\beta_2 = \beta_3 = 0$ when $r = 0$. Then together with Lemma~\ref{lemma_laplace_approx_second_order}, we know
    \begin{equation}
        \lambda_{\min}\big\{n^{-1}\cH_{\theta\theta}(\btheta^*)\big\} \le \lambda_{\min}\big\{n^{-1}\bH_{-\theta}^{\star}(\btheta^*)\big\} + \big\|n^{-1}\cH_{\theta\theta}(\btheta^*) - n^{-1}\bH_{-\theta}^{\star}(\btheta^*)\big\|\le \cOp\big(\varepsilon_{nq}q^{-1}\big).\label{eq_hessian_true_upper}
    \end{equation}
    When $r\ge 1$ and $\btheta \in \cB_{\epsilon}(\btheta^*)$ for some sufficiently small $\epsilon>0$, it can be shown that $|\beta_2| = \cOp\big(\sqrt{nqB_n} + \sqrt{\epsilon}\big)$ and $|\beta_3|=\cOp(nB_n/b_n)$. These rates are no smaller than those in the case $r=0$. They reflect the diminishing directions of $\cH_{\theta\theta}(\btheta^*)$, which also appear in the lower bound in \eqref{eq_hessian_lower_bound}.
    We next introduce the following lemma, which establishes two lower bounds.


    \begin{lemma}\em\label{lemma_convexity2}
        Suppose $\bv_{rl}(\btheta)$ is as defined in \eqref{eq_define_v_pq} and $\bH_{-\theta}^{\star}(\btheta)$ is as defined in Lemma~\ref{lemma_laplace_approx_second_order}. Under Assumptions~\ref{assump_standard_id}--\ref{assump_scaling}, for some $\epsilon\ll (\log n)^{-1}\wedge b_n^3/(B_n^3\sqrt{\log n})$, there exist absolute constants $\gamma,c>0$ such that
        \begin{equation*}
            \PP\left(\min_{\btheta\in\cB_{\epsilon}(\btheta^*)}\lambda_{\min}\Big[n^{-1}\bH_{-\theta}^{\star}(\btheta) + cb_nq^{-1}\sum_{r=0}^{K}\sum_{l=1}^K\bv_{rl}(\btheta)\big\{\bv_{rl}(\btheta)\big\}^\T\Big]\ge \gamma b_n\right)\to 1,\text{ as }n,q\to\infty.
        \end{equation*}
        In addition, the result holds when replacing $\bv_{rl}(\btheta)\big\{\bv_{rl}(\btheta)\big\}^\T$ with $\bv_{rl}(\btheta^*)\big\{\bv_{rl}(\btheta^*)\big\}^\T$. Moreover, 
         $\lambda_{\min}\{n^{-1}\cH_{\theta\theta}(\btheta^*)\}\ge -\cOp(B_n^{5/2}b_n^{-2}(n\wedge q)^{-1/2})$. 
    \end{lemma}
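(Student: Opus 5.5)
The plan is to work directly with the Schur-complement matrix $\bH_{-\theta}^{\star}(\btheta)$ and write its quadratic form, for a generic $\bv=\mathrm{vec}(\bV)$, as the minimum over auxiliary latent perturbations. Since $\bH_L$ has a positive definite block $\bH_{Luu}$, we have
\begin{equation*}
\bv^\T\bH_{-\theta}^{\star}(\btheta)\bv=\min_{\bw\in\RR^{nK}}(\bv^\T,\bw^\T)\,\bH_L\big(\btheta,\bu^\star(\btheta)\big)\,(\bv^\T,\bw^\T)^\T .
\end{equation*}
The plan is to separate $\bH_L=\bar\bH_L+\bJ_L$, where $\bJ_L$ collects the first-order terms $\ell_{ij}'$. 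The part with $\bar\bH_{L\theta u}$ is a sum over $(i,j)$ of $-\ell_{ij}''(\eta^\star_{ij})\{(1,\bu_i^{\star\T})\bv_j+\bgamma_j^\T\bw_i\}^2$, where $\bv_j$ is the $j$th block of $\bv$. This part is bounded below by $b_n\sum_{i,j}\{(1,\bu_i^{\star\T})\bv_j+\bgamma_j^\T\bw_i\}^2$, because Lemma~\ref{lemma_neighbour_control} keeps every $\eta^\star_{ij}$ in the region $|\eta|\le C\sqrt{\log n}$.

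\textbf{Step 1.} Study the purely algebraic quadratic form
\begin{equation*}
Q(\bV,\bW)=\sum_{i,j}\big\{(1,\bu_i^{\star\T})\bv_j+\bgamma_j^\T\bw_i\big\}^2 ,
\end{equation*}
which is the squared Frobenius norm of the rank-perturbation $\bU^{c}\bV^{\T}+\bW\bGamma^\T$, with $\bU^c=(\one_n,\bU^\star)$. Its null space consists of the infinitesimal gauge directions $\bv_j=\bA\bgamma_j$ and $\bw_i=-\bA^\T(1,\bu_i^{\star\T})^\T$ for $\bA\in\RR^{(K+1)\times K}$. These are exactly the vectors spanned by the $\bv_{rl}(\btheta)$ in \eqref{eq_define_v_pq}. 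The claim to prove is that, modulo this $(K+1)K$-dimensional span, $Q\ge c\,n\|\bV\|_{\Fn}^2$.

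To prove it, project $\bV$ onto $\mathrm{col}(\bGamma)^\perp$ column-wise and project $\bW$ likewise. Then use that $n^{-1}\bU^{c\T}\bU^c$ and $q^{-1}\bGamma^\T\bGamma$ are well conditioned. The first follows from Assumption~\ref{assump_standard_id} together with the bound on $\bu^\star_i-\bu^*_i$ in Lemma~\ref{lemma_neighbour_control}; the second from Assumption~\ref{assumption_psd_covariance}. A Pythagorean decomposition then gives a lower bound. Adding $cb_nq^{-1}\sum\bv_{rl}\bv_{rl}^\T$ penalizes exactly the gauge component, since $\bv^\T\bv_{rl}$ is an entry of $\bV\bGamma$. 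This yields $\gamma b_n$ times the identity for the $\bar\bH$ part.

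\textbf{Step 2.} Control the $\bJ_L$ cross terms $2\sum_{i,j}\ell_{ij}'(\eta^\star_{ij})\,\bv_{j,-1}^\T\bw_i$, where $\bv_{j,-1}$ drops the intercept coordinate of $\bv_j$. These are bounded by $\|\Lb_\eta(\btheta,\bu^\star)\|\,\|\bV\|_{\Fn}\|\bW\|_{\Fn}$. Lemma~\ref{lemma_concentration_l_mat}(iv) gives
\begin{equation*}
\|\Lb_\eta(\btheta,\bu^\star)\|=\cOp\Big(\sqrt{(n\vee q)B_n}+B_n\sqrt{nq\log n}\,\epsilon\Big).
\end{equation*}
The minimizing $\bw$ has $\|\bW\|_{\Fn}\lesssim (B_n/b_n)\sqrt{n/q}\,\|\bV\|_{\Fn}$. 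The cross term is therefore $o(nb_n)\|\bV\|_{\Fn}^2$ provided $\epsilon\ll b_n^3/(B_n^3\sqrt{\log n})$ and $n\wedge q\to\infty$ polynomially.

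I expect this step to be the main obstacle. The bound must hold uniformly over $\cB_\epsilon(\btheta^*)$ and must handle the dependence of $\bu^\star(\btheta)$ on $\btheta$, which is why the $\epsilon$ conditions enter. The replacement of $\bv_{rl}(\btheta)$ by $\bv_{rl}(\btheta^*)$ costs only $O(\epsilon)$ in operator norm and is absorbed. This proves the first two claims; combined with Lemma~\ref{lemma_laplace_approx_second_order}, whose error $n\varepsilon_{nq}/q$ is $o(nb_n)$, it also gives \eqref{eq_hessian_lower_bound}.

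\textbf{Step 3.} For the final claim at $\btheta^*$, restrict to $\bv\in\mathrm{span}\{\bv_{rl}(\btheta^*)\}$ and use the explicit computation of $\alpha_1+\beta_1+\beta_2+\beta_3$ above. Directions outside this span are already handled by Steps 1--2. For $r\ge1$, bound $\beta_2$ by the size of the score, $\cOp(\sqrt{nqB_n})$, and bound $\beta_3$, which is nonpositive, by $\cOp(nB_n/b_n)$. Normalizing by $n\|\bv_{rl}\|^2\asymp nq$, and accounting for the cross terms between the span and its complement by Cauchy--Schwarz with the $\gamma b_n$ curvature, yields
\begin{equation*}
-\cOp\big(B_n^{5/2}b_n^{-2}(n\wedge q)^{-1/2}\big).
\end{equation*}
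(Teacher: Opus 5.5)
Your Steps~1--2 are correct, and they take a genuinely different route to the first two claims. Step~3 does not prove the third claim; it is the one real gap.

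\textbf{Steps 1--2 versus the paper.} The paper lower-bounds the smallest eigenvalue of the whole scaled matrix $\bS_m\bH_L\bS_m$ plus a penalty, in four stages:
\begin{enumerate}
\item It uses the full-space gauge vectors $\tilde\bv_{rl}$, for which the decomposition $\alpha_1+\alpha_2+\alpha_3$ is exact.
\item It exchanges them for the $\btheta$-only vectors $\bar\bv_{rl}$ through a projection argument with the singular-value bounds $\zeta_1,\zeta_2$.
\item It absorbs the $\ell'$-terms via $(nq)^{-1/2}\|\bJ_{L\theta u}\|=o_p(b_n)$.
\item Only then does it pass to the Schur complement.
\end{enumerate}
You instead minimize over the latent block explicitly. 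Take $\bV\in\RR^{(K+1)\times q}$ with columns $\bv_j$, and let $\bP_{\bGamma}^{\perp}$ be the projector onto $\mathrm{col}(\bGamma)^{\perp}\subset\RR^q$. Then $\min_{\bm{W}}\|\bU^c\bV+\bm{W}\bGamma^\T\|_{\Fn}^2=\|\bU^c\bV\bP_{\bGamma}^{\perp}\|_{\Fn}^2$, which makes the $\tilde\bv\to\bar\bv$ exchange unnecessary. You then handle the $\ell'$-terms through the actual minimizer $\bm{w}^\star=-\bH_{Luu}^{-1}\bH_{Lu\theta}\bv$. The price is the factor $\|\bm{W}^\star\|_{\Fn}/\|\bV\|_{\Fn}\lesssim(B_n/b_n)\sqrt{n/q}$. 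Lemma~\ref{lemma_concentration_l_mat}(iv) must also be fed the latent deviation $n^{-1/2}\|\bu^\star(\btheta)-\bu^*\|\lesssim\epsilon B_n/b_n+\kappa_n/\sqrt q$, not just $\epsilon$ as you wrote. With that correction your route uses exactly the hypothesis $\epsilon\ll b_n^3/(B_n^3\sqrt{\log n})$. So the step you flagged as the main obstacle does go through.

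\textbf{The gap in Step 3.} Split $\bv=\bv_1+\bv_2$ into the gauge span and its complement; you must control $2\bv_1^\T\bH_{-\theta}^\star\bv_2$.
\begin{itemize}
\item Because $\bH_{-\theta}^\star(\btheta^*)$ is not positive semidefinite, the bound $|\bv_1^\T\bH_{-\theta}^\star\bv_2|\le(\bv_1^\T\bH_{-\theta}^\star\bv_1)^{1/2}(\bv_2^\T\bH_{-\theta}^\star\bv_2)^{1/2}$ is unavailable.
\item The only route is $\|\bH_{-\theta}^\star\bv_1\|\,\|\bv_2\|$. Absorbing this into $\gamma b_n n\|\bv_2\|^2$ leaves a term $-\|n^{-1}\bH_{-\theta}^\star\bv_1\|^2/(\gamma b_n)$.
\item The $\alpha_1+\beta_1+\beta_2+\beta_3$ computation gives only the diagonal forms $\bv_{rl}^\T\bH_{-\theta}^\star\bv_{rl}$. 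It gives neither the mixed forms for a general $\bv_1$ in the span nor the vector $\bH_{-\theta}^\star\bv_1$.
\item With the generic bound $\|n^{-1}\bH_{-\theta}^\star\|=\cOp(B_n^2/b_n)$, the leftover is of order $B_n^4/b_n^3$.
\item Nothing in your sketch produces the rate $B_n^{5/2}b_n^{-2}(n\wedge q)^{-1/2}$; your normalized $\beta_2,\beta_3$ are of order $\sqrt{B_n/(nq)}$ and $B_n/(b_nq)$.
\end{itemize}
The missing fact is that the latent score vanishes at the mode: $\sum_j\ell_{ij}'(\eta_{ij}^\star)\bgamma_j=\zero_K$. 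This makes $\beta_2=\beta_3=0$ identically. It also gives $\bJ_{Lu\theta}\bv_1=\zero$ for $\bv_1$ in the span. Combined with your null-space identity $\bar\bH_L(\bv_1^\T,\bm{w}_1^\T)^\T=\zero$, this yields $\bH_{-\theta}^\star\bv_1=\bJ_{L\theta u}\bm{w}_1$, where $\bm{w}_{1i}=-\bA^\T(1,\bu_i^{\star\T})^\T$ when $\bv_{1j}=\bA\bgamma_j^*$. That vector is controlled by $\|\bS_{L\theta}(\btheta^*,\bu^\star)\|$.

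\textbf{A simpler fix.} Drop the decomposition and reuse your Step~2 inequality at $\btheta^*$ without the penalty. The $\bar\bH_L$-part of the quadratic form at $(\bv,\bm{w}^\star)$ is nonnegative, so
\[
n^{-1}\bv^\T\bH_{-\theta}^\star(\btheta^*)\bv\;\ge\;-2n^{-1}\|\Lb_\eta(\btheta^*,\bu^\star)\|\,\|\bV\|_{\Fn}\,\|\bm{W}^\star\|_{\Fn}.
\]
By the mean-value theorem and $\sum_i\|\bu_i^\star-\bu_i^*\|^2=\cOp(n\kappa_n^2/q)$, one has $\|\Lb_\eta(\btheta^*,\bu^\star)\|=\cOp\{\sqrt{(n\vee q)B_n}+B_n\kappa_n\sqrt n\}$. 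The right side is then $-\cOp\{B_n^{3/2}b_n^{-1}(n\wedge q)^{-1/2}+B_n^{5/2}b_n^{-2}q^{-1/2}\}\|\bv\|^2$, which is within the claimed rate. Lemma~\ref{lemma_laplace_approx_second_order} then adds only $\cOp(\varepsilon_{nq}/q)=o\{(n\wedge q)^{-1/2}\}$.

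This is the paper's argument in variational form. The paper uses that $\bS_m\bar\bH_L\bS_m$ is positive semidefinite and that $\|\bS_m(\bH_L-\bar\bH_L)\bS_m\|=\cOp\{\sqrt{B_n/(n\wedge q)}\}$, and carries the bound to the Schur complement through $\bD_m$.
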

    \begin{proof}See Section~\ref{supp_sec_prove_lemma_convexity2}.\end{proof}
    The last part of Lemma~\ref{lemma_convexity2} together with \eqref{eq_hessian_true_upper} establishes $\big|\lambda_{\min}\{n^{-1}\cH_{\theta\theta}(\btheta^*)\}\big| = \cOp(B_n^{5/2}b_n^{-2}(n\wedge q)^{-1/2})$.
    Now we establish the lower bound \eqref{eq_hessian_lower_bound}. Let $\bV =(\bbeta^v,\bGamma^v)^\T \in\RR^{(K+1)\times q}$ and $\bv = \mathrm{vec}\big((\bbeta^v,\bGamma^v)^\T\big)$. Simple algebra gives
    \begin{align*}
        \bv^\T\sum_{r=0}^{K}\sum_{l=1}^K\bv_{rl}(\btheta^*)\big\{\bv_{rl}(\btheta^*)\big\}^\T\bv =&\, \sum_{l=1}^K\left(\sum_{j=1}^q\beta_j^v\gamma_{jl}^*\right)^2 + \sum_{r=1}^{K}\sum_{l=1}^K\left(\sum_{j=1}^q\gamma_{jr}^v\gamma_{jl}^*\right)^2\\=&\, \big\|(\bbeta^v)^\T\bGamma^*\big\|^2 + \big\|(\bGamma^v)^\T\bGamma^*\big\|_{\Fn}^2.
    \end{align*}
    Consequently, replacing $\bv_{rl}(\btheta)\big\{\bv_{rl}(\btheta)\big\}^\T$ with $\bv_{rl}(\btheta^*)\big\{\bv_{rl}(\btheta^*)\big\}^\T$ in Lemma~\ref{lemma_convexity2} yields that, when $n,q\to\infty$,
\begin{equation*}
            \PP\left(\min_{\substack{\btheta\in\cB_{\epsilon}(\btheta^*)\\\bv\in\RR^{qK+q},\|\bv\|=1}}n^{-1}\bv^\T\bH_{-\theta}^{\star}(\btheta) \bv + cb_nq^{-1}\big\|(\bbeta^v)^\T\bGamma^*\big\|^2 + cb_nq^{-1}\big\|(\bGamma^v)^\T\bGamma^*\big\|_{\Fn}^2\ge \gamma b_n \right)\to 1,
        \end{equation*}
        for any $\bV$ given in Lemma~\ref{lemma_convexity}. Next, invoke Lemma~\ref{lemma_laplace_approx_second_order} and Weyl's theorem to obtain that for any $\bv\in\RR^{qK+q}$, 
        \begin{equation*}
            n^{-1}\left|\bv^\T\cH_{\theta\theta}(\btheta) \bv- \bv^\T\bH_{-\theta}^{\star}(\btheta)\bv\right| = \cOp(\varepsilon_{nq}q^{-1}\|\bv\|^2).
        \end{equation*}
        The proof follows because the approximation error is $o_p(b_n\|\bv\|^2)$ under Assumption~\ref{assump_scaling}.

\subsection{Proof of Lemma~\ref{lemma_key_first_order}}\label{supp_sec_prove_lemma_key_first_order}
The following result allows us to localize the analysis but does not provide uniform control of the estimator.
\begin{lemma}
    \em\label{lemma_average_converge_eta} Suppose Assumptions~\ref{assump_standard_id}--\ref{assump_scaling} hold. For any $\hat\btheta\in\Xi$ and $(\bmu,\bSigma)\in\bar\cK_\rho$ for $\rho<1$, if $\cL(\hat\btheta,\bmu,\bSigma) \le \cL(\btheta^*,\bmu,\bSigma)$, then it holds that
    \begin{equation*}
        \sum_{i=1}^n\sum_{j=1}^q\left(\hat\bgamma_j^\T\bu_i^{\star}(\hat\btheta) + \hat\beta_j - (\bgamma_j^*)^\T\bu_i^{\star}(\btheta^*) - \beta_j^*\right)^2 = \cOp\big\{(n\vee q)\log (n\vee q)\kappa_n^2B_n^2/b_n^2 \big\},
    \end{equation*}
    where $\bu_i^{\star}(\btheta) = \argmax_{\bu\in\RR^{K}}\ell_i(\bu;\btheta)$.
\end{lemma}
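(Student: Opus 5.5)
Write $\hat\eta_{ij}=\hat\bgamma_j^\T\bu_i^{\star}(\hat\btheta)+\hat\beta_j$ and $\eta_{ij}^{\star}=(\bgamma_j^*)^\T\bu_i^{\star}(\btheta^*)+\beta_j^*$, and let $\Delta$ be the $n\times q$ matrix with entries $\hat\eta_{ij}-\eta_{ij}^{\star}$. The plan is a basic-inequality argument on the \emph{profiled} conditional likelihood: turn the marginal-likelihood comparison into a comparison of $\sum_i\max_{\bu}\ell_i(\bu;\cdot)$, then extract a quadratic lower bound in $\Delta$ and an upper bound through a rank-restricted noise term.

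\textbf{Step 1 (Laplace reduction).} For any $\btheta\in\Xi$ and $(\bmu,\bSigma)\in\bar\cK_\rho$, a zeroth-order Laplace approximation gives
\[
-\log\int e^{\ell_i(\bu;\btheta)}\pwk(\bu\mid\bmu,\bSigma)\,d\bu=-\ell_i(\bu_i^{\star}(\btheta);\btheta)+\tfrac12\log\det\{-\partial_{\bu}^2\ell_i\}+O(\log n)+O(\varepsilon_{nq}/q).
\]
The terms are controlled as follows. The prior contributes $O(\|\bu_i^{\star}\|^2)=O(\log n)$ by the $\Xi$ constraint. The log-determinant is $K\log q+O(\log B_n+\log b_n^{-1})$ for both parameters, because the curvature $\sum_j(-\ell_{ij}'')\bgamma_j\bgamma_j^\T$ lies between $b_n q$ and $B_n q$, using $\sigma_{K+1}$ in $\Xi$. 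The remainder is uniform over $\Xi$; this is the same machinery as in Lemma~\ref{lemma_first_order_base}. Summing over $i$, $\cL(\hat\btheta,\bmu,\bSigma)\le\cL(\btheta^*,\bmu,\bSigma)$ yields
\[
\sum_i\ell_i(\bu_i^{\star}(\btheta^*);\btheta^*)-\sum_i\ell_i(\bu_i^{\star}(\hat\btheta);\hat\btheta)\le C n\log(n)\,\log(B_n/b_n)\lesssim n\log n\,\kappa_n^2 .
\]

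\textbf{Step 2 (curvature lower bound).} View the left side as $F(\bm\eta^{\star})-F(\hat{\bm\eta})$ with $F(\bm\eta)=\sum_{ij}\ell_{ij}(\eta_{ij})$. By a second-order Taylor expansion of each $\ell_{ij}$ about $\eta^{\star}_{ij}$, with strong concavity $-\ell_{ij}''\ge b_n$ on $|\eta|\le C\sqrt{\log n}$ (guaranteed by the $\Xi$ bounds on $\btheta$ and $\bu^{\star}$),
\[
F(\bm\eta^{\star})-F(\hat{\bm\eta})\ge -\langle \Lb_{\eta}(\btheta^*,\bu^{\star}),\Delta\rangle+\tfrac{b_n}{2}\|\Delta\|_{\Fn}^2 .
\]

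\textbf{Step 3 (noise term).} Since $\Delta$ is the difference of two $n\times q$ matrices of rank at most $K+1$, we have $\operatorname{rank}(\Delta)\le 2K+2$. Hence
\[
|\langle\Lb_{\eta}(\btheta^*,\bu^{\star}),\Delta\rangle|\le\sqrt{2K+2}\,\|\Lb_{\eta}(\btheta^*,\bu^{\star})\|\,\|\Delta\|_{\Fn}.
\]
To bound the operator norm, evaluate at $\bu^{\star}=\bu^{\star}(\btheta^*)$ using Lemma~\ref{lemma_concentration_l_mat}(iv) with $\epsilon\asymp\kappa_n\sqrt{\log n/q}$ from Lemma~\ref{lemma_bu_star_theta_true}:
\[
\|\Lb_{\eta}\|=\cOp\big(\sqrt{(n\vee q)B_n}+B_n\kappa_n\sqrt{n\log n}\,\log^{1/2} n\big)=\cOp\big(\sqrt{(n\vee q)\log(n\vee q)}\,\kappa_nB_n\big).
\]
Alternatively, one can do a direct first-order expansion, noting that $\sum_j\ell_{ij}'(\eta^{\star}_{ij})\bgamma_j^*=\zero$ by optimality of $\bu_i^{\star}$.

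\textbf{Step 4 (combine).} Steps 1–3 give
\[
\tfrac{b_n}{2}\|\Delta\|_{\Fn}^2\le C\|\Lb_\eta\|\,\|\Delta\|_{\Fn}+Cn\log n\,\kappa_n^2 .
\]
Solving the quadratic inequality yields $\|\Delta\|_{\Fn}^2\lesssim\|\Lb_\eta\|^2/b_n^2+n\log n\,\kappa_n^2/b_n$, which is $\cOp\{(n\vee q)\log(n\vee q)\kappa_n^2B_n^2/b_n^2\}$.

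\textbf{Main obstacle.} I expect the hardest step to be the uniformity of the Laplace approximation in Step 1 over the data-dependent $\hat\btheta\in\Xi$, particularly the log-determinant and tail contributions, and ensuring the error is $O(\log n)$ per subject rather than larger. I would handle it with the $\Xi$ constraints, which fix the mode region and bound the curvature, together with the uniform bounds already used for Lemma~\ref{lemma_first_order_base}.
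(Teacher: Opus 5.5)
Your argument is essentially the paper's. Both reduce to the profiled conditional likelihood, Taylor-expand each $\ell_{ij}$ about $\eta_{ij}^\star$ using $-\ell_{ij}''\ge b_n$, and bound the linear term by the rank-restricted trace inequality with $\|\Lb_\eta(\btheta^*,\bu^\star)\|$ taken from Lemma~\ref{lemma_concentration_l_mat}$(iv)$ and Lemma~\ref{lemma_bu_star_theta_true}. Both then solve the resulting quadratic inequality in $\|\Delta\|_{\Fn}$.

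\textbf{Step 1.} This step is heavier than necessary, and the obstacle you anticipate does not arise. The paper uses the crude sandwich
$c\,n^{-C}(nq)^{-K/2}e^{\ell_i\{\bu_i^\star(\btheta);\btheta\}}\le Z_i(\btheta)\le e^{\ell_i\{\bu_i^\star(\btheta);\btheta\}}$.
The upper bound holds simply because $\pwk$ integrates to one. The lower bound comes from integrating over a ball of radius $(nq)^{-1/2}$ around the mode. At the data-dependent $\hat\btheta$ you only need the trivial upper bound, so no uniformity over $\Xi$ is required. The nontrivial lower bound is used only at the fixed $\btheta^*$, and it yields an $O\{n\log(nq)\}$ error with no log-determinant bookkeeping and no uniform Laplace remainder.

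\textbf{Step 3: rate slip.} Take $\epsilon\asymp\kappa_n/\sqrt q$, the root-mean-square rate $n^{-1/2}\|\bu^\star(\btheta^*)-\bu^*\|=\cOp(\kappa_n/\sqrt q)$ from Lemma~\ref{lemma_bu_star_theta_true}. This is exactly the quantity Lemma~\ref{lemma_concentration_l_mat}$(iv)$ is stated in terms of, and it gives $\|\Lb_\eta\|=\cOp\{B_n\kappa_n\sqrt{(n\vee q)\log n}\}$.

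With your max-norm choice $\epsilon\asymp\kappa_n\sqrt{\log n/q}$, the second term becomes $B_n\kappa_n\sqrt n\,\log n$. The equality you claim with $\sqrt{(n\vee q)\log(n\vee q)}\,\kappa_nB_n$ is then false by a factor $\sqrt{\log n}$, for example whenever $q\le n$. That would leave an extra $\log n$ in the final rate. The fix is only this change of $\epsilon$; the rest of your argument, including Step 4, goes through as written.
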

\begin{proof}See Section~\ref{supp_sec_prove_lemma_average_converge_eta}.\end{proof}

\noindent\underline{\large \it Step 0: Setup. }
Let $\cK_C = \{(\bmu,\bSigma):\|\bmu\|\le C,\; C^{-1}\bI_K\preceq \bSigma\preceq C\bI_K\}$ for $C > 1$ as defined in \eqref{eq_define_KC}. Recall that in Section~\ref{subsec_first_order}, we have constructed
\begin{equation}
    (\hat\btheta^*,\hat\bmu^*,\hat\bSigma^*) \in \mathop{\arg\min}_{\btheta\in \cB_{\epsilon}(\btheta^*),(\bmu,\bSigma)\in\cK_C}\cL(\btheta,\bmu,\bSigma) + P^*(\btheta),\label{eq_aux_estimator}
\end{equation}
for some $\epsilon$ specified below in \eqref{eq_key_scaling_equation_epsilon}, where $P^*(\btheta)$ in \eqref{eq_define_p_star_illus} can be equivalently written as
\begin{equation}
    P^* ( \btheta) = c_P n q\Big\|q^{-1}\sum_{j=1}^q\bgamma_j^*\btheta_j^\T - q^{-1}\sum_{j=1}^q\bgamma_j^*(\btheta^*_j)^\T\Big\|_{\Fn}^2,\label{eq_define_p_star}
\end{equation}where $c_P\asymp b_n$ is chosen to match the penalty scale in Lemma~\ref{lemma_convexity2}. We do not index the estimator by $\epsilon$ because, as shown in Lemma~\ref{lemma_aux_estimator_property}, it does not depend on the choice of $\epsilon$ as long as
\begin{equation}
    \frac{\varepsilon_{nq}}{\sqrt{B_n(n\wedge q)}}\ll\epsilon\ll (\log n)^{-1}\wedge\frac{b_n^3}{B_n^3\sqrt{\log n}}.\label{eq_key_scaling_equation_epsilon}
\end{equation}
This interval is nonempty under Assumption~\ref{assump_scaling}.
Write $\cQ^*(\btheta,\bmu,\bSigma) = \cL(\btheta,\bmu,\bSigma) + P^*(\btheta)$. 
The derivatives of $P^*(\btheta)$ are
\begin{align}
    &\partial_{\btheta}P^*(\btheta) = {2c_Pn}\sum_{l=1}^K\big(q^{-1}\sum_{j=1}^q\gamma_{jl}^*\beta_j-M_{l0}^*\big)\bv_{0l}^*+{2c_Pn}\sum_{r=1}^K\sum_{l=1}^K\big(q^{-1}\sum_{j=1}^q\gamma_{jl}^*\gamma_{jr}-M_{lr}^*\big)\bv_{rl}^*;\label{eq_first_order_p_star}\\
    &\partial_{\btheta\btheta}^2P^*(\btheta) = 2c_Pnq^{-1}\sum_{r=0}^K\sum_{l=1}^K\bv_{rl}^*(\bv_{rl}^*)^\T,\nonumber
\end{align}
where $\bM^* =(M_{rl}^*)_{K\times(K+1)}= q^{-1}\sum_{j=1}^q\bgamma_j^*(\btheta^*_j)^\T$ and $\bv^{*}_{rl} = \bv_{rl}(\btheta^*)$ is given as
\begin{equation*}
    \bv_{rl}(\btheta) = \left(\gamma_{1l}\big(\be_{r+1}^{(K+1)}\big)^\T, \dots, \gamma_{ql}\big(\be_{r+1}^{(K+1)}\big)^\T\right)^\T \in \RR^{qK+q}.
\end{equation*}
\noindent\underline{\large \it Step 1: Theoretical Analysis of $\hat\btheta^*$. }
The first advantage of the constructed objective function follows directly from Lemmas~\ref{lemma_laplace_approx_second_order} and~\ref{lemma_convexity2}. Specifically, whenever $\btheta \in\cB_{\epsilon}(\btheta^*)$, for $c$ specified in Lemma~\ref{lemma_convexity2}, it holds with probability approaching $1$ that
\begin{equation}\begin{aligned}
    \lambda_{\min}\left\{\partial_{\btheta\btheta}^2\cQ^*(\btheta,\bmu,\bSigma) \right\}\ge &\, \lambda_{\min}\left\{\bH_{-\theta}^{\star}(\btheta) + \partial_{\btheta\btheta}^2 P^*(\btheta)\right\} -\left\|\cH_{\theta\theta}(\btheta,\bmu,\bSigma) - \bH_{-\theta}^{\star}(\btheta)\right\|\\\ge &\,nb_n\gamma - o_p(nb_n) \ge \gamma nb_n/2,
\end{aligned}\label{eq_convex_aux_obj}
\end{equation}
where the second inequality follows from Lemmas~\ref{lemma_laplace_approx_second_order} and~\ref{lemma_convexity2}.
The next lemma establishes that $\hat{\btheta}^*$ converges to $\btheta^*$ at a near-optimal rate, ensuring that it lies in the interior of the feasible set of problem \eqref{eq_aux_estimator}.
\begin{lemma}\label{lemma_aux_estimator_property}\em
    {For $r_{\theta,nq}$ defined in Equation~\eqref{eq_define_r_theta_nq}, choose $\epsilon$ to satisfy \eqref{eq_key_scaling_equation_epsilon} and let}
    \begin{equation*}
        \delta_{nq} \; := \; \frac{\kappa_n^2r_{\theta,nq}}{\sqrt{n}} \; + \; \frac{\kappa_n}{\sqrt{q}}. 
    \end{equation*}
    Given $(\hat\btheta^*, \hat\bmu^*,\hat\bSigma^*)$ in \eqref{eq_aux_estimator} under Assumptions~\ref{assump_standard_id}--\ref{assump_scaling}, one has
    \begin{enumerate}[label=$(\roman*)$]
        \item average consistency for $\hat\btheta^*$:
        \begin{equation*}
        q^{-1/2}\big\|\hat\btheta^* - \btheta^*\big\| = \cOp(b_n^{-1}n^{-1/2} r_{\theta,nq});\end{equation*}
        \item consistency for $\hat\bmu^*$ and $\hat\bSigma^*$:\begin{equation*}\big\|\hat\bmu^*\big\| = \cOp(\delta_{nq})\quad\text{ and }\quad\big\|\hat\bSigma^* - \bSigma_u^*\big\| = \cOp(\delta_{nq});
        \end{equation*}
        where $\delta_{nq} = \frac{\kappa_n^2r_{\theta,nq}}{\sqrt{n}} + \frac{\kappa_n}{\sqrt{q}}$;
        \item uniform consistency for $\hat\btheta^*$: \begin{equation*} \big\|\hat\btheta^* - \btheta^*\big\|_{\infty} = \cOp\left[\frac{r_{\theta,nq}}{\sqrt{n}}\left\{\frac{\sqrt{\log q}}{b_n} + \frac{B_n^2}{b_n^3}\right\}\right];\end{equation*}
        \item first-order conditions: $\cS_{\theta}(\hat\btheta^*, \hat\bmu^*,\hat\bSigma^*) = \zero$, $\cS_{\mu}(\hat\btheta^*, \hat\bmu^*,\hat\bSigma^*) = \zero$, and $\cS_{\Sigma}(\hat\btheta^*, \hat\bmu^*,\hat\bSigma^*) = \zero$ with probability approaching $1$ as $n,q\to\infty$.
    \end{enumerate}
\end{lemma}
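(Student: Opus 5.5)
Because $\hat\btheta^*$ minimises $\cQ^*=\cL+P^*$ over the localized set $\cB_\epsilon(\btheta^*)\times\cK_C$, the plan is a convexity-plus-score argument on $\cQ^*$, followed by an interior-point argument that yields (iv).

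\emph{Step 1: $\hat\btheta^*$ for fixed nuisance.} Fix $(\bmu,\bSigma)$ and write the second-order Taylor expansion of $\cQ^*(\cdot,\bmu,\bSigma)$ along the segment from $\btheta^*$ to $\hat\btheta^*$. This segment lies in $\cB_\epsilon(\btheta^*)$, which is convex. Since $P^*(\btheta^*)=0$ and $\partial_{\btheta}P^*(\btheta^*)=\zero$, the curvature bound \eqref{eq_convex_aux_obj} gives
\[
0\ge \cQ^*(\hat\btheta^*)-\cQ^*(\btheta^*)\ge -\|\cS_\theta(\btheta^*,\bmu,\bSigma)\|\,\|\hat\btheta^*-\btheta^*\|+\tfrac{\gamma nb_n}{4}\|\hat\btheta^*-\btheta^*\|^2 .
\]
Lemma~\ref{lemma_first_order_concern}(i) bounds the score by $\sqrt{nq}\,r_{\theta,nq}$, which yields (i) directly, provided $(\hat\bmu^*,\hat\bSigma^*)\in\bar\cK_\rho$. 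The same inequality holds over $\cK_C$, since Lemmas~\ref{lemma_first_order_base} and~\ref{lemma_laplace_approx_second_order} extend to $\cK_C$. The lower limit on $\epsilon$ in \eqref{eq_key_scaling_equation_epsilon} makes this rate $o(\epsilon)$, so $\hat\btheta^*$ lies strictly inside the $\ell_2$ ball. The bound $\|\btheta\|_\infty\le C_\theta$ must also be inactive, and that needs (iii).

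\emph{Step 2: $(\hat\bmu^*,\hat\bSigma^*)$.} The objective $\cQ^*$ is smooth in $(\bmu,\bSigma)$, and by Lemma~\ref{lemma_first_order_base} its gradients are, up to $\cOp(n\varepsilon_{nq}/q)$, the Gaussian-likelihood scores of the modes $\bu_i^\star(\hat\btheta^*)$. Hence an interior minimiser satisfies $\hat\bmu^*\approx n^{-1}\sum_i\bu_i^\star(\hat\btheta^*)$, with $\hat\bSigma^*$ approximately equal to the empirical covariance of these modes. A boundary minimiser is impossible: near the boundary of $\cK_C$ the leading Gaussian term $-\sum_i\log\pwk(\bu_i^\star\mid\bmu,\bSigma)$ strictly increases, because the modes have bounded second moments. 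To control the empirical moments, decompose $\bu_i^\star(\hat\btheta^*)-\bu_i^*$ into $\bu_i^\star-\bu_i^*$ and $\bu_i^\star(\hat\btheta^*)-\bu_i^\star$. The first piece is handled by Lemma~\ref{lemma_bu_star_theta_true}. For the second, linearise the mode equation $\sum_j\ell_{ij}'\bgamma_j=\zero$ in $\btheta$; this gives a difference of order $\kappa_n^2$ times $q^{-1/2}\|\hat\btheta^*-\btheta^*\|$, as in the proof of Lemma~\ref{lemma_neighbour_control}. Adding $\|\bar\bmu_u\|,\|\bar\bSigma_u-\bI_K\|=\cOp(n^{-1/2})$ gives (ii) with rate $\delta_{nq}$. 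In particular the minimiser lies inside $\bar\cK_\rho$, which closes the circularity in Step 1.

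\emph{Step 3: uniform rate, the main obstacle.} An $\ell_2$ rate alone does not control individual coordinates, so for (iii) I would work coordinatewise in $\btheta$. The interior condition has so far been shown only for the $\ell_2$ ball, so the plan is to derive a block-$j$ stationarity equation from the fact that $\hat\btheta^*$ minimises along block $j$, or alternatively to use a KKT-type argument on the box constraint. By Lemmas~\ref{lemma_first_order_base} and~\ref{lemma_laplace_approx_second_order}, block $j$ of the marginal score approximately equals $\bS_{L\theta}$ at the modes. Expanding it gives
\[
\bH_{L\theta_j\theta_j}(\hat\btheta_j^*-\btheta_j^*)=-\{\bS_{L\theta}\}_{\cK_j^+}+\text{(coupling via modes)}+\text{(penalty gradient)},
\]
with $\bH_{L\theta_j\theta_j}\succeq c\,nb_n$. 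The pieces are bounded as follows:
\begin{itemize}
\item Lemma~\ref{coro_first_order_concern} bounds the sup-norm of the score by $\sqrt{nB_n\log q}$, which produces the term $\sqrt{\log q}/b_n$.
\item The coupling term $\sum_i\ell_{ij}''\,\bgamma_j^\T\{\bu_i^\star(\hat\btheta^*)-\bu_i^*\}$ is bounded using the $\ell_2$ rate from Step 1 together with Lemma~\ref{lemma_bu_star_theta_true}. This costs factors $B_n^2/b_n^2$ and produces the $B_n^2/b_n^3$ term.
\item The penalty gradient \eqref{eq_first_order_p_star} is $\|\bgamma_j^*\|$ times $q^{-1}\|\hat\bV\bGamma^*\|$, which is of the same $\ell_2$ order.
\end{itemize}
Inverting the block then gives (iii), and since the rate is $o(1)$, the constraint $\|\btheta\|_\infty\le C_\theta$ is inactive because $C_\theta>M_\theta$. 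The delicate part is this coupling term, since the modes depend on all of $\hat\btheta^*$.

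\emph{Step 4: first-order conditions.} With $(\hat\btheta^*,\hat\bmu^*,\hat\bSigma^*)$ interior, stationarity of the full objective gives $\cS_\theta+\partial_{\btheta}P^*=\zero$, $\cS_\mu=\zero$ and $\cS_\Sigma=\zero$. It remains to show $\partial_{\btheta}P^*(\hat\btheta^*)=\zero$. Project the $\theta$-equation onto the directions $\bv^*_{rl}$ and use the invariances of $\cL$, namely shifts of $\bmu$ absorbed into $\bbeta$ and linear maps of $\bSigma$ and rotations absorbed into $\bGamma$. These $K^2+K$ directions give identities of the form $\langle\cS_\theta,\bv_{rl}\rangle=$ a linear combination of $\cS_\mu$ and $\cS_\Sigma$ (plus an antisymmetric rotation part that vanishes). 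Hence $\langle\partial_{\btheta}P^*,\bv_{rl}\rangle=0$ for all $r,l$. Because $\partial_{\btheta}P^*$ lies in the span of the $\bv^*_{rl}$, and the Gram matrix of $\{\bv_{rl}\}$ versus $\{\bv^*_{rl}\}$ is invertible near $\btheta^*$, this forces $\partial_{\btheta}P^*=\zero$ and hence $\cS_\theta=\zero$, which is (iv).
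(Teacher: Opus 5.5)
Your proposal is correct. It reaches the lemma by a genuinely different and more elementary route than the paper.

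\textbf{Comparison with the paper.}

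\emph{Part (i).} You use the basic inequality $\cQ^*(\hat\btheta^*,\hat\bmu^*,\hat\bSigma^*)\le\cQ^*(\btheta^*,\hat\bmu^*,\hat\bSigma^*)$ together with the uniform curvature bound \eqref{eq_convex_aux_obj} on the convex set $\cB_\epsilon(\btheta^*)$. This gives the $\ell_2$ rate before anything is known about interiority. The paper instead introduces a second auxiliary estimator $\tilde\btheta^*\in\argmin_{\btheta\in\cB_\epsilon(\btheta^*)}\|\hat\cS_Q^*(\btheta)\|_\infty^2$, for which $\|\hat\cS_Q^*(\tilde\btheta^*)-\hat\cS_Q^*(\btheta^*)\|_\infty\le2\|\hat\cS_Q^*(\btheta^*)\|_\infty$ holds by construction. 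It then splits $\tilde\cH_Q^*=\tilde\cH_{Q1}^*-\tilde\cH_{Q2}^*$ and uses $\|(\tilde\cH_{Q1}^*)^{-1}\|_\infty=\cOp(b_n^{-1})$, obtained by Woodbury plus a Neumann series. This gives $\ell_2$ and $\ell_\infty$ control at once, hence interiority. A Newton-step contradiction then gives $\hat\cS_Q^*(\tilde\btheta^*)=\zero$, strong convexity gives $\tilde\btheta^*=\hat\btheta^*$, and (i) and (iii) are re-derived from the zero score.

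\emph{Part (iii).} Your blockwise argument replaces that whole construction. Per-block curvature of order $nb_n$ stands in for the $\ell_\infty$-operator-norm inverse. Your coupling term is exactly the paper's bound on $\tilde\cH_{Q2}^*(\btheta-\btheta^*)$, which is where $B_n^2/b_n^3$ comes from. The paper's device controls both norms and the score with a single sup-norm bound and no constraint bookkeeping. Yours is shorter and decouples (i) from interiority.

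\emph{Part (ii).} Your Step 2 is the paper's argument in outline. The paper excludes the boundary of $\cK_C$ rigorously, using the variational inequality and a one-point coercivity identity for the Gaussian surrogate.

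\emph{Part (iv).} Your Step 4 is more explicit than the paper. The identity $\hat\cS_Q^*=\zero$ only gives $\cS_\theta=-\partial_{\btheta}P^*$, and the paper's proof of (iv) merely cites the $(\bmu,\bSigma)$ first-order conditions. The vanishing of $\partial_{\btheta}P^*(\hat\btheta^*)$ becomes explicit only in Step 2 of the proof of Lemma~\ref{lemma_key_first_order}, where $P^*(\hat\btheta^*)=0$ is shown by a finite affine reparametrization. Your infinitesimal form of the same invariance, closed by invertibility of $q^{-1}(\hat\bGamma^*)^\T\bGamma^*\approx\bSigma_\gamma^*$, is self-contained and correct.

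\textbf{Three points need care in the write-up.}

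\emph{Step 3 cannot invert the block.} Before interiority is known you cannot invert the block, and block stationarity fails if the box constraint is active. Use instead the block variational inequality $\langle\partial_{\btheta_j}\cQ^*(\hat\btheta^*,\hat\bmu^*,\hat\bSigma^*),\btheta_j^*-\hat\btheta_j^*\rangle\ge0$. Its test point $(\hat\btheta^*_{-j},\btheta_j^*)$ lies in $\cB_\epsilon(\btheta^*)$. Combined with block strong convexity along the segment, this bounds $\|\hat\btheta_j^*-\btheta_j^*\|$ by $C(nb_n)^{-1}$ times the block-$j$ gradient at the test point, whatever constraints are active.

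\emph{Step 3 coupling must be split.} Write the coupling as $\bu^\star(\hat\btheta^*)-\bu^\star(\btheta^*)$ plus $\bu^\star(\btheta^*)-\bu^*$. The first piece is controlled through (i) and produces $B_n^2/b_n^3$. The second must go through the sharp bound of Lemma~\ref{lemma_first_order_concern}(ii), that is, Lemma~\ref{lemma_aux_score_bound_pre}. Bounding it crudely by $CB_n\sqrt n\,\|\bu^\star(\btheta^*)-\bu^*\|$ with the rate of Lemma~\ref{lemma_bu_star_theta_true} leaves an extra $B_n\kappa_n/(b_n\sqrt q)$. That term is $o(1)$, so interiority and (iv) survive. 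However, it is not dominated by the rate in (iii) when $q\ll n$. The $\ell_{ij}'$ part of the coupling (the $\bJ_{L\theta u}$ block), which you left out, is handled the same way.

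\emph{Step 2 rate factor.} The mode-perturbation factor from Lemma~\ref{lemma_neighbour_control} is $B_n/b_n$, not $\kappa_n^2$. With $\kappa_n^2$ and the rate in (i) you would obtain $\kappa_n^2r_{\theta,nq}/(b_n\sqrt n)$, which misses $\delta_{nq}$ by a factor $b_n^{-1}$.
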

\begin{proof}
    The proof of this lemma does not require the empirical conditions $n^{-1}\sum_{i=1}^n\bu_i^* = \zero_K$ or $n^{-1}\sum_{i=1}^n\bu_i^*(\bu_i^*)^\T  = \bI_K$. Instead, it uses $\EE[\bU_i] = \zero_K$ and $\EE[\bU_i\bU_i^\T] = \bSigma_u^*$. Thus, the result extends to estimation under the practical identifiability conditions. See the complete proof in Section~\ref{supp_sec_prove_lemma_aux_estimator_property}.
\end{proof}
\noindent\underline{\large \it Step 2: Link $\hat\btheta^*$ with $\hat\btheta$. }In this step and Step~3, we work with the canonical representative introduced before Theorem~\ref{thm_asymp}. This does not affect the consistency result because the difference between the canonical and original true parameter sets is of order $n^{-1/2}$. For simplicity, we use $(\btheta^*,\bu^*)$ to denote the canonical parameters and assume that
\begin{equation}
    \label{eq_id_st}n^{-1}\sum_{i=1}^n\bu_i^*= \zero_K,\qquad n^{-1}\sum_{i=1}^n\bu_i^*(\bu_i^*)^\T=\bI_K.
\end{equation}
This does not affect our analysis because we can always perform an additional linear whitening transformation, which does not change the derivation of the first-order condition, as will be shown in Step 3 below. The error introduced by the transformation is of order $\cOp(n^{-1/2})$ and is therefore negligible in establishing consistency.
Now we establish the relationship between $(\hat\btheta^*, \hat\bmu^*,\hat\bSigma^*)$ and $\hat\btheta$. First, we show that $P^*(\hat\btheta^*) = 0$. By the average consistency result $(i)$ in Lemma~\ref{lemma_aux_estimator_property} and $\|\btheta^*\|\le C\sqrt{q}$, one has
\begin{equation}
    \Big\|q^{-1}\sum_{j=1}^q\bgamma_j^*(\hat\btheta_j^*)^\T - q^{-1}\sum_{j=1}^q\bgamma_j^*(\btheta_j^*)^\T\Big\|  = \cOp( r_{\theta,nq}b_n^{-1}n^{-1/2}).\label{eq_small_p_Star_hat_theta}
\end{equation}
Suppose, for contradiction, that $P^*(\hat\btheta^*) = \epsilon_P>0$. We show that there exists $\tilde\btheta^*\in\cB_{\epsilon}(\btheta^*)$ attaining a smaller value of $\cQ^*(\tilde\btheta^*,\tilde\bmu^*,\tilde\bSigma^*)$ for suitable $\tilde\bmu^*$ and $\tilde\bSigma^*$.

Let \begin{equation*}
    \tilde\bG^* = \Big\{\sum_{j=1}^q\bgamma_j^*(\hat\bgamma_j^*)^\T\Big\}^{-1}\Big\{\sum_{j=1}^q\bgamma_j^*(\bgamma_j^*)^\T\Big\},\;\tilde\bd^* = -\Big\{\sum_{j=1}^q\bgamma_j^*(\hat\bgamma_j^*)^\T\Big\}^{-1}\Big\{\sum_{j=1}^q\bgamma_j^*(\hat\beta_j^* - \beta_j^*)\Big\},
\end{equation*}
    and $\tilde\btheta^* = (\tilde\beta_1^*, (\tilde\bgamma_1^*)^\T,\dots,\tilde\beta_q^*, (\tilde\bgamma_q^*)^\T)^\T$ with
\begin{equation*}
    \tilde\bgamma_j^* = (\tilde\bG^*)^\T\hat\bgamma_j^*\qquad\text{ and }\qquad\tilde\beta_j^* = \hat\beta_j^* + (\hat\bgamma_j^*)^\T\tilde\bd^*.
\end{equation*}
When $P^*(\hat\btheta^*)\neq 0$, $\tilde\bG^*\neq \bI_K$ or $\tilde\bd^*\neq \zero_K$, implying that $\hat\btheta^* \neq \tilde\btheta^*$. By construction,
\begin{equation}
    P^*(\tilde\btheta^*) = 0\label{eq_P_star_exact_0}.
\end{equation}
By Assumption~\ref{assumption_psd_covariance}, $\lambda_{\min}\big\{q^{-1}\sum_{j=1}^q\bgamma_j^*(\bgamma_j^*)^\T\big\}\ge \lambda_{\min}(\bSigma_\gamma^*)/2>0$ when $q$ is large enough. Then by \eqref{eq_small_p_Star_hat_theta} and Weyl's theorem, one has $\sigma_{K}\big\{q^{-1}\sum_{j=1}^q\bgamma_j^*(\hat\bgamma_j^*)^\T\big\}\ge \lambda_{\min}(\bSigma_\gamma^*)/2 - \cOp( r_{\theta,nq}b_n^{-1}n^{-1/2})$, and thus $\tilde\bG^*$ is well defined with probability approaching $1$, with
\begin{align*}
    \big\|\tilde\bG^* - \bI_K\big\| \le \Big\|\Big\{q^{-1}\sum_{j=1}^q\bgamma_j^*(\hat\bgamma_j^*)^\T\Big\}^{-1}\Big\|\Big\|\Big\{q^{-1}\sum_{j=1}^q\bgamma_j^*(\bgamma_j^* - \hat\bgamma_j^*)^\T\Big\}\Big\| = \cOp( r_{\theta,nq}/(b_n\sqrt{n})),
\end{align*}
where the equality follows from Cauchy's inequality, $\|\btheta^*\|\le C\sqrt{q}$, and \eqref{eq_small_p_Star_hat_theta}. Similarly, $\big\|\tilde\bd^*\big\| = \cOp\{r_{\theta,nq}/(b_n\sqrt{n})\}$.
Because $\hat\btheta^*\in\Xi$, direct calculation gives
\begin{align*}
    q^{-1}\sum_{j=1}^q\big\|\tilde\btheta_j^* - \btheta_j^*\big\|^2 &\le 2q^{-1}\sum_{j=1}^q\big\|\hat\btheta_j^* - \btheta_j^*\big\|^2 + 2\left(\big\|\tilde\bG^* - \bI_K\big\|^2 + \big\|\tilde\bd^*\big\|^2\right)q^{-1}\sum_{j=1}^q\big\|\hat\bgamma_j^*\big\|^2  \\&=\cOp\big\{r_{\theta,nq}^2/(b_n^2n)\big\},
\end{align*}
and 
\begin{align*}
    \|\tilde\btheta^* \|_{\infty} &\le \|\tilde\btheta^* - \hat\btheta^*\|_{\infty} + \|\hat\btheta^* - \btheta^*\|_{\infty} + \|\btheta^*\|_{\infty}\\&\le \left(\|\tilde\bG^* -\bI_K\| +\|\tilde\bd^*\| + 1\right)\left(\big\|\hat\btheta^* - \btheta^*\big\|_{\infty} + \big\|\btheta^*\big\|_{\infty}\right) \\&\le (1+o_p(1))(C_{\theta}/2 + o_p(1))\le C_{\theta}.
\end{align*}
For the second inequality, uniformly over $j\in[q]$,
\begin{equation*}
    \|\tilde\bgamma_j^*-\hat\bgamma_j^*\|_\infty
    \le \|\tilde\bG^*-\bI_K\|\,\|\hat\bgamma_j^*\|,
    \qquad
    |\tilde\beta_j^*-\hat\beta_j^*|
    \le \|\hat\bgamma_j^*\|\,\|\tilde\bd^*\|,
\end{equation*}
and $\max_{j\in[q]}\|\hat\bgamma_j^*\|=\cOp(1)$ by the uniform-consistency result. For the third inequality, $C_\theta$ can be enlarged so that $\|\btheta^*\|_\infty\le C_\theta/2$ under Assumption~\ref{assumption_psd_covariance}. Lemma~\ref{lemma_neighbour_control} then gives $\tilde\btheta^*\in\cB_\epsilon(\btheta^*)$.

Next, we construct 
\begin{equation*}
    \tilde\bmu^* = (\tilde\bG^*)^{-1}(\hat\bmu^* - \tilde\bd^*)\text{ and }\tilde\bSigma^*= (\tilde\bG^*)^{-1}\hat\bSigma^*(\tilde\bG^*)^{-\T},
\end{equation*}
such that $\cL(\hat\btheta^*, \hat\bmu^*,\hat\bSigma^*) = \cL(\tilde\btheta^*, \tilde\bmu^*,\tilde\bSigma^*)$. By $(ii)$ of Lemma~\ref{lemma_aux_estimator_property} and $\bSigma_u^* = \bI_K$ under the considered identifiability condition,
\begin{align*}
    \big\|\tilde\bSigma^* - \bI_K\big\|
    \le \big\|(\tilde\bG^*)^{-1}\big\|^2\big\|\hat\bSigma^* - \bI_K\big\|
    +\big\|(\tilde\bG^*)^{-1}(\tilde\bG^*)^{-\T}-\bI_K\big\|=o_p(1).
\end{align*}
Also, $\|\tilde\bmu^*\|\le \|(\tilde\bG^*)^{-1}\|(\|\hat\bmu^*\| + \|\tilde\bd^*\|)=o_p(1)$. Thus $(\tilde\bmu^*,\tilde\bSigma^*)\in\cK_C$. Combining $\cL(\hat\btheta^*, \hat\bmu^*,\hat\bSigma^*) = \cL(\tilde\btheta^*, \tilde\bmu^*,\tilde\bSigma^*)$, $P^*(\hat\btheta^*)>0$, and \eqref{eq_P_star_exact_0}, $(\tilde\btheta^*,\tilde\bmu^*,\tilde\bSigma^*)$ attains a smaller value of $\cQ^* = \cL + P^*$, a contradiction. Therefore $P^*(\hat\btheta^*)=0$. Applying the same locally valid affine reparameterization to any parameter set produces a representative with the same value of $\cL$ and zero penalty. Hence the penalized minimizer also minimizes $\cL(\btheta,\bmu,\bSigma)$ within $\cB_{\epsilon}(\btheta^*)$ and $(\bmu,\bSigma)\in\bar\cK_\rho$.

Now, to relate this auxiliary estimator $(\hat\btheta^*,\hat\bmu^*,\hat\bSigma^*)$ with $(\hat\btheta,\zero,\bI_K)$, we construct\begin{equation*}
    \hat\bG^{\dagger} = \Big\{\sum_{s=1}^q\bgamma_s^*(\hat\bgamma_s)^\T\Big\}^{-1}\Big\{\sum_{s=1}^q\bgamma_s^*(\bgamma_s^*)^\T\Big\}\text{, }\hat\bd^{\dagger} = -\Big\{\sum_{s=1}^q\bgamma_s^*\hat\bgamma_s^\T\Big\}^{-1}\Big\{\sum_{s=1}^q\bgamma_s^*(\hat\beta_s - \beta_s^*)\Big\},
\end{equation*}
and $\hat\btheta^{\dagger} = (\hat\beta_1^{\dagger}, (\hat\bgamma_1^{\dagger})^\T,\dots,\hat\beta_q^{\dagger}, (\hat\bgamma_q^{\dagger})^\T)^\T$ with
\begin{equation}
    \hat\bgamma_j^{\dagger} = (\hat\bG^{\dagger})^\T\hat\bgamma_j\text{ and }\hat\beta_j^{\dagger} = \hat\beta_j + \hat\bgamma_j^\T\hat\bd^{\dagger}\text{ for each }j\in[q].\label{eq_transform_from_hat_theta_star}
\end{equation}
We aim to show that $\hat\btheta^{\dagger}\in \cB_{\epsilon}(\btheta^*)$. To establish this, we first show for some constant $c_0$,
\begin{equation}
    \sigma_{K}\Big\{q^{-1}\sum_{s=1}^q\hat\bgamma_s(\bgamma_s^*)^\T\Big\}\ge c_0.\label{eq_non_degenerate_loading}
\end{equation}
 By Lemmas~\ref{lemma_bu_star_theta_true} and~\ref{lemma_average_converge_eta}, we know 
 \begin{align*}
        &\, \sum_{i=1}^n\sum_{j=1}^q\left\{\hat\bgamma_j^\T\bu_i^{\star}(\hat\btheta) + \hat\beta_j - (\bgamma_j^*)^\T\bu_i^* - \beta_j^*\right\}^2\\
        \le &\, 2\sum_{i=1}^n\sum_{j=1}^q\left\{\hat\bgamma_j^\T\bu_i^{\star}(\hat\btheta) + \hat\beta_j - (\bgamma_j^*)^\T\bu_i^{\star}(\btheta^*) - \beta_j^*\right\}^2
        +2\sum_{i=1}^n\sum_{j=1}^q\left\{(\bgamma_j^*)^\T\bu_i^{\star}(\btheta^*) - (\bgamma_j^*)^\T\bu_i^* \right\}^2\\
        = &\, \cOp\big\{(n\vee q)\log (n\vee q)\kappa_n^2B_n^2/b_n^2+n\kappa_n^2\big\}\\
        = &\, \cOp\big\{(n\vee q)\log (n\vee q)\kappa_n^2B_n^2/b_n^2\big\},
    \end{align*}
    or, in matrix form:
    \begin{equation}
        \Big\|\underbrace{\big(\hat\bbeta\;\hat\bGamma\big) \big(\one_n\;\bU^{\star}(\hat\btheta)\big)^\T}_{\hat \bM} - \underbrace{\big(\bbeta^*\; \bGamma^*\big) \big(\one_n\;\bU^*\big)^\T}_{\bM^*}\Big\|_{\Fn}^2
        = \cOp\big\{(n\vee q)\log (n\vee q)\kappa_n^2B_n^2/b_n^2\big\}.\label{eq_average_eta_u_true_mat}
    \end{equation}
    where $\bU^{\star}(\hat\btheta) = \big(\bu_1^{\star}(\hat\btheta),\dots,\bu_n^{\star}(\hat\btheta)\big)^\T$ and $\bU^* = (\bu_1^*,\dots,\bu_n^*)^\T$. Here, $\bU^*$ denotes the matrix of realized latent variables and should not be confused with the generic random vector $\bU_i$. 
    Denote the normalized $(\hat\bbeta,\hat\bGamma)$ as $(\hat\bbeta^{n},\hat\bGamma^{n})$, i.e., 
    \begin{equation*}
        (\hat\bbeta^{n},\hat\bGamma^{n})^\T (\hat\bbeta^{n},\hat\bGamma^{n}) = \bI_{K+1}.
    \end{equation*}
By the definition of $\Xi$, there exists an invertible matrix $\bR^n\in\RR^{(K+1) \times (K+1)}$ such that $(\hat\bbeta^{n},\hat\bGamma^{n})=q^{-1/2}(\hat\bbeta,\hat\bGamma)\bR^n$ and $c\le \sigma_{K+1}(\bR^n)\le \sigma_{1}(\bR^n)\le C$ for some absolute constants $c,C>0$. For the true parameters, first note that our setup~\eqref{eq_id_st} gives
    \begin{equation*}
n^{-1}\big(\one_n\;\bU^*\big)^\T\big(\one_n\;\bU^*\big) = \bI_{K+1}.
    \end{equation*}
    Then $n^{-1/2}\big(\one_n\;\bU^*\big)$ is column-wise orthogonal spanning the right singular space of $\bM^*$. 
    
    Let $\bP = \bI_n - n^{-1}\one_n\one_n^\T$. By \eqref{eq_id_st}, $\one_n^\T\bU^* = \zero$ and $(\bU^*)^\T\bU^* = n\bI_K$, so $\bM^*\bP = \bGamma^*(\bU^*)^\T$ and, since $\|\bP\| = 1$, \eqref{eq_average_eta_u_true_mat} gives
\begin{equation*}
    \big\|\hat\bM\bP - \bM^*\bP\big\|_{\Fn}^2 = \cOp\big\{(n\vee q)\log (n\vee q)\kappa_n^2B_n^2/b_n^2\big\}.
\end{equation*}
Because $q^{-1}(\bGamma^*)^\T\bGamma^*\to\bSigma_{\gamma}^*$ is positive definite by Assumption~\ref{assumption_psd_covariance}, we have $\sigma_{K}(\bM^*\bP) = \sqrt{n}\,\sigma_K(\bGamma^*)\ge c\sqrt{nq}$ and $\sigma_{K+1}(\bM^*\bP) = 0$. Weyl's inequality then shows that $\hat\bM\bP = \hat\bGamma\big\{\bU^{\star}(\hat\btheta)\big\}^\T\bP$ has rank exactly $K$ with probability tending to one, so that its top-$K$ left singular subspace is $\mathrm{col}(\hat\bGamma)$, and the Davis--Kahan theorem applied to the pair $(\hat\bM\bP,\bM^*\bP)$ yields
\begin{equation*}
    \sin\Theta\big\{\mathrm{col}(\hat\bGamma),\mathrm{col}(\bGamma^*)\big\} = \cOp\left\{\frac{\kappa_nB_n\log^{1/2} (n\vee q)}{b_n\sqrt{n\wedge q}} \right\} = o_p(1),
\end{equation*}
the last step by Assumption~\ref{assump_scaling}. Set $\bR^{\prime} = \big\{q^{-1}(\bGamma^*)^\T\bGamma^*\big\}^{-1}q^{-1}(\bGamma^*)^\T\hat\bGamma\in\RR^{K\times K}$, so that $q^{-1}(\bGamma^*)^\T\hat\bGamma = \big\{q^{-1}(\bGamma^*)^\T\bGamma^*\big\}\bR^{\prime}$. With $\sigma_1(q^{-1/2}\hat\bGamma)\le C$, we know $q^{-1/2}\big\|\hat\bGamma - \bGamma^*\bR^{\prime}\big\|_{\Fn} = o_p(1)$. In addition, because $\hat\btheta\in\Xi$, by singular value interlacing, we have $\sigma_K(q^{-1/2}\hat\bGamma)\ge \sigma_{K+1}(q^{-1/2}\hat\bTheta)\ge c$. Hence $\sigma_K(q^{-1/2}\bGamma^*\bR^{\prime})\ge c - o_p(1)$ and, since $\sigma_1(q^{-1/2}\bGamma^*)\le C$, also $\sigma_K(\bR^{\prime})\ge c/(2C)$. Consequently,
\begin{equation*}
    \sigma_K\Big\{q^{-1}\sum_{s=1}^q\hat\bgamma_s(\bgamma_s^*)^\T\Big\} = \sigma_K\big[\big\{q^{-1}(\bGamma^*)^\T\bGamma^*\big\}\bR^{\prime}\big]\ge \tfrac12\lambda_{\min}(\bSigma_{\gamma}^*)\,\sigma_K(\bR^{\prime})\ge c_0,
\end{equation*}
    This proves \eqref{eq_non_degenerate_loading}, and we conclude that $\big\|\hat\btheta^{\dagger}\big\|_{\infty}\le C$ for some absolute constant $C$. 

Next, we show $q^{-1/2}\|\hat\btheta^{\dagger} - \btheta^*\|\ll \epsilon$. We introduce the following lemma.  
\begin{lemma}
    \em\label{lemma_average_eta} For any $(\btheta,\bu)$ such that
    \begin{equation*}
        (nq)^{-1}\sum_{i=1}^n\sum_{j=1}^q\big(\bgamma_j^\T\bu_i + \beta_j - (\bgamma_j^*)^\T\bu_i^{\star}(\btheta^*)  -\beta_j^*\big)^2 = o_p(1),
    \end{equation*}
    if $P^*(\btheta) = c_Pnq\big\|q^{-1}\sum_{j=1}^q\bgamma_j^*(\btheta_j - \btheta_j^*)^\T\|_{\Fn}^2= 0$, one has
    \begin{equation*}
        q^{-1/2}\big\|\btheta - \btheta^*\big\| = \cOp\Big\{(nq)^{-1/2}{\Big(\sum_{i=1}^n\sum_{j=1}^q\big(\bgamma_j^\T\bu_i + \beta_j - (\bgamma_j^*)^\T\bu_i^{\star}(\btheta^*)  -\beta_j^*\big)^2\Big)^{1/2}}\Big\}.
    \end{equation*}
\end{lemma}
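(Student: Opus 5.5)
The plan is to rewrite everything in matrix form and use the penalty constraint $P^*(\btheta)=0$ to decouple the parameter error from the latent error by projecting onto $\mathrm{col}(\bGamma^*)$ and its orthogonal complement. Following Step~2, I work with the canonical representative satisfying \eqref{eq_id_st}.

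\textbf{Setup.} Let $\bTheta=(\bbeta,\bGamma)$ and $\bTheta^*=(\bbeta^*,\bGamma^*)$, both in $\RR^{q\times(K+1)}$. Let $\bW=(\one_n,\bU)$, where $\bU=(\bu_1,\dots,\bu_n)^\T$, and let $\bW^\star=(\one_n,\bU^\star)$ with $\bU^\star=(\bu_1^\star(\btheta^*),\dots,\bu_n^\star(\btheta^*))^\T$. Set $\bDelta=\bTheta-\bTheta^*$ and $\bD=\bTheta\bW^\T-\bTheta^*(\bW^\star)^\T\in\RR^{q\times n}$. Then $\|\bD\|_{\Fn}^2$ is exactly the double sum in the statement, and $q^{-1/2}\|\btheta-\btheta^*\|=q^{-1/2}\|\bDelta\|_{\Fn}$.

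\textbf{Algebraic decomposition.} The first columns of $\bW$ and $\bW^\star$ coincide, so
\begin{equation*}
\bD=\bDelta\bW^\T+\bGamma^*(\bU-\bU^\star)^\T .
\end{equation*}
The condition $P^*(\btheta)=0$ says precisely that $(\bGamma^*)^\T\bDelta=\zero$, i.e.\ every column of $\bDelta$ lies in $\mathrm{col}(\bGamma^*)^\perp$. Applying the projectors $\bP_{\mathrm{col}(\bGamma^*)}$ and $\bP^\perp_{\mathrm{col}(\bGamma^*)}$ therefore splits the two pieces exactly:
\begin{equation*}
\bP_{\mathrm{col}(\bGamma^*)}\bD=\bGamma^*(\bU-\bU^\star)^\T,\qquad \bP^\perp_{\mathrm{col}(\bGamma^*)}\bD=\bDelta\bW^\T .
\end{equation*}

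\textbf{Controlling the latent error.} By Assumption~\ref{assumption_psd_covariance}, $\sigma_K(\bGamma^*)\ge c\sqrt q$ for large $q$. Hence
\begin{equation*}
\|\bU-\bU^\star\|_{\Fn}\le \|\bD\|_{\Fn}/(c\sqrt q),
\end{equation*}
which is $o_p(\sqrt n)$ by the hypothesis $(nq)^{-1}\|\bD\|_{\Fn}^2=o_p(1)$.

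\textbf{Lower-bounding $\sigma_{K+1}(\bW)$.} This is the step I expect to need the most care. By \eqref{eq_id_st}, $n^{-1/2}(\one_n,\bU^*)$ has orthonormal columns. Lemma~\ref{lemma_bu_star_theta_true} gives $\|\bU^\star-\bU^*\|_{\Fn}=\cOp(\kappa_n\sqrt{n/q})=o_p(\sqrt n)$ under Assumption~\ref{assump_scaling}. Combining these two facts with the previous step and Weyl's inequality yields
\begin{equation*}
\sigma_{K+1}(\bW)\ge\sqrt n-\|\bU^\star-\bU^*\|_{\Fn}-\|\bU-\bU^\star\|_{\Fn}\ge\sqrt n/2
\end{equation*}
with probability tending to one.

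\textbf{Conclusion.} From the second projection identity,
\begin{equation*}
\|\bD\|_{\Fn}\ge\|\bDelta\bW^\T\|_{\Fn}\ge\sigma_{K+1}(\bW)\|\bDelta\|_{\Fn}\ge(\sqrt n/2)\|\bDelta\|_{\Fn}.
\end{equation*}
Dividing by $\sqrt q$ gives
\begin{equation*}
q^{-1/2}\|\btheta-\btheta^*\|\le 2(nq)^{-1/2}\|\bD\|_{\Fn},
\end{equation*}
which is the claimed bound. The only probabilistic inputs are the $o_p(\sqrt n)$ controls on $\bU-\bU^\star$ and $\bU^\star-\bU^*$. The rest is deterministic linear algebra, made possible because the orthogonality supplied by $P^*(\btheta)=0$ exactly removes the cross term between $\bDelta$ and $\bU-\bU^\star$.
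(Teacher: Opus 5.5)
Your proposal is correct and follows the paper's proof essentially step for step: the exact projection split forced by $P^*(\btheta)=0$, the bound $\|\bU-\bU^\star\|_{\Fn}\lesssim\|\bD\|_{\Fn}/\sqrt q$ from the $\mathrm{col}(\bGamma^*)$ component, a Weyl perturbation to lower-bound the smallest singular value of $(\one_n,\bU)$, and the orthogonal-complement component to finish. The only cosmetic difference is that you get $\sigma_{K+1}(\one_n,\bU^*)=\sqrt n$ exactly from the canonical normalization \eqref{eq_id_st}, whereas the paper uses the law of large numbers, so its version holds without that normalization.
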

\begin{proof}See Section~\ref{supp_sec_prove_lemma_average_eta}.\end{proof}

By construction, $P^*(\hat\btheta^{\dagger}) = 0$. Then \eqref{eq_transform_from_hat_theta_star}, \eqref{eq_average_eta_u_true_mat}, Lemma~\ref{lemma_average_eta}, and Assumption~\ref{assump_scaling} give
\begin{equation*}
    q^{-1/2}\big\|\hat\btheta^{\dagger} - \btheta^*\big\| = \cOp\Big\{\frac{\kappa_nB_n}{b_n\sqrt{n\wedge q}} \Big\} =  o_p\Big(\frac{\varepsilon_{nq}}{\sqrt{B_n(n\wedge q)}}\Big).
\end{equation*}The last inequality follows from Assumption~\ref{assump_scaling}.
Hence $\hat\btheta^{\dagger}\in\cB_{\epsilon}(\btheta^*)$ for a properly selected $\epsilon$ satisfying the requirement in Lemma~\ref{lemma_aux_estimator_property}, and the associated mean and variance parameters $\big(-(\hat\bG^{\dagger})^{-1}\hat\bd^{\dagger}, \big\{(\hat\bG^{\dagger})^\T\hat\bG^{\dagger}\big\}^{-1}\big)$ can be verified to lie within $\cK_C$. Therefore, we know
\begin{equation}
    \cL(\hat\btheta,\zero_K,\bI_K) = \cL\left(\hat\btheta^{\dagger}, -(\hat\bG^{\dagger})^{-1}\hat\bd^{\dagger}, \big\{(\hat\bG^{\dagger})^\T\hat\bG^{\dagger}\big\}^{-1}\right)\ge \cL(\hat\btheta^{*},\hat\bmu^*,\hat\bSigma^*).\label{eq_theta_ge_theta_star}
\end{equation}

Conversely, we construct $\hat\btheta^{\ddagger}$ transformed from $\hat\btheta^*$ with 
\begin{equation*}
    \hat\bgamma_j^{\ddagger} = (\hat\bSigma^*)^{1/2}\hat\bgamma_j^*,\text{ and }\hat\beta_j^{\ddagger} = \hat\beta_j^* + (\hat\bgamma_j^*)^\T\hat\bmu^*\text{ for each }j\in[q].
\end{equation*}
Then $\cL(\hat\btheta^{\ddagger},\zero_K,\bI_K) = \cL(\hat\btheta^*,\hat\bmu^*,\hat\bSigma^*)$. Since $\|(\hat\bSigma^*)^{1/2} - \bI_K\|+\|\hat\bmu^*\| = \cOp(\delta_{nq}) = o_p(1)$,
the same argument used for $\tilde\btheta^*$ gives $\hat\btheta^{\ddagger} \in \cB_{\epsilon}(\btheta^*) \subseteq\Xi$. By definition,
\begin{equation}
\cL(\hat\btheta^*,\hat\bmu^*,\hat\bSigma^*)=\cL(\hat\btheta^{\ddagger},\zero_K,\bI_K)\ge \cL(\hat\btheta,\zero_K,\bI_K).\label{eq_theta_star_theta_ge}
\end{equation}

Finally, \eqref{eq_theta_ge_theta_star} and \eqref{eq_theta_star_theta_ge} give
\begin{equation*}
    \cL(\hat\btheta^*,\hat\bmu^*,\hat\bSigma^*) = \cL(\hat\btheta,\zero_K,\bI_K).
\end{equation*}
Both corresponding parameter sets have zero penalty and the same value of $\cL$, hence the same value of $\cQ^*=\cL+P^*$. Thus, the two local argmin sets correspond under the displayed affine transformations. We may choose aligned representatives satisfying $\hat\btheta^* = \hat\btheta^{\dagger}$, or equivalently $\hat\btheta = \hat\btheta^{\ddagger}$ up to an orthogonal transformation. For simplicity, we suppress this transformation and write $\hat\btheta = \hat\btheta^{\ddagger}$.

\noindent\underline{\large \it Step 3: Establish the First-order Conditions. }
By the conclusion of Step 2, we may take $\hat\btheta=\hat\btheta^{\ddagger}$ up to an
orthogonal transformation. Hence, for each $j\in[q]$, $\hat\bgamma_j = (\hat\bSigma^*)^{1/2}\hat\bgamma_j^*$ and $\hat\beta_j = \hat\beta_j^* + (\hat\bgamma_j^*)^\T\hat\bmu^*$, and therefore
\begin{equation*}
    \ell_i(\bu;\hat\btheta)=\ell_i\big((\hat\bSigma^*)^{1/2}\bu+\hat\bmu^* ;\hat\btheta^*\big).
\end{equation*}
For notational simplicity, write
\begin{equation*}
    \hat\bA=(\hat\bSigma^*)^{1/2}, \qquad \hat\bR =\begin{pmatrix}
        1&\zero_K^\T\\\hat\bmu^*&\hat\bA
    \end{pmatrix}.
\end{equation*}
Then, for $\bv=\hat\bA\bu+\hat\bmu^*$, we have the change-of-variable form
\begin{equation*}
    \pwk(\bu)d\bu=\pwk(\bv|\hat\bmu^*,\hat\bSigma^*)d\bv.
\end{equation*}
For each $j\in[q]$, we have
\begin{align*}
   \cS_{\theta_j}(\hat\btheta) &= \partial_{\btheta_j}\cL(\hat\btheta) \\
   &= -\sum_{i=1}^n \int_{\bu_i\in\RR^{K}}\ell_{ij}^{\prime}(\hat\bgamma_j^\T\bu_i + \hat\beta_j)
   \begin{pmatrix}
        1\\\bu_i
        \end{pmatrix}
   \frac{\exp\big\{\ell_i(\bu_i;\hat\btheta)\big\}\pwk(\bu_i)d\bu_i}{\int_{\bu_i\in\RR^{K}}\exp\big\{\ell_i(\bu_i;\hat\btheta)\big\}\pwk(\bu_i)d\bu_i}\\
   &=-\sum_{i=1}^n\int_{\bu_i\in\RR^{K}}\ell_{ij}^{\prime}\left[(\hat\bgamma_j^*)^\T\big\{\hat\bA\bu_i+\hat\bmu^*\big\}     + \hat\beta_j^*\right]
   \begin{pmatrix}1\\\bu_i\end{pmatrix}\\&\qquad\qquad\times
   \frac{\exp\big\{\ell_i(\hat\bA\bu_i+\hat\bmu^*;\hat\btheta^*)\big\}\pwk(\bu_i)d\bu_i}{\int_{\bu_i\in\RR^{K}}\exp\big\{\ell_i(\hat\bA\bu_i+\hat\bmu^*;\hat\btheta^*)\big\}\pwk(\bu_i)d\bu_i}\\
   &=-\hat\bR^{-1}\sum_{i=1}^n\int_{\bv_i\in\RR^{K}}\ell_{ij}^{\prime}
   \big((\hat\bgamma_j^*)^\T\bv_i + \hat\beta_j^*\big)
   \begin{pmatrix}
        1\\\bv_i
   \end{pmatrix}
   \frac{\exp\big\{\ell_i(\bv_i;\hat\btheta^*)\big\}\pwk(\bv_i|\hat\bmu^*,\hat\bSigma^*)d\bv_i}{\int_{\bv_i\in\RR^{K}}\exp\big\{\ell_i(\bv_i;\hat\btheta^*)\big\} \pwk(\bv_i|\hat\bmu^*,\hat\bSigma^*)d\bv_i}\\
   &=\hat\bR^{-1}\cS_{\theta_j}(\hat\btheta^*,\hat\bmu^*,\hat\bSigma^*)=
   \zero .
\end{align*}
Similarly,
\begin{align*}
    \cS_{\mu}(\hat\btheta)&= \partial_{\bmu}\cL(\hat\btheta) \\
    &=\sum_{i=1}^n\frac{\int_{\bu_i}\exp\{\ell_i(\bu_i;\hat\btheta)\}(-\bu_i)\pwk(\bu_i)d\bu_i}{\int_{\bu_i} \exp\{\ell_i(\bu_i;\hat\btheta)\}\pwk(\bu_i)d\bu_i}\\
    &=\hat\bA\sum_{i=1}^n\frac{\int_{\bv_i}\exp\{\ell_i(\bv_i;\hat\btheta^*)\}\big[-(\hat\bSigma^*)^{-1}(\bv_i-\hat\bmu^*)\big]\pwk(\bv_i|\hat\bmu^*,\hat\bSigma^*)d\bv_i}{\int_{\bv_i}\exp\{\ell_i(\bv_i;\hat\btheta^*)\}\pwk(\bv_i|\hat\bmu^*,\hat\bSigma^*)d\bv_i}\\
    &=\hat\bA\,\cS_{\mu}(\hat\btheta^*,\hat\bmu^*,\hat\bSigma^*)=
    \zero .
\end{align*}
For the covariance score, we first write the fully vectorized form; the $\mathrm{vech}$ form then follows directly. Specifically,
\begin{align*}
    \cS_{\Sigma}^{\mathrm{vec}}(\hat\btheta)&=\partial_{\mathrm{vec}(\bSigma)}\cL(\hat\btheta)\\
    &=-\sum_{i=1}^n\frac{\int_{\bu_i}\exp\{\ell_i(\bu_i;\hat\btheta)\}\frac{1}{2}\mathrm{vec}\big\{\bu_i\bu_i^\T-\bI_K\big\}    \pwk(\bu_i)d\bu_i}{\int_{\bu_i}\exp\{\ell_i(\bu_i;\hat\btheta)\}\pwk(\bu_i)d\bu_i}\\
    &=\big(\hat\bA\otimes\hat\bA\big)\sum_{i=1}^n\Big\{\frac{1}{\int_{\bv_i} \exp\{\ell_i(\bv_i;\hat\btheta^*)\}\pwk(\bv_i|\hat\bmu^*,\hat\bSigma^*)d\bv_i}\\
    &\qquad\qquad\times\int_{\bv_i}-\exp\{\ell_i(\bv_i;\hat\btheta^*)\}\frac{1}{2}\mathrm{vec}\big[(\hat\bSigma^*)^{-1}(\bv_i-\hat\bmu^*)(\bv_i-\hat\bmu^*)^\T(\hat\bSigma^*)^{-1} - (\hat\bSigma^*)^{-1}\big]\\
    &\qquad\qquad\qquad\times \pwk(\bv_i|\hat\bmu^*,\hat\bSigma^*)d\bv_i\Bigg\}\\
    &= \big(\hat\bA\otimes\hat\bA\big) \cS_{\Sigma}^{\mathrm{vec}}(\hat\btheta^*,\hat\bmu^*,\hat\bSigma^*) = \zero .
\end{align*}
Consequently, $\cS_{\Sigma}(\hat\btheta)=\zero_{K\times K}$, and hence the score under the $\mathrm{vech}(\bSigma)$ parameterization is also zero. Here the preceding transformation follows from
$\mathrm{vec}(\bA\bX\bB)=(\bB^\T\otimes \bA)\mathrm{vec}(\bX)$ and the identity
\[
    \bu_i\bu_i^\T-\bI_K = \hat\bA \left[ (\hat\bSigma^*)^{-1} (\bv_i-\hat\bmu^*)(\bv_i-\hat\bmu^*)^\T (\hat\bSigma^*)^{-1} - (\hat\bSigma^*)^{-1} \right] \hat\bA^\T ,
    \qquad
    \bv_i=\hat\bA\bu_i+\hat\bmu^* .
\]
This finishes the proof.


\section{Proof of Main Results}\label{supp_sec_main_proof}

\subsection{Proof of Theorem~\ref{thm_consis}}\label{supp_sec_prove_consis}
In this subsection, starred true parameters refer to the original representative, and $\hat\bQ^*$ denotes the original-target rotation in \eqref{eq_q_transf} of the main text. Applying the affine equivalence from Step~2 of the proof of Lemma~\ref{lemma_key_first_order}  to this target, take the unrotated representative $\hat\bgamma_j=(\hat\bSigma^*)^{1/2}\hat\bgamma_j^*$. Then
\begin{equation*}
    \hat\bQ^* = {\arg\min}_{\bQ\in\cO^{K}}\sum_{j=1}^q\big\|\bQ(\hat\bSigma^*)^{1/2}\hat\bgamma_j^* - \bgamma_j^*\big\|^2,
\end{equation*}
Then
\begin{equation*}
    \hat\bgamma_j^Q = \hat\bQ^*(\hat\bSigma^*)^{1/2}\hat\bgamma_j^*,\;\text{ and }\;\hat\beta_j^Q = \hat\beta_j^* + (\hat\bgamma_j^*)^\T\hat\bmu^*\;\text{ for each }j\in[q],
\end{equation*} where $\{\hat\bgamma_j^*,\hat\beta_j^*\}_{j\in[q]}$, $\hat\bmu^*$ and $\hat\bSigma^*$ are obtained in 
\eqref{eq_aux_estimator}. 
By Theorem~2 of \citet{ten1977orthogonal}, $\hat\bQ^*$ solves the Procrustes problem if and only if
\begin{equation}
    \hat\bQ^*(\hat\bSigma^*)^{1/2}\sum_{j=1}^q\hat\bgamma_j^*(\bgamma_j^*)^\T
    =\sum_{j=1}^q\bgamma_j^*(\hat\bgamma_j^*)^\T(\hat\bSigma^*)^{1/2}(\hat\bQ^*)^\T
    \label{eq_solve_symm_align}
\end{equation}
is positive definite. The proof of Lemma~\ref{lemma_key_first_order} gives
\begin{equation*}
    q^{-1}\sum_{j=1}^q\hat\bgamma_j^*(\bgamma_j^*)^\T
    =q^{-1}\sum_{j=1}^q\bgamma_j^*(\bgamma_j^*)^\T.
\end{equation*}
Let $\bM_\gamma^*=q^{-1}\sum_{j=1}^q\bgamma_j^*(\bgamma_j^*)^\T$ and $\hat\bA=(\hat\bSigma^*)^{1/2}$. The orthogonal polar factor satisfying \eqref{eq_solve_symm_align} is
\begin{equation*}
    \hat\bQ^*
    =\big(\bM_\gamma^*\hat\bSigma^*\bM_\gamma^*\big)^{-1/2}\bM_\gamma^*\hat\bA.
\end{equation*}
Indeed, $\hat\bQ^*\hat\bA\bM_\gamma^*=(\bM_\gamma^*\hat\bSigma^*\bM_\gamma^*)^{1/2}$ is symmetric positive definite. By conclusion~$(ii)$ of Lemma~\ref{lemma_aux_estimator_property},
$\|\hat\bSigma^*-\bI_K\|=\cOp(\delta_{nq})$, while $\bM_\gamma^*\to\bSigma_\gamma^*$ with eigenvalues bounded above and away from zero. The matrix square root $\bA^{1/2}$ and inverse square root $\bA^{-1/2}$ mappings are locally Lipschitz. Thus by telescoping, we have
\begin{equation*}
    \|\hat\bQ^*-\bI_K\|=\cOp(\delta_{nq})
    =\cOp\left(\frac{\kappa_n^2r_{\theta,nq}}{\sqrt n}+\frac{\kappa_n}{\sqrt q}\right).
\end{equation*}
Theorem~\ref{thm_consis} now follows from conclusions~$(i)$--$(iii)$ of Lemma~\ref{lemma_aux_estimator_property}, together with the boundedness of $\|\hat\btheta^*\|_{\infty}$. For later use, we also derive the consistency rate for $\hat\bu^{\star} = \left((\hat\bu_{1}^{\star})^\T,\dots, (\hat\bu_{n}^{\star})^\T\right)^\T\in\RR^{nK}$, where $\hat\bu_{i}^{\star} = \bu_i^{\star}(\hat\btheta) = \argmax_{\bu}\ell_i(\bu;\hat\btheta)$. Let $\bu^{\star}_i(\hat\btheta^*) = \argmax_{\bu}\ell_i(\bu;\hat\btheta^*)$ for $\hat\btheta^*$ in \eqref{eq_aux_estimator}. Conclusion~$(i)$ of Lemma~\ref{lemma_aux_estimator_property} and Lemma~\ref{lemma_neighbour_control} imply
\begin{equation*}
    \Big(n^{-1}\sum_{i=1}^n\big\|\bu^{\star}_i(\hat\btheta^*) - \bu_i^*\big\|^2\Big)^{1/2} = \cOp(\delta_{nq}).
\end{equation*}
Next, with $\hat\bgamma_j = \hat\bQ^*(\hat\bSigma^*)^{1/2}\hat\bgamma_j^*$ and $\hat\beta_j = \hat\beta_j^* + (\hat\bgamma_j^*)^\T\hat\bmu^*$, simple algebra yields $\hat\bu_i^{\star} = \hat\bQ^*(\hat\bSigma^*)^{-1/2} \left\{\bu_i^{\star}(\hat\btheta^*) - \hat\bmu^*\right\}$.
With the rates for $\|\hat\bmu^*\|$, $ \|\hat\bSigma^* - \bI_K\|$, and $\|\hat\bQ^* - \bI_K\|$ given above, one can show that 
\begin{equation}
    \Big(n^{-1}\sum_{i=1}^n\big\|\hat\bu^{\star}_i - \bu_i^*\big\|^2\Big)^{1/2} = \cOp(\delta_{nq}).\label{eq_consis_u_star_hat}
\end{equation}

\subsection{Proof of Theorem~\ref{thm_asymp}}\label{supp_sec_prove_asym_0}
Throughout Sections~\ref{supp_sec_prove_asym_0}--\ref{supp_sec_prove_thm_bvm_irt} and the proofs of Lemmas~\ref{lemma_residual_in_asym_expand}--\ref{lemma_bound_aug_via_id}, write $(\btheta^*,\bu^*)$ for the canonical representative in \eqref{eq_canp_trans}, and $\hat\btheta$ for the estimator aligned by $\hat\bQ^0$ as defined before Theorem~\ref{thm_asymp} . In this notation,
\begin{equation*}
    n^{-1}\sum_{i=1}^n\bu_i^*= \zero_K,\qquad n^{-1}\sum_{i=1}^n\bu_i^*(\bu_i^*)^\T=\bI_K,
\end{equation*} and $\hat\btheta$ obeys
\begin{equation*}
    \bI_K = \argmin_{\bQ\in\cO^K}\sum_{j=1}^q\big\|\bQ\hat\bgamma_j - \bgamma_j^*\big\|^2.
\end{equation*}
Then Theorem~2 of \citet{ten1977orthogonal} implies that
\begin{equation}
    \sum_{j=1}^q\hat\bgamma_j (\bgamma_j^*)^\T = \sum_{j=1}^q\bgamma_j^*\hat\bgamma_j^\T\label{eq_id_gamma_rotate}
\end{equation}
is positive definite. We briefly explain how to transfer the consistency bounds from Section~\ref{supp_sec_prove_consis}. Set $\bD=\hat\bQ^0(\hat\bQ^*)^\T$, and one can easily verify by $\|\bar\bmu_u \| = \cOp(n^{-1/2})$ and $\|\bar\bSigma_u - \bI_K\| = \cOp(n^{-1/2})$  that $\|\bD-\bI_K\|=\cOp(n^{-1/2})$. 
The canonical transformation changes each $\bgamma_j$, $\bu_i$, and $\beta_j$ uniformly by $\cOp(n^{-1/2})$. Thus their consistency bounds are preserved. 
Recall that $\delta_{nq} = \kappa_n^2r_{\theta,nq}/\sqrt{n}+\kappa_n/\sqrt{q}$. We have
\begin{equation}
    \label{eq_consis_ave_all}\Big(q^{-1}\sum_{j=1}^q\big\|\hat\btheta_j - \btheta_j^*\big\|^2\Big)^{1/2} = \cOp(\delta_{nq})\quad\text{and}\quad\Big(n^{-1}\sum_{i=1}^n\big\|\hat\bu^{\star}_i - \bu_i^*\big\|^2\Big)^{1/2} = \cOp(\delta_{nq}).
\end{equation}
Here $\hat\bu_i^{\star}=\bu_i^{\star}(\hat\btheta)$ is evaluated at the canonically aligned estimator. This convention is also used later in Sections~\ref{supp_sec_prove_thm_asymp_bu}--\ref{supp_sec_prove_thm_bvm_irt}. In what follows, the derivations are conducted conditional on $\mathscr U_n$, on latent events whose $\PP_U$-probabilities tend to one. Accordingly, all conditional stochastic orders and limits in this proof hold in $\PP_U$-probability.

In Lemma~\ref{lemma_key_first_order}, we have established that, with probability approaching $1$, one has
    \begin{equation*}
        \cS_{\theta}(\hat\btheta) = \zero,\;\cS_{\mu}(\hat\btheta) = \zero,\text{ and }\cS_{\Sigma}(\hat\btheta) = \zero.
    \end{equation*}
    A standard route to asymptotic normality is to expand $\cS_{\theta}(\hat\btheta)$ around $\cS_{\theta}(\btheta^*)$. However, Lemma~\ref{lemma_laplace_approx_second_order} gives an approximation to $\partial_{\btheta}\cS_{\theta}$ containing both a block-diagonal term $\bH_{L\theta\theta}$ and a dense term, so no convenient inversion formula is available. To overcome this problem, we use the derivatives $\{\bS_{Lu}(\btheta,\bu)\}_{\cK_i} = \partial_{\bu_i}L(\btheta,\bu)$. 
    By definition, one has $\bS_{Lu}(\hat\btheta,\hat\bu^{\star}) = \zero$. By Lemma~\ref{lemma_first_order_base}, we know that for each $j\in[q]$,
    \begin{equation}
        \big\|\bS_{L\theta_j}(\hat\btheta,\hat\bu^{\star}) \big\| = \big\|\cS_{\theta_j}(\hat\btheta) \big\| + \cOp(nB_n\varepsilon_{nq}\sqrt{\log n}/q) = \cOp(nB_n\varepsilon_{nq}\sqrt{\log n}/q).\label{eq_first_order_near_zero}
    \end{equation}
This is of smaller order than $\|\bS_{L\theta_j}(\btheta^*,\bu^*)\| = \cOp(\sqrt{nB_n})$ under the scaling condition in Theorem~\ref{thm_asymp}. We subsequently expand the first-order derivatives $\bS_L(\btheta,\bu) = \begin{pmatrix}
        \bS_{L\theta}(\btheta,\bu)\\
        \bS_{Lu}(\btheta,\bu)
    \end{pmatrix} $    as follows
    \begin{equation}
        \bS_L\big(\hat\btheta,\hat\bu^{\star}\big) - \bS_{L}(\btheta^*,\bu^*) = \bH_L(\btheta^*,\bu^*) \begin{pmatrix}
            \hat\btheta - \btheta^*\\ \hat\bu^{\star} - \bu^*
        \end{pmatrix} + \frac{1}{2}\begin{pmatrix}\bR_{L\theta}\\\bR_{Lu}\end{pmatrix},\label{eq_asymp_expand}
    \end{equation}
    where $\bR_{L\theta}\in\RR^{qK+q}$ and $\bR_{Lu}\in\RR^{nK}$ are the residuals bounded by the following lemma. Let
    \begin{equation*}
        \bar\delta_{nq} = \frac{B_n}{b_n}\delta_{nq} = \frac{B_n}{b_n}\Big\{\frac{\kappa_n^2r_{\theta,nq}}{\sqrt{n}} \; + \; \frac{\kappa_n}{\sqrt{q}}\Big\}.
    \end{equation*}
    \begin{lemma}\em\label{lemma_residual_in_asym_expand}
        Under Assumptions~\ref{assump_standard_id}--\ref{assump_scaling}, for each $i\in[n]$ and $j\in[q]$
        \begin{equation*}
            \big\|\hat\btheta_j - \btheta^*_j\big\| = \cOp(\bar\delta_{nq})\quad\text{ and }\quad\big\|\hat\bu_i^{\star} - \bu_i^*\big\| = \cOp(\bar\delta_{nq}).
        \end{equation*}
        Moreover, for the expansion in \eqref{eq_asymp_expand}, it holds that, for each $j\in[q]$,
        \begin{equation*}
            \big\|(\bR_{L\theta})_{\cK_j^+}\big\| = \cOp\big(nB_n\bar\delta_{nq}^2\big);\quad \big\|\bR_{L\theta}\big\| = \cOp\big(n\sqrt{q}B_n\bar\delta_{nq}^2\big);\quad \max_{j\in[q]}\big\|(\bR_{L\theta})_{\cK_j^+}\big\| = \cOp\big(nB_n\bar\delta_{nq}^2\log q\big),
        \end{equation*}
        and for each $i\in[n]$,
        \begin{equation*}
            \big\|(\bR_{Lu})_{\cK_i}\big\| = \cOp\big(qB_n\bar\delta_{nq}^2\big);\quad \big\|\bR_{Lu}\big\| = \cOp\big(\sqrt{n}qB_n\bar\delta_{nq}^2\big);\quad \max_{i\in[n]}\big\|(\bR_{Lu})_{\cK_i}\big\| = \cOp\big(qB_n\bar\delta_{nq}^2{\log n}\big).
        \end{equation*}
    \end{lemma}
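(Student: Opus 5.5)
The proof has two parts: first the individual rates for $\hat\btheta_j-\btheta_j^*$ and $\hat\bu_i^{\star}-\bu_i^*$, then the second-order Taylor remainders, whose bounds are sums over products of these individual errors. Throughout, I work on $\cE_\eta$ and in the canonical representative, whose change from the original representative costs only $\cOp(n^{-1/2})$ per coordinate. Write $\bDelta_j=\hat\btheta_j-\btheta_j^*$, $\bdelta_i=\hat\bu_i^{\star}-\bu_i^*$, and use $\kappa_n^2=B_n/b_n^2$ repeatedly.

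\emph{Step 1 (rates for $\bDelta_j$).} The $\ell_\infty$ bound in Theorem~\ref{thm_consis}, transferred to the canonical alignment $\hat\bQ^0$, gives
\[
\max_j\|\bDelta_j\|=\cOp\big(r_{\theta,nq}n^{-1/2}\{\sqrt{\log q}/b_n+B_n^2/b_n^3\}+\kappa_n/\sqrt q\big).
\]
Since $B_n\kappa_n^2/b_n=B_n^2/b_n^3$ and $\sqrt{\log q}/b_n\le B_n^2/b_n^3$ under Assumption~\ref{assump_scaling}, this is $\cOp(\bar\delta_{nq})$.

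\emph{Step 2 (rates for $\bdelta_i$).} I repeat the argument of Lemma~\ref{lemma_neighbour_control} at $\btheta=\hat\btheta$. The mode equation $\sum_j\ell_{ij}'(\hat\eta_{ij})\hat\bgamma_j=\zero$ together with the integral mean-value representation \eqref{eq_bound_neighb_theta_2der_diff} gives
\[
\|\bdelta_i\|\le (c\,qb_n)^{-1}\Big\|\sum_j\ell_{ij}'(\hat\bgamma_j^\T\bu_i^*+\hat\beta_j)\hat\bgamma_j\Big\|.
\]
I split the right-hand side as in \eqref{eq_bound_neighb_theta0}--\eqref{eq_bound_neighb_theta2}. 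The pure-noise term is $\cOp(\sqrt{qB_n})$. The perturbation terms are bounded by $B_n\sqrt q\,\|\hat\btheta-\btheta^*\|$, which by \eqref{eq_consis_ave_all} is $\cOp(qB_n\delta_{nq})$. Hence $\|\bdelta_i\|=\cOp(B_n\delta_{nq}/b_n+\kappa_n/\sqrt q)=\cOp(\bar\delta_{nq})$. The uniform-in-$i$ version picks up the $\sqrt{\log n}$ factor exactly as in \eqref{eq_condition_u}.

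\emph{Step 3 (remainder blocks).} I write the remainder in integral form along the segment joining $(\btheta^*,\bu^*)$ to $(\hat\btheta,\hat\bu^{\star})$. Steps 1--2 keep every predictor on this segment within $C_\eta\sqrt{\log n}+o(1)$, so the bounds $|\ell_{ij}''|,|\ell_{ij}'''|\le B_n$ apply after an immaterial enlargement of the constant. Differentiating $\ell_{ij}'(\eta_{ij})(1,\bu_i^\T)^\T$ twice, each summand of $(\bR_{L\theta})_{\cK_j^+}$ is bounded by
\[
CB_n\big\{(\Delta\eta_{ij})^2(1+\|\bu_i\|)+|\Delta\eta_{ij}|\,\|\bdelta_i\|\big\},\qquad |\Delta\eta_{ij}|\lesssim\|\bDelta_j\|(1+\|\bu_i^*\|)+\|\bdelta_i\|.
\]
Summing over $i$ and using $n^{-1}\sum_i\|\bu_i^*\|^3=\cOp(1)$ gives
\[
\|(\bR_{L\theta})_{\cK_j^+}\|\lesssim B_n\Big(n\|\bDelta_j\|^2+\sum_i\|\bdelta_i\|^2\Big)=\cOp(nB_n\bar\delta_{nq}^2),
\]
where the second sum is $n\delta_{nq}^2$ by \eqref{eq_consis_ave_all}. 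For the full norm I square and sum over $j$:
\[
\sum_j\big(n\|\bDelta_j\|^2+n\delta_{nq}^2\big)^2\le n^2\Big(\max_j\|\bDelta_j\|^2\sum_j\|\bDelta_j\|^2+q\delta_{nq}^4\Big)=\cOp(n^2q\bar\delta_{nq}^4).
\]
For the maximum over $j$ I use the uniform Step 1 bound together with $\max_i\|\bu_i^*\|\lesssim\sqrt{\log n}$, which costs at most a $\log q\asymp\log n$ factor. The $\bR_{Lu}$ blocks are handled symmetrically, now summing over $j$ with $\|\bgamma_j\|\le C$: the per-$i$ bound is $B_n(q\|\bdelta_i\|^2+\sum_j\|\bDelta_j\|^2)$, and the maximum over $i$ uses \eqref{eq_condition_u}.

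\emph{Main obstacle.} I expect the hardest part to be the first-order terms of the $\bu$-derivative of $\ell_{ij}'(\eta_{ij})\bu_i$ that carry $\ell_{ij}'$ rather than $\ell_{ij}''$; these are the $\bJ$ blocks. In the remainder they enter as $\ell_{ij}''\,\Delta\eta_{ij}\,\bdelta_i$ cross terms, which the pointwise bound above already covers. However, I must make sure I never bound a noise-weighted sum such as $\sum_i\ell_{ij}'(\eta_{ij}^*)\bdelta_i$ crudely, since $\bdelta_i$ depends on the responses. Where such a term appears, I would first pull out $\sup_i\|\bdelta_i\|$ and then apply the column-norm bound on $\Lb_\eta$ from Lemma~\ref{lemma_concentration_l_mat}.
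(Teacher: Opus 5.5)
Your proposal is correct. Step 3 is essentially the paper's Taylor-remainder computation, but you obtain the block rates for $\hat\btheta_j-\btheta_j^*$ by a different route.

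The paper does not invoke Theorem~\ref{thm_consis} here. It rederives a per-block inequality from three ingredients:
\begin{itemize}
\item the first-order condition $\cS_{\theta_j}(\hat\btheta)=\zero$;
\item the Laplace approximation of Lemma~\ref{lemma_first_order_base};
\item a Taylor expansion in $\btheta_j$ at fixed $\hat\bu^\star$.
\end{itemize}
It then uses Rosenthal-type moment bounds to get $q^{-1}\sum_j\|\hat\btheta_j-\btheta_j^*\|^4=\cOp(\bar\delta_{nq}^4)$, together with a maximal rate inflated by $\sqrt{\log q}$. These are what it needs for $\|\bR_{L\theta}\|$ and for the maximal remainder bounds.

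You instead import the $\ell_\infty$ rate of Theorem~\ref{thm_consis}, which is Lemma~\ref{lemma_aux_estimator_property}(iii) carried through the affine equivalence and the $\cOp(n^{-1/2})$ canonical shift. This is legitimate, since that theorem is proved before and independently of this lemma. You observe that this rate is already $\cOp(\bar\delta_{nq})$, because $B_n/b_n$ outgrows every power of $\log n$ when $b_L,b_U>0$. This gives a uniform-in-$j$ rate with no $\sqrt{\log q}$. As a result:
\begin{itemize}
\item $\sum_j\|\bDelta_j\|^4\le\max_j\|\bDelta_j\|^2\sum_j\|\bDelta_j\|^2$ replaces the fourth-moment argument;
\item the $\log q$ factor in the maximal $\bR_{L\theta}$ bound becomes unnecessary.
\end{itemize}
The price is reliance on the auxiliary-estimator $\ell_\infty$ machinery and on that polylogarithmic slack. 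The paper's derivation is self-contained once the average rates \eqref{eq_consis_ave_all} are available.

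Your latent-variable step (the mode equation plus the argument of Lemma~\ref{lemma_neighbour_control} evaluated at $\hat\btheta$) matches the paper's \eqref{eq_u_block_error_inequality}.

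One small slip, which is harmless. The $\|\bdelta_i\|^2$ part of your summand carries the weight $1+\|\bu_i\|$, so the sum you need is $\sum_i(1+\|\bu_i^*\|)\|\bdelta_i\|^2$, not $\sum_i\|\bdelta_i\|^2$. The paper handles this by Cauchy--Schwarz with fourth moments. In your framework, pulling out $\max_i(1+\|\bu_i^*\|)=\cOp(\sqrt{\log n})$ suffices, because $\bar\delta_{nq}^2/\delta_{nq}^2=(B_n/b_n)^2\gg\log n$. The same slack covers $\sum_i\|\bdelta_i\|^4\le\max_i\|\bdelta_i\|^2\sum_i\|\bdelta_i\|^2$ in your bound for $\|\bR_{Lu}\|$.
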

\begin{proof}
See Section~\ref{sec_prove_lemma_residual_in_asym_expand}.
\end{proof}
We next consider the first term on the right-hand side. Our goal is to invert $\bH_{L}(\btheta^*,\bu^*)$. Direct inversion of this matrix, however, does not yield a tractable form. Specifically, for any $r\in\{0\}\cup[K]$ and $l\in[K]$, define
\begin{equation*}
    \bphi_{rl}(\btheta,\bu) = \left(q^{-1/2}\gamma_{1l}\big(\be_{r+1}^{(K+1)}\big)^\T,\dots, q^{-1/2}\gamma_{ql}\big(\be_{r+1}^{(K+1)}\big)^\T, -n^{-1/2}u_{1r}(\be_l^{(K)})^\T,\dots, -n^{-1/2}u_{nr}(\be_l^{(K)})^\T\right)^\T.
\end{equation*}
Here we abuse the notation to denote $u_{i0} = 1$ for $i\in[n]$. Then similar to the proof of Lemma~\ref{lemma_convexity}, one can check that $\{\bphi_{rl}(\btheta^*,\bu^*)\}^\T\bS_m\bH_L(\btheta^*,\bu^*)\bS_m\bphi_{rl}(\btheta^*,\bu^*) = \cOp(\sqrt{B_n/(nq)})$ with $\|\bphi_{rl}(\btheta^*,\bu^*)\| = \cOp(1)$ and $\big\|\bS_m\bH_L(\btheta^*,\bu^*)\bS_m\big\| = \cOp(B_n)$, which follows from $|\ell_{ij}^{\prime\prime}|\le B_n$, $\max_{i\in[n]}\|\bu_i^*\|_{\infty}\le C\sqrt{\log n}$, $n^{-1}\sum_{i=1}^n\|\bu_i^*\|^2 = \cOp(1)$ and $q^{-1}\|\bGamma^*\|^2 = \cOp(1)$ after bounding the off-diagonal block in Frobenius norm. Here,  \begin{equation}
    \bS_m = \begin{pmatrix}
            n^{-1/2}\bI_{qK+q}&\zero\\\zero&q^{-1/2}\bI_{nK}
        \end{pmatrix}.\label{eq_define_sm1}
    \end{equation}
    These vectors span a subspace along which the matrix $\bH_{L}(\btheta^*,\bu^*)$ is nearly flat. As a result, directly inverting $\bH_{L}(\btheta^*,\bu^*)$ yields expressions that are difficult to control. To address this issue, we construct the following augmentation $\bH_P^*$:
    \begin{equation*}
        \bH_P^* = cb_n\sum_{r=0}^K\sum_{l=1}^K\bpsi_{rl}^*(\bpsi_{rl}^*)^\T,
    \end{equation*}
    where $c<1$ is some positive constant specified later in Lemma~\ref{lemma_approx_hessian_inverse}, and
    \begin{enumerate}[label=$(\roman*)$]
        \item $\bpsi_{rl}^* = \sqrt{n/q}\big(\gamma_{1l}^*(\be_{r+1}^{(K+1)})^\T - \gamma_{1r}^*(\be_{l+1}^{(K+1)})^\T,\dots, \gamma_{ql}^*(\be_{r+1}^{(K+1)})^\T - \gamma_{qr}^*(\be_{l+1}^{(K+1)})^\T,\zero_{nK}^\T\big)^\T$ when $1\le l<r\le K$;
        \item $\bpsi_{0l}^* = \sqrt{q/n}\big(\zero_{qK+q}^\T, (\be_l^{(K)})^\T,\dots, (\be_l^{(K)})^\T\big)^\T$ when $1\le l\le K$;
        \item $\bpsi_{rl}^* = \sqrt{q/n}\big(\zero_{qK+q}^\T, u_{1r}^*(\be_l^{(K)})^\T,\dots, u_{nr}^*(\be_l^{(K)})^\T\big)^\T$ when $1\le r=l\le K$;
        \item $\bpsi_{rl}^* = \sqrt{q/n}\big(\zero_{qK+q}^\T, u_{1r}^*(\be_l^{(K)})^\T + u_{1l}^*(\be_r^{(K)})^\T,\dots, u_{nr}^*(\be_l^{(K)})^\T+ u_{nl}^*(\be_r^{(K)})^\T\big)^\T$ when $1\le r<l\le K$.
    \end{enumerate}
The augmentation $\bH_P^*$ is constructed so that the inverse of $\bH_{L}(\btheta^*, \bu^*) + \bH_P^*$ can be accurately approximated by a block-diagonal matrix, as shown in Lemma~\ref{lemma_approx_hessian_inverse}. Moreover, the term $\bH_P^*\big((\hat\btheta - \btheta^*)^\T,(\hat\bu^{\star} - \bu^*)^\T\big)^\T$ is of the same order as the residual terms $\bR_{L\theta}$ and $\bR_{Lu}$ in expansion \eqref{eq_asymp_expand}, as shown in Lemma~\ref{lemma_bound_aug_via_id}.
\begin{lemma}
    \em\label{lemma_approx_hessian_inverse} Suppose Assumptions~\ref{assump_standard_id}--\ref{assumption_smoothness} hold and $c$ is some positive constant. Denote $\bH_{L}^* = \bH_{L}(\btheta^*, \bu^*)$, $\bH_{L\theta\theta}^* = \bH_{L\theta\theta}(\btheta^*, \bu^*)$, and $\bH_{Luu}^* = \bH_{Luu}(\btheta^*, \bu^*)$. For each $j\in[q]$,
    \begin{align*}
        (a).&\quad\Big\|\bS_m^{-1}\big\{(\bH_{L}^* + \bH_P^*)^{-1}\bS_m^{-1}\big\}_{\cK_j^+,} -\big\{ n(\bH_{L\theta\theta}^*)^{-1}\big\}_{\cK_j^+,}\Big\| = \cOp\big(\kappa_n^3\sqrt{B_n}q^{-1/2}\big);\\
        (b).&\quad\Big\|\Big[\big\{(\bH_{L}^* + \bH_P^*)^{-1}\big\}_{\cK_j^+,} - \big\{(\bH_{L\theta\theta}^*)^{-1}\big\}_{\cK_j^+,}\Big]\bS_L(\btheta^*,\bu^*)\Big\| = \cOp\Big(\frac{\kappa_n^2B_n}{\sqrt{nq}} + \frac{\kappa_n^2}{n\wedge q}\Big);
    \end{align*}
    and, for each $i\in[n]$,
    \begin{align*}
        (c)&\quad\Big\|\big\{\bS_m^{-1}(\bH_{L}^* + \bH_P^*)^{-1}\bS_m^{-1}\big\}_{qK+q+\cK_i,} - q\big\{(\bH_{Luu}^*)^{-1}\big\}_{\cK_i,}\Big\| = \cOp\big(\kappa_n^3\sqrt{B_n}n^{-1/2}\big);\\
        (d)&\quad\left\|\Big[\big\{(\bH_{L}^* + \bH_P^*)^{-1}\big\}_{qK+q+\cK_i,} - \big\{(\bH_{Luu}^*)^{-1}\big\}_{\cK_i,}\Big]\bS_L(\btheta^*,\bu^*)\right\| = \cOp\Big(\frac{\kappa_n^2B_n}{\sqrt{nq}} + \frac{\kappa_n^2}{n\wedge q}\Big).
    \end{align*}
\end{lemma}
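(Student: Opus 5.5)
The plan is to work with the rescaled matrix $\bA^* := \bS_m(\bH_L^*+\bH_P^*)\bS_m$, where $\bS_m$ is defined in \eqref{eq_define_sm1}. We compare it with its block-diagonal part $\bD^* := \mathrm{diag}\big(n^{-1}\bH_{L\theta\theta}^*,\,q^{-1}\bH_{Luu}^*\big)$ and invert perturbatively. Since $\bS_m^{-1}(\bH_L^*+\bH_P^*)^{-1}\bS_m^{-1}=(\bA^*)^{-1}$, parts (a) and (c) amount to row-block bounds on $(\bA^*)^{-1}-(\bD^*)^{-1}$. Parts (b) and (d) are the same differences applied to the score $\bS_L(\btheta^*,\bu^*)$.

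\textbf{Step 1: $\bA^*$ is uniformly well conditioned.} First show $\lambda_{\min}(\bA^*)\ge c' b_n$ with probability tending to one. The decomposition mirrors Lemma~\ref{lemma_convexity2}. The off-diagonal block $\bE^*:=(nq)^{-1/2}\bH_{L\theta u}^*$ equals $(nq)^{-1/2}\bar\bH_{L\theta u}^*$ plus the score part $(nq)^{-1/2}\bJ_{L\theta u}^*$. By Lemma~\ref{lemma_concentration_l_mat}(iii), the score part has operator norm $\cOp(\sqrt{B_n/(n\wedge q)})$, so it is negligible.

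The leading part $\bA_0 := \bD^* + (nq)^{-1/2}\big(\begin{smallmatrix}0&\bar\bH_{L\theta u}^*\\ \cdot&0\end{smallmatrix}\big) + \bS_m\bH_P^*\bS_m$ is the rescaled Hessian of $\sum_{ij}-\ell_{ij}(\bgamma_j^\T\bu_i+\beta_j)$ in the ``Gauss--Newton'' form. Its quadratic form at a direction $(\bv,\bw)$ is
\begin{equation*}
(nq)^{-1}\sum_{ij}-\ell_{ij}''\,\big\{\bv_j^\T(1,\bu_i^{*\T})^\T+\bgamma_j^{*\T}\bw_i\big\}^2 .
\end{equation*}
This form is bounded below by $b_n\,(nq)^{-1}\|\bV(\one,\bU^*)^\T+\bGamma^*\bW^\T\|_\Fn^2$. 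The null space of the bilinear map $(\bV,\bW)\mapsto \bV(\one,\bU^*)^\T+\bGamma^*\bW^\T$ consists exactly of the infinitesimal affine reparametrizations. Under the canonical normalization \eqref{eq_id_st}, these are spanned (after rescaling) by the directions $\bphi_{rl}$. The augmentation $\bH_P^*$, through the vectors $\bpsi_{rl}^*$, penalizes each of these $K(K+1)$ directions at rate $b_n$. The antisymmetric $\bpsi_{rl}^*$ handle rotations; the others handle the mean and the symmetric part of the covariance on the $\bu$-side.

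A Davis--Kahan/Weyl argument then yields the spectral gap. It uses $\sigma_{K}(\bGamma^*)\asymp\sqrt q$ and the orthonormality of $n^{-1/2}(\one,\bU^*)$, and gives the lower bound $c' b_n$ uniformly. The upper bound $\|\bA^*\|\lesssim B_n$ is immediate.

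\textbf{Step 2: Resolvent expansion.} Write $\bA^*=\bD^*+\bF$, where $\bF$ collects the off-diagonal block and the augmentation. Use
\begin{equation*}
(\bA^*)^{-1}-(\bD^*)^{-1} = -(\bD^*)^{-1}\bF(\bA^*)^{-1}.
\end{equation*}
For a fixed row block $\cK_j^+$, the factor $\{(\bD^*)^{-1}\}_{\cK_j^+,}$ is just $n(\bH^*_{L\theta_j\theta_j})^{-1}$, which is $\cOp(b_n^{-1})$. The row $\{\bF\}_{\cK_j^+,}$ has norm of order $(nq)^{-1/2}\big(\sum_i\|\bH_{L\theta_ju_i}\|^2\big)^{1/2}+b_n q^{-1}\|\bpsi\text{-block}\|$, which is $\cOp(B_n q^{-1/2})$ after using $n^{-1}\sum_i\|\bu_i^*\|^2=\cOp(1)$. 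Multiplying by $\|(\bA^*)^{-1}\|\lesssim b_n^{-1}$ gives $\cOp(B_n b_n^{-2}q^{-1/2})\le\cOp(\kappa_n^3\sqrt{B_n}q^{-1/2})$, which is (a). Part (c) is symmetric in $(i,n)\leftrightarrow(j,q)$.

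For (b) and (d), write $(\bH_L^*+\bH_P^*)^{-1}\bS_L=\bS_m(\bA^*)^{-1}\bS_m\bS_L$ and expand $(\bA^*)^{-1}$ to second order:
\begin{equation*}
(\bA^*)^{-1}=(\bD^*)^{-1}-(\bD^*)^{-1}\bF(\bD^*)^{-1}+(\bD^*)^{-1}\bF(\bA^*)^{-1}\bF(\bD^*)^{-1}.
\end{equation*}
The first-order term applied to the score reads $\sum_i \bH_{L\theta_j u_i}^*(\bH^*_{Lu_iu_i})^{-1}\bS_{Lu_i}$. This is a sum of $n$ conditionally mean-zero independent terms, except for the $\bJ$-part, which is quadratic in scores; there one uses independence across $j$ and the conditional mean structure. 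This yields $\cOp(\kappa_n^2B_n/\sqrt{nq})$ instead of the crude $\cOp(\sqrt{B_n}/\sqrt q)$ rate. The augmentation contributes through $\bpsi^{*\T}\bS_L$, which are also averages of scores; this gives similar rates. The second-order term is bounded crudely in operator norm using Lemma~\ref{coro_first_order_concern}, giving $\kappa_n^2/(n\wedge q)$.

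The main obstacle is Step 1: proving that the augmentation exactly fills the null directions uniformly, without losing powers of $B_n/b_n$. The second obstacle is the cancellation argument for the first-order score term in (b) and (d), where the $\bJ$-part is the delicate one.
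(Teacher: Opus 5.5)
Your Step~1 is essentially the paper's Step~1: the Gauss--Newton decomposition, the null space $\mathrm{col}(\bphi_L^*)$, and nondegeneracy of $(\bphi_L^*)^\T\widetilde\bpsi_L^*$. The paper verifies that last point by a symmetric/skew-symmetric computation, whereas you only assert it. Your route to (a) and (c) uses $(\bA^*)^{-1}-(\bD^*)^{-1}=-(\bD^*)^{-1}\bm{F}(\bA^*)^{-1}$ together with the block-diagonal structure of $\bD^*$. It is correct, shorter than the paper's Schur-complement/Woodbury bookkeeping, and even gives $\cOp(B_nb_n^{-2}q^{-1/2})$.

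The gap is in (b) and (d), at the claim that the second-order term can be bounded crudely in operator norm. The matrix $\bm{F}$ is not small. Its off-diagonal block $(nq)^{-1/2}\bar\bH_{L\theta u}(\btheta^*,\bu^*)$ has operator norm of order $B_n$; for Gaussian responses it is the map $\bm{W}\mapsto(nq)^{-1/2}\bGamma^*\bm{W}^\T(\one_n,\bU^*)$, whose norm is of order one. Meanwhile Lemma~\ref{coro_first_order_concern} only gives $\|(\bD^*)^{-1}\bS_m\bS_L(\btheta^*,\bu^*)\|=\cOp(\kappa_n\sqrt{n\vee q})$. Hence the crude bound on $n^{-1/2}\{(\bD^*)^{-1}\bm{F}(\bA^*)^{-1}\bm{F}(\bD^*)^{-1}\}_{\cK_j^+,}\bS_m\bS_L(\btheta^*,\bu^*)$ is
\[
n^{-1/2}\cdot b_n^{-1}\cdot B_nq^{-1/2}\cdot b_n^{-1}\cdot B_n\cdot\kappa_n\sqrt{n\vee q}\;\asymp\;\frac{B_n^2\kappa_n}{b_n^2\sqrt{n\wedge q}} .
\]
This is no better than what (a) already gives for the whole difference. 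It misses the target $(nq)^{-1/2}+(n\wedge q)^{-1}$ by a factor $\sqrt{n\wedge q}$, even when $B_n=b_n=1$.

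What is missing is that the cancellation you invoke for the first-order term must also be applied, in aggregated $\ell_2$ form, to the whole vector $\bm{F}(\bD^*)^{-1}\bS_m\bS_L(\btheta^*,\bu^*)$ that feeds the remainder. Concretely, you need bounds on:
\begin{itemize}
\item $\sum_{j\in[q]}\|\sum_i\bH^*_{L\theta_ju_i}(\bH^*_{Lu_iu_i})^{-1}\bS^*_{Lu_i}\|^2$;
\item $\sum_{i\in[n]}\|\sum_j\bH^*_{Lu_i\theta_j}(\bH^*_{L\theta_j\theta_j})^{-1}\bS^*_{L\theta_j}\|^2$;
\item $(\widetilde\bpsi_L^*)^\T(\bD^*)^{-1}\bS_m\bS_L(\btheta^*,\bu^*)$.
\end{itemize}
This is what Lemma~\ref{lemma_aux_score_bound} and \eqref{eq_combined_est} provide in the paper, which uses such aggregated bounds for its $\eta_2$ and $\eta_4$ terms. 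They show $\|\bm{F}(\bD^*)^{-1}\bS_m\bS_L(\btheta^*,\bu^*)\|$ is of order $\sqrt{(n\vee q)/(n\wedge q)}$ rather than $\sqrt{n\vee q}$, up to factors polynomial in $B_n$ and $b_n^{-1}$. Only then is the remainder of order $(nq)^{-1/2}+(n\wedge q)^{-1}$, up to such factors.

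Relatedly, the $(n\wedge q)^{-1}$ term does not come from the remainder alone. In your first-order term, the diagonal part $t=j$ of the $\bJ$-piece, $\sum_i\{\ell_{ij}'(\eta_{ij}^*)\}^2(\bH^*_{Lu_iu_i})^{-1}\bgamma_j^*$, has nonzero conditional mean and contributes $\kappa_n^2/q$. Also, when $\ell_{ij}''$ depends on the data, the dependence between $\bH^*_{Lu_iu_i}$ and $\bS^*_{Lu_i}$ must be removed by centering and leave-one-out, as in Lemma~\ref{lemma_aux_score_bound_pre}, before treating these sums as conditionally mean-zero.
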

\begin{proof}
    See Section~\ref{sec_prove_lemma_approx_hessian_inverse}.
\end{proof}


\begin{lemma}
    \label{lemma_bound_aug_via_id}\em Given \eqref{eq_id_gamma_rotate}, $\cS_{\mu}(\hat\btheta) = 0$ and $\cS_{\Sigma}(\hat\btheta) = 0$, under Assumptions~\ref{assump_standard_id}--\ref{assump_scaling},
    \begin{equation*}
        \left\|\bS_m^{2}\bH_P^*\begingroup\renewcommand{\arraystretch}{0.6}\begin{pmatrix}
            \hat\btheta - \btheta^*\\\hat\bu^{\star} - \bu^{*}
        \end{pmatrix}\endgroup\right\|_{\infty}= \cOp\big(b_n\sqrt{\log n}(\delta_{nq}^2 + \varepsilon_{nq}q^{-1})\big).
    \end{equation*}
\end{lemma}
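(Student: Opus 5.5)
The plan is to compute the vector $\bS_m^2\bH_P^*\bv$ explicitly, with $\bv=\big((\hat\btheta-\btheta^*)^\T,(\hat\bu^\star-\bu^*)^\T\big)^\T$. Since $\bH_P^*=cb_n\sum_{r,l}\bpsi_{rl}^*(\bpsi_{rl}^*)^\T$ has only $O(K^2)$ rank-one terms, we have
\begin{equation*}
\bS_m^2\bH_P^*\bv = cb_n\sum_{r=0}^K\sum_{l=1}^K\big\{(\bpsi_{rl}^*)^\T\bv\big\}\,\bS_m^2\bpsi_{rl}^* .
\end{equation*}
It therefore suffices to bound the sup-norm of each $\bS_m^2\bpsi_{rl}^*$ and the size of each scalar $(\bpsi_{rl}^*)^\T\bv$.

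For the first factor, the $\btheta$-block entries of $\bS_m^2\bpsi_{rl}^*$ are $n^{-1}\sqrt{n/q}\,\gamma^*_{j\cdot}$, of size $O\big((nq)^{-1/2}\big)$ by Assumption~\ref{assumption_psd_covariance}. The $\bu$-block entries are $q^{-1}\sqrt{q/n}\,u^*_{i\cdot}$ (or $1$ when $r=0$). On $\cE_\eta$ these are $O\big(\sqrt{\log n}/\sqrt{nq}\big)$. Hence $\|\bS_m^2\bpsi_{rl}^*\|_\infty=\cOp\big(\sqrt{\log n/(nq)}\big)$, and the lemma reduces to showing $|(\bpsi_{rl}^*)^\T\bv|=\cOp\big(\sqrt{nq}(\delta_{nq}^2+\varepsilon_{nq}q^{-1})\big)$ for each of the four types of $\bpsi_{rl}^*$.

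I would handle the four types as follows, using the canonical normalization $\sum_i\bu_i^*=\zero_K$, $\sum_i\bu_i^*(\bu_i^*)^\T=n\bI_K$.
\begin{enumerate}[label=$(\roman*)$]
\item For $1\le l<r\le K$, the inner product equals $\sqrt{n/q}$ times the $(l,r)$ entry of the antisymmetric part of $(\bGamma^*)^\T(\hat\bGamma-\bGamma^*)$. Since $(\bGamma^*)^\T\bGamma^*$ is symmetric, this is the antisymmetric part of $(\bGamma^*)^\T\hat\bGamma$, which vanishes exactly by the Procrustes identity \eqref{eq_id_gamma_rotate}.
\item For $r=0$, the inner product is $\sqrt{q/n}\,\big\{\sum_i(\hat\bu_i^\star-\bu_i^*)\big\}_l=\sqrt{q/n}\,\big\{\sum_i\hat\bu_i^\star\big\}_l$. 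Applying Lemma~\ref{lemma_first_order_base} at $(\hat\btheta,\zero_K,\bI_K)$ with $\cS_\mu(\hat\btheta)=\zero$ gives $\|\sum_i\hat\bu_i^\star\|=\cOp(n\varepsilon_{nq}/q)$, so this term is $\cOp(\sqrt{nq}\,\varepsilon_{nq}/q)$.
\item[$(iii)$--$(iv)$] For $1\le r\le l\le K$, the inner product is $\sqrt{q/n}$ times an entry of the symmetrized cross term $\sum_i\big\{\bu_i^*(\hat\bu_i^\star-\bu_i^*)^\T+(\hat\bu_i^\star-\bu_i^*)(\bu_i^*)^\T\big\}$. (The diagonal case has factor $2$; this is harmless.) This cross term equals
\begin{equation*}
\sum_i\hat\bu_i^\star(\hat\bu_i^\star)^\T-\sum_i\bu_i^*(\bu_i^*)^\T-\sum_i(\hat\bu_i^\star-\bu_i^*)(\hat\bu_i^\star-\bu_i^*)^\T .
\end{equation*}
   \begin{itemize}
   \item The last Lemma~\ref{lemma_first_order_base} bound, with $\cS_\Sigma(\hat\btheta)=\zero$ and the bound on $\sum_i\hat\bu_i^\star$ from $(ii)$, gives $\sum_i\hat\bu_i^\star(\hat\bu_i^\star)^\T=n\bI_K+\cOp(n\varepsilon_{nq}/q)$.
   \item The second sum equals $n\bI_K$ exactly.
   \item The quadratic remainder is $\cOp(n\delta_{nq}^2)$ by \eqref{eq_consis_ave_all}.
   \end{itemize}
   Altogether this term is $\cOp\big(\sqrt{nq}(\delta_{nq}^2+\varepsilon_{nq}/q)\big)$.
\end{enumerate}
Multiplying by $cb_n\,\|\bS_m^2\bpsi_{rl}^*\|_\infty$ and summing the finitely many terms yields the claimed $\cOp\big(b_n\sqrt{\log n}(\delta_{nq}^2+\varepsilon_{nq}q^{-1})\big)$.

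The main point requiring care is step $(iii)$--$(iv)$, namely converting the vanishing covariance score into the near-identity normalization of the modes. Lemma~\ref{lemma_first_order_base} is stated for $\btheta\in\Xi$, so I must check that $\hat\btheta$, after the canonical alignment by $\hat\bQ^0$, still lies in $\Xi$. This is immediate because $\Xi$ is rotation invariant and $\hat\btheta$ lies in it by construction. I also need that the scores vanish at the aligned estimator: both $\cS_\mu$ and $\cS_\Sigma$ transform equivariantly under an orthogonal $\bQ$, so Lemma~\ref{lemma_key_first_order} carries over. Finally, the whitening of $\bu^*$ into canonical form must not spoil the $\delta_{nq}$ rate in \eqref{eq_consis_ave_all}; this holds because that shift is $\cOp(n^{-1/2})$, as noted before \eqref{eq_consis_ave_all}.
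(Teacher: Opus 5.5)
Your proposal is correct and is essentially the paper's own argument. The antisymmetric $\btheta$-directions $\bpsi_{rl}^*$ ($1\le l<r\le K$) are annihilated exactly by the Procrustes symmetry \eqref{eq_id_gamma_rotate}. The $\bu$-directions are controlled by $\cS_\mu(\hat\btheta)=\cS_\Sigma(\hat\btheta)=\zero$ through Lemma~\ref{lemma_first_order_base}, together with the canonical normalization of $\bu^*$ and the $\cOp(n\delta_{nq}^2)$ quadratic remainder. The $\sqrt{\log n}$ comes from $\|\bpsi_{rl}^*\|_\infty=\cOp(\sqrt{q\log n/n})$. The paper organizes this by row blocks rather than by rank-one terms, but it is the same computation.

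One small correction to your last paragraph: $\Xi$ is not exactly rotation invariant, because the $\ell_\infty$ box constraint is not. Nothing hinges on this. Since $\bu_i^\star(\hat\btheta^Q)=\hat\bQ^0\bu_i^\star(\hat\btheta)$, you can apply Lemma~\ref{lemma_first_order_base} at the unaligned estimator in $\Xi$ and then rotate the resulting bounds on $\sum_i\hat\bu_i^\star$ and $\sum_i\hat\bu_i^\star(\hat\bu_i^\star)^\T-n\bI_K$.
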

\begin{proof}
See Section~\ref{sec_prove_lemma_bound_aug_via_id}. The key technical input is that
\begin{equation*}
    \big\|n^{-1}\sum_{i=1}^n\hat\bu_i^{\star}\big\| = \cOp(\varepsilon_{nq}q^{-1})\text{ and } \big\|n^{-1}\sum_{i=1}^n\hat\bu_i^{\star}(\hat\bu_i^{\star})^\T - \bI_K\big\| = \cOp(\varepsilon_{nq}q^{-1}).
\end{equation*}
by $\cS_{\mu}(\hat\btheta) = 0$, $\cS_{\Sigma}(\hat\btheta) = 0$ from 
Lemma~\ref{lemma_key_first_order} and Lemma~\ref{lemma_first_order_base}. At this point, we can explain why the canonical parameters must serve as the target of the asymptotic distribution for the marginal likelihood estimator. The canonical representations, now denoted by $\bu^*$ and $\btheta^*$ in this proof for notational convenience, satisfy
\begin{equation}\label{eq_canproperty}
    n^{-1}\sum_{i=1}^n\bu_i^*=\zero_K, \qquad n^{-1}\sum_{i=1}^n\bu_i^*(\bu_i^*)^\T=\bI_K.
\end{equation}
In the proof detailed in Section~\ref{sec_prove_lemma_bound_aug_via_id}, this is crucial for showing the infinity norm of $\bS_m^{2}\bH_P^*\begin{pmatrix}
            \hat\btheta - \btheta^*\\\hat\bu^{\star} - \bu^{*}
        \end{pmatrix}$ can be bounded in a smaller order than $n^{-1/2}$ and $q^{-1/2}$. With the random variables generated under Assumption~\ref{assump_standard_id}, the rate can at best be $\cOp(n^{-1/2})$, causing a non-negligible bias in the subsequent analysis. Intuitively, this normalization ensures a clear target for distributional analysis and that the conditioning distribution is not affected by the additional randomness in the latent variables.
\end{proof}
Lemma~\ref{lemma_bound_aug_via_id} plays a crucial role for two reasons. First, the corresponding perturbation term $\bH_P^*$ gives the augmented Hessian $\bH_L^*+\bH_P^*$ a favorable inversion structure, as shown in Lemma~\ref{lemma_approx_hessian_inverse}. Second, the same lemma shows that the magnitude of this perturbation is sufficiently small.
With the augmentation $\bH_P^*$, we can re-write the expansion \eqref{eq_asymp_expand} as 
\begin{equation}
         - \bS_{L}(\btheta^*,\bu^*) = \big\{\bH_L(\btheta^*,\bu^*) + \bH_P^*\big\} \begingroup\renewcommand{\arraystretch}{0.8}\begin{pmatrix}
            \hat\btheta - \btheta^*\\ \hat\bu^{\star} - \bu^*
        \end{pmatrix}\endgroup + \frac{1}{2}\begingroup\renewcommand{\arraystretch}{0.8}\begin{pmatrix}\bR_{L\theta}^{\prime}\\\bR_{Lu}^{\prime}\end{pmatrix}\endgroup,\label{eq_asymp_expand_2}
    \end{equation}
with the residual given as
\begin{equation*}
    \begingroup\renewcommand{\arraystretch}{0.8}\begin{pmatrix}\bR_{L\theta}^{\prime}\\\bR_{Lu}^{\prime}\end{pmatrix}\endgroup = \begingroup\renewcommand{\arraystretch}{0.8}\begin{pmatrix}\bR_{L\theta}\\\bR_{Lu}\end{pmatrix}\endgroup  - 2\bS_L(\hat\btheta,\hat\bu^{\star}) - 2\bH_P^*\begingroup\renewcommand{\arraystretch}{0.6}\begin{pmatrix}
            \hat\btheta - \btheta^*\\\hat\bu^{\star} - \bu^{*}
        \end{pmatrix}\endgroup.
\end{equation*}
By definition, one has $\bS_{Lu}(\hat\btheta, \hat\bu^{\star}) = 0$. 
Then together with \eqref{eq_first_order_near_zero}, Lemmas~\ref{lemma_residual_in_asym_expand} and~\ref{lemma_bound_aug_via_id}, we know that, for each $j\in[q]$,
        \begin{equation}
            \big\|(\bR^{\prime}_{L\theta})_{\cK_j^+}\big\| = \cOp\Big(\frac{nB_n\sqrt{\log n}\tau_{nq}}{\kappa_n^3B_n}\Big),\; \big\|\bR^{\prime}_{L\theta}\big\| = \cOp\Big(\frac{n\sqrt{q}B_n\sqrt{\log n}\tau_{nq}}{\kappa_n^3B_n}\Big),\label{eq_residual_prime_j}
        \end{equation}
        and for each $i\in[n]$,
        \begin{equation}
            \big\|(\bR^{\prime}_{Lu})_{\cK_i}\big\| = \cOp\Big(\frac{qB_n\sqrt{\log n}\tau_{nq}}{\kappa_n^3B_n}\Big),\; \big\|\bR^{\prime}_{Lu}\big\| = \cOp\Big(\frac{\sqrt{n}qB_n\sqrt{\log n}\tau_{nq}}{\kappa_n^3B_n}\Big).\label{eq_residual_prime_i}
        \end{equation}
         Here, we use $\bar\delta_{nq}^2=\cO\{\tau_{nq}/(\kappa_n^3B_n^2)\}$ and $b_n\{\delta_{nq}^2+\varepsilon_{nq}/q\}=\cO\{\tau_{nq}/(\kappa_n^3B_n)\}$.
Now we compute the asymptotic error of $\hat\btheta_j - \btheta_j^* + \{\bH_{L\theta_j\theta_j}(\btheta^*,\bu^*)\}^{-1}\bS_{L\theta_j}(\btheta^*,\bu^*)$ via the expansion \eqref{eq_asymp_expand_2}. Write for short $\bH_{L}^* = \bH_{L}(\btheta^*,\bu^*)$, $\bS_{L}^* = \bS_{L}(\btheta^*,\bu^*)$, $\bH_{L\theta_j\theta_j}^* = \bH_{L\theta_j\theta_j}(\btheta^*,\bu^*)$, $\bS_{L\theta_j}^* = \bS_{L\theta_j}(\btheta^*,\bu^*)$, and $\bdelta = \big((\hat\btheta - \btheta^*)^\T,(\hat\bu^{\star} - \bu^*)^\T\big)^\T$. Then \eqref{eq_asymp_expand_2} is
\begin{equation*}
    (\bH_L^* + \bH_P^*)\bdelta = -\bS_L^* - \frac{1}{2}\begingroup\renewcommand{\arraystretch}{0.8}\begin{pmatrix}\bR_{L\theta}^{\prime}\\\bR_{Lu}^{\prime}\end{pmatrix}\endgroup \tag{\ref{eq_asymp_expand_2}$'$}
\end{equation*}
We then compute
\begin{align*}
    &\,\Big\|\hat\btheta_j - \btheta_j^* + (\bH_{L\theta_j\theta_j}^*)^{-1}\bS_{L\theta_j}^*\Big\|\\
= &\,\Big\|\big[\big\{\bH_L^* + \bH_P^*\big\}^{-1}\big]_{\cK_j^+,}\big\{\bH_L^* + \bH_P^*\big\} \bdelta + (\bH_{L\theta_j\theta_j}^*)^{-1}\bS_{L\theta_j}^*\Big\|\\
\overset{(i)}{\le} &\,\Big\|\big[\big(\bH_L^* + \bH_P^*\big)^{-1} - \big(\bH_{L\theta\theta}^*\big)^{-1}\big]_{\cK_j^+,}\Big\{-\bS_L^* - \frac{1}{2}\begingroup\renewcommand{\arraystretch}{0.8}\begin{pmatrix}\bR_{L\theta}^{\prime}\\
\bR_{Lu}^{\prime}\end{pmatrix}\endgroup\Big\}\Big\| \\
&\, + \Big\|-\big(\bH_{L\theta\theta}^*\big)^{-1}_{\cK_j^+,}\bS_L^* + (\bH_{L\theta_j\theta_j}^*)^{-1}\bS_{L\theta_j}^* \Big\| + \Big\|\frac{1}{2}\big(\bH_{L\theta\theta}^*\big)^{-1}_{\cK_j^+,}\begingroup\renewcommand{\arraystretch}{0.8}\begin{pmatrix}\bR_{L\theta}^{\prime}\\
\bR_{Lu}^{\prime}\end{pmatrix}\endgroup\Big\|\\
\overset{(ii)}{ \le}&\,  \Big\|\big[\big(\bH_L^* + \bH_P^*\big)^{-1} - \big(\bH_{L\theta\theta}^*\big)^{-1}\big]_{\cK_j^+,}\bS_L^* \Big\|\\
&\,+ \frac{1}{2}\Big\|\big[\big(\bH_L^* + \bH_P^*\big)^{-1} - \big(\bH_{L\theta\theta}^*\big)^{-1}\big]_{\cK_j^+,} \begingroup\renewcommand{\arraystretch}{0.8}\begin{pmatrix}\bR_{L\theta}^{\prime}\\
\bR_{Lu}^{\prime}\end{pmatrix}\endgroup\Big\|+ \frac{1}{2} \Big\|\big(\bH_{L\theta_j\theta_j}^*\big)^{-1}\Big\|\big\|(\bR_{L\theta}^{\prime})_{\cK_j^+}\big\|\\
\overset{(iii)}{=} &\,\cOp\Big(\frac{\kappa_n^2B_n^2}{\sqrt{n q}} + \frac{\kappa_n^2}{n\wedge q} + \frac{B_n}{b_n}\tau_{nq}\sqrt{\log n}+\frac{\tau_{nq}\sqrt{\log n}}{\sqrt{B_n}}\Big) = \cOp\Big(\frac{\tau_{nq}\sqrt{\log n}}{\sqrt{B_n}}\Big).
\end{align*}
Here, $(i)$ follows from expansion \eqref{eq_asymp_expand_2}; $(ii)$ follows from the block diagonal structure of $\bH_{L\theta\theta}^*$; and $(iii)$ follows from Lemma~\ref{lemma_approx_hessian_inverse}, $\|(\bH_{L\theta_j\theta_j}^*)^{-1}\| = \cOp((nb_n)^{-1})$ from the proof of Lemma~\ref{lemma_approx_hessian_inverse}, \eqref{eq_residual_prime_j}, \eqref{eq_residual_prime_i} and that 
\begin{align*}
    &\,\Big\|\big[\big(\bH_L^* + \bH_P^*\big)^{-1} - \big(\bH_{L\theta\theta}^*\big)^{-1}\big]_{\cK_j^+,} \begingroup\renewcommand{\arraystretch}{0.8}\begin{pmatrix}\bR_{L\theta}^{\prime}\\
\bR_{Lu}^{\prime}\end{pmatrix}\endgroup\Big\|\\
\le &\,\frac{1}{2}\Big\|\big[\big(\bH_L^* + \bH_P^*\big)^{-1}\bS_m^{-1} - \sqrt{n}\big(\bH_{L\theta\theta}^*\big)^{-1}\big]_{\cK_j^+,}\Big\|\Big\| \bS_m\begingroup\renewcommand{\arraystretch}{0.8}\begin{pmatrix}\bR_{L\theta}^{\prime}\\
\bR_{Lu}^{\prime}\end{pmatrix}\endgroup\Big\|\\
\le &\,\frac{1}{\sqrt{n}}\Big\|\big[\bS_m\big(\bH_L^* + \bH_P^*\big)^{-1}\bS_m^{-1} - n\big(\bH_{L\theta\theta}^*\big)^{-1}\big]_{\cK_j^+,}\Big\|\Big\| \bS_m\begingroup\renewcommand{\arraystretch}{0.8}\begin{pmatrix}\bR_{L\theta}^{\prime}\\
\bR_{Lu}^{\prime}\end{pmatrix}\endgroup\Big\|\\
= &\,\cOp\Big(n^{-1/2}\times \kappa_n^3\sqrt{B_n}q^{-1/2}\times \frac{\sqrt{nq}B_n\tau_{nq}\sqrt{\log n}}{\kappa_n^3B_n}\Big)\\
 = &\,\cOp\Big(\frac{\tau_{nq}\sqrt{\log n}}{\sqrt{B_n}}\Big).
\end{align*}
Here the second step inserts $\bS_m^{-1}\bS_m$, the third uses that the rows indexed by $\cK_j^+$ lie in the $\btheta$-block on which $\bS_m^{-1} = \sqrt{n}\,\bI$, and  the last equality follows from Lemma~\ref{lemma_approx_hessian_inverse}, \eqref{eq_residual_prime_j}, and \eqref{eq_residual_prime_i}. This gives
\begin{equation}
    \hat\btheta_j - \btheta_j^* = -\big(\bH_{L\theta_j\theta_j}^*\big)^{-1}\bS_{L\theta_j}^* +\cOp\Big(\frac{\tau_{nq}\sqrt{\log n}}{\sqrt{B_n}}\Big).\label{eq_asymp_expand_2_bgamma}
\end{equation}
Similarly, one can show from the expansion \eqref{eq_asymp_expand_2} that
\begin{equation}
    \hat\bu_i^{\star} - \bu_i^* = -\big(\bH_{Lu_iu_i}^*\big)^{-1}\bS_{Lu_i}^* +\cOp\Big(\frac{\tau_{nq}\sqrt{\log n}}{\sqrt{B_n}}\Big).\label{eq_asymp_expand_2_bu}
\end{equation}
Subsequently, with $\sqrt{n\log n}\tau_{nq}\to 0$, to prove Theorem~\ref{thm_asymp}, it suffices to show that, for each $j\in[q]$
\begin{equation}
    (\bPhi^*_j)^{-1/2}(\bH_{L\theta_j\theta_j}^*)^{-1}\bS_{L\theta_j}^* \overset{d}{\to}\cN\big(\zero_{K+1},\bI_{K+1}\big)\text{, as }n,q\to\infty.\label{eq_clt}
\end{equation}Here, we note that $\|(\bPhi_j^*)^{-1/2}\| = \cOp(\sqrt{nB_n})$ by Assumptions~\ref{assumption_psd_covariance} and~\ref{assumption_smoothness}.
To verify \eqref{eq_clt}, write $\bu_i^{c*}=\big(1,(\bu_i^*)^\T\big)^\T\in\RR^{K+1}$ and set
\begin{equation*}
    \bH_j=-\sum_{i=1}^n\ell_{ij}''(\eta_{ij}^*)\bu_i^{c*}(\bu_i^{c*})^\T,
    \qquad
    \bV_j=\sum_{i=1}^n\{\ell_{ij}'(\eta_{ij}^*)\}^2\bu_i^{c*}(\bu_i^{c*})^\T,
\end{equation*}
so that $\bPhi_j^*=\bH_j^{-1}\bV_j\bH_j^{-1}$. Conditional on $\{\bu_i^*\}_{i=1}^n$, the score $\bS_{L\theta_j}^*= -\sum_{i=1}^n \ell_{ij}'(\eta_{ij}^*)\bu_i^{c*}$ is a sum of independent mean-zero random vectors. Let $\bar\bV_j=\sum_{i=1}^n\EE\!\left[\{\ell_{ij}'(\eta_{ij}^*)\}^2\mid\bu_i^*\right]\bu_i^{c*}(\bu_i^{c*})^\T$.
The curvature bounds, the information identity, and the empirical moment bounds for $\{\bu_i^{c*}\}$ imply that the eigenvalues of $\bH_j$ and $\bar\bV_j$ lie between constant multiples of $nb_n$ and $nB_n$ with probability tending to one. Moreover, the conditional sub-exponential bound and $\max_i\|\bu_i^{c*}\|=\cOp(\sqrt{\log n})$, together with Assumption~\ref{assump_scaling}, yield the conditional Lindeberg condition. Hence the multivariate Lindeberg--Feller theorem gives
\[
    \bar\bV_j^{-1/2}\bS_{L\theta_j}^*\ \overset{d}{\longrightarrow}\ \cN(\zero_{K+1},\bI_{K+1}).
\]
It remains to show that the above holds when replacing $\bar\bV_j$ with $\bV_j$. Conditionally on $\{\bu_i^*\}_{i\in[n]}$, $\bV_j-\bar\bV_j = \sum_{i=1}^n\big[\{\ell_{ij}'(\eta_{ij}^*)\}^2 - \EE\big[\big\{\ell_{ij}'(\eta_{ij}^*)\}^2\mid\bu_i^*]\big]\bu_i^{c*}(\bu_i^{c*})^\T$. The conditional sub-exponential bound implies $\EE[\{\ell_{ij}'(\eta_{ij}^*)\}^4\mid\mathscr U_n]\le CB_n^2$. Truncate the scores at $C\sqrt{B_n}\log(nq)$ and center their squared values conditionally on $\mathscr U_n$. The truncation probability and bias are negligible for sufficiently large $C$. Using $\max_i\|\bu_i^{c*}\|^2=\cOp(\log n)$ and $n^{-1}\sum_i\|\bu_i^{c*}\|^4=\cOp(1)$, Bernstein's inequality for these bounded, independent variables and a union bound over $j$ give $\max_{j\in[q]}\|\bV_j-\bar\bV_j\|=\cOp[B_n\{\sqrt{n\log q}+\log^4(nq)\}]=\cOp(B_n\sqrt{n\log q})$, where the last equality uses Assumption~\ref{assump_scaling}. Assumption~\ref{assump_scaling}, $\lambda_{\min}(\bar\bV_j)\ge cnb_n$, and telescoping yield $\big\|\bar\bV_j^{-1/2}(\bV_j-\bar\bV_j)\bar\bV_j^{-1/2}\big\|=\cOp\big(B_n\sqrt{\log q}/(b_n\sqrt{n})\big)=o_p(1)$.
Slutsky's theorem and $\bPhi_j^*=\bH_j^{-1}\bV_j\bH_j^{-1}$ now yield \eqref{eq_clt}. In particular, $\|(\bPhi_j^*)^{-1/2}\|=\cOp(\sqrt{nB_n})$. The asymptotic distribution with $\bPhi_j^*$ replaced by the empirical counterpart can be similarly verified.

Similarly, one can establish that
\begin{equation}
    (\bPsi^*_i)^{-1/2}(\bH_{Lu_iu_i}^*)^{-1}\bS_{Lu_i}^*\overset{d}{\to}\cN(\zero_K,\bI_K)\text{, as }n,q\to\infty.\label{eq_asymp_expand_2_bu_2}
\end{equation}
This will be used to establish Corollary~\ref{thm_asymp_bu}.

\subsection{Proof of Corollary~\ref{thm_asymp_bu}}\label{supp_sec_prove_thm_asymp_bu}
With Equations~\eqref{eq_asymp_expand_2_bu} and~\eqref{eq_asymp_expand_2_bu_2}, it suffices to show that $\hat\bu_i^{\star} - \hat\bu_i$ is negligible in the asymptotic distribution, where
\begin{equation*}
    \hat\bu_i^{\star} = \argmax_{\bu\in\RR^K}\ell_i(\bu;\hat\btheta) \quad\text{ and }\quad\hat\bu_i = \argmax_{\bu\in\RR^K}\big\{\ell_i(\bu;\hat\btheta) - \frac{1}{2}\|\bu\|^2\big\}
\end{equation*}
By the first-order optimality conditions, we know
\begin{align*}
    \zero = &\, \partial_{\bu}\ell_i(\hat\bu_i^\star;\hat\btheta) - \big\{\partial_{\bu}\ell_i(\hat\bu_i;\hat\btheta)-\hat\bu_i\big\} \\ = &\,
\hat\bu_i-\int_0^1 \partial_{\bu\bu}^2
\ell_i(\hat\bu_i^\star+t\bd_i;\hat\btheta) dt\big(\hat\bu_i-\hat\bu_i^\star\big).
\end{align*}
Here, the second equality follows from the mean-value expansion of $\partial_{\bu}\ell_i(\cdot;\hat\btheta)$ between $\hat\bu_i^\star$ and $\hat\bu_i$, with $\bd_i=\hat\bu_i-\hat\bu_i^\star$. Letting $\bar\bH_i = -\int_0^1 \nabla_{\bu\bu}^2 \ell_i(\hat\bu_i^\star+t\bd_i;\hat\btheta) dt$, we obtain
\begin{equation*}
    \hat\bu_i-\hat\bu_i^\star = -(\bar\bH_i+\bI_K)^{-1}\hat\bu_i^\star.
\end{equation*}
Assumptions~\ref{assumption_psd_covariance} and~\ref{assumption_smoothness} give $\lambda_{\min}(\bar\bH_i)\ge cb_nq\gg 1$. Since $\hat\bu_i^{\star} = \cOp(\sqrt{\log n})$ by the latent-variable tail bound and Lemma~\ref{lemma_residual_in_asym_expand}, we conclude that \[\big\|\hat\bu_i-\hat\bu_i^\star\big\| = \cOp\big({\sqrt{\log n}}/{(b_nq)}\big).\] The definition of $\bPsi_i^*$ and the curvature bounds give $\|(\bPsi_i^*)^{-1/2}\| = \cOp(\sqrt {qB_n})$, which yields
\begin{equation*}
    \big\|\big(\bPsi_i^*)^{-1/2}\big(\hat\bu_i-\hat\bu_i^\star\big)\big\| = \cOp\Big(\frac{\kappa_n\sqrt{\log n}}{\sqrt{q}}\Big).
\end{equation*}
Together with Equations~\eqref{eq_asymp_expand_2_bu} and~\eqref{eq_asymp_expand_2_bu_2}, Assumption~\ref{assump_scaling}, and $\sqrt{q\log n}\tau_{nq}\to 0$, this completes the proof.

\subsection{Proof of Corollary~\ref{thm_bvm_irt}}\label{supp_sec_prove_thm_bvm_irt}
Because $\hat\btheta$ is consistent, the argument follows the classical Bayesian asymptotic framework (see Chapter 10 of van der Vaart \cite{van2000asymptotic}). For completeness, we present the full proof below.

For notational simplicity, fix $i\in[n]$ throughout the proof and write $\hat\bJ_i = -\sum_{j=1}^q\ell_{ij}^{\prime\prime}(\hat\eta_{ij}^{\star}) \hat\bgamma_j\hat\bgamma_j^\T$ and $\hat\bPsi_i^{\star}=\hat\bJ_i^{-1}$. For each $j\in[q]$, let $\hat\eta_{ij}^{\star} = \hat\bgamma_j^\T\hat\bu_i^{\star} + \hat\beta_j$. At the true parameters, let $\bJ_i^*=-\sum_{j=1}^q\ell_{ij}^{\prime\prime}(\eta_{ij}^*)
    \bgamma_j^*(\bgamma_j^*)^\T$ and $\bar\bPsi_i^*=(\bJ_i^*)^{-1}.$ By Assumptions~\ref{assumption_psd_covariance} and~\ref{assumption_smoothness}, together with \eqref{eq_consis_ave_all}, Lemma~\ref{lemma_residual_in_asym_expand}, and $\delta_{nq}\ll b_n^2/B_n^2$, we obtain with probability approaching one that
\begin{equation}\label{eq_bvm_curvature_bounds}
    cqb_n\le \lambda_{\min}(\hat\bJ_i)\le \lambda_{\max}(\hat\bJ_i)\le CqB_n,\qquad
    cqb_n\le \lambda_{\min}(\bJ_i^*)\le \lambda_{\max}(\bJ_i^*)\le CqB_n,
\end{equation} 
for some constants $c,C$. This is similar to the derivation of \eqref{eq_reference_eigen_lowerbound} in Section~\ref{supp_sec_prove_lemma_convexity2}. A telescoping argument and a Taylor expansion of $\ell''_{ij}(\cdot)$, with $|\ell^{(3)}_{ij}(\eta)|\le B_n$ for $|\eta|\le C\sqrt{\log n}$, yield
\begin{equation*}
    \big\|(\bJ_i^*)^{-1/2}(\hat\bJ_i-\bJ_i^*)(\bJ_i^*)^{-1/2}\big\| = \cOp(b_n^{-1}B_n\bar\delta_{nq})
    =\cOp\big(B_n^2b_n^{-2}\delta_{nq}\big)=o_p(1).
\end{equation*}
Consequently, by the continuity of Gaussian densities in total variation,
\begin{equation*}
    \int_{\RR^K}\left|
    \pwk(\bu\mid \hat\bu_i^\star,\hat\bPsi_i^\star)
    -\pwk(\bu\mid \hat\bu_i^\star,\bar\bPsi_i^*)
    \right|d\bu=o_p(1).
\end{equation*}
It then suffices to prove the approximation with the Gaussian density with covariance $\hat\bPsi_i^\star$ as
\begin{equation}\label{eq_bvm_fit_cov_goal}
    \int_{\RR^K}\left|\hat p_i(\bu\mid\bX)-
    \pwk(\bu\mid \hat\bu_i^\star,\hat\bPsi_i^\star)\right|d\bu=o_p(1).
\end{equation}
Recall that $\pwk(\bx)=(2\pi)^{-K/2}\exp(-\|\bx\|^2/2)$ is the  standard Gaussian density. Let $\bu_{\bx}=\hat\bu_i^\star+(\hat\bPsi_i^\star)^{1/2}\bx$ and define
\begin{equation*}
    R_{nq}(\bx)=
    \exp\{\ell_i(\bu_{\bx};\hat\btheta)-\ell_i(\hat\bu_i^\star;\hat\btheta)\}\frac{\pwk(\bu_{\bx})}{\pwk(\hat\bu_i^\star)},
\end{equation*}
with $Z_{nq} = \int_{\RR^K}R_{nq}(\bz)d\bz$. Then $\det(\hat\bPsi_i^\star)^{1/2}\hat p_i(\bu_{\bx}\mid\bX)=R_{nq}(\bx)/Z_{nq}$. After the change of variables $\bx=(\hat\bPsi_i^\star)^{-1/2}(\bu-\hat\bu_i^\star)$, \eqref{eq_bvm_fit_cov_goal} is equivalent to
\begin{equation}\label{eq_bvm_rescaled_goal}
    \int_{\RR^K}\left|\frac{R_{nq}(\bx)}{Z_{nq}}-\pwk(\bx)\right|d\bx=o_p(1).
\end{equation}

By Assumption~\ref{assump_scaling}, we can choose deterministic sequences $m_q\to\infty$ and $M_q\to\infty$, with $m_q=o(M_q)$, such that
\begin{equation*}
    \frac{B_nm_q^3}{b_n^{3/2}\sqrt q}\to0,\qquad
    \frac{M_q\sqrt{\log n}}{\sqrt{qb_n}}\to0, \qquad
    \frac{b_nm_q^2}{B_n\log(n\vee q)}\to\infty.
\end{equation*}
In particular, $m_q\sqrt{\log n}/ \sqrt{qb_n}\to 0$ and $b_nM_q^2/\{B_n\log(n\wedge q)\}\to\infty$. Then we partition $\RR^{K}$ into
\begin{equation*}
    A_x^{(1)} = \{\bx:\|\bx\|\le m_q\},
    \quad
    A_x^{(2)} = \{\bx:m_q<\|\bx\|\le M_q\},
    \quad
    A_x^{(3)} = \{\bx:\|\bx\|>M_q\},
\end{equation*}
and bound the integral \eqref{eq_bvm_rescaled_goal} separately over these regions.

We first consider $A_x^{(1)}$. Since $\hat\bu_i^\star$ maximizes $\ell_i(\bu;\hat\btheta)$, it holds that $\nabla_{\bu}\ell_i(\hat\bu_i^\star;\hat\btheta)=\zero $. For $\bx\in A_x^{(1)}$, Taylor's theorem gives
\begin{equation}\label{eq_talor_expand_ell_u}
\begin{aligned}
    \ell_i(\bu_{\bx};\hat\btheta)  - \ell_i(\hat\bu_i^\star;\hat\btheta)
    &= \frac12 \sum_{j=1}^q \ell_{ij}^{\prime\prime}(\hat\eta_{ij}^\star) \bx^\T (\hat\bPsi_i^\star)^{1/2} \hat\bgamma_j\hat\bgamma_j^\T (\hat\bPsi_i^\star)^{1/2} \bx + r_{nq}(\bx)  \\
    &= -\frac12\|\bx\|^2+r_{nq}(\bx),
\end{aligned}
\end{equation}
where the last equality follows from $(\hat\bPsi_i^\star)^{1/2}\hat\bJ_i (\hat\bPsi_i^\star)^{1/2} = \bI_K$, and
\begin{equation*}
    \sup_{\bx\in A_x^{(1)}}|r_{nq}(\bx)| \le C B_n q \|\hat\bPsi_i^\star\|^{3/2}m_q^3 = \cOp\Big( \frac{B_nm_q^3}{b_n^{3/2}\sqrt q} \Big) = o_p(1).
\end{equation*}
Since $\|\hat\bu_i^\star\|=\cOp(\sqrt{\log n})$ and $\sup_{\bx\in A_x^{(1)}}\|\bu_{\bx}-\hat\bu_i^\star\| \le \|\hat\bPsi_i^\star\|^{1/2}m_q = \cOp\big({m_q}/{\sqrt{qb_n}}\big)$, we obtain
\begin{equation*}
    \sup_{\bx\in A_x^{(1)}} \left| \frac{\pwk(\bu_{\bx})}{\pwk(\hat\bu_i^\star)}-1\right| =\sup_{\bx\in A_x^{(1)}} \left| \exp\big\{ - \hat\bu_i^{\star\T}(\bu_{\bx}-\hat\bu_i^\star) - \frac12\|\bu_{\bx}-\hat\bu_i^\star\|^2\big\}- 1 \right|=
    o_p(1).
\end{equation*}
Hence we have \[\sup_{\bx\in A_x^{(1)}}\big|R_{nq}(\bx)\exp(\|\bx\|^2/2) - 1\big| = o_p(1),\] which gives
\begin{equation*}
    \int_{A_x^{(1)}}\left| R_{nq}(\bx)-\exp(-\|\bx\|^2/2) \right|d\bx = o_p(1).
\end{equation*}

Next consider $A_x^{(2)}$. For $\bx\in A_x^{(2)}$, the segment between $\hat\bu_i^\star$ and $\bu_{\bx}$ is bounded within $\{\bu:\|\bu\|\le C\sqrt{\log n}\}$
with probability tending to one as $M_q\sqrt{\log n}/\sqrt{qb_n}\to 0$. Using \eqref{eq_talor_expand_ell_u}, we then have by Assumption~\ref{assumption_smoothness} that
\begin{equation*}
    \ell_i(\bu_{\bx};\hat\btheta) - \ell_i(\hat\bu_i^\star;\hat\btheta) \le -c\frac{b_n}{B_n}\|\bx\|^2.
\end{equation*}
For the ratio ${\pwk(\bu_{\bx})}/{\pwk(\hat\bu_i^\star)}$, similarly one has
\begin{equation}\label{eq_prior_diff}
    \frac{\pwk(\bu_{\bx})}{\pwk(\hat\bu_i^\star)} \le \exp\left\{ C\frac{\sqrt{\log n}}{\sqrt{qb_n}}\|\bx\| \right\}.
\end{equation}
Since $m_q\sqrt{\log n}\ll\sqrt{qb_n}$, $b_nm_q^2\gg B_n$, and $\|\bx\|>m_q$ on $A_x^{(2)}$, the chosen sequences satisfy
\begin{equation*}
    C\frac{\sqrt{\log n}}{\sqrt{qb_n}}\le \frac{c}{2}\frac{b_n}{B_n}m_q,
\end{equation*}
for all sufficiently large $n$ and $q$. In this case, with probability tending to one,
\begin{equation*}
    R_{nq}(\bx)\le \exp\left\{ -c\frac{b_n}{2B_n}\|\bx\|^2\right\},
\end{equation*}
for $\bx\in A_x^{(2)}$.
Thus, because $K$ is fixed, one can check that 
\begin{align*}
    \int_{A_x^{(2)}}R_{nq}(\bx)d\bx \le &\,\int_{\|\bx\|>m_q}\exp\left\{-c\frac{b_n}{2B_n}\|\bx\|^2\right\}d\bx \\=&\, \left(\frac{B_n}{cb_n}\right)^{K/2}\int_{\|\by\|>\frac{m_q\sqrt{cb_n}}{\sqrt{B_n}}}e^{-\|\by\|^2/2}d\by \le C\left(\frac{B_n}{b_n}\right)^{K/2}\Big(\frac{m_q\sqrt{cb_n}}{\sqrt{B_n}}\Big)^{K-2}e^{-\big(m_q\sqrt{cb_n/B_n}\big)^2/2},
\end{align*}
Since $s_q^2 = cb_nm_q^2/B_n \gg \log(n\vee q)$, $\int_{A_x^{(2)}}R_{nq}(\bx)d\bx = o_p(1)$.
With $\int_{A_x^{(2)}}\exp(-\|\bx\|^2/2)d\bx=o_p(1)$ as $m_q\to\infty$, we get
\begin{equation*}
    \int_{A_x^{(2)}} \left| R_{nq}(\bx)-\exp(-\|\bx\|^2/2) \right|d\bx = o_p(1).\end{equation*}

It remains to handle $A_x^{(3)}$. For any unit vector $\bv\in\mathbb S^{K-1}$, define $h_{\bv}(t) = \ell_i\big( \hat\bu_i^\star + (\hat\bPsi_i^\star)^{1/2}t\bv ;\hat\btheta \big) - \ell_i(\hat\bu_i^\star;\hat\btheta)$.
For $0\le t\le M_q$, the path remains inside $\{\bu:\|\bu\|\le C\sqrt{\log n}\}$ with probability tending to one. Hence $h_{\bv}''(t)\le-cb_n/B_n$ for $0\le t\le M_q$ uniformly over $\bv\in\mathbb S^{K-1}$. Since $h_{\bv}'(0)=0$, it follows that
\begin{equation*}
    h_{\bv}(M_q) \le -c\frac{b_n}{B_n}M_q^2,
    \qquad
    h_{\bv}'(M_q) \le -c\frac{b_n}{B_n}M_q.
\end{equation*}
For $t>M_q$, $h_{\bv}$ remains concave by the global concavity clause of Assumption~\ref{assumption_smoothness}, although its curvature may diminish. Thus,
\begin{equation*}    h_{\bv}(t) \le h_{\bv}(M_q)+h_{\bv}'(M_q)(t-M_q) \le -c\frac{b_n}{B_n}M_qt,
\end{equation*}
for $t\ge M_q$.
Subsequently, for $\bx=t\bv\in A_x^{(3)}$, we can bound $\ell_i(\bu_{\bx};\hat\btheta) - \ell_i(\hat\bu_i^\star;\hat\btheta)$ by 
\begin{equation*} 
    \ell_i(\bu_{\bx};\hat\btheta) - \ell_i(\hat\bu_i^\star;\hat\btheta) \le -c\frac{b_n}{B_n}M_q\|\bx\|.
\end{equation*}
Combining this with  \eqref{eq_prior_diff}, we know
\begin{equation*}
    R_{nq}(\bx)
    \le \exp\Big\{ - \Big( c\frac{b_n}{B_n}M_q - C\frac{\sqrt{\log n}}{\sqrt{qb_n}} \Big) \|\bx\| \Big\}.
\end{equation*}
Since $\frac{\sqrt{\log n}/\sqrt{qb_n}} {(b_n/B_n)M_q} = o(1)$, with probability tending to one, it holds that
\begin{equation*}
    R_{nq}(\bx)
    \le \exp\left\{ -c\frac{b_n}{B_n}M_q\|\bx\| \right\},
\end{equation*}
for $\bx\in A_x^{(3)}$.
Then one can also show $\int_{A_x^{(3)}}R_{nq}(\bx)d\bx = o_p(1)$ under $b_nM_q^2/B_n\gg\log(n\vee q)$. With $\int_{A_x^{(3)}}\exp(-\|\bx\|^2/2)d\bx=o(1)$, we arrive at
\begin{equation*}
    \int_{A_x^{(3)}} \left| R_{nq}(\bx)-\exp(-\|\bx\|^2/2) \right|d\bx =o_p(1).
\end{equation*}

Combining the bounds over $A_{x}^{(1)}$--$A_{x}^{(3)}$ gives
\begin{equation*}
    \int_{\RR^K} \left| R_{nq}(\bx)-\exp(-\|\bx\|^2/2) \right|d\bx = o_p(1).
\end{equation*}
We can also derive that $Z_{nq} = \int_{\RR^K}R_{nq}(\bx)d\bx = \int_{\RR^K}\exp(-\|\bx\|^2/2)d\bx + o_p(1) = (2\pi)^{K/2}+o_p(1).$ Therefore, we conclude that
\[
\begin{aligned}
    \int_{\RR^K} \left| \frac{R_{nq}(\bx)}{Z_{nq}} - \pwk(\bx) \right|d\bx &\le Z_{nq}^{-1} \int_{\RR^K} \left| R_{nq}(\bx)-\exp(-\|\bx\|^2/2) \right|d\bx  \\ &\quad+ \left| Z_{nq}^{-1} - (2\pi)^{-K/2} \right| \int_{\RR^K}\exp(-\|\bx\|^2/2)d\bx  \\ &= o_p(1).
\end{aligned}
\] This establishes \eqref{eq_bvm_rescaled_goal} and completes the proof.

 \section{Estimation under Practical Identifiability Conditions}\label{sec_res_pract_id}

In the main text, we proved the asymptotic theory for marginal maximum likelihood estimation up to transformations $\hat\bQ^*$ and $\hat\bQ^0$, which depend on the true parameters. The result, while statistically significant, is not readily applicable in practice. To remove this ambiguity, identifiability conditions are typically imposed, making the theoretical results practically meaningful.
In this section, we further establish asymptotic theory for marginal likelihood estimation under practical identifiability conditions. We consider the uncorrelated case in Section~\ref{subsec_uncor} and the correlated case in Section~\ref{subsec_cor}.
The practically identified estimators and their covariance corrections below are defined directly by Conditions~\ref{condition1}--\ref{condition2} and do not use either Procrustes rotation.

\subsection{Uncorrelated Case}\label{subsec_uncor}
We introduce the following condition. 
\begin{cond}\label{condition1}
\begin{enumerate}[label=$(\roman*)$]
    \item $n^{-1}\sum_{i=1}^n\bu_i^*  = \zero_K$;
    \item $n^{-1}\sum_{i=1}^n\bu_i^*(\bu_i^*)^\T = \bI_K$; 
    \item up to a column permutation of $\bGamma$, for each $r\in\{2,\dots,K\}$, there is some $\cA_r\subseteq[q]$ of size $r-1$ such that $\bGamma_{\cA_r,r} = \zero_{r-1}$ and ${\rm rank}(\bGamma_{\cA_r,1:r}) = r-1$.
\end{enumerate}
\end{cond}
Conditions~\ref{condition1}$(i)$--$(ii)$ retain the centering and scaling normalizations introduced in Section~\ref{sec_res}, but impose them directly on the realized latent variables. They can be understood as the canonical representative of the realized latent configuration used in the asymptotic theory of Section~\ref{sec_res}. For notational simplicity, we continue to denote the normalized representatives by $\{\bu_i^*\}_{i\in[n]}$. A closely related treatment appears in \citet{bai2012statistical}, who study a quasi-likelihood for linear factor models. Although the factors are formally treated as fixed (when they are random, the analysis is interpreted conditionally on their realized values), their estimation procedure nevertheless follows the random-effects likelihood formulation of classical factor analysis, using a covariance-based criterion in which the factors enter only through their empirical second moment~\citep{andersonintroduction}. For identification and inference, the realized factors are centered, and one of their identification schemes normalizes their empirical second moment to the identity matrix, paralleling Conditions~\ref{condition1}$(i)$--$(ii)$.

To remove the remaining orthogonal indeterminacy, part~$(iii)$ imposes $K(K-1)/2$ constraints on $\bGamma$, matching the number of degrees of freedom of a $K$-dimensional orthogonal matrix. This condition corresponds to IC$^{(1)}$ of~\citet{cui2025identifiability}.
These constraints encode prior structural information: for each known $j\in\cA_r$, the response $X_{ij}$ does not depend directly on $U_{ir}$ after conditioning on the remaining latent variables. A common example is a lower-triangular design for $\bGamma$, which is widely used in factor analysis~\citep{huber2004estimation,bai2012statistical,cui2025identifiability}.

We consider the following marginal estimator subject to this identifiability condition.
\begin{equation}
    \begin{aligned}
    \hat\btheta^{(1)} = &\,\argmin_{\btheta\in\Xi}\cL(\btheta)\\&\,\text{ subject to } \gamma_{j,r} = 0\text{ when }j\in\cA_r\text{ for }r=2,\ldots,K.
    \end{aligned}\label{eq_mmle_IC2}
\end{equation}
Parts $(i)$ and $(ii)$ of Condition~\ref{condition1} are implemented in \eqref{eq_mmle_IC2} implicitly via working prior $\pwk(\cdot)$ under its implicit normalizing effect. 
In what follows, we present the asymptotic theory for the estimator $\hat\btheta^{(1)}$.

\begin{theorem}[Asymptotic Theory under Condition~\ref{condition1}]\label{thm_asymp_id1}
\it
    Suppose Assumptions~\ref{assump_standard_id}--\ref{assump_scaling} hold and $(\btheta^*,\bu^*)$ satisfy Condition~\ref{condition1}. Let $\cS_j:=\{r\in[K]:j\notin\cA_r\}$ be the indices of the free entries of $\bgamma_j$, with $\cA_1:=\varnothing$, and let $\bE_{(j)}=(\bI_{K+1})_{\{1\}\cup\{r+1:r\in\cS_j\},}$. Then, for $\hat\btheta^{(1)}$ obtained in \eqref{eq_mmle_IC2}, the following conclusions hold conditionally on $\{\bU_i\}_{i\in[n]}$.
\begin{enumerate}[label=$(\roman*)$]
    \item If $\sqrt{n\log n}\tau_{nq}\to 0$ as $n,q\to\infty$, then
\begin{equation*}
    \Big(\bE_{(j)}\bPhi_{j}^{(1)}\bE_{(j)}^\T\Big)^{-1/2}\begin{pmatrix}
        \hat\beta_j^{(1)} - \beta_j^*\\\hat\bgamma_{j,\cS_j}^{(1)} - \bgamma_{j,\cS_j}^*\end{pmatrix}\overset{d}{\to}\cN\left(\zero_{|\cS_j|+1},\bI_{|\cS_j|+1}\right),
\end{equation*}
where $\bPhi^{(1)}_j$ is given in \eqref{eq_define_cov_ic1};
\item Let $\hat\bu_i^{(1)} = \mathop{\argmax}_{\bu\in\RR^{K}}\big\{\ell_i\big(\bu;\hat\btheta^{(1)}\big) + \log \pwk(\bu)\big\}$ be the empirical Bayes estimator. If $\sqrt{q\log n}\tau_{nq}\to 0$ as $n,q\to\infty$, then
    \begin{equation*}
    \big(\bPsi_i^{(1)}\big)^{-1/2}\big(\hat\bu_i^{(1)} - \bu_i^*\big)\overset{d}{\to}\cN(\zero_K,\bI_K), \text{ as }n,q\to \infty,
\end{equation*}
where $\bPsi^{(1)}_i$ is given in \eqref{eq_define_cov_psi_i_1};
    \item Let $\hat\bu_i^{(1)\star} = \argmax_{\bu\in\RR^{K}}\ell_i(\bu;\hat\btheta^{(1)})$, $\bar\bPsi_{i}^* = \big\{-\sum_{j=1}^q\ell_{ij}^{\prime\prime}(\eta_{ij}^*)\bgamma_j^*(\bgamma_j^*)^\T\big\}^{-1}$, and
    \begin{equation*}
    \hat p_i^{(1)}(\bu\mid \bX) = \frac{\exp\{\ell_i(\bu;\hat\btheta^{(1)})\}\pwk(\bu)}{\int_{\bv\in\RR^{K}}\exp\{\ell_i(\bv;\hat\btheta^{(1)})\}\pwk(\bv)d\bv};
\end{equation*}
then
    \begin{equation*}
          \int_{\RR^{K}} \Big|\hat p_i^{(1)}(\bu \mid \bX) - \pwk(\bu\mid \hat{\bu}_i^{(1)\star}, \bar\bPsi^*_i) \Big|d\bu  \overset{p}{\to} 0 \quad \text{as } n,q \to \infty.
    \end{equation*}
\end{enumerate}
\end{theorem}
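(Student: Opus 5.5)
The plan is to reduce Theorem~\ref{thm_asymp_id1} to the canonical-representative results (Theorem~\ref{thm_asymp}, Corollaries~\ref{thm_asymp_bu} and~\ref{thm_bvm_irt}) through an explicit orthogonal reparameterization. The marginal likelihood is invariant under $\bgamma_j\mapsto\bQ\bgamma_j$. Hence the constrained minimizer $\hat\btheta^{(1)}$ in \eqref{eq_mmle_IC2} equals $\hat\bQ^{(1)}$ applied to an unconstrained minimizer $\hat\btheta$ of \eqref{eq_mmle_IC1}, for an orthogonal $\hat\bQ^{(1)}$ chosen so that the rotated loadings satisfy the zero pattern in Condition~\ref{condition1}$(iii)$. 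Conversely, any such rotation of an unconstrained minimizer is feasible, because $\Xi$ is rotation invariant.

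\textbf{Step 1: $\hat\bQ^{(1)}$ is a smooth function of the first few loadings.} Under Condition~\ref{condition1}$(i)$--$(ii)$, the true $(\btheta^*,\bu^*)$ is already canonical, so Theorem~\ref{thm_asymp} centers $\hat\bQ^0\hat\bgamma_j$ at $\bgamma_j^*$. Write $\hat\bgamma_j^0=\hat\bQ^0\hat\bgamma_j$. Then $\hat\bgamma^{(1)}_j=\bO(\hat\bGamma^0_{\cA})\hat\bgamma_j^0$, where $\bO(\cdot)$ is a Gram--Schmidt/QR-type map. This map sends the fixed finite set of rows $\cA=\cup_r\cA_r$ to the orthogonal matrix that zeroes the required entries. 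The rank condition ${\rm rank}(\bGamma_{\cA_r,1:r})=r-1$ makes $\bO$ smooth near $\bGamma^*_{\cA}$, with $\bO(\bGamma^*_\cA)=\bI_K$ up to sign conventions fixed by continuity.

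\textbf{Step 2: delta method.} By the expansion \eqref{eq_asymp_expand_2_bgamma}, each $\hat\bgamma^0_{j}-\bgamma_j^*$ is a linear term $-(\bH^*_{L\theta_j\theta_j})^{-1}\bS^*_{L\theta_j}$ plus a negligible remainder. A first-order expansion then gives
\[
\hat\bgamma_j^{(1)}-\bgamma_j^*=(\hat\bgamma_j^0-\bgamma_j^*)+\textstyle\sum_{k\in\cA}\bD_{k}(\hat\bgamma_k^0-\bgamma_k^*)\bgamma_j^*+o_p(n^{-1/2}),
\]
with $\bD_k$ the derivatives of $\bO$ at $\bGamma^*_\cA$. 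For fixed $j$, the summands for $j$ and for $k\in\cA$ are sums over $i$ of $\ell'_{ij}$ and $\ell'_{ik}$ scores. These are conditionally independent across columns, so their joint conditional law is asymptotically Gaussian by the same Lindeberg--Feller argument. This yields the sandwich-plus-correction covariance $\bPhi_j^{(1)}$. Restricting to free coordinates via $\bE_{(j)}$ removes the degenerate zero rows and gives $(i)$.

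\textbf{Step 3: latent variables.} Since $\ell_i(\bu;\hat\btheta^{(1)})=\ell_i(\bO\bu;\hat\btheta^0)$ and $\pwk$ is rotation invariant, we have $\hat\bu_i^{(1)}=\bO^{-1}$ applied to the MAP under $\hat\btheta^0$. Combining \eqref{eq_asymp_expand_2_bu} with the $\cOp(n^{-1/2})$ fluctuation of $\bO-\bI_K$ gives an extra term $(\bO^{-1}-\bI_K)\bu_i^*$. This term is of order $n^{-1/2}\sqrt{\log n}$, which relative to the $q^{-1/2}$ scale need not vanish when $q\gtrsim n$. I therefore keep it in $\bPsi^{(1)}_i$: its fluctuation involves only the columns in $\cA$, which are asymptotically independent of row $i$'s score. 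Part $(iii)$ follows by transporting Corollary~\ref{thm_bvm_irt} through the fixed rotation $\bO$. Total variation is invariant under the change of variables $\bu\mapsto\bO\bu$, and the covariance change $\bO\bar\bPsi_i^*\bO^\T$ versus $\bar\bPsi_i^*$ is $o_p$ in relative norm. By the Gaussian TV continuity argument already used, this suffices.

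The main obstacle is Step~2's joint CLT and the remainder control. The remainders must be negligible at rate $o_p(n^{-1/2})$ uniformly over the finitely many indices in $\cA\cup\{j\}$; the rotation $\hat\bQ^0$ is not observable but cancels, since only $\bO(\hat\bGamma^0_\cA)\hat\bQ^0$ matters. I would first verify this uniformity using the $\max_j$ bounds on $(\bR_{L\theta})_{\cK_j^+}$ in Lemma~\ref{lemma_residual_in_asym_expand}.
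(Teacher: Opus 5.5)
Your argument is correct, but it reaches the result by a different route from the paper's.

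\textbf{The paper's route.} The paper never passes through the Procrustes-aligned MMLE. It returns to the oracle-penalized auxiliary estimator $(\hat\btheta^*,\hat\bmu^*,\hat\bSigma^*)$ of \eqref{eq_aux_estimator} and writes $\hat\bgamma_j^{(1)}=\hat\bR^{(1)}(\hat\bSigma^*)^{1/2}\hat\bgamma_j^*$ and $\hat\beta_j^{(1)}=\hat\beta_j^*+(\hat\bgamma_j^*)^\T\hat\bmu^*$. It then solves a linear system for $\hat\bX^{(1)}=\hat\bR^{(1)}(\hat\bSigma^*)^{1/2}-\bI_K$, built from the zero constraints together with $\hat\bX^{(1)}+(\hat\bX^{(1)})^\T=\cOp(\tau_{nq}\sqrt{\log n/B_n})$. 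That near-skew-symmetry, and the negligibility of the intercept shift, require the sharpened rates \eqref{eq_accurate_control_hsigma_star_2} for $\hat\bmu^*$ and $\hat\bSigma^*$. These are extra work in Lemma~\ref{lemma_asymptotic_normality_star}. Part $(iii)$ is then obtained by rerunning the Bernstein--von Mises argument.

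\textbf{Your route.} Your base estimator is the Procrustes-aligned MMLE, which already carries the implicit normalization inside \eqref{eq_asymp_expand_2_bgamma} and \eqref{eq_asymp_expand_2_bu} through Lemma~\ref{lemma_bound_aug_via_id}. Consequently the linking map is exactly orthogonal, which gives you several things for free:
\begin{itemize}
\item skew-symmetry of $d\bm{O}$ is automatic;
\item no rates for $\hat\bmu^*$ or $\hat\bSigma^*$ are needed, and the intercepts are untouched;
\item part $(iii)$ reduces to a change of variables plus the Gaussian total-variation comparison.
\end{itemize}

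\textbf{Why the two agree.} The first-order terms coincide. Differentiating $\{\bm{O}(\bGamma_{\cA})\bg_k\}_r=0$ for $k\in\cA_r$ at $\bGamma^*_{\cA}$, with $d\bm{O}$ skew, gives exactly the triangular system solved in \eqref{eq_linear_solver_X1}. So your delta-method covariances reproduce \eqref{eq_define_cov_ic1} and \eqref{eq_define_cov_psi_i_1}.

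\textbf{What the paper's route buys.} It gives one template that also covers Theorem~\ref{thm_asymp_id2}. There the working covariance is free (subject to unit diagonal), the linking map contains a non-orthogonal factor, and a pure-rotation reduction to Theorem~\ref{thm_asymp} is not available.

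\textbf{Two small corrections.}
\begin{itemize}
\item $\Xi$ is not rotation invariant, because the constraint $\|\btheta\|_\infty\le C_\theta$ is not. To conclude that $\hat\btheta^{(1)}$ minimizes \eqref{eq_mmle_IC1}, you must check that the constraint-satisfying rotation of an unconstrained minimizer stays in $\Xi$. This follows from the uniform consistency in Theorem~\ref{thm_consis}, $\|\bm{O}-\bI_K\|=o_p(1)$, and $C_\theta>M_\theta$, just as the paper checks that $\tilde\btheta^*$ stays in $\cB_\epsilon(\btheta^*)$. After that, Theorem~\ref{thm_asymp} applies directly to $\hat\btheta^{(1)}$ itself.
\item Your transposes are swapped in Step 3. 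With $\hat\bgamma_j^{(1)}=\bm{O}\hat\bgamma_j^0$ one has $\ell_i(\bu;\hat\btheta^{(1)})=\ell_i(\bm{O}^\T\bu;\hat\btheta^0)$, so $\hat\bu_i^{(1)}=\bm{O}\hat\bu_i^0$. The extra term is therefore $(\bm{O}-\bI_K)\bu_i^*$, which for fixed $i$ is $\cOp(n^{-1/2})$. Its sign does not affect $\bPsi_i^{(1)}$.
\end{itemize}
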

\begin{proof}
    See Section~\ref{supp_sec_prove_thm_asymp_id1}.
\end{proof}
Theorem~\ref{thm_asymp_id1} establishes the asymptotic theory for marginal likelihood estimation under Condition~\ref{condition1}. This result is practically useful because, unlike the estimator studied in the preceding section, the constrained estimator $\hat\btheta^{(1)}$ is directly computable from the data.

A related setting was studied by \citet{huber2004estimation}, but their analysis focuses on the fixed-$q$ regime. In that setting, the number of model parameters remains fixed, so the asymptotic theory can be developed using classical finite-dimensional $M$-estimation arguments. By contrast, our regime allows $q$ and hence the number of model parameters to diverge. More importantly, the high-dimensional marginal likelihood does not have uniformly regular curvature in all directions, as established in Lemma~\ref{lemma_convexity}. Therefore, even after imposing identifiability constraints, the asymptotic analysis is quite different from the classical fixed-dimensional theory.

The theorem also reveals an interesting phenomenon: under practical identifiability constraints, the asymptotic covariance of the constrained marginal estimator $\hat\btheta^{(1)}$ can be larger than that of $\hat\btheta^Q$ studied in the previous section. A similar phenomenon was observed in the fixed-effect analysis of \citet{cui2025identifiability}, where the latent variables are treated as fixed parameters and estimated jointly with the model parameters. Intuitively, this occurs because the identifiability constraints fix certain entries of $\bGamma$ at zero to determine a representative within the rotational equivalence class. Although these entries are not estimated as free parameters, their corresponding uncertainty is not eliminated. Instead, it is propagated through the data-dependent transformation needed to enforce the constraints, thereby enlarging the uncertainty of the remaining free coordinates.

Another feature concerns posterior asymptotic normality in part~$(iii)$, which describes a different notion of uncertainty from part~$(ii)$. The posterior covariance is $\bar\bPsi_i^*$, the same as in Corollary~\ref{thm_bvm_irt}. Under Condition~\ref{condition1}, the additional uncertainty from estimating $\btheta$ is asymptotically negligible for posterior asymptotic normality but can still contribute to the frequentist variance of the point estimator. Consequently, posterior credible regions should be constructed using $\bar\bPsi_i^*$, whereas frequentist confidence regions for $\bu_i^*$ should use $\bPsi_i^{(1)}$, and these two can differ significantly. We illustrate this distinction through simulations in Section~\ref{supp_simu_2}.

\subsection{Correlated Case}\label{subsec_cor}
We next turn to settings with correlated latent variables. We consider the following identifiability condition adapted from IC$^{(4)}$ of~\citet{cui2025identifiability}.
\begin{cond}\label{condition2}
\begin{enumerate}[label= $(\roman*)$]
    \item $n^{-1}\sum_{i=1}^n\bu_i^*  = \zero_K$;
    \item $\mathrm{diag}\big(n^{-1}\sum_{i=1}^n\bu_i^*(\bu_i^*)^\T\big) = \bI_K$; 
    \item for each $r\in\{1,\dots,K\}$, there is some $\cA_r\subseteq[q]$ of size $K-1$ such that $\bGamma_{\cA_r,r} = \zero_{K-1}$ and ${\rm rank}(\bGamma_{\cA_r,}) = K-1$.
\end{enumerate}
\end{cond}
Compared with Condition~\ref{condition1}, Condition~\ref{condition2} relaxes the constraints on the off-diagonal entries of $n^{-1}\sum_{i=1}^n\bu_i^*(\bu_i^*)^\T$, thereby allowing the latent variables to be correlated. As before, the normalization is imposed directly on the realized latent variables for notational convenience. Since part~$(ii)$ now consists of $K$ constraints rather than the $K(K+1)/2$ constraints in Condition~\ref{condition1}, part~$(iii)$ compensates by imposing additional restrictions on $\bGamma$, similar to IC$^{(4)}$ in~\citet{cui2025identifiability}.

Throughout this subsection, we replace the identity-covariance clause of Assumption~\ref{assump_standard_id} by the requirement that $\bSigma_u^*=\EE(\bU_i\bU_i^\T)$ be positive definite with $\mathrm{diag}(\bSigma_u^*)=\bI_K$ and eigenvalues bounded above and away from zero. The other parts of Assumption~\ref{assump_standard_id} are unchanged.

For estimation, recall that the working prior $\pwk(\cdot)$ imposes an implicit normalization on the latent variables. To accommodate correlated latent variables, use $\cL(\btheta,\bmu,\bSigma)$ as defined in \eqref{eq_full_marginal_likelihood}. Specifically, we define
\begin{equation}
    \begin{aligned}
    (\hat\btheta^{(2)},\hat\bSigma^{(2)}) = &\,\argmin_{\btheta\in\Xi,\bSigma\in \SSS_{++}^K, }\cL(\btheta,\zero_K,\bSigma)\\&\,\text{ subject to }\gamma_{j,r} = 0\text{ when }j\in\cA_r\text{ for }r\in[K],\; \text{ and }\mathrm{diag}(\bSigma) = \bI_K\\&\qquad\qquad\quad\text{and } \lambda_{\min}(\bSigma)>c\text{ for some constant }c
    \end{aligned}\label{eq_mmle_IC3}
\end{equation}
Part~$(i)$ of Condition~\ref{condition2} is imposed by setting $\bmu = \zero_K$, and part~$(ii)$ is implemented in \eqref{eq_mmle_IC3} through the constraint $\mathrm{diag}(\bSigma) = \bI_K$.
In what follows, we present the limiting distributions for the estimator $\hat\btheta^{(2)}$. 
\begin{theorem}[Asymptotic Theory under Condition~\ref{condition2}]\label{thm_asymp_id2}
\it
     Suppose the modified version of Assumption~\ref{assump_standard_id} stated above and Assumptions~\ref{assumption_psd_covariance}--\ref{assump_scaling} hold, and suppose $(\btheta^*,\bu^*)$ satisfies Condition~\ref{condition2}. Let $\cS_j:=\{r\in[K]:j\notin\cA_r\}$ be the indices of the free entries of $\bgamma_j$, and let 
     \(\bE_{(j)} = \begin{pmatrix}
         1 \\ & (\bI_{K})_{\cS_j,}
     \end{pmatrix}.\)
     Then, for $\hat\btheta^{(2)}$ obtained in \eqref{eq_mmle_IC3}, the following conclusions hold conditionally on $\{\bU_i\}_{i\in[n]}$.
\begin{enumerate}[label=$(\roman*)$]
    \item If $\sqrt{n\log n}\tau_{nq}\to 0$ as $n,q\to\infty$, then
\begin{equation*}
    \Big(\bE_{(j)}\bPhi_{j}^{(2)}\bE_{(j)}^\T\Big)^{-1/2}\begin{pmatrix}
        \hat\beta_j^{(2)} - \beta_j^*\\\hat\bgamma_{j,\cS_j}^{(2)} - \bgamma_{j,\cS_j}^*\end{pmatrix}\overset{d}{\to}\cN\left(\zero_{|\cS_j|+1},\bI_{|\cS_j|+1}\right),
\end{equation*}
where $\bPhi^{(2)}_j$ is given in \eqref{eq_define_cov_ic2};
\item Let $\hat\bu_i^{(2)} = \argmax_{\bu\in\RR^{K}}\big\{\ell_i\big(\bu;\hat\btheta^{(2)}\big) + \log\pwk(\bu\mid \zero_K,\hat\bSigma^{(2)})\big\}$ be the empirical Bayes estimator. If $\sqrt{q\log n}\tau_{nq}\to 0$ as $n,q\to\infty$, then
    \begin{equation}
    \big(\bPsi_i^{(2)}\big)^{-1/2}\big(\hat\bu_i^{(2)} - \bu_i^*\big)\overset{d}{\to}\cN(\zero_K,\bI_K), \text{ as }n,q\to\infty,
\end{equation}
where $\bPsi^{(2)}_i$ is given in \eqref{eq_define_cov_psi_i_2};
    \item Let $\hat\bu_i^{\star(2)} = \argmax_{\bu\in\RR^K}\ell_i(\bu;\hat\btheta^{(2)})$, $\bar\bPsi_{i}^* = \big\{-\sum_{j=1}^q\ell_{ij}^{\prime\prime}(\eta_{ij}^*)\bgamma_j^*(\bgamma_j^*)^\T\big\}^{-1}$, and
        \begin{equation*}
    \hat p_i^{(2)}(\bu\mid \bX) = \frac{\exp\{\ell_i(\bu;\hat\btheta^{(2)})\}\pwk(\bu\mid\zero_K,\hat\bSigma^{(2)})}{\int_{\bv\in\RR^{K}}\exp\{\ell_i(\bv;\hat\btheta^{(2)})\}\pwk(\bv\mid\zero_K,\hat\bSigma^{(2)})d\bv};
\end{equation*}
Then
    \begin{equation*}
          \int_{\RR^{K}} \Big|\hat p_i^{(2)}(\bu \mid \bX) - \pwk(\bu\mid \hat{\bu}_i^{\star(2)}, \bar\bPsi^*_i) \Big|d\bu  \overset{p}{\to} 0 \quad \text{as } n,q \to \infty.
    \end{equation*}
\end{enumerate}
\end{theorem}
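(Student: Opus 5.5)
The plan is to reduce Theorem~\ref{thm_asymp_id2} to the results already proved for the canonically aligned estimator $\hat\btheta^Q$ (Theorem~\ref{thm_asymp} and Corollaries~\ref{thm_asymp_bu}--\ref{thm_bvm_irt}). The tool is an explicit, data-dependent invertible reparameterization. The key observation is that $\cL(\btheta,\zero_K,\bSigma)$ is invariant under $\bu\mapsto\bA\bu$, $\bgamma_j\mapsto\bA^{-\T}\bgamma_j$, $\bSigma\mapsto\bA\bSigma\bA^\T$. Hence every feasible point of \eqref{eq_mmle_IC3} is the image of a point $(\btheta,\zero_K,\bI_K)$, and conversely.

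\textbf{Step 1: the transformation and the first-order conditions.} Given $\hat\btheta$ from \eqref{eq_mmle_IC1}, I will look for $\hat\bG\in\RR^{K\times K}$ such that $\hat\bgamma_j^{(2)}:=\hat\bG^\T\hat\bgamma_j$ satisfies the zero pattern in Condition~\ref{condition2}$(iii)$ and $\hat\bSigma^{(2)}:=\hat\bG^{-1}\hat\bG^{-\T}$ has unit diagonal.
\begin{itemize}
\item For each $r$, the zero constraints force column $r$ of $\hat\bG$ to lie in the one-dimensional null space of $\hat\bGamma_{\cA_r,}$. This is well defined because $\mathrm{rank}(\hat\bGamma_{\cA_r,})=K-1$ with probability tending to one, by the $\ell_\infty$ rate in Theorem~\ref{thm_consis}.
\item The diagonal constraint then fixes the scale of each column, up to sign.
\end{itemize}
Thus $\hat\bG=g(\hat\bgamma_{j'}:j'\in\cA)$ with $\cA=\cup_r\cA_r$, where $g$ is a smooth map and $|\cA|\le K(K-1)$ is finite. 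The analogous map applied to the canonical parameters $\bgamma_j^0$ returns $(\btheta^*,\bu^*)$ satisfying Condition~\ref{condition2}.

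Because the map is a bijection between the two feasible sets on the localized region, $\cL(\hat\btheta,\zero_K,\bI_K)$ attains the constrained minimum. The constrained minimizer is therefore exactly the image of $\hat\btheta$. Applying the change-of-variables computation from Step~3 of the proof of Lemma~\ref{lemma_key_first_order}, the scores with respect to $\btheta$, $\bmu$ and $\bSigma$ vanish at the image as well. Since $\Xi$ is only approximately invariant, I will also check that the image stays in $\cB_\epsilon(\btheta^*)$. This follows as for $\tilde\btheta^*$ in that proof, since $\hat\bG-\bG^0=o_p(1)$.

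\textbf{Step 2: linearization for the model parameters.} By a Taylor expansion of $g$,
\[
\hat\bG-\bG^0=\sum_{j'\in\cA}\partial g_{j'}\,(\hat\bgamma_{j'}-\bgamma_{j'}^0)+\cOp(\bar\delta_{nq}^2).
\]
Combining this with \eqref{eq_asymp_expand_2_bgamma} gives
\[
\hat\btheta_j^{(2)}-\btheta_j^*=-\bT_j(\bH_{L\theta_j\theta_j}^*)^{-1}\bS_{L\theta_j}^*-\sum_{j'\in\cA}\bD_{jj'}(\bH_{L\theta_{j'}\theta_{j'}}^*)^{-1}\bS_{L\theta_{j'}}^*+o_p\big((nB_n)^{-1/2}\big),
\]
for deterministic (given $\mathscr U_n$) matrices $\bT_j$ and $\bD_{jj'}$. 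The normalization error from \eqref{eq_implicit_reg_1} is $\cOp(\varepsilon_{nq}q^{-1})$, which is negligible under the scaling condition. The residual uses the rates of Lemma~\ref{lemma_residual_in_asym_expand}.

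Conditional on $\mathscr U_n$, the scores $\bS_{L\theta_j}^*$ for distinct items are independent, being built from different columns $\ell_{\cdot j}'$. A finite-dimensional multivariate Lindeberg--Feller argument, as in \eqref{eq_clt}, therefore yields joint normality. The resulting covariance is $\bPhi_j^{(2)}$, with the $\bD_{jj'}$ terms producing the variance inflation. I then restrict to the free coordinates through $\bE_{(j)}$. This restriction is needed because the constrained coordinates are degenerate, so $\bPhi_j^{(2)}$ itself is singular.

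\textbf{Step 3: latent variables and the posterior.} Under the reparameterization, $\hat\bu_i^{\star(2)}=\hat\bG^{-1}\hat\bu_i^\star$. Expanding as in Step~2,
\[
\hat\bu_i^{\star(2)}-\bu_i^*=\text{(term from \eqref{eq_asymp_expand_2_bu})}-(\bG^0)^{-1}(\hat\bG-\bG^0)(\bG^0)^{-1}\bu_i^0+\text{negligible terms}.
\]
The second term is of order $n^{-1/2}$ and is independent of the item-$i$ row score. It is retained in $\bPsi_i^{(2)}$ because it is not negligible when $q\gtrsim n$. The gap between the MAP and the mode, $\hat\bu_i^{(2)}-\hat\bu_i^{\star(2)}$, is handled exactly as in the proof of Corollary~\ref{thm_asymp_bu}; the prior $\pwk(\cdot\mid\zero_K,\hat\bSigma^{(2)})$ has bounded precision, so the same argument applies.

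For part $(iii)$, $\hat p_i^{(2)}$ is the pushforward of $\hat p_i$ under $\bu\mapsto\hat\bG^{-1}\bu$, and total variation is invariant under bijections. Corollary~\ref{thm_bvm_irt} therefore gives a Gaussian approximation with covariance $\hat\bG^{-1}\bar\bPsi_i^{0}\hat\bG^{-\T}$. This covariance is of order $q^{-1}$, and it is relatively $o_p(1)$-close to $\bar\bPsi_i^*$ because $\hat\bG\to\bG^0$. Gaussian TV continuity then finishes part $(iii)$.

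\textbf{Main obstacle.} I expect the hardest part to be Step~1. It requires showing rigorously that the constrained minimizer coincides with the image of $\hat\btheta$, despite the feasible set $\Xi$ and the constraint $\lambda_{\min}(\bSigma)>c$ being only approximately invariant. It also requires checking that the implicit normalization now targets the correlated $\bSigma_u^*$ through the $\bSigma$-score, rather than the identity. For the latter, I would first redo Lemma~\ref{lemma_aux_estimator_property}$(ii)$ with $\bSigma_u^*$ in place of $\bI_K$, which its proof already allows.
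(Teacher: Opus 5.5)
Your proposal is correct, but it reduces to a different base estimator than the paper does.

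\textbf{The paper's route.} The paper never passes through the MMLE $\hat\btheta$. It writes $\hat\btheta^{(2)}$ as an affine image of the oracle auxiliary estimator $(\hat\btheta^*,\hat\bmu^*,\hat\bSigma^*)$, which is centered at the Condition~\ref{condition2} parameters themselves, with $\hat\bSigma^*$ tracking the correlated $\bSigma_u^*$. It takes the expansion of this estimator from Lemma~\ref{lemma_asymptotic_normality_star}. It then linearizes $\hat\bX^{(2)}=\hat\bR^{(2)}_1\hat\bR^{(2)}_2(\hat\bSigma^*)^{1/2}-\bI_K$ through the zero constraints and the diagonal constraint. The diagonal step needs the sharpened rate $\|\mathrm{diag}(\hat\bSigma^*-\bI_K)\|=\cOp(\tau_{nq}\sqrt{\log n/B_n})$ in \eqref{eq_accurate_control_hsigma_star_3}. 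Part $(iii)$ is proved by re-running the three-region Bernstein--von Mises argument.

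\textbf{Your route.} You start from $\hat\btheta$ and Theorem~\ref{thm_asymp}. Set $\bG^0=\bar\bSigma_u^{-1/2}$ and $\bI_K+\hat\bX':=\hat\bG^\T(\bG^0)^{-\T}$. Your constraints then become exactly the paper's two linear systems, with $\hat\bgamma_j^*$ replaced by $(\bG^0)^\T\hat\bgamma_j$ and $\hat\bSigma^*$ replaced by $\bar\bSigma_u$.
\begin{itemize}
\item The diagonal equation now carries no first-order error term, because $(\bG^0)^{-1}(\bG^0)^{-\T}=\bar\bSigma_u$ has unit diagonal by Condition~\ref{condition2}$(ii)$. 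So you need no analogue of \eqref{eq_accurate_control_hsigma_star_3}; the implicit normalization is already absorbed into the canonical centering of \eqref{eq_asymp_expand_2_bgamma}.
\item The canonical $\bH_{L\theta_j\theta_j}$ and $\bS_{L\theta_j}$ are congruent to the original ones through $\mathrm{diag}(1,\bar\bSigma_u^{-1/2})$. Hence $\hat\beta_j-\beta_j^0$ and $(\bG^0)^\T(\hat\bgamma_j-\bgamma_j^0)$ have the same leading terms as $\hat\btheta_j^*-\btheta_j^*$.
\item That identity is what makes your covariances coincide with $\bPhi_j^{(2)}$ in \eqref{eq_define_cov_ic2} and $\bPsi_i^{(2)}$ in \eqref{eq_define_cov_psi_i_2}. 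You assert this coincidence; you should verify it explicitly.
\item Your part $(iii)$ is shorter than the paper's. You view $\hat p_i^{(2)}$ as the pushforward of the plug-in posterior of Corollary~\ref{thm_bvm_irt} under $\bu\mapsto\hat\bG^{-1}\bu$, and use the exact identity $(\bG^0)^{-1}\bar\bPsi_i^{0}(\bG^0)^{-\T}=\bar\bPsi_i^*$, where $\bar\bPsi_i^0$ is the canonical-coordinate covariance.
\end{itemize}

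\textbf{The cost of your route.} Theorem~\ref{thm_asymp} is proved with $\bSigma_u^*=\bI_K$, whereas here only $\mathrm{diag}(\bSigma_u^*)=\bI_K$ holds.
\begin{itemize}
\item The clean fix is to whiten. Set $\tilde\bU_i=(\bSigma_u^*)^{-1/2}\bU_i$ and $\tilde\bgamma_j=(\bSigma_u^*)^{1/2}\bgamma_j^*$. These satisfy Assumptions~\ref{assump_standard_id}--\ref{assump_scaling}, with the constants of $\Xi$ enlarged to contain both the whitened truth and the Condition~\ref{condition2} truth.
\item Their canonical representative agrees with yours up to an orthogonal matrix, which your rotation-equivariant map $g$ absorbs.
\item Your remark that the implicit normalization must now ``target the correlated $\bSigma_u^*$ through the $\bSigma$-score'' is therefore misdirected. $\hat\btheta$ is fitted with the standard working prior, so its normalization targets the whitened representative. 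The correlation, namely $\bar\bSigma_u$, enters only through $\bG^0$.
\end{itemize}
The paper's route avoids this step by letting $\hat\bSigma^*$ absorb the correlation, and it parallels the Condition~\ref{condition1} proof.

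\textbf{Minor points.} The global-versus-local feasibility issue you flag is handled no more rigorously in the paper, which says only ``similar to Step~2'', so it is not a gap relative to it. In Step~3, the correction term is independent of the subject-$i$ score (not the ``item-$i$'' score), and only asymptotically, since both involve the finitely many $\ell_{ij'}'$ with $j'\in\cup_r\cA_r$.
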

\begin{proof}
    See Section~\ref{supp_sec_prove_thm_asymp_id2}.
\end{proof}
Theorem~\ref{thm_asymp_id2} establishes the asymptotic theory for estimation under correlated latent variables. Similarly, the asymptotic covariances $\bPhi_j^{(2)}$ and $\bPsi_i^{(2)}$ generally contain additional variability relative to $\bPhi_j^*$ and $\bPsi_i^*$. Still, the covariance in posterior asymptotic normality under this setup matches that in Corollary~\ref{thm_bvm_irt}. Together with Theorem~\ref{thm_asymp_id1}, it provides a comprehensive asymptotic framework for inference under practically implementable identification constraints. We further examine these theoretical findings through empirical studies in Section~\ref{supp_simu_2}.

\subsection{Proof of Theorem~\ref{thm_asymp_id1}}\label{supp_sec_prove_thm_asymp_id1}
Theorem~\ref{thm_asymp_id1} consists of three parts, whose proofs are given in Sections~\ref{thm_asymp_id1_part_1}, \ref{thm_asymp_id1_part_2}, and~\ref{thm_asymp_id1_part_3}, respectively.

\subsubsection{Asymptotic Distributions for Item Parameters}\label{thm_asymp_id1_part_1}
Following the idea in proving Lemma~\ref{lemma_key_first_order}, we start with studying the estimator $(\hat\btheta^*,\hat\bmu^*,\hat\bSigma^*)$ in \eqref{eq_aux_estimator}. Under the identifiability Conditions~\ref{condition1} or~\ref{condition2}, the estimator remains consistent. In particular, we introduce the following result.
\begin{lemma}\em\label{lemma_asymptotic_normality_star}
    Under Assumptions~\ref{assump_standard_id}--\ref{assump_scaling}, we have
   \begin{equation*}
    \big\|\hat\btheta_j^* - \btheta^*_j\big\| = \cOp(\bar\delta_{nq})\quad\text{ and }\quad\big\|\hat\bu_i^{\star}(\hat\btheta^*) - \bu_i^*\big\| = \cOp(\bar\delta_{nq})\text{ for }i\in[n],j\in[q].
\end{equation*} Furthermore, if $\sqrt{n\log n}\tau_{nq}\to 0$ as $n,q\to\infty$, for $\hat\btheta^*$ obtained in \eqref{eq_aux_estimator}, we have
    \begin{equation*}
        (\bPhi_{j}^*)^{-1/2} \begin{pmatrix}
            \hat\beta_j^* - \beta_j^*\\  \hat\bgamma_j^* - \bgamma_j^*
        \end{pmatrix}\overset{d}{\to}\cN(\zero_{K+1},\bI_{K+1}),\text{ as }n,q\to\infty,
    \end{equation*}
    where $\bPhi_j^*$ is given in Theorem~\ref{thm_asymp}. For $\bu_i^{\star}(\hat\btheta^*) = \argmax_{\bu\in\RR^{K}}\ell_i(\bu;\hat\btheta^*)$, suppose Assumptions~\ref{assump_standard_id}--\ref{assump_scaling} hold and $\sqrt{q\log n}\tau_{nq}\to 0$ as $n,q\to\infty$. Then for each $i\in[n]$,
    \begin{equation*}
        (\bPsi_{i}^*)^{-1/2} (\bu_i^{\star}(\hat\btheta^*) - \bu_i^*)\overset{d}{\to}\cN(\zero_K,\bI_K),\text{ as }n,q\to\infty,
    \end{equation*}
    where $\bPsi^*_i$ is given in Corollary~\ref{thm_asymp_bu}. Furthermore, for $\hat\bmu^*$ and $\hat\bSigma^*$ obtained in \eqref{eq_aux_estimator}, we have
    \begin{equation*}
        \|\hat\bmu^*\|=  \cOp(\delta_{nq})\quad\text{ and }\quad\|\hat\bSigma^* - \bSigma_u^*\|=  \cOp(\delta_{nq})
    \end{equation*}
\end{lemma}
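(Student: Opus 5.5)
The plan is to transfer the results already established for $\hat\btheta$ (Theorem~\ref{thm_asymp}, Corollary~\ref{thm_asymp_bu}, and the rates in Lemma~\ref{lemma_residual_in_asym_expand}) to the auxiliary estimator $(\hat\btheta^*,\hat\bmu^*,\hat\bSigma^*)$. The transfer uses the explicit affine correspondence from Step~2 of the proof of Lemma~\ref{lemma_key_first_order}. There we showed, up to an orthogonal matrix, that
\[
\hat\bgamma_j=(\hat\bSigma^*)^{1/2}\hat\bgamma_j^*,\qquad \hat\beta_j=\hat\beta_j^*+(\hat\bgamma_j^*)^\T\hat\bmu^*,\qquad \bu_i^\star(\hat\btheta)=(\hat\bSigma^*)^{-1/2}\{\bu_i^\star(\hat\btheta^*)-\hat\bmu^*\}.
\]
The rates $\|\hat\bmu^*\|=\cOp(\delta_{nq})$ and $\|\hat\bSigma^*-\bSigma_u^*\|=\cOp(\delta_{nq})$ are exactly conclusion $(ii)$ of Lemma~\ref{lemma_aux_estimator_property}. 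Its proof uses only population moments, so it holds verbatim under Conditions~\ref{condition1}--\ref{condition2} with the modified $\bSigma_u^*$.

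For the componentwise consistency, I would not transfer the bounds but rerun the augmented expansion \eqref{eq_asymp_expand} directly at $(\hat\btheta^*,\bu^\star(\hat\btheta^*))$. Lemma~\ref{lemma_aux_estimator_property}$(iv)$ gives $\cS_\theta(\hat\btheta^*,\hat\bmu^*,\hat\bSigma^*)=\zero$. Lemma~\ref{lemma_first_order_base} (valid uniformly over $\cK_C$) then gives $\|\bS_{L\theta_j}(\hat\btheta^*,\bu^\star(\hat\btheta^*))\|=\cOp(nB_n\varepsilon_{nq}\sqrt{\log n}/q)$, and $\bS_{Lu}=\zero$ holds by definition. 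The average rates $(i)$ and the $\ell_\infty$ rate $(iii)$ of Lemma~\ref{lemma_aux_estimator_property}, combined with Lemma~\ref{lemma_neighbour_control} for the modes, feed into the argument of Lemma~\ref{lemma_residual_in_asym_expand}. That argument yields $\|\hat\btheta_j^*-\btheta_j^*\|$ and $\|\bu_i^\star(\hat\btheta^*)-\bu_i^*\|$ of order $\cOp(\bar\delta_{nq})$, where the factor $B_n/b_n$ comes from inverting the diagonal blocks $\bH_{L\theta_j\theta_j}^*$ and $\bH_{Lu_iu_i}^*$.

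For the normality, I would again add the augmentation $\bH_P^*$ and use Lemma~\ref{lemma_approx_hessian_inverse} to invert $\bH_L^*+\bH_P^*$ blockwise, which gives
\[
\hat\btheta_j^*-\btheta_j^*=-(\bH^*_{L\theta_j\theta_j})^{-1}\bS^*_{L\theta_j}+\text{rem}.
\]
The CLT \eqref{eq_clt} and its analogue \eqref{eq_asymp_expand_2_bu_2} then finish the argument. The \textbf{main obstacle} is the analogue of Lemma~\ref{lemma_bound_aug_via_id}, namely bounding $\bS_m^2\bH_P^*\bdelta$ in sup norm by $b_n\sqrt{\log n}(\delta_{nq}^2+\varepsilon_{nq}/q)$.

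The $\btheta$-block components of $\bH_P^*\bdelta$ (the antisymmetric parts $\bpsi_{rl}^*$) vanish for the auxiliary estimator because $P^*(\hat\btheta^*)=0$. That identity gives $(\hat\bbeta^*-\bbeta^*,\hat\bGamma^*-\bGamma^*)^\T\bGamma^*=\zero$, so in fact the full $\bV\bGamma^*$ vanishes, not only its antisymmetric part.

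The $\bu$-block components require $n^{-1}\sum_i\{\bu_i^\star(\hat\btheta^*)-\bu_i^*\}$ and the symmetric second-moment difference to be small. The first-order conditions $\cS_\mu=\zero$ and $\cS_\Sigma=\zero$ only give
\[
n^{-1}\sum_i\bu_i^\star(\hat\btheta^*)\approx\hat\bmu^*,\qquad n^{-1}\sum_i(\bu_i^\star-\hat\bmu^*)(\bu_i^\star-\hat\bmu^*)^\T\approx\hat\bSigma^*,
\]
which are only $\cO(\delta_{nq})$ away from the targets. I would handle this by absorbing $(\hat\bmu^*,\hat\bSigma^*-\bI)$ into the expansion as additional free coordinates. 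Concretely, I would augment $\bdelta$ with these $K+K(K+1)/2$ parameters and append their score equations. Their contribution to the $\btheta_j$ and $\bu_i$ rows is then of order $\delta_{nq}$ times a bounded-rank term, which should still give the $\delta_{nq}^2$ bound after one further substitution.

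If this route fails, the fallback is a delta-method transfer from $\hat\btheta$ through the affine map. That route requires showing that the extra fluctuation induced by $\hat\bmu^*$ and $\hat\bSigma^*$ is negligible at rate $n^{-1/2}$, which I would check by expressing both quantities via the approximate moment identities above.
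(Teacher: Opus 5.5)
Your consistency argument and the rates for $\hat\bmu^*$ and $\hat\bSigma^*$ match the paper. The normality step, however, has a genuine gap, and it sits exactly where you placed your ``main obstacle.''

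Because you keep the original augmentation $\bH_P^*$, you must control its $\bu$-block action on $\bdelta$. These are the quantities $\alpha_{1l},\alpha_{2r},\alpha_{3,rl}$ in the proof of Lemma~\ref{lemma_bound_aug_via_id}, i.e.\ $\sum_i\{\bu_i^\star(\hat\btheta^*)-\bu_i^*\}$ and the symmetric second-moment differences. In this lemma the targets are the original, unnormalized $(\btheta^*,\bu^*)$. So the identities $\sum_i\bu_i^*=\zero_K$ and $\sum_i\bu_i^*(\bu_i^*)^\T=n\bI_K$ used in that proof are unavailable. All you can certify a priori, from $\cS_\mu=\cS_\Sigma=\zero$ and Lemma~\ref{lemma_aux_estimator_property}$(ii)$, is $\cOp(n\delta_{nq})$.

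A force of that size along the $\bpsi^*$-directions is mapped by $(\bH_L^*+\bH_P^*)^{-1}$ into a displacement of every $\hat\btheta_j$ along the flat affine directions $\bphi_{rl}$. That displacement is of order $\delta_{nq}$, not $\delta_{nq}^2$. Since $\sqrt n\,\delta_{nq}\ge\kappa_n\sqrt{n/q}$, and $\sqrt{n\log n}\,\tau_{nq}\to0$ allows $q\asymp n^{1/2+c}$, this is not $o_p(n^{-1/2})$.

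Appending $(\bmu,\bSigma)$ as coordinates with their score equations does not repair this. Those equations only restate $\hat\bmu^*\approx n^{-1}\sum_i\bu_i^\star(\hat\btheta^*)$ and its second-moment analogue. The enlarged system therefore keeps the same $K(K+1)$ flat directions, and the new coordinates stay undetermined. The sharper bound $\|\sum_i\{\bu_i^\star(\hat\btheta^*)-\bu_i^*\}\|=\cOp(n\tau_{nq}\sqrt{\log n/B_n})$ is true, but the paper derives it from the expansion itself, so using it here would be circular.

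The missing idea is to change the augmentation rather than control it. The paper replaces $\bH_P^*$ by $\tilde\bH_P^*$ with $\bS_m\tilde\bH_P^*\bS_m=cb_n\sum_{r,l}\bar\bv_{rl}(\btheta^*)\{\bar\bv_{rl}(\btheta^*)\}^\T$. This matrix is supported on the $\btheta$-block only and is, up to scaling, the Hessian of $P^*$. Your own observation that $P^*(\hat\btheta^*)=0$ kills the full $(\hat\bbeta^*-\bbeta^*,\hat\bGamma^*-\bGamma^*)^\T\bGamma^*$ then gives $\tilde\bH_P^*\bdelta=\zero$ \emph{exactly}. This needs no $\bu$-moment input and no normalization of $\bu^*$. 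The curvature bound for $\bS_m(\bH_L^*+\tilde\bH_P^*)\bS_m$ is \eqref{eq_standard_convex_aug} from Lemma~\ref{lemma_convexity2}. Lemma~\ref{lemma_approx_hessian_inverse} carries over, because its Woodbury step uses only the low rank and scale of the augmentation. Then \eqref{eq_asymp_expand_2_bgamma}, \eqref{eq_asymp_expand_2_bu} and the CLTs finish the proof.

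Your fallback has the same hole. $\hat\bSigma^*-\bI_K\approx\bar\bSigma_u-\bI_K$ is genuinely of order $n^{-1/2}$; it must be, since $\hat\btheta^*$ targets $\btheta^*$ while $\hat\btheta$ targets $\btheta^0$. What you would actually need is that $\hat\bmu^*-\bar\bmu_u$, $\hat\bSigma^*-\bar\bSigma_u$, and the mismatch between the Procrustes rotation and the rotation fixed by $P^*$ are all $o_p(n^{-1/2})$. The moment identities cannot deliver this. Again only $P^*(\hat\btheta^*)=0$ pins these quantities down; for instance, it gives $\hat\bmu^*=\{(\bGamma^*)^\T\bGamma^*\}^{-1}(\bGamma^*)^\T(\hat\bbeta-\bbeta^*)$.
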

\begin{proof}
    See Section~\ref{supp_sec_prove_lemma_asymptotic_normality_star}. The oracle penalty $P^*(\btheta)$ makes the result independent of any identifiability condition; in particular, no constraints on $n^{-1}\sum_{i=1}^n\bu_i^*$ or $n^{-1}\sum_{i=1}^n\bu_i^*(\bu_i^*)^\T$ are needed. Under the whitening condition $n^{-1}\sum_{i=1}^n\bu_i^* = \zero$ and $n^{-1}\sum_{i=1}^n\bu_i^*(\bu_i^*)^\T = \bI_K$ (Condition~\ref{condition1}), the proof further yields
\begin{equation}
    \label{eq_accurate_control_hsigma_star_2}\big\|\hat\bmu^*\big\| = \cOp\big(\tau_{nq}\sqrt{\log n/B_n}\big);\quad \big\|\hat\bSigma^* - \bI_K\big\| = \cOp\big(\tau_{nq}\sqrt{\log n/B_n}\big).
\end{equation}
    Under Condition~\ref{condition2}, which imposes $n^{-1}\sum_{i=1}^n\bu_i^* = \zero$ and ${\rm diag}\big\{n^{-1}\sum_{i=1}^n\bu_i^*(\bu_i^*)^\T \big\}=\bI_K$, the same argument gives
    \begin{equation}
        \label{eq_accurate_control_hsigma_star_3}\big\|\hat\bmu^*\big\| = \cOp\big(\tau_{nq}\sqrt{\log n/B_n}\big);\quad \big\|{\rm diag}(\hat\bSigma^* - \bI_K)\big\| = \cOp \big(\tau_{nq}\sqrt{\log n/B_n}\big).
    \end{equation}
    It is worth mentioning that, if the identifiability constraints are instead imposed on the population quantities $\EE[\bU_i]$ and $\EE[\bU_i\bU_i^\T]$, such rates would no longer be available, and the subsequent derivations of the asymptotic distribution would not proceed as stated.
\end{proof}

In the following, we show that $\hat\btheta^{(1)}$ can be obtained from $\hat\btheta^*$. Similar to Step 2 in the proof of Lemma~\ref{lemma_key_first_order}, we show that $\hat\btheta^{(1)}$ and $\hat\btheta^*$ are equivalent up to some linear transformation.
First, we construct 
\begin{align*}
    \hat\bG^{(1)\dagger} = &\,\Big\{\sum_{s=1}^q\bgamma_s^*(\hat\bgamma_s^{(1)})^\T\Big\}^{-1}\Big\{\sum_{s=1}^q\bgamma_s^*(\bgamma_s^*)^\T\Big\}\\\hat\bd^{(1)\dagger} = &\,-\Big\{\sum_{s=1}^q\bgamma_s^*(\hat\bgamma_s^{(1)})^\T\Big\}^{-1}\Big\{\sum_{s=1}^q\bgamma_s^*(\hat\beta_s^{(1)} - \beta_s^*)\Big\},
\end{align*}
and $\hat\btheta^{(1)\dagger} = (\hat\beta_1^{(1)\dagger}, (\hat\bgamma_1^{(1)\dagger})^\T,\dots,\hat\beta_q^{(1)\dagger}, (\hat\bgamma_q^{(1)\dagger})^\T)^\T$ with
\begin{equation}
    \hat\bgamma_j^{(1)\dagger} = (\hat\bG^{(1)\dagger})^\T\hat\bgamma_j^{(1)}\quad\text{ and }\quad\hat\beta_j^{(1)\dagger} = \hat\beta_j^{(1)} + (\hat\bgamma_j^{(1)})^\T\hat\bd^{(1)\dagger}\text{ for each }j\in[q].
\end{equation}
Following a similar procedure of Step~2 in the proof of Lemma~\ref{lemma_key_first_order}, one can show that $\hat\btheta^{(1)\dagger}\in \cB_{\epsilon}(\btheta^*)$, that is, $\cL(\hat\btheta^{(1)})\ge \cL(\hat\btheta^{*})$. 

Next, we construct $\hat\btheta^{(1)\ddagger}$ transformed from $\hat\btheta^*$ with 
\begin{equation*}
    \hat\bgamma_j^{(1)\ddagger} = \hat\bR^{(1)}(\hat\bSigma^*)^{1/2}\hat\bgamma_j^*\text{ and }\hat\beta_j^{(1)\ddagger} = \hat\beta_j^* + (\hat\bgamma_j^*)^\T\hat\bmu^*\text{ for each }j\in[q],
\end{equation*}
where $\hat\bR^{(1)} = (\hat\br_1^{(1)},\dots,\hat\br_K^{(1)})^\T$ is an orthogonal matrix constructed as follows. For each $l\in[K]$, $\hat\br_l^{(1)}$ obeys
\begin{equation*}
    \hat\br_l^{(1)}\perp \mathrm{span}\Big\{\{\hat\br_{h}^{(1)}\}_{h>l}\cup\mathrm{row}\big(\hat\bGamma^*_{\cA_l,}(\hat\bSigma^*)^{1/2}\big)\Big\}\text{ and }\|\hat\br_l^{(1)}\| = 1.
\end{equation*}
Because $|\cA_l|=l-1$, a vector satisfying these requirements exists. By construction, $\cL(\hat\btheta^{(1)\ddagger},\zero_K,\bI_K) = \cL(\hat\btheta^*,\hat\bmu^*,\hat\bSigma^*)$, and $\hat\btheta^{(1)\ddagger}$ satisfies Condition~\ref{condition1}. As in the proof of Lemma~\ref{lemma_key_first_order},
\begin{equation}
\cL(\hat\btheta^*,\hat\bmu^*,\hat\bSigma^*)=\cL(\hat\btheta^{(1)\ddagger},\zero_K,\bI_K)\ge \cL(\hat\btheta^{(1)},\zero_K,\bI_K).
\end{equation}
Therefore we conclude that $\hat\btheta^{(1)\ddagger} = \hat\btheta^{(1)}$, up to sign changes. Next, we first show the consistency of $\hat\btheta^{(1)}$ obtained as follows
\begin{equation}\label{eq_transform_to_est_1}
    \hat\bgamma_j^{(1)} = \hat\bR^{(1)}(\hat\bSigma^*)^{1/2}\hat\bgamma_j^*,\text{ and }\hat\beta_j^{(1)} = \hat\beta_j^* + (\hat\bgamma_j^*)^\T\hat\bmu^*\text{ for each }j\in[q].
\end{equation}
We introduce the following lemma.
 \begin{lemma}\em \label{lemma_linear_equation_perturb}
        Suppose $\bH\in\RR^{(K-1)\times K}$ has rank $K-1$ and its nonzero singular values are bounded above and away from zero. Let $\bx_h$ be a unit vector spanning $\ker(\bH)$. Suppose $\tilde\bH\in\RR^{(K-1)\times K}$ satisfies $\|\bH - \tilde\bH\|\le \epsilon$, and let $\tilde\bx_h$ be a consistently oriented unit vector spanning $\ker(\tilde\bH)$. For sufficiently small $\epsilon$,
        \begin{equation*}
            \sin\angle(\bx_h,\tilde\bx_h) \lesssim \epsilon.
        \end{equation*}
    \end{lemma}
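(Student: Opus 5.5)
The plan is to reduce the statement to a standard perturbation bound for the smallest singular direction, or equivalently to Wedin/Davis--Kahan applied to $\bH^\T\bH$. Since $\bH$ has rank $K-1$, the Gram matrix $\bH^\T\bH\in\RR^{K\times K}$ has eigenvalue $0$ with eigenvector $\bx_h$, and its remaining $K-1$ eigenvalues are the squared nonzero singular values of $\bH$. By assumption these are bounded below by some constant $\sigma_{\min}^2>0$ and above by $\sigma_{\max}^2$. Likewise $\tilde\bx_h$ is a unit vector in $\ker(\tilde\bH)$. For small $\epsilon$ this kernel is one-dimensional: by Weyl's inequality $\sigma_{K-1}(\tilde\bH)\ge\sigma_{\min}-\epsilon>0$, so $\tilde\bH$ still has rank $K-1$, and the ``consistently oriented'' choice just fixes the sign.

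For the main step I would avoid Davis--Kahan altogether and argue directly. Decompose $\tilde\bx_h=\cos\phi\,\bx_h+\sin\phi\,\bw$ with $\bw\perp\bx_h$ a unit vector, so that $\bw$ lies in the row space of $\bH$. Then
\[
\zero=\tilde\bH\tilde\bx_h=\bH\tilde\bx_h+(\tilde\bH-\bH)\tilde\bx_h=\sin\phi\,\bH\bw+(\tilde\bH-\bH)\tilde\bx_h.
\]
Because $\bw$ lies in the row space, $\|\bH\bw\|\ge\sigma_{\min}$. Taking norms gives
\[
|\sin\phi|\,\sigma_{\min}\le\|\tilde\bH-\bH\|\,\|\tilde\bx_h\|\le\epsilon,
\]
hence $\sin\angle(\bx_h,\tilde\bx_h)\le\epsilon/\sigma_{\min}\lesssim\epsilon$. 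This uses only the lower bound on the nonzero singular values; the upper bound is not needed.

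The only mild obstacle is ensuring that the kernel of $\tilde\bH$ is indeed one-dimensional and that the orientation is consistent. Both follow from Weyl's inequality once $\epsilon<\sigma_{\min}/2$, and the sign choice $\langle\bx_h,\tilde\bx_h\rangle\ge0$ makes $\cos\phi\ge0$. With that sign, the bound on $\sin\phi$ also yields $\|\tilde\bx_h-\bx_h\|\le\sqrt2\,|\sin\phi|\lesssim\epsilon$. This is the form in which the lemma will be applied: each row $\hat\br_l^{(1)}$ in the construction of $\hat\bR^{(1)}$ is a kernel vector of a perturbed $(K-1)\times K$ matrix, and it will be compared with its population counterpart.
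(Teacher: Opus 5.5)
Your argument is correct, and it takes a more direct route than the paper. The paper works with the orthogonal projectors $\bP=\bH^\T(\bH\bH^\T)^{-1}\bH$ and $\tilde\bP=\tilde\bH^\T(\tilde\bH\tilde\bH^\T)^{-1}\tilde\bH$ onto the two row spaces. It uses Weyl's inequality to get $\operatorname{rank}(\tilde\bH)=K-1$. It then uses the resolvent identity, together with $\|\tilde\bH\tilde\bH^\T-\bH\bH^\T\|\le(2\|\bH\|+\epsilon)\epsilon$, to control $(\tilde\bH\tilde\bH^\T)^{-1}-(\bH\bH^\T)^{-1}$, and expands $\tilde\bP-\bP$ to get $\|\tilde\bP-\bP\|=O(\epsilon)$. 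Finally it reads off $\sin\angle(\bx_h,\tilde\bx_h)=\|\tilde\bP-\bP\|$, because $\bI_K-\bP$ and $\bI_K-\tilde\bP$ are the rank-one projectors onto the two kernels.

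You instead substitute $\tilde\bx_h$ into $\zero=\tilde\bH\tilde\bx_h$. You remove its $\bx_h$-component using $\bH\bx_h=\zero$, and use that $\bH$ is bounded below by $\sigma_{K-1}(\bH)$ on its row space. What your route buys:
\begin{itemize}
\item an explicit constant, $\sin\angle(\bx_h,\tilde\bx_h)\le\epsilon/\sigma_{K-1}(\bH)$;
\item dependence only on the lower singular-value bound, whereas the paper's constant also involves $\|\bH\|$ through the Gram-matrix perturbation;
\item a bound valid for every unit vector of $\ker(\tilde\bH)$.
\end{itemize}

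The last point in fact makes your Weyl step redundant. A kernel of dimension at least two would contain a unit vector orthogonal to $\bx_h$, which would force $\sigma_{K-1}(\bH)\le\epsilon$. The same observation also recovers the paper's projector statement: your inequality bounds $\|\bP\tilde\bx\|$ for all unit $\tilde\bx\in\ker(\tilde\bH)$, and for equal-dimensional kernels this is the same as bounding $\|\tilde\bP-\bP\|$. The paper's computation is a generic projector-perturbation template, but for this rank-one kernel your argument is the cleaner one.

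Your closing remark is also correct: with $\langle\bx_h,\tilde\bx_h\rangle\ge0$ one gets $\|\tilde\bx_h-\bx_h\|\le\sqrt2\,|\sin\phi|$. That is the form actually used when the lemma is applied to the kernel vectors $\hat\br_h^{(1)}$ and $\hat\br_r^{(2)}$ under Conditions~\ref{condition1} and~\ref{condition2}.
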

    \begin{proof}
        See Section~\ref{prove_lemma_linear_equation_perturb}.
    \end{proof}
By Lemma~\ref{lemma_asymptotic_normality_star} and $\bSigma_u^* = \bI_K$, we know that \begin{equation}\|\hat\bSigma^* - \bI_K\| = \cOp(\delta_{nq})\quad \text{ and }\quad \|\hat\bmu^*\| = \cOp(\delta_{nq}).\label{eq_accurate_control_hsigma_star_1}\end{equation} 
For $\hat\bR^{(1)}$, we first show that $\|\hat\br_h^{(1)} - \be_h\| = \cOp(\bar\delta_{nq})$ for each $h\in[K]$. Since $\{\hat\br_h^{(1)}\}_{h\in[K]}$ are constructed consecutively, we begin with $\hat\br_{K}^{(1)}$. By Lemma~\ref{lemma_asymptotic_normality_star}, we have $\|\hat\bGamma_{\cA_K,}^* - \bGamma^*_{\cA_K,}\| = \cOp\big(\bar\delta_{nq}\big)$. Note that $\bGamma^*_{\cA_K,}\be_K = \bGamma^*_{\cA_K,K} = \zero$ by Condition~\ref{condition1} and $\hat\bGamma^*_{\cA_K,}(\hat\bSigma^*)^{1/2}\hat\br_K^{(1)} = \zero$ by definition.
Note that $\sigma_{h-1}(\bGamma_{\cA_h,1:h}^*)$ is bounded below under the rank condition in Condition~\ref{condition1} and $K$ is finite. Combined with \eqref{eq_accurate_control_hsigma_star_1}, Lemma~\ref{lemma_linear_equation_perturb}, and $\|\hat\br_K^{(1)}\| = \|\be_K\| = 1$, this gives
\begin{equation*}
    \big\|\hat\br_K^{(1)} - \be_K\big\| = \cOp(\bar\delta_{nq}).
\end{equation*}
Suppose $\big\|\hat\br_k^{(1)} - \be_k\big\| = \cOp(\bar\delta_{nq})$ for all $k>h$. For $\hat\br_h^{(1)}$, the defining equations give
\begin{equation*}
    \begin{pmatrix}
        (\hat\bSigma^*)^{1/2}\big(\hat\bGamma^*_{\cA_h,}\big)^\T&\hat\br_{h+1}^{(1)}&\cdots&\hat\br_{K}^{(1)}
    \end{pmatrix}^\T\hat\br_h^{(1)} = \zero,
\end{equation*}
by definition. One can also verify that
\begin{equation*}
    \begin{pmatrix}
        \big(\bGamma^*_{\cA_h,}\big)^\T&\be_{h+1}&\cdots&\be_{K}
    \end{pmatrix}^\T\be_h = \zero.
\end{equation*}
Lemma~\ref{lemma_asymptotic_normality_star} gives $\|\hat\bGamma^*_{\cA_h,} - \bGamma^*_{\cA_h,}\| = \cOp(\bar\delta_{nq})$. Lemma~\ref{lemma_linear_equation_perturb} and the induction hypothesis therefore yield $\big\|\hat\br_h^{(1)} - \be_h\big\| = \cOp(\bar\delta_{nq})$, and hence $\|\hat\bR^{(1)} - \bI_K\| = \cOp(\bar\delta_{nq})$. Therefore, with \eqref{eq_transform_to_est_1}, Lemma~\ref{lemma_aux_estimator_property}, and $n^{-1/2}r_{\theta,nq}\ll \delta_{nq}\le\bar\delta_{nq}$, one has
\begin{equation*}
    q^{-1/2}\big\|\hat\btheta^{(1)} - \btheta^*\big\| = \cOp(\bar\delta_{nq}).
\end{equation*}

For the asymptotic distribution, the current convergence rate implies that we have to take $\hat\bR^{(1)}$ into account. Specifically, let $\hat\bX^{(1)} = \hat\bR^{(1)}(\hat\bSigma^*)^{1/2} - \bI_K$. The bound $\|\hat\bR^{(1)} - \bI_K\| = \cOp(\bar\delta_{nq})$ and \eqref{eq_accurate_control_hsigma_star_1} imply
\begin{equation}
    \big\|\hat\bX^{(1)}\big\| = \cOp(\bar\delta_{nq}),\label{eq_bound_x_1}
\end{equation}
by telescoping.
Recall that $\hat\bGamma^{(1)} = \hat\bGamma^*(\hat\bSigma^*)^{1/2}(\hat\bR^{(1)})^\T = \hat\bGamma^*(\hat\bX^{(1)}+\bI_K)^\T$. The identifiability condition $\hat\bGamma_{\cA_r,r}^{(1)} = \bGamma^*_{\cA_r,r} = \zero_{r-1}$ yields
\begin{equation*}
    \zero_{r-1} = \hat\bGamma^{(1)}_{\cA_r,r} - \bGamma_{\cA_r,r}^*=\hat\bGamma^*_{\cA_r,}\big(\hat\bX^{(1)}_{r,}\big)^\T + \hat\bGamma^{*}_{\cA_r,r}- \bGamma^*_{\cA_r,r}.
\end{equation*}
By Lemma~\ref{lemma_asymptotic_normality_star}, $\big\|\hat\bGamma^*_{\cA_r,} - \bGamma^*_{\cA_r,}\big\| = \cOp(\bar\delta_{nq})$. Together with \eqref{eq_bound_x_1}, we arrive at
\begin{equation}\begin{aligned}
    \zero_{r-1} =&\, \bGamma^*_{\cA_r,}\big(\hat\bX^{(1)}_{r,}\big)^\T + \hat\bGamma^{*}_{\cA_r,r}- \bGamma^*_{\cA_r,r} + \left\{\hat\bGamma^*_{\cA_r,} - \bGamma^*_{\cA_r,}\right\}\big(\hat\bX^{(1)}_{r,}\big)^\T\\ = &\,\bGamma^*_{\cA_r,}\big(\hat\bX^{(1)}_{r,}\big)^\T + \hat\bGamma^{*}_{\cA_r,r}- \bGamma^*_{\cA_r,r} + \cOp\big(\bar\delta_{nq}^2\big),\end{aligned}\label{eq1_linear_eq_x1}
\end{equation}
for each $r\in[K]$. Here $\cOp(\bar\delta_{nq}^2)$ is interpreted as an elementwise error bound. Next, since $\hat\bR^{(1)}$ is orthogonal, one has
\begin{equation*}
    \Big\|\big(\hat \bX^{(1)} + \bI_K\big)^\T\big(\hat \bX^{(1)} + \bI_K\big) - \bI_K\Big\|  = \big\|\hat\bSigma^* - \bI_K\big\| =  \cOp\big(\tau_{nq}\sqrt{\log n/B_n}\big),
\end{equation*}
due to \eqref{eq_accurate_control_hsigma_star_2}. Subsequently, with $\big\|\hat\bX^{(1)}\big\| = \cOp\big(\bar\delta_{nq}\big)$, one has
\begin{equation}\begin{aligned}
    \big(\hat \bX^{(1)}\big)^\T + \hat \bX^{(1)} =   \big(\hat \bX^{(1)} + \bI_K\big)^\T\big(\hat \bX^{(1)} + \bI_K\big) - \bI_K  -\big(\hat \bX^{(1)}\big)^\T\hat \bX^{(1)} =\cOp\big(\tau_{nq}\sqrt{\log n/B_n}\big).\end{aligned}\label{eq2_linear_eq_x1}
\end{equation}
Again, the $\cOp(\cdot)$ term is an elementwise error bound.
Equations \eqref{eq1_linear_eq_x1} and \eqref{eq2_linear_eq_x1} together form a system of $K^2$ linearly independent equations with random errors of order $\cOp\big(\tau_{nq}\sqrt{\log n/B_n}\big)$. In particular, \eqref{eq2_linear_eq_x1} gives $\hat X_{lh}^{(1)} = -\hat X_{hl}^{(1)} + \cOp\big(\tau_{nq}\sqrt{\log n/B_n}\big)$ for $l\neq h$ and $\hat X_{ll}^{(1)} = \cOp\big(\tau_{nq}\sqrt{\log n/B_n}\big)$ for $l\in[K]$. Subsequently, it suffices to solve the lower triangular part of $\hat\bX^{(1)}$. For each $2\le r\le K$, define $\cK_r^{tri} = \{(r-1)(r-2)/2 + 1,\dots,r(r-1)/2\}$ and let $\hat\bx^{(1)}$ be the vectorized lower-triangular part of $\hat\bX^{(1)}$, i.e., $\hat\bx^{(1)}_{\cK_r^{tri}} = \hat\bX^{(1)}_{r,1:(r-1)}$ for each $2\le r\le K$. Then \eqref{eq1_linear_eq_x1} can be expressed as
\begin{equation*}
    \bLambda_{X}^{(1)}\hat\bx^{(1)} = -\begin{pmatrix}
        \hat\bGamma^{*}_{\cA_2,2}- \bGamma^*_{\cA_2,2}\\\vdots\\\hat\bGamma^{*}_{\cA_K,K}- \bGamma^*_{\cA_K,K}
    \end{pmatrix} + \cOp\big(\tau_{nq}\sqrt{\log n/B_n}\big),
\end{equation*}
where $\bLambda_{X}^{(1)}$ is the coefficient matrix given as
\begin{equation*}
    \big(\bLambda_{X}^{(1)}\big)_{\cK_l^{tri}, \cK_h^{tri}} = 1_{(l=h)}\bGamma^*_{\cA_l,1:(l-1)} - 1_{(l<h)}\bGamma^*_{\cA_l,h}\big(\be_l^{(h-1)}\big)^\T\text{ for each }l,h\in[K]
\end{equation*}
The upper-triangular structure guarantees that the solution $\hat\bX^{(1)}$ can be expressed uniquely in the following form:
\begin{equation}\label{eq_linear_solver_X1}
    \hat X_{lh}^{(1)} = \left\{\begin{aligned}& -\big(\be_h^{(l-1)}\big)^\T \sum_{k=l}^K\big(\bLambda_{X}^{(1)}\big)^{-1}_{\cK_{l}^{tri},\cK_{k}^{tri}}\big(\hat\bGamma^{*}_{\cA_k,k}- \bGamma^*_{\cA_k,k}\big)+ \cOp\big(\tau_{nq}\sqrt{\log n/B_n}\big)\text{, }l > h,\\& \cOp\big(\tau_{nq}\sqrt{\log n/B_n}\big)\text{, }l = h,\\& -\hat X_{hl}^{(1)}+ \cOp\big(\tau_{nq}\sqrt{\log n/B_n}\big)\text{, }l < h.\end{aligned}\right.
\end{equation}
By Lemma~\ref{lemma_asymptotic_normality_star}, we know that $\hat\bGamma_{\cA_k,k}^* - \bGamma_{\cA_k,k}^*$ is asymptotically normal as $n,q\to\infty$. Define $\bPhi_{\Gamma}^{(1)} \in \RR^{\frac{K(K-1)}{2}\times \frac{K(K-1)}{2}}$ as
\begin{equation*}
    \left(\bPhi_{\Gamma}^{(1)}\right)_{\cK_l^{tri},\cK_h^{tri}} = \left\{1_{(j_1=j_2)}\be_l^\T\bPhi_{\gamma,j_1}^*\be_h\right\}_{j_1\in \cA_l,j_2\in \cA_h}\text{ for each }l,h\in[K]\setminus\{1\},
\end{equation*}
where $\bPhi_{\gamma,j_1}^* = (\bPhi_{j_1}^*)_{2:(K+1),2:(K+1)}$. Subsequently, 
\begin{equation*}
    \left(\bPhi_{\Gamma}^{(1)}\right)^{-1/2}\left\{\left(\hat\bGamma_{\cA_2,2}^* - \bGamma_{\cA_2,2}^*\right)^\T,\dots, \left(\hat\bGamma_{\cA_K,K}^* - \bGamma_{\cA_K,K}^*\right)^\T\right\}^\T \overset{d}{\to } \cN\left(\zero,\bI_{\frac{K(K-1)}{2}}\right).
\end{equation*}
Because the first-order part of $\hat\bX^{(1)}$ is skew-symmetric, its covariance is singular on $\RR^{K^2}$. Accordingly, the Gaussian limit is stated for nondegenerate projections:
\begin{equation}
    \frac{\ba^\T\mathrm{vec}(\hat\bX^{(1)})}
    {\{\ba^\T\bPhi_X^{(1)}\ba\}^{1/2}}
    \overset{d}{\to}\cN(0,1)
    \quad\text{for every fixed $\ba$ such that $\ba^\T\bPhi_X^{(1)}\ba>0$}.\label{eq_asymp_dis_X_1}
\end{equation}
To define $\bPhi^{(1)}_{X}$, let $\iota(l,h) = {(l\vee h-1)(l\vee h-2)}/{2} + (l\wedge h)\in\cK^{tri}_{l\vee h}$ and $\bPhi^{(1)}_{X}$ is given as
\begin{equation}
    \big(\bPhi^{(1)}_{X}\big)_{\cK_{l}, \cK_{h}} = \left\{\mathrm{sgn}(l - i)\mathrm{sgn}(h - j)\be_{\iota(i,l)}^\T\big(\bLambda_{X}^{(1)}\big)^{-1}\bPhi_{\Gamma}^{(1)}\big(\bLambda_{X}^{(1)}\big)^{-\T}\be_{\iota(j,h)}\right\}_{i\in[K],j\in[K]},\label{eq_asymp_cov_X_1}
\end{equation}
where the formula holds for each $l,h\in[K]$, and $\cK_l=\{(l-1)K+1,\ldots,lK\}$ indexes the $l$th column in $\mathrm{vec}(\bX)$.

We now derive the asymptotic distributions for $\hat\btheta_j^{(1)} - \btheta_j^* = \big((\hat\beta_j^{(1)}-\beta_j^*), (\hat\bgamma_j^{(1)}-\bgamma_j^*)^\T\big)^\T$. For $\hat\beta_j^{(1)} - \beta_j^*$, \eqref{eq_accurate_control_hsigma_star_2}, \eqref{eq_transform_to_est_1}, and $\|\hat\btheta^*\|_{\infty}=\cO(1)$ give
\begin{equation*}
    \hat\beta_j^{(1)} - \beta_j^* = \hat\beta_j^* - \beta_j^* + \cOp\big(\tau_{nq}\sqrt{\log n/B_n}\big).
\end{equation*}
When $\sqrt{n\log n}\tau_{nq}\to 0$, $ \hat\beta_j^{(1)} - \beta_j^*$ has the same asymptotic distribution as $\hat\beta_j^* - \beta_j^*$. For $\hat\bgamma_j^{(1)} - \bgamma_j^*$, note that
\begin{equation}\hat\bgamma_j^{(1)} - \bgamma_j^* = \big(\hat\bX^{(1)} + \bI_K\big)\hat\bgamma_j^* - \bgamma_j^*= \hat\bX^{(1)}\bgamma_j^* +  \hat\bgamma_j^*- \bgamma_j^* + \hat\bX^{(1)}(\hat\bgamma_j^*- \bgamma_j^*)\end{equation}
The first term $\hat\bX^{(1)}\bgamma_j^*$ has a tractable asymptotic distribution given in \eqref{eq_asymp_dis_X_1}; the second term $\hat\bgamma_j^* - \bgamma_j^*$ has an asymptotic distribution given in Lemma~\ref{lemma_asymptotic_normality_star}; and the third term is $\cOp\big(\bar\delta_{nq}^2\big)$ by \eqref{eq_bound_x_1} and $\|\hat\bgamma_j^* - \bgamma_j^*\| = \cOp(\bar\delta_{nq})$. It remains to calculate the covariance between $\hat\bX^{(1)}\bgamma_j^*$ and $\hat\btheta_j^* - \btheta_j^*$.

When $j\notin \cA_r$ for all $r\in[K]$, by Lemma~\ref{lemma_asymptotic_normality_star}, one can check that $\hat\btheta_j^* - \btheta_j^*$ and $\hat\bGamma_{\cA_k,k}^* - \bGamma_{\cA_k,k}^*$ are asymptotically independent pairwise for $k\in[K]$. Hence $\hat\btheta_j^* - \btheta_j^*$ and $\hat\bX^{(1)}$ are asymptotically independent, and we have
\begin{equation}\begin{aligned}
    &\,\left(\bPhi_j^* + \begin{pmatrix}
        0&\zero\\\zero & \big\{(\bgamma_j^*)^\T\otimes \bI_K\big\}\bPhi^{(1)}_X\big\{\bgamma_j^*\otimes \bI_K\big\}
    \end{pmatrix}\right)^{-1/2}\begin{pmatrix}\hat\beta_j^{(1)} - \beta_j^*\\\hat\bgamma_j^{(1)} - \bgamma_j^*\end{pmatrix}\\
    =&\,\left(\bPhi_j^* + \begin{pmatrix}
        0&\zero\\\zero & \big\{(\bgamma_j^*)^\T\otimes \bI_K\big\}\bPhi^{(1)}_X\big\{\bgamma_j^*\otimes \bI_K\big\}
    \end{pmatrix}\right)^{-1/2}\begin{pmatrix}\hat\beta_j^{*} - \beta_j^*\\\hat\bgamma_j^{*} - \bgamma_j^* +\hat\bX^{(1)}\bgamma_j^*\end{pmatrix} \\&\quad+ \cOp\big(\sqrt{n}\tau_{nq}\sqrt{\log n/B_n}\big)\\\overset{d}{\to}&\,\cN\big(\zero_{K+1},\bI_{K+1}\big),\end{aligned}\label{eq_example_asym_X1_ind}
\end{equation}
as $n,q\to\infty$, provided that $\sqrt{n\log n}\tau_{nq}\to0$. Here, we use $\|(\bPhi_j^*)^{-1}\| = \cOp(nB_n)$ as established in Section~\ref{supp_sec_prove_asym_0}.

When $j\in\cA_k$ for at least one $k$, the covariance between $\hat\btheta_j^* - \btheta_j^*$ and $\hat\bX^{(1)}\bgamma_j^*$ must also be included. Let $j_{kr}=\cA_k(r)$ for $k=2,\ldots,K$ and $r\in[k-1]$. From \eqref{eq_linear_solver_X1}, define the $(K+1)\times K$ matrix
\begin{equation}
\begin{aligned}
    \bPhi_{X,kr}^{(1)}
    =\sum_{l=1}^K\Bigg[&-\sum_{h<l}\be_{k+1}^{(K+1)}(\be_r^{(k-1)})^\T
        \left\{\big((\bLambda_X^{(1)})^{-1}\big)_{\cK_l^{tri},\cK_k^{tri}}\right\}^\T
        \be_h^{(l-1)}\gamma_{j_{kr}h}^*\\
    &+\sum_{h>l}\be_{k+1}^{(K+1)}(\be_r^{(k-1)})^\T
        \left\{\big((\bLambda_X^{(1)})^{-1}\big)_{\cK_h^{tri},\cK_k^{tri}}\right\}^\T
        \be_l^{(h-1)}\gamma_{j_{kr}h}^*\Bigg]\be_l^\T.
\end{aligned}\label{eq_define_phi_X_kr}
\end{equation}
The use of $\be_{k+1}^{(K+1)}$ reflects that $\gamma_{jk}$ is the $(k+1)$st coordinate of $\btheta_j$. Hence we have
\begin{equation*}
    \mathrm{Cov}\left(\hat\btheta_j^* - \btheta_j^*,\hat\bX^{(1)}\bgamma_j^*\right)
    =\sum_{k=2}^K\sum_{r=1}^{k-1}1_{\{j=j_{kr}\}}\bPhi_j^*\bPhi_{X,kr}^{(1)}
    +\cOp\big(\tau_{nq}\bar\delta_{nq}\sqrt{\log n/B_n}\big).
\end{equation*}
Similar to the derivation of \eqref{eq_example_asym_X1_ind}, it holds that
\begin{equation*}
    \Big(\bE_{(j)}\bPhi_{j}^{(1)}\bE_{(j)}^\T\Big)^{-1/2}\begin{pmatrix}
        \hat\beta_j^{(1)} - \beta_j^*\\\hat\bgamma_{j,\cS_j}^{(1)} - \bgamma_{j,\cS_j}^*\end{pmatrix}\overset{d}{\to}\cN\big(\zero_{K_j+1},\bI_{K_j+1}\big)
\end{equation*}
where $K_j=|\cS_j|$ and
\begin{equation}
\begin{aligned}
    \bPhi_{j}^{(1)}
    =&\,\bPhi_j^*
    +\bJ_\gamma\big\{(\bgamma_j^*)^\T\otimes \bI_K\big\}\bPhi_X^{(1)}
        \big\{\bgamma_j^*\otimes \bI_K\big\}\bJ_\gamma^\T\\
    &\,+\Big\{\sum_{k=2}^K\sum_{r=1}^{k-1}1_{\{j=j_{kr}\}}\bPhi_j^*\bPhi_{X,kr}^{(1)}\Big\}\bJ_\gamma^\T+\bJ_\gamma\Big\{\sum_{k=2}^K\sum_{r=1}^{k-1}1_{\{j=j_{kr}\}}\bPhi_j^*\bPhi_{X,kr}^{(1)}\Big\}^\T.
\end{aligned}\label{eq_define_cov_ic1}
\end{equation}
with $\bPhi_X^{(1)}$ defined in \eqref{eq_asymp_cov_X_1}, $\bPhi_{X,kr}^{(1)}$ defined in \eqref{eq_define_phi_X_kr}, and
\begin{equation*}
    \bJ_\gamma := \begin{pmatrix}\zero_{1\times K}\\\bI_K\end{pmatrix}\in\RR^{(K+1)\times K}.
\end{equation*}

\subsubsection{Asymptotic Distributions for Latent Variables}\label{thm_asymp_id1_part_2}
For the asymptotic distribution of $\hat\bu_i^{(1)}$, following a similar argument in the proof of Corollary~\ref{thm_asymp_bu}, we first establish the asymptotic distribution of $\hat\bu_i^{\star(1)}$. We note that $\hat\bu_i^{\star(1)}$ solves
\begin{align*}
    \hat\bu^{\star(1)}_i &= \mathop{\arg\max}_{\bu\in\RR^{K}}\sum_{j=1}^q\ell_{ij}\left(\big(\hat\bgamma_j^{(1)}\big)^\T\bu + \hat\beta_j^{(1)}\right) \\ &= \mathop{\arg\max}_{\bu\in\RR^{K}}\sum_{j=1}^q\ell_{ij}\left(\big(\hat\bgamma_j^{*})^\T\big(\hat\bX^{(1)} + \bI_K\big)^\T\bu + \hat\beta_j^{*} + (\hat\bgamma_j^*)^\T\hat\bmu^*\right).
\end{align*}
Recall that $\bu_i^{\star}(\hat\btheta^*)$ is the unique maximizer of $\sum_{j=1}^q\ell_{ij}\big((\hat\bgamma_j^*)^\T\bu + \hat\beta_j^*\big)$ as given in Lemma~\ref{lemma_asymptotic_normality_star}. Then we know 
\begin{align*}
    \hat\bu^{\star(1)}_i - \bu_i^*& = \big(\hat\bX^{(1)} + \bI_K\big)^{-\T}\big\{\bu_i^{\star}(\hat\btheta^*) -\hat\bmu^*\big\} - \bu_i^*\\& = \bu_i^{\star}(\hat\btheta^*)  - \bu_i^* - \big(\hat\bX^{(1)} \big)^\T\bu_i^* -  \big(\hat\bX^{(1)} \big)^\T\big(\bu_i^{\star}(\hat\btheta^*) - \bu_i^*\big) - \hat\bmu^* + \cOp(\bar\delta_{nq}^2).
\end{align*}
The remainder in the second equality follows from \eqref{eq_bound_x_1} and Lemma~\ref{lemma_asymptotic_normality_star}. Together with \eqref{eq_asymp_dis_X_1} and $\sqrt{q\log n}\tau_{nq}\to 0$, this gives
\begin{equation*}
    \Big(\bPsi_i^{(1)}\Big)^{-1/2}\big(\hat\bu^{\star(1)}_i - \bu_i^*\big) \overset{d}{\to} \cN\big(\zero_K,\bI_K\big),
\end{equation*}
with 
\begin{equation}
    \bPsi_i^{(1)} = \bPsi_i^* + \big\{\bI_K\otimes (\bu_i^*)^\T\big\}\bPhi^{(1)}_X\big\{\bI_K\otimes \bu_i^*\big\}.\label{eq_define_cov_psi_i_1}
\end{equation}
Here, we use the fact that $(\hat\bX^{(1)})^\T\bu_i^*$ is asymptotically independent of $\bu_i^{\star}(\hat\btheta^*)$, because Lemma~\ref{lemma_asymptotic_normality_star} makes each $\hat\btheta_j^*$ asymptotically independent of $\bu_i^{\star}(\hat\btheta^*)$. As in Section~\ref{supp_sec_prove_thm_asymp_bu}, $\big(\bPsi_i^{(1)}\big)^{-1/2}(\hat\bu_i^{\star(1)} - \hat\bu_i^{(1)}) = o_{\PP}(1)$, which finishes the proof.

\subsubsection{Posterior Asymptotic Normality}\label{thm_asymp_id1_part_3}
We only sketch the proof, since it follows the same Bernstein--von Mises argument as in the proof of Corollary~\ref{thm_bvm_irt}. Fix $i\in[n]$ and write
\begin{equation*}
    \hat\bJ_i^{(1)}=-\sum_{j=1}^q\ell_{ij}^{\prime\prime}(\hat\eta_{ij}^{\star(1)})
    \hat\bgamma_j^{(1)}(\hat\bgamma_j^{(1)})^\T,
    \qquad
    \hat\bPsi_i^{\star(1)}=(\hat\bJ_i^{(1)})^{-1},
\end{equation*}
where $\hat\eta_{ij}^{\star(1)}=(\hat\bgamma_j^{(1)})^\T\hat\bu_i^{\star(1)}+\hat\beta_j^{(1)}$. By the consistency of $\hat\btheta^{(1)}$ and the arguments already used in Section~\ref{supp_sec_prove_thm_bvm_irt}, the same curvature bounds as in \eqref{eq_bvm_curvature_bounds} hold for $\hat\bJ_i^{(1)}$. Moreover, the Gaussian approximation with covariance $\bar\bPsi_i^*$ can first be reduced to the one with covariance $\hat\bPsi_i^{\star(1)}$, by the same covariance-comparison argument used at the beginning of the proof of Corollary~\ref{thm_bvm_irt}. Hence it remains to prove the local normal approximation after centering at $\hat\bu_i^{\star(1)}$ and scaling by $\hat\bPsi_i^{\star(1)}$.

Let $\bu_{\bx}^{(1)}=\hat\bu_i^{\star(1)}+(\hat\bPsi_i^{\star(1)})^{1/2}\bx$ and define
\begin{equation*}
    \widetilde R_{nq}^{(1)}(\bx)=
    \exp\{\ell_i(\bu_{\bx}^{(1)};\hat\btheta^{(1)})
    -\ell_i(\hat\bu_i^{\star(1)};\hat\btheta^{(1)})\}
    \frac{\pwk(\bu_{\bx}^{(1)})}{\pwk(\hat\bu_i^{\star(1)})},
    \qquad
    \widetilde Z_{nq}^{(1)}=\int_{\RR^K}\widetilde R_{nq}^{(1)}(\bz)d\bz.
\end{equation*}
After the change of variables $\bx=(\hat\bPsi_i^{\star(1)})^{-1/2}(\bu-\hat\bu_i^{\star(1)})$, it suffices to show
\begin{equation}
    \int_{\RR^K}\left|\frac{\widetilde R_{nq}^{(1)}(\bx)}{\widetilde Z_{nq}^{(1)}}-\pwk(\bx)\right|d\bx=o_p(1).
    \label{eq_posterior_asymp_2_IC1}
\end{equation}
Take the same sequences $m_q$ and $M_q$ as in Section~\ref{supp_sec_prove_thm_bvm_irt} and decompose $\RR^K$ into
\begin{equation*}
    A_x^{(1)}=\{\|\bx\|\le m_q\},\qquad
    A_x^{(2)}=\{m_q<\|\bx\|\le M_q\},\qquad
    A_x^{(3)}=\{\|\bx\|>M_q\}.
\end{equation*}
On $A_x^{(1)}$, since $\hat\bu_i^{\star(1)}$ maximizes $\ell_i(\bu;\hat\btheta^{(1)})$, the linear term in the Taylor expansion vanishes and
\begin{equation*}
\begin{aligned}
    \ell_i(\bu_{\bx}^{(1)};\hat\btheta^{(1)})
    -\ell_i(\hat\bu_i^{\star(1)};\hat\btheta^{(1)})
    &= -\frac12\|\bx\|^2+r_{nq}^{(1)}(\bx),\\
    \sup_{\bx\in A_x^{(1)}}|r_{nq}^{(1)}(\bx)|
    &\le C B_n q\|\hat\bPsi_i^{\star(1)}\|^{3/2}m_q^3=o_p(1).
\end{aligned}
\end{equation*}
Also, uniformly on $A_x^{(1)}$, we know ${\pwk(\bu_{\bx}^{(1)})}/{\pwk(\hat\bu_i^{\star(1)})}=1+o_p(1)$ because $\|\hat\bu_i^{\star(1)}\|=\cOp(\sqrt{\log n})$ and $\sup_{\bx\in A_x^{(1)}}\|\bu_{\bx}^{(1)}-\hat\bu_i^{\star(1)}\|=\cOp\{m_q/(\sqrt{qb_n})\}=o_p(1/\sqrt{\log n})$. Therefore,
\begin{equation*}
    \int_{A_x^{(1)}}\left|\widetilde R_{nq}^{(1)}(\bx)-\exp(-\|\bx\|^2/2)\right|d\bx=o_p(1).
\end{equation*}
On $A_x^{(2)}$ and $A_x^{(3)}$, the same concavity and prior-ratio bounds as in the proof of Corollary~\ref{thm_bvm_irt} give
\begin{equation*}
    \int_{A_x^{(2)}\cup A_x^{(3)}}\widetilde R_{nq}^{(1)}(\bx)d\bx=o_p(1),
    \qquad
    \int_{A_x^{(2)}\cup A_x^{(3)}}\exp(-\|\bx\|^2/2)d\bx=o(1).
\end{equation*}
Combining the three regions yields
\begin{equation*}
    \int_{\RR^K}\left|\widetilde R_{nq}^{(1)}(\bx)-\exp(-\|\bx\|^2/2)\right|d\bx=o_p(1),
\end{equation*}
and hence $\widetilde Z_{nq}^{(1)}=(2\pi)^{K/2}+o_p(1)$. This proves \eqref{eq_posterior_asymp_2_IC1}. Together with the covariance-comparison step, the desired posterior asymptotic normality follows.

\subsection{Proof of Theorem~\ref{thm_asymp_id2}}\label{supp_sec_prove_thm_asymp_id2}
Theorem~\ref{thm_asymp_id2} consists of three parts, whose proofs are given in Sections~\ref{thm_asymp_id2_part_1}, \ref{thm_asymp_id2_part_2}, and~\ref{thm_asymp_id2_part_3}, respectively.
\subsubsection{Asymptotic Distributions for Item Parameters}\label{thm_asymp_id2_part_1}
Similar to the derivation in Section~\ref{thm_asymp_id1_part_1}, we can show that $\hat\btheta^{(2)}$ in \eqref{eq_mmle_IC3} can be obtained from \eqref{eq_aux_estimator} via the following linear transformation:
\begin{equation}
    \hat\bgamma_j^{(2)} = \hat\bR^{(2)}_1\hat\bR^{(2)}_2(\hat\bSigma^*)^{1/2}\hat\bgamma_j^*\text{ and }\hat\beta_j^{(2)} = \hat\beta_j^* + (\hat\bgamma_j^*)^\T \hat\bmu^*\text{ for each }j\in[q],\label{eq_transform_theta_Star_2}
\end{equation}
where $\hat\bR^{(2)}_1 = \mathrm{diag}(\tilde r_1^{(2)},\dots,\tilde r_K^{(2)})$ and $\hat\bR^{(2)}_2 = \big(\hat\br_1^{(2)},\dots,\hat\br_K^{(2)}\big)^\T$ are constructed as follows. For each $l\in[K]$, $\hat\br_l^{(2)}$ obeys
\begin{equation*}
    \hat\br_l^{(2)}\perp\mathrm{row}\left\{\hat\bGamma_{\cA_l,}^*(\hat\bSigma^*)^{1/2}\right\}\text{ and }\big\|\hat\br_l^{(2)}\big\| = 1,
\end{equation*}
and we choose the diagonal matrix $\hat\bR_1^{(2)}$ with positive diagonal entries to satisfy \begin{equation}\mathrm{diag}\big\{\big(\hat\bR_1^{(2)}\big)^{-\T}\big(\hat\bR_2^{(2)}\big)^{-\T}\big(\hat\bR_2^{(2)}\big)^{-1}\big(\hat\bR_1^{(2)}\big)^{-1}\big\} = \bI_K.\label{eq_constraint_br_12}\end{equation}
Here, the first requirement determines each direction up to sign; we choose the orientation satisfying $(\hat\br_l^{(2)})^\T\be_l\geq0$. The second requirement determines the scale. In addition, one can check that $\hat\bSigma^{(2)}$ from \eqref{eq_mmle_IC3} satisfies
\begin{equation}
    \hat\bSigma^{(2)} = \big(\hat\bR_1^{(2)}\big)^{-\T}\big(\hat\bR_2^{(2)}\big)^{-\T}\big(\hat\bR_2^{(2)}\big)^{-1}\big(\hat\bR_1^{(2)}\big)^{-1},\label{eq_transform_theta_Star_2_sigma}
\end{equation}
 and by Lemma~\ref{lemma_asymptotic_normality_star}, we know 
\begin{equation}
    \big\|\hat\bSigma^* - \bSigma_u^*\big\| = \cOp(\delta_{nq})\label{eq_ic4_sigma_consistency}.
\end{equation}

By Condition~\ref{condition2}, for each $r\in[K]$, we have $\mathrm{rank}(\bGamma_{\cA_r,}^*) = K-1$ and $\bGamma_{\cA_r,}^*\be_r = \zero_{K-1}$. By Lemma~\ref{lemma_asymptotic_normality_star},
\begin{equation*}
    \begin{aligned}
\|\hat\bGamma^*_{\cA_r,}(\hat\bSigma^*)^{1/2}-\bGamma^*_{\cA_r,}(\bSigma_u^*)^{1/2}\|
&\le\|\hat\bGamma^*_{\cA_r,}-\bGamma^*_{\cA_r,}\|\,\|(\bSigma_u^*)^{1/2}\|\\
&\quad+\|\hat\bGamma^*_{\cA_r,}\|\,\|(\hat\bSigma^*)^{1/2}-(\bSigma_u^*)^{1/2}\|
=\cOp(\bar\delta_{nq}).
\end{aligned}
\end{equation*} where the second term uses the perturbation bound for matrix square root operator together with the positive definiteness of $\bSigma_u^*$. Since $\bGamma_{\cA_r,}^*\be_r = \zero_{K-1}$, the null space of $\bGamma_{\cA_r,}^*(\bSigma_u^*)^{1/2}$ is spanned by $\br_r^{\circ} =  c_r^{-1}(\bSigma_u^*)^{-1/2}\be_r$ for $ c_r = \|(\bSigma_u^*)^{-1/2}\be_r\|$.
Because $\hat\bGamma_{\cA_r,}^*(\hat\bSigma^*)^{1/2}\hat\br_r^{(2)} = \zero_{K-1}$, Lemma~\ref{lemma_linear_equation_perturb} and $\|\hat\br_r^{(2)}\| = \|\br_r^{\circ}\| = 1$ give
\begin{equation*}
    \big\|\hat\br_r^{(2)} - \br_r^o\big\| = \cOp(\bar\delta_{nq}).
\end{equation*}
Writing $\bC = \mathrm{diag}(c_1,\dots,c_K)$, this says $\big\|\hat\bR^{(2)}_2 - \bC^{-1}(\bSigma_u^*)^{-1/2}\big\| = \cOp(\bar\delta_{nq})$. By telescoping, one can obtain $\big\|(\hat\bR^{(2)}_2)^{-\T}(\hat\bR^{(2)}_2)^{-1} - \bC\bSigma_u^*\bC\big\| = \cOp(\bar\delta_{nq})$. Since $\hat\bR^{(2)}_1$ is diagonal with positive entries and satisfies \eqref{eq_constraint_br_12}, and since $\mathrm{diag}(\bC\bSigma_u^*\bC) = \bC^2$ by $\mathrm{diag}(\bSigma_u^*) = \bI_K$, it follows that $\big\|\hat\bR_1^{(2)} - \bC\big\| = \cOp(\bar\delta_{nq})$. Thus, we have $\bC\cdot\bC^{-1}(\bSigma_u^*)^{-1/2}\cdot(\bSigma_u^*)^{1/2} = \bI_K$, which yields \[\big\|\hat\bR_1^{(2)}\hat\bR_2^{(2)}(\hat\bSigma^*)^{1/2}-\bI_K\big\| = \cOp(\bar\delta_{nq}).\] Together with \eqref{eq_transform_theta_Star_2} and $\|\hat\bmu^*\| = \cOp\big(\delta_{nq}\big)$, this yields
\begin{equation*}
    q^{-1/2}\big\|\hat\btheta^{(2)} - \btheta^*\big\| = \cOp(\bar\delta_{nq}).
\end{equation*}

Now we follow a similar procedure in Section~\ref{thm_asymp_id1_part_1} to derive the asymptotic distribution for $\hat\btheta^{(2)}$. In particular, define $\hat\bX^{(2)} = \hat\bR^{(2)}_1\hat\bR_2^{(2)}(\hat\bSigma^*)^{1/2} - \bI_K$. First, we have
\begin{equation}
    \big\|\hat\bX^{(2)}\big\| = \cOp(\bar\delta_{nq}),\label{eq_bound_x_2}
\end{equation}
by the above discussion. Subsequently, $(\bI_K + \hat\bX^{(2)})^{-1}$ can be approximated by
\begin{equation*}
    (\bI_K + \hat\bX^{(2)})^{-1} = \bI_K - \hat\bX^{(2)} + \cOp(\bar\delta_{nq}^2).
\end{equation*}
Now we construct linear equations for $\hat\bX^{(2)}$. By \eqref{eq_bound_x_2}, one has
\begin{align*}
    \bI_K &= \mathrm{diag}\left\{ \big(\hat\bX^{(2)} + \bI_K\big)^{-\T} \hat\bSigma^*\big(\hat\bX^{(2)} + \bI_K\big)^{-1}\right\} \\&= \mathrm{diag}\left\{  \hat\bSigma^* - \big(\hat\bX^{(2)}\big)^\T\hat\bSigma^* - \hat\bSigma^*\hat\bX^{(2)}\right\} + \cOp(\bar\delta_{nq}^2)
\end{align*}
With $\|{\rm diag}(\hat\bSigma^* - \bI_K)\| = \cOp\big(\tau_{nq}\sqrt{\log n/B_n}\big)$ by \eqref{eq_accurate_control_hsigma_star_3}, we arrive at 
\begin{equation*}
    \mathrm{diag}\left\{\big(\hat\bX^{(2)}\big)^\T\bSigma_u^* + \bSigma_u^*\hat\bX^{(2)}\right\}=\cOp\big(\tau_{nq}\sqrt{\log n/B_n}\big).
\end{equation*}
Since the diagonal of $\bSigma_u^*$ is one and the matrix is symmetric, the component form is
\begin{equation}\label{eq_correlated_diagonal_correction}
\hat X^{(2)}_{ll}=-\sum_{h\ne l}(\bSigma_u^*)_{lh}\hat X^{(2)}_{hl}
+O_p\big(\tau_{nq}\sqrt{\log n/B_n}\big).
\end{equation}
Next, we invoke the identifiability condition 
\begin{equation*}
    \zero_{K-1} = \hat\bGamma^{(2)}_{\cA_r,r} - \bGamma_{\cA_r,r}^* = \Big\{\hat\bGamma^*\big(\hat\bX^{(2)} + \bI_K\big)^\T\Big\}_{\cA_r,r} - \bGamma_{\cA_r,r}^*.
\end{equation*}
The second equality follows from $\hat\bGamma^{(2)} = \hat\bGamma^*(\hat\bSigma^*)^{1/2}\big(\hat\bR_2^{(2)}\big)^\T\big(\hat\bR_1^{(2)}\big)^\T$ and \eqref{eq_transform_theta_Star_2}. Since $\|\hat\bGamma_{\cA_r,}^{*} - \bGamma_{\cA_r,}^*\| = \cOp\big(\bar\delta_{nq}\big)$ by Lemma~\ref{lemma_asymptotic_normality_star},
\begin{equation}
    \zero_{K-1} = \bGamma^*_{\cA_r,}\big(\hat\bX_{r,}^{(2)}\big)^\T + \hat\bGamma^*_{\cA_r,r} - \bGamma_{\cA_r,r}^* + \cOp(\bar\delta_{nq}^2).\label{eq_solve_linear_eq2_con2}
\end{equation}
Similar to the calculation in Section~\ref{thm_asymp_id1_part_1}, we can determine $\hat\bX^{(2)}$ from \eqref{eq_correlated_diagonal_correction} and \eqref{eq_solve_linear_eq2_con2}. 
Let $\ba_l=\hat\bGamma^*_{\cA_l,l}-\bGamma^*_{\cA_l,l}$, $\bE_l^{(2)}=(\bI_K)_{,-l}$, and $\bD_l^{(2)}=\bE_l^{(2)}(\bGamma^*_{\cA_l,-l})^{-1}$. Define $\bY$ with zero diagonal by $(\bY_{l,})^\T=-\bD_l^{(2)}\ba_l$. Then $(\hat\bX^{(2)}_{l,-l})^\T=(\bY_{l,-l})^\T+O_p\big(\tau_{nq}\sqrt{\log n/B_n}\big)$ for $l\in[K]$.
By \eqref{eq_correlated_diagonal_correction}, we can recover the diagonal entries in $\hat\bX^{(2)}$. Therefore, we arrive at 
\begin{equation}\label{eq_correlated_linear_map}
\mathrm{vec}(\hat\bX^{(2)})=\Big[\bI_{K^2}-\sum_{l=1}^K
(\be_l\otimes\be_l)\{\be_l^\T\otimes(\be_l^\T\bSigma_u^*)\}\Big]\mathrm{vec}(\bY)+O_p\big(\tau_{nq}\sqrt{\log n/B_n}\big).
\end{equation}
Denote $\bT_\Sigma=\bI_{K^2}-\sum_{l=1}^K
(\be_l\otimes\be_l)\{\be_l^\T\otimes(\be_l^\T\bSigma_u^*)\}$. The asymptotic expansion in Lemma~\ref{lemma_asymptotic_normality_star} gives
\begin{equation*}
    \left(\bPhi_{\Gamma}^{(2)}\right)^{-1/2}\left\{\left(\hat\bGamma_{\cA_1,1}^* - \bGamma_{\cA_1,1}^*\right)^\T,\dots, \left(\hat\bGamma_{\cA_K,K}^* - \bGamma_{\cA_K,K}^*\right)^\T\right\}^\T \overset{d}{\to } \cN\left(\zero,\bI_{{K(K-1)}}\right),
\end{equation*}
where $\bPhi_{\Gamma}^{(2)} \in \RR^{{K(K-1)}\times {K(K-1)}}$ is given as
\begin{equation*}
    \left(\bPhi_{\Gamma}^{(2)}\right)_{\cK_l^-,\cK_h^-} = \left\{1_{(j_1=j_2)}\be_l^\T\bPhi_{\gamma,j_1}^*\be_h\right\}_{j_1\in \cA_l,j_2\in \cA_h}\text{ for each }l,h\in[K],
\end{equation*}
where $\cK_l^- = \{(l-1)(K-1)+1,\dots,l(K-1)\}$. Let $\bOmega_X^{(2)}$ be the covariance matrix for the row-stacked representation, with $K\times K$ blocks
\begin{equation}
    \big(\bOmega_X^{(2)}\big)_{\cK_l,\cK_h}
    =\bD_l^{(2)}\big(\bPhi_{\Gamma}^{(2)}\big)_{\cK_l^-,\cK_h^-}(\bD_h^{(2)})^\T,
    \qquad l,h\in[K].
    \label{eq_asymp_cov_X_2_row}
\end{equation}
Let $\bP_K$ be the commutation matrix satisfying $\bP_K\mathrm{vec}(\bM^\T)=\mathrm{vec}(\bM)$ for every $\bM\in\RR^{K\times K}$, and define
\begin{equation}
    \bPhi_X^{(2)}=\bT_\Sigma\bP_K\bOmega_X^{(2)}\bP_K^\T\bT_\Sigma^\T.\label{eq_asymp_cov_X_2}
\end{equation}
The distributional limit of $\hat\bX^{(2)}$ is therefore written as
\begin{equation}
    \frac{\ba^\T\mathrm{vec}(\hat\bX^{(2)})}
    {\{\ba^\T\bPhi_X^{(2)}\ba\}^{1/2}}
    \overset{d}{\to}\cN(0,1)
    \quad\text{for every fixed $\ba$ such that $\ba^\T\bPhi_X^{(2)}\ba>0$}.\label{eq_asymp_dis_X_2}
\end{equation}
Now we derive the asymptotic distributions for $\hat\btheta_j^{(2)} - \btheta_j^* = \big((\hat\beta_j^{(2)}  -\beta_j^*), (\hat\bgamma_j^{(2)} - \bgamma_j^*)^\T\big)^\T$. For $\hat\beta_j^{(2)}$, by \eqref{eq_accurate_control_hsigma_star_3} and \eqref{eq_transform_theta_Star_2}, one has
\begin{equation*}
    \hat\beta_j^{(2)} - \beta_j^* = \hat\beta_j^* - \beta_j^* + \cOp\big(\tau_{nq}\sqrt{\log n/B_n}\big).
\end{equation*}
When $\sqrt{n\log n}\tau_{nq}\to 0$, $ \hat\beta_j^{(2)} - \beta_j^*$ has the same asymptotic distribution as $\hat\beta_j^* - \beta_j^*$. For $\hat\bgamma_j^{(2)} - \bgamma_j^*$, one notes
\begin{equation*}
    \hat\bgamma_j^{(2)} - \bgamma_j^*
    = \hat\bX^{(2)}\bgamma_j^* + \hat\bgamma_j^*- \bgamma_j^*
    + \hat\bX^{(2)}(\hat\bgamma_j^*- \bgamma_j^*).
\end{equation*}
The first term $\hat\bX^{(2)}\bgamma_j^*$ has the projected asymptotic distribution in \eqref{eq_asymp_dis_X_2}; the second term has the asymptotic distribution given in Lemma~\ref{lemma_asymptotic_normality_star}; and the third term is $\cOp(\bar\delta_{nq}^2)$ by \eqref{eq_bound_x_2} and $\|\hat\bgamma_j^* - \bgamma_j^*\| = \cOp(\bar\delta_{nq})$. It remains to calculate the asymptotic covariance between $\hat\bX^{(2)}\bgamma_j^*$ and $\hat\btheta_j^* - \btheta_j^*$.

When $j\notin \cA_r$ for all $r\in[K]$, Lemma~\ref{lemma_asymptotic_normality_star} shows that $\hat\btheta_j^* - \btheta_j^*$ is asymptotically independent of $\hat\bGamma_{\cA_k,k}^* - \bGamma_{\cA_k,k}^*$ for all $k\in[K]$, and hence of $\hat\bX^{(2)}$. Therefore,
\begin{equation}\begin{aligned}
    &\,\left(\bPhi_j^* + \begin{pmatrix}
        0&\zero\\\zero & \big\{(\bgamma_j^*)^\T\otimes \bI_K\big\}\bPhi^{(2)}_X\big\{\bgamma_j^*\otimes \bI_K\big\}
    \end{pmatrix}\right)^{-1/2}\begin{pmatrix}\hat\beta_j^{(2)} - \beta_j^*\\\hat\bgamma_j^{(2)} - \bgamma_j^*\end{pmatrix}\\
    =&\,\left(\bPhi_j^* + \begin{pmatrix}
        0&\zero\\\zero & \big\{(\bgamma_j^*)^\T\otimes \bI_K\big\}\bPhi^{(2)}_X\big\{\bgamma_j^*\otimes \bI_K\big\}
    \end{pmatrix}\right)^{-1/2}\begin{pmatrix}\hat\beta_j^{*} - \beta_j^*\\\hat\bgamma_j^{*} - \bgamma_j^* +\hat\bX^{(2)}\bgamma_j^*\end{pmatrix} \\&\quad+ \cOp\big(\sqrt{n}\tau_{nq}\sqrt{\log n/B_n}\big)\\\overset{d}{\to}&\,\cN\big(\zero_{K+1},\bI_{K+1}\big),\end{aligned}\label{eq_example_asym_X2_ind}
\end{equation}
as $n,q\to\infty$, provided that $\sqrt {n\log n}\,\tau_{nq}\to0$. When $j\in\cA_k$ for at least one $k$, the covariance between $\hat\btheta_j^* - \btheta_j^*$ and $\hat\bX^{(2)}\bgamma_j^*$ must also be included. Write $j_{kr}=\cA_k(r)$ and $\bM_k=\bT_\Sigma(\bD_k^{(2)}\otimes\be_k)$. Then $\mathrm{vec}(\hat\bX^{(2)})=-\sum_k\bM_k\ba_k$ to first order. Define 
\begin{equation}
    \bPhi^{(2)}_{X,kr}
    =-\be_{k+1}^{(K+1)}(\be_r^{(K-1)})^\T(\bM_k)^\T
    (\bgamma_{j_{kr}}^*\otimes\bI_K),
    \qquad k\in[K],\quad r\in[K-1].\label{eq_define_phi_X2_kr}
\end{equation}
Similarly, we know
\begin{equation*}
    \mathrm{Cov}\left(\hat\btheta_j^* - \btheta_j^*,\hat\bX^{(2)}\bgamma_j^*\right)
    =\sum_{k=1}^K\sum_{r=1}^{K-1}1_{\{j=j_{kr}\}}\bPhi_j^*\bPhi_{X,kr}^{(2)}
    +\cOp\big(\bar\delta_{nq}\tau_{nq}\sqrt{\log n/B_n}\big).
\end{equation*}
Subject to the joint limit noted above, the same calculation as in Section~\ref{thm_asymp_id1_part_1} yields
\begin{equation*}
    \Big(\bE_{(j)}\bPhi_{j}^{(2)}\bE_{(j)}^\T\Big)^{-1/2}\begin{pmatrix}
        \hat\beta_j^{(2)} - \beta_j^*\\\hat\bgamma_{j,\cS_j}^{(2)} - \bgamma_{j,\cS_j}^*\end{pmatrix}\overset{d}{\to}\cN\big(\zero_{K_j+1},\bI_{K_j+1}\big)
\end{equation*}
where $K_j=|\cS_j|$ and
\begin{equation}
\begin{aligned}
    \bPhi_{j}^{(2)}
    =&\,\bPhi_j^*
    +\bJ_\gamma\big\{(\bgamma_j^*)^\T\otimes \bI_K\big\}\bPhi_X^{(2)}
        \big\{\bgamma_j^*\otimes \bI_K\big\}\bJ_\gamma^\T\\
    &\,+\Big\{\sum_{k=1}^K\sum_{r=1}^{K-1}1_{\{j=j_{kr}\}}\bPhi_j^*\bPhi_{X,kr}^{(2)}\Big\}\bJ_\gamma^\T+\bJ_\gamma\Big\{\sum_{k=1}^K\sum_{r=1}^{K-1}1_{\{j=j_{kr}\}}\bPhi_j^*\bPhi_{X,kr}^{(2)}\Big\}^\T.
\end{aligned}\label{eq_define_cov_ic2}
\end{equation}
Here $\bPhi_X^{(2)}$ and $\bPhi_{X,kr}^{(2)}$ are defined in \eqref{eq_asymp_cov_X_2} and \eqref{eq_define_phi_X2_kr}, respectively.

\subsubsection{Asymptotic Distributions for Latent Variables}\label{thm_asymp_id2_part_2}
We establish the asymptotic normality of $\hat\bu^{\star(2)}_i = \mathop{\arg\max}_{\bu\in\RR^{K}}\sum_{j=1}^q\ell_{ij}\left(\big(\hat\bgamma_j^{(2)}\big)^\T\bu + \hat\beta_j^{(2)}\right)$. The asymptotic normality of $\hat\bu^{(2)}$ then follows from the argument used in Section~\ref{supp_sec_prove_thm_asymp_bu}, together with $\|\hat\bSigma^{(2)} - \bSigma_u^*\| = \cOp(\bar\delta_{nq})$ and the positive definiteness and bounded eigenvalues of $\bSigma_u^*$.

Similar to the derivation in Section~\ref{thm_asymp_id1_part_2}, one can verify that with \eqref{eq_transform_theta_Star_2}, there is
\begin{align*}
    \hat\bu^{\star(2)}_i - \bu_i^*& = \big(\hat\bX^{(2)} + \bI_K\big)^{-\T}\big\{\bu_i^{\star}(\hat\btheta^*) - \hat\bmu^*\big\} - \bu_i^*  \\& = \bu_i^{\star}(\hat\btheta^*)  - \bu_i^* - \big(\hat\bX^{(2)} \big)^\T\bu_i^* -   \hat\bmu^* + \cOp(\bar\delta_{nq}^2).
\end{align*}
The remainder term follows from \eqref{eq_bound_x_2} and Lemma~\ref{lemma_asymptotic_normality_star}. Together with \eqref{eq_accurate_control_hsigma_star_3}, \eqref{eq_asymp_cov_X_2}, and the scaling condition in part~$(ii)$ of Theorem~\ref{thm_asymp_id2}, this gives $\big(\bPsi_i^{(2)}\big)^{-1/2}\big(\hat\bu^{\star(2)}_i - \bu_i^*\big) \overset{d}{\to} \cN\big(\zero_K,\bI_K\big)$,
with 
\begin{equation}
    \bPsi_i^{(2)} = \bPsi_i^* + \big\{\bI_K\otimes (\bu_i^*)^\T\big\}\bPhi^{(2)}_X\big\{\bI_K\otimes \bu_i^*\big\}.\label{eq_define_cov_psi_i_2}
\end{equation}
Here $(\hat\bX^{(2)})^\T\bu_i^*$ is asymptotically independent of $\bu_i^{\star}(\hat\btheta^*)$, because Lemma~\ref{lemma_asymptotic_normality_star} makes each $\hat\btheta_j^*$ asymptotically independent of $\bu_i^{\star}(\hat\btheta^*)$.

\subsubsection{Posterior Asymptotic Normality}\label{thm_asymp_id2_part_3}
The proof is omitted because it closely mirrors the argument in Section~\ref{thm_asymp_id1_part_3}. The only difference is that the working prior now has the plug-in covariance matrix $\hat\bSigma^{(2)}$. By \eqref{eq_transform_theta_Star_2_sigma}, $\|\hat\bR^{(2)}_1\hat\bR_2^{(2)}(\hat\bSigma^*)^{1/2} - \bI_K\| = \|\hat\bX^{(2)}\| = \cOp(\bar\delta_{nq})$, and \eqref{eq_ic4_sigma_consistency} and the transformation above give $\|\hat\bSigma^{(2)} - \bSigma_u^*\| = \cOp(\bar\delta_{nq})$. Because $\bSigma_u^*$ is positive definite with bounded eigenvalues under the modified Assumption~\ref{assump_standard_id}, the argument in Section~\ref{thm_asymp_id1_part_3} applies with only notational changes.



\section{Proof of Technical Lemmas}
\subsection{Proof of Lemmas in Section~\ref{supp_sec_prelim}}

\subsubsection{Proof of Lemma~\ref{lemma_bu_star_theta_true}}\label{supp_sec_prove_lemma_bu_star_theta_true}
The first part of Lemma~\ref{lemma_bu_star_theta_true} is implied by the proof of Lemma~\ref{lemma_neighbour_control} in Section~\ref{supp_sec_prove_lemma_neighbour_control} by taking $\epsilon = 0$. Now we show the asymptotic expansion.
Apply the integral mean value theorem, and we have the expansion
\begin{align}
    \sum_{j=1}^q\ell_{ij}^{\prime}\big({\bgamma_j^*}^\T\bu_i^{*} + \beta_j^*\big)\bgamma_j^*=&\,\sum_{j=1}^q\ell_{ij}^{\prime}\big({\bgamma_j^*}^\T\bu_i^{*} + \beta_j^*\big)\bgamma_j^* - \sum_{j=1}^q\ell_{ij}^{\prime}({\bgamma_j^*}^\T\bu_i^{\star}+\beta_j^*)\bgamma_j^*\nonumber \\=&\, \sum_{j=1}^q\int_{0}^1-\ell_{ij}^{\prime\prime}\big({\bgamma_j^*}^\T\big\{\bu_i^{*} + s(\bu_i^{\star} - \bu_i^*)\big\} + \beta_j^*\big)\bgamma_j^*{\bgamma_j^*}^\T(\bu_i^{\star} - \bu_i^*)ds.\label{eq_bound_neighb_theta_2der_diff_true}
\end{align}
Here the first equality follows from $\partial_{\bu}\ell_i(\bu_i^\star;\btheta^*) =\sum_{j=1}^q\ell_{ij}^{\prime}({\bgamma_j^*}^\T\bu_i^{\star}+\beta_j^*)\bgamma_j^* = \zero_K$ as $\bu^\star_i$ is the maximizer of $\ell_i(\bu;\btheta^*)$. For the integral, we decompose it as
\begin{align}
    &\,-\sum_{j=1}^q\ell_{ij}^{\prime\prime}\big(\eta_{ij}^*)\bgamma_j^*{\bgamma_j^*}^\T(\bu_i^{\star} - \bu_i^*) \nonumber\\&\,+\sum_{j=1}^q\int_{0}^1\Big(\ell_{ij}^{\prime\prime}\big(\eta_{ij}^*) -\ell_{ij}^{\prime\prime}\big[{\bgamma_j^*}^\T\big\{\bu_i^{*} + s(\bu_i^{\star} - \bu_i^*)\big\} + \beta_j^*\big]\Big)\bgamma_j^*{\bgamma_j^*}^\T(\bu_i^{\star} - \bu_i^*)ds.\label{eq_integraL_res_u_star}
\end{align}
Since $|\ell_{ij}^{(3)}(\eta)|\le B_n$, $\max_{i\in[n]}\|\bu_i^{\star} - \bu_i^*\| = \cOp(\sqrt{\log n/q}\kappa_n)$ and $\|\btheta_j^{*}\|\le C$, we have
\begin{align*}
    &\,\max_{i\in[n],j\in[q]}\max_{s\in[0,1]}\left|\ell_{ij}^{\prime\prime}\big({\bgamma_j^*}^\T\{\bu_i^{*} + s(\bu_i^{\star} - \bu_i^*)\} + \beta_j^*\big) - \ell_{ij}^{\prime\prime}\big({\bgamma_j^*}^\T\bu_i^{*}  + \beta_j^*\big)\right| \\\le &\,B_n\max_{i\in[n],j\in[q]}\left|{\bgamma_j^*}^\T(\bu_i^{\star} - \bu_i^{*})\right| = \cOp\big(B_n\kappa_n\sqrt{\log n/q}\big).
\end{align*}
Therefore, we know 
\begin{align*}
    &\,\max_{i\in[n]}\Big\|\sum_{j=1}^q\int_{0}^1\Big(\ell_{ij}^{\prime\prime}\big(\eta_{ij}^*) -\ell_{ij}^{\prime\prime}\big[{\bgamma_j^*}^\T\big\{\bu_i^{*} + s(\bu_i^{\star} - \bu_i^*)\big\} + \beta_j^*\big]\Big)\bgamma_j^*{\bgamma_j^*}^\T(\bu_i^{\star} - \bu_i^*)ds\Big\| \\&\,\le \sum_{j=1}^q\cOp\Big(B_n\kappa_n\sqrt{\log n/q}\max_{j\in[q]}\|\bgamma_j^*\|^2\|\bu_i^{\star}-\bu_i^*\|\Big) = \cOp\big(\kappa_n^2B_n\log n\big).
\end{align*}
One then has $\max_{i\in[n]}\|\eqref{eq_integraL_res_u_star}\| = \cOp(\kappa_n^2B_n\log n)$. Similarly, $\|\eqref{eq_integraL_res_u_star}\| = \cOp(\kappa_n^2B_n)$ for each $i\in[n]$ and $\sum_{i\in[n]}\|\eqref{eq_integraL_res_u_star}\|^2 = \cOp(n\kappa_n^4B_n^2)$.

Plugging into \eqref{eq_bound_neighb_theta_2der_diff_true} and inverting $\sum_{j=1}^q-\ell_{ij}^{\prime\prime}(\eta_{ij}^*)\bgamma_j^*{\bgamma_j^*}^\T$, one has
\begin{equation*}
        \bu_i^{\star} - \bu_i^* = \big\{\sum_{j=1}^q-\ell_{ij}^{\prime\prime}(\eta_{ij}^*)\bgamma_j^*(\bgamma_j^*)^\T\big\}^{-1}\Big\{\sum_{j=1}^q\ell_{ij}^{\prime}(\eta_{ij}^*)\bgamma_j^* - \eqref{eq_integraL_res_u_star}\Big\}.
    \end{equation*}
With the bounds for \eqref{eq_integraL_res_u_star} above and $\lambda_{\min}\{\sum_{j=1}^q-\ell_{ij}^{\prime\prime}(\eta_{ij}^*)\bgamma_j^*(\bgamma_j^*)^\T\big\}\ge q b_n\lambda_{\min}(\bSigma_{\gamma}^*)/2$ when $q$ is large enough, by Assumptions~\ref{assumption_psd_covariance} and~\ref{assumption_smoothness}, we prove the expansion with the residual bounded as required in Lemma~\ref{lemma_bu_star_theta_true}.


To sum the expansion \eqref{eq_expansion_u_star}, let
$\bA_i=-\sum_j\ell_{ij}''(\eta_{ij}^*)\bgamma_j^*(\bgamma_j^*)^\T$,
$\bb_i=\sum_j\ell_{ij}'(\eta_{ij}^*)\bgamma_j^*$, and
$\bar\bA_i=\EE(\bA_i\mid\mathscr U_n)$. Conditional on $\mathscr U_n$, the vectors $\bar\bA_i^{-1}\bb_i$ are independent and mean zero. Assumptions~\ref{assumption_psd_covariance} and~\ref{assumption_smoothness} give $\|\bA_i^{-1}\|,\|\bar\bA_i^{-1}\|\le C/(qb_n)$,
$\EE(\|\bA_i-\bar\bA_i\|^2\mid\mathscr U_n)\le CqB_n^2$, and
$\EE(\|\bb_i\|^2\mid\mathscr U_n)\le CqB_n$. Hence we know\begin{align*}
\Big\|\sum_i\bar\bA_i^{-1}\bb_i\Big\|&=\cOp\big(\kappa_n\sqrt{n/q}\big),\\
\Big\|\sum_i(\bA_i^{-1}-\bar\bA_i^{-1})\bb_i\Big\| &\le \frac{C}{q^2b_n^2}\sum_i\|\bA_i-\bar\bA_i\|\,\|\bb_i\| =\cOp\left(\frac{nB_n^{3/2}}{qb_n^2}\right),\end{align*} which yields\begin{equation*}
\Big\|\sum_i(\bu_i^\star-\bu_i^*)\Big\| =\cOp\left(\kappa_n\sqrt{n/q}+\frac{n\kappa_n^3\sqrt{B_n}}{q}\right).
\end{equation*}
Here we use $\|\sum_i\cR_i^\star\|\le\sqrt n(\sum_i\|\cR_i^\star\|^2)^{1/2}$ and $b_n\le1\le B_n$.

Following a similar procedure, one can prove
\begin{equation}
    \left\|\sum_{i=1}^n\bu_i^*\left(\bu_i^{\star} - \bu_i^*\right)^\T\right\| = \cOp\big(\kappa_n\sqrt{n/q} + \kappa_n^3\sqrt{B_n}n/q\big).\label{eq_bound_sum_asymp_uu}
\end{equation}
Therefore, for $\|\sum_{i=1}^n\big(\bu_i^{\star} {\bu_i^{\star}}^\T - \bu_i^{*} {\bu_i^{*}}^\T\big)\|$, we conclude
\begin{align*}
    \left\|\sum_{i=1}^n\left(\bu_i^{\star} {\bu_i^{\star}}^\T - \bu_i^{*} {\bu_i^{*}}^\T\right)^\T\right\| \le &\, \left\|\sum_{i=1}^n\bu_i^*\left(\bu_i^{\star} - \bu_i^*\right)^\T\right\| + \left\|\sum_{i=1}^n\left(\bu_i^{\star} - \bu_i^*\right)(\bu_i^*)^\T\right\| \\&\, + \left\|\sum_{i=1}^n\left(\bu_i^{\star} - \bu_i^{*} \right)\left(\bu_i^{\star} - \bu_i^{*} \right)^\T\right\|\\\le &\,\cOp\big(\kappa_n\sqrt{n/q} + \kappa_n^3\sqrt{B_n}n/q\big),
\end{align*}where the last inequality follows from \eqref{eq_bound_sum_asymp_uu}, $\big(n^{-1}\sum_{i=1}^n\|\bu_i^{\star} - \bu_i^*\|^2\big)^{1/2} = \cOp(\kappa_nq^{-1/2})$ and Cauchy--Schwarz inequality.


\subsubsection{Proof of Lemma~\ref{lemma_first_order_base}}\label{supp_sec_prove_lemma_first_order_base}

We calculate a Laplace expansion for $\cS_{\theta_j}(\btheta,\bmu,\bSigma)$ as posterior expectations following~\citep{miyata2004fully}, and verify that its remainder is uniform. All bounds below are simultaneous over $i\in[n]$, $j\in[q]$, $\btheta\in\Xi$ and $(\bmu,\bSigma)\in\bar\cK_\rho$. A similar argument applies with $\bar\cK_\rho$ replaced by any fixed compact subset of $\RR^K\times\{\bSigma:\bSigma\succ\zero\}$, with differences only in the constants.

For each subject $i\in[n]$, define
\begin{equation*}
Z_i(\btheta,\bmu,\bSigma)
:=\int_{\RR^K}\exp\{\ell_i(\bu;\btheta)\}\pwk(\bu\mid\bmu,\bSigma)\,d\bu,
\end{equation*}
so that $\cL(\btheta,\bmu,\bSigma)=-\sum_{i=1}^n \log Z_i(\btheta,\bmu,\bSigma)$. 
For each $j\in[q]$, let
\[
N_{ij}(\btheta,\bmu,\bSigma)
:=\int_{\RR^K} \ell^{\prime}_{ij}(\bgamma_j^\T\bu+\beta_j) \begingroup \renewcommand{\arraystretch}{0.7}\begin{pmatrix}
        1\\\bu
    \end{pmatrix}\endgroup\exp\{\ell_i(\bu;\btheta)\}\pwk(\bu\mid\bmu,\bSigma)\,d\bu.
\]
Then we know $\cS_{\theta_j}(\btheta,\bmu,\bSigma)
= \partial_{\btheta_j}\cL(\btheta,\bmu,\bSigma)
= -\sum_{i=1}^n \frac{N_{ij}(\btheta,\bmu,\bSigma)}{Z_i(\btheta,\bmu,\bSigma)}$.
Write, for short, $g_{ij}(\bu,\btheta) = \ell_{ij}^{\prime}(\bgamma_j^\T\bu + \beta_j)(1,\bu^\T)^\T$ for each $i\in[n]$ and $j\in[q]$, and denote its $k$th-order derivative by $\partial_{\bu}^kg_{ij}$. After enlarging the fixed constant $C_\eta$ in the definitions of $B_n$ and $b_n$ if necessary, the constraints defining $\Xi$ imply $\max_{\btheta\in\Xi}\max_{i\in[n],j\in[q]}
|\bgamma_j^\T\bu_i^\star(\btheta)+\beta_j|
\le C_\eta\sqrt{\log n}$. Let $M_{ij,n}:=\sup_{|\eta|\le C_\eta\sqrt{\log n}}|\ell_{ij}'(\eta)|$. Then the mean-value theorem and Assumption~\ref{assumption_smoothness} give $M_{ij,n}\le |\ell_{ij}'(\eta_{ij}^*)|+CB_n\sqrt{\log n}$.
Set $\widetilde\varepsilon_{nq}=\exp[\{\log(n\vee q)\}^{1/2+\tilde c_{\varepsilon}/2}]$ for some constant $\tilde c_{\varepsilon}>0$, such that every fixed product of powers of $\widetilde\varepsilon_{nq}$, $B_n$, $b_n^{-1}$, and $\log(n\vee q)$ is $o(\varepsilon_{nq})$. This is feasible under Assumption~\ref{assump_scaling}.
The conditional sub-exponential bound, followed by Bernstein's inequality and a union bound over $j$, leads to
\begin{equation}\label{eq_uniform_Mijn}
    \max_{j\in[q]}\frac{1}{n}\sum_{i=1}^nM_{ij,n} =\cOp(B_n\sqrt{\log n}),\qquad
    \max_{i\in[n],j\in[q]}M_{ij,n} =\cOp\big\{\sqrt{B_n}\log(nq)+B_n\sqrt{\log n}\big\}=\cOp(\tilde\varepsilon_{nq}).
\end{equation}
Write $\bH_{i,\btheta}:=-q^{-1}\partial_{\bu}^2\ell_i\{\bu_i^\star(\btheta);\btheta\}$.
By Assumption~\ref{assumption_smoothness}, $\max_{\btheta\in\Xi}\max_{i\in[n],j\in[q]}
|\bgamma_j^\T\bu_i^\star(\btheta)+\beta_j|
\le C_\eta\sqrt{\log n}$, the definition of $\Xi$, and singular-value interlacing,
\begin{equation*}
c_\theta^2b_n \le\lambda_{\min}(\bH_{i,\btheta}) \le\lambda_{\max}(\bH_{i,\btheta}) \le CB_n
\end{equation*}
uniformly over $i\in[n]$ and $\btheta\in\Xi$. Assumption~\ref{assumption_smoothness} implies that third and fourth derivatives of $q^{-1}\ell_i(\cdot;\btheta)$ are bounded by $CB_n$, and subsequently, for $1\le r\le3$,
\begin{equation}\label{eq_uniform_gij_derivatives}
\|\partial_{\bu}^rg_{ij}(\bu,\btheta)\|
\le C\{1+M_{ij,n}+B_n\sqrt{\log n}\}.
\end{equation}
The derivatives of the Gaussian working density divided by the density itself grow only polynomially in $\|\bu\|$, uniformly over $(\bmu,\bSigma)\in\bar\cK_\rho$. 

Fix $i$ and write for short $\bu_0=\bu_i^\star(\btheta)$, $h=-q^{-1}\ell_i(\bu;\btheta)$, $\bH=\bH_{i,\btheta}$, $\bA=\bH^{-1/2}$ and $\delta=q^{-1/2}$, so that $\partial_{\bu}h(\bu_0)=\zero$ and $\bH=q^{-1}\bH_{Lu_iu_i}\{\btheta,\bu^{\star}(\btheta)\}$. Since $\|\partial_{\bu}^3h\|\le CB_n$, there is a fixed $c_r>0$ such that, letting $r_n=c_rb_n/B_n$ be the localization radius, it holds that
\begin{equation*}
\tfrac12\bH\preceq\partial_{\bu}^2h(\bu_0+\bv)\preceq\tfrac32\bH\quad(\|\bv\|\le r_n),
\qquad q\{h(\bu_0+\bv)-h(\bu_0)\}\ge cqb_n\|\bv\|^2.
\end{equation*}
By the convexity of $h$, we know that for every unit vector $\bs$ and every $t\ge r_n$,
\begin{equation*}
q\{h(\bu_0+t\bs)-h(\bu_0)\}\ge cqb_n\{r_n^2+r_n(t-r_n)\}\ge cqb_n^3/B_n^2.
\end{equation*}
Integrating over $\|\bu-\bu_0\|\le (qB_n)^{-1/2}$ gives $Z_i\ge c\exp\{\ell_i(\bu_0;\btheta)\}\pwk(\bu_0\mid\bmu,\bSigma)(qB_n)^{-K/2}$ with $\pwk(\bu_0\mid\bmu,\bSigma)\ge n^{-C}$ on $\bar\cK_\rho$. Assumption~\ref{assumption_smoothness} gives the global envelope $\|g_{ij}(\bu,\btheta)\|\le C\{1+M_{ij,n}+B_n\sqrt{\log n}\}(1+\|\bu\|)e^{C\|\bu\|}$, and the integral is uniformly bounded on $\bar\cK_\rho$. In $\{\|\bu-\bu_0\|>r_n\}$, with the lower bound $Z_i\ge c\exp\{\ell_i(\bu_0;\btheta)\}\pwk(\bu_0\mid\bmu,\bSigma)(qB_n)^{-K/2}$, we know that the scaling factor is at most $(n\vee q)^Ce^{-cqb_n^3/B_n^2}\ll q^{-C}$ for any fixed constant $C$. Here we use $qb_n^3/\{B_n^2\log(n\vee q)\}\to\infty$ under Assumption~\ref{assump_scaling}.

Next, with change of variables $\bu=\bu_0+\delta\bA\bz$, we set
\begin{equation*}
W_\delta(\bz)=\frac{\pwk(\bu_0+\delta\bA\bz\mid\bmu,\bSigma)}{\pwk(\bu_0\mid\bmu,\bSigma)}
\exp\big[-q\{h(\bu_0+\delta\bA\bz)-h(\bu_0)\}+\tfrac12\|\bz\|^2\big].
\end{equation*}
To simplify the integral expansion, we let $\EE_0$ denote expectation under $\bZ\sim\cN(\zero_K,\bI_K)$, and $\Pi_i(\cdot\mid\btheta,\bmu,\bSigma)$ denote the posterior measure on $\bu_i$ induced by the integrand in the marginal likelihood. We have, for any function $F$,
\begin{equation}\label{eq_exact_ratio_representation}
\EE_{\Pi_i}F(\bU)=\frac{\EE_0\{F(\bu_0+\delta\bA\bZ)W_\delta(\bZ)\}}{\EE_0 W_\delta(\bZ)}.
\end{equation}
 Split $\RR^K$ into $\{\|\bz\|\le C_T\sqrt{\log(n\vee q)}\}$, $\{\|\bz\|>C_T\sqrt{\log(n\vee q)},\,\|\delta\bA\bz\|\le r_n\}$ and $\{\|\delta\bA\bz\|> r_n\}$ for some constant $C_T$. The first region lies in the local ball of radius $r_n$ because $qb_n^3/B_n^2\gg\log(n\vee q)$. On the second region, one can verify that $e^{-\|\bz\|^2/2}W_\delta(\bz)\le Ce^{-\|\bz\|^2/8}$, so enlarging $C_T$ makes its contribution smaller than any fixed power of $q^{-1}$. The third region is controlled above.
 
 On the first region, by Assumption~\ref{assumption_smoothness}, Taylor expansion of $h$ yields
\begin{equation*}
W_\delta(\bz)=1+\delta p_1(\bz)+\delta^2R_W(\bz),\;\text{ for }\;
p_1(\bz)=\{\partial_{\bu}\log\pwk(\bu_0\mid\bmu,\bSigma)\}^\T\bA\bz
-\tfrac16\partial_{\bu}^3h(\bu_0)[\bA\bz,\bA\bz,\bA\bz],
\end{equation*}
and $|R_W(\bz)|\le C\tilde\varepsilon_{nq}(1+\|\bz\|^{8})$. Since $p_1$ is odd, we obtain $\EE_0W_{\delta}(\bZ)=1+\cO(\delta^2\tilde\varepsilon_{nq})\ge 1/2$. Centering the integrand at $\bu_0$, \eqref{eq_uniform_gij_derivatives} gives
\begin{equation*}
\bG_{ij}(\bz):=g_{ij}(\bu_0+\delta\bA\bz,\btheta)-g_{ij}(\bu_0,\btheta)
=\delta\bL_{ij}(\bz)+\delta^2\bQ_{ij}(\bz)+\delta^3\bR_{ij}(\bz),
\end{equation*}
where $\bL_{ij}(\bz)=\partial_{\bu}g_{ij}(\bu_0,\btheta)\bA\bz$ is odd, $\bQ_{ij}(\bz)=\frac12\partial_{\bu}^2g_{ij}(\bu_0,\btheta)[\bA\bz,\bA\bz]$, and $\|\bR_{ij}(\bz)\|\le C\{1+M_{ij,n}+B_n\sqrt{\log n}\}b_n^{-3/2}\|\bz\|^3$. Integrating each term over standard Gaussian and dividing by $\EE_0W_{\delta}(\bZ)$, the order-$\delta$ term cancels and one obtains \begin{align}
\sup_{\substack{\btheta\in\Xi\\(\bmu,\bSigma)\in\bar\cK_\rho}} \Big\| \frac{N_{ij}(\btheta,\bmu,\bSigma)}{Z_i(\btheta,\bmu,\bSigma)} -g_{ij}\{\bu_i^\star(\btheta),\btheta\} -q^{-1}\cA_{ij}(\btheta,\bmu,\bSigma) \Big\| =\cOp\left[\frac{\tilde\varepsilon_{nq}}{q^{3/2}} \{1+M_{ij,n}+B_n\sqrt{\log n}\}\right], \label{eq_uniform_first_moment_expansion}\\
\sup_{\substack{\btheta\in\Xi\\(\bmu,\bSigma)\in\bar\cK_\rho}} \|\cA_{ij}(\btheta,\bmu,\bSigma)\| =\cOp\left[\tilde\varepsilon_{nq} \{1+M_{ij,n}+B_n\sqrt{\log n}\}\right],
\label{eq_uniform_first_moment_coefficient}
\end{align}
uniformly over $i\in[n]$ and $j\in[q]$.

Note here $\varepsilon_{nq}$ dominates fixed product of $\tilde\varepsilon_{nq}$, $B_n$, $b_n^{-1}$ and $\log(n\vee q)$. Here $\cA_{ij}(\btheta,\bmu,\bSigma)$ collects the $\delta^2$ coefficient. Thus, combining \eqref{eq_uniform_first_moment_expansion} and \eqref{eq_uniform_first_moment_coefficient} gives the tailored first-order bound
\begin{equation}\label{eq_uniform_first_moment_laplace}
\sup_{\substack{\btheta\in\Xi\\(\bmu,\bSigma)\in\bar\cK_\rho}} \left\| \frac{N_{ij}(\btheta,\bmu,\bSigma)}{Z_i(\btheta,\bmu,\bSigma)} -g_{ij}\{\bu_i^\star(\btheta),\btheta\} \right\|
=\cOp\Big(\frac{\varepsilon_{nq}}{q}\Big).
\end{equation}
Summing \eqref{eq_uniform_first_moment_laplace} over $i$ yields
\begin{equation*}
\sup_{\substack{\btheta\in\Xi\\(\bmu,\bSigma)\in\bar\cK_\rho}}
\max_{j\in[q]} \left\| \{\cS_\theta(\btheta,\bmu,\bSigma) -\bS_{L\theta}(\btheta,\bu^\star(\btheta))\}_{\cK_j^+} \right\|
=\cOp\left(\frac{nB_n\sqrt{\log n}}{q}\varepsilon_{nq}\right).
\end{equation*}
Since all $q$ blocks have fixed dimension, the $\ell_2$ bound follows immediately.

For $F(\bu)=\bu$ and $F(\bu)=(\bu-\bmu)(\bu-\bmu)^\T$, because their coefficients grow at most polynomially in $\|\bu_0\|+\|\bmu\|$, the term $\{1+M_{ij,n}+B_n\sqrt{\log n}\}$ no longer occurs and thus we get
\begin{align*}
\sup_{\substack{\btheta\in\Xi\\(\bmu,\bSigma)\in\bar\cK_\rho}} \max_{i\in[n]} \left\|\EE_{\Pi_i}(\bU)-\bu_i^\star(\btheta)\right\|
&=\cOp(q^{-1}\varepsilon_{nq}),\\
\sup_{\substack{\btheta\in\Xi\\(\bmu,\bSigma)\in\bar\cK_\rho}} \max_{i\in[n]} \Big\|\EE_{\Pi_i} \{(\bU-\bmu)(\bU-\bmu)^\T\} -\{\bu_i^\star(\btheta)-\bmu\} \{\bu_i^\star(\btheta)-\bmu\}^\T\Big\| &=\cOp(q^{-1}\varepsilon_{nq}).
\end{align*}
Because $\cS_\mu(\btheta,\bmu,\bSigma) = -\sum_{i=1}^n\bSigma^{-1}
\{\EE_{\Pi_i}(\bU)-\bmu\}$, $\cS_\Sigma(\btheta,\bmu,\bSigma) = -\frac12\sum_{i=1}^n[
\bSigma^{-1}\EE_{\Pi_i}
\{(\bU-\bmu)(\bU-\bmu)^\T\}\bSigma^{-1}-\bSigma^{-1}
]$, and the eigenvalues of $\bSigma$ are uniformly bounded above and away from zero on $\bar\cK_\rho$, the last two assertions follow by summing over $i$.



\subsubsection{Proof of Lemma~\ref{coro_first_order_concern}}\label{sub_prove_coro_first_order_concern}

Note that $\|\bS_{L\theta}(\btheta^*,\bu^*)\| = \|\Lb_{\eta}^\T\bU^{c*}\|_{F}$ and $\|\bS_{Lu}(\btheta^*,\bu^*)\| = \|\Lb_{\eta}\bGamma^*\|_F$ where $\Lb_{\eta} = \{\ell_{ij}^{\prime}(\eta_{ij}^*)\}_{n\times q}$ is given in Lemma~\ref{lemma_concentration_l_mat}, $\bGamma^* = (\bgamma_1^*,\dots,\bgamma_q^*)^\T$ and 
\begin{equation*}
    \bU^{c*} = \begin{pmatrix}
        1 & \cdots & 1\\
        \bu_1^* & \cdots & \bu_n^*
    \end{pmatrix}^\T.
\end{equation*}
By Assumption~\ref{assumption_psd_covariance} and because $K$ is fixed, $\|\bGamma^*\|_{\Fn}\le C\sqrt{q}$ for some constant $C$. By Assumption~\ref{assump_standard_id},
\[
    \|\bU^{c*}\|_{\Fn}
    =\left(n+\sum_{i=1}^n\|\bu_i^*\|^2\right)^{1/2}
    =\cOp(\sqrt n).
\]
Given $\{\bu_i^*\}_{i=1}^n$, the entries of $\Lb_{\eta}$ are independent and mean zero. Their conditional second moments are bounded by $CB_n$ because their conditional sub-exponential norms are bounded by $C\sqrt{B_n}$. Therefore,
\begin{equation*}
    \EE\left[\|\Lb_{\eta}^\T\bU^{c*}\|_{F}^2\Big|\{\bU_i = \bu_i^*\}_{i=1}^n\right]
    \le CB_n\sum_{j=1}^q\sum_{i=1}^n\big(1+\|\bu_i^*\|^2\big)
    =\cOp(nqB_n).
\end{equation*}
This proves $\big\|\bS_{L\theta}(\btheta^*, \bu^*)\big\| = \cOp\big(\sqrt{qnB_n}\big)$. The same calculation with the roles of $i$ and $j$ interchanged proves the stated bound for $\bS_{Lu}(\btheta^*, \bu^*)$.
The first part of Lemma~\ref{coro_first_order_concern} then follows from $\|\bA\bB\|_{\Fn}\le \|\bA\|\|\bB\|_{\Fn}$ and $(iii)$ of Lemma~\ref{lemma_concentration_l_mat}.

Note that each entry of $\bS_{L\theta}(\btheta^*,\bu^*)$ and $\bS_{Lu}(\btheta^*,\bu^*)$ is a summation of $\ell_{ij}^{\prime}(\eta_{ij}^*)$, whose sub-exponential norms are bounded by $C\sqrt{B_n}$ uniformly $i\in[n]$ and $j\in[q]$. To prove the second part of Lemma~\ref{coro_first_order_concern}, we apply the Bernstein inequality for sums of independent sub-exponential random variables:
\begin{lemma}\em Suppose Assumption~\ref{assumption_smoothness} holds and $\log(n\vee q)/\sqrt{n\wedge q}\to 0$ as $n,q\to\infty$.\begin{enumerate}[label=$(\roman*)$]

    \item For any deterministic sequence $\{a_i\}_{i=1}^n$ with $\max_{1\le i\le n}|a_i|\le A_n$ for some $A_n$, it holds that
    \begin{align*}
        \forall j\in[q],\;\Big| \sum_{i=1}^n a_i \ell_{ij}^{\prime}(\eta_{ij}^*) \Big| =\cOp\big(A_n\sqrt{nB_n}\big), \;\max_{1\le j\le q}\Big| \sum_{i=1}^n a_i \ell_{ij}^{\prime}(\eta_{ij}^*) \Big| =\cOp\big(A_n\sqrt{nB_n\log q}\big).
    \end{align*}
    \item For any deterministic sequence $\{a_j\}_{j=1}^q$ with $\max_{1\le j\le q}|a_j|\le A_q$ for some $A_q$, it holds that 
    \begin{align*}
        \forall i\in[n],\;\Big| \sum_{j=1}^q a_j \ell_{ij}^{\prime}(\eta_{ij}^*) \Big| =\cOp\big(A_q\sqrt{qB_n}\big), \; \max_{1\le i\le n}\Big| \sum_{j=1}^q a_j \ell_{ij}^{\prime}(\eta_{ij}^*) \Big| = \cOp\big(A_q\sqrt{qB_n\log n}\big).
    \end{align*}
    \item 
    For any deterministic sequence $\{a_{ij}\}_{i=1,j=1}^{n,q}$ with $\max_{1\le i\le n,1\le j\le q}|a_{ij}|\le A_{nq}$, it holds that
    \begin{align*}
        \left| \sum_{i=1}^n\sum_{j=1}^q a_{ij} \ell_{ij}^{\prime}(\eta_{ij}^*) \right| =\cOp\big( A_{nq}\sqrt{nqB_n}\big),\quad \max_{1\le i\le n}\max_{1\le j\le q}\left|\ell_{ij}^{\prime}(\eta_{ij}^*)\right|=\cOp\big(\sqrt{B_n}\log (nq)\big).
    \end{align*}
    \end{enumerate}\label{lemma_concentration}
\end{lemma}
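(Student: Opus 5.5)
This is a Bernstein-type bound for sums of independent sub-exponential variables, and I will treat the three parts together. Conditional on $\mathscr U_n=\{\bU_i\}_{i\le n}$, Assumption~\ref{assumption_smoothness} makes the $\ell_{ij}'(\eta_{ij}^*)$ independent and mean zero. Their conditional $\psi_1$-norms are at most $C_0\exp(b_U|\eta_{ij}^*|/2)$, which on $\cE_\eta$ is $\le C\sqrt{B_n}$. Every bound is therefore derived conditionally on $\mathscr U_n$ on the event $\cE_\eta$, and then transferred to the joint law. The transfer uses the fact that $\PP(\cE_\eta)\to1$ and that a conditional $\cOp$ bound with deterministic constants integrates to an unconditional one.

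\textbf{Fixed-index bounds.} For part $(i)$ with $j$ fixed, set $Y_i=a_i\ell_{ij}'(\eta_{ij}^*)$. These satisfy $\|Y_i\|_{\psi_1\mid\mathscr U_n}\le CA_n\sqrt{B_n}$, and by Bernstein's inequality for sub-exponential sums,
\begin{equation*}
\PP\Big(\Big|\sum_{i=1}^n Y_i\Big|\ge t\,\Big|\,\mathscr U_n\Big)\le 2\exp\Big[-c\min\Big(\frac{t^2}{nA_n^2B_n},\frac{t}{A_n\sqrt{B_n}}\Big)\Big].
\end{equation*}
Taking $t=MA_n\sqrt{nB_n}$ with $M$ large makes the quadratic branch $cM^2$ and the linear branch $cM\sqrt n$, so both are large and the tail is small uniformly in $\mathscr U_n$. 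This gives the fixed-$j$ rate. Part $(ii)$ is identical with the roles of $i$ and $j$ swapped, using a sum over $q$ terms. The first claim of part $(iii)$ is the same argument with a sum over $nq$ independent terms, which gives the rate $A_{nq}\sqrt{nqB_n}$.

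\textbf{Maxima.} For the maximum over $j$ in part $(i)$, take $t=MA_n\sqrt{nB_n\log q}$ and apply a union bound over $q$ indices. The quadratic branch contributes $q\exp(-cM^2\log q)\to0$ once $M^2c>1$. The linear branch contributes $q\exp(-cM\sqrt{n\log q})$. Since $\log q\lesssim\log(n\vee q)=o(\sqrt{n\wedge q})$, we have $\sqrt{n\log q}\gg\log q$, so this term also vanishes. The maximum over $i$ in part $(ii)$ is handled symmetrically, with the assumption $\log n=o(\sqrt q)$ playing the same role. For the second claim of part $(iii)$, the sub-exponential tail gives
\begin{equation*}
\PP\big(|\ell_{ij}'(\eta_{ij}^*)|\ge t\mid\mathscr U_n\big)\le2\exp\big(-ct/\sqrt{B_n}\big).
\end{equation*}
A union bound over $nq$ pairs with $t=M\sqrt{B_n}\log(nq)$ then yields the tail bound $2(nq)^{1-cM}\to0$.

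The only delicate step is the linear (sub-exponential) regime in the union bounds, which is exactly where the hypothesis $\log(n\vee q)/\sqrt{n\wedge q}\to0$ is needed. Everything else is routine bookkeeping of constants, all of which are uniform over $\mathscr U_n\in\cE_\eta$.
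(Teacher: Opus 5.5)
Your proposal is correct and follows essentially the same route as the paper: a conditional Bernstein inequality for independent mean-zero sub-exponential scores with $\psi_1$-norm $O(\sqrt{B_n})$, the choices $t\asymp A_n\sqrt{nB_n}$ and $t\asymp A_n\sqrt{nB_n\log q}$ (the latter with a union bound, where the growth condition handles the linear branch), symmetry for part $(ii)$, the same inequality over $nq$ terms plus a union bound over single-entry tails for part $(iii)$. Your explicit handling of the event $\cE_\eta$ and of the passage from the conditional to the joint law is a detail the paper leaves implicit.
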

\begin{proof}
    See Section~\ref{supp_sec_prove_lemma_concentration}.
\end{proof}
Conditioning on $\{\bu_i^*\}_{i=1}^n$ and $\cE_\eta$, the intercept coordinates of $\bS_{L\theta}$ are unweighted sums over $i$, whereas each loading coordinate is a sum with weights $u_{ir}^*$. By Assumption~\ref{assump_standard_id}, we know $\max_{i\in[n],r\in[K]}|u_{ir}^*|=\cOp(\sqrt{\log n})$ and $\max_{r\in[K]}\sum_{i=1}^n(u_{ir}^*)^2=\cOp(n)$.
Conditional Bernstein inequalities and a union bound over the $q(K+1)$ coordinates therefore yield
\begin{equation*}
\|\bS_{L\theta}(\btheta^*,\bu^*)\|_\infty
=\cOp\left\{\sqrt{nB_n\log q}+\sqrt{B_n\log n}\log q\right\}.
\end{equation*} Under Assumption~\ref{assump_scaling}, the second term is absorbed by the first term, and we obtain the stated bound for $n^{-1/2}\big\|\bS_{L\theta}(\btheta^*, \bu^*)\big\|_{\infty}$. The bound for $q^{-1/2}\big\|\bS_{Lu}(\btheta^*, \bu^*)\big\|_{\infty}$ can be obtained similarly.

\subsubsection{Proof of Lemma~\ref{lemma_first_order_concern}}\label{supp_sec_prove_lemma_first_order_concern}

We first prove the case when $\bmu = \zero_K$ and $\bSigma = \bI_K$. 
    Following the notation in Lemma~\ref{lemma_bu_star_theta_true}, let $\bu_i^\star$ be defined as
\begin{equation}
    \bu_i^{\star} =\mathop{\arg\max}_{\bu\in\RR^K} \ell_i(\bu ;\btheta^*).\label{eq_opti_intergral_mvt_at_true}
\end{equation}
By Lemma~\ref{lemma_bu_star_theta_true}, we know that $\|\bu_i^{\star} - \bu_i^*\| = \cOp(\kappa_n/\sqrt{q})$ and $\max_{i\in[n]}\|\bu_i^{\star} - \bu_i^*\|\le \cOp(\kappa_n\sqrt{\log n/q})$. For $\bu^{\star} = \big((\bu_1^{\star})^\T,\cdots, (\bu_n^{\star})^\T\big)^\T$, invoke Lemma~\ref{lemma_first_order_base} to get
\begin{equation*}
   \big\|\cS_{\theta}(\btheta^*) - \bS_{L\theta}\big(\btheta^*,\bu^\star\big)\big\| = \cOp(nB_n\sqrt{\log n}\varepsilon_{nq}/\sqrt{q}),\end{equation*} and \begin{equation*}\big\|\cS_{\theta}(\btheta^*) - \bS_{L\theta}\big(\btheta^*,\bu^\star\big)\big\|_{\infty} = \cOp(nB_n\sqrt{\log n}\varepsilon_{nq}/q).
\end{equation*}
Next, for the $j$-th block of $\bS_{L\theta}\big(\btheta^*,\bu^\star\big)$, it holds that
\begin{align*}
    &\,\big\|\bS_{L\theta_j}\big(\btheta^*,\bu^\star\big) - \bS_{L\theta_j}\big(\btheta^*,\bu^*\big)\big\|\nonumber \\=&\, \left\|\sum_{i=1}^n\ell_{ij}^{\prime}\big({\bgamma_j^*}^\T\bu_i^\star + \beta_j^*\big)\begin{pmatrix}
            1\\\bu_i^\star
        \end{pmatrix} - \ell_{ij}^{\prime}\big({\bgamma_j^*}^\T\bu_i^* + \beta_j^*\big)\begin{pmatrix}
            1\\\bu_i^*\end{pmatrix}\right\|\nonumber\\\le &\, \underbrace{\Big\|\sum_{i=1}^n\ell_{ij}^{\prime}\big({\bgamma_j^*}^\T\bu_i^* + \beta_j^*\big)\big\{\bu_i^\star-\bu_i^*\big\}\Big\|}_{\alpha_1} +\underbrace{\Big|\sum_{i=1}^n\ell_{ij}^{\prime}\big({\bgamma_j^*}^\T\bu_i^\star + \beta_j^*\big)-\ell_{ij}^{\prime}\big({\bgamma_j^*}^\T\bu_i^* + \beta_j^*\big)\Big|}_{\alpha_2}\\&\,+ \underbrace{\Big\|\sum_{i=1}^n\big\{\ell_{ij}^{\prime}\big({\bgamma_j^*}^\T\bu_i^\star + \beta_j^*\big)-\ell_{ij}^{\prime}\big({\bgamma_j^*}^\T\bu_i^* + \beta_j^*\big)\big\}\bu_i^\star\Big\|}_{\alpha_3}.
\end{align*}
To bound $\alpha_1$--$\alpha_3$, we introduce the following lemma.
\begin{lemma}\em\label{lemma_aux_score_bound_pre}
    Under Assumptions~\ref{assump_standard_id}--\ref{assump_scaling}, letting $\bH_{Lu_iu_i}^* = -\sum_{t=1}^q \ell_{it}^{\prime\prime}(\eta_{it}^*) \bgamma_t^* (\bgamma_t^*)^\T $, we have the following:
    \begin{enumerate}[label = $(\roman*)$, leftmargin=0.6cm]
       
\item For any $j\in[q]$, for any $r\in[K]$, 
\begin{align*}
    \left|\sum_{i=1}^n \ell_{ij}^{\prime}(\eta_{ij}^*) \be_r^\T \big(\bH_{Lu_iu_i}^*\big)^{-1} \Big\{\sum_{t=1}^q \bgamma_t^* \ell_{it}^{\prime}(\eta_{it}^*) \Big\}\right|=\cOp \left( \frac{B_n}{b_n}\big(\sqrt{n /q} + n/q\big)  + \varepsilon_{nq}n/q\right);
\end{align*}
\begin{align*}
    \max_{j\in[q]}\left|\sum_{i=1}^n \ell_{ij}^{\prime}(\eta_{ij}^*) \be_r^\T \big(\bH_{Lu_iu_i}^*\big)^{-1} \Big\{\sum_{t=1}^q \bgamma_t^* \ell_{it}^{\prime}(\eta_{it}^*) \Big\}\right|=\cOp \left(\frac{B_n}{b_n}\big\{\sqrt{\log q}( \sqrt{n /q} + n/q) \big\}+ \varepsilon_{nq}n/q\right);
\end{align*}
\begin{align*}
    \sum_{j=1}^q\left|\sum_{i=1}^n \ell_{ij}^{\prime}(\eta_{ij}^*) \be_r^\T \big(\bH_{Lu_iu_i}^*\big)^{-1} \Big\{\sum_{t=1}^q \bgamma_t^* \ell_{it}^{\prime}(\eta_{it}^*) \Big\}\right|^2=\cOp \left(\frac{B_n^2}{b_n^2}\big( n + n^2/q \big)+ \varepsilon_{nq}^2n^2/q\right);
\end{align*}


\item For any $j\in[q]$, for any $r\in[K]$, 
\begin{equation*}
    \left|\sum_{i=1}^n \ell_{ij}^{\prime\prime}(\eta_{ij}^*)  (\bu_i^{\star})^\T \big(\bH_{Lu_iu_i}^*\big)^{-1} \Big\{\sum_{t=1}^q \bgamma_t^* \ell_{it}^{\prime}(\eta_{it}^*) \Big\}\right| = \cOp\left( B_n\kappa_n\sqrt{n/q}+ \varepsilon_{nq}n/q\right);
\end{equation*}
\begin{align*}
    \max_{j\in[q]}\left|\sum_{i=1}^n \ell_{ij}^{\prime\prime}(\eta_{ij}^*)  (\bu_i^{\star})^\T \big(\bH_{Lu_iu_i}^*\big)^{-1} \Big\{\sum_{t=1}^q \bgamma_t^* \ell_{it}^{\prime}(\eta_{it}^*) \Big\}\right| = \cOp\left( B_n\kappa_n\sqrt{n\log n/q}+ \varepsilon_{nq}n/q \right);
\end{align*}
\begin{equation*}
    \sum_{j=1}^q\left|\sum_{i=1}^n \ell_{ij}^{\prime\prime}(\eta_{ij}^*)  (\bu_i^{\star})^\T \big(\bH_{Lu_iu_i}^*\big)^{-1}\Big\{\sum_{t=1}^q \bgamma_t^* \ell_{it}^{\prime}(\eta_{it}^*) \Big\}\right|^2 = \cOp\big(B_n^2\kappa_n^2n+ \varepsilon_{nq}^2n^2/q\big).
\end{equation*}

\end{enumerate}
\end{lemma}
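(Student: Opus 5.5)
The plan is to condition throughout on $\mathscr U_n=\{\bU_i\}_{i\le n}$ and on $\cE_\eta$. Write $\bA_i=\bH_{Lu_iu_i}^*$, $\bb_i=\sum_{t=1}^q\bgamma_t^*\ell_{it}'(\eta_{it}^*)$ and $\bar\bA_i=\EE(\bA_i\mid\mathscr U_n)$, as in the proof of Lemma~\ref{lemma_bu_star_theta_true}. From that proof we have $\|\bA_i^{-1}\|,\|\bar\bA_i^{-1}\|\le C/(qb_n)$, $\EE(\|\bb_i\|^2\mid\mathscr U_n)\le CqB_n$ and $\EE(\|\bA_i-\bar\bA_i\|^2\mid\mathscr U_n)\le CqB_n^2$. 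The difficulty is that $\bA_i^{-1}\bb_i$ is not independent of $\ell_{ij}'(\eta_{ij}^*)$. I would remove this dependence in two steps.

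\textbf{Step 1: replace $\bA_i^{-1}$ by $\bar\bA_i^{-1}$.} Write $\bA_i^{-1}-\bar\bA_i^{-1}=-\bA_i^{-1}(\bA_i-\bar\bA_i)\bar\bA_i^{-1}$. By Cauchy--Schwarz and the moment bounds above, the replacement error in each sum over $i$ is of order $n\cdot B_n^{5/2}\sqrt{\log q}/(q\,b_n^2)$, up to logarithmic factors coming from $\max_{i,j}|\ell_{ij}'|=\cOp(\sqrt{B_n}\log(nq))$ (Lemma~\ref{lemma_concentration}). Because $\varepsilon_{nq}$ dominates any fixed product of $B_n$, $b_n^{-1}$ and $\log(n\vee q)$, this error is absorbed into $\varepsilon_{nq}n/q$, uniformly in $j$. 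Summing its square over $j$ gives $\cOp(\varepsilon_{nq}^2n^2/q)$.

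\textbf{Step 2: split off the diagonal term $t=j$ in part $(i)$.} The term $t=j$ of $\bb_i$ gives $\sum_i\{\ell_{ij}'(\eta_{ij}^*)\}^2\be_r^\T\bar\bA_i^{-1}\bgamma_j^*$, which has nonzero mean. It is bounded crudely by $n\cdot CB_n/(qb_n)$, producing the $(B_n/b_n)\,n/q$ term; no $\sqrt{\log q}$ factor is needed in the maximum over $j$ once the max bound on $\ell'$ is used. The remaining terms form $c_{ij}:=\be_r^\T\bar\bA_i^{-1}\bb_i^{(-j)}$, with $\bb_i^{(-j)}=\sum_{t\ne j}\bgamma_t^*\ell_{it}'$. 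Each $c_{ij}$ is measurable with respect to columns $t\neq j$, so conditionally on $\mathscr U_n$ the products $\ell_{ij}'c_{ij}$ are independent across $i$ with mean zero. Their conditional variance satisfies
\[
\sum_i B_n\,\EE c_{ij}^2\lesssim nB_n\cdot qB_n/(qb_n)^2=n(B_n/b_n)^2/q,
\]
which gives $(B_n/b_n)\sqrt{n/q}$. For the maximum over $j$, apply Bernstein's inequality to these products: $\ell_{ij}'$ is sub-exponential, and $c_{ij}$ is sub-exponential after truncation at $\cOp(\sqrt{B_n\log(nq)}/(\sqrt q\,b_n))$. A union bound over $j$ then adds the factor $\sqrt{\log q}$. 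For the sum of squares over $j$, take conditional second moments term by term. This yields $q\{(B_n/b_n)^2(n/q+n^2/q^2)\}$, which matches the stated bound.

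\textbf{Part $(ii)$.} Decompose $\bu_i^\star=\bu_i^*+(\bu_i^\star-\bu_i^*)$ and $\ell_{ij}''(\eta_{ij}^*)=\EE(\ell_{ij}''\mid\mathscr U_n)+\{\ell_{ij}''-\EE(\ell_{ij}''\mid\mathscr U_n)\}$.

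For the main piece, $\sum_i w_{ij}\bar\bA_i^{-1}\bb_i$ with deterministic weights $|w_{ij}|\le B_n\|\bu_i^*\|$, the summands are independent and mean zero across $i$. The variance is $\lesssim nB_n^2\cdot B_n/(qb_n^2)=nB_n^2\kappa_n^2/q$, giving $B_n\kappa_n\sqrt{n/q}$. The maximum over $j$ picks up $\sqrt{\log n}$ from $\max_i\|\bu_i^*\|$ and the union bound.

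The fluctuation part of $\ell_{ij}''$ is handled exactly like part $(i)$: after removing its own $t=j$ term, it is again a mean-zero independent sum, and it is of no larger order. The piece involving $\bu_i^\star-\bu_i^*$ is bounded by Cauchy--Schwarz, using $\|\bu_i^\star-\bu_i^*\|=\cOp(\kappa_n/\sqrt q)$ in mean square and $\|\bA_i^{-1}\bb_i\|=\cOp(\kappa_n/\sqrt q)$ (Lemma~\ref{lemma_bu_star_theta_true}). This gives $\cOp(B_n\kappa_n^2n/q)$, which is absorbed into $\varepsilon_{nq}n/q$.

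\textbf{Main obstacle.} I expect the hardest step to be the maximum over $j$ in part $(i)$. The summand is a product of two sub-exponential variables, $\ell_{ij}'$ and $c_{ij}$, so it is only sub-Weibull, and Bernstein's inequality does not apply directly. I would first truncate both factors at their $\log(nq)$-level maxima, then show that the truncation bias is negligible relative to $\varepsilon_{nq}n/q$, and finally apply Bernstein's inequality to the bounded summands.
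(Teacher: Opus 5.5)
Your proposal is correct and follows essentially the paper's own proof. That proof uses matrix concentration to replace $\bA_i^{-1}$ by $\bar\bA_i^{-1}$, splits off the diagonal $t=j$ term, and bounds the remaining conditionally mean-zero sum (independent across $i$) by its variance, which is exactly the paper's $T_1,T_2,T_3$ decomposition. Two differences are worth noting: your truncated-Bernstein argument for the maximum over $j$ is more explicit than the paper's brief appeal to ``the preceding derivation.'' Also, in part $(ii)$ you treat the displayed factor $(\bu_i^\star)^\T$ through the extra $\bu_i^\star-\bu_i^*$ Cauchy--Schwarz step, whereas the paper's proof works with weights $u_{ir}^*(\bgamma_j^*)^\T$, the form actually used later for $\alpha_2,\alpha_3$; your argument covers that version as well.
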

\begin{proof}
See Section~\ref{supp_sec_prove_lemma_aux_score_bound_pre}.
\end{proof}

For $\alpha_1$, with $\eta_{ij}^* = {\bgamma_j^*}^\T\bu_i^* + \beta_j^*$ and the expansion in Lemma~\ref{lemma_bu_star_theta_true}, one has
\begin{align*}
    \alpha_1 = \left\|\sum_{i=1}^n\ell_{ij}^{\prime}(\eta_{ij}^*)\big\{\sum_{t=1}^q\ell_{it}^{\prime\prime}(\eta_{it}^*)\bgamma_t^*{\bgamma_t^*}^\T\big\}^{-1}\sum_{t=1}^q\ell_{it}^{\prime}(\eta_{it}^*){\bgamma_t^*}\right\| + \cOp(\kappa_n^3B_nn/q)
    \end{align*}
Here, $\cOp(\kappa_n^3B_nn/q)$ follows from $\|\sum_{i=1}^n\ell_{ij}'(\eta_{ij}^*)\cR_i^{\star}\|\le \big(\sum_{i=1}^n\ell_{ij}'(\eta_{ij}^*)^2\big)^{1/2}\big(\sum_{i=1}^n\|\cR_i^{\star}\|^2\big)^{1/2}= \cOp(\sqrt{nB_n}\times \kappa_n^2B_n\sqrt{n}/(qb_n))$ by the Cauchy--Schwarz inequality, part~$(ii)$ of Lemma~\ref{lemma_concentration_l_mat}, and Lemma~\ref{lemma_bu_star_theta_true}.
By $(i)$ of Lemma~\ref{lemma_aux_score_bound_pre}, 
\begin{equation*}\alpha_1 = \cOp\Big(\frac{B_n}{b_n}\sqrt{n/q} + \kappa_n^3B_nn/q + \varepsilon_{nq}n/q\Big).\end{equation*}
Similarly, with $(i)$ of Lemma~\ref{lemma_aux_score_bound_pre} and the expansion in Lemma~\ref{lemma_bu_star_theta_true}, one can obtain $\max_{j\in[q]}\alpha_1 = \cOp\big(\sqrt{\log q}(B_n/b_n\sqrt{n/q} + \kappa_n^3B_nn/q)+ \varepsilon_{nq}n/q\big)$ and $\sum_{j=1}^q\alpha_1^2 = \cOp(B_n^2n/b_n^2+\kappa_n^6B_n^2n^2/q + \varepsilon_{nq}^2n^2/q)$.

For $\alpha_2$, apply Taylor expansion on $\sum_{i=1}^n\ell_{ij}^{\prime}\big({\bgamma_j^*}^\T\bu_i^{\star} + \beta_j^*)$ and we have
\begin{align*}
    \sum_{i=1}^n\ell_{ij}^{\prime}\big({\bgamma_j^*}^\T\bu_i^{\star} + \beta_j^*) = &\,\sum_{i=1}^n\ell_{ij}^{\prime}\big({\bgamma_j^*}^\T\bu_i^{*} + \beta_j^*) + \sum_{i=1}^n\ell_{ij}^{\prime\prime}\big({\bgamma_j^*}^\T\bu_i^{*} + \beta_j^*){\bgamma_j^*}^\T(\bu_i^{\star} - \bu_i^*)\\&\,+\frac{1}{2}\sum_{i=1}^n\ell_{ij}^{(3)}\big({\bgamma_j^*}^\T\{\bu_i^{*} + s_i(\bu_i^{\star} - \bu_i^*)\} + \beta_j^*\big)\left\{{\bgamma_j^*}^\T(\bu_i^{\star} - \bu_i^*)\right\}^2,
\end{align*}
for some $s_i\in[0,1]$. By $|\ell_{ij}^{(3)}(x)| \le B_n$ when $|x|\lesssim\sqrt{\log n}$, $\|\bgamma_j^*\| = \cO(1)$, and $\sum_{i=1}^n\|\bu_i^{\star} - \bu_i^*\|^2 = \cOp(n\kappa_n^2/q)$ from Lemma~\ref{lemma_bu_star_theta_true}, one has 
\begin{equation*}
    \alpha_2 \le \left\|\sum_{i=1}^n\ell_{ij}^{\prime\prime}\big(\eta_{ij}^*){\bgamma_j^*}^\T(\bu_i^{\star} - \bu_i^*)\right\| + \cOp(\kappa_n^2B_nnq^{-1}).
\end{equation*}
Apply the expansion in Lemma~\ref{lemma_bu_star_theta_true} again to obtain 
\begin{align*}
\alpha_2 \le &\,\left\|\sum_{i=1}^n\ell_{ij}^{\prime\prime}\big(\eta_{ij}^*){\bgamma_j^*}^\T\big\{\sum_{t=1}^q\ell_{it}^{\prime\prime}(\eta_{it}^*)\bgamma_t^*{\bgamma_t^*}^\T\big\}^{-1}\sum_{t=1}^q\ell_{it}^{\prime}(\eta_{it}^*){\bgamma_t^*}\right\| + \cOp\big(\kappa_n^3B_n^{3/2}nq^{-1}\big) \\
=&\, \cOp\big(B_n\kappa_n\sqrt{n/q} + \kappa_n^3B_n^{3/2}nq^{-1} + \varepsilon_{nq}nq^{-1}\big).
\end{align*}
The last inequality follows from part~$(ii)$ of Lemma~\ref{lemma_aux_score_bound_pre}. Similarly,
$\max_{j\in[q]}\alpha_2 = \cOp\big(\sqrt{\log q}\{B_n\kappa_n\sqrt{n/q} + \kappa_n^3B_n^{3/2}nq^{-1}\}+ \varepsilon_{nq}n/q\big)$ and
$\sum_{j=1}^q\alpha_2^2 = \cOp\big(B_n^2\kappa_n^2n + \kappa_n^6B_n^3n^2q^{-1}+ \varepsilon_{nq}^2n^2/q\big)$.

For $\alpha_3$, note that 
\begin{equation}\label{eq_bound_lalpha_3}\begin{aligned}
    &\,\Big\|\sum_{i=1}^n\big\{\ell_{ij}^{\prime}\big({\bgamma_j^*}^\T\bu_i^\star + \beta_j^*\big)-\ell_{ij}^{\prime}\big({\bgamma_j^*}^\T\bu_i^* + \beta_j^*\big)\big\}(\bu_i^\star - \bu_i^*)\Big\|  \\\le &\,\left\|B_n\sum_{i=1}^n(\bu_i^\star - \bu_i^*)(\bu_i^\star - \bu_i^*)^\T\bgamma_j^*\right\|\le \cOp(\kappa_n^2B_nnq^{-1}).
\end{aligned}\end{equation}
The last inequality follows from $|\ell_{ij}^{\prime\prime}(x)|\le B_n$ when $x=(\bgamma_j^*)^\T\bu_{i}^{*} + s(\bgamma_j^*)^\T(\bu_i^{\star}- \bu_i^*) + \beta_j^*$ for $s\in[0,1]$ by Assumption~\ref{assumption_smoothness} and  $(\sum_{i=1}^n\|\bu_i^{\star} - \bu_i^*\|^2)^{1/2} = \cOp(\kappa_n\sqrt{n/q})$ by Lemma~\ref{lemma_bu_star_theta_true}. Subsequently,
\begin{align*}
    \alpha_3 \le &\,\Big\|\sum_{i=1}^n\big\{\ell_{ij}^{\prime}\big({\bgamma_j^*}^\T\bu_i^\star + \beta_j^*\big)-\ell_{ij}^{\prime}\big({\bgamma_j^*}^\T\bu_i^* + \beta_j^*\big)\big\}\bu_i^*\Big\| + \cOp(\kappa_n^2B_nnq^{-1})\\\le &\,\cOp\big(B_n\kappa_n\sqrt{n/q} + \kappa_n^3B_n^{3/2}nq^{-1} + \varepsilon_{nq}nq^{-1}\big).
\end{align*}
The last inequality follows by the same argument used for $\alpha_2$ and from $\big(\sum_{i=1}^n\|\bu_i^*\|^2\big)^{1/2} = \cOp(\sqrt{n})$ under Assumption~\ref{assump_standard_id}. Because \eqref{eq_bound_lalpha_3} holds uniformly over $j\in[q]$, the same uniform and squared-sum bounds obtained for $\alpha_2$ also hold for $\alpha_3$.

To conclude, we have
\begin{equation*}
    \big\|\bS_{L\theta_j}\big(\btheta^*,\bu^\star\big) - \bS_{L\theta_j}\big(\btheta^*,\bu^*\big)\big\|= \cOp\big(B_n\kappa_n\sqrt{n/q} + \kappa_n^3B_n^{3/2}nq^{-1} + \varepsilon_{nq}nq^{-1}\big),
\end{equation*}
\begin{equation*}
    \max_{j\in[q]}\big\|\bS_{L\theta_j}\big(\btheta^*,\bu^\star\big) - \bS_{L\theta_j}\big(\btheta^*,\bu^*\big)\big\|= \cOp\Big(\sqrt{\log q}\big\{B_n\kappa_n\sqrt{n/q} + \kappa_n^3B_n^{3/2}nq^{-1}\big\}+ \varepsilon_{nq}nq^{-1}\Big),
\end{equation*}
and 
\begin{equation*}
    \big\|\bS_{L\theta}\big(\btheta^*,\bu^\star\big) - \bS_{L\theta}\big(\btheta^*,\bu^*\big)\big\|= \cOp\big(B_n\kappa_n\sqrt{n} + \kappa_n^3B_n^{3/2}n/\sqrt{q} + \varepsilon_{nq}n/\sqrt{q}\big).
\end{equation*}
Finally, we decompose $\cS_{\theta}(\btheta^*)$ into
\begin{equation*}
    \cS_{\theta}(\btheta^*) = \big\{\cS_{\theta}(\btheta^*) - \bS_{L\theta}(\btheta^*,\bu^{\star})\big\} + \big\{\bS_{L\theta}(\btheta^*,\bu^{\star}) - \bS_{L\theta}(\btheta^*,\bu^*)\big\} + \big\{\bS_{L\theta}(\btheta^*,\bu^*)\big\}
\end{equation*}
Lemma~\ref{lemma_first_order_base} bounds the first term, and Lemma~\ref{coro_first_order_concern} bounds the third, proving the result under $(\bmu,\bSigma) = (\zero_K,\bI_K)$. For the uniform bound over $(\bmu,\bSigma)$, note that the above decomposition involves $(\bmu,\bSigma)$ only in $\cS_{\theta}(\btheta^*) - \bS_{L\theta}(\btheta^*,\bu^{\star})$. Lemma~\ref{lemma_first_order_base} bounds this term uniformly over $(\bmu,\bSigma) \in\bar\cK_\rho$ for any fixed $\rho<1$, so the uniform result follows.


\subsubsection{Proof of Lemma~\ref{lemma_laplace_approx_second_order}}\label{supp_sec_prove_lemma_laplace_approx_second_order}

\begin{proof}
Recall that $\pwk(\cdot\mid{\bmu,\bSigma})$ is the density function for multivariate Gaussian distribution with mean $\bmu$ and covariance $\bSigma$. For notational simplicity, we suppress the arguments $(\bmu,\bSigma)$ in $\cL(\btheta,\bmu,\bSigma)$ and write $\pwk(\bu_i)=\pwk(\bu_i|\bmu,\bSigma)$ for this proof.
Recall that in Section~\ref{supp_sec_prove_lemma_first_order_base}, we defined $\Pi_i$ as the posterior and showed that $\EE_0W_\delta(\bZ)=1+\cO(\delta^2\tilde\varepsilon_{nq})$. In what follows, we use an argument  similar to that in Section~\ref{supp_sec_prove_lemma_first_order_base}, partitioning $\RR^K$ into three regions. 
Write $\bz_i(\bu) = (1,\bu^\T)^\T$ and $\bg_{ij}(\bu;\btheta)=\ell_{ij}'(\bgamma_j^\T\bu+\beta_j)\bz_i(\bu)$.
Differentiating $\cS_{\theta_j}=-\sum_{i=1}^n\EE_{\Pi_i}\bg_{ij}(\bU;\btheta)$ under the integral sign gives
\begin{equation}
\begin{aligned}
    \partial_{\btheta_{j_1}\btheta_{j_2}}^2\cL(\btheta,\bmu,\bSigma)
    =&\,\partial_{\btheta){j_2}}\Big\{-\sum_{i=1}^n\frac{\int_{\bu_i\in\RR^K}\bu_i\exp\{\ell_{i}(\bu_i|\btheta)\} \ell_{ij}^{\prime}(\eta_{ij}) \pwk(\bu_i)d\bu_i}{\int_{\bu_i\in\RR^K}\exp\{\ell_{i}(\bu_i)\}\pwk(\bu_i|\btheta)d\bu_i}\Big\}^\T\\=&\,\sum_{i=1}^n\Big[
        1_{\{j_1=j_2\}}\,
        \EE_{\Pi_i}\!\left\{-\ell_{ij_1}''(\eta_{ij_1})\bz_i(\bu_i)\bz_i(\bu_i)^\T\right\}\\
        &\qquad-\operatorname{Cov}_{\Pi_i}\!\left\{\bg_{ij_1}(\bu_i;\btheta),\bg_{ij_2}(\bu_i;\btheta)\right\}
    \Big].
\end{aligned}
\label{eq_marginal_hessian_covariance}
\end{equation}Let $\bz_i^\star=\bz_i\{\bu_i^\star(\btheta)\}$ and $\eta_{ij}^\star(\btheta)=\bgamma_j^\T\bu_i^\star(\btheta)+\beta_j$,
and define the $(K+1)\times K$ Jacobian
\begin{equation*}
    \bD_{ij}^\star
    :=\left.\partial_{\bu}\bg_{ij}(\bu;\btheta)\right|_{\bu=\bu_i^\star(\btheta)}
    =\ell_{ij}''\{\eta_{ij}^\star(\btheta)\}\bz_i^\star\bgamma_j^\T
      +\begin{pmatrix}\zero_K^\T\\ \ell_{ij}'\{\eta_{ij}^\star(\btheta)\}\bI_K
      \end{pmatrix}.
\end{equation*}
Fix $i,j_1,j_2$ and write $\bu_0=\bu_i^\star(\btheta)$, $h(\bu)=-q^{-1}\ell_i(\bu;\btheta)$, $ \bH=\partial_{\bu}^2h(\bu_0) =   q^{-1}\bH_{Lu_iu_i}\{\btheta,\bu^\star(\btheta)\}$, $\bA=\bH^{-1/2},$ and $ \delta=q^{-1/2}$. With the change of variables $\bu=\bu_0+\delta\bA\bz$, and letting $\EE_0$ denote expectation under $\bZ\sim\cN(\zero,\bI_K)$, we have in the first region $\{\|\bz\|\le C_T\sqrt{\log(n\vee q)}\}$ the expansion $\pwk(\bz) = 1 + \delta p_1(\bz) + \delta^2 R_W(\bz)$ where $p_1(\bz)=\{\partial_{\bu}\log\pwk(\bu_0)\}^{\T}\bA\bz +\frac16\,\partial_{\bu}^3\{q^{-1}\ell_i\}(\bu_0) [\bA\bz,\bA\bz,\bA\bz]$ and $|R_W(\bz)|\le \tilde\varepsilon_{nq}^C(1 + \|\bz\|^C)$. For $a=1,2$, let $\bG_a(\bz)=\bg_{ij_a}(\bu_0+\delta\bA\bz;\btheta) -\bg_{ij_a}(\bu_0;\btheta)$, and we have
\begin{equation*}
    \bG_a(\bz) = \delta\big(\bD_{ij_a}^\star\bA\bz\big) + \delta^2\big\{\frac12\,\partial_{\bu}^2\bg_{ij_a}(\bu_0;\btheta)
       [\bA\bz,\bA\bz]\big\} + \delta^3\bR_a(\bz).
\end{equation*}
Write $\bL_a(\bz)=\bD_{ij_a}^\star\bA\bz$ and $ \bQ_a(\bz)=\frac12\,\partial_{\bu}^2\bg_{ij_a}(\bu_0;\btheta)
       [\bA\bz,\bA\bz]$ for short. Because $\|\bR_a(\bz)\|+|R_W(\bz)|$ is bounded by a fixed product of $\tilde\varepsilon_{nq}$, $B_n$, $b_n^{-1}$, and $\|\bz\|^c$ for some constant $c$, we have, for $W_{\delta}(\bZ)$ defined in Section~\ref{supp_sec_prove_lemma_first_order_base},
\begin{align*}
    \frac{\EE_0\{\bG_1(\bZ)\bG_2(\bZ)^\T W_{\delta}(\bZ)\}}{\EE_o[W_{\delta}(\bZ)]} = &\, \delta^2\EE_0\{\bL_1(\bZ)\bL_2(\bZ)^\T\} \\&\,+ \delta^3\EE_0\!\left[
 \bL_1(\bZ)\bQ_2(\bZ)^\T+\bQ_1(\bZ)\bL_2(\bZ)^\T
 +\bL_1(\bZ)\bL_2(\bZ)^\T p_1(\bZ)
\right] + \cOp(\delta^4\varepsilon_{nq}).
\end{align*}
The second term on the right side vanishes because all three terms inside the expectation are odd, and by the definition of $\bL_a$, we know 
\begin{equation*}
    \EE_{\Pi_i}(\bG_1\bG_2^\T) = \frac{\EE_0\{\bG_1(\bZ)\bG_2(\bZ)^\T W_{\delta}(\bZ)\}}{\EE_0W_{\delta}(\bZ)}  = \delta^2\bD_{ij_1}^\star\bH^{-1}(\bD_{ij_2}^\star)^\T + \cOp(\delta^4\varepsilon_{nq}).
\end{equation*}
With $\operatorname{Cov}_{\Pi_i}\{\bg_{ij_1}(\bU;\btheta),\bg_{ij_2}(\bU;\btheta)\} =\EE_{\Pi_i}(\bG_1\bG_2^\T)  -\EE_{\Pi_i}(\bG_1)\EE_{\Pi_i}(\bG_2)^\T$ and $\EE_{\Pi_i}(\bG_a) = \cOp(\tilde\varepsilon_{nq}^Cq^{-1})$, we arrive at
\begin{equation}
\begin{aligned}
    \operatorname{Cov}_{\Pi_i}\!\left\{\bg_{ij_1}(\bu_i;\btheta),\bg_{ij_2}(\bu_i;\btheta)\right\}=\bD_{ij_1}^\star
      \left\{\bH_{Lu_iu_i}\big(\btheta,\bu^\star(\btheta)\big)\right\}^{-1}
      (\bD_{ij_2}^\star)^\T+\cOp(q^{-2}\varepsilon_{nq}),
\end{aligned} \label{eq_laplace_hessian_covariance}
\end{equation}
uniformly in $i,j_1,j_2,\btheta,\bmu$, and $\bSigma$. Finally, the first-order Laplace approximation applied to the expectation in
\eqref{eq_marginal_hessian_covariance} yields
\begin{equation}
    \EE_{\Pi_i}\!\left\{-\ell_{ij}''(\eta_{ij})\bz_i(\bu_i)\bz_i(\bu_i)^\T\right\}
    =-\ell_{ij}''\{\eta_{ij}^\star(\btheta)\}\bz_i^\star(\bz_i^\star)^\T
      +\cOp(\varepsilon_{nq}q^{-1}),
    \label{eq_laplace_hessian_diagonal}
\end{equation}

Therefore, $\bH_{L\theta_ju_i}\{\btheta,\bu^\star(\btheta)\}=-\bD_{ij}^\star$.
Substituting \eqref{eq_laplace_hessian_diagonal} and \eqref{eq_laplace_hessian_covariance} into \eqref{eq_marginal_hessian_covariance} and canceling the identical terms therefore gives
\begin{equation}
\begin{aligned}
    \partial_{\btheta_{j_1}\btheta_{j_2}}^2\cL(\btheta,\bmu,\bSigma)
    =&\,
    \left\{\bH_{L\theta\theta}\big(\btheta,\bu^\star(\btheta)\big)\right\}_{\cK_{j_1}^+,\cK_{j_2}^+}\\
    &-\left\{\bH_{L\theta u}\big(\btheta,\bu^\star(\btheta)\big)\right\}_{\cK_{j_1}^+,\,}
      \left\{\bH_{Luu}\big(\btheta,\bu^\star(\btheta)\big)\right\}^{-1}
      \left\{\bH_{Lu\theta}\big(\btheta,\bu^\star(\btheta)\big)\right\}_{,\,\cK_{j_2}^+}\\
    &+1_{\{j_1=j_2\}}\cOp(\varepsilon_{nq}nq^{-1})+\cOp(\varepsilon_{nq}nq^{-2}).
\end{aligned}
\label{eq_approx_aa2}
\end{equation}
Since the leading terms on the right side form the $(j_1,j_2)$th $(K + 1)\times (K+1)$ block of $\bH_{-\theta}^{\star}(\btheta)$, the remaining error is the sum of a block-diagonal term of order $\cOp(\varepsilon_{nq}nq^{-1})$ per block and a dense term of order $\cOp(\varepsilon_{nq}nq^{-2})$ per block. Because the remainders are uniform over $\btheta\in\Xi$ and
$(\bmu,\bSigma)\in\bar\cK_\rho$, we conclude that
\begin{equation*}
    \big\|\cH_{\theta\theta}(\btheta) - \bH_{-\theta}^{\star}(\btheta)\big\|\le \|\bR_1\| + \|\bR_2\|_{\Fn} = \cOp(\varepsilon_{nq}nq^{-1}).
\end{equation*}
Indeed, $\bR_1$ has nonzero entries only in the block-diagonal positions, with each diagonal block bounded by $\cOp(\varepsilon_{nq}nq^{-1})$ uniformly, so $\|\bR_1\|=\cOp(\varepsilon_{nq}nq^{-1})$. The matrix $\bR_2$ has all blocks bounded by $\cOp(\varepsilon_{nq}nq^{-2})$ uniformly; hence $\|\bR_2\|\le \|\bR_2\|_{\Fn}=\cOp(\varepsilon_{nq}nq^{-1})$. The maximum-row-sum bound is also $\cOp(\varepsilon_{nq}nq^{-1})$, because each block row contains one $\cOp(\varepsilon_{nq}nq^{-1})$ diagonal block and $q-1$ blocks of order $\cOp(\varepsilon_{nq}nq^{-2})$.
    \end{proof}


\subsection{Proof of Lemmas in Section~\ref{supp_sec_main_lemma}}

\subsubsection{Proof of Lemma~\ref{lemma_concentration_l_mat}}\label{supp_sec_prove_lemma_concentration_l_mat}

By Assumption~\ref{assumption_smoothness}, conditional on $\mathscr{U}_n = \sigma(\bU_1,\ldots,\bU_n)$ and on the event $\mathcal E_\eta$, the variables $\ell'_{ij}(\eta_{ij}^*)$ are independent, have conditional mean zero, and satisfy $\max_{i\in[n],j\in[q]}\|\ell'_{ij}(\eta_{ij}^*)\|_{\psi_1\mid \mathscr U_n} \le C\sqrt{B_n}$. Thus $\EE(\{\ell'_{ij}(\eta_{ij}^*)\}^2\mid \mathscr U_n)\le C B_n$ and $\big\|\{\ell'_{ij}(\eta_{ij}^*)\}^2-\EE(\{\ell'_{ij}(\eta_{ij}^*)\}^2\mid \mathscr U_n)\big\|_{\psi_{1/2}\mid \mathscr U_n} \le C B_n$, uniformly over $(i,j)$. Since $\PP_U(\mathcal E_{\eta})\to1$, it suffices to establish the bounds under the conditional response law on $\mathcal E_{\eta}$. We suppress this conditioning below.

We first prove the row and column bounds. For each fixed $i\in[n]$, let $S_i:=\sum_{j=1}^q \{\ell_{ij}'(\eta_{ij}^*)\}^2=\|(\Lb_\eta)_{i,}\|^2$.
By the Bernstein inequality for sums of independent centered sub-Weibull\((1/2)\) variables, for all \(t>0\),
\begin{equation*}
    \PP\{S_i-\EE S_i\ge t\}\le 2\exp\Big[
        -c\min\Big\{\frac{t^2}{qB_n^2},\,\Big(\frac{t}{B_n}\Big)^{1/2}\Big\}
    \Big].
\end{equation*}
Since $\EE S_i\le CqB_n$, taking $t=C_\delta qB_n$ gives $\PP\{S_i\ge C_\delta qB_n\}\le q^{-\delta}$ after enlarging $C_\delta$. Hence, for each fixed $i\in[n]$, we have
\begin{equation*}
    \|(\Lb_\eta)_{i,}\|=\cOp(\sqrt{qB_n}).
\end{equation*}
For the uniform row bound, take $t=C_\delta B_n\{\sqrt{q\log n}+(\log n)^2\}$. For sufficiently large \(C_\delta\), $\PP\{S_i-\EE S_i\ge t\}\le 2n^{-(\delta+1)}$. By the union bound,
\begin{equation*}
\begin{aligned}
    \PP\left[
        \max_{1\le i\le n}S_i\ge C_\delta B_n\{q+(\log n)^2\}
    \right]&\le
    \sum_{i=1}^n
    \PP\left[    S_i\ge C_\delta B_n\{q+(\log n)^2\}
    \right]  \\ &\le 2n^{-\delta}.
\end{aligned}
\end{equation*}
Therefore, $\max_{1\le i\le n}\|(\Lb_\eta)_{i,}\| = \cOp\big(\sqrt{qB_n}+\sqrt{B_n}\log n\big)$. This proves part \((i)\). Part \((ii)\) follows by symmetry.

We next prove the spectral norm bound in part \((iii)\). Let $\bar Z_{ij}:=B_n^{-1/2}\ell'_{ij}(\eta_{ij}^*)$ and $\bar\Lb_\eta:=B_n^{-1/2}\Lb_\eta$.
Then $\bar Z_{ij}$ are independent, mean zero, and uniformly sub-exponential with $\max_{i,j}\|\bar Z_{ij}\|_{\psi_1}\le C$ and $\max_{i,j}\EE \bar Z_{ij}^2\le C$.  For a truncation level \(\nu>0\), define
\begin{equation*}
    \bar Z_{ij}^{(\nu)} := \operatorname{sgn}(\bar Z_{ij})\{|\bar Z_{ij}|\wedge \nu\},\qquad W_{ij} := \bar Z_{ij}^{(\nu)}-\EE\bar Z_{ij}^{(\nu)}, \qquad \Wb:=(W_{ij})_{n\times q}.
\end{equation*}
The variables $W_{ij}$ are independent, mean zero, bounded by $2\nu$, and satisfy $\max_{i,j}\EE W_{ij}^2\le C$. Consider the self-adjoint dilation
\begin{equation*}
    \bX := \begin{pmatrix}
        \zero_{q\times q} & \Wb^\T\\ \Wb & \zero_{n\times n}
    \end{pmatrix}.
\end{equation*}
Then $\|\bX\|=\|\Wb\|$. Moreover, we have
\begin{equation*}
     \tilde\sigma := \max_a \left(\sum_b\EE X_{ab}^2\right)^{1/2} \le C\sqrt{n\vee q},
\end{equation*}
and its entries are bounded by \(2\nu\). By Corollary 3.12 of \cite{bandeira2016sharp}, for all $t>0$, it holds that
\begin{equation*}
    \PP\left\{\|\Wb\| \ge C\sqrt{n\vee q}+t \right\}
    \le (n+q)\exp\left(-\frac{ct^2}{\nu^2}\right).
\end{equation*}
It remains to pass from $\Wb$ to $\bar\Lb_\eta$. { Since $\bar Z_{ij}$ are uniformly sub-exponential,} the truncation is inactive with high probability,
\begin{equation*}
    \PP\big\{\bar\Lb_\eta\neq (\bar Z_{ij}^{(\nu)})_{n\times q}\big\} \le \sum_{i=1}^n\sum_{j=1}^q \PP\{|\bar Z_{ij}|>\nu\} \le
    Cnq\exp(-c\nu).
\end{equation*}
In addition, since $\EE\bar Z_{ij}=0$, we have $\big|\EE\bar Z_{ij}^{(\nu)}\big| = \big|\EE\big[\bar Z_{ij}^{(\nu)}-\bar Z_{ij}\big]\big|\le \EE\big[|\bar Z_{ij}|1_{\{|\bar Z_{ij}|>\nu\}}\big]\le Ce^{-c\nu}$, which yields
\begin{equation*}
    \big\|\big(\EE\bar Z_{ij}^{(\nu)}\big)_{n\times q}\big\|\le C\sqrt{nq}\,e^{-c\nu}.
\end{equation*}
Combining the last three bounds gives
\begin{equation*}
    \PP\left\{ \|\bar\Lb_\eta\|\ge C\sqrt{n\vee q}+C\sqrt{nq}e^{-c\nu}+t
    \right\} \le (n+q)\exp\left(-\frac{ct^2}{\nu^2}\right) +Cnq\exp(-c\nu).
\end{equation*}
If we take $\nu=C_\delta\log(nq)$ with $C_\delta$ large enough that $\sqrt{nq}e^{-c\nu}=o(1)$ and $Cnqe^{-c\nu}=o(1)$, and $t=C_\delta\nu\sqrt{\log(n+q)}$, we know
\begin{equation*}
    \|\bar\Lb_\eta\| = \cOp\left(
        \sqrt{n\vee q} +\log(nq)\sqrt{\log(n+q)} \right).
\end{equation*}
Since $\log(nq)\sqrt{\log(n+q)}=o(\sqrt{n\vee q})$, this leads to $\|\bar\Lb_\eta\|=\cOp(\sqrt{n\vee q})$ and therefore $\|\Lb_\eta\| = \cOp\big(\sqrt{(n\vee q)B_n}\big)$. This proves part \((iii)\).
It remains to prove the uniform bound in part \((iv)\). Let $\bGamma=(\bgamma_1,\ldots,\bgamma_q)^\T$, $\bGamma^*=(\bgamma_1^*,\ldots,\bgamma_q^*)^\T$, $\bbeta=(\beta_1,\ldots,\beta_q)^\T$, $\bbeta^*=(\beta_1^*,\ldots,\beta_q^*)^\T$, \(\bU=(\bu_1,\ldots,\bu_n)^\T\), and \(\bU^*=(\bu_1^*,\ldots,\bu_n^*)^\T\). Define $\Delta_\eta(\btheta,\bu) := \{\eta_{ij}-\eta_{ij}^*\}_{n\times q}$.
Then
\begin{equation*}
    \Delta_\eta(\btheta,\bu) = \bU(\bGamma-\bGamma^*)^\T +(\bU-\bU^*)(\bGamma^*)^\T +\one_n(\bbeta-\bbeta^*)^\T.
\end{equation*}
For $(\btheta,\bu)$ satisfying \eqref{eq_feasible_theta}, we have $\|\bGamma-\bGamma^*\|_{\Fn}\le \|\btheta-\btheta^*\|\le \sqrt q\,\epsilon$ and $\|\bbeta-\bbeta^*\|\le \|\btheta-\btheta^*\|\le \sqrt q\,\epsilon$, and $\|\bU-\bU^*\|_{\Fn}\le \|\bu-\bu^*\|\le \sqrt n\,\epsilon$.
Also, note that $\|\bU\|_{\Fn}\le C\sqrt{n\log n}$ by $\|\bu\|_\infty\le C\sqrt{\log n}$, and $\|\bGamma^*\|_{\Fn}\le C\sqrt q$ by Assumption~\ref{assumption_psd_covariance}. Therefore, 
\begin{align*}
    \|\Delta_\eta(\btheta,\bu)\|_{\Fn} &\le \|\bU\|_{\Fn}\|\bGamma-\bGamma^*\|_{\Fn} +\|\bU-\bU^*\|_{\Fn}\|\bGamma^*\|_{\Fn} +\|\one_n\|\|\bbeta-\bbeta^*\|  \\ &\le
    C\sqrt{nq\log n}\,\epsilon.
\end{align*}
This bound is deterministic and uniform over all $(\btheta,\bu)$ satisfying \eqref{eq_feasible_theta}.

By the mean value theorem, for each $(i,j)$, there is some $\tilde\eta_{ij}$ between $\eta_{ij}$ and $\eta_{ij}^*$ such that $\ell_{ij}^{\prime}(\eta_{ij}) - \ell_{ij}^{\prime}(\eta_{ij}^*) = \ell_{ij}^{\prime\prime}(\tilde\eta_{ij}) (\eta_{ij}-\eta_{ij}^*)$. Since $\|\btheta\|_\infty\le C$ and $\|\bu\|_\infty\le C\sqrt{\log n}$, the whole segment between \(\eta_{ij}\) and \(\eta_{ij}^*\) lies in the interval where Assumption~\ref{assumption_smoothness} applies. Thus $|\ell_{ij}^{\prime\prime}(\tilde\eta_{ij})| \le B_n$ uniformly over $i,j$ and over all feasible $(\btheta,\bu)$. Consequently,
\[
\begin{aligned}
    \sup_{(\btheta,\bu)\text{ in }\eqref{eq_feasible_theta}}
    \|\Lb_\eta(\btheta,\bu)-\Lb_\eta\|
    &\le
    \sup_{(\btheta,\bu)\text{ in }\eqref{eq_feasible_theta}}
    \|\Lb_\eta(\btheta,\bu)-\Lb_\eta\|_{\Fn}  \\
    &\le
    B_n \sup_{(\btheta,\bu)\text{ in }\eqref{eq_feasible_theta}} \|\Delta_\eta(\btheta,\bu)\|_{\Fn}  \\ &\le C B_n\sqrt{nq\log n}\,\epsilon.
\end{aligned}
\]
Combining this with part \((iii)\) completes the proof.

\subsubsection{Proof of Lemma~\ref{lemma_convexity2}}\label{supp_sec_prove_lemma_convexity2}

As in Lemma~\ref{lemma_laplace_approx_second_order}, let $\bu_i^{\star}(\btheta) = \arg\max_{\bu\in\RR^{K}}\ell_i(\bu;\btheta)$ and $\bu^{\star}(\btheta) = \big(\bu_1^{\star}(\btheta)^\T, \dots, \bu_n^{\star}(\btheta)^\T\big)^\T$. The proof of the first lower bound consists of two steps. \begin{itemize}
\item \textbf{Step 1.} We establish the following relationship:
\begin{equation}
\begin{aligned}
   &\,\bD_m\big(\btheta,\bu^{\star}(\btheta)\big)^\T\bS_m\left\{\begin{pmatrix}
       \bH_{L\theta\theta}&\bH_{L\theta u}\\\bH_{Lu\theta}&\bH_{Luu}
   \end{pmatrix}\circ\big(\btheta,\bu^{\star}(\btheta)\big)\right\}\bS_m \bD_m\big(\btheta,\bu^{\star}(\btheta)\big)\\ = &\,\begin{pmatrix}
       n^{-1}{\bH}_{-\theta}^{\star}(\btheta)&\zero\\\zero&q^{-1}\bH_{Luu}\big(\btheta,\bu^{\star}(\btheta)\big)
   \end{pmatrix},
   \end{aligned}\label{eq_obtain_schur}
\end{equation}
for any $\btheta$, and 
\begin{equation}
    \big\{\bD_m(\btheta,\bu)\big\}^\T \times \bar\bv_{rl}(\btheta) = \begin{pmatrix}
            q^{-1/2}\bv_{rl}(\btheta)\\\zero_{nK}
        \end{pmatrix}\text{ for any }\btheta,\bu,\label{eq_relate_eigen_vectors}
\end{equation}
where $\bar\bv_{rl}(\btheta)$, $\bS_m$, and $\bD_m(\btheta,\bu)$ are defined below in \eqref{eq_define_v_bar_pq}, \eqref{eq_define_sm}, and \eqref{eq_define_bd_trans}, respectively. Here $\circ \big(\btheta,\bu^{\star}(\btheta)\big)$ indicates that the preceding matrix is evaluated at $\btheta$ and $\bu^{\star}(\btheta)$. This establishes $n^{-1}\bH_{-\theta}^{\star}(\btheta)$ as the Schur complement of an augmented matrix, which we study in the next step.
\item \textbf{Step 2.} We derive that for $\lambda^{\prime} \asymp b_n$ and $c^{\prime}\asymp b_n$, the lower  bound 
\begin{equation}
    \lambda_{\min}\left\{\bS_m\begin{pmatrix}
       \bH_{L\theta\theta}&\bH_{L\theta u}\\\bH_{Lu\theta}&\bH_{Luu}
   \end{pmatrix}\circ\big(\btheta,\bu^{\star}(\btheta)\big)\bS_m + c^{\prime}\sum_{r=0}^K\sum_{l=1}^K \bar\bv_{rl}(\btheta)\big\{\bar\bv_{rl}(\btheta)\big\}^\T\right\}\ge \lambda^{\prime}\label{eq_standard_convex_aug}\end{equation}
holds with probability approaching $1$ when $n,q\to\infty$. 
\end{itemize}
\bigskip

\noindent\underline{\large \it Executing Step 1.}    Define 
    \begin{equation}
        \bar\bv_{rl}(\btheta) = \left(q^{-1/2}\{\bv_{rl}(\btheta)\}^\T, \zero_{nK}\right)^\T\in\RR^{qK+q+nK}.\label{eq_define_v_bar_pq}
    \end{equation}
    for $r\in\{0\}\cup [K]$ and $l\in[K]$, \begin{equation}
        \bS_m = \begin{pmatrix}
            n^{-1/2}\bI_{qK+q}&\zero\\\zero&q^{-1/2}\bI_{nK}
        \end{pmatrix}\label{eq_define_sm}
    \end{equation}
    and
    \begin{equation}
        \bD_m(\btheta,\bu) = \begin{pmatrix}
            \bI_{qK + q} &\zero \\  -\sqrt{q/n}\{\bH_{Luu}(\btheta,\bu)\}^{-1}\bH_{Lu\theta}(\btheta,\bu) & \bI_{nK}
        \end{pmatrix}.\label{eq_define_bd_trans}
    \end{equation}
    Simple algebra yields \eqref{eq_obtain_schur} and \eqref{eq_relate_eigen_vectors}.
    

Before we start Step 2, we show that $\bD_m(\btheta,\bu)$ is invertible.
When $\btheta \in\cB_{\epsilon}(\btheta^*)$, invoke Lemma~\ref{lemma_neighbour_control} to obtain
\begin{equation}
    \|\bu^{\star}\|_{\infty} \le C\sqrt{\log n}\quad\text{ and }\quad n^{-1/2}\big\|\bu^{\star} - \bu^*\big\| = \cOp(\epsilon\kappa_n\sqrt{B_n}\vee \kappa_n/\sqrt{q}).\label{eq_control_btheta_in_prove_con}
\end{equation}
For $\bH_{Luu}(\btheta,\bu^{\star}(\btheta))$, its $i$-th diagonal block can be bounded as
\begin{equation}
    \begin{aligned}
        &\,\lambda_{\min}\Big\{\bH_{Lu_iu_i}(\btheta,\bu^{\star})\Big\}= \lambda_{\min}\Big\{\sum_{j=1}^q-\ell_{ij}^{\prime\prime}\big(\eta_{ij}^{\star}(\btheta)\big)\bgamma_j\bgamma_j^\T\Big\}\\\overset{(i)}{\ge} &\,\lambda_{\min}\Big\{\sum_{j=1}^q-\ell_{ij}^{\prime\prime}\big(\eta_{ij}^{\star}(\btheta)\big)\bgamma_j^*{\bgamma_j^*}^\T\Big\} - \Big\|\sum_{j=1}^q\ell_{ij}^{\prime\prime}\big(\eta_{ij}^{\star}(\btheta)\big)\bgamma_j^*{\bgamma_j^*}^\T - \sum_{j=1}^q\ell_{ij}^{\prime\prime}\big(\eta_{ij}^{\star}(\btheta)\big)\bgamma_j\bgamma_j^\T\Big\|\\\overset{(ii)}{\ge}&\,b_n\lambda_{\min}\Big\{\sum_{j=1}^q\bgamma_j^*{\bgamma_j^*}^\T\Big\}- 2\Big\|\sum_{j=1}^q\ell_{ij}^{\prime\prime}\big(\eta_{ij}^{\star}(\btheta)\big)\bgamma_j^*(\bgamma_j^* - \bgamma_j)^\T \Big\|  \\&\,- \Big\|\sum_{j=1}^q\ell_{ij}^{\prime\prime}\big(\eta_{ij}^{\star}(\btheta)\big)(\bgamma_j^* - \bgamma_j)(\bgamma_j^* - \bgamma_j)^\T \Big\|\\\overset{(iii)}{\ge}&\,\Theta\{q(b_n-B_n\epsilon)\}=\Theta(qb_n).
    \end{aligned}\label{eq_reference_eigen_lowerbound}
\end{equation}
Here $\eta_{ij}^{\star}(\btheta)=\bgamma_j^\T\bu_i^{\star}(\btheta)+\beta_j$ for each $i\in[n]$ and $j\in[q]$. Step~$(i)$ follows from Weyl's inequality. Step~$(ii)$ uses $\|\btheta\|_{\infty}\le C$, $\|\bu^{\star}(\btheta)\|_{\infty}\le C\sqrt{\log n}$ from \eqref{eq_control_btheta_in_prove_con}, the bound $-\ell_{ij}^{\prime\prime}(\eta_{ij}^{\star}(\btheta))\ge b_n$, and a telescoping expansion. Step~$(iii)$ follows because $\lim_{q\to\infty}q^{-1}\sum_{j=1}^q\bgamma_j^*{\bgamma_j^*}^\T=\bSigma_\gamma^*$ is positive definite, $|\ell_{ij}^{\prime\prime}(\eta_{ij}^{\star}(\btheta))|\le B_n$, $\|\btheta-\btheta^*\|\le\sqrt q\,\epsilon$, and $\epsilon\ll b_n^2/B_n^2$ by \eqref{eq_key_scaling_equation_epsilon}, together with the Cauchy--Schwarz inequality.

For $\bH_{L\theta u}(\btheta,\bu^{\star}(\btheta)) = \bar\bH_{L\theta u}(\btheta,\bu^{\star}(\btheta)) + \bJ_{L\theta u}(\btheta,\bu^{\star}(\btheta))$, for each $i\in[n]$ and $j\in[q]$,
\begin{equation*}
    \left\{\bar\bH_{L\theta u}(\btheta,\bu^{\star}(\btheta))\right\}_{\cK_j^+, \cK_i} = -\ell_{ij}^{\prime\prime}(\eta_{ij}^{\star}(\btheta))\begingroup \renewcommand{\arraystretch}{0.7}\begin{pmatrix}
        1\\\bu_i^{\star}(\btheta)
    \end{pmatrix}\endgroup\bgamma_j^\T.
\end{equation*}
Since $|\ell_{ij}^{\prime\prime}(\eta_{ij}^{\star}(\btheta))| \le B_n$, $\|\bu_i^{\star}(\btheta)\|=\cOp(1)$ and $\|\bgamma_j\|\le C$, we have $\big\|\{\bar\bH_{L\theta u}(\btheta,\bu^{\star}(\btheta))\}_{\cK_j^+, \cK_i}\big\| = \cOp(B_n)$.
Since $\{\bU_i\}_{i=1}^n$ are independent, one can then obtain
\begin{equation*}
    \left\|\bar\bH_{L\theta u}(\btheta,\bu^{\star}(\btheta))\right\| = \cOp(\sqrt{nqB_n}).
\end{equation*}
For $\bJ_{L\theta u}(\btheta,\bu^{\star}(\btheta))$, by (iv) of Lemma~\ref{lemma_concentration_l_mat}, one has
\begin{equation*}
    \left\|\bJ_{L\theta u}(\btheta,\bu^{\star})\right\| = \cOp\left(\sqrt{(n\vee q)B_n} + B_n\sqrt{nq}\epsilon\right).
\end{equation*}
We then conclude that
\begin{align*}
    \left\|\bH_{L\theta u}(\btheta,\bu^{\star})\right\| = \cOp\left(\sqrt{nq}B_n \right) .
\end{align*}
  Subsequently, it is not hard to check that there exists some absolute constants $c,C>0$ such that
    \begin{equation}
       c\frac{b_n}{\sqrt{B_n} + B_n\epsilon}\le \sigma_{\min}\big\{\bD_m(\btheta,\bu^{\star})\big\}\le \sigma_{\max}\big\{\bD_m(\btheta,\bu^{\star})\big\}\le C\frac{\sqrt{B_n} + B_n\epsilon}{b_n} \label{eq_benign_bD_scale}
    \end{equation}holds with probability approaching $1$. 

\medskip
\noindent\underline{\large \it Executing Step 2.} For $r\in\{0\}\cup [K]$ and $l\in[K]$, let
    \begin{equation}
        \tilde\bv_{rl}(\btheta,\bu) = \left(q^{-1/2}\{\bv_{rl}(\btheta)\}^\T, -n^{-1/2}u_{1r}\be_l^\T,\dots,-n^{-1/2}u_{nr}\be_l^\T\right)^\T\in\RR^{qK+q+nK}.\label{eq_define_u_pq}
    \end{equation}
For brevity, we omit explicit dependence on $\btheta$ or $\big(\btheta,\bu^{\star}(\btheta)\big)$ in the following when it is clear from the context.
Assemble vectors $\tilde \bv_{rl}$ and $\bar \bv_{rl}$ into matrices as follows:
\begin{align*}
    &\tilde \bV = \left(\tilde\bv_{01},\dots,\tilde\bv_{0K},\tilde\bv_{11},\dots,\tilde\bv_{KK}\right)\in\RR^{(qK+q+nK)\times (K^2+K)};\\
    &\bar \bV = \left(\bar\bv_{01},\dots,\bar\bv_{0K},\bar\bv_{11},\dots,\bar\bv_{KK}\right)\in\RR^{(qK+q+nK)\times (K^2+K)}.
\end{align*}
Define $\tilde\bV_\theta = \tilde\bV_{1:qK+q,}$ and $\tilde\bV_u = \tilde\bV_{(qK+q+1):(qK+q+nK),}$.
By simple algebra, 
\begin{align*}
    &\,\bS_m\begin{pmatrix}
       \bH_{L\theta\theta}&\bar\bH_{L\theta u}\\\bar\bH_{Lu\theta}&\bH_{Luu}
   \end{pmatrix}\bS_m + \tilde c^{\prime}\sum_{r=0}^K\sum_{l=1}^K \tilde\bv_{rl}\tilde\bv_{rl}^\T \\=&\, \sum_{i=1}^n\sum_{j=1}^q(-\ell_{ij}^{\prime\prime}(\eta_{ij}^{\star}) - \tilde c^{\prime})\bS_m\begin{pmatrix}
       \be_j^{(q)}\otimes \begingroup \renewcommand{\arraystretch}{0.7}\begin{pmatrix}
            1\\\bu_i^{\star}
        \end{pmatrix}\endgroup\\\be_i^{(n)}\otimes \bgamma_j
   \end{pmatrix}\begin{pmatrix}
       \be_j^{(q)}\otimes \begingroup \renewcommand{\arraystretch}{0.7}\begin{pmatrix}
            1\\\bu_i^{\star}
        \end{pmatrix}\endgroup\\\be_i^{(n)}\otimes \bgamma_j
	   \end{pmatrix}^\T\bS_m\quad(:=\alpha_1)\\
    &\,+\tilde c^{\prime}\begin{pmatrix}
	    \bI_q\otimes n^{-1}\sum_{i=1}^n\bz_i^\star(\bz_i^\star)^\T&\zero\\
        \zero&\bI_n\otimes q^{-1}\sum_{j=1}^q\bgamma_j\bgamma_j^\T
	    \end{pmatrix}\quad(:=\alpha_2)\\
    &\,+\tilde c^{\prime}\begin{pmatrix}
	        \tilde\bV_{\theta}(\tilde\bV_{\theta})^\T&\zero\\
            \zero&\tilde\bV_{u}(\tilde\bV_{u})^\T
	    \end{pmatrix}\quad(:=\alpha_3),
\end{align*}
where $\bz_i^\star=(1,(\bu_i^\star)^\T)^\T$. Recall that $\bar\bH_{L\theta u} = \bH_{L\theta u}(\btheta,\bu) - \bJ_{L\theta u}(\btheta,\bu)$ is the curvature-only part of $\bH_{L\theta u}$.
By construction, $\alpha_1$ and $\alpha_3$ are positive semidefinite when $0<\tilde c^{\prime}<b_n$, because $-\ell_{ij}^{\prime\prime}\{\eta_{ij}^{\star}(\btheta)\}\ge b_n$ for $\btheta\in\cB_{\epsilon}(\btheta^*)$ under Assumption~\ref{assumption_smoothness}. For $\alpha_2$, the law of large numbers, \eqref{eq_control_btheta_in_prove_con}, and a telescoping expansion give
\begin{equation}
    \lambda_{\min}\left\{n^{-1}\sum_{i=1}^n\bz_i^\star(\bz_i^\star)^\T\right\}
    \ge \frac{1}{2}\lambda_{\min}\left\{\begin{pmatrix}1&\zero_K^\T\\\zero_K&\bSigma_u^*\end{pmatrix}\right\}
    =:c_u>0
    \label{eq_define_bar_theta_mtheta_star2}
\end{equation}
with probability tending to one. The same loading-matrix perturbation used in \eqref{eq_reference_eigen_lowerbound} gives
\[
    \lambda_{\min}\left\{q^{-1}\sum_{j=1}^q\bgamma_j\bgamma_j^\T\right\}
    \ge \frac{1}{2}\lambda_{\min}(\bSigma_\gamma^*)=:c_\gamma>0.
\]
Consequently, taking $\tilde c^{\prime}=2b_n/3$,
\begin{align*}
    \lambda_{\min}\left\{\bS_m\begin{pmatrix}
       \bH_{L\theta\theta}&\bar\bH_{L\theta u}\\\bar\bH_{Lu\theta}&\bH_{Luu}
   \end{pmatrix}\bS_m + \tilde c^{\prime}\sum_{r=0}^K\sum_{l=1}^K \tilde\bv_{rl}\tilde\bv_{rl}^\T\right\}&\ge \lambda_{\min}(\alpha_2) \\&\ge b_n\min\left\{\lambda_{\min}(\bSigma_u^*), \lambda_{\min}(\bSigma_\gamma^*)\right\}/3\\&=\tilde\lambda^{\prime}.
\end{align*}
with probability tending to one. Next, we show that a similar lower bound exists when replacing $\tilde \bv_{rl}$ with $\bar \bv_{rl}$ and $\bar\bH_{L\theta u}$ with $\bH_{L\theta u}$. First, the preceding result implies
\begin{equation*}
    \bx ^\T\bS_m\begin{pmatrix}
       \bH_{L\theta\theta}&\bar\bH_{L\theta u}\\\bar\bH_{Lu\theta}&\bH_{Luu}
   \end{pmatrix}\bS_m \bx\ge \tilde\lambda^{\prime}\|\bx\|^2,
\end{equation*}
for any $\bx \perp \mathrm{col}(\tilde\bV)$. For general $\bx\in\RR^{qK+q+nK}$, decompose it as $\bx = \by + \bz$, where $\by\in\mathrm{col}(\tilde\bV)$ and $\bz \perp \mathrm{col}(\tilde\bV)$. This decomposition is unique, with $\by = \tilde\bV(\tilde\bV^\T\tilde\bV)^{-1}\tilde\bV^\T \bx$ and $\bz = \bx - \by$. By definition,
\begin{equation*}
    \bS_m\begin{pmatrix}
       \bH_{L\theta\theta}&\bar\bH_{L\theta u}\\\bar\bH_{Lu\theta}&\bH_{Luu}
   \end{pmatrix}\bS_m\by = 0,
\end{equation*}
because $\by\in\mathrm{col}(\tilde\bV)$.
Then one has
\begin{equation}\begin{aligned}
     \bx ^\T\left\{\bS_m\begin{pmatrix}
       \bH_{L\theta\theta}&\bH_{L\theta u}\\\bH_{Lu\theta}&\bH_{Luu}
   \end{pmatrix}\bS_m+c^{\prime}\bar\bV\bar\bV^\T \right\}\bx\ge  &\, \tilde\lambda^{\prime}\|\bz\|^2 + c^{\prime}\bx^\T \bar\bV\bar\bV^\T\bx\\&\, - 2(nq)^{-1/2}\|\bH_{L\theta u} - \bar\bH_{L\theta u}\|\|\bx\|^2.\end{aligned}\label{eq_final_push_to_convexity}
\end{equation}
Choose $c^{\prime}\le\tilde c^{\prime}$ below. Write $\bar\bV = \bar\bV_y + \bar\bV_z$, where
\[
    \bar\bV_y = \tilde\bV(\tilde\bV^\T\tilde\bV)^{-1}\tilde\bV^\T\bar\bV,\qquad
    \bar\bV_z = \left\{\bI_{qK+q+nK}-\tilde\bV(\tilde\bV^\T\tilde\bV)^{-1}\tilde\bV^\T\right\}\bar\bV.
\]
Then
\begin{equation}
    \bx^\T \bar\bV\bar\bV^\T\bx \ge \underbrace{\tfrac{1}{2}\big\|\bar\bV_y^\T\bx\big\|^2}_{\beta_1} - \underbrace{\big\|\bar\bV_z^\T\bx\big\|^2}_{\beta_2}.\label{eq_final_push_to_convexity2}
\end{equation}
To control $\beta_1$ and $\beta_2$, first define
\begin{equation*}
    \zeta_1 = \sigma_{K^2 + K}\big(\tilde\bV\big)\quad\text{ and }\quad\zeta_2 := \sigma_{\min}\left\{(\tilde\bV^\T\tilde\bV)^{-1}\tilde\bV^\T\bar\bV\right\}.
\end{equation*}
Define $\bar\gamma_{lh} = q^{-1}\sum_{j=1}^q\gamma_{jl}\gamma_{jh}$ and $\bar u_{lh}^{\star} = n^{-1}\sum_{i=1}^nu_{il}^{\star}u_{ih}^{\star}$ for $l,h\in[K]$, $\bar\bGamma = q^{-1}\sum_{j=1}^q\bgamma_{j}\bgamma_{j}^\T$, and $\bar \bU^{\star} = n^{-1}\sum_{i=1}^n\bu_i^{\star}{\bu_i^{\star}}^\T$. We also set $\bar u_{l0}^{\star} = \bar u_{0l}^{\star} = n^{-1}\sum_{i=1}^nu_{il}^{\star}$ and $\bar u_{00}^{\star} = 1$. Then
\begin{equation*}
    \tilde\bV^\T\tilde\bV = \bI_{K+1}\otimes \bar\bGamma + \begin{pmatrix}
        \bar u_{00}^{\star}\bI_K & \bar u_{01}^{\star}\bI_K&\cdots &\bar u_{0K}^{\star}\bI_K\\\bar u_{10}^{\star}\bI_K & \bar u_{11}^{\star}\bI_K&\cdots &\bar u_{1K}^{\star}\bI_K\\\ddots & \ddots &\ddots & \ddots\\\bar u_{K0}^{\star}\bI_K & \bar u_{K1}^{\star}\bI_K&\cdots &\bar u_{KK}^{\star}\bI_K
    \end{pmatrix}
\end{equation*}
Under Assumptions~\ref{assump_standard_id} and~\ref{assumption_psd_covariance}, the arguments used for \eqref{eq_reference_eigen_lowerbound} and \eqref{eq_define_bar_theta_mtheta_star2} show that, for some absolute constant $c_1>0$, $\lambda_{\min}(\tilde\bV^\T\tilde\bV)\ge c_1^2$ with probability tending to one. Thus $\PP(\zeta_1\ge c_1)\to1$. For $\zeta_2$, note that
\begin{equation*}
    \tilde\bV^\T\bar\bV = \bI_{K+1}\otimes \bar\bGamma.
\end{equation*}
Similarly, there exists a constant $c_2>0$ such that $\sigma_{\min}(\tilde\bV^\T\bar\bV)\ge c_2$ with probability tending to one. The preceding display and \eqref{eq_control_btheta_in_prove_con} also give $\lambda_{\max}(\tilde\bV^\T\tilde\bV)=\cOp(1)$. After decreasing $c_2$ if necessary, we therefore have $\zeta_2\ge c_2$ with probability tending to one.

    For $\beta_1$, since $\bz\perp\mathrm{col}(\tilde\bV)$ and $\bar\bV_y$ is the projection of $\bar\bV$ onto $\mathrm{col}(\tilde\bV)$,
    \begin{equation}
        \beta_1 = \frac{1}{2}\big\{\by^\T\tilde\bV\big\}\big\{(\tilde\bV^\T\tilde\bV)^{-1}\tilde\bV^\T\bar \bV\big\}\big\{\bar\bV^\T\tilde\bV(\tilde\bV^\T\tilde\bV)^{-1}\big\}\big\{\tilde\bV^\T\by\big\}\ge \frac{c_1^2c_2^2}{2}\|\by\|^2.\label{eq_final_push_to_convexity3}
    \end{equation}
    For $\beta_2$, since $\by\in\mathrm{col}(\tilde\bV)$ and $\bar\bV_z$ is the projection of $\bar\bV$ onto $\mathrm{col}(\tilde\bV)^{\perp}$, we have
    \begin{equation}
        \beta_2 = \|\bar\bV_z^\T\bz\|^2 \le \|\bar\bV_z\|^2 \|\bz\|^2\le \|\bar\bV\|^2\|\bz\|^2\le C_3\|\bz\|^2,\label{eq_final_push_to_convexity4}
    \end{equation}
    for some absolute constant $C_3$ due to the boundedness of $\bgamma$.

    Furthermore, for $\|\bH_{L\theta u} - \bar\bH_{L\theta u}\| = \|\bJ_{L\theta u}\|$, $(iv)$ of Lemma~\ref{lemma_concentration_l_mat} gives $(nq)^{-1/2}\|\bJ_{L\theta u}\| = \cOp\big(\sqrt{B_n/(n\wedge q)} + B_n^2\epsilon\sqrt{\log n}/b_n + B_n\kappa_n\sqrt{\log n/q}\big)$. By Assumption~\ref{assump_scaling} and scaling condition \eqref{eq_key_scaling_equation_epsilon}, we get $(nq)^{-1/2}\|\bJ_{L\theta u}\| = o_p(b_n)$, and therefore,
    \begin{equation}
        (nq)^{-1/2}\|\bH_{L\theta u} - \bar\bH_{L\theta u}\|\|\bx\|^2= o_p\big(b_n\|\bx\|^2\big).\label{eq_final_push_to_convexity5}
    \end{equation}
    Combining \eqref{eq_final_push_to_convexity}--\eqref{eq_final_push_to_convexity5}, we arrive at
    \begin{equation*}
         \bx ^\T\left\{\bS_m\begin{pmatrix}
       \bH_{L\theta\theta}&\bH_{L\theta u}\\\bH_{Lu\theta}&\bH_{Luu}
   \end{pmatrix}\bS_m+c^{\prime}\bar\bV\bar\bV^\T \right\}\bx\ge \left(\tilde\lambda^{\prime} - c^{\prime}C_3\right)\|\bz\|^2 + \frac{c^{\prime}(c_1c_2)^2}{2}\|\by\|^2 - o_p\big(b_n\big)\|\bx\|^2.
    \end{equation*}
    Since $c_1,c_2,C_3$ are constants independent of $n,q$ and the choice of $\btheta$, take $c^{\prime}= C_3^{-1}\tilde\lambda^{\prime}/2$ and we obtain \eqref{eq_standard_convex_aug} with
    \begin{equation*}
        \lambda^{\prime} = \min\left\{\tilde\lambda^{\prime}/2 ,\; C_3^{-1}\tilde\lambda^{\prime}(c_1c_2)^2/4\right\}\asymp b_n.
    \end{equation*}

\medskip
\noindent\underline{\large \it Combining Steps 1 and 2.}
By Step~2, with probability approaching one,
\begin{equation*}
\lambda_{\min}\left[\bS_m \begin{pmatrix}
       \bH_{L\theta\theta}&\bH_{L\theta u}\\
       \bH_{Lu\theta}&\bH_{Luu}
\end{pmatrix} \circ\big(\btheta,\bu^{\star}(\btheta)\big) \bS_m + cb_n\sum_{r=0}^{K}\sum_{l=1}^{K} \bar\bv_{rl}(\btheta)\big\{\bar\bv_{rl}(\btheta)\big\}^{\T}\right]\ge \lambda'
\end{equation*} 
uniformly over $\btheta\in\cB_{\epsilon}(\btheta^*)$, for some constant $c>0$ and some $\lambda'\asymp b_n$. The scaled lower-right block is positive definite by \eqref{eq_reference_eigen_lowerbound}, and taking its Schur complement gives
\begin{equation*}
\lambda_{\min}\Big[n^{-1}\bH_{-\theta}^{\star}(\btheta) + cb_nq^{-1}\sum_{r=0}^{K}\sum_{l=1}^{K} \bv_{rl}(\btheta)\big\{\bv_{rl}(\btheta)\big\}^{\T}\Big] \ge \lambda'.
\end{equation*}
To replace $\bv_{rl}(\btheta)\big\{\bv_{rl}(\btheta)\big\}^\T$ by $\bv_{rl}(\btheta^*)\big\{\bv_{rl}(\btheta^*)\big\}^\T$, set $\Delta\bv_{rl}=\bv_{rl}(\btheta)-\bv_{rl}(\btheta^*)$ and note that
\begin{align*}
    &\,\Big\|q^{-1}\bv_{rl}(\btheta)\big\{\bv_{rl}(\btheta)\big\}^\T-q^{-1}\bv_{rl}(\btheta^*)\big\{\bv_{rl}(\btheta^*)\big\}^\T\Big\|\\
    &\qquad\le 2q^{-1}\|\bv_{rl}(\btheta^*)\|\,\|\Delta\bv_{rl}\|
       +q^{-1}\|\Delta\bv_{rl}\|^2
       =\cO(\epsilon)=o(b_n).
\end{align*}
Here $q^{-1/2}\|\bv_{rl}(\btheta^*)\|=\cO(1)$ and
$q^{-1/2}\|\Delta\bv_{rl}\|\le\epsilon$, while
$\epsilon=o( b_n^2/B_n^2)=o(b_n)$ by \eqref{eq_key_scaling_equation_epsilon} and $b_n\le1\le B_n$. Weyl's inequality completes the replacement.

\medskip 

For the lower bound on the minimum eigenvalue of $\bH_{-\theta}^{\star}(\btheta^*)$ alone, we let $\bar\bH_L^*$ be obtained from $\bH_L(\btheta^*,\bu^*)$ by replacing $\bH_{L\theta u}(\btheta^*,\bu^*),\bH_{Lu\theta}(\btheta^*,\bu^*)$ with  $\bar\bH_{L\theta u}(\btheta^*,\bu^*)$ and $\bar\bH_{Lu\theta}(\btheta^*,\bu^*)$.
By construction, one can verify $\bS_m\bar\bH_L^*\bS_m$ is positive semidefinite. Next, part (iii) of Lemma~\ref{lemma_concentration_l_mat} gives $\|\bS_m\{\bH_L(\btheta^*,\bu^*)-\bar\bH_L^*\}\bS_m\| = \cOp(\sqrt{B_n/(n\wedge q)})$. By Weyl's inequality, we know $\lambda_{\min}(\bS_m\bH_L^*\bS_m)\ge -\cOp(\sqrt{B_n/(n\wedge q)})$. Then by \eqref{eq_obtain_schur},
\[
\bD_m^\T\bS_m\bH_L^*\bS_m\bD_m = \begin{pmatrix}
n^{-1}\bH_{-\theta}^\star(\btheta^*)&\zero\\ \zero&q^{-1}\bH_{Luu}^* \end{pmatrix}.
\]
Together with \eqref{eq_benign_bD_scale}, this yields $\lambda_{\min}\{n^{-1}\bH_{-\theta}^\star(\btheta^*)\} \ge -\cOp(B_n^{5/2}/(b_n^2\sqrt{n\wedge q}))$.
Because $\varepsilon_{nq}=(n\vee q)^{o(1)}$, Assumption~\ref{assump_scaling} gives $n\wedge q\ge(n\vee q)^a$ for some fixed $a>0$, which yields $\varepsilon_{nq}/q\ll(n\wedge q)^{-1/2}$. Lemma~\ref{lemma_laplace_approx_second_order} therefore finishes the proof.


\subsubsection{Proof of Lemma~\ref{lemma_average_converge_eta}}\label{supp_sec_prove_lemma_average_converge_eta}
Fix $(\bmu,\bSigma)\in\bar\cK_\rho$ with $\rho<1$, and suppress these arguments in $\cL$. 
Denote the $i$th marginal integral by $Z_i(\btheta) = \int_{\RR^K}\exp\{\ell_i(\bu;\btheta)\}\pwk(\bu\mid\bmu,\bSigma)\,d\bu $. We first compare $\cL(\btheta)$ and $-\sum_{i=1}^n\ell_i\{\bu_i^\star(\btheta);\btheta\}$ over $\btheta\in\Xi$. Since $\bu_i^\star(\btheta)$ maximizes $\ell_i(\cdot;\btheta)$, $Z_i(\btheta)$ is at most $\exp[\ell_i\{\bu_i^\star(\btheta);\btheta\}]$.
For a lower bound, restrict the integral to the ball of radius $r_{nq}=(nq)^{-1/2}$ centered at $\bu_i^\star(\btheta)$. The constraints defining $\Xi$ and Assumption~\ref{assumption_smoothness} imply that $\|\partial_{\bu}^2\ell_i(\bu;\btheta)\|$ on this ball is bounded by $Cq\exp(C\sqrt{\log n})$. Since $\partial_{\bu}\ell_i\{\bu_i^\star(\btheta);\btheta\}=\zero$, Taylor's theorem bounds the decrease in $\ell_i(\bu;\btheta)$ from its maximum by $Cqr_{nq}^2\exp(C\sqrt{\log n})\le C$ on this ball for sufficiently large $n$. Moreover, $\|\bu\|\le C_u\sqrt{\log n}+r_{nq}$, so the Gaussian density $\pwk$ is bounded below by $cn^{-C}$, uniformly over $(\bmu,\bSigma)\in\bar\cK_\rho$. Therefore, we have
\begin{equation*}
    c n^{-C}(nq)^{-K/2}
    \exp[\ell_i\{\bu_i^\star(\btheta);\btheta\}]
    \le Z_i(\btheta)
    \le \exp[\ell_i\{\bu_i^\star(\btheta);\btheta\}] .
\end{equation*}
 Taking logarithms and summing over $i$ yields
\begin{equation*}
    0\le \cL(\btheta) + \sum_{i=1}^n\ell_i\{\bu_i^\star(\btheta);\btheta\} \le Cn\log(nq).
\end{equation*}
Let $\eta_{ij}^\star = (\bgamma_j^*)^\T\bu_i^\star(\btheta^*)+\beta_j^*$ and $\hat\eta_{ij}^\star =\hat\bgamma_j^\T\bu_i^\star(\hat\btheta)+\hat\beta_j$ for each $i\in[n]$ and $j\in[q]$. Since $\btheta^*\in\Xi$ with probability tending to one, $\hat\btheta\in\Xi$, and $\cL(\hat\btheta)\le\cL(\btheta^*)$, we therefore obtain
\begin{align*}
    -\sum_{i=1}^n\sum_{j=1}^q \ell_{ij}(\hat\eta_{ij}^\star) +\sum_{i=1}^n\sum_{j=1}^q \ell_{ij}(\eta_{ij}^\star) 
    &= \cL(\hat\btheta)-\cL(\btheta^*) + \cOp(n\log(nq))\\
    &= \cOp\big(n\log(n\vee q)\big).
\end{align*}Here, the last inequality follows from $\log(nq)\asymp \log (n\vee q)$. A second-order Taylor expansion about $\eta_{ij}^{\star}$ gives
\begin{equation}
     -\underbrace{\sum_{i=1}^n\sum_{j=1}^q\ell_{ij}^{\prime}(\eta_{ij}^{\star}) (\hat\eta_{ij}^{\star} - \eta_{ij}^{\star})}_{\alpha_1} + \frac{1}{2}\underbrace{\sum_{i=1}^n\sum_{j=1}^q\big\{-\ell_{ij}^{\prime\prime}(\tilde\eta_{ij}^{\star})\big\} (\hat\eta_{ij}^{\star} - \eta_{ij}^{\star})^2}_{\alpha_2}\le \cOp(n\log(n\vee q)).\label{eq_taylor_expansio_Average}
\end{equation}
Here $\tilde\eta_{ij}^{\star} = \eta_{ij}^{\star} + s_{ij}(\hat\eta_{ij}^{\star} - \eta_{ij}^{\star})$ for some $s_{ij}\in[0,1]$. For $\ell_{ij}^{\prime}(\eta_{ij}^{\star})$, where $\eta_{ij}^{\star}= {\bgamma_j^*}^\T\bu_{i}^{\star}(\btheta^*) + \beta_j^*$, invoke Lemma~\ref{lemma_bu_star_theta_true} and part~(iv) of Lemma~\ref{lemma_concentration_l_mat} to obtain
\begin{equation*}
    \left\|\big\{\ell_{ij}^{\prime}(\eta_{ij}^{\star})\big\}_{n\times q}\right\|\le \cOp(\sqrt{(n\vee q)B_n} + B_n\sqrt{nq\log n}\times \kappa_n/\sqrt{q}) = \cOp(B_n\kappa_n\sqrt{(n\vee q)\log n}).
\end{equation*}
This bound is deliberately coarse because it is used only to localize the analysis.
The matrix $\{\hat\eta_{ij}^{\star}-\eta_{ij}^{\star}\}_{n\times q}$ has rank at most $2K+1$. Hence the trace inequality
$|\tr(\bA^\T\bB)|\le\|\bA\|\|\bB\|_*\le\sqrt{\operatorname{rank}(\bB)}\|\bA\|\|\bB\|_{\Fn}$ gives
\begin{align*}
    |\alpha_1|
    \le &\, \sqrt{2K+1}\left\|\big\{\ell_{ij}^{\prime}(\eta_{ij}^{\star})\big\}_{n\times q}\right\|\left\|\left\{\hat\eta_{ij}^{\star} - \eta_{ij}^{\star}\right\}_{n\times q}\right\|_{\Fn}\\
    = &\, \cOp\left(B_n\kappa_n\sqrt{(n\vee q)\log n}\Big\{\sum_{i=1}^n\sum_{j=1}^q\left(\hat\eta_{ij}^{\star} - \eta_{ij}^{\star}\right)^2\Big\}^{1/2}\right).
\end{align*}
For $\alpha_2$, $\hat\eta_{ij}^{\star}=\hat\bgamma_j^\T\bu_{i}^{\star}(\hat\btheta)+\hat\beta_j$ is bounded by $C\sqrt{\log n}$ uniformly because $\hat\btheta\in\Xi$. Because $\btheta^*\in\Xi$, Assumption~\ref{assumption_smoothness} gives
$\min_{i\in[n],j\in[q]}\{-\ell_{ij}^{\prime\prime}(\tilde\eta_{ij}^{\star})\}\ge b_n$. Therefore
\begin{equation*}
    \alpha_2 \ge b_n\sum_{i=1}^n\sum_{j=1}^q\left(\hat\eta_{ij}^{\star} - \eta_{ij}^{\star}\right)^2.
\end{equation*}
Plug the estimates for $\alpha_1$ and $\alpha_2$ into \eqref{eq_taylor_expansio_Average} and with the scaling condition in Assumption~\ref{assump_scaling}, we arrive at \begin{equation}
\begin{aligned}
    \sum_{i=1}^n\sum_{j=1}^q\left(\hat\eta_{ij}^{\star}  - \eta_{ij}^{\star}\right)^2 &= \cOp\left\{\frac{\kappa_n^2B_n^2(n\vee q)\log n}{b_n^2} +\frac{n\log(n\vee q)}{b_n}\right\}\\
    &= \cOp\left\{\frac{\kappa_n^2B_n^2(n\vee q)}{b_n^2}\log (n\vee q)\right\}. \end{aligned} \label{eq_average_eta_consis}
\end{equation}


\subsubsection{Proof of Lemma~\ref{lemma_aux_estimator_property}}\label{supp_sec_prove_lemma_aux_estimator_property}
When $\btheta\in\cB_{\epsilon}(\btheta^*)$, we know $\|\btheta - \btheta^*\|\le \sqrt{q}\epsilon$. By Lemma~\ref{lemma_neighbour_control}, we have $\|\bu_i^{\star}(\btheta) - \bu_i^*\| = \cOp(\epsilon\kappa_n\sqrt{B_n}\vee \kappa_n/\sqrt{q})$. 
Assumption~\ref{assump_standard_id} gives $\bSigma_u^* = \bI_K$. Hence, $\|n^{-1}\sum_{i=1}^n\bu_i^*\| = \cOp(n^{-1/2})$ and $\|n^{-1}\sum_{i=1}^n\bu_i^*(\bu_i^*)^\T - \bI_K\| = \cOp(n^{-1/2})$. 
Therefore, by the Cauchy--Schwarz inequality,
\begin{equation}\label{eq_bound_u_sum_ms}\begin{aligned}
    \left\|\frac{1}{n}\sum_{i=1}^n\bu_i^{\star}(\btheta)\right\|
    &=\cOp\left(\epsilon\kappa_n\sqrt{B_n}+\frac{\kappa_n}{\sqrt q}+\frac{1}{\sqrt n}\right),\\
    \left\|\frac{1}{n}\sum_{i=1}^n\bu_i^{\star}(\btheta)\{\bu_i^{\star}(\btheta)\}^\T-\bI_K\right\|
    &=\cOp\left(\epsilon\kappa_n\sqrt{B_n}+\frac{\kappa_n}{\sqrt q}+\frac{1}{\sqrt n}\right).
\end{aligned}\end{equation} 
Indeed, $(\hat\bmu^*,\hat\bSigma^*)\in\cK_C$ minimizes a differentiable function over the convex compact set $\cK_C$, so constrained optimality gives the variational inequality
\begin{equation*}
    \big\langle \cS_{\mu}(\hat\btheta^*,\hat\bmu^*,\hat\bSigma^*),\bmu - \hat\bmu^*\big\rangle + \big\langle \cS_{\Sigma}(\hat\btheta^*,\hat\bmu^*,\hat\bSigma^*),\bSigma - \hat\bSigma^*\big\rangle \; \ge\; 0\qquad\text{for all }(\bmu,\bSigma)\in\cK_C.
\end{equation*}
Let $\bd(\btheta) = n^{-1}\sum_{i=1}^n\bu_i^\star(\btheta)$ and $\bS(\btheta)=n^{-1}\sum_{i=1}^n\{\bu_i^\star(\btheta)-\bd(\btheta)\}\{\bu_i^\star(\btheta)-\bd(\btheta)\}^\T$. Then for $G(\bmu,\bSigma)=\tfrac12\log\det\bSigma+
\tfrac12\operatorname{tr}\{\bSigma^{-1}[\bS(\btheta)+(\bmu-\bd(\btheta))(\bmu-\bd(\btheta))^\T]\}$, we know by Lemma~\ref{lemma_first_order_base} that its derivatives with respect to $\bmu$ and $\bSigma$ can be uniformly approximated by $\cS_{\mu}$ and $\cS_{\Sigma}$ up to an error of order $\cOp(\varepsilon_{nq}/q)$. Taking $C>1$ so that $(\zero_K,\bI_K)\in\cK_C$. Equations \eqref{eq_bound_u_sum_ms} ensure that $(\bd(\btheta),\bS(\btheta))$ also lies in the interior of $\cK_C$.
Then we have 
\begin{equation*}
\begin{aligned}
\langle\partial_\mu G,\bmu-\bd(\btheta)\rangle +\langle\partial_\Sigma G,\bSigma-\bS(\btheta)\rangle &= \frac12\big\{\bmu-\bd(\btheta)\big\}^\T
\left\{ \bSigma^{-1} +\bSigma^{-1}\bS(\btheta)\bSigma^{-1} \right\}\big\{\bmu-\bd(\btheta)\big\}\\
&\quad+ \frac12 \left\| \bSigma^{-1/2}\big\{\bSigma-\bS(\btheta)\big\}\bSigma^{-1/2} \right\|_{\Fn}^{2}\\
&\ge c\left( \big\|\bmu-\bd(\btheta)\big\|^2+\|\bD\|_{\Fn}^2 \right),
\end{aligned}
\end{equation*}
 Thus we know 
\begin{equation*}
\big\|\hat\bmu^* - \bd(\btheta)\big\| + \big\|\hat\bSigma^* - \bS(\btheta)\big\| = \cOp(\varepsilon_{nq}/q),
\end{equation*}
which, together with \eqref{eq_bound_u_sum_ms}, yields $\|\hat\bmu^*\| = \cOp(\epsilon\kappa_n\sqrt{B_n} + \kappa_n/\sqrt{q} + n^{-1/2} + \varepsilon_{nq}/q)= o_p(1)$ and $\|\hat\bSigma^* - \bI_K\| = \cOp(\epsilon\kappa_n\sqrt{B_n} + \kappa_n/\sqrt{q} + n^{-1/2} + \varepsilon_{nq}/q) = o_p(1)$. Since $(\zero_K,\bI_K)$ is an interior point of $\cK_C$, the constraints on $(\bmu,\bSigma)$ are inactive with probability tending to one, so first-order conditions $\cS_{\mu} = \zero$ and $\cS_{\Sigma}=\zero$ hold.

The first-order condition for $\hat\btheta^*$ in \eqref{eq_aux_estimator}, however, is substantially more challenging to establish.
To address this, we construct a second auxiliary estimator for the estimator defined in \eqref{eq_aux_estimator}:
\begin{equation}
    \tilde\btheta^* = \mathop{\arg\min}_{\btheta\in \cB_{\epsilon}(\btheta^*)}\left\|\hat\cS_{Q}^*(\btheta)\right\|_{\infty}^{2},\label{eq_dbl_aux_estimator}
\end{equation}
where $\hat\cS_{Q}^*(\btheta)$ is defined as
\begin{equation*}
    \hat\cS_{Q}^*(\btheta) := n^{-1}\partial_{\btheta}\cQ^*(\btheta,\hat\bmu^*,\hat\bSigma^*).
\end{equation*}
 The primary reason we consider this estimator is that, by definition, it holds that
\begin{equation}
    \left\|\hat\cS_{Q}^*(\tilde\btheta^*) - \hat\cS_{Q}^*(\btheta^*)\right\|_{\infty}\le 2\left\|\hat\cS_{Q}^*(\btheta^*)\right\|_{\infty}=2\left\|n^{-1}\partial_{\btheta}\cQ^*(\btheta^*,\hat\bmu^*,\hat\bSigma^*)\right\|_{\infty},\label{eq_zeta_diff_definition}
\end{equation}
where the right-hand side is controlled by the following lemma.
\begin{lemma}\em\label{lemma_zeta_bound_S_Q}
    Under the setting of Lemma~\ref{lemma_aux_estimator_property}, for
    {$r_{\theta,nq}$ defined in Equation~\eqref{eq_define_r_theta_nq}, one has}
    \begin{equation*}
        \left\|n^{-1}\partial_{\btheta}\cQ^*(\btheta^*,\hat\bmu^*,\hat\bSigma^*)\right\|_{\infty} = \cOp(r_{\theta,nq}\sqrt{\log q/n}).
    \end{equation*}
\end{lemma}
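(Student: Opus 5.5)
The gradient splits into two pieces, $n^{-1}\partial_{\btheta}\cQ^*(\btheta^*,\hat\bmu^*,\hat\bSigma^*) = n^{-1}\cS_{\theta}(\btheta^*,\hat\bmu^*,\hat\bSigma^*) + n^{-1}\partial_{\btheta}P^*(\btheta^*)$, and we bound each in sup-norm.

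The penalty piece vanishes exactly. By \eqref{eq_first_order_p_star}, $\partial_{\btheta}P^*(\btheta)$ is a linear combination of the $\bv^*_{rl}$. Its coefficients are $q^{-1}\sum_j\gamma^*_{jl}\beta_j - M^*_{l0}$ and $q^{-1}\sum_j\gamma^*_{jl}\gamma_{jr} - M^*_{lr}$, and both are identically zero at $\btheta=\btheta^*$. So $\partial_{\btheta}P^*(\btheta^*)=\zero$, and only the marginal score term remains.

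For the score term we use part $(ii)$ of Lemma~\ref{lemma_first_order_concern}. It gives $\|\cS_{\theta}(\btheta^*,\bmu,\bSigma)\|_\infty = \cOp(\sqrt{n\log q}\,r_{\theta,nq})$ uniformly over $(\bmu,\bSigma)\in\bar\cK_\rho$ for any fixed $\rho<1$. Dividing by $n$ yields exactly the claimed rate $\cOp(r_{\theta,nq}\sqrt{\log q/n})$, provided $(\hat\bmu^*,\hat\bSigma^*)\in\bar\cK_\rho$ with probability tending to one.

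The main obstacle is this localization of $(\hat\bmu^*,\hat\bSigma^*)$, because the optimization in \eqref{eq_aux_estimator} is only over the larger set $\cK_C$. We fix some $\rho\in(0,1)$, say $\rho=1/2$, and appeal to the argument given just before this lemma in the proof of Lemma~\ref{lemma_aux_estimator_property}. That argument compares the constrained optimality of $(\hat\bmu^*,\hat\bSigma^*)$ with the strongly convex Gaussian surrogate $G(\bmu,\bSigma)$, using Lemma~\ref{lemma_first_order_base} and \eqref{eq_bound_u_sum_ms}. It shows $\|\hat\bmu^*\|=o_p(1)$ and $\|\hat\bSigma^*-\bI_K\|=o_p(1)$, which puts $(\hat\bmu^*,\hat\bSigma^*)\in\bar\cK_{1/2}$ with probability tending to one.

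One point needs care here. The bound in Lemma~\ref{lemma_first_order_concern} is uniform over the deterministic set $\bar\cK_\rho$, so substituting the random pair $(\hat\bmu^*,\hat\bSigma^*)$ is legitimate only on the event where that pair lies in $\bar\cK_\rho$. That event has probability tending to one, and on it
\[
\|\cS_{\theta}(\btheta^*,\hat\bmu^*,\hat\bSigma^*)\|_\infty \le \sup_{(\bmu,\bSigma)\in\bar\cK_\rho}\|\cS_{\theta}(\btheta^*,\bmu,\bSigma)\|_\infty .
\]
The stated $\cOp$ bound follows. Alternatively, the final sentence of Lemma~\ref{lemma_first_order_base} extends the Laplace approximation uniformly to $\cK_C$. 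Rerunning the proof of Lemma~\ref{lemma_first_order_concern} with $\cK_C$ in place of $\bar\cK_\rho$ would then avoid the localization step entirely, and we would use this route as a fallback.
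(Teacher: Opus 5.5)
Your proposal is correct and follows the paper's proof. The paper likewise notes that $\partial_{\btheta}P^*(\btheta^*)=\zero$ by \eqref{eq_first_order_p_star} and then invokes part $(ii)$ of Lemma~\ref{lemma_first_order_concern}; your explicit localization of $(\hat\bmu^*,\hat\bSigma^*)$ into $\bar\cK_\rho$, via the consistency argument given earlier in the proof of Lemma~\ref{lemma_aux_estimator_property}, fills in a step the paper leaves implicit.
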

    \begin{proof}
        Invoke the expression of $\partial_{\btheta}P^*(\btheta)$ in \eqref{eq_first_order_p_star} and one immediately has 
        \begin{equation*}
            n^{-1}\partial_{\btheta}\cQ^*(\btheta^*,\hat\bmu^*,\hat\bSigma^*) = n^{-1}\cS_{\theta}(\btheta^*,\hat\bmu^*,\hat\bSigma^*) + n^{-1}\partial_{\btheta}P^*(\btheta^*) = n^{-1}\cS_{\theta}(\btheta^*,\hat\bmu^*,\hat\bSigma^*).
        \end{equation*}
        The result now follows from Lemma~\ref{lemma_first_order_concern}.
    \end{proof}
Next, we apply the integral mean value theorem for $\hat\cS_{Q}^*$:
\begin{equation*}
    \hat\cS_{Q}^*(\tilde\btheta) - \hat\cS_{Q}^*(\btheta^*) = \int_{0}^1n^{-1}\partial_{\btheta\btheta}^2\cQ^*\big(\btheta^* + s(\tilde\btheta - \btheta^*),\hat\bmu^*,\hat\bSigma^*\big)ds\times (\tilde\btheta - \btheta^*).
\end{equation*}
For the remainder of this proof, write $\tilde\btheta=\tilde\btheta^*$.
Lemma~\ref{lemma_laplace_approx_second_order} provides an approximation for $\partial_{\btheta}^2\cQ^*(\btheta,\hat\bmu^*,\hat\bSigma^*)$ through $\bH_{-\theta}^{\star}(\btheta)$. We let 
\begin{equation*}
    \bH_{Q2}^{\star}(\btheta) := \left\{\bH_{L\theta u}\big(\btheta,\bu^{\star}(\btheta)\big)\right\}\,\left\{\bH_{Lu u}\big(\btheta,\bu^{\star}(\btheta)\big)\right\}^{-1}\,\left\{\bH_{Lu\theta}\big(\btheta,\bu^{\star}(\btheta)\big)\right\}.
\end{equation*}
Define $\tilde\cH_{Q}^*(\btheta,\hat\bmu^*,\hat\bSigma^*) = \int_{0}^1n^{-1}\partial_{\btheta\btheta}^2\cQ^*\big(\btheta^* + s(\btheta - \btheta^*),\hat\bmu^*,\hat\bSigma^*\big)ds$, $\tilde\cH_{Q2}^*(\btheta) = \int_{0}^1n^{-1}\bH_{Q2}^{\star}\big(\btheta^* + s(\btheta - \btheta^*)\big)ds$, and $\tilde\cH_{Q1}^*(\btheta,\hat\bmu^*,\hat\bSigma^*) = \tilde\cH_{Q}^*(\btheta,\hat\bmu^*,\hat\bSigma^*) + \tilde\cH_{Q2}^*(\btheta)$. With these definitions, the expansion can be equivalently expressed as 
\begin{subequations}\label{eq_expand_tild_theta}
    \begin{align}
    \hat\cS_{Q}^*(\tilde\btheta) - \hat\cS_{Q}^*(\btheta^*) &= \tilde\cH_{Q}^*(\tilde\btheta,\hat\bmu^*,\hat\bSigma^*)\times (\tilde\btheta - \btheta^*)\label{eq_expand_tild_theta_S1} \\&=\tilde\cH_{Q1}^*(\tilde\btheta,\hat\bmu^*,\hat\bSigma^*)\times (\tilde\btheta - \btheta^*) - \tilde\cH_{Q2}^*(\tilde\btheta)\times (\tilde\btheta - \btheta^*) .\label{eq_expand_tild_theta_S2}
\end{align}
\end{subequations}
We present the following technical lemmas to characterize the terms in this expansion. 
\begin{lemma}\label{lemma_aux_of_aux_estimator_property}\em Under the setting of Lemma~\ref{lemma_aux_estimator_property}, the following statements hold for every $\btheta\in\cB_{\epsilon}(\btheta^*)$:
\begin{enumerate}[label=$\textnormal{(\roman*)}$]
\item $\PP\big[\lambda_{\min}\{\tilde\cH_{Q}^*(\btheta,\hat\bmu^*,\hat\bSigma^*)\}\ge\lambda\big]\to1$ as $n,q\to\infty$, for some $\lambda\asymp b_n$;
\item $\big\|\{\tilde\cH_{Q1}^*(\btheta,\hat\bmu^*,\hat\bSigma^*)\}^{-1}\big\|_{\infty}= \cOp(b_n^{-1})$ and
\begin{equation*}
    \big\|\tilde\cH_{Q2}^*(\btheta)(\btheta - \btheta^*) \big\|_{\infty}
    = \cOp\Big(\frac{B_n^2}{b_n\sqrt q}\|\btheta-\btheta^*\|\Big).
\end{equation*}
\end{enumerate}
\end{lemma}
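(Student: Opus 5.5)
We would prove the two parts separately. Both rest on the decomposition of the marginal Hessian supplied by Lemma~\ref{lemma_laplace_approx_second_order}. Part~(ii) additionally uses the block structure of the conditional-likelihood Hessian. Throughout, we work on the event, whose probability tends to one, that $(\hat\bmu^*,\hat\bSigma^*)\in\bar\cK_\rho$. This event follows from the bounds $\|\hat\bmu^*\|=o_p(1)$ and $\|\hat\bSigma^*-\bI_K\|=o_p(1)$ established just before \eqref{eq_dbl_aux_estimator}. On this event, the Laplace approximations hold uniformly.

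For part~(i), fix $\btheta\in\cB_{\epsilon}(\btheta^*)$. Every point $\btheta^*+s(\btheta-\btheta^*)$ with $s\in[0,1]$ lies in $\cB_{\epsilon}(\btheta^*)$, because this set is convex. The bound \eqref{eq_convex_aux_obj} is uniform over $\cB_{\epsilon}(\btheta^*)$, since both Lemma~\ref{lemma_convexity2} and Lemma~\ref{lemma_laplace_approx_second_order} are uniform statements. Hence, with probability tending to one, we have $n^{-1}\partial^2_{\btheta\btheta}\cQ^*(\cdot,\hat\bmu^*,\hat\bSigma^*)\succeq (\gamma b_n/2)\bI$ simultaneously along the whole segment. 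Integrating over $s$ preserves this Loewner lower bound, which gives (i) with $\lambda=\gamma b_n/2$.

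For the first claim of part~(ii), we add back the Schur-complement term and write
\[
\tilde\cH_{Q1}^*=\int_0^1\Big[n^{-1}\bH_{L\theta\theta}\{\btheta_s,\bu^\star(\btheta_s)\}+2c_Pq^{-1}\textstyle\sum_{r,l}\bv_{rl}^*(\bv_{rl}^*)^\T\Big]ds+\bE ,
\]
where $\btheta_s=\btheta^*+s(\btheta-\btheta^*)$. By the max-row-sum version of Lemma~\ref{lemma_laplace_approx_second_order}, the remainder satisfies $\|\bE\|_\infty=\cOp(\varepsilon_{nq}/q)=o_p(b_n)$. The first integrand is block diagonal; call its integral $\bD$. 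Each block of $\bD$ has the form $n^{-1}\sum_i(-\ell''_{ij})\bz_i^\star(\bz_i^\star)^\T$, which is bounded below by $c_ub_n$ by \eqref{eq_define_bar_theta_mtheta_star2} and above by $CB_n\log n$. Hence $\|\bD^{-1}\|_\infty\le C/b_n$, since $K$ is fixed. The penalty term is the low-rank matrix $c'q^{-1}\bV\bV^\T$. Here $\bV$ has $K(K+1)$ columns and bounded entries, so each of its rows has bounded Euclidean norm.

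We would apply the Woodbury identity:
\[
(\bD+c'q^{-1}\bV\bV^\T)^{-1}=\bD^{-1}-\bD^{-1}\bV\{qc'^{-1}\bI+\bV^\T\bD^{-1}\bV\}^{-1}\bV^\T\bD^{-1}.
\]
We then bound the correction in max-row-sum norm. The rows of $\bD^{-1}\bV$ are $O(1/b_n)$. The middle inverse has operator norm $O(1/q)$, because $\bV^\T\bD^{-1}\bV\succeq 0$. Each column of $\bV^\T\bD^{-1}$ has norm $O(1/b_n)$, so summing over the $q$ blocks costs a factor $q$. Together these give a correction of order $O(b_n^{-2})$ crudely. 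We expect to sharpen this to $O(b_n^{-1})$ by using $\bV^\T\bD^{-1}\bV\succeq q\bV^\T\bV/(qCB_n)$, which is of order $q/B_n$. This makes the middle inverse $O(B_n/q)$, and the resulting polylogarithmic and $B_n$ factors are absorbed. Finally, a Neumann series removes $\bE$, since $\|\bE\|_\infty\|(\bD+\cdot)^{-1}\|_\infty=o_p(1)$. This max-row-sum control of the inverse is the step we expect to be the main obstacle, because Woodbury mixes operator and row-sum norms.

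For the second claim of part~(ii), we compute the $j$th block of $n^{-1}\bH_{Q2}^\star(\btheta_s)\bd$, where $\bd=\btheta-\btheta^*$. It equals
\[
n^{-1}\sum_i\bH_{L\theta_ju_i}\bH_{Lu_iu_i}^{-1}\sum_{j'}\bH_{Lu_i\theta_{j'}}\bd_{j'} .
\]
We split $\bH_{L\theta u}=\bar\bH_{L\theta u}+\bJ_{L\theta u}$. For the curvature part, each block is bounded by $CB_n\sqrt{\log n}$. By Cauchy--Schwarz, the inner sum is then at most $CB_n\sqrt{q\log n}\,\|\bd\|$. By \eqref{eq_reference_eigen_lowerbound}, $\|\bH_{Lu_iu_i}^{-1}\|\le C/(qb_n)$. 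The outer factor and the average over $i$ contribute $O(B_n)$. Altogether this gives $B_n^2\|\bd\|/(b_n\sqrt q)$ up to logarithmic factors, which we absorb.

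The $\ell'$ part is handled with Lemma~\ref{lemma_concentration_l_mat}~(i) and~(iv). The inner sum $\sum_{j'}\ell'_{ij'}\bd_{\gamma,j'}$ is bounded using the row norm $\cOp(\sqrt{qB_n})$ together with the perturbation term. For the outer sum, we use that its $\ell'_{ij}$ weights have column norm $\cOp(\sqrt{nB_n})$. Both contributions are of smaller order than the curvature part. Integrating over $s$ preserves the bound and completes (ii).
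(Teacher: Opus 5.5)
Your route is the paper's. Part (i) follows by integrating the uniform curvature bound \eqref{eq_convex_aux_obj} along the segment. Part (ii) adds back the Schur term so that the leading part of $\tilde\cH_{Q1}^*$ is block-diagonal plus the rank-$K(K+1)$ penalty, then uses Woodbury, a Neumann series for the $\cOp(\varepsilon_{nq}/q)$ Laplace error, and a norm bound on the Schur term applied to $\btheta-\btheta^*$.

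The step that does not work as written is the one you flagged as the main obstacle. In your Woodbury formula, positive semidefiniteness of $\bV^\T\bD^{-1}\bV$ alone gives $\|\{qc'^{-1}\bI+\bV^\T\bD^{-1}\bV\}^{-1}\|\le c'/q$. Since $c'=2c_P\asymp b_n$, this is $O(b_n/q)$, not merely $O(1/q)$. Your own three factors then give $O(b_n^{-1})\cdot O(b_n/q)\cdot O(q/b_n)=O(b_n^{-1})$ at once. This is exactly the paper's computation: there $\sqrt{2c_P/q}$ is folded into $\bV^*$, so that $\|\bV^*\|_\infty\|(\bV^*)^\T\|_\infty=O(b_n)$ while the middle inverse is bounded by $n$.

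Your proposed ``sharpening'' goes the wrong way. The bound $\bV^\T\bD^{-1}\bV\succeq\bV^\T\bV/(CB_n)$ only controls the middle inverse by $O(B_n/q)$. That is weaker than both $1/q$ and $c'/q$, and it would give $O(B_n/b_n^2)$. The surplus factor $B_n/b_n$ cannot be ``absorbed''. The Neumann step would survive it, but the lemma asserts the rate $b_n^{-1}$. That exact rate is what produces the $\ell_\infty$ bound in Lemma~\ref{lemma_aux_estimator_property}(iii), and hence in Theorem~\ref{thm_consis}.

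A smaller point concerns the second claim of (ii): the $\sqrt{\log n}$ you absorb there is also unnecessary. One option is to keep the factor $1+\|\bu_i^\star(\btheta_s)\|$ in both the inner and outer curvature blocks instead of bounding it by its maximum over $i$. This gives $CB_n^2(b_n\sqrt q)^{-1}\|\btheta-\btheta^*\|\cdot n^{-1}\sum_i\{1+\|\bu_i^\star(\btheta_s)\|\}^2$, and the average is $\cOp(1)$ by Lemma~\ref{lemma_neighbour_control}. The other option, which the paper uses, bounds the $j$th block by $n^{-1}\|\{\bH_{L\theta u}^s\}_{\cK_j^+,}\|\,\|(\bH_{Luu}^s)^{-1}\|\,\|\bH_{Lu\theta}^s\|\,\|\btheta-\btheta^*\|$, with the three factors $\cOp(B_n\sqrt n)$, $\cOp\{(qb_n)^{-1}\}$ and $\cOp(B_n\sqrt{nq})$.
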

\begin{proof}
    See Section~\ref{supp_sec_prove_lemma_aux_of_aux_estimator_property}.
\end{proof}
Subsequently, invoke expansion \eqref{eq_expand_tild_theta} and one has
\begin{align*}
    \frac{\|\tilde\btheta - \btheta^*\|}{\sqrt{q}}\le&\,q^{-1/2}\lambda_{\min}\left\{\tilde\cH_{Q}^*(\tilde\btheta,\hat\bmu^*,\hat\bSigma^*)\right\}^{-1}\left\|\hat\cS_{Q}^*(\tilde\btheta) - \hat\cS_{Q}^*(\btheta^*)\right\|\\\overset{(i)}{\le}&\,q^{-1/2}\lambda_{\min}\left\{\tilde\cH_{Q}^*(\tilde\btheta,\hat\bmu^*,\hat\bSigma^*)\right\}^{-1}\left\|\hat\cS_{Q}^*(\tilde\btheta) - \hat\cS_{Q}^*(\btheta^*)\right\|_{\infty}(qK+q)^{1/2}\\
    \overset{(ii)}{\le}&\,4K^{1/2}\lambda_{\min}\left\{\tilde\cH_{Q}^*(\tilde\btheta,\hat\bmu^*,\hat\bSigma^*)\right\}^{-1}\left\|n^{-1}\partial_{\btheta}\cQ^*(\btheta^*,\hat\bmu^*,\hat\bSigma^*)\right\|_{\infty}\\
    \overset{(iii)}{=}&\,\cOp\big\{r_{\theta,nq}\sqrt{\log q}/(b_n\sqrt{n})\big\}.
\end{align*}
Here $(i)$ follows from H\"older's inequality, $(ii)$ from \eqref{eq_zeta_diff_definition}, and $(iii)$ from Lemma~\ref{lemma_zeta_bound_S_Q} and part~$(i)$ of Lemma~\ref{lemma_aux_of_aux_estimator_property}. Similarly, expansion \eqref{eq_expand_tild_theta_S2} gives
\begin{align*}
    \big\|\tilde\btheta - \btheta^*\big\|_{\infty}\le&\,\left\|\big\{\tilde\cH_{Q1}^*(\tilde\btheta,\hat\bmu^*,\hat\bSigma^*)\big\}^{-1}\right\|_{\infty}\Big\|\hat\cS_{Q}^*(\tilde\btheta) - \hat\cS_{Q}^*(\btheta^*)\Big\|_{\infty} \\&\,+ \left\|\big\{\tilde\cH_{Q1}^*(\tilde\btheta,\hat\bmu^*,\hat\bSigma^*)\big\}^{-1}\right\|_{\infty}\Big\|\tilde\cH_{Q2}^*(\tilde\btheta)\times (\tilde\btheta - \btheta^*) \Big\|_{\infty}\\
    =&\,\cOp\Big\{\frac{r_{\theta,nq}}{b_n}\sqrt{\frac{\log q}{n}} + \frac{B_n^2}{b_n^2}\epsilon\Big\}.
\end{align*}
The last equality follows from Lemma~\ref{lemma_zeta_bound_S_Q} and part~$(ii)$ of Lemma~\ref{lemma_aux_of_aux_estimator_property}. 

Assumption~\ref{assump_scaling} and \eqref{eq_key_scaling_equation_epsilon} imply that $\epsilon$ satisfies
\begin{equation*}
    \frac{r_{\theta,nq}\sqrt{\log q}}{b_n\sqrt n}\ll \epsilon\ll (\log n)^{-1}\wedge\frac{b_n^2}{B_n^2},
\end{equation*}
the above estimates imply 
\begin{equation*}
    q^{-1/2}\big\|\tilde\btheta - \btheta^*\big\|\ll \epsilon\quad\text{ and }\quad\big\|\tilde\btheta - \btheta^*\big\|_{\infty}\ll 1.
\end{equation*}
Therefore $\tilde\btheta$ is an interior point of problem \eqref{eq_dbl_aux_estimator}. We next show that its score is zero without differentiating the nonsmooth maximum norm. Suppose, to the contrary, that $\hat\cS_Q^*(\tilde\btheta)\ne\zero$. By \eqref{eq_convex_aux_obj}, the Jacobian
$\partial_{\btheta}\hat\cS_Q^*(\tilde\btheta)$ is positive definite. Set
\[
    \bd=-\{\partial_{\btheta}\hat\cS_Q^*(\tilde\btheta)\}^{-1}\hat\cS_Q^*(\tilde\btheta).
\]
Because $\tilde\btheta$ is interior, $\tilde\btheta+t\bd$ is feasible for all sufficiently small $t>0$, and Taylor's theorem gives
\[
    \hat\cS_Q^*(\tilde\btheta+t\bd)
    =(1-t)\hat\cS_Q^*(\tilde\btheta)+o(t).
\]
Its maximum norm is therefore strictly smaller for sufficiently small $t>0$, contradicting the definition of $\tilde\btheta$. Hence
\begin{equation*}
    \hat\cS^*_{Q}(\tilde\btheta) = \zero.
\end{equation*}
Strong convexity then shows that $\tilde\btheta$ is the unique minimizer of $\cQ^*$ inside $\cB_{\epsilon}(\btheta^*)$, so $\hat\btheta^* = \tilde\btheta$ with probability tending to one.
Next, as in the proof of Lemma~\ref{lemma_zeta_bound_S_Q}, $\|\hat\cS_{Q}^*(\btheta^*)\| = \cOp(r_{\theta,nq}\sqrt{q/n})$. Together with $\hat\cS^*_{Q}(\hat\btheta^*) = \zero$, the same argument used to bound $\|\tilde\btheta - \btheta^*\|$ gives
\begin{equation*}
    \big\|\hat\btheta^* - \btheta^*\big\| = \cOp\big(r_{\theta,nq}\sqrt{q/(nb_n^2)}\big),
\end{equation*}
which proves $(i)$ of Lemma~\ref{lemma_aux_estimator_property}. 

For part $(ii)$, substituting the established rate $q^{-1/2}\|\hat\btheta^* - \btheta^*\| = \cOp(r_{\theta,nq}b_n^{-1}n^{-1/2})$ into the preceding bounds $\|\hat\bmu^*\| = \cOp(\epsilon\kappa_n\sqrt{B_n}\vee \kappa_n/\sqrt{q}\vee n^{-1/2})$ and $\|\hat\bSigma^* - \bI_K\| = \cOp(\epsilon\kappa_n\sqrt{B_n}\vee \kappa_n/\sqrt{q}\vee n^{-1/2}) $ gives the result.

For part~$(iii)$, use $\hat\cS_Q^*(\hat\btheta^*)=\zero$ in \eqref{eq_expand_tild_theta_S2}, together with Lemmas~\ref{lemma_zeta_bound_S_Q} and~\ref{lemma_aux_of_aux_estimator_property}. The sharper bound in part~$(ii)$ of Lemma~\ref{lemma_aux_of_aux_estimator_property} and the average rate above yield
\begin{align*}
    \|\hat\btheta^*-\btheta^*\|_\infty
   =\cOp\Big(\frac{r_{\theta,nq}}{\sqrt n}
    \left\{\frac{\sqrt{\log q}}{b_n}+\frac{B_n^2}{b_n^3}\right\}\Big).
\end{align*}
This proves part~$(iii)$ without changing the localization radius $\epsilon$. Part~$(iv)$ follows from $\tilde\btheta = \hat\btheta^*$ and $\hat\cS_{Q}^*(\tilde\btheta)=\zero$ with probability tending to one, together with the first-order conditions for $\hat\bmu^*$ and $\hat\bSigma^*$.

\subsubsection{Proof of Lemma~\ref{lemma_average_eta}}\label{supp_sec_prove_lemma_average_eta}

Write $\bu^{\star}_i = \bu_i^{\star}(\btheta^*)$ for short. Note that $P^*(\btheta) = 0$ implies
    \begin{align}
        \sum_{j=1}^q\bgamma_j^*\big(\beta_j  -\beta_j^*\big) = \zero\text{ and }\sum_{j=1}^q\bgamma_j^*\big(\bgamma_j  -\bgamma_j^*\big)^\T = \zero.\label{eq_identify_mean}
    \end{align}
    By \eqref{eq_identify_mean}, one has 
    \begin{align*}
        \sum_{i=1}^n\Big\|\sum_{j=1}^q\bgamma_j^*(\bgamma_j^*)^\T(\bu_i - \bu_i^{\star})\Big\|^2
        = &\,\sum_{i=1}^n\Big\|\sum_{j=1}^q\bgamma_j^*\big\{\bgamma_j^\T\bu_i + \beta_j - (\bgamma_j^*)^\T\bu_i^{\star} - \beta_j^*\big\}\Big\|^2\\
        \le &\, \sum_{i=1}^n\left(\sum_{j=1}^q\big\|\bgamma_j^*\big\|^2\right)
        \left(\sum_{j=1}^q\big\{\bgamma_j^\T\bu_i + \beta_j - (\bgamma_j^*)^\T\bu_i^{\star} - \beta_j^*\big\}^{2}\right)\\
        = &\,\cO\big(q\iota_{nq}^2\big),
    \end{align*}
    where
    \begin{equation*}
        \iota_{nq} = \Bigg\{\sum_{i=1}^n\sum_{j=1}^q\big(\bgamma_j^\T\bu_i + \beta_j - (\bgamma_j^*)^\T\bu_i^{\star}(\btheta^*)  -\beta_j^*\big)^2\Bigg\}^{1/2}
        =o_p(\sqrt{nq}).
    \end{equation*}
    By Assumption~\ref{assumption_psd_covariance}, $\lambda_{\min}\big\{\sum_{j=1}^q\bgamma_j^*(\bgamma_j^*)^\T\big\} = \Theta(q)$, and consequently, 
    \begin{equation*}
        \Big\|\sum_{j=1}^q\bgamma_j^*(\bgamma_j^*)^\T(\bu_i - \bu_i^{\star})\Big\|^2\ge \lambda_{\min}\big\{\sum_{j=1}^q\bgamma_j^*(\bgamma_j^*)^\T\big\}^2 \big\|\bu_i - \bu_i^{\star}\big\|^2,
    \end{equation*}
which implies
    \begin{equation}\label{eq_bound_u_star_from_Avera}
        n^{-1}\sum_{i=1}^n\big\|\bu_i-\bu_i^{\star}\big\|^2 = \cOp\big(\iota_{nq}^2/(nq)\big).
    \end{equation}
Next, let $\bP_{\Gamma^*} = \bGamma^*\{(\bGamma^*)^\T\bGamma^*\}^{-1}(\bGamma^*)^\T$ and $\bP_{\Gamma^*}^{\perp} = \bI_q-\bP_{\Gamma^*}$. Let $\bU = (\bu_1,\dots,\bu_n)^\T$, $\bU^* = (\bu_1^*,\dots,\bu_n^*)^\T$, and $\bU^{\star} = (\bu_1^{\star},\dots,\bu_n^{\star})^\T$. Then
\begin{align*}
    \bE := &\,\bbeta\one_n^\T + \bGamma\bU^\T - \bbeta^*\one_n^\T - \bGamma^*\bU^{\star\T} \\
    = &\, (\bbeta - \bbeta^*)\one_n^\T + \bGamma^*\big(\bU-\bU^{\star}\big)^\T + (\bGamma - \bGamma^*)\bU^\T.
\end{align*}
Then $\bP_{\Gamma^*}^{\perp}\bE =  (\bGamma - \bGamma^*)\bU^\T + (\bbeta - \bbeta^*)\one_n^\T$. Here, we use $\sum_{j=1}^q\bgamma_j^*(\beta_j-\beta_j^*)=0$ and $\sum_{j=1}^q\bgamma_j^*(\bgamma_j-\bgamma_j^*)^\T=0$, which follow from $P^*(\btheta) = 0$. Therefore,
\begin{equation*}
    \big\|(\bGamma - \bGamma^*)\bU^\T + (\bbeta - \bbeta^*)\one_n^\T\big\|_{\Fn}^2\le \big\|\bE\big\|_{\Fn}^2= \iota_{nq}^2.
\end{equation*}
For $(\one_n,\bU)$, note that \eqref{eq_bound_u_star_from_Avera} and Lemma~\ref{lemma_bu_star_theta_true} give
\begin{equation*}
    n^{-1/2}\big\|\bU - \bU^{*}\big\|_{\Fn}
    \le n^{-1/2}\big\|\bU - \bU^{\star}\big\|_{\Fn} + n^{-1/2}\big\|\bU^* - \bU^{\star}\big\|_{\Fn}
    = \cOp\big\{\iota_{nq}/\sqrt{nq} + \kappa_n/\sqrt{q}\big\}=o_p(1).
\end{equation*}
Moreover, the law of large numbers and Assumption~\ref{assump_standard_id} imply
\begin{equation*}
    n^{-1}(\one_n,\bU^*)^\T(\one_n,\bU^*)\overset{p}{\longrightarrow}\begin{pmatrix}
        1 & \zero_K^\T\\ \zero_K & \bI_K
    \end{pmatrix}.
\end{equation*}
Weyl's inequality therefore yields, for some constant $c>0$,
\begin{equation}
    \label{eq_lower_bound_U_c}\lambda_{\min}\big\{n^{-1}(\one_n,\bU)^\T(\one_n,\bU)\big\}\ge c
\end{equation}
with probability approaching $1$ as $n,q\to\infty$. Consequently,
\begin{equation*}
    \big\|(\bGamma - \bGamma^*)\bU^\T + (\bbeta - \bbeta^*)\one_n^\T\big\|_{\Fn}^2
    \ge cn\big(\|\bGamma- \bGamma^*\|_{\Fn}^2 + \|\bbeta^* - \bbeta\|^2\big),
\end{equation*}
which gives 
\begin{equation*}
    q^{-1/2}\big\|\btheta - \btheta^*\big\|=\cOp\left(\frac{\iota_{nq}}{\sqrt{nq}}\right).
\end{equation*}

\subsection{Proofs of Lemmas in Sections~\ref{supp_sec_main_proof} and~\ref{sec_res_pract_id}}

\subsubsection{Proof of Lemma~\ref{lemma_residual_in_asym_expand}}\label{sec_prove_lemma_residual_in_asym_expand}

As in the statement of the lemma, write
\[
    \Delta_{\theta j} := \hat\btheta_j-\btheta_j^*,
    \qquad
    \Delta_{u i} := \hat\bu_i^\star-\bu_i^*,
\]
and let $\Delta_{\beta j}$ and $\Delta_{\gamma j}$ denote the intercept and loading components of $\Delta_{\theta j}$, so that $\|\Delta_{\theta j}\|^2 = \Delta_{\beta j}^2 + \|\Delta_{\gamma j}\|^2$. All stochastic orders below are taken conditionally on $\{\bu_i^*\}_{i=1}^n$. Recall from \eqref{eq_consis_ave_all} that
\begin{equation}
    q^{-1}\sum_{j=1}^q\big\|\Delta_{\theta j}\big\|^2=\cOp(\delta_{nq}^2),
    \qquad
    n^{-1}\sum_{i=1}^n\big\|\Delta_{u i}\big\|^2=\cOp(\delta_{nq}^2),
    \label{eq_second_moment_block_errors}
\end{equation}
and that Assumption~\ref{assump_standard_id} gives $n^{-1}\sum_{i=1}^n(1+\|\bu_i^*\|)^m=\cOp(1)$ for every fixed positive integer $m$, together with $\max_{i\in[n]}\|\bu_i^*\|=\cOp(\sqrt{\log n})$.

At fixed $\hat\bu^\star$, the integral form of Taylor expansion gives
\begin{equation}
    \bS_{L\theta_j}(\hat\btheta,\hat\bu^\star) -\bS_{L\theta_j}(\btheta^*,\hat\bu^\star) =
    \left\{\int_0^1\bH_{L\theta_j\theta_j}\big(\btheta^*+s(\hat\btheta-\btheta^*),\hat\bu^\star\big)\,ds\right\} \Delta_{\theta j}.
    \label{eq_taylor_expand_theta_j}
\end{equation}
The argument of $\ell_{ij}''$ in the integrand is $\{\bgamma_j^*+s(\hat\bgamma_j-\bgamma_j^*)\}^{\T}\hat\bu_i^\star
    +\beta_j^*+s(\hat\beta_j-\beta_j^*)$. By Assumption~\ref{assumption_smoothness}, the norm of the right side is at least
\begin{equation*}
        b_n\lambda_{\min}\left\{\sum_{i=1}^n\begingroup\renewcommand{\arraystretch}{0.8}\begin{pmatrix}1&(\hat\bu_i^{\star})^\T\\\hat\bu_i^{\star}&\hat\bu_i^{\star}(\hat\bu_i^{\star})^\T\end{pmatrix}\endgroup\right\} \big\|\Delta_{\theta j}\big\|\ge cnb_n\big\|\Delta_{\theta j}\big\|,
\end{equation*}
where the inequality follows from \eqref{eq_second_moment_block_errors} and the same argument used to derive \eqref{eq_lower_bound_U_c} in Section~\ref{supp_sec_prove_lemma_average_eta}. Thus we have
\begin{equation*}
    \left\|\bS_{L\theta_j}(\hat\btheta,\hat\bu^\star)
    -\bS_{L\theta_j}(\btheta^*,\hat\bu^\star)\right\|
    \ge cnb_n\|\Delta_{\theta j}\|.
\end{equation*}
Because $\cS_{\theta_j}(\hat\btheta)=\zero$ with probability approaching one, the left side of \eqref{eq_taylor_expand_theta_j} can be decomposed as
\begin{align}
    &\bS_{L\theta_j}(\hat\btheta,\hat\bu^\star)-\bS_{L\theta_j}(\btheta^*,\hat\bu^\star)\nonumber\\ =\,&
    \big\{\bS_{L\theta_j}(\hat\btheta,\hat\bu^\star)-\cS_{\theta_j}(\hat\btheta)\big\} + \big\{\bS_{L\theta_j}(\btheta^*,\bu^*)-\bS_{L\theta_j}(\btheta^*,\hat\bu^\star)\big\} -\bS_{L\theta_j}(\btheta^*,\bu^*). \label{eq_pointwise_score_decomposition}
\end{align}
Lemma~\ref{lemma_first_order_base} bounds the first term by $\cOp(nB_n\varepsilon_{nq}\sqrt{\log n}/q)$ {uniformly over} $j\in[q]$. For the second term, the mean-value theorem and the Cauchy--Schwarz inequality give
\begin{align*}
    &\left\|\bS_{L\theta_j}(\btheta^*,\hat\bu^\star)
    -\bS_{L\theta_j}(\btheta^*,\bu^*)\right\|\\
    &\quad\le C B_n\sum_{i=1}^n
    \big(1+\|\bu_i^*\|+\|\Delta_{u i}\|\big)\|\Delta_{u i}\|
    +\big\|(\Lb_\eta)_{,j}\big\|
    \left(\sum_{i=1}^n\|\Delta_{u i}\|^2\right)^{1/2},
\end{align*}
where $\Lb_\eta$ is the score matrix of Lemma~\ref{lemma_concentration_l_mat}. By \eqref{eq_second_moment_block_errors} and $n^{-1}\sum_{i=1}^n(1+\|\bu_i^*\|)^2=\cOp(1)$, the first sum is $\cOp(nB_n\delta_{nq})$ and the second is $\cOp(\sqrt n\,\delta_{nq})\|(\Lb_\eta)_{,j}\|$. 
Combining these bounds, we know that for all $j\in[q]$,
\begin{equation}
    \big\|\Delta_{\theta j}\big\|\le C\left\{ \frac{B_n}{b_n}\delta_{nq} +\frac{\big\|\bS_{L\theta_j}(\btheta^*,\bu^*)\big\|}{nb_n} +\frac{\delta_{nq}\big\|(\Lb_\eta)_{,j}\big\|}{\sqrt n\,b_n} +\frac{B_n\varepsilon_{nq}\sqrt{\log n}}{qb_n}
    \right\}.
    \label{eq_theta_block_error_inequality}
\end{equation}

For  $\bS_{L\theta_j}(\btheta^*,\bu^*)$, because, conditionally on $\{\bu_i^*\}_{i=1}^n$, the variables $\ell_{ij}'(\eta_{ij}^*)$ are independent and mean zero, and Assumption~\ref{assumption_smoothness} bounds their conditional sub-exponential norms by $C\sqrt{B_n}$, we therefore have its conditional second moment bounded by $CB_n\sum_{i=1}^n(1+\|\bu_i^*\|^2)$. Thus $\|\bS_{L\theta_j}(\btheta^*,\bu^*)\|=\cOp(\sqrt{nB_n})$. Part $(i)$ of Lemma~\ref{lemma_concentration_l_mat} give $\max_{j\in[q]}\|(\Lb_\eta)_{,j}\|=\cOp(\sqrt{nB_n}+\sqrt{B_n}\log q)$. Therefore, by the definition of $\bar\delta_{nq}$, combining these bounds gives $\|\bDelta_{\theta j}\|=\cOp(\bar\delta_{nq})$.

In addition, applying Rosenthal-type inequality, we know $\EE[\|\bS_{L\theta_j}(\btheta^*,\bu^*)\|^4\mid \{\bu_i^*\}_{i=1}^n]\le CB_n^2\{(\sum_{i=1}^n(1+\|\bu_i^*\|^2))^2+\sum_{i=1}^n(1+\|\bu_i^*\|^2)^2\}=\cOp(n^2B_n^2)$ and $\EE[\|(\Lb_\eta)_{,j}\|^4\mid \{\bu_i^*\}_{i=1}^n]\le C\{(\sum_{i=1}^n\EE[\{\ell_{ij}'(\eta_{ij}^*)\}^2])^2+\sum_{i=1}^n\EE[\{\ell_{ij}'(\eta_{ij}^*)\}^4]\}=\cO(n^2B_n^2)$. Averaging over $j$ leads to $q^{-1}\sum_{j=1}^q\big\{\|\bS_{L\theta_j}(\btheta^*,\bu^*)\|^4+\|(\Lb_\eta)_{,j}\|^4\big\}=\cOp(n^2B_n^2)$.
Subsequently, we have \begin{equation}
    q^{-1}\sum_{j=1}^q\|\Delta_{\theta j}\|^4=\cOp(\bar\delta_{nq}^4).\label{eq_fourth_moment_block_errors}
\end{equation}

A similar upper bound can be derived for $\|\Delta_{u i}\|$ as
\begin{equation}
    \big\|\Delta_{u i}\big\|\le C\left\{ \frac{B_n}{b_n}\big(1+\|\bu_i^*\|\big)\delta_{nq} + \frac{\big\|\bS_{Lu_i}(\btheta^*,\bu^*)\big\|}{qb_n} + \frac{\delta_{nq}\big\|(\Lb_\eta)_{i,}\big\|}{\sqrt q\,b_n}
    \right\}\quad\text{ for all }\quad i\in[n].
    \label{eq_u_block_error_inequality}
\end{equation}
And we can obtain analogously that, for each $i\in[n]$,
\begin{equation}
    \|\Delta_{u i}\|=\cOp(\delta_{nq});\quad \|\Delta_{u i}\|^4 = \cOp(\delta_{nq}^4).\label{eq_fourth_moment_latent_errors}
\end{equation}
 Applying the conditional Bernstein bounds to \eqref{eq_theta_block_error_inequality} and \eqref{eq_u_block_error_inequality} also yields
\begin{equation}\label{eq_max_block_errors_asym_expand}
\max_{j\in[q]}\|\Delta_{\theta j}\|=\cOp(\bar\delta_{nq}\sqrt{\log q}),\qquad
\max_{i\in[n]}\|\Delta_{u i}\|=\cOp(\bar\delta_{nq}\sqrt{\log n}).
\end{equation}

Next, we compute the Taylor remainders in \eqref{eq_asymp_expand}.
For $t\in[0,1]$, set
\begin{equation*}
    \bgamma_j(t)=\bgamma_j^*+t\Delta_{\gamma j}, \qquad \bu_i(t)=\bu_i^*+t\Delta_{u i}, \qquad  \bu_i^c(t)=\big(1,\bu_i(t)^\T\big)^\T,
\end{equation*}
and let $\eta_{ij}(t)=\bgamma_j(t)^\T\bu_i(t)+\beta_j^*+t\Delta_{\beta j}$. Subsequently, the derivatives with respect to $t$ are 
\begin{equation*}
    \dot\eta_{ij}(t)=\Delta_{\beta j}+\Delta_{\gamma j}^\T\bu_i(t)+\bgamma_j(t)^\T\Delta_{u i},
    \quad\text{ and }\quad
    \ddot\eta_{ij}(t)=2\Delta_{\gamma j}^\T\Delta_{u i}.
\end{equation*}
The second derivatives of $-\ell_{ij}'(\eta_{ij}(t))\bu_i^c(t)$ and $-\ell_{ij}'(\eta_{ij}(t))\bgamma_j(t)$ along this path are
\begin{align*}
    \frac{d^2}{dt^2}\left\{-\ell_{ij}'\big(\eta_{ij}(t)\big)\bu_i^c(t)\right\}
    ={}&-\ell_{ij}^{(3)}\big(\eta_{ij}(t)\big)\dot\eta_{ij}(t)^2\bu_i^c(t)
    -\ell_{ij}''\big(\eta_{ij}(t)\big)\ddot\eta_{ij}(t)\bu_i^c(t)\\
    &-2\ell_{ij}''\big(\eta_{ij}(t)\big)\dot\eta_{ij}(t)
    \begin{pmatrix}0\\\Delta_{u i}\end{pmatrix},\\
    \frac{d^2}{dt^2}\left\{-\ell_{ij}'\big(\eta_{ij}(t)\big)\bgamma_j(t)\right\}
    ={}&-\ell_{ij}^{(3)}\big(\eta_{ij}(t)\big)\dot\eta_{ij}(t)^2\bgamma_j(t)
    -\ell_{ij}''\big(\eta_{ij}(t)\big)\ddot\eta_{ij}(t)\bgamma_j(t)\\
    &-2\ell_{ij}''\big(\eta_{ij}(t)\big)\dot\eta_{ij}(t)\Delta_{\gamma j}.
\end{align*}
By \eqref{eq_max_block_errors_asym_expand} and Assumption~\ref{assump_scaling}, $\max_{j\in[q]}\|\Delta_{\theta j}\|+\max_{i\in[n]}\|\Delta_{u i}\|=o_p(1)$. Uniformly in $t\in[0,1]$, we have 
    $\|\bu_i^c(t)\|\le C(1+\|\bu_i^*\|)$, $\|\bgamma_j(t)\|\le C$, $|\dot\eta_{ij}(t)|\le C\{(1+\|\bu_i^*\|)\|\Delta_{\theta j}\|+\|\Delta_{u i}\|\}$, and $|\ddot\eta_{ij}(t)|\le2\|\Delta_{\theta j}\|\|\Delta_{u i}\|$. Under Assumption~\ref{assumption_smoothness}, the derivatives can be bounded by
\begin{equation*}
    \Big\|\frac{d^2}{dt^2}\left\{-\ell_{ij}'\big(\eta_{ij}(t)\big)\bu_i^c(t)\right\}\Big\|\le CB_n\Big\{(1+\|\bu_i^*\|)^3\|\Delta_{\theta j}\|^2+(1+\|\bu_i^*\|)\|\Delta_{u i}\|^2+(1+\|\bu_i^*\|)^2\|\Delta_{\theta j}\|\|\Delta_{u i}\|\Big\}
\end{equation*}
and
\begin{equation*}
    \Big\|\frac{d^2}{dt^2}\left\{-\ell_{ij}'\big(\eta_{ij}(t)\big)\bgamma_j(t)\right\}\Big\|\le CB_n\Big\{(1+\|\bu_i^*\|)^2\|\Delta_{\theta j}\|^2+\|\Delta_{u i}\|^2+(1+\|\bu_i^*\|)\|\Delta_{\theta j}\|\|\Delta_{u i}\|\Big\},
\end{equation*}
respectively. Hence the integral form of the second-order Taylor formula gives
\begin{align}
    \big\|(\bR_{L\theta})_{\cK_j^+}\big\|
    &\le CB_n\sum_{i=1}^n\Big\{(1+\|\bu_i^*\|)^3\|\Delta_{\theta j}\|^2+(1+\|\bu_i^*\|)\|\Delta_{u i}\|^2+(1+\|\bu_i^*\|)^2\|\Delta_{\theta j}\|\|\Delta_{u i}\|\Big\},
    \label{eq_residual_theta_direct}\\
    \big\|(\bR_{Lu})_{\cK_i}\big\|
    &\le CB_n\sum_{j=1}^q\Big\{(1+\|\bu_i^*\|)^2\|\Delta_{\theta j}\|^2+\|\Delta_{u i}\|^2+(1+\|\bu_i^*\|)\|\Delta_{\theta j}\|\|\Delta_{u i}\|\Big\}.
    \label{eq_residual_u_direct}
\end{align}

Fix a $j\in[q]$, by $n^{-1}\sum_{i=1}^n(1+\|\bu_i^*\|)^m=\cOp(1)$, the first term in \eqref{eq_residual_theta_direct} can be bounded by $\cOp(n)\|\Delta_{\theta j}\|^2=\cOp(n\bar\delta_{nq}^2)$. For the second term, the Cauchy--Schwarz inequality and \eqref{eq_fourth_moment_latent_errors} give $\sum_{i=1}^n(1+\|\bu_i^*\|)\|\Delta_{u i}\|^2 = \cOp(n\bar\delta_{nq}^2)$.
For the third term, $\|\Delta_{\theta j}\|\{\sum_i(1+\|\bu_i^*\|)^4\}^{1/2}\{\sum_i\|\Delta_{u i}\|^2\}^{1/2}=\cOp(\bar\delta_{nq}\times n\delta_{nq})=\cOp(n\bar\delta_{nq}^2)$. This proves $\|(\bR_{L\theta})_{\cK_j^+}\|=\cOp(nB_n\bar\delta_{nq}^2)$. 

For $\|\bR_{L\theta}\|$, note that the three summations in \eqref{eq_residual_theta_direct} do not depend on $j$. With $\sum_{i=1}^n(1+\|\bu_i^*\|)^3=\cOp(n)$, $\sum_{i=1}^n(1+\|\bu_i^*\|)\|\Delta_{u i}\|^2=\cOp(n\bar\delta_{nq}^2)$, $\sum_{i=1}^n(1+\|\bu_i^*\|)^2\|\Delta_{u i}\|=\cOp(n\delta_{nq})$, \eqref{eq_second_moment_block_errors} and \eqref{eq_fourth_moment_block_errors}, squaring \eqref{eq_residual_theta_direct} and summing over $j$ to get
\begin{align*}
    \big\|\bR_{L\theta}\big\|^2\le{}& CB_n^2\Bigg[\Big\{\sum_{i=1}^n(1+\|\bu_i^*\|)^3\Big\}^2\sum_{j=1}^q\|\Delta_{\theta j}\|^4
    +q\Big\{\sum_{i=1}^n(1+\|\bu_i^*\|)\|\Delta_{u i}\|^2\Big\}^2\\
    &\qquad\quad+\Big\{\sum_{i=1}^n(1+\|\bu_i^*\|)^2\|\Delta_{u i}\|\Big\}^2\sum_{j=1}^q\|\Delta_{\theta j}\|^2\Bigg]
    =\cOp\big(B_n^2n^2q\bar\delta_{nq}^4\big).
\end{align*}
With $\max_{j\in[q]}\|\hat\btheta_j - \btheta_j^*\| = \cOp(\bar\delta_{nq}\sqrt{\log q})$ and \eqref{eq_residual_theta_direct}, one can check $\max_{j\in[q]}\|(\bR_{L\theta})_{\cK_j^+}\|=\cOp(nB_n\log q\bar\delta_{nq}^2)$. The bounds for $\bR_{Lu}$ can be obtained analogously from \eqref{eq_residual_u_direct}.

\subsubsection{Proof of Lemma~\ref{lemma_approx_hessian_inverse}}\label{sec_prove_lemma_approx_hessian_inverse}
Lemma~\ref{lemma_approx_hessian_inverse} provides appropriately scaled approximations to the inverse of $\bH_L^* + \bH_P^*$. The proof consists of three steps.
\begin{itemize}
    \item \textbf{Step 1.} We establish that, for some constant $\gamma_L>0$,
    \begin{equation}
        \PP\left\{\lambda_{\min}\big[\bS_m\big(\bH_L^* + \bH_P^*\big)\bS_m\big]\ge \gamma_Lb_n\right\}\to 1 \text{ as }n,q\to\infty.\label{eq_standard_convex_aug2}
    \end{equation}
    \item \textbf{Step 2.} We study the following block form of $\bH_L^*$:
    \begin{equation}
        \bH_L^* + \bH_P^* = \begin{pmatrix}
            \bH_{\theta\theta}^* & \bH_{\theta u}^*\\
            \bH_{u\theta}^* & \bH_{u u}^*
        \end{pmatrix},\label{eq_block_HL_plus_HP}
    \end{equation}
with detailed expressions for the blocks provided in \eqref{eq_block_HL_plus_HP_decomp} below. Using this block structure, we derive an explicit expression for the inverse $\big(\bH_L^* + \bH_P^*\big)^{-1}$ and establish sharp bounds for the matrices appearing in the inversion.
    \item \textbf{Step 3.} Based on the formula for $\big(\bH_L^* + \bH_P^*\big)^{-1}$, we derive $(a)$--$(d)$.
\end{itemize}

\medskip
\noindent\underline{\large \it Executing Step 1.}
This step follows a similar argument in Step 3 in the proof of Lemma~\ref{lemma_convexity2}. For completeness, we provide the details. Start with the matrix $\bar\bH_L^* + \bphi_L^*(\bphi_L^*)^\T$, where 
\begin{equation*}
    \bar\bH_{L}^* = \begin{pmatrix}
        \bH_{L\theta\theta}(\btheta^*,\bu^*) & \bar\bH_{L\theta u}(\btheta^*,\bu^*)\\
        \bar\bH_{Lu\theta}(\btheta^*,\bu^*) & \bH_{Lu u}(\btheta^*,\bu^*)
    \end{pmatrix},
\end{equation*}
and
\begin{equation*}
    \bphi_L^* = \big(\bphi_{01}(\btheta^*),\dots,\bphi_{0K}(\btheta^*),\bphi_{11}(\btheta^*),\dots,\bphi_{1K}(\btheta^*),\dots,\bphi_{K1}(\btheta^*),\dots,\bphi_{KK}(\btheta^*)\big),
\end{equation*}
for some constant $c'\in(0,1)$ used below.
Let $\bphi_{L\theta}^* = (\bphi_L^*)_{1:qK+q,}$ and $\bphi_{Lu}^* = (\bphi_L^*)_{(qK+q+1):(qK+q + nK),}$.
With $\eta_{ij}^* = (\bgamma_j^*)^\T\bu_i^* + \beta_j^*$, the matrix $\bar\bH_L^* + c'b_n\bphi_L^*(\bphi_L^*)^\T$ can be expanded as follows:
\begin{subequations}
    \begin{align}
    &\,\bS_m\bar\bH_L^*\bS_m + c'b_n\bphi_L^*(\bphi_L^*)^\T \nonumber\\=&\, \sum_{i=1}^n\sum_{j=1}^q\big(-\ell_{ij}^{\prime\prime}(\eta_{ij}^*) - c^{\prime} b_n\big)\bS_m\begin{pmatrix}
        \be_j^{(q)}\otimes\begingroup\renewcommand{\arraystretch}{0.7}\begin{pmatrix}
            1\\\bu_i^*
        \end{pmatrix}\endgroup\\\be_i^{(n)}\otimes\bgamma_j^*
    \end{pmatrix}\begin{pmatrix}
        \be_j^{(q)}\otimes\begingroup\renewcommand{\arraystretch}{0.7}\begin{pmatrix}
            1\\\bu_i^*
        \end{pmatrix}\endgroup\\\be_i^{(n)}\otimes\bgamma_j^*
    \end{pmatrix}^\T\bS_m\label{eq_expand_Hl_phi_1}\\&\,+ c^{\prime}b_n\begin{pmatrix}
        \frac{1}{n}\bI_q\otimes \sum_{i=1}^n\begingroup\renewcommand{\arraystretch}{0.7}\begin{pmatrix}
            1&(\bu_i^*)^\T\\\bu_i^*  &\bu_i^*(\bu_i^*)^\T
        \end{pmatrix}\endgroup&\zero\\\zero
        &\frac{1}{q}\bI_n\otimes \sum_{j=1}^q\bgamma_j^*(\bgamma_j^*)^\T
    \end{pmatrix}\label{eq_expand_Hl_phi_2}\\&\,+c'b_n\begin{pmatrix}
        \bphi_{L\theta}^*\big(\bphi_{L\theta}^*\big)^\T&\zero\\\zero & \bphi_{Lu}^*\big(\bphi_{Lu}^*\big)^\T
        \end{pmatrix}\label{eq_expand_Hl_phi_3}.
\end{align}\end{subequations}
The matrix in \eqref{eq_expand_Hl_phi_1} is positive semidefinite because $b_n\le -\ell_{ij}^{\prime\prime}(\eta_{ij}^*)$ uniformly by Assumption~\ref{assumption_smoothness}. Next, the smallest eigenvalue of \eqref{eq_expand_Hl_phi_2} satisfies
\begin{align*}
    \lambda_{\min}\eqref{eq_expand_Hl_phi_2}\ge&\, c^{\prime}b_n\min\left[\lambda_{\min}\left\{\frac{1}{n} \sum_{i=1}^n\begingroup\renewcommand{\arraystretch}{0.7}\begin{pmatrix}
            1&(\bu_i^*)^\T\\\bu_i^*  &\bu_i^*(\bu_i^*)^\T
        \end{pmatrix}\endgroup\right\},\lambda_{\min}\left\{\frac{1}{q}\sum_{j=1}^q\bgamma_j^*(\bgamma_j^*)^\T\right\}\right]\\\ge &\,c^{\prime}b_n\min\left\{\lambda_{\min}\big(\bSigma_u^*\big),\lambda_{\min}\big(\bSigma_{\gamma}^*\big)\right\}/2,
\end{align*}
which is of order $b_n$ by Assumptions~\ref{assump_standard_id} and~\ref{assumption_psd_covariance}. The second inequality holds for all sufficiently large $n$ and $q$. Finally, \eqref{eq_expand_Hl_phi_3} is positive semidefinite. Combining these bounds gives
\begin{equation*}
    \PP\left(\lambda_{\min}\left[\bS_m\left\{\bar\bH_L^* + c'b_n\bphi_L^*(\bphi_L^*)^\T\right\}\bS_m\right]\ge \gamma b_n\right)\to 1
\end{equation*}
as $n,q\to\infty$, for some constant $\gamma>0$. Next, we establish a similar lower bound with $\bar\bH_L^*+c'b_n\bphi^*_L(\bphi_L^*)^\T$ replaced by $\bH_{L}^* + \bH_P^*$. For any $\bx\perp \mathrm{span}\big\{\bphi_{rl}(\btheta^*):r\in\{0\}\cup[K],l\in[K]\big\}$,
\begin{equation*}
    \bx^\T\bS_m\bar\bH_L^*\bS_m\bx = \bx^\T\bS_m\left\{\bar\bH_L^* + c'b_n\bphi_L^*(\bphi_L^*)^\T\right\}\bS_m\bx\ge \gamma b_n\|\bx\|^2.
\end{equation*}
For an arbitrary $\bx\in\RR^{qK+q+nK}$, write $\bx = \by + \bz$, where $\by\in\mathrm{col}(\bphi_L^*)$ and $\bz\perp\mathrm{col}(\bphi_L^*)$. This decomposition is unique, with $\by = \bphi_L^*\big\{(\bphi_L^*)^\T\bphi_L^*\big\}^{-1}(\bphi_L^*)^\T \bx$ and $\bz = \bx - \by$. Since $\bS_m\bar\bH_L^*\bS_m\by = \zero$, we have
\begin{equation}
     \bx ^\T\bS_m\left\{\bH_L^*+\bH_P^*\right\}\bS_m\bx\ge \gamma b_n\|\bz\|^2 + \bx^\T\bS_m\bH_P^*\bS_m\bx - \bx^\T\bS_m(\bar\bH_L^* - \bH_L^*)\bS_m\bx.\label{eq_final_push_to_convexity_true}
\end{equation}
For $\bH_P^* = cb_n\sum_{r=0}^K\sum_{l=1}^K\bpsi_{rl}^*(\bpsi_{rl}^*)^\T$, define the normalized column matrix
\begin{equation*}
\widetilde\bpsi_L^* = \bS_m\big(\bpsi_{01}^*,\dots,\bpsi_{0K}^*,\bpsi_{11}^*,\dots,\bpsi_{1K}^*,\dots,\bpsi_{K1}^*,\dots,\bpsi_{KK}^*\big),
\end{equation*}
so that $\bS_m\bH_P^*\bS_m=cb_n\widetilde\bpsi_L^*(\widetilde\bpsi_L^*)^\T$. Write $\widetilde\bpsi_L^* = \widetilde\bpsi_{Ly}^* + \widetilde\bpsi_{Lz}^*$, where
\begin{align*}
\widetilde\bpsi_{Ly}^*&=\bphi_L^*\{(\bphi_L^*)^\T\bphi_L^*\}^{-1}(\bphi_L^*)^\T\widetilde\bpsi_L^*,\\
\widetilde\bpsi_{Lz}^*&=\left[\bI_{q(K+1)+nK}-\bphi_L^*\{(\bphi_L^*)^\T\bphi_L^*\}^{-1}(\bphi_L^*)^\T\right]\widetilde\bpsi_L^*.
\end{align*}
Then
\begin{equation}
    \bx^\T \widetilde\bpsi_L^*(\widetilde\bpsi_L^*)^\T\bx
    \ge \underbrace{\frac{1}{2}\big\|(\widetilde\bpsi_{Ly}^*)^\T\bx\big\|^2}_{\beta_1} - \underbrace{\big\|(\widetilde\bpsi_{Lz}^*)^\T\bx\big\|^2}_{\beta_2}.\label{eq_final_push_to_convexity2_true}
\end{equation}
To control $\beta_1$ and $\beta_2$, we first bound two quantities:
\begin{equation*}
    \zeta_1 = \sigma_{K^2 + K}\big(\bphi_L^*\big)\quad\text{ and }\quad\zeta_2 := \sigma_{\min}\left[\{(\bphi_L^*)^\T\bphi_L^*\}^{-1}(\bphi_L^*)^\T\widetilde\bpsi_L^*\right].
\end{equation*}
Define $\bar\gamma_{lh}^* = q^{-1}\sum_{j=1}^q\gamma_{jl}^*\gamma_{jh}^*$ and $\bar u_{lh}^{*} = n^{-1}\sum_{i=1}^nu_{il}^{*}u_{ih}^{*}$ for $l,h\in[K]$, $\bar\bGamma^* = q^{-1}\sum_{j=1}^q\bgamma_{j}^*(\bgamma_{j}^*)^\T$, and $\bar \bU^{*} = n^{-1}\sum_{i=1}^n\bu_i^{*}(\bu_i^{*})^\T$. We also set $\bar u_{l0}^* = \bar u_{0l}^* = n^{-1}\sum_{i=1}^nu_{il}^{*}$ and $\bar u_{00}^{*} = 1$. The canonical identifiability condition \eqref{eq_id_st} implies that $\bar u_{lh}^* = \mathds{1}_{\{l=h\}}$. Then 
\begin{equation*}
    (\bphi_L^*)^\T\bphi_L^* = \bI_{K+1}\otimes \bar\bGamma^* + \bI_{K(K+1)}
\end{equation*}
Under Assumption~\ref{assumption_psd_covariance}, we know that for some absolute constant $c_1$, $\lambda_{\min}((\bphi_L^*)^\T\bphi_L^*)\ge c_1^2$ when $n,q$ are large enough. Then $\zeta_1\ge {c_1}$. For $\zeta_2$, note that 
\begin{equation*}
    \zeta_2 \ge \frac{\sigma_{\min}\big\{(\bphi_L^*)^\T\widetilde\bpsi_L^*\big\}}{\sigma_{\max}\big\{(\bphi_L^*)^\T\bphi_L^*\big\}}.
\end{equation*}
Since $\|\btheta^*\|_{\infty}\le C$, we have $\lambda_{\max}((\bphi_L^*)^\T\bphi_L^*) = \cO(1)$. We now study $(\bphi_L^*)^\T\widetilde\bpsi_L^*$. For $r\in\{0\}\cup[K]$ and $l\in[K]$, let $\iota_{rl} = rK + l$ and compute its $\iota_{rl}$th column as follows.
\begin{enumerate}[label=(\arabic*)]
    \item when $r = 0$,
    \begin{equation*}
        (\bphi_L^*)^\T\big(\widetilde\bpsi_L^*\big)_{,\iota_{0l}} = -\be_l^{(K^2+K)};
    \end{equation*}
    \item when $1\le r=l\le K$,
    \begin{equation*}
         (\bphi_L^*)^\T\big(\widetilde\bpsi_L^*\big)_{,\iota_{rr}} = -\be_{\iota_{rr}}^{(K^2+K)};
    \end{equation*}
    \item when $1\le r< l\le K$
    \begin{equation*}
         (\bphi_L^*)^\T\big(\widetilde\bpsi_L^*\big)_{,\iota_{rl}} = -\big\{\be_{\iota_{rl}}^{(K^2+K)} + \be_{\iota_{lr}}^{(K^2+K)}\big\};
    \end{equation*}
    \item when $1\le l<r\le K$, 
    \begin{equation*}
         (\bphi_L^*)^\T\big(\widetilde\bpsi_L^*\big)_{,\iota_{rl}} = \sum_{k=1}^K\bar \gamma_{lk}^*\be_{\iota_{rk}}^{(K^2+K)} - \sum_{k=1}^K\bar \gamma_{rk}^*\be_{\iota_{lk}}^{(K^2+K)};
    \end{equation*}
\end{enumerate}
Let $\bM=(\bphi_L^*)^\T\widetilde\bpsi_L^*$. We show that there exists a constant $c_2>0$ such that $\|\bM\ba\|\ge c_2\|\ba\|$ for every $\ba=(a_{01},\ldots,a_{0K},a_{11},\ldots,a_{1K},\ldots,a_{K1},\ldots,a_{KK})^\T \in\RR^{K^2+K}$. By the column formulas for $\bM$, $\sum_{l=1}^K [\bM\ba]_l^2 = \sum_{l=1}^K a_{0l}^2$.
It suffices to control the coordinates indexed by $\iota_{rl}$ for $r,l\in[K]$. 
Arrange these \(K^2\) coordinates into a \(K\times K\) matrix according to the index \(\iota_{rl}\). Specifically, $\bM\ba$ can be written as
\begin{equation*}
-\Big\{ \sum_{r=1}^K a_{rr}\be_r\be_r^\T + \sum_{1\le r<l\le K} a_{rl}\big(\be_r\be_l^\T+\be_l\be_r^\T\big) \Big\} + \Big\{ \sum_{1\le l<r\le K} a_{rl}\big(\be_r\be_l^\T-\be_l\be_r^\T\big) \Big\}\bar\bGamma^* . \end{equation*}
By $\|\btheta^*\|_\infty\le C$, there exist constants $0<c_\Gamma<C_\Gamma<\infty$ such that, for all sufficiently large \(n,q\), we have $c_\Gamma\le \lambda_{\min}(\bar\bGamma^*)\le \lambda_{\max}(\bar\bGamma^*)\le C_\Gamma$. Subsequently, we have
\[
\begin{aligned}
\sum_{r=1}^K\sum_{l=1}^K [\bM\ba]_{\iota_{rl}}^2  \ge
\lambda_{\min}(\bar\bGamma^*) &\Bigg\| -\Big\{ \sum_{r=1}^K a_{rr}\be_r\be_r^\T + \sum_{1\le r<l\le K} a_{rl}\big(\be_r\be_l^\T+\be_l\be_r^\T\big) \Big\}(\bar\bGamma^*)^{-1/2} \\&\quad
+ \Big\{ \sum_{1\le l<r\le K} a_{rl}\big(\be_r\be_l^\T-\be_l\be_r^\T\big) \Big\}(\bar\bGamma^*)^{1/2} \Bigg\|_{\Fn}^2 .
\end{aligned}
\]
The first matrix inside the norm is symmetric before multiplication by $(\bar\bGamma^*)^{-1/2}$, while the second matrix is skew-symmetric before multiplication by $(\bar\bGamma^*)^{1/2}$. One can then check the Frobenius inner product of these two terms are zero.
Thus,
\begin{align*}
\sum_{r=1}^K\sum_{l=1}^K [\bM\ba]_{\iota_{rl}}^2 &\ge
\frac{\lambda_{\min}(\bar\bGamma^*)}{\lambda_{\max}(\bar\bGamma^*)} \Big\| \sum_{r=1}^K a_{rr}\be_r\be_r^\T + \sum_{1\le r<l\le K} a_{rl}\big(\be_r\be_l^\T+\be_l\be_r^\T\big) \Big\|_{\Fn}^2 \\
&\quad + \lambda_{\min}^2(\bar\bGamma^*) \Big\| \sum_{1\le l<r\le K} a_{rl}\big(\be_r\be_l^\T-\be_l\be_r^\T\big) \Big\|_{\Fn}^2 .
\end{align*}
For the two terms on the right side, $\big\|\sum_{r=1}^K a_{rr}\be_r\be_r^\T + \sum_{1\le r<l\le K} a_{rl}\big(\be_r\be_l^\T+\be_l\be_r^\T\big)\big\|_{\Fn}^2 = \sum_{r=1}^K a_{rr}^2 + 2\sum_{1\le r<l\le K}a_{rl}^2$ and $\big\|\sum_{1\le l<r\le K} a_{rl}\big(\be_r\be_l^\T-\be_l\be_r^\T\big)\big\|_{\Fn}^2= 2\sum_{1\le l<r\le K}a_{rl}^2$, which therefore implies
\begin{equation*}
\sum_{r=1}^K\sum_{l=1}^K [\bM\ba]_{\iota_{rl}}^2 \ge
\frac{c_\Gamma}{C_\Gamma} \Big( \sum_{r=1}^K a_{rr}^2 + \sum_{1\le r<l\le K}a_{rl}^2 \Big) + c_\Gamma^2 \sum_{1\le l<r\le K}a_{rl}^2 .
\end{equation*}
We thus conclude that $\sigma_{\min}\{(\bphi_L^*)^\T\bpsi_L^*\} \ge \left\{\min\left(1,\frac{c_\Gamma}{C_\Gamma},c_\Gamma^2\right)\right\}^{1/2}
=:c_2'>0$. Therefore, $\zeta_2$ can be lower bounded by some positive constant, and for brevity, we simply denote this constant by $c_2$.

    For $\beta_1$, since $\bz\perp\mathrm{col}(\bphi_L^*)$ and $\widetilde\bpsi_{Ly}^*$ is the projection of $\widetilde\bpsi_L^*$ onto $\mathrm{col}(\bphi_L^*)$,
    \begin{equation}
        \beta_1 =\frac{1}{2}\big\|({\widetilde\bpsi_{Ly}^*})^\T\by\big\|^2
        \ge \frac{1}{2}c_1^2c_2^2\|\by\|^2.\label{eq_final_push_to_convexity3_true}
    \end{equation}
    For $\beta_2$, since $\by\in\mathrm{col}(\bphi_L^*)$ and $\widetilde\bpsi_{Lz}^*$ lies in $\mathrm{col}^{\perp}(\bphi_L^*)$, we have
    \begin{equation}
        \beta_2 = \big\|(\widetilde\bpsi_{Lz}^*)^\T\bz\big\|^2 \le \|\widetilde\bpsi_{L}^*\|^2 \|\bz\|^2\le C_3\|\bz\|^2,\label{eq_final_push_to_convexity4_true}
    \end{equation}
    for some absolute constant $C_3$, by the boundedness of the loadings and the canonical empirical moments.
    Combining \eqref{eq_final_push_to_convexity_true}--\eqref{eq_final_push_to_convexity4_true}, we arrive at
    \begin{equation*}
         \bx ^\T\bS_m\left\{\bH_L^*+\bH_P^* \right\}\bS_m\bx\ge \left(\gamma b_n - cb_nC_3\right)\|\bz\|^2 + \frac{cb_n}{2}(c_1c_2)^2\|\by\|^2 - \|\bx\|^2\big\|\bS_m(\bar\bH_{L}^* - \bH_L^*)\bS_m\big\|.
    \end{equation*}
    By the appropriate spectral-norm conclusion of Lemma~\ref{lemma_concentration_l_mat}, the last term is at least
    $-\cOp\{\sqrt{B_n/(n\wedge q)}\}(\|\bz\|^2 + \|\by\|^2)$ and is therefore negligible relative to the preceding terms under Assumption~\ref{assump_scaling}.
    Since $c_1,c_2,C_3$ do not depend on $n$ or $q$, taking $c=\gamma/(2C_3)$ proves \eqref{eq_standard_convex_aug2} for a sufficiently small constant $\gamma_L>0$.

\medskip
\noindent\underline{\large \it Executing Step 2.} Now we study the inversion of the block form given in \eqref{eq_block_HL_plus_HP}. Define
\begin{equation*}
    \bpsi_L^*:=\sqrt{cb_n}\big(\bpsi_{01}^*,\dots,\bpsi_{0K}^*,\bpsi_{11}^*,\dots,\bpsi_{1K}^*,\dots,\bpsi_{K1}^*,\dots,\bpsi_{KK}^*\big),
\end{equation*} and let $\bpsi_{L\theta}^* = \big(\bpsi_{L}^*\big)_{1:qK+q,}$ and $\bpsi_{Lu}^* = \big(\bpsi_{L}^*\big)_{(qK+q+1):(qK+q+nK),}$. The blocks are defined by
\begin{subequations}
    \label{eq_block_HL_plus_HP_decomp}\begin{align}
        &\bH_{\theta\theta}^* = \bH_{L\theta\theta}^* + \bpsi_{L\theta}^*\big(\bpsi_{L\theta}^*\big)^\T;\\
        &\bH_{\theta u}^* = \big(\bH_{u\theta}^*\big)^\T = \bH_{L\theta u}(\btheta^*,\bu^*)+\bpsi_{L\theta}^*\big(\bpsi_{Lu}^*\big)^\T;\\
        &\bH_{uu}^* = \bH_{Luu}^* + \bpsi_{Lu}^*\big(\bpsi_{Lu}^*\big)^\T.
    \end{align}
\end{subequations}
 Here we use
\begin{equation*}
    \bH_P^* = \bpsi_L^*(\bpsi_L^*)^\T = \begin{pmatrix}
        \bpsi_{L\theta}^*(\bpsi_{L\theta}^*)^\T & \bpsi_{L\theta}^*(\bpsi_{Lu}^*)^\T\\
        \bpsi_{Lu}^*(\bpsi_{L\theta}^*)^\T & \bpsi_{Lu}^*(\bpsi_{Lu}^*)^\T 
    \end{pmatrix}
\end{equation*}
Applying the block-matrix inversion formula gives
\begin{align*}
    \big(\bH_L^* + \bH_P^*\big)^{-1} = \begin{pmatrix}
           \bH_{-\theta}^* & -(\bH_{\theta\theta}^*)^{-1} \bH_{\theta u}^*\bH_{-u}^*\\
            -(\bH_{uu}^*)^{-1}\bH_{u\theta}^*\bH_{-\theta}^* & \bH_{-u}^{*}
        \end{pmatrix},
\end{align*}
where $\bH_{-\theta}^*$ and $\bH_{-u}^{*}$ are the inverses of the Schur complement of $\bH_L^* + \bH_P^*$ given as
\begin{align*}
    \bH_{-\theta}^* = \big\{\bH_{\theta\theta}^* - \bH_{\theta u}^*(\bH_{uu}^*)^{-1}\bH_{u\theta}^*\big\}^{-1}= (\bH_{\theta\theta}^*)^{-1} + (\bH_{\theta\theta}^*)^{-1} \bH_{\theta u}^*\bH_{-u}^*\bH_{u \theta}^*(\bH_{\theta\theta}^*)^{-1} ;\\ \bH_{-u}^* =\big\{\bH_{uu}^* - \bH_{u\theta}^*(\bH_{\theta\theta}^*)^{-1}\bH_{\theta u}^*\big\}^{-1} = (\bH_{uu}^*)^{-1} + (\bH_{uu}^*)^{-1}\bH_{u\theta}^*\bH_{-\theta}^*\bH_{\theta u}^*(\bH_{uu}^*)^{-1}.
\end{align*}
For $\bH_{-\theta}^*$ and $\bH_{-u}^*$, by $\bS_m^{-1}(\bH_L^* + \bH_P^*)^{-1}\bS_m^{-1} = \cOp(b_n^{-1})$ with probability approaching $1$ as established in Step 1, we know that the Schur complements satisfy
\begin{equation}
    \|\bH_{-\theta}^*\| = \cOp(n^{-1}b_n^{-1})\text{ and }\|\bH_{-u}^*\| = \cOp(q^{-1}b_n^{-1}).\label{eq_step2_bound_schir}
\end{equation}
We first use the Woodbury identity to decompose $(\bH_{\theta\theta}^*)^{-1} = \big\{\bH_{L\theta\theta}^* + \bpsi_{L\theta}^*\big(\bpsi_{L\theta}^*\big)^\T\big\}^{-1}$ as 
\begin{equation}
    (\bH_{\theta\theta}^*)^{-1} = \big(\bH_{L\theta\theta}^*\big)^{-1} - \big(\bH_{L\theta\theta}^*\big)^{-1}\bpsi_{L\theta}^*\left\{\bI_{K^2+K} + \big(\bpsi_{L\theta}^*\big)^\T\big(\bH_{L\theta\theta}^*\big)^{-1}\bpsi_{L\theta}^*\right\}^{-1}\big(\bpsi_{L\theta}^*\big)^\T\big(\bH_{L\theta\theta}^*\big)^{-1}.\label{eq_wood_expand}
\end{equation}
By the construction of $\bpsi_{L\theta}^*$ and $\|\btheta^*\|_{\infty}\le C$, we have
\begin{equation}
    \big\|\big(\bpsi_{L\theta}^*\big)_{\cK_j^+,}\big\| = \cO(\sqrt{nb_n/q})\text{ and }\big\|\bpsi_{L\theta}^*\big\| = \cO(\sqrt{nb_n}).\label{eq_step2_bound_psi_theta}
\end{equation}
Next, the structure of $\bH_{L\theta\theta}^*$ allows us to focus on the diagonal blocks, where we have the following bounds
\begin{align*}
    \min_{j\in[q]}\lambda_{\min}\big\{(\bH_{L\theta\theta}^*)_{\cK_j^+,\cK_j^+}\big\}=&\,\min_{j\in[q]}\lambda_{\min}\left\{\sum_{i=1}^n-\ell_{ij}^{\prime\prime}\big(\eta_{ij}^*\big)\begingroup\renewcommand{\arraystretch}{0.7}\begin{pmatrix}1\\\bu_i^{*}\end{pmatrix}\begin{pmatrix}1\\\bu_i^{*}\end{pmatrix}^\T\endgroup\right\}\\\overset{(i)}{\ge}&\,b_n\lambda_{\min}\left\{\sum_{i=1}^n\begingroup\renewcommand{\arraystretch}{0.7}\begin{pmatrix}1\\\bu_i^{*}\end{pmatrix}\begin{pmatrix}1\\\bu_i^{*}\end{pmatrix}^\T\endgroup\right\}\\\overset{(ii)}{=}&\,\Theta_p(b_nn).
\end{align*}
and 
\begin{align*}
    \max_{j\in[q]}\lambda_{\max}\big\{(\bH_{L\theta\theta}^*)_{\cK_j^+,\cK_j^+}\big\}=&\,\max_{j\in[q]}\lambda_{\max}\left\{\sum_{i=1}^n-\ell_{ij}^{\prime\prime}\big(\eta_{ij}^*\big)\begingroup\renewcommand{\arraystretch}{0.7}\begin{pmatrix}1\\\bu_i^{*}\end{pmatrix}\begin{pmatrix}1\\\bu_i^{*}\end{pmatrix}^\T\endgroup\right\}\\\overset{(iii)}{\le}&\,B_n\lambda_{\max}\left\{\sum_{i=1}^n\begingroup\renewcommand{\arraystretch}{0.7}\begin{pmatrix}1\\\bu_i^{*}\end{pmatrix}\begin{pmatrix}1\\\bu_i^{*}\end{pmatrix}^\T\endgroup\right\}\\\overset{(iv)}{=}&\,\cOp(B_nn).
\end{align*}
Here $(i)$ and $(iii)$ follow from Assumption~\ref{assumption_smoothness} and $|\eta_{ij}^*|\le C\sqrt{\log n}$ uniformly over $i\in[n],j\in[q]$; $(ii)$ and $(iv)$ hold for sufficiently large $n$ and $q$ by Assumption~\ref{assump_standard_id}. Consequently, by the block-diagonal structure, for $\bH_{L\theta_j\theta_j}^* :=(\bH_{L\theta\theta}^*)_{\cK_j^+,\cK_j^+}$, one has
\begin{equation}
    \Big\|\bH_{L\theta_j\theta_j}^*\Big\| = \cOp(nB_n);\; \Big\|\big(\bH_{L\theta_j\theta_j}^*\big)^{-1}\Big\| = \cOp((nb_n)^{-1}),\label{eq_step2_bound_hl_theta_j}
\end{equation}
and for $\bH_{L\theta\theta}^*$, one has
\begin{equation}
    \Big\|\bH_{L\theta\theta}^*\Big\| = \cOp(nB_n);\; \Big\|\big(\bH_{L\theta\theta}^*\big)^{-1}\Big\| = \cOp((nb_n)^{-1}).\label{eq_step2_bound_hl_theta}
\end{equation}
Now go back to \eqref{eq_wood_expand} and we first note that 
\begin{equation*}
    \left\|\big(\bpsi_{L\theta}^*\big)^\T\big(\bH_{L\theta\theta}^*)^{-1}\bpsi_{L\theta}^*\right\| \le \big\|\bpsi_{L\theta}^*\big\|^2 \big\|\big(\bH_{L\theta\theta}^*\big)^{-1}\big\| = \cOp(1).
\end{equation*}
Therefore, $\Theta_p(1)\le \lambda_{\min}\big(\bI_{K^2+K} + \big(\bpsi_{L\theta}^*\big)^\T\big(\bH_{L\theta\theta}^*\big)^{-1}\bpsi_{L\theta}^*\big)\le \lambda_{\max}\big(\bI_{K^2+K} + \big(\bpsi_{L\theta}^*\big)^\T\big(\bH_{L\theta\theta}^*\big)^{-1}$ $\bpsi_{L\theta}^*\big) \le \cOp(1)$, which gives \begin{equation}
    \left\|\big(\bI_{K^2+K} + \big(\bpsi_{L\theta}^*\big)^\T\big(\bH_{L\theta\theta}^*\big)^{-1}\bpsi_{L\theta}^*\big)^{-1}\right\| = \cOp(1).\label{eq_bound_wood_inv}
\end{equation}
With \eqref{eq_step2_bound_psi_theta}, \eqref{eq_step2_bound_hl_theta_j} and \eqref{eq_step2_bound_hl_theta}, we arrive at
\begin{equation}
    \begin{aligned}\left\|\left\{(\bH_{\theta\theta}^*)^{-1} - \big(\bH_{L\theta\theta}^*\big)^{-1}\right\}_{\cK_j^+,}\right\| \le &\, \big\|\big(\bH_{L\theta_j\theta_j}^*\big)^{-1}\big\|\big\|\{\bpsi_{L\theta}^*\}_{\cK_j^+,}\big\|\big\|\bpsi_{L\theta}^*\big\|\big\|\big(\bH_{L\theta\theta}^*\big)^{-1}\big\|\\&\quad\times \left\|\left\{\bI_{K^2+K} + \big(\bpsi_{L\theta}^*\big)^\T\big(\bH_{L\theta\theta}^*\big)^{-1}\bpsi_{L\theta}^*\right\}^{-1}\right\|\\= &\,\cOp\left(\frac{1}{n\sqrt{q}b_n}\right),\end{aligned}\label{eq_step2_bound_hl_theta_inv_j}
\end{equation}
and 
\begin{equation}
    \begin{aligned}\big\|(\bH_{\theta\theta}^*)^{-1}\big\| \le &\,\big\|\big(\bH_{L\theta\theta}^*\big)^{-1}\big\| +\big\| \big(\bH_{L\theta\theta}^*\big)^{-1}\big\|^2\big\|\bpsi_{L\theta}^*\big\|^2\\&\quad\times\left\|\left\{\bI_{K^2+K} + \big(\bpsi_{L\theta}^*\big)^\T\big(\bH_{L\theta\theta}^*\big)^{-1}\bpsi_{L\theta}^*\right\}^{-1}\right\|= \cOp\left(\frac{1}{nb_n}\right).\end{aligned}\label{eq_step2_bound_hl_theta_inv}
\end{equation}
Next, we note that
\begin{equation}
    \big\|\big(\bH_{\theta u}^*\big)_{\cK_j^+,}\big\| = \cOp(B_n\sqrt{n}),\;\big\|\big(\bH_{\theta u}^*\big)_{,\cK_i}\big\| = \cOp(B_n\sqrt{q}),\text{ and }\big\|\bH_{\theta u}^*\big\| = \cOp(B_n\sqrt{nq}).\label{eq_step2_bound_hl_thetau}
\end{equation}
One can verify the bounds by (i) $\bH_{\theta u}^* = \bar\bH_{L\theta u}(\btheta^*,\bu^*) + \bJ_{L\theta u}(\btheta^*,\bu^*) + \bpsi_{L\theta}^*(\bpsi_{Lu}^*)^\T$; (ii) each entry in $\bar\bH_{L\theta u}(\btheta^*,\bu^*) $ is bounded by $\max_{i\in[n],j\in[q]}|\ell_{ij}^{\prime\prime}(\eta_{ij}^*)|\|\btheta_j^*\|_{\infty}(1+\|\bu_i^*\|_{\infty}) = \cOp(B_n\sqrt{\log n})$ by Assumptions~\ref{assump_standard_id}--\ref{assumption_smoothness}; (iii) $\bJ_{L\theta u}(\btheta^*,\bu^*)$ can be bounded according to Lemma~\ref{lemma_concentration_l_mat}; and (iv) \eqref{eq_step2_bound_psi_theta}.

Invoke the expansion for $\bH_{-\theta}^*$ and with \eqref{eq_step2_bound_schir}, \eqref{eq_step2_bound_hl_theta_inv_j}, \eqref{eq_step2_bound_hl_theta_inv} and \eqref{eq_step2_bound_hl_thetau}, we arrive at
\begin{align*}
&\,\left\|\left\{\bH_{-\theta}^* - \big(\bH_{L\theta\theta}^*\big)^{-1}\right\}_{\cK_j^+,}\right\| = \left\|\left\{(\bH_{\theta\theta}^*)^{-1} - \big(\bH_{L\theta\theta}^*\big)^{-1}\right\}_{\cK_j^+,}\right\|\\&\,+\left\|\big\{(\bH_{\theta\theta}^*)^{-1}\big\}_{\cK_j^+,} \bH_{\theta u}^*\bH_{-u}^*\bH_{u \theta}^*(\bH_{\theta\theta}^*)^{-1}\right\|\\=&\,\cOp\big(b_n^{-1}n^{-1}q^{-1/2}\big) + \left\|\left\{(\bH_{\theta\theta}^*)^{-1} - \big(\bH_{L\theta\theta}^*\big)^{-1}\right\}_{\cK_j^+,}\bH_{\theta u}^*\bH_{-u}^*\bH_{u \theta}^*(\bH_{\theta\theta}^*)^{-1}\right\|\\&\,+ \left\|\left\{ \big(\bH_{L\theta\theta}^*\big)^{-1}\right\}_{\cK_j^+,}\bH_{\theta u}^*\bH_{-u}^*\bH_{u \theta}^*(\bH_{\theta\theta}^*)^{-1}\right\|\\=&\, \cOp\Big(n^{-1}q^{-1/2}\frac{B_n^2}{b_n^3}\Big) + \left\| \big(\bH_{L\theta_j\theta_j}^*\big)^{-1}\big(\bH_{\theta u}^*\big)_{\cK_j^+,}\bH_{-u}^*\bH_{u \theta}^*(\bH_{\theta\theta}^*)^{-1}\right\|\\=&\,\cOp\left(\frac{\kappa_n^3\sqrt{B_n}}{n\sqrt{q}}\right).\end{align*}
Similarly, we can obtain a similar result for $\bH_{uu}^*$ and $\bH_{-u}^*$. To summarize, we have for each $i\in[n]$ and $j\in[q]$ that
    \begin{subequations}\label{subeq_step2_bound}
    \begin{align}
       & \Big\|\big\{(\bH_{\theta\theta}^*)^{-1} - \big(\bH_{L\theta\theta}^*\big)^{-1}\big\}_{\cK_j^+,}\Big\| = \cOp\big(b_n^{-1}n^{-1}q^{-1/2}\big);\label{subeq_step2_bound_hl_theta_inv_j}\\
       &\Big\|\big\{\bH_{-\theta}^* - \big(\bH_{L\theta\theta}^*\big)^{-1}\big\}_{\cK_j^+,}\Big\| = \cOp\big(\kappa_n^3\sqrt{B_n}n^{-1}q^{-1/2}\big);\label{subeq_step2_bound_hl_m_theta_inv_j}\\
       &\big\|(\bH_{L\theta_j\theta_j}^*)^{-1}\big\|  = \cOp\big((b_nn)^{-1}\big);\;\big\|(\bH_{\theta\theta}^*)^{-1}\big\|  = \cOp\big((b_nn)^{-1}\big);\label{subeq_step2_bound_hl_m_theta_inv}\\&\;\big\|\bH_{-\theta}^*\big\| = \cOp\big((b_nn)^{-1}\big);\nonumber\\
       & \Big\|\big\{(\bH_{uu}^*)^{-1} - \big(\bH_{Luu}^*\big)^{-1}\big\}_{\cK_i,}\Big\| = \cOp\big(b_n^{-1}n^{-1/2}q^{-1}\big);\label{subeq_step2_bound_hl_u_inv_j}\\
       &\Big\|\big\{\bH_{-u}^* - \big(\bH_{Luu}^*\big)^{-1}\big\}_{\cK_i,}\Big\| = \cOp\big(\kappa_n^3\sqrt{B_n}n^{-1/2}q^{-1}\big);\label{subeq_step2_bound_hl_m_u_inv_j}\\
       &\big\|(\bH_{Lu_iu_i}^*)^{-1}\big\|  = \cOp\big((qb_n)^{-1}\big);\label{subeq_step2_bound_hl_m_u_inv}\\&\;\big\|(\bH_{uu}^*)^{-1}\big\|  = \cOp\big((qb_n)^{-1}\big);\;\big\|\bH_{-u}^*\big\| = \cOp\big((qb_n)^{-1}\big);\nonumber\\
       &\big\|\big(\bH_{\theta u}^*\big)_{\cK_j^+,}\big\| = \cOp(B_n\sqrt{n}),\;\big\|\big(\bH_{\theta u}^*\big)_{,\cK_i}\big\| = \cOp(B_n\sqrt{q});\label{subeq_step2_bound_hl_thetau}\\&\;\big\|\bH_{\theta u}^*\big\| = \cOp(B_n\sqrt{nq}).\nonumber
    \end{align}
\end{subequations}

\medskip
\noindent\underline{\large \it Executing Step 3. } Now we use \eqref{subeq_step2_bound} to derive the results. Recall that
\begin{align*}
    \big(\bH_L^* + \bH_P^*\big)^{-1} = \begin{pmatrix}
           \bH_{-\theta}^* & -(\bH_{\theta\theta}^*)^{-1} \bH_{\theta u}^*\bH_{-u}^*\\
            -(\bH_{uu}^*)^{-1}\bH_{u\theta}^*\bH_{-\theta}^* & \bH_{-u}^{*}
        \end{pmatrix},
\end{align*}
For $(a)$, we note that 
\begin{align*}
    &\,\Big\|\bS_m^{-1}\big\{(\bH_{L}^* + \bH_P^*)^{-1}\bS_m^{-1} - n(\bH_{L\theta\theta}^*)^{-1}\big\}_{\cK_j^+,}\Big\| \\\le&\, \Big\|n\big\{\bH_{-\theta}^* - (\bH_{L\theta\theta}^*)^{-1}\big\}_{\cK_j^+,}\Big\| + \Big\|(nq)^{1/2}\big\{(\bH_{\theta\theta}^*)^{-1} \bH_{\theta u}^*\bH_{-u}^*\big\}_{\cK_j^+,}\Big\| \\\overset{(i)}{\le }&\,\cOp\big(\kappa_n^3\sqrt{B_n}q^{-1/2}\big) + (nq)^{1/2}\Big\|\big\{(\bH_{\theta\theta}^*)^{-1} - (\bH_{L\theta\theta}^*)^{-1} \big\}_{\cK_j^+,}\bH_{\theta u}^*\bH_{-u}^*\Big\|\\&\,+(nq)^{1/2} \Big\|\big\{(\bH_{L\theta\theta}^*)^{-1}\big\}_{\cK_j^+,} \bH_{\theta u}^*\bH_{-u}^*\Big\|\\\overset{(ii)}{\le }&\,\cOp\big(\kappa_n^3\sqrt{B_n}q^{-1/2}\big) + (nq)^{1/2}\Big\|(\bH_{L\theta_j\theta_j}^*)^{-1}(\bH_{\theta u}^*)_{\cK_j^+,}\bH_{-u}^*\Big\|\\ \overset{(iii)}{=} &\,\cOp\big(\kappa_n^3\sqrt{B_n}q^{-1/2}\big).
\end{align*}
Here $(i)$ follows from \eqref{subeq_step2_bound_hl_m_theta_inv_j}; $(ii)$ follows from \eqref{subeq_step2_bound_hl_theta_inv_j}, \eqref{subeq_step2_bound_hl_m_u_inv}, and \eqref{subeq_step2_bound_hl_thetau}; and $(iii)$ follows from \eqref{subeq_step2_bound_hl_m_theta_inv}, \eqref{subeq_step2_bound_hl_m_u_inv}, and \eqref{subeq_step2_bound_hl_thetau}. Part~$(c)$ of Lemma~\ref{lemma_approx_hessian_inverse} follows similarly.

Next, for part $(b)$, we introduce the following lemma.
\begin{lemma}\em\label{lemma_aux_score_bound}
    Suppose Assumptions~\ref{assump_standard_id}--\ref{assumption_smoothness} hold.
    \begin{enumerate}[label = $(\roman*)$]
       \item For any $r,l\in[K]$ \begin{equation*}
    \Big|\sum_{i=1}^n u_{il}^* \be_r^\T\Big\{\sum_{t=1}^q \ell_{it}^{\prime\prime}\big(\eta_{it}^*\big) \bgamma_t^* (\bgamma_t^*)^\T \Big\}^{-1} \Big\{\sum_{t=1}^q \bgamma_t^* \ell_{it}^{\prime}(\eta_{it}^*) \Big\}\Big| =\cOp\big( \kappa_n\sqrt{n/q}\big);
\end{equation*}

\item For any $r\in[K]$, 
\begin{align*}
    \sum_{j=1}^q\Big|\sum_{i=1}^n \ell_{ij}^{\prime}(\eta_{ij}^*) \be_r^\T \Big\{\sum_{t=1}^q \ell_{it}^{\prime\prime}(\eta_{it}^*) \bgamma_t^* (\bgamma_t^*)^\T \Big\}^{-1} \Big\{\sum_{t=1}^q \bgamma_t^* \ell_{it}^{\prime}(\eta_{it}^*) \Big\}\Big|^2\\=\cOp \Big( \frac{B_n^2q}{b_n^2}\big(\sqrt{n /q} + n/q\big)^2\Big);
\end{align*}

\item For any $r\in[K]$, 
\begin{align*}
    \sum_{j=1}^q\Big|\sum_{i=1}^n \ell_{ij}^{\prime\prime}(\eta_{ij}^*) u_{ir}^* (\bgamma_j^*)^\T \Big\{\sum_{t=1}^q \ell_{it}^{\prime\prime}(\eta_{it}^*) \bgamma_t^* (\bgamma_t^*)^\T \Big\}^{-1} \Big\{\sum_{t=1}^q \bgamma_t^* \ell_{it}^{\prime}(\eta_{it}^*) \Big\}\Big|^2 \\= \cOp\big( B_n^2\kappa_n^2{n^2/q}\big);
\end{align*}

\item For any $r,l\in[K]$, 
\begin{align*}
    \Big|\sum_{j=1}^q \gamma_{jl}^* \be_r^\T \Big\{\sum_{t=1}^n \ell_{tj}^{\prime\prime}(\eta_{tj}^*) \bu_t^* (\bu_t^*)^\T \Big\}^{-1} \Big\{\sum_{t=1}^n \bu_t^* \ell_{tj}^{\prime}(\eta_{tj}^*) \Big\}\Big| = \cOp\big(\kappa_n\sqrt{q/n}\big);
\end{align*}

\item For any $r\in[K]$,
\begin{align*}
 \sum_{i=1}^n\Big|\sum_{j=1}^q \ell_{ij}^{\prime}(\eta_{ij}^*) \be_r^\T \Big\{\sum_{t=1}^n \ell_{tj}^{\prime\prime}(\eta_{tj}^*) \bu_t^* (\bu_t^*)^\T \Big\}^{-1} \Big\{\sum_{t=1}^n \bu_t^* \ell_{tj}^{\prime}(\eta_{tj}^*) \Big\}\Big|^2 \\ =\cOp \Big(\frac{B_n^2n}{b_n^2}\big(\sqrt{q /n} + q/n \big)^2\Big);
\end{align*}
\item For any $r\in[K]$,
\begin{align*}
    \sum_{i=1}^n\Big|\sum_{j=1}^q \ell_{ij}^{\prime\prime}(\eta_{ij}^*) \gamma_{jr}^* (\bu_i^*)^\T \Big\{\sum_{t=1}^n \ell_{tj}^{\prime\prime}(\eta_{tj}^*) \bu_t^* (\bu_t^*)^\T \Big\}^{-1} \Big\{\sum_{t=1}^n \bu_t^* \ell_{tj}^{\prime}(\eta_{tj}^*) \Big\}\Big|^2 \\= \cOp\big(\kappa_n^2B_n^2{q^2/n}\big);
\end{align*}
\end{enumerate}
\end{lemma}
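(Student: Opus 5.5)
The plan is to treat all six parts with one template: conditional on $\mathscr U_n=\{\bu_i^*\}$, each quantity is a (possibly weighted) sum of products of the centred scores $\ell_{ij}'(\eta_{ij}^*)$, with random matrix inverses $\bA_i^{-1}=\{\sum_t -\ell_{it}''(\eta_{it}^*)\bgamma_t^*(\bgamma_t^*)^\T\}^{-1}$ (or the analogous $\bB_j^{-1}$ built from $\bu_t^*$). First, replace each random inverse by its conditional expectation counterpart $\bar\bA_i^{-1}$ with $\bar\bA_i=\EE(\bA_i\mid\mathscr U_n)$, exactly as in the proof of Lemma~\ref{lemma_bu_star_theta_true}. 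We have $\|\bA_i^{-1}\|,\|\bar\bA_i^{-1}\|\le C/(qb_n)$, which follows from Assumptions~\ref{assumption_psd_covariance}--\ref{assumption_smoothness}. We also have $\|\bA_i^{-1}-\bar\bA_i^{-1}\|\le C\|\bA_i-\bar\bA_i\|/(q^2b_n^2)$ with $\EE\|\bA_i-\bar\bA_i\|^2\le CqB_n^2$. The replacement error is controlled by Cauchy--Schwarz together with $\EE\|\sum_t\bgamma_t^*\ell_{it}'\|^2\le CqB_n$, and it is of lower order than the claimed rates.

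With deterministic (given $\mathscr U_n$) inverses, parts $(i)$ and $(iv)$ reduce to sums of independent mean-zero terms. In $(i)$, $\sum_i u_{il}^*\be_r^\T\bar\bA_i^{-1}\sum_t\bgamma_t^*\ell_{it}'$ has conditional variance at most $\sum_i (u_{il}^*)^2 qB_n/(qb_n)^2=\cOp(nB_n/(qb_n^2))$. This gives $\kappa_n\sqrt{n/q}$ via Chebyshev. Part $(iv)$ is the transpose argument with the roles of $i,j$ and of $n,q$ swapped.

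For the quadratic forms $(ii)$ and $(v)$, expand $\sum_i\ell_{ij}'\,\be_r^\T\bar\bA_i^{-1}\sum_t\bgamma_t^*\ell_{it}'$ into the diagonal part $t=j$ and the off-diagonal part $t\ne j$. The diagonal part is $\sum_i(\ell_{ij}')^2\be_r^\T\bar\bA_i^{-1}\bgamma_j^*$, whose magnitude is $O(nB_n/(qb_n))$; squared and summed over $j$ it yields $q(B_n/b_n)^2(n/q)^2$. The off-diagonal part is a degenerate U-statistic-type sum of independent mean-zero pairs. Its conditional second moment is $\sum_i\sum_{t\ne j}\EE(\ell_{ij}')^2\EE(\ell_{it}')^2\|\bar\bA_i^{-1}\|^2\lesssim nqB_n^2/(qb_n)^2$, since cross terms vanish by independence and mean zero. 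Summing over $j$ gives $q\cdot nB_n^2/(qb_n^2)=q(B_n/b_n)^2(n/q)$, matching the $(\sqrt{n/q})^2$ term. Markov's inequality on the summed second moments finishes the argument.

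Parts $(iii)$ and $(vi)$ involve $\ell''$ weights, which are not centred. Write $\ell_{ij}''=\EE(\ell_{ij}''\mid\mathscr U_n)+(\ell_{ij}''-\EE\ell_{ij}'')$. The deterministic part gives a linear form in the scores with weights bounded by $B_n\|\bu_i^*\|/(qb_n)$; its variance per $j$ is $\lesssim nB_n^2\cdot qB_n/(qb_n)^2$, so summing over $j$ gives $B_n^2\kappa_n^2 n$. This is at most $B_n^2\kappa_n^2n^2/q$ when $n\gtrsim q$; otherwise the coarser bound is absorbed into the stated rate through the remaining $j$-independent factor, which needs checking. The centred fluctuation part is handled as in $(ii)$. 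If $\ell''$ is not independent of $\ell'$ for the same $(i,j)$, the $t=j$ term is bounded directly by $B_n\sqrt{B_n}/(qb_n)$ per $i$, and this is negligible.

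The main obstacle is the bookkeeping in $(ii)$--$(iii)$ and $(v)$--$(vi)$: the $t=j$ diagonal terms and the dependence between $\ell''$ and $\ell'$ at the same index must be separated so that the off-diagonal sums are genuinely centred. The first thing to try is the conditional second-moment computation for these off-diagonal sums, using only independence of the responses given $\mathscr U_n$; sub-exponential tail bounds should not be needed because only $\cOp$ rates, not maxima, are claimed.
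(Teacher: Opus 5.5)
Your template matches the paper's. The paper's proof of this lemma only defers to the proof of Lemma~\ref{lemma_aux_score_bound_pre}, which splits each sum into three pieces: a term with the inverse of the conditional mean, $\bar\bA_i^{-1}$, and the leave-one-out score; a replacement term; and the diagonal $t=j$ term. Your conditional-variance computations for the $\bar\bA_i^{-1}$ pieces and the diagonal pieces are correct.

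\textbf{The step you flagged in (iii), and symmetrically (vi), cannot be closed.} The $\EE(\ell''_{ij}\mid\mathscr U_n)$ part, which you bound by $B_n^2\kappa_n^2 n$ after summing over $j$, cannot be ``absorbed'' when $q\gg n$, because that is its true order. Take the Gaussian model $X_{ij}\mid\bU_i\sim\cN(\eta_{ij}^*,1)$, which satisfies Assumptions~\ref{assump_standard_id}--\ref{assumption_smoothness}, and write $\epsilon_{ij}=X_{ij}-\eta_{ij}^*$.
\begin{itemize}
\item Here $\ell''\equiv-1$. The $q$-vector of inner sums in (iii) is the orthogonal projection of $\sum_i u_{ir}^*(\epsilon_{i1},\dots,\epsilon_{iq})^\T$ onto $\mathrm{col}(\bGamma^*)$. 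So, conditionally on $\mathscr U_n$, the left side of (iii) is exactly $\{\sum_i(u_{ir}^*)^2\}\chi^2_K$, which is of order $n$.
\item In the same way, the left side of (vi) is $\{\sum_j(\gamma_{jr}^*)^2\}\chi^2_K$, of order $q$.
\item Since $B_n,\kappa_n=n^{o(1)}$, (iii) fails whenever $q\ge n^{1+a}$ and (vi) fails whenever $n\ge q^{1+a}$, for any fixed $a>0$. No argument yields both printed rates unless $n$ and $q$ agree up to $n^{o(1)}$ factors.
\end{itemize}
What your computation actually proves, for nonrandom $\ell''$, is $\cOp(B_n^2\kappa_n^2 n)$ for (iii) and $\cOp(\kappa_n^2B_n^2 q)$ for (vi). This is consistent with the $B_n^2\kappa_n^2 n$ term in Lemma~\ref{lemma_aux_score_bound_pre}(ii). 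Combined with (v), it reproduces exactly the bound \eqref{eq_combined_est} used downstream. State and prove that version instead.

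\textbf{The replacement of $\bA_i^{-1}$ by $\bar\bA_i^{-1}$ is not of lower order via the Cauchy--Schwarz bound you cite.} Let $\bb_i=\sum_t\bgamma_t^*\ell'_{it}(\eta_{it}^*)$.
\begin{itemize}
\item In (i), that bound gives $C(qb_n)^{-2}\sum_i|u_{il}^*|\,\|\bA_i-\bar\bA_i\|\,\|\bb_i\|=\cOp\{nB_n^{3/2}/(qb_n^2)\}$. This is $o(\kappa_n\sqrt{n/q})$ only if $q\gg nB_n^2/b_n^2$, while the mirror bound in (iv) needs $n\gg qB_n^2/b_n^2$.
\item In (ii), it gives $(B_n/b_n)^2 n/q$ per $j$, a factor $B_n/b_n$ above the $n/q$ part of the target.
\end{itemize}
In (i) this is not only crudeness. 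When $\ell''_{ij}$ depends on $X_{ij}$, which Assumption~\ref{assumption_smoothness} allows, the first-order Neumann term has conditional mean $\pm\be_r^\T\bar\bA_i^{-1}\sum_s\EE(\ell''_{is}\ell'_{is}\mid\mathscr U_n)\bgamma_s^*(\bgamma_s^*)^\T\bar\bA_i^{-1}\bgamma_s^*$. This is of order $B_n^{3/2}/(qb_n^2)$ and not zero in general, so the weighted sum in (i) can carry a conditional bias of order $n/q$.

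The paper keeps an explicit $n/q$-type term wherever it makes this replacement. It is absorbed into the $\kappa_n^3\sqrt{B_n}\,n/q$ term of Lemma~\ref{lemma_bu_star_theta_true}, and it appears as $T_2=\cOp(\varepsilon_{nq}n/q)$ in the proof of Lemma~\ref{lemma_aux_score_bound_pre}; the latter uses Assumption~\ref{assump_scaling}, which is not among the present hypotheses. You have two options:
\begin{itemize}
\item carry such a term explicitly; or
\item restrict to models in which $\ell''_{ij}$ is a deterministic function of $\eta$ (the canonical-link Gaussian, logistic and Poisson cases). There $\bA_i=\bar\bA_i$ and the step is vacuous.
\end{itemize}
In (ii) and (v) you can also avoid $\bar\bA_i$ altogether: condition on the other columns and use the leave-one-out inverse built without the $j$th term, which restores exact centring.
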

\begin{proof}
    These results can be verified similar to those in the proof of Lemma~\ref{lemma_aux_score_bound_pre}.
\end{proof}

We now prove part $(b)$ of Lemma~\ref{lemma_approx_hessian_inverse}. Let $\bS_{L\theta}^* = \{\bS_L(\btheta^*,\bu^*)\}_{1:qK+q}$, $\bS_{L\theta_j}^* = (\bS_{L\theta}^*)_{\cK_j^+,}$, $\bS_{Lu}^* = \{\bS_L(\btheta^*,\bu^*)\}_{(qK+q+1):(qK+q+nK)}$, and $\bS_{Lu_i}^* = (\bS_{Lu}^*)_{\cK_i,}$. For each $i\in[n]$ and $j\in[q]$,
\begin{equation*}
\big\|\bS_{L\theta_j}^*\big\| = \cOp\big(\sqrt{nB_n}\big);\;    \big\|\bS_{L\theta}^*\big\| = \cOp\big(\sqrt{nqB_n}\big);\;\big\|\bS_{Lu_i}^*\big\| = \cOp\big(\sqrt{qB_n}\big);\;    \big\|\bS_{Lu}^*\big\| = \cOp\big(\sqrt{nqB_n}\big),
\end{equation*}
by Lemma~\ref{coro_first_order_concern}.
Consequently, for part $(b)$, we calculate
\begin{align*}
    &\,\Big\|\big\{(\bH_{L}^* + \bH_P^*)^{-1} - (\bH_{L\theta\theta}^*)^{-1}\big\}_{\cK_j^+,}\bS_L(\btheta^*,\bu^*)\Big\| \\\le&\,\underbrace{\Big\|\big\{\bH_{-\theta}^* - (\bH_{L\theta\theta}^*)^{-1}\big\}_{\cK_j^+,}\bS_{L\theta}^*\Big\|}_{\zeta_1} + \underbrace{\Big\|\big\{(\bH_{\theta\theta}^*)^{-1} \bH_{\theta u}^*\bH_{-u}^*\big\}_{\cK_j^+,}\bS_{Lu}^*\Big\|}_{\zeta_2} .
\end{align*}
For $\zeta_1$, invoke the expansion of $\bH_{-\theta}^*$ and we know
\begin{align*}
\zeta_1\le \underbrace{\Big\|\big\{(\bH_{\theta\theta}^*)^{-1} - (\bH_{L\theta\theta}^*)^{-1}\big\}_{\cK_j^+,}\bS_{L\theta}^*\Big\|}_{\eta_1} + \underbrace{\Big\|\big\{(\bH_{\theta\theta}^*)^{-1} \bH_{\theta u}^*\bH_{-u}^*\bH_{u \theta}^*(\bH_{\theta\theta}^*)^{-1}\big\}_{\cK_j^+,}\bS_{L\theta}^*\Big\|}_{\eta_2}.
\end{align*}
For $\eta_1$, invoke the expansion of $\bH_{\theta\theta}^*$ and we know
\begin{align*}
    \eta_1 \le&\, \Big\|\big\{\big(\bH_{L\theta\theta}^*\big)^{-1} - (\bH_{L\theta\theta}^*)^{-1}\big\}_{\cK_j^+,}\bS_{L\theta}^*\Big\| \\&\, +\Big\|\big(\bH_{L\theta_j\theta_j}^*\big)^{-1}\big(\bpsi_{L\theta}^*\big)_{\cK_j^+,}\left\{\bI_{K^2+K} + \big(\bpsi_{L\theta}^*\big)^\T\big(\bH_{L\theta\theta}^*\big)^{-1}\bpsi_{L\theta}^*\right\}^{-1}\big(\bpsi_{L\theta}^*\big)^\T\big(\bH_{L\theta\theta}^*\big)^{-1}\bS_{L\theta}^*\Big\| \\\overset{(i)}{\le} &\, \cOp\big((nqb_n)^{-1/2}\big) \Big\|\big(\bpsi_{L\theta}^*\big)^\T\big(\bH_{L\theta\theta}^*\big)^{-1}\bS_{L\theta}^*\Big\|\\\overset{(ii)}{\le }&\,\cOp\big(\kappa_n(nq)^{-1/2}\big).
\end{align*}
Here $(i)$ is due to \eqref{eq_step2_bound_psi_theta}, \eqref{eq_step2_bound_hl_theta_j}, and \eqref{eq_bound_wood_inv}; $(ii)$ is due to $(iv)$ of Lemma~\ref{lemma_aux_score_bound}. For $\eta_2$, we compute
\begin{align*}
    \eta_2\le&\, \Big\|\big\{(\bH_{\theta\theta}^*)^{-1} - (\bH_{L\theta\theta}^*)^{-1}\big\}_{\cK_j^+,} \bH_{\theta u}^*\bH_{-u}^*\bH_{u \theta}^*(\bH_{\theta\theta}^*)^{-1}\bS_{L\theta}^*\Big\|\\&\,+\Big\|\big\{(\bH_{L\theta\theta}^*)^{-1}\big\}_{\cK_j^+,} \bH_{\theta u}^*\bH_{-u}^*\bH_{u \theta}^*(\bH_{\theta\theta}^*)^{-1}\bS_{L\theta}^*\Big\|\\\le&\, \Big\|\big\{(\bH_{\theta\theta}^*)^{-1} - (\bH_{L\theta\theta}^*)^{-1}\big\}_{\cK_j^+,} \bH_{\theta u}^*\bH_{-u}^*\bH_{u \theta}^*(\bH_{L\theta\theta}^*)^{-1}\bS_{L\theta}^*\Big\|\\&\,+ \Big\|\big\{(\bH_{\theta\theta}^*)^{-1} - (\bH_{L\theta\theta}^*)^{-1}\big\}_{\cK_j^+,} \bH_{\theta u}^*\bH_{-u}^*\bH_{u \theta}^*\big\{(\bH_{\theta\theta}^*)^{-1} - (\bH_{L\theta\theta}^*)^{-1}\big\}\bS_{L\theta}^*\Big\|\\&\,+\Big\|(\bH_{L\theta_j\theta_j}^*)^{-1} (\bH_{\theta u}^*)_{\cK_j^+,}\bH_{-u}^*\bH_{u \theta}^*(\bH_{L\theta\theta}^*)^{-1}\bS_{L\theta}^*\Big\|\\&\,+ \Big\|(\bH_{L\theta_j\theta_j}^*)^{-1} (\bH_{\theta u}^*)_{\cK_j^+,}\bH_{-u}^*\bH_{u \theta}^*\big\{(\bH_{\theta\theta}^*)^{-1} - (\bH_{L\theta\theta}^*)^{-1}\big\}\bS_{L\theta}^*\Big\|.
\end{align*}
Similar to deriving $\eta_1 = \cOp\big(\kappa_n(nq)^{-1/2}\big)$, we can also show that $\big\|\big\{(\bH_{\theta\theta}^*)^{-1} - (\bH_{L\theta\theta}^*)^{-1}\big\}\bS_{L\theta}^*\big\| = \cOp(\kappa_nn^{-1/2})$, as the bound for $\big\|\big\{(\bH_{\theta\theta}^*)^{-1} - (\bH_{L\theta\theta}^*)^{-1}\big\}_{\cK_j^+,}\big\|$ is uniform over $j\in[q]$. By $(v)$ and $(vi)$ of Lemma~\ref{lemma_aux_score_bound}, we know that
\begin{equation}
    \label{eq_combined_est}\big\|\bH_{u \theta}^*(\bH_{L\theta\theta}^*)^{-1}\bS_{L\theta}^*\big\| = \cOp(\kappa_nB_nq^{1/2} + B_nq/(\sqrt{n}b_n)).
\end{equation}
Combined with \eqref{subeq_step2_bound_hl_theta_inv_j}, \eqref{subeq_step2_bound_hl_m_theta_inv}, \eqref{subeq_step2_bound_hl_m_u_inv}, \eqref{subeq_step2_bound_hl_thetau}, and Lemma~\ref{coro_first_order_concern}, we arrive at
\begin{equation*}
    \zeta_1 = \cOp\Big(\frac{\kappa_n^2B_n}{\sqrt{nq}} +  \frac{\kappa_n^3\sqrt{B_n}}{n}\Big).
\end{equation*}
Similarly, one can derive that 
\begin{equation}
    \Big\|\big\{\bH_{-u}^* - (\bH_{Luu}^*)^{-1}\big\}\bS_{Lu}^*\Big\| = \cOp\big(\kappa_nq^{-1/2}\big),\label{eq_step3_bound_eta_3}
\end{equation}
For $\zeta_2$, we note that
\begin{align*}
    \zeta_2 \le &\, \underbrace{\Big\|\big\{(\bH_{\theta\theta}^*)^{-1}\bH_{\theta u}^*\big\}_{\cK_j^+,}\big\{\bH_{-u}^* - (\bH_{Luu}^*)^{-1}\big\}\bS_{Lu}^*\Big\|}_{\eta_3} \\&\,+ \underbrace{\Big\|\big\{(\bH_{\theta\theta}^*)^{-1} - (\bH_{L\theta\theta}^*)^{-1}\big\}_{\cK_j^+,}\bH_{\theta u}^* (\bH_{Luu}^*)^{-1}\bS_{Lu}^*\Big\|}_{\eta_4}\\&\,+ \underbrace{\Big\|(\bH_{L\theta_j\theta_j}^*)^{-1}\big(\bH_{\theta u}^*\big)_{\cK_j^+,} (\bH_{Luu}^*)^{-1}\bS_{Lu}^*\Big\|}_{\eta_5}.
\end{align*}
By \eqref{subeq_step2_bound_hl_theta_inv_j} and \eqref{subeq_step2_bound_hl_thetau}, one has
\begin{align*}
    \Big\|\big\{(\bH_{\theta\theta}^*)^{-1}\bH_{\theta u}^*\big\}_{\cK_j^+,}\Big\| \le&\, \Big\|\big\{(\bH_{L\theta\theta}^*)^{-1}-(\bH_{\theta\theta}^*)^{-1}\big\}_{\cK_j^+,}\bH_{\theta u}^*\Big\| + \Big\|\big\{(\bH_{L\theta\theta}^*)^{-1}\bH_{\theta u}^*\big\}_{\cK_j^+,}\Big\|\\=&\,\cOp\big(B_n/(b_n\sqrt{n})\big) + \Big\|\big\{(\bH_{L\theta_j\theta_j}^*)^{-1}\big(\bH_{\theta u}^*\big)_{\cK_j^+,}\Big\|\\=&\,\cOp\big(B_n/(b_n\sqrt{n})\big).
\end{align*}
Combining this bound with \eqref{eq_step3_bound_eta_3} gives $\eta_3 = \cOp\big(\kappa_n^2\sqrt{B_n}(nq)^{-1/2}\big)$. For $\eta_4$, parts~$(ii)$ and~$(iii)$ of Lemma~\ref{lemma_aux_score_bound} give, similarly to \eqref{eq_combined_est},
\begin{equation*}
    \big\|\bH_{ \theta u}^*(\bH_{Luu}^*)^{-1}\bS_{Lu}^*\big\| = \cOp(\kappa_nB_nn^{1/2} + B_nn/(\sqrt{q}b_n)).
\end{equation*}
Then with \eqref{subeq_step2_bound_hl_theta_inv_j}, we conclude that $\eta_4 = \cOp\big((nq)^{-1/2}\kappa_n^2(\sqrt{B_n} + \sqrt{n/q})\big)$. Finally, for $\eta_5$, with \eqref{subeq_step2_bound_hl_m_theta_inv} and $\big\|\big(\bH_{\theta u}^*\big)_{\cK_j^+,} (\bH_{Luu}^*)^{-1}\bS_{Lu}^*\big\| = \cOp(\kappa_nB_n\sqrt{n/q} + B_nn/(qb_n))$, we know $\eta_5 = \cOp\big((nq)^{-1/2}\kappa_n^2(\sqrt{B_n} + \sqrt{n/q})\big)$. Then we arrive at 
\begin{equation}
    \zeta_2 = \cOp\Big(\frac{\kappa_n^2B_n}{\sqrt{nq}} +  \frac{\kappa_n^2}{q}\Big).
\end{equation}
Combining the estimates for $\zeta_1$ and $\zeta_2$ proves part~(b) of Lemma~\ref{lemma_approx_hessian_inverse}. Part~(d) follows by symmetry.

\subsubsection{Proof of Lemma~\ref{lemma_bound_aug_via_id}}\label{sec_prove_lemma_bound_aug_via_id}
For each $j\in[q]$, one can compute
\begin{align*}
    \big\{\bH_P^*\big\}_{\cK_j^+,}\begingroup\renewcommand{\arraystretch}{0.6}\begin{pmatrix}
            \hat\btheta - \btheta^*\\\hat\bu^{\star} - \bu^{*}
        \end{pmatrix}\endgroup =&\, \sum_{r=0}^K\sum_{l=1}^Kcb_n\big\{\bpsi_{rl}^*\big\}_{\cK_j^+}\big(\bpsi_{rl}^*\big)^\T\begingroup\renewcommand{\arraystretch}{0.6}\begin{pmatrix}
            \hat\btheta - \btheta^*\\\hat\bu^{\star} - \bu^{*}
        \end{pmatrix}\endgroup\\
        \overset{(i)}{=}&\, \sum_{r=1}^K\sum_{1\le l<r}cb_n\big\{\bpsi_{rl}^*\big\}_{\cK_j^+}\big(\bpsi_{rl}^*\big)^\T\begingroup\renewcommand{\arraystretch}{0.6}\begin{pmatrix}
            \hat\btheta - \btheta^*\\\hat\bu^{\star} - \bu^{*}
        \end{pmatrix}\endgroup\\
         = &\, \sum_{r=1}^K\sum_{1\le l<r}cb_n\big\{\bpsi_{rl}^*\big\}_{\cK_j^+}\sqrt{n/q}\sum_{s=1}^q\left\{\gamma_{sl}^*(\hat\gamma_{sr} - \gamma_{sr}^*) - \gamma_{sr}^*(\hat\gamma_{sl} - \gamma_{sl}^*)\right\}\\\overset{(ii)}{=}&\,0.
\end{align*}
Here $(i)$ follows because the item-parameter block of $\bpsi_{rl}^*$ is zero unless $1\le l<r\le K$; $(ii)$ follows from \eqref{eq_id_gamma_rotate}.

For each $i\in[n]$, similarly one can compute
\begin{align*}
    &\,\big\{\bH_P^*\big\}_{qK+q+\cK_i,}\begingroup\renewcommand{\arraystretch}{0.6}\begin{pmatrix}
            \hat\btheta - \btheta^*\\\hat\bu^{\star} - \bu^{*}
        \end{pmatrix}\endgroup \\=&\, \sum_{r=0}^K\sum_{l=1}^Kcb_n\big\{\bpsi_{rl}^*\big\}_{qK+q+\cK_i}\big(\bpsi_{rl}^*\big)^\T\begingroup\renewcommand{\arraystretch}{0.6}\begin{pmatrix}
            \hat\btheta - \btheta^*\\\hat\bu^{\star} - \bu^{*}
        \end{pmatrix}\endgroup\\
        {=}&\, \sum_{r=0}^K\sum_{ ( r\vee 1)\le l\le K}cb_n\big\{\bpsi_{rl}^*\big\}_{qK+q+\cK_i}\big(\bpsi_{rl}^*\big)^\T\begingroup\renewcommand{\arraystretch}{0.6}\begin{pmatrix}
            \hat\btheta - \btheta^*\\\hat\bu^{\star} - \bu^{*}
        \end{pmatrix}\endgroup\\
         = &\, cb_n\sqrt{\frac{q}{n}}\sum_{l=1}^K\big\{\bpsi_{0l}^*\big\}_{qK+q+\cK_i}\underbrace{\sum_{s=1}^n\left(\hat u_{sl}^{\star} - u_{sl}^*\right)}_{\alpha_{1l}} \\&\,+ cb_n\sqrt{\frac{q}{n}}\sum_{r=1}^K\big\{\bpsi_{rr}^*\big\}_{qK+q+\cK_i}\underbrace{\sum_{s=1}^n\left(u_{sr}^*\hat u_{sr}^{\star} - (u_{sr}^*)^2\right)}_{\alpha_{2r}}\\&\,+cb_n\sqrt{\frac{q}{n}}\sum_{r=1}^K\sum_{r<l\le K}\big\{\bpsi_{rl}^*\big\}_{qK+q+\cK_i}\underbrace{\sum_{s=1}^n\left\{u_{sr}^*\big(\hat u_{sl}^{\star} - u_{sl}^*\big) + u_{sl}^*\big(\hat u_{sr}^{\star} - u_{sr}^*\big)\right\}}_{\alpha_{3,rl}}.
\end{align*}
Next, we show that each $\alpha$ term above is $\cOp(n\varepsilon_{nq}/q + n\delta_{nq}^2)$, uniformly over its fixed-dimensional index set.
Invoke Lemma~\ref{lemma_first_order_base}, and we know with $\bS_{\mu}(\hat\btheta) = 0$ and $\bS_{\Sigma}(\hat\btheta) = 0$ that $\big\|\sum_{i=1}^n\hat\bu_i^{\star}\big\| = \cOp(n\varepsilon_{nq}/q)$ and $\big\|\sum_{i=1}^n\hat\bu_i^{\star}(\hat\bu_i^{\star})^\T -n\bI_K\big\| = \cOp(n\varepsilon_{nq}/q)$.
With $\sum_{i=1}^n\bu_i^* = \zero_K$, $\sum_{i=1}^n\bu_i^*(\bu_i^*)^\T = n\bI_K$ and $\sum_{s=1}^n\|\hat\bu_s^\star-\bu_s^*\|^2=\cOp(n\delta_{nq}^2)$ by  \eqref{eq_consis_u_star_hat}, we have $|\alpha_{1l}| = \cOp(n\varepsilon_{nq}/q)$, 
\begin{equation*}
    |\alpha_{2r}| =\left|\frac{1}{2}\sum_{s=1}^n\big(\hat u_{sr}^{\star}\big)^2 -\frac{1}{2}\sum_{s=1}^n(\hat u_{sr}^{\star} - u_{sr}^*)^2 - \frac{1}{2}\sum_{s=1}^n\big(u_{sr}^{*}\big)^2 \right| = \cOp(n\varepsilon_{nq}/q + n\delta_{nq}^2),
\end{equation*}
and 
\begin{align*}
    |\alpha_{3,rl}| =&\,\left|\sum_{s=1}^n\hat u_{sr}^{\star}\hat u_{sl}^{\star}-\sum_{s=1}^n(\hat u_{sr}^{\star}-u_{sr}^*) (\hat u_{sl}^{\star} - u_{sl}^*) - \sum_{s=1}^nu_{sr}^*u_{sl}^*\right| =  \cOp(n\varepsilon_{nq}/q + n\delta_{nq}^2).
\end{align*}
Since $\max_{i\in[n]}\|\bu_i^*\| = \cOp(\sqrt{\log n})$ by Assumption~\ref{assump_standard_id}, it holds that $\|\bpsi^*_{rl}\|_{\infty} = O(\sqrt{q\log n/n})$ when $r=0$ or $r\le l$. Therefore, we conclude that 
\begin{align*}
    \left\|\bS_m^{2}\bH_P^*\begingroup\renewcommand{\arraystretch}{0.6}\begin{pmatrix}
            \hat\btheta - \btheta^*\\\hat\bu^{\star} - \bu^{*}
        \end{pmatrix}\endgroup\right\|_{\infty} &= q^{-1}\max_{i\in[n]}\left\|\big\{\bH_P^*\big\}_{qK+q+\cK_i,}\begingroup\renewcommand{\arraystretch}{0.6}\begin{pmatrix}
            \hat\btheta - \btheta^*\\\hat\bu^{\star} - \bu^{*}
        \end{pmatrix}\endgroup\right\|_{\infty}\\&= \cOp\Big(b_n\sqrt{\log n}\big\{\delta_{nq}^2 + \varepsilon_{nq}/q\big\}\Big)
\end{align*}
which completes the proof.
\subsubsection{Proof of Lemma~\ref{lemma_asymptotic_normality_star}}\label{supp_sec_prove_lemma_asymptotic_normality_star}

The proof of Lemma~\ref{lemma_aux_estimator_property} does not rely on a particular identifiability condition, so its conclusions continue to hold under Condition~\ref{condition1} or~\ref{condition2}. Using the argument for \eqref{eq_asymp_expand} and Lemma~\ref{lemma_residual_in_asym_expand}, we obtain the following result under Assumptions~\ref{assump_standard_id}--\ref{assump_scaling}:
\begin{equation*}
    \big\|\hat\btheta_j^* - \btheta^*_j\big\| = \cOp(\bar\delta_{nq})\quad\text{ and }\quad\big\|\hat\bu_i^{\star}(\hat\btheta^*) - \bu_i^*\big\| = \cOp(\bar\delta_{nq})\text{ for }i\in[n],j\in[q].
\end{equation*}
This pointwise consistency does not rely on the identifiability condition imposed. Moreover, the same Taylor argument gives
\begin{equation*}
        \bS_L\big(\hat\btheta^*,\bu^{\star}(\hat\btheta^*)\big) - \bS_{L}(\btheta^*,\bu^*) = \bH_L(\btheta^*,\bu^*) \begin{pmatrix}
            \hat\btheta^* - \btheta^*\\ \bu^{\star}(\hat\btheta^*) - \bu^*
        \end{pmatrix} + \frac{1}{2}\begin{pmatrix}\tilde\bR_{L\theta}\\\tilde\bR_{Lu}\end{pmatrix},
    \end{equation*}
    for residuals $\tilde\bR_{L\theta}$ and $\tilde\bR_{Lu}$ satisfying the same bound as in Lemma~\ref{lemma_residual_in_asym_expand}. Next, to derive the asymptotic expansion, we use an argument slightly different from that used in the proof of Theorem~\ref{thm_asymp} in Section~\ref{supp_sec_prove_asym_0}. Specifically, define
\begin{equation*}
    \tilde\bH_P^* = c b_n\sum_{r=0}^K\sum_{l=1}^K \bar\bv_{rl}(\btheta^*)\{\bar\bv_{rl}(\btheta^*)\}^{\T},
\end{equation*}
where $\bar\bv_{rl} = \bS_m^{-1}\bar\bv_{rl}(\btheta^*) = \sqrt{n}\,\bar\bv_{rl}(\btheta^*)$ is defined in Lemma~\ref{lemma_convexity2}. Then $\bS_m\tilde\bH_P^*\bS_m = cb_n\sum_{r=0}^K\sum_{l=1}^K\bar\bv_{rl}(\btheta^*)\{\bar\bv_{rl}(\btheta^*)\}^\T$ has the same scale as $\bS_m\bH_P^*\bS_m = cb_n\widetilde\bpsi_L^*(\widetilde\bpsi_L^*)^\T$.
We first note that Lemma~\ref{lemma_approx_hessian_inverse} continues to hold with $\tilde\bH_P^*$ in place of $\bH_P^*$. To avoid unnecessary repetition, we only sketch the required modifications.
The proof of Lemma~\ref{lemma_approx_hessian_inverse} consists of three steps. In Step~1, the key task is to establish a lower bound on the minimum eigenvalue of $\bar\bH_L^*+\tilde\bH_P^*$. This is proved in Step~2 of Lemma~\ref{lemma_convexity2}. Thus Step~1 remains valid after replacing $\bH_P^*$ by $\tilde\bH_P^*$. Steps~2 and~3 rely mainly on inverting $\bH_L^*+\bH_P^*$, where the penalty Hessian enters the Woodbury identity only through its low-rank structure. Since $\tilde\bH_P^*$ has the same low-rank form and the same scale as $\bS_m\bH_P^*\bS_m$, the same Woodbury-based argument applies verbatim. Therefore, Lemma~\ref{lemma_approx_hessian_inverse} holds with $\tilde\bH_P^*$ replacing $\bH_P^*$. For the analogue of Lemma~\ref{lemma_bound_aug_via_id}, note that for any $r\in\{0\}\cup[K]$ and $l\in[K]$,
\begin{align*}
    \big\{\bar\bv_{rl}(\btheta^*)\big\}^\T\begin{pmatrix}
            \hat\btheta^* - \btheta^*\\ \bu^{\star}(\hat\btheta^*) - \bu^*
        \end{pmatrix}  = &\, q^{-1/2}\big\{\bv_{rl}(\btheta^*)\big\}^\T\big( \hat\btheta^* - \btheta^*\big)\\
        = &\, q^{-1/2}\sum_{j=1}^q\gamma_{jl}^*\left\{\big(\hat\beta_j^* - \beta_j^*\big)1_{\{r=0\}} + \big(\hat\gamma_{jr}^* - \gamma_{jr}^*\big)1_{\{r\neq0\}}\right\}\\
        = &\, 0,
\end{align*}
because $P^*(\hat\btheta^*)=0$, as established in Step~2 of Section~\ref{supp_sec_prove_lemma_key_first_order}.
Consequently, one can repeat the derivations of \eqref{eq_asymp_expand_2_bgamma} and \eqref{eq_asymp_expand_2_bu} to derive the asymptotic distributions in Lemma~\ref{lemma_asymptotic_normality_star}. Finally, the rate for $\hat\bmu^*$ and $\hat\bSigma^*$ required in the lemma follows directly from Lemma~\ref{lemma_aux_estimator_property}.

To derive the rates for $\hat\bmu^*$ and $\hat\bSigma^*$ in \eqref{eq_accurate_control_hsigma_star_2} and \eqref{eq_accurate_control_hsigma_star_3}, invoke the first-order conditions in Lemma~\ref{lemma_aux_estimator_property}: $\cS_{\mu}(\hat\btheta^*,\hat\bmu^*,\hat\bSigma^*) = \zero$ and $\cS_{\Sigma}(\hat\btheta^*,\hat\bmu^*,\hat\bSigma^*) = \zero$. Because $\|\hat\bmu^*\|$ and $\|\hat\bSigma^* - \bSigma_u^*\|$ are bounded by part~$(ii)$ of Lemma~\ref{lemma_aux_estimator_property}, Lemma~\ref{lemma_first_order_base} gives
    \begin{equation}
        \big\|\hat\bmu^*-n^{-1}\sum_{i=1}^n\bu_i^{\star}(\hat\btheta^*)\big\| = \cOp(\varepsilon_{nq}q^{-1}),\label{eq_first_order_app_to_hat_mu}
    \end{equation}
    and 
    \begin{equation}
        \big\|\hat\bSigma^* - n^{-1}\sum_{i=1}^n\big\{\bu_i^{\star}(\hat\btheta^*) - \hat\bmu^*\big\}\big\{\bu_i^{\star}(\hat\btheta^*)-\hat\bmu^*\big\}^\T\big\| = \cOp(\varepsilon_{nq}q^{-1}).\label{eq_first_order_app_to_hat_sigma}
    \end{equation}
The preceding argument gives the expansion $\bu_i^{\star}(\hat\btheta^*) - \bu_i^* = -\big(\bH_{Lu_iu_i}^*\big)^{-1}\bS_{Lu_i}^* +\cOp\big(\tau_{nq}\sqrt{\log n/B_n}\big)$.
In addition, one can similarly show that the remainders are also controlled in squared sum over $i\in[n]$.
Subsequently, one can calculate
\begin{align*}
    \Big\|\sum_{i=1}^n\big\{\bu_i^{\star}(\hat\btheta^*) -\bu_i^*\big\}\Big\| \le&\, \Big\|\sum_{i=1}^n\sum_{j=1}^q\Big(-\sum_{s=1}^q\ell_{is}^{\prime\prime}(\eta_{is}^*)\bgamma_s^*{\bgamma_s^*}^\T\Big)^{-1}\ell_{ij}^{\prime}(\eta_{ij}^*)\bgamma_j^*\Big\|\\ &\,+\cOp\big(n\tau_{nq}\sqrt{\log n/B_n}\big) .
\end{align*}
To bound the right side, let $\bb_i = \sum_{j=1}^q\ell_{ij}^{\prime}(\eta_{ij}^*)\bgamma_j^*$ , $\bA_i := -\sum_{j=1}^q\ell_{ij}^{\prime\prime}(\eta_{ij}^*)\bgamma_j^*(\bgamma_j^*)^\T$, and $\bar\bA_i = \EE[\bA_i\mid\bu_i^*]$, which is deterministic conditionally on $\{\bu_i^*\}_{i=1}^n$ and satisfies $\lambda_{\min}(\bar\bA_i)\ge qb_n\lambda_{\min}(\bSigma_{\gamma}^*)/2$. Conditionally on $\{\bu_i^*\}_{i=1}^n$, the summands of $\sum_{i=1}^n\bar\bA_i^{-1}\bb_i$ are independent and mean zero with $\EE\|\bb_i\|^2 = \cO(qB_n)$. Thus, one can obtain $\|\sum_{i=1}^n\bar\bA_i^{-1}\bb_i\| = \cOp\big(\kappa_n\sqrt{n/q}\big)$. Then it suffices to bound $\|\sum_{i=1}^n(\bA_i^{-1} - \bar\bA_i^{-1})\bb_i\|$. With Bernstein inequality and a union bound over $i\in[n]$, we know $\max_{i\in[n]}\|\bA_i-\bar\bA_i\| = \cOp(B_n\sqrt{q\log n})$. Then one can check with $\tau_{nq}\ge\varepsilon_{nq}\kappa_n^3B_n^2/q$ and $b_n\le 1\le B_n\le B_n^{3/2}\varepsilon_{nq}$ that $\|\sum_{i=1}^n(\bA_i^{-1} - \bar\bA_i^{-1})\bb_i\| = \cOp\big(n\tau_{nq}\sqrt{\log n/B_n}\big)$.
Therefore, we conclude that 
\begin{equation}\label{eq_first_order_app_to_hat_mu_1}
    \Big\|\sum_{i=1}^n\big\{\bu_i^{\star}(\hat\btheta^*) -\bu_i^*\big\}\Big\| = \cOp\big(n\tau_{nq}\sqrt{\log n/B_n}\big)
\end{equation}
Similarly, one has
    \begin{equation}\label{eq_first_order_app_to_hat_sigma_2}
        n^{-1}\big\|\sum_{i=1}^n\bu_i^{\star}(\hat\btheta^*)\big\{\bu_i^{\star}(\hat\btheta^*)\big\}^\T - \sum_{i=1}^n\bu_i^*\big\{\bu_i^*\big\}^\T\big\| = \cOp\big(\tau_{nq}\sqrt{\log n/B_n}\big).
    \end{equation}
    Under the constraints $n^{-1}\sum_{i=1}^n\bu_i^* = \zero$ and $n^{-1}\sum_{i=1}^n\bu_i^*(\bu_i^*)^\T = \bI_K$, \eqref{eq_first_order_app_to_hat_mu}--\eqref{eq_first_order_app_to_hat_sigma_2} imply \eqref{eq_accurate_control_hsigma_star_2}. Under the constraints $n^{-1}\sum_{i=1}^n\bu_i^* = \zero$ and ${\rm diag}\big\{n^{-1}\sum_{i=1}^n\bu_i^*(\bu_i^*)^\T \big\}=\bI_K$, the same equations imply \eqref{eq_accurate_control_hsigma_star_3}.

\subsubsection{Proof of Lemma~\ref{lemma_linear_equation_perturb}}\label{prove_lemma_linear_equation_perturb}

Let \(\bP=\bH^\T(\bH\bH^\T)^{-1}\bH\) and
\(\widetilde\bP=\widetilde\bH^\T(\widetilde\bH\widetilde\bH^\T)^{-1}\widetilde\bH\)
be the orthogonal projectors onto the row spaces. Weyl's inequality implies
\(\operatorname{rank}(\widetilde\bH)=K-1\) for sufficiently small \(\epsilon\).
The resolvent identity and
\[
    \|\widetilde\bH\widetilde\bH^\T-\bH\bH^\T\|
    \le (2\|\bH\|+\epsilon)\epsilon
\]
give \(\|(\widetilde\bH\widetilde\bH^\T)^{-1}\|=O(1)\) and
\[
    \|(\widetilde\bH\widetilde\bH^\T)^{-1}-(\bH\bH^\T)^{-1}\|
    =O(\epsilon).
\]
Expanding \(\widetilde\bP-\bP\) and applying submultiplicativity therefore gives
\[
    \|\widetilde\bP-\bP\|=O(\epsilon).
\]
Because \(\bI_K-\bP\) and \(\bI_K-\widetilde\bP\) are the rank-one
projectors onto \(\ker(\bH)\) and \(\ker(\widetilde\bH)\), respectively,
\[
    \sin\angle(\bx_h,\widetilde\bx_h)
    =\|\widetilde\bP-\bP\|
    =O(\epsilon),
\]
which proves the result.

    \subsection{Proof of Additional Technical Lemmas}

    \subsubsection{Proof of Lemma~\ref{lemma_concentration}}\label{supp_sec_prove_lemma_concentration}
Conditional on $\{\bu_i^*\}_{i=1}^n$, apply the Bernstein inequality for weighted sums of independent, mean-zero sub-exponential variables:
\[
\mathbb{P} \left\{ \left| \sum_{i=1}^n a_i \ell_{ij}^{\prime}(\eta_{ij}^*) \right| \geq t \,\middle|\, \bu_1^*,\ldots,\bu_n^* \right\}
\leq 2 \exp \left[ -c \min \left\{ \frac{t^2}{C^2B_n\sum_{i=1}^na_i^2}, \frac{t}{C\sqrt{B_n}\max_i|a_i|} \right\} \right].
\]
Here $c>0$ is an absolute constant and $C\sqrt{B_n}$ is the uniform upper bound for the sub-exponential norm in Assumption~\ref{assumption_smoothness}. Taking $t$ to be a sufficiently large multiple of $A_n\sqrt{nB_n}$ gives the pointwise bound in part~(i). A union bound gives
\[
\mathbb{P} \left\{\max_{1\le j\le q} \left| \sum_{i=1}^n a_i\ell_{ij}^{\prime}(\eta_{ij}^*) \right| \geq t \,\middle|\, \bu_1^*,\ldots,\bu_n^* \right\}
\leq 2q\exp \left[ -c \min \left\{ \frac{t^2}{C^2B_nA_n^2n}, \frac{t}{C\sqrt{B_n}A_n} \right\} \right].
\]
Under $\log q/\sqrt n\to0$, taking $t$ to be a sufficiently large multiple of $A_n\sqrt{nB_n\log q}$ proves the uniform bound in part~(i). Part~(ii) follows by interchanging $i$ and $j$, and the weighted double-sum assertion in part~(iii) follows by applying the same inequality to the $nq$ entries. Finally, a union bound for individual sub-exponential tails gives
$\max_{i,j}|\ell_{ij}'(\eta_{ij}^*)|=\cOp\{\sqrt{B_n}\log(nq)\}$.

\subsubsection{Proof of Lemma~\ref{lemma_aux_score_bound_pre}}\label{supp_sec_prove_lemma_aux_score_bound_pre}
First, note that by Assumptions~\ref{assumption_psd_covariance} and~\ref{assumption_smoothness}, we have
\begin{equation}\label{eq_sub_exp_1}\min_{i\in[n]}\lambda_{\min}\{\sum_{j=1}^q-\ell_{ij}^{\prime\prime}(\eta_{ij}^*)\bgamma_j^*(\bgamma_j^*)^\T\big\}\ge q b_n\lambda_{\min}(\bSigma_{\gamma}^*) := Cqb_n,\end{equation} for some absolute constant $C$ as $-\ell_{ij}^{\prime\prime}(\eta_{ij}^*)\ge b_n$ uniformly over $i\in[n]$ and $j\in[q]$. Conditional on $\{\bu_i^*\}_{i=1}^n$, the variables $\{\ell_{ij}^{\prime}(\eta_{ij}^*)\}_{i\in[n],j\in[q]}$ are independent and mean-zero with
\begin{equation}
    \EE\left[\{\ell_{ij}^{\prime}(\eta_{ij}^*)\}^2\mid \bu_1^*,\dots,\bu_n^*\right]\le C B_n, \qquad  \EE\left[\{\ell_{ij}^{\prime}(\eta_{ij}^*)\}^4\mid \bu_1^*,\dots,\bu_n^*\right]\le C B_n^2,\label{eq_sub_exp_sum}
\end{equation}

Similar to the proof in Section~\ref{supp_sec_prove_lemma_asymptotic_normality_star}, write $\bA_i := -\sum_{t=1}^q \ell_{it}^{\prime\prime}(\eta_{it}^*)\bgamma_t^*(\bgamma_t^*)^\T$, $\bb_i := \sum_{t=1}^q\bgamma_t^*\ell_{it}^{\prime}(\eta_{it}^*)$, and $\bb_i^{(-j)} := \sum_{t\neq j}\bgamma_t^*\ell_{it}^{\prime}(\eta_{it}^*)$, so that $\bb_i = \bb_i^{(-j)} + \bgamma_j^*\ell_{ij}^{\prime}(\eta_{ij}^*)$. We further introduce
\begin{equation*}
    \bar\bA_i \; := \; \EE\big[\bA_i\mid \bu_1^*,\dots,\bu_n^*\big],\quad\text{with}\quad \lambda_{\min}(\bar\bA_i)\ge qb_n\lambda_{\min}(\bSigma_{\gamma}^*)/2 =: Cqb_n,
\end{equation*}
where the lower bound follows from $-\ell_{it}^{\prime\prime}(\eta_{it}^*)\ge b_n$. Applying the matrix Bernstein inequality conditionally on $\{\bu_i^*\}_{i=1}^n$ and taking a union bound over $i\in[n]$, gives
\begin{equation}\label{eq_curvature_fluct}
    \max_{i\in[n]}\big\|\bA_i - \bar\bA_i\big\| = \cOp\big(B_n\sqrt{q\log n}\big),\qquad
    \max_{i\in[n]}\big\|\bA_i^{-1} - \bar\bA_i^{-1}\big\| = \cOp\Big(\frac{B_n\sqrt{q\log n}}{(qb_n)^2}\Big).
\end{equation}

For part $(i)$, decompose
\begin{equation*}
    \sum_{i=1}^n \ell_{ij}^{\prime}(\eta_{ij}^*) \be_r^\T\bA_i^{-1}\bb_i
    = \underbrace{\sum_{i=1}^n \ell_{ij}^{\prime}(\eta_{ij}^*)\be_r^\T\bar\bA_i^{-1}\bb_i^{(-j)}}_{T_1}
    +\underbrace{\sum_{i=1}^n \ell_{ij}^{\prime}(\eta_{ij}^*)\be_r^\T\big(\bA_i^{-1}-\bar\bA_i^{-1}\big)\bb_i^{(-j)}}_{T_2}
    +\underbrace{\sum_{i=1}^n \big\{\ell_{ij}^{\prime}(\eta_{ij}^*)\big\}^2\be_r^\T\bA_i^{-1}\bgamma_j^*}_{T_3}.
\end{equation*}
Conditionally on $\{\bu_i^*\}_{i=1}^n$, the summands in $T_1$ are independent and mean zero, as $\bar\bA_i^{-1}$ is deterministic and $\ell_{ij}^{\prime}(\eta_{ij}^*)$ is independent of $\bb_i^{(-j)}$ with mean zero. With $\EE\|\bb_i^{(-j)}\|^2 = \cO(qB_n)$ and $\|\bar\bA_i^{-1}\|\le (Cqb_n)^{-1}$ from \eqref{eq_sub_exp_sum}, this yields
\begin{equation*}
    |T_1| = \cOp\Big(\Big\{\sum_{i=1}^n\EE\big[\{\ell_{ij}^{\prime}(\eta_{ij}^*)\}^2\big]\big\|\bar\bA_i^{-1}\big\|^2\EE\big\|\bb_i^{(-j)}\big\|^2\Big\}^{1/2}\Big) = \cOp\Big(\frac{B_n}{b_n}\sqrt{\frac nq}\Big).
\end{equation*}For $T_2$, by \eqref{eq_curvature_fluct}, $\sum_i|\ell_{ij}^{\prime}(\eta_{ij}^*)| = \cOp(n\sqrt{B_n})$ and $\max_i\|\bb_i^{(-j)}\| = \cOp(\sqrt{qB_n\log n})$, we know
\begin{equation*}
    |T_2| = \cOp\left(\frac{nB_n^2\log n}{qb_n^2}\right) = \cOp\left(\varepsilon_{nq}\frac nq\right),
\end{equation*}
where the last follows the scaling condition in Assumption~\ref{assump_scaling}.
For $T_3$, with $\max_i\|\bA_i^{-1}\|=\cOp\{(qb_n)^{-1}\}$ and $\sum_i\{\ell_{ij}^{\prime}(\eta_{ij}^*)\}^2 = \cOp(nB_n)$, we arrive at $|T_3| = \cOp\{(B_n/b_n)(n/q)\}$. 
Combining the three terms proves the first assertion of part $(i)$.

For the second assertion, part $(ii)$ of Lemma~\ref{lemma_concentration_l_mat} gives that $\max_{j\in[q]}\big\{\sum_{i=1}^n\ell_{ij}^{\prime}(\eta_{ij}^*)^2\big\}^{1/2} = \cOp(\sqrt{nB_n} + \sqrt{B_n}\log q)$. The preceding derivation then verifies the second assertion in part $(i)$. The third follows similarly.

For part $(ii)$, with the convention $u_{i0}^*=1$, decompose analogously, writing $\bar\ell_{ij}^{\prime\prime} = \EE[\ell_{ij}^{\prime\prime}(\eta_{ij}^*)\mid\bu_i^*]$,
\begin{align*}
    \sum_{i=1}^n \ell_{ij}^{\prime\prime}(\eta_{ij}^*)u_{ir}^*(\bgamma_j^*)^\T\bA_i^{-1}\bb_i
    = &\, \sum_{i=1}^n \ell_{ij}^{\prime\prime}(\eta_{ij}^*)u_{ir}^*(\bgamma_j^*)^\T\bar\bA_i^{-1}\bb_i^{(-j)}\\
    &\,+\sum_{i=1}^n \ell_{ij}^{\prime\prime}(\eta_{ij}^*)u_{ir}^*(\bgamma_j^*)^\T\big(\bA_i^{-1}-\bar\bA_i^{-1}\big)\bb_i^{(-j)}\\
    &\,+\sum_{i=1}^n \ell_{ij}^{\prime\prime}(\eta_{ij}^*)\ell_{ij}^{\prime}(\eta_{ij}^*)u_{ir}^*(\bgamma_j^*)^\T\bA_i^{-1}\bgamma_j^*.
\end{align*}
Following a similar procedure above, one can bound the three terms by $\cOp(B_n\kappa_n\sqrt{n/q})$, $\cOp(\varepsilon_{nq}n/q)$, $\cOp(nB_n\kappa_n/q)$, respectively.
The second and third assertions follow as in part $(i)$, using $\max_{i\in[n]}\|\bu_i^*\| =\cOp(\sqrt{\log n})$.

\subsubsection{Proof of Lemma~\ref{lemma_aux_of_aux_estimator_property}}\label{supp_sec_prove_lemma_aux_of_aux_estimator_property}

     The first part is a result of \eqref{eq_convex_aux_obj} and the continuity of eigenvalues. Now we focus on the second part of the lemma. Write $\btheta^s = \btheta^* + s(\btheta - \btheta^*)$, and, for symbolic subscripts $x,y\in\{\theta,u\}$, define $\bH_{Lxy}^s = \bH_{Lxy}(\btheta^s,\bu^{\star}(\btheta^s))$. We begin by providing a list of estimates for these quantities. Specifically, uniformly over $s\in[0,1]$, we have
     \begin{enumerate}[label=(\alph*)]
    \item for each $j\in[q]$, $\Theta_p(nb_n)\le\lambda_{\min}\big(\{\bH_{L\theta\theta}^s\}_{\cK_j^+,\cK_j^+}\big)\le \lambda_{\max}\big(\{\bH_{L\theta\theta}^s\}_{\cK_j^+,\cK_j^+}\big)\le \cOp(B_nn)$;
    \item $\big\|\bH_{L\theta\theta}^s\big\|=\cOp(B_nn)$, $\big\|(\bH_{L\theta\theta}^s)^{-1}\big\| = \cOp((b_nn)^{-1})$, $\big\|(\bH_{L\theta\theta}^s)^{-1}\big\|_{\infty} = \cOp((b_nn)^{-1})$;
    \item for each $i\in[n]$, $\Theta_p(qb_n)\le\lambda_{\min}\big(\{\bH_{Luu}^s\}_{\cK_i,\cK_i}\big)\le \lambda_{\max}\big(\{\bH_{Luu}^s\}_{\cK_i,\cK_i}\big)\le \cOp(qB_n)$;
    \item $\big\|\bH_{Luu}^s\big\|=\cOp(qB_n)$, $\big\|(\bH_{Luu}^s)^{-1}\big\| = \cOp((b_nq)^{-1})$, $\big\|(\bH_{Luu}^s)^{-1}\big\|_{\infty} = \cOp((qb_n)^{-1})$;
    \item $\big\|(\bar\bH_{L\theta u}^s)_{\cK_j^+,}\big\| = \cOp(B_n\sqrt{n})$, $\big\|(\bar\bH_{L\theta u}^s)_{,\cK_i}\big\| = \cOp(B_n\sqrt{q})$, $\big\|\bar\bH_{L\theta u}^s\big\| = \cOp(B_n\sqrt{nq})$;
    \item $\max_{j\in[q]}\big\|(\bar\bH_{L\theta u}^s)_{\cK_j^+,}\big\| = \cOp(B_n\sqrt{n})$, $\max_{i\in[n]}\big\|(\bar\bH_{L\theta u}^s)_{,\cK_i}\big\| = \cOp(B_n\sqrt{q\log n})$;
    \item $\|(\bV^*)_{\cK_j^+}\| = \cO(\sqrt{b_n/q})$, $\|\bV^*\| = \cO(\sqrt{b_n})$, $\|\bV^*\|_{\infty} = \cO(\sqrt{b_n/q})$, $\|(\bV^*)^\T\|_{\infty} = \cO(\sqrt{qb_n})$.
    \end{enumerate}
    The matrix $\bV^*$ in (g) is defined as in the proof of Lemma~\ref{lemma_convexity2}, but with the scaling
    \begin{equation*}
        \bV^* = \sqrt{2c_P/q}\left(\bv_{01}^*,\dots,\bv_{0K}^*,\bv_{11}^*,\dots,\bv_{1K}^*,\dots,\bv_{KK}^*\right),
    \end{equation*}
    where, for $r\in\{0\}\cup[K]$ and $l\in[K]$,
    \begin{equation}
        \bv_{rl}^*=\bv_{rl}(\btheta^*)
        =\left(\gamma_{1l}^*\big(\be_{r+1}^{(K+1)}\big)^\T, \dots, \gamma_{ql}^*\big(\be_{r+1}^{(K+1)}\big)^\T\right)^\T \in \RR^{q(K+1)}.
    \end{equation}
To verify (a)--(g), first note that, under the setup of Lemma~\ref{lemma_aux_estimator_property}, Lemma~\ref{lemma_neighbour_control} gives
\begin{equation}
    \big\|\btheta^s\big\|_{\infty}\le C;\; \big\|\bu^{\star}(\btheta^s)\big\|_{\infty}\le C\sqrt{\log n};\; \big\|\btheta^s - \btheta^*\|\le \sqrt{q}\epsilon;\; \big\|\bu^{\star}(\btheta^s) - \bu^*\big\| = \cOp(\sqrt{nB_n}\kappa_n\epsilon).\label{eq_bounded_execute_step2}
\end{equation}
For any $s\in[0,1]$, these bounds hold uniformly. For (a), one has
\begin{align*}
    \min_{s\in[0,1]}\lambda_{\min}\big(\bH_{L\theta_j\theta_j}\big)=&\,\min_{s\in[0,1]}\lambda_{\min}\left\{\sum_{i=1}^n-\ell_{ij}^{\prime\prime}\big(\eta_{ij}^{\star}(\btheta^s)\big)\begingroup\renewcommand{\arraystretch}{0.7}\begin{pmatrix}1\\\bu_i^{\star}(\btheta^s)\end{pmatrix}\begin{pmatrix}1\\\bu_i^{\star}(\btheta^s)\end{pmatrix}^\T\endgroup\right\}\\\overset{(i)}{\ge} &\,\min_{s\in[0,1]}\lambda_{\min}\Big\{\sum_{i=1}^n-\ell_{ij}^{\prime\prime}\big(\eta_{ij}^{\star}(\btheta^s)\big)\begingroup\renewcommand{\arraystretch}{0.7}\begin{pmatrix}1\\\bu_i^*\end{pmatrix}\begin{pmatrix}1\\\bu_i^*\end{pmatrix}^\T\endgroup\Big\} \\&\,- \min_{s\in[0,1]}\Big\|\sum_{i=1}^n\ell_{ij}^{\prime\prime}\big(\eta_{ij}^{\star}(\btheta^s)\big)\begingroup\renewcommand{\arraystretch}{0.7}\begin{pmatrix}1\\\bu_i^*\end{pmatrix}\begin{pmatrix}1\\\bu_i^*\end{pmatrix}^\T\endgroup \\
    &\, \qquad\qquad-  \sum_{i=1}^n\ell_{ij}^{\prime\prime}\big(\eta_{ij}^{\star}(\btheta^s)\big)\begingroup\renewcommand{\arraystretch}{0.7}\begin{pmatrix}1\\\bu_i^{\star}(\btheta^s)\end{pmatrix}\begin{pmatrix}1\\\bu_i^{\star}(\btheta^s)\end{pmatrix}^\T\endgroup\Big\|\\\overset{(ii)}{\ge}&\,b_n\lambda_{\min}\Big\{\begingroup\renewcommand{\arraystretch}{0.7}\begin{pmatrix}n&\sum_{i=1}^n(\bu_i^*)^\T\\\sum_{i=1}^n\bu_i^*& \sum_{i=1}^n\bu_i^*{\bu_i^*}^\T\end{pmatrix}\endgroup\Big\} \\&\, - 2B_n\max_{s\in[0,1]}\Big\|\sum_{i=1}^n\bu_i^*(\bu_i^* - \bu_i^{\star}(\btheta^s))^\T \Big\|  \\&\,- B_n\max_{s\in[0,1]}\Big\|\sum_{i=1}^n(\bu_i^* - \bu_i^{\star}(\btheta^s))(\bu_i^* - \bu_i^{\star}(\btheta^s))^\T \Big\|\\\overset{(iii)}{\ge}&\,\Theta_p(nb_n)-\cOp\big(nB_n\kappa_n\sqrt{B_n}\epsilon+nB_n^2\kappa_n^2\epsilon^2\big)=\Theta_p(nb_n).
\end{align*}
Here $\eta_{ij}^{\star}(\btheta) =\bgamma_j^\T\bu_i^{\star}(\btheta) + \beta_j$. Step $(i)$ uses Weyl's theorem; step $(ii)$ uses $B_n\ge -\ell_{ij}^{\prime\prime}(\eta_{ij}^{\star}(\btheta^s))\ge b_n$ and telescoping; and step $(iii)$ uses Assumption~\ref{assump_standard_id}, \eqref{eq_bounded_execute_step2}, and the Cauchy--Schwarz inequality. Assumption~\ref{assump_scaling} makes both perturbation terms $o_p(nb_n)$, proving the lower bound in (a). The upper bound is obtained similarly:
\begin{align*}
\max_{s\in[0,1]}\lambda_{\max}\big(\bH_{L\theta_j\theta_j}\big)\le &\,\max_{s\in[0,1]}\lambda_{\max}\left\{\sum_{i=1}^n-\ell_{ij}^{\prime\prime}\big(\eta_{ij}^{\star}(\btheta^s)\big)\begingroup\renewcommand{\arraystretch}{0.7}\begin{pmatrix}1\\\bu_i^*\end{pmatrix}\begin{pmatrix}1\\\bu_i^*\end{pmatrix}^\T\endgroup\right\} \\&\,+ \max_{s\in[0,1]}\left\|\sum_{i=1}^n\ell_{ij}^{\prime\prime}\big(\eta_{ij}^{\star}(\btheta^s)\big)\begingroup\renewcommand{\arraystretch}{0.7}\begin{pmatrix}1\\\bu_i^*\end{pmatrix}\begin{pmatrix}1\\\bu_i^*\end{pmatrix}^\T\endgroup- \sum_{i=1}^n\ell_{ij}^{\prime\prime}\big(\eta_{ij}^{\star}(\btheta^s)\big)\begingroup\renewcommand{\arraystretch}{0.7}\begin{pmatrix}1\\\bu_i^{\star}(\btheta^s)\end{pmatrix}\begin{pmatrix}1\\\bu_i^{\star}(\btheta^s)\end{pmatrix}^\T\endgroup\right\|\\
\le&\,\cOp\big(nB_n+nB_n\kappa_n\sqrt{B_n}\epsilon\big)=\cOp(nB_n).
\end{align*}
Then (b) follows from the block-diagonal structure of $\bH_{L\theta\theta}^s$; (c) and (d) can be checked by symmetry.

For (e), note that
\begin{equation*}
    \max_{s\in[0,1]}\left\|\ell_{ij}^{\prime\prime}(\eta_{ij}^{\star}(\btheta^s))\begingroup\renewcommand{\arraystretch}{0.7}\begin{pmatrix}1\\\bu_i^{\star}(\btheta^s)\end{pmatrix}\endgroup(\bgamma_j^s)^\T\right\| = \cOp(B_n),
\end{equation*}
by \eqref{eq_bounded_execute_step2} and Assumption~\ref{assumption_smoothness}. Then $\max_{s\in[0,1]}\big\|\big(\bar\bH_{L\theta u}\big)_{\cK_j^+,}\big\| = \cOp(B_n\sqrt{n})$. 
By symmetry, $\max_{s\in[0,1]}\big\|(\bar\bH_{L\theta u})_{,\cK_i}\big\| = \cOp(B_n\sqrt{q})$. A similar argument gives $\big\|\bar\bH_{L\theta u}\big\| = \cOp(B_n\sqrt{nq})$. The bounds in (f) are uniform versions of the same estimates.
Finally, part (g) can be checked by the definition of $\bv_{rl}^*$ and Assumption~\ref{assumption_psd_covariance}.

     Now we compute $\big\|\{\tilde\cH_{Q1}^*(\btheta,\hat\bmu^*,\hat\bSigma^*)\}^{-1}\big\|_{\infty}$. Let $\bH=\bH_{L\theta u}$, $\bJ=\bJ_{L\theta u}$, and $\bA=\bH_{Luu}^{-1}$. Then $\bar\bH=\bH-\bJ$ and $-\bH\bA\bH^\T+\bar\bH\bA\bar\bH^\T=-\bJ\bA\bH^\T-\bH\bA\bJ^\T+\bJ\bA\bJ^\T$. Since $\bH_{Q2}^{\star}$ contains the full off-diagonal Hessian blocks, adding $n^{-1}\bH_{Q2}^{\star}$ cancels the entire Schur term in the penalized Hessian approximation. Define
     \begin{equation}\begin{aligned}
    \bH_{Q1}^{\star}(\btheta)
    &= n^{-1}\bH_{L\theta\theta}\big(\btheta,\bu^{\star}(\btheta)\big)
       +2c_Pq^{-1}\sum_{r=0}^K\sum_{l=1}^K\bv_{rl}^*(\bv_{rl}^*)^\T\\
    &= n^{-1}\bH_{L\theta\theta}\big(\btheta,\bu^{\star}(\btheta)\big)+\bV^*(\bV^*)^\T
       =:\bH_{Q11}^{\star}(\btheta).
\end{aligned}\label{eq_define_H_Q1_star}\end{equation} Set $\bH_Q^\star(\btheta)=\bH_{Q1}^\star(\btheta)-n^{-1}\bH_{Q2}^\star(\btheta)$.
     Since $\partial_{\btheta\btheta}^2\cQ^* = \partial_{\btheta\btheta}^2\cL + \partial_{\btheta\btheta}^2P^*$, Lemma~\ref{lemma_laplace_approx_second_order} implies, uniformly over $s\in[0,1]$, that
     \begin{subequations}\begin{align}
    &\Big\|\bH_{Q}^{\star}(\btheta) - n^{-1}\partial_{\btheta\btheta}^2\cQ^*(\btheta,\hat\bmu^*,\hat\bSigma^*)\Big\| = \cOp(\varepsilon_{nq}q^{-1});\label{eq_approx_h_Q_integral_2}\\&\Big\|\bH_{Q}^{\star}(\btheta) - n^{-1}\partial_{\btheta\btheta}^2\cQ^*(\btheta,\hat\bmu^*,\hat\bSigma^*)\Big\|_{\infty} = \cOp(\varepsilon_{nq}q^{-1}).\label{eq_approx_h_Q_integral_inf}\end{align}
\end{subequations}
Subsequently, using $\tilde\cH_{Q}^*(\btheta,\hat\bmu^*,\hat\bSigma^*) = \int_{0}^1n^{-1}\partial_{\btheta\btheta}^2\cQ^*\big(\btheta^s,\hat\bmu^*,\hat\bSigma^*\big)ds$ and $\tilde\cH_{Q2}^*(\btheta) = \int_{0}^1n^{-1}\bH_{Q2}^{\star}\big(\btheta^s\big)ds$, define $\tilde\cH_{Q1}^*(\btheta,\hat\bmu^*,\hat\bSigma^*) = \tilde\cH_{Q}^*(\btheta,\hat\bmu^*,\hat\bSigma^*) + \tilde\cH_{Q2}^*(\btheta)$. Then
\begin{subequations}\begin{align}
    &\Big\|\tilde\cH_{Q1}^*(\btheta,\hat\bmu^*,\hat\bSigma^*) - \int_0^1\bH_{Q1}^{\star}(\btheta^s)ds\Big\| = \cOp(\varepsilon_{nq}q^{-1});\label{eq_approx_h_Q1_integral_2}\\&\Big\|\tilde\cH_{Q1}^*(\btheta,\hat\bmu^*,\hat\bSigma^*) - \int_0^1\bH_{Q1}^{\star}(\btheta^s)ds\Big\|_{\infty} = \cOp(\varepsilon_{nq}q^{-1}).\label{eq_approx_h_Q1_integral_inf}\end{align}
\end{subequations}
This implies that $\tilde\bH_{Q1}^{\star}(\btheta) := \int_0^1\bH_{Q1}^{\star}(\btheta^s)ds$ is the leading term in $\tilde\cH_{Q1}^*(\btheta,\hat\bmu^*,\hat\bSigma^*)$. We therefore calculate the inverse of $\tilde\bH_{Q1}^{\star}(\btheta)$.

By \eqref{eq_define_H_Q1_star}, we let $\tilde\bH_{Q11}^{\star}(\btheta) = n^{-1}\tilde\bH_{L\theta\theta}(\btheta) + \bV^*(\bV^*)^\T$ where $\tilde\bH_{L\theta\theta}(\btheta)$ is defined as $\int_0^1\bH_{L\theta\theta}(\btheta^s)ds$. By (a) and (b), one can check
\begin{enumerate}[label=(\alph*$^{\prime}$)]
    \item for each $j\in[q]$, $\Theta_p(b_nn)\le\lambda_{\min}\big(\{\tilde\bH_{L\theta\theta}\}_{\cK_j^+,\cK_j^+}\big)\le \lambda_{\max}\big(\{\tilde\bH_{L\theta\theta}\}_{\cK_j^+,\cK_j^+}\big)\le \cOp(B_nn)$;
    \item $\big\|\tilde\bH_{L\theta\theta}\big\|=\cOp(B_nn)$, $\big\|\tilde\bH_{L\theta\theta}^{-1}\big\| = \cOp((b_nn)^{-1})$, $\big\|\tilde\bH_{L\theta\theta}^{-1}\big\|_{\infty} = \cOp((b_nn)^{-1})$;
    \end{enumerate}
Next, apply Woodbury identity to write the inverse of $\tilde\bH_{Q11}^{\star}(\btheta)$ as
\begin{align*}
    n^{-1}\left\{\tilde\bH_{Q11}^{\star}(\btheta)\right\}^{-1} =  \tilde\bH_{L\theta\theta}^{-1} - \tilde\bH_{L\theta\theta}^{-1}\bV^*\left\{n^{-1}\bI_{K^2+K} + (\bV^*)^\T\tilde\bH_{L\theta\theta}^{-1}\bV^*\right\}^{-1}(\bV^*)^\T\tilde\bH_{L\theta\theta}^{-1}.
\end{align*}
By (g) and (b$^{\prime}$), we know $\|(\bV^*)^\T\tilde\bH_{L\theta\theta}^{-1}\bV^*\|\le \|\bV^*\|^2\|\tilde\bH_{L\theta\theta}^{-1}\|\le \cOp(n^{-1})$. Since $(\bV^*)^\T\tilde\bH_{L\theta\theta}^{-1}\bV^*$ is positive semidefinite, we know
\begin{equation*}
    \left\|\left(n^{-1}\bI_{K^2+K} + (\bV^*)^\T\tilde\bH_{L\theta\theta}^{-1}\bV^*\right)^{-1}\right\| = \lambda_{\min}\left\{n^{-1}\bI_{K^2+K} + (\bV^*)^\T\tilde\bH_{L\theta\theta}^{-1}\bV^*\right\}^{-1} = \cOp(n).
\end{equation*}
Then with (g), (a$^{\prime}$) and (b$^{\prime}$), we conclude that 
\begin{equation}\begin{aligned}
    &\,\left\|\left\{\tilde\bH_{Q11}^{\star}(\btheta)\right\}^{-1}\right\|\\\le &\,n\big\|\tilde\bH_{L\theta\theta}^{-1}\big\| + n\left\|\tilde\bH_{L\theta\theta}^{-1}\bV^*\left\{n^{-1}\bI_{K^2+K} + (\bV^*)^\T\tilde\bH_{L\theta\theta}^{-1}\bV^*\right\}^{-1}(\bV^*)^\T\tilde\bH_{L\theta\theta}^{-1}\right\|\\\le &\,n\big\|\tilde\bH_{L\theta\theta}^{-1}\big\|^2 \big\|\bV^*\big\|\big\|(\bV^*)^\T\big\|\left\|\left\{n^{-1}\bI_{K^2+K} + (\bV^*)^\T\tilde\bH_{L\theta\theta}^{-1}\bV^*\right\}^{-1}\right\| + \cOp(b_n^{-1})\\= &\,\cOp(b_n^{-1}).
\end{aligned}\label{eq_bound_tilde_HQ1_star_inv}\end{equation}
and
\begin{equation}\begin{aligned}
    &\,\left\|\left\{\tilde\bH_{Q11}^{\star}(\btheta)\right\}^{-1}\right\|_{\infty}\\\le &\,n\big\|\tilde\bH_{L\theta\theta}^{-1}\big\|_{\infty} + n\left\|\tilde\bH_{L\theta\theta}^{-1}\bV^*\left\{n^{-1}\bI_{K^2+K} + (\bV^*)^\T\tilde\bH_{L\theta\theta}^{-1}\bV^*\right\}^{-1}(\bV^*)^\T\tilde\bH_{L\theta\theta}^{-1}\right\|_{\infty}\\\le &\,n\big\|\tilde\bH_{L\theta\theta}^{-1}\big\|_{\infty}^2 \big\|\bV^*\big\|_{\infty}\big\|(\bV^*)^\T\big\|_{\infty}(K^2+K)^{1/2}\left\|\left\{n^{-1}\bI_{K^2+K} + (\bV^*)^\T\tilde\bH_{L\theta\theta}^{-1}\bV^*\right\}^{-1}\right\| \\&\,+ \cOp(b_n^{-1})\\= &\,\cOp(b_n^{-1}).
\end{aligned}\label{eq_bound_tilde_HQ1_star_inv_inf}\end{equation}
Since $\bH_{Q1}^{\star}=\bH_{Q11}^{\star}$, the perturbation is only the Laplace-approximation error. Define $\tilde\bH_{Q1}^{\delta}=\tilde\bH_{Q11}^{\star}-\tilde\cH_{Q1}^*$, suppressing the arguments. By \eqref{eq_approx_h_Q1_integral_inf}, we know 
\begin{equation}
    \|\tilde\bH_{Q1}^{\delta}\|_{\infty} =\cOp(\varepsilon_{nq}/q)=o_p(b_n),\;\text{ and }\; \rho_{nq}:=\| (\tilde\bH_{Q11}^{\star})^{-1}\tilde\bH_{Q1}^{\delta}\|_{\infty} =\cOp\big(\varepsilon_{nq}/(qb_n)\big)=o_p(1).
    \label{eq_bound_tilde_HQ1_star_inv_2}
\end{equation}
Here the bound for $\rho_{nq}$ uses \eqref{eq_bound_tilde_HQ1_star_inv_inf} and Assumption~\ref{assump_scaling}. The Neumann series therefore converges in $\|\cdot\|_{\infty}$ with probability tending to one, which yields
\begin{equation*}
    \| (\tilde\cH_{Q1}^*)^{-1}\|_{\infty}
    \le \frac{\| (\tilde\bH_{Q11}^{\star})^{-1}\|_{\infty}}{1-\rho_{nq}}
    =\cOp(b_n^{-1}).
\end{equation*}
This proves the first part.

     Now we prove the bound for $\tilde\cH_{Q2}^*(\btheta)(\btheta - \btheta^*)$. Let $\btheta^s = \btheta^* + s(\btheta - \btheta^*)$. Since $\tilde\cH_{Q2}^*(\btheta) = n^{-1}\int_0^1\bH_{Q2}^{\star}(\btheta^s)ds$, we have
     \begin{align*}
         \big\|\tilde\cH_{Q2}^*(\btheta) (\btheta - \btheta^*)\big\|_{\infty}
         \le&\, \max_{s\in[0,1]}n^{-1}\big\|\bH^{\star}_{Q2}(\btheta^s)(\btheta - \btheta^*)\big\|_{\infty} \\
         \le&\,\max_{s\in[0,1]}\max_{j\in[q]}n^{-1}\left\|\left\{\bH_{L\theta u}^s\left(\bH_{Lu u}^s\right)^{-1}\bH_{Lu\theta}^s (\btheta - \btheta^*)\right\}_{\cK_j^+,}\right\|_{\infty} \\
         \le&\,\max_{s\in[0,1]}\max_{j\in[q]}\frac{C}{n}\big\|\{\bH_{L\theta u}^s\}_{\cK_j^+,}\big\|\big\|(\bH_{Lu u}^s)^{-1}\big\|\big\|\bH_{Lu\theta}^s (\btheta - \btheta^*)\big\| \\
         \le&\,\max_{s\in[0,1]}\max_{j\in[q]}\frac{C}{n}\big\|\{\bH_{L\theta u}^s\}_{\cK_j^+,}\big\|\big\|(\bH_{Lu u}^s)^{-1}\big\|\big\|\bH_{Lu\theta}^s\big\|\big\|\btheta - \btheta^*\big\| \\
         \le&\,\cOp\left(\frac{B_n^2}{b_n\sqrt q}\|\btheta-\btheta^*\|\right).
     \end{align*}
     For the last inequality, (d) gives the inverse bound. Bounded loadings and intercepts and $\max_s n^{-1}\sum_i\|\bu_i^{\star}(\btheta^s)\|^2=\cOp(1)$ imply $\max_{s,j}\|(\eta_{ij}^{\star}(\btheta^s)-\eta_{ij}^*)_{i\in[n]}\|=\cOp(\sqrt n)$. Taylor expansion of the scores and the uniform column bound in Lemma~\ref{lemma_concentration_l_mat} therefore give $\max_{s,j}\|\{\bJ_{L\theta u}^s\}_{\cK_j^+,}\|=\cOp(B_n\sqrt n)$. Combining this with (e)--(f) yields $\max_{s,j}\|\{\bH_{L\theta u}^s\}_{\cK_j^+,}\|=\cOp(B_n\sqrt n)$; bounding the spectral norm by the Frobenius norm then gives $\max_s\|\bH_{Lu\theta}^s\|=\cOp(B_n\sqrt{nq})$. Thus the stated bound for $\tilde\cH_{Q2}^*(\btheta)(\btheta-\btheta^*)$ is unchanged.

\section{Additional Simulation Results}\label{supp_Sec_simu}

\subsection{Additional Results from Section~\ref{sec_simu}}

\begin{table}[h]
\centering
\caption{Empirical coverage probabilities for model parameters under the linear model. Parentheses report Monte Carlo standard errors multiplied by \(1000\). The nominal coverage level is \(0.95\).}
\label{tab:coverage-gamma-beta-linear}
\small
\setlength{\tabcolsep}{3.5pt} 
\begin{tabular}{@{}l*{9}{c}@{}}
\toprule
 & \multicolumn{3}{c}{Gaussian} & \multicolumn{3}{c}{Uniform} & \multicolumn{3}{c}{Gaussian Mixture} \\
\cmidrule(lr){2-4}\cmidrule(lr){5-7}\cmidrule(lr){8-10}
$n$ & $q=150$ & $q=300$ & $q=600$ & $q=150$ & $q=300$ & $q=600$ & $q=150$ & $q=300$ & $q=600$ \\
\midrule
300 & 0.941 & 0.944 & 0.945 & 0.943 & 0.945 & 0.946 & 0.943 & 0.945 & 0.944 \\
& (1.45) & (0.67) & (0.46) & (1.32) & (0.65) & (0.47) & (1.36) & (0.66) & (0.47) \\
600 & 0.944 & 0.946 & 0.947 & 0.945 & 0.947 & 0.947 & 0.943 & 0.947 & 0.947 \\
& (1.41) & (0.63) & (0.46) & (1.31) & (0.65) & (0.44) & (1.30) & (0.64) & (0.46) \\
1000 & 0.943 & 0.947 & 0.948 & 0.944 & 0.948 & 0.949 & 0.943 & 0.946 & 0.948 \\
& (1.35) & (0.67) & (0.46) & (1.32) & (0.65) & (0.44) & (1.34) & (0.68) & (0.45) \\
\bottomrule
\end{tabular}
\end{table}

\begin{table}[h]
\centering
\caption{Empirical coverage probabilities for latent variables $\bU_i$ under the linear model. Parentheses report Monte Carlo standard errors multiplied by \(1000\). The nominal coverage level is \(0.95\).}
\label{tab:coverage-u-linear}
\small
\setlength{\tabcolsep}{3.5pt} 
\begin{tabular}{@{}l*{9}{c}@{}}
\toprule
 & \multicolumn{3}{c}{Gaussian} & \multicolumn{3}{c}{Uniform} & \multicolumn{3}{c}{Gaussian Mixture} \\
\cmidrule(lr){2-4}\cmidrule(lr){5-7}\cmidrule(lr){8-10}
$n$ & $q=150$ & $q=300$ & $q=600$ & $q=150$ & $q=300$ & $q=600$ & $q=150$ & $q=300$ & $q=600$ \\
\midrule
300 & 0.940 & 0.945 & 0.947 & 0.941 & 0.943 & 0.946 & 0.940 & 0.946 & 0.946 \\
& (1.19) & (0.74) & (0.80) & (1.09) & (0.88) & (0.84) & (1.01) & (0.76) & (0.82) \\
600 & 0.940 & 0.945 & 0.947 & 0.940 & 0.945 & 0.947 & 0.941 & 0.945 & 0.947 \\
& (0.89) & (0.56) & (0.51) & (0.79) & (0.56) & (0.52) & (0.86) & (0.48) & (0.51) \\
1000 & 0.940 & 0.945 & 0.947 & 0.941 & 0.946 & 0.948 & 0.940 & 0.945 & 0.947 \\
& (0.61) & (0.45) & (0.45) & (0.70) & (0.45) & (0.42) & (0.64) & (0.43) & (0.38) \\
\bottomrule
\end{tabular}
\end{table}

We report finite-sample coverage under the linear link (Tables~\ref{tab:coverage-gamma-beta-linear} and~\ref{tab:coverage-u-linear}) and the Poisson link (Tables~\ref{tab:coverage-gamma-beta-poisson} and~\ref{tab:coverage-u-poisson}) across different problem sizes and latent distributions. The results follow the same pattern as those for the logistic link in Section~\ref{sec_simu} where empirical coverage is close to the nominal 95\% level. Coverage is most accurate under the linear link, which represents an easier setting because $\kappa_n\asymp 1$, whereas $\kappa_n\asymp n^{o(1)}$ under the Bernoulli and Poisson models. Overall, the coverage results under both nonlinear links remain satisfactory across the settings considered.

\begin{table}[h]
\centering
\caption{Empirical coverage probabilities for model parameters under the Poisson model. Parentheses report Monte Carlo standard errors multiplied by \(1000\). The nominal coverage level is \(0.95\).}
\label{tab:coverage-gamma-beta-poisson}
\small
\setlength{\tabcolsep}{3.5pt} 
\begin{tabular}{@{}l*{9}{c}@{}}
\toprule
 & \multicolumn{3}{c}{Gaussian} & \multicolumn{3}{c}{Uniform} & \multicolumn{3}{c}{Gaussian Mixture} \\
\cmidrule(lr){2-4}\cmidrule(lr){5-7}\cmidrule(lr){8-10}
$n$ & $q=150$ & $q=300$ & $q=600$ & $q=150$ & $q=300$ & $q=600$ & $q=150$ & $q=300$ & $q=600$ \\
\midrule
300 & 0.918 & 0.926 & 0.930 & 0.929 & 0.938 & 0.941 & 0.922 & 0.929 & 0.933 \\
& (1.87) & (0.82) & (0.56) & (1.55) & (0.76) & (0.49) & (1.69) & (0.81) & (0.54) \\
600 & 0.922 & 0.931 & 0.935 & 0.936 & 0.941 & 0.944 & 0.927 & 0.934 & 0.937 \\
& (1.83) & (0.83) & (0.56) & (1.52) & (0.70) & (0.48) & (1.78) & (0.82) & (0.53) \\
1000 & 0.923 & 0.936 & 0.937 & 0.932 & 0.941 & 0.944 & 0.929 & 0.936 & 0.939 \\
& (1.70) & (0.86) & (0.57) & (1.72) & (0.80) & (0.49) & (1.76) & (0.87) & (0.53) \\
\bottomrule
\end{tabular}
\end{table}

\begin{table}[h]
\centering
\caption{Empirical coverage probabilities for latent variables $\bU_i$ under the Poisson model. Parentheses report Monte Carlo standard errors multiplied by \(1000\). The nominal coverage level is \(0.95\).}
\label{tab:coverage-u-poisson}
\small
\setlength{\tabcolsep}{3.5pt} 
\begin{tabular}{@{}l*{9}{c}@{}}
\toprule
 & \multicolumn{3}{c}{Gaussian} & \multicolumn{3}{c}{Uniform} & \multicolumn{3}{c}{Gaussian Mixture} \\
\cmidrule(lr){2-4}\cmidrule(lr){5-7}\cmidrule(lr){8-10}
$n$ & $q=150$ & $q=300$ & $q=600$ & $q=150$ & $q=300$ & $q=600$ & $q=150$ & $q=300$ & $q=600$ \\
\midrule
300 & 0.924 & 0.933 & 0.935 & 0.929 & 0.936 & 0.940 & 0.929 & 0.934 & 0.935 \\
& (1.33) & (0.87) & (0.96) & (1.43) & (0.94) & (0.73) & (1.32) & (0.94) & (0.89) \\
600 & 0.931 & 0.935 & 0.940 & 0.933 & 0.941 & 0.943 & 0.930 & 0.938 & 0.940 \\
& (0.88) & (0.63) & (0.56) & (0.91) & (0.59) & (0.50) & (1.01) & (0.59) & (0.62) \\
1000 & 0.935 & 0.938 & 0.941 & 0.933 & 0.940 & 0.945 & 0.936 & 0.940 & 0.944 \\
& (0.70) & (0.48) & (0.48) & (0.62) & (0.51) & (0.43) & (0.58) & (0.54) & (0.52) \\
\bottomrule
\end{tabular}
\end{table}

\newpage
\subsection{Simulation Results under Practical Identifiability Condition}\label{supp_simu_2}
We assess the finite-sample accuracy of the frequentist and Bayesian asymptotic approximations for the latent variables $\bU_i$. Under the framework of Section~\ref{subsec_latent_recover}, the frequentist and Bayesian asymptotic covariance matrices are asymptotically equivalent, both concurring with the inverse conditional Fisher information given $\{\bU_i\}_{i\in[n]}$. Under the practical identifiability conditions studied in Section~\ref{sec_res_pract_id}, however, the two approaches generally yield different covariance expressions. To examine this distinction empirically, we conduct additional simulations under the identifiability constraints introduced in that section. Specifically, we consider the following four patterns:
\begin{equation*}
    \bGamma^{(a)} = \begingroup
    \renewcommand{\arraystretch}{0.45}\begin{pmatrix}
        * & 0 & 0\\ * & * & 0\\ * & * & *\\\vdots & \vdots & \vdots
    \end{pmatrix},\endgroup \quad \bGamma^{(b)}=\begingroup
    \renewcommand{\arraystretch}{0.45}\begin{pmatrix}
        * & 0 & *\\ * & * & 0 \\ * & * & 0 \\\vdots & \vdots & \vdots
    \end{pmatrix}\endgroup\qquad
    \bGamma^{(c)} = \begingroup
    \renewcommand{\arraystretch}{0.45}\begin{pmatrix}
        * & * & 0\\ * & 0 & 0\\ 0 & 0 & *\\ 0 & * & *\\\vdots & \vdots & \vdots
    \end{pmatrix},\endgroup \quad \bGamma^{(d)}=\begingroup
    \renewcommand{\arraystretch}{0.45}\begin{pmatrix}
        * & * & 0\\ * & * & 0\\ * & 0 & *\\ * & 0 & *\\ 0 & * & *\\ 0 & * & * \\\vdots & \vdots & \vdots
    \end{pmatrix}\endgroup
\end{equation*}


\noindent Here `$*$' denotes a free entry, `$0$' denotes an entry fixed at zero, and the vertical ellipses indicate additional unrestricted rows. The patterns $\bGamma^{(a)}$ and $\bGamma^{(b)}$ satisfy the zero-restriction structure in Condition~\ref{condition1}$(iii)$, whereas $\bGamma^{(c)}$ and $\bGamma^{(d)}$ satisfy that in Condition~\ref{condition2}$(iii)$, provided the corresponding rank requirements hold. We generate the model parameters as in Section~\ref{sec_simu}, setting the designated entries of $\bGamma$ to zero according to each pattern. We examine the performance under Bernoulli case and Gaussian distribution as example. In settings~$(a)$ and~$(b)$, the latent variables are simulated as in Section~\ref{sec_simu}. In settings~$(c)$ and~$(d)$, we simulate latent variables from $\cN(\zero_3,\bSigma_u)$, where $\bSigma_u\in\RR^{K\times K}$ has its $(l,h)$th entry being $0.3^{|l-h|}$, and then normalize them to satisfy Condition~\ref{condition2}.  Other aspects of the simulation design remain the same as in Section~\ref{sec_simu}. For frequentist inference, we compute the empirical coverage of nominal 95\% confidence intervals for $\bU_i$, using either the sandwich covariance $\bPsi_i^*$ or the corresponding covariance $\bPsi_i^{(1)}$ or $\bPsi_i^{(2)}$ derived under the relevant identifiability condition. To assess posterior approximation, we report the KL divergence between the plug-in posterior, evaluated by adaptive Gauss--Hermite quadrature, and Gaussian approximations constructed using each of these covariance matrices. The results are reported in Tables~\ref{tab:varcomp-ic1}--\ref{tab:varcomp-ic4}.

The results clearly distinguish the two notions of uncertainty. Across all four settings, confidence intervals for point estimate based on $\bPsi_i^{(1)}$ or $\bPsi_i^{(2)}$ attain coverage generally close to the nominal 95\% level, whereas those based on $\bPsi_i^*$ exhibit substantial undercoverage. Conversely, $\bPsi_i^*$ provides an accurate Gaussian approximation to the plug-in posterior, with the KL divergence decreasing as $q$ increases, while using the frequentist covariance produces considerably larger discrepancies. This agrees with our theory that the uncertainty induced by practical identification affects the frequentist distribution of the point estimator but not the first-order posterior approximation.

\begin{table}[htbp]
\centering
\caption{Comparison between two choices of covariance matrices under setting $(a)$. Boldface marks the theoretically appropriate covariance for each inferential target. }\label{tab:varcomp-ic1}
\small
\begin{tabular}{llccccccc}
\toprule
 & & \multicolumn{3}{c}{Frequentist 95\% CI coverage} && \multicolumn{3}{c}{Posterior approximation: KL divergence} \\
\cmidrule(lr){3-5}\cmidrule(lr){7-9}
$n$ & & $q=150$ & $q=300$ & $q=600$ && $q=150$ & $q=300$ & $q=600$ \\
\midrule
300 & $\bPsi_i^*$ & 0.855 & 0.818 & 0.713 && \textbf{0.051} & \textbf{0.024} & \textbf{0.012} \\
 & $\bPsi_i^{(1)}$ & \textbf{0.974} & \textbf{0.969} & \textbf{0.943} && 1.231 & 0.854 & 1.089 \\
\midrule
600 & $\bPsi_i^*$ & 0.901 & 0.890 & 0.761 && \textbf{0.045} & \textbf{0.024} & \textbf{0.011} \\
 & $\bPsi_i^{(1)}$ & \textbf{0.947} & \textbf{0.954} & \textbf{0.928} && 0.251 & 0.279 & 0.694 \\
\midrule
1000 & $\bPsi_i^*$ & 0.922 & 0.887 & 0.889 && \textbf{0.041} & \textbf{0.022} & \textbf{0.011} \\
 & $\bPsi_i^{(1)}$ & \textbf{0.945} & \textbf{0.985} & \textbf{0.970} && 0.080 & 1.770 & 0.469 \\
\bottomrule
\end{tabular}
\end{table}

\begin{table}[htbp]
\centering
\caption{Comparison between two choices of covariance matrices under setting $(b)$. Boldface marks the theoretically appropriate covariance for each inferential target. }\label{tab:varcomp-ic2}
\small
\begin{tabular}{llccccccc}
\toprule
 & & \multicolumn{3}{c}{Frequentist 95\% CI coverage} & &\multicolumn{3}{c}{Posterior approximation: KL divergence} \\
\cmidrule(lr){3-5}\cmidrule(lr){7-9}
$n$ & & $q=150$ & $q=300$ & $q=600$ && $q=150$ & $q=300$ & $q=600$ \\
\midrule
300 & $\bPsi_i^*$ & 0.715 & 0.752 & 0.605 && \textbf{0.045} & \textbf{0.024} & \textbf{0.012} \\
 & $\bPsi_i^{(1)}$ & \textbf{0.939} & \textbf{0.937} & \textbf{0.955} && 1.062 & 0.774 & 1.604 \\
\addlinespace
600 & $\bPsi_i^*$ & 0.880 & 0.890 & 0.879 && \textbf{0.049} & \textbf{0.025} & \textbf{0.011} \\
 & $\bPsi_i^{(1)}$ & \textbf{0.933} & \textbf{0.946} & \textbf{0.954} && 0.160 & 0.153 & 0.212 \\
\addlinespace
1000 & $\bPsi_i^*$ & 0.903 & 0.891 & 0.642 && \textbf{0.043} & \textbf{0.023} & \textbf{0.012} \\
 & $\bPsi_i^{(1)}$ & \textbf{0.937} & \textbf{0.943} & \textbf{0.910} && 0.089 & 0.155 & 1.220 \\
\bottomrule
\end{tabular}
\end{table}

\begin{table}[htbp]
\centering
\caption{Comparison between two choices of covariance matrices under setting $(c)$. Boldface marks the theoretically appropriate covariance for each inferential target.}\label{tab:varcomp-ic3}
\small
\begin{tabular}{llccccccc}
\toprule
 & & \multicolumn{3}{c}{Frequentist 95\% CI coverage} && \multicolumn{3}{c}{Posterior approximation: KL divergence} \\
\cmidrule(lr){3-5}\cmidrule(lr){7-9}
$n$ & & $q=150$ & $q=300$ & $q=600$ && $q=150$ & $q=300$ & $q=600$ \\
\midrule
300 & $\bPsi_i^*$ & 0.856 & 0.860 & 0.757 && \textbf{0.052} & \textbf{0.031} & \textbf{0.016} \\
 & $\bPsi^{(2)}$ & \textbf{0.933} & \textbf{0.940} & \textbf{0.921} && 0.230 & 0.241 & 0.763 \\
\midrule
600 & $\bPsi_i^*$ & 0.882 & 0.874 & 0.835 && \textbf{0.071} & \textbf{0.027} & \textbf{0.014} \\
 & $\bPsi^{(2)}$ & \textbf{0.935} & \textbf{0.937} & \textbf{0.937} && 0.150 & 0.160 & 0.278 \\
\midrule
1000 & $\bPsi_i^*$ & 0.900 & 0.895 & 0.878 && \textbf{0.077} & \textbf{0.026} & \textbf{0.012} \\
 & $\bPsi^{(2)}$ & \textbf{0.932} & \textbf{0.937} & \textbf{0.932} && 0.106 & 0.083 & 0.182 \\
\bottomrule
\end{tabular}
\end{table}

\begin{table}[htbp]
\centering
\caption{Comparison between two choices of covariance matrices under setting $(d)$. Boldface marks the theoretically appropriate covariance for each inferential target.}\label{tab:varcomp-ic4}
\small
\begin{tabular}{llccccccc}
\toprule
 & & \multicolumn{3}{c}{Frequentist 95\% CI coverage} && \multicolumn{3}{c}{Posterior approximation: KL divergence} \\
\cmidrule(lr){3-5}\cmidrule(lr){7-9}
$n$ & & $q=150$ & $q=300$ & $q=600$ && $q=150$ & $q=300$ & $q=600$ \\
\midrule
300 & $\bPsi_i^*$ & 0.850 & 0.800 & 0.771 && \textbf{0.060} & \textbf{0.033} & \textbf{0.015} \\
 & $\bPsi^{(2)}$ & \textbf{0.922} & \textbf{0.927} & \textbf{0.935} && 0.215 & 0.418 & 0.772 \\
\midrule
600 & $\bPsi_i^*$ & 0.909 & 0.879 & 0.827 && \textbf{0.046} & \textbf{0.024} & \textbf{0.013} \\
 & $\bPsi^{(2)}$ & \textbf{0.937} & \textbf{0.930} & \textbf{0.932} && 0.069 & 0.102 & 0.289 \\
\midrule
1000 & $\bPsi_i^*$ & 0.910 & 0.896 & 0.867 && \textbf{0.072} & \textbf{0.024} & \textbf{0.013} \\
 & $\bPsi^{(2)}$ & \textbf{0.937} & \textbf{0.929} & \textbf{0.936} && 0.092 & 0.074 & 0.194 \\
\bottomrule
\end{tabular}
\end{table}

\end{document}